\documentclass[a4paper,twosided,11pt]{article}
\usepackage{amsfonts}
\usepackage{latexsym}
\usepackage{amsmath}
\usepackage{amssymb}
\usepackage{amsthm}
\usepackage{amsmath}
\usepackage[shortlabels]{enumitem}
\usepackage{hyperref}
\usepackage{verbatim}
\usepackage{tikz}
\usepackage{tikz-cd}
\usepackage{enumitem} \usetikzlibrary{calc}

\tikzset{vertex/.style = {shape=circle,fill,inner sep=0pt,minimum size=2mm}}

\newtheorem{theorem}{Theorem}[section]
\newtheorem*{theorem*}{Theorem}

\newtheorem{corollary}[theorem]{Corollary}

\newtheorem{lemma}[theorem]{Lemma}

\newtheorem*{prop*}{Proposition}
\newtheorem{proposition}[theorem]{Proposition}

\newtheorem*{conjecture*}{Conjecture}
\newtheorem{definition}[theorem]{Definition}
\newtheorem*{definition*}{Definition}
\theoremstyle{definition}
\newtheorem{remark}[theorem]{\textbf{Remark}}
\newtheorem*{remark*}{Remark}
\newtheorem*{fact*}{Fact}

\newcommand{\vol}{\textrm{Vol}}
\newcommand{\norm}[1]{\left\Vert#1\right\Vert}
\newcommand{\snorm}[1]{\Vert#1\Vert}
\newcommand{\abs}[1]{\left\vert#1\right\vert}
\newcommand{\set}[1]{\left\{#1\right\}}
\newcommand{\brac}[1]{\left(#1\right)}
\newcommand{\scalar}[1]{\left \langle #1 \right \rangle}
\newcommand{\sscalar}[1]{\langle #1 \rangle}
\newcommand{\Y}{\mathbf{Y}}
\newcommand{\T}{\mathbf{T}}

\newcommand{\R}{\mathbb{R}}
\newcommand{\GG}{\mathbb{G}}
\newcommand{\RP}{\mathbb{RP}}
\newcommand{\GL}{\mathrm{GL}}
\newcommand{\PGL}{\mathrm{PGL}}
\newcommand{\CC}{\mathbf{C}}
\newcommand{\G}{\mathcal{G}}

\newcommand{\E}{\mathcal{E}}

\newcommand{\II}{\mathrm{I\!I}}
\newcommand{\Id}{\text{Id}}
\newcommand{\sym}{\text{sym}}
\newcommand{\M}{\mathbb{M}}
\newcommand{\F}{\mathcal{F}}

\renewcommand{\H}{\mathcal{H}}
\newcommand{\eps}{\epsilon}

\renewcommand{\SS}{\mathcal{S}}

\newcommand{\Ric}{\text{\rm Ric}}

\newcommand{\tr}{\text{\rm tr}}

\renewcommand{\S}{\mathbb{S}}
\newcommand{\B}{\mathbb{B}}
\renewcommand{\div}{\text{\rm div}}

\newcommand{\n}{\mathfrak{n}}
\renewcommand{\c}{\mathbf{c}}
\newcommand{\s}{\mathbf{s}}
\renewcommand{\k}{\mathbf{k}}
\newcommand{\ck}{\mathbf{ck}}
\newcommand{\tang}{\mathbf{t}}
\newcommand{\cyclic}{\mathcal{C}}
\newcommand{\C}{\mathcal{C}}
\newcommand{\Tr}{\mathcal{T}}
\newcommand{\Lip}{\textrm{Lip}}

\newcommand{\no}{\textrm{no}}
\newcommand{\simplex}{\Delta}
\newcommand{\sspan}{\text{span}}
\renewcommand{\L}{\text{L}}
\newcommand{\per}{P}
\newcommand{\pot}{W}

\DeclareMathOperator*{\argmin}{arg\,min}
\DeclareMathOperator{\interior}{int}
\DeclareMathOperator{\closure}{cls}
\DeclareMathOperator{\rank}{rank}

\newlength{\defbaselineskip}
\setlength{\defbaselineskip}{\baselineskip}

\numberwithin{equation}{section}

\setlength{\voffset}{-0.2in}
\setlength{\headsep}{25pt}
\topmargin 0.0in
\oddsidemargin 0.0in \evensidemargin 0.0in \marginparwidth 0pt
\textwidth 6.4in
\textheight 9.0in

\begin{document}

\title{The Structure of Isoperimetric Bubbles on $\R^n$ and $\S^n$} \date{}

\author{Emanuel Milman\textsuperscript{1} and Joe Neeman\textsuperscript{2}}

\footnotetext[1]{Department of Mathematics, Technion - Israel
Institute of Technology, Haifa, Israel; and Oden Institute, University of Texas at Austin. Email: emilman@tx.technion.ac.il.}
\footnotetext[2]{Department of Mathematics, University of Texas at Austin. Email: joeneeman@gmail.com.\\
The research leading to these results is part of a project that has received funding from the European Research Council (ERC) under the European Union's Horizon 2020 research and innovation programme (grant agreement No 101001677). J.N. was partially supported by the Deutsche Forschungsgemeinschaft (DFG, German Research Foundation) under Germany’s Excellence Strategy – EXC-2047/1 – 390685813, and by a fellowship from the Alfred P. Sloan Foundation.}

\begingroup    \renewcommand{\thefootnote}{}    \footnotetext{2020 Mathematics Subject Classification: 49Q20, 49Q10, 53A10, 51B10}
    \footnotetext{Keywords: Isoperimetric inequalities, Double-Bubble, Triple-Bubble, Multi-Bubble, M\"obius Geometry.}
\endgroup

\maketitle

\begin{abstract}
The multi-bubble isoperimetric conjectures in $n$-dimensional Euclidean and spherical spaces from the 1990's assert that standard bubbles uniquely minimize total perimeter among all $q-1$ bubbles enclosing prescribed volume, for any $q \leq n+2$. The double-bubble conjecture on $\R^3$ was confirmed in 2000 by Hutchings--Morgan--Ritor\'e--Ros, and is nowadays fully resolved on $\R^n$ for all $n \geq 2$. The double-bubble and triple-bubble conjectures on $\R^2$ and $\S^2$ have also been resolved, but all other cases are in general open. We confirm the conjectures on $\R^n$ and on $\S^n$ for all $q \leq \min(5,n+1)$, namely: the double-bubble conjectures for $n \geq 2$, the triple-bubble conjectures for $n \geq 3$ and the quadruple-bubble conjectures for $n \geq 4$. In fact, we show that for all $q \leq n+1$, a minimizing cluster necessarily has spherical interfaces, and after stereographic projection to $\S^n$, its cells are obtained as the Voronoi cells of $q$ affine-functions, or equivalently, as the intersection with $\S^n$ of convex polyhedra in $\R^{n+1}$. Moreover, the cells (including the unbounded one) are necessarily connected and intersect a common hyperplane of symmetry, resolving a conjecture of Heppes. We also show for all $q \leq n+1$ that a minimizer with non-empty interfaces between all pairs of cells is necessarily a standard bubble. The proof makes crucial use of considering $\R^n$ and $\S^n$ in tandem and of M\"obius geometry and conformal Killing fields;  it does not rely on establishing a PDI for the isoperimetric profile as in the Gaussian setting, which seems out of reach in the present one. 
\end{abstract}

\section{Introduction}

A weighted Riemannian manifold $(M^n,g,\mu)$ consists of a smooth complete $n$-dimensional Riemannian manifold $(M^n,g)$ endowed with a measure $\mu$ with $C^\infty$ smooth positive density $\Psi$ with respect to the Riemannian volume measure $\vol_g$. The metric $g$ induces a geodesic distance on $M^n$, and the corresponding $k$-dimensional Hausdorff measure is denoted by $\H^k$. Let $\mu^{k} := \Psi \H^{k}$.  The $\mu$-weighted perimeter of a Borel subset $U \subset M^n$ of locally finite perimeter is defined as $\per_\mu(U) := \mu^{n-1}(\partial^* U)$, where $\partial^* U$ is the reduced boundary of $U$ (see Section \ref{sec:prelim} for definitions).

A \emph{$q$-cluster} $\Omega = (\Omega_1, \ldots, \Omega_q)$ is a $q$-tuple of Borel subsets with locally finite perimeter $\Omega_i \subset M^n$ called cells, such that $\set{\Omega_i}$ are pairwise disjoint, $\mu(M^n \setminus \bigcup_{i=1}^q \Omega_i) = 0$, $\per_\mu(\Omega_i) < \infty$ for each $i$, and $\mu(\Omega_i) < \infty$ for all $i=1,\ldots,q-1$. Note that when $\mu(M^n) = \infty$, then necessarily $\mu(\Omega_q) = \infty$. Also note that the cells are not required to be connected.
The $\mu$-weighted volume of $\Omega$ is defined as:
\[
  \mu(\Omega) := (\mu(\Omega_1), \ldots, \mu(\Omega_q)) \in \simplex^{(q-1)}_{\mu(M^n)} ,
\]
where $\simplex^{(q-1)}_T := \{v \in \R^{q}_+  \; ; \; \sum_{i=1}^q v_i = T\}$ if $T < \infty$ and 
$\simplex^{(q-1)}_\infty := \R^{q-1}_+ \times \{ \infty\}$. 
The $\mu$-weighted total perimeter of a cluster is defined as:
\[
\per_\mu(\Omega) := \frac{1}{2} \sum_{i=1}^q \per_\mu(\Omega_i) = \sum_{1 \leq i < j \leq q} \mu^{n-1}(\Sigma_{ij}) ,
\]
where $\Sigma_{ij} := \partial^* \Omega_i \cap \partial^* \Omega_j$ denotes the $(n-1)$-dimensional interface between cells $i$ and $j$.

The isoperimetric problem for $q$-clusters consists of identifying those clusters $\Omega$ of prescribed volume $\mu(\Omega) = v$ which minimize the total perimeter $\per_\mu(\Omega)$. Note that for a $2$-cluster $\Omega = (\Omega_1,\Omega_2)$, $\per_\mu(\Omega) = \per_\mu(\Omega_1) = \per_\mu(\Omega_2)$, and so the case $q=2$ corresponds to the classical isoperimetric setup of minimizing the perimeter of a single set of prescribed volume. Consequently, the case $q=2$ is referred to as the ``single-bubble" case (with the bubble being $\Omega_1$ and having complement $\Omega_2$). Accordingly, the case $q=3$ is called the ``double-bubble" problem, the case $q=4$ is the ``triple-bubble" problem, etc... The case of general $q$ is referred to as the ``multi-bubble" problem. 

\bigskip

In this work, we will focus on (arguably) the two most natural and important settings: 
\begin{itemize}
\item Euclidean space $\R^n$, namely $(\R^n,|\cdot|)$ endowed with the Lebesgue measure $V$. 
\item Spherical space $\S^n$, namely the unit-sphere $(\S^n,g)$ in its canonical embedding in $\R^{n+1}$, endowed with its Haar measure $V$ (normalized to have total mass $1$).  
\end{itemize}
The following definition and corresponding conjectures were put forth by J.~Sullivan in the 90's \cite[Problem 2]{OpenProblemsInSoapBubbles96}. Recall that the (open) Voronoi cells of distinct $\set{x_1,\ldots,x_q} \subset \R^N$ are defined as:
\[
\Omega_i = \interior \set{ x \in \R^N \;;\; i \in \argmin_{j=1,\ldots,q} \abs{x-x_j} } = \set{ x \in \R^N \;;\; \argmin_{j=1,\ldots,q} \abs{x-x_j} = \{i\} } ~,~  i=1,\ldots, q ~,
\]
where $\interior$ denotes the (relative) interior operation and $\argmin$ denotes the subset of indices on which the corresponding minimum is attained.

\begin{definition}[Standard Bubble on $\R^n$ and $\S^n$]
Given $2 \leq q \leq n+2$, the equal-volume standard $(q-1)$-bubble on $\S^n$ is the $q$-cluster $\Omega$ obtained by intersecting $\S^n$ with the Voronoi cells of $q$ equidistant points in $\S^n \subset \R^{n+1}$. \\
A standard $(q-1)$-bubble on $\R^n$ is a stereographic projection of the equal-volume standard $(q-1)$-bubble $\Omega$ on $\S^n$ with respect to some North pole in (the open) $\Omega_q$. \\
A standard $(q-1)$-bubble on $\S^n$ is a stereographic projection of a standard $(q-1)$-bubble on $\R^n$. 
\end{definition}

\begin{figure}
    \begin{center}
            \raisebox{-0.1\height}{\includegraphics[scale=0.1]{triple-bubble.png}}
        \hspace{20pt}
        \begin{tikzpicture}[scale=1.2]
            \input{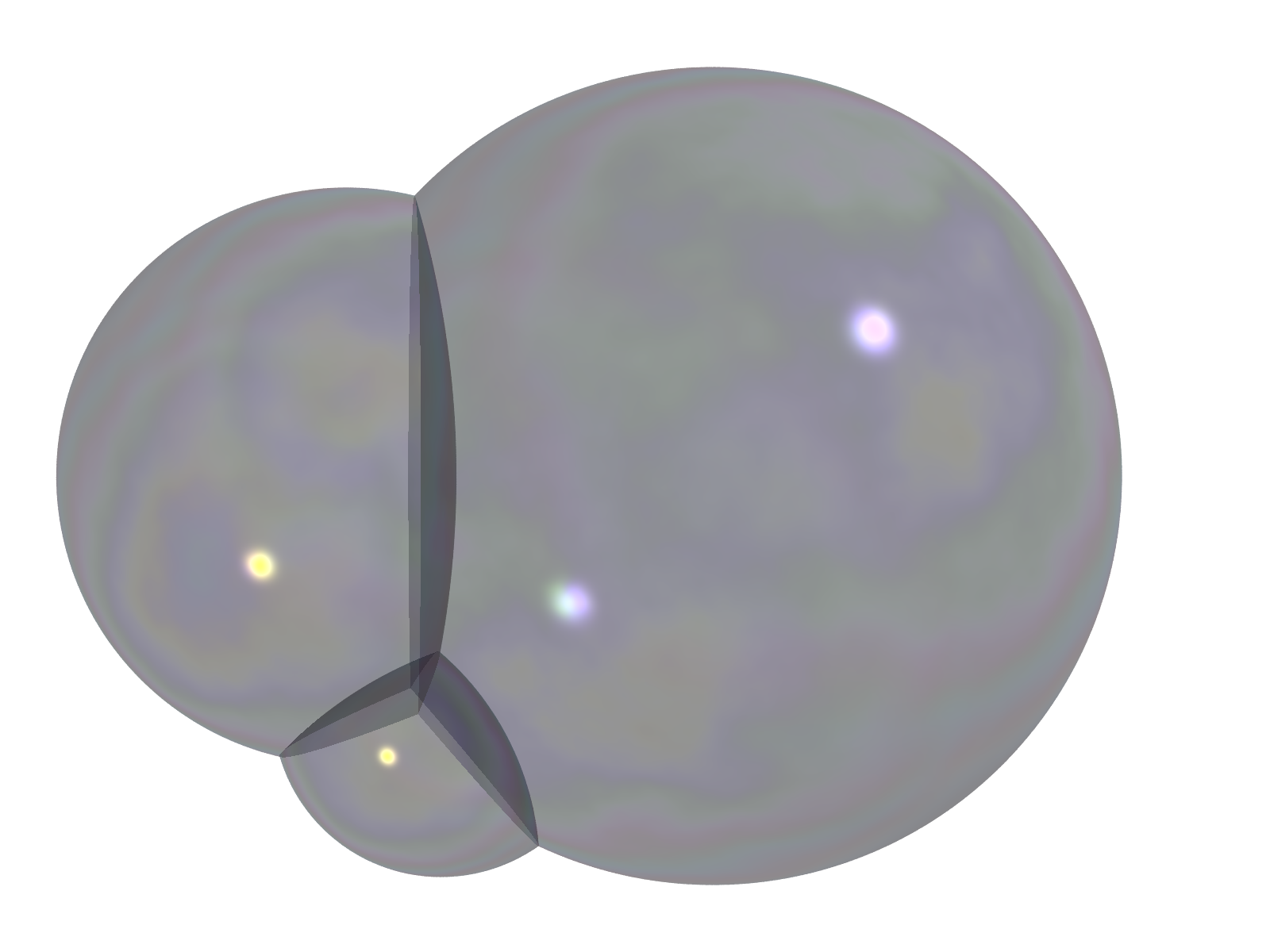}
        \end{tikzpicture}
     \end{center}
     \caption{
         \label{fig:triple-bubble}
        Left: a standard triple-bubble in $\R^3$. Right: the $2$D cross-section through its plane of symmetry. 
     }
\end{figure}

\begin{figure}
    \begin{center}
            \includegraphics[scale=0.1]{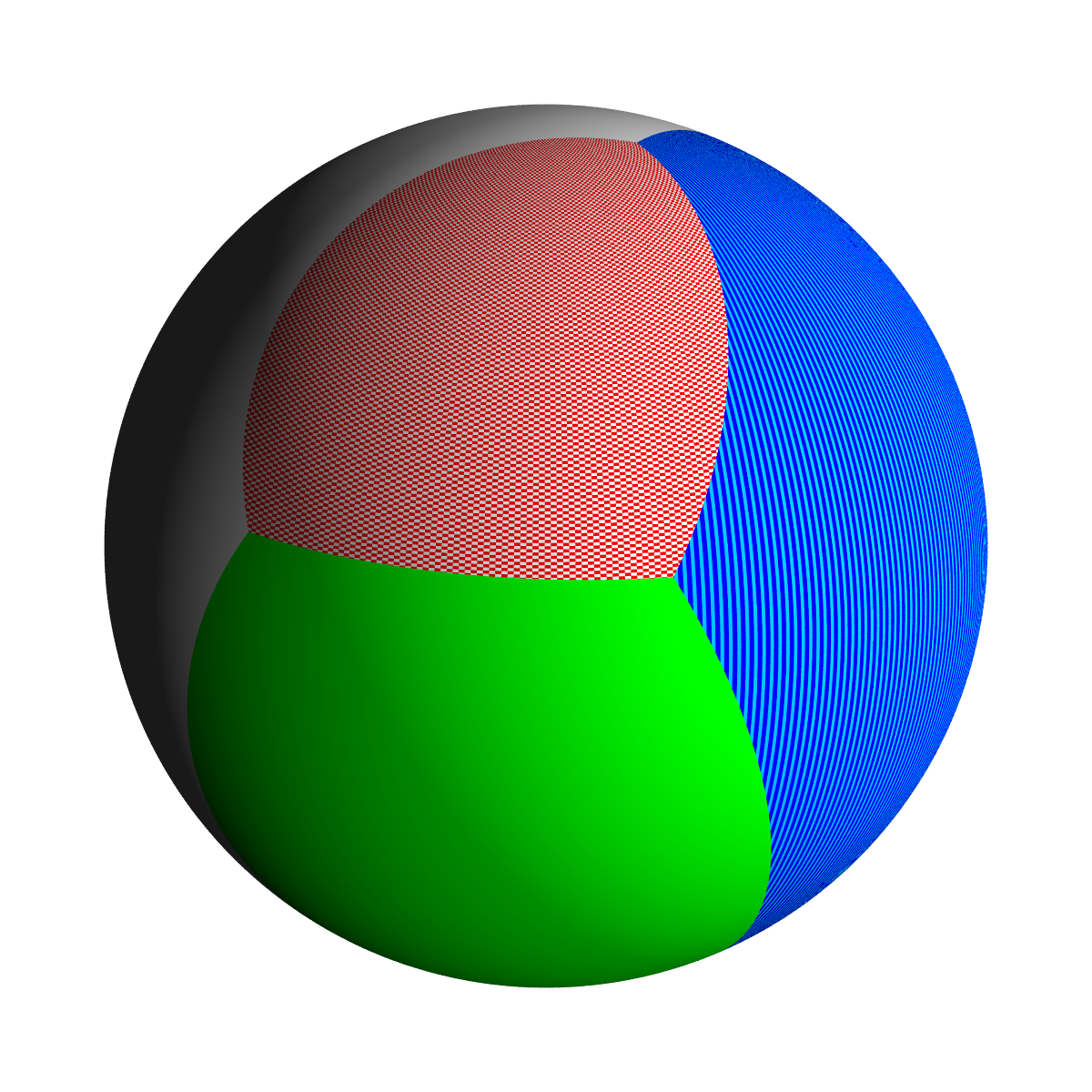}
     \end{center}
     \caption{
         \label{fig:triple-bubble-S2}
        A standard triple-bubble in $\S^2$; also, the cross-section of a standard triple-bubble in $\S^3$ through its hyperplane of symmetry. 
     }
\end{figure}

\begin{figure}
    \begin{center}
        \includegraphics[scale=0.09]{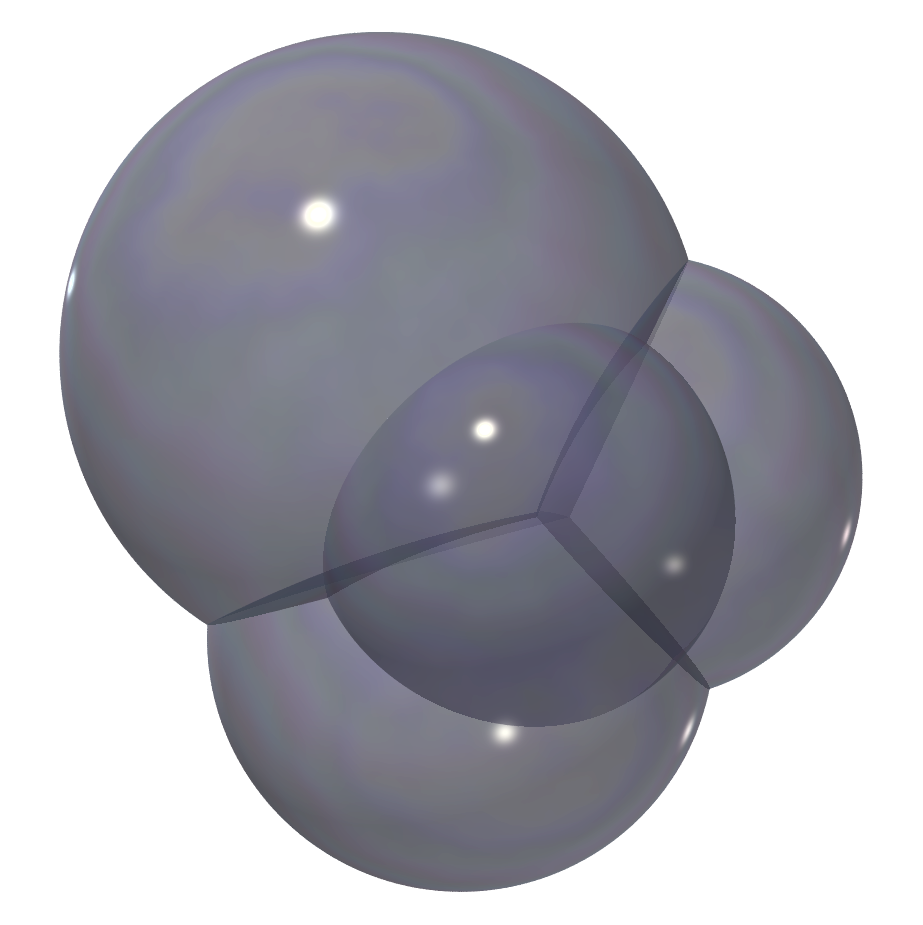}
        \includegraphics[scale=0.09]{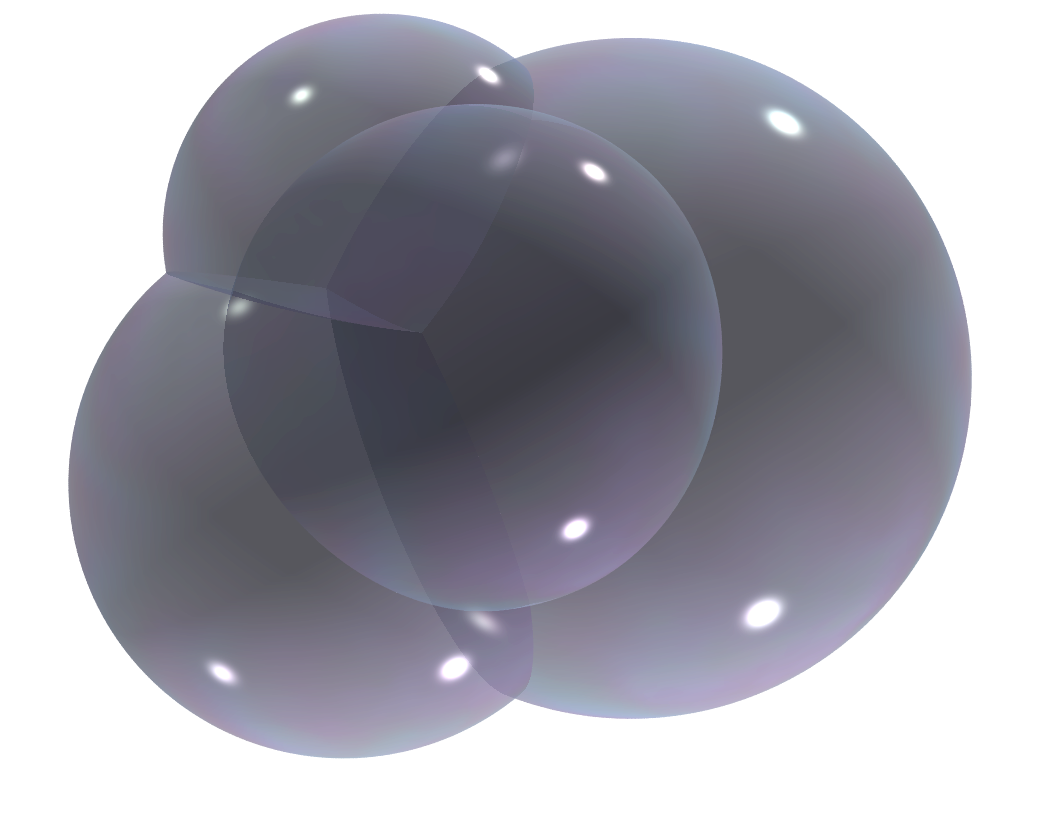}
        \includegraphics[scale=0.09]{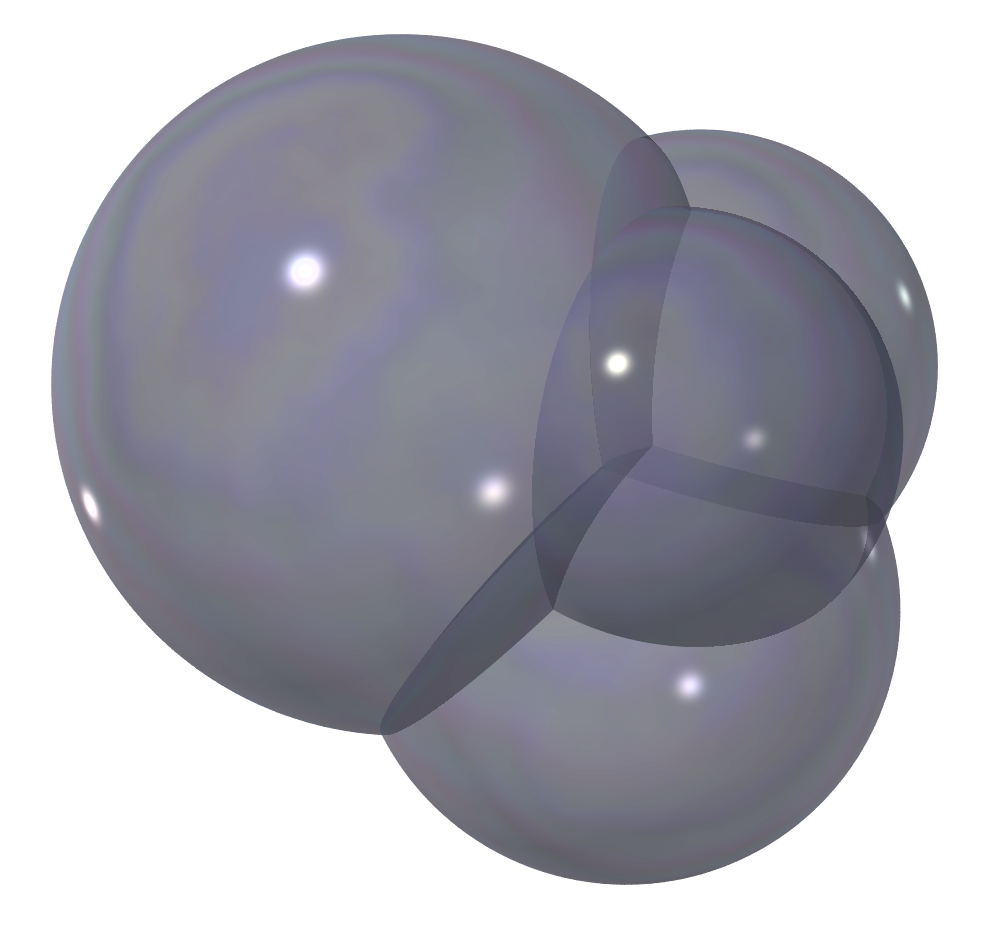}
        \includegraphics[scale=0.09]{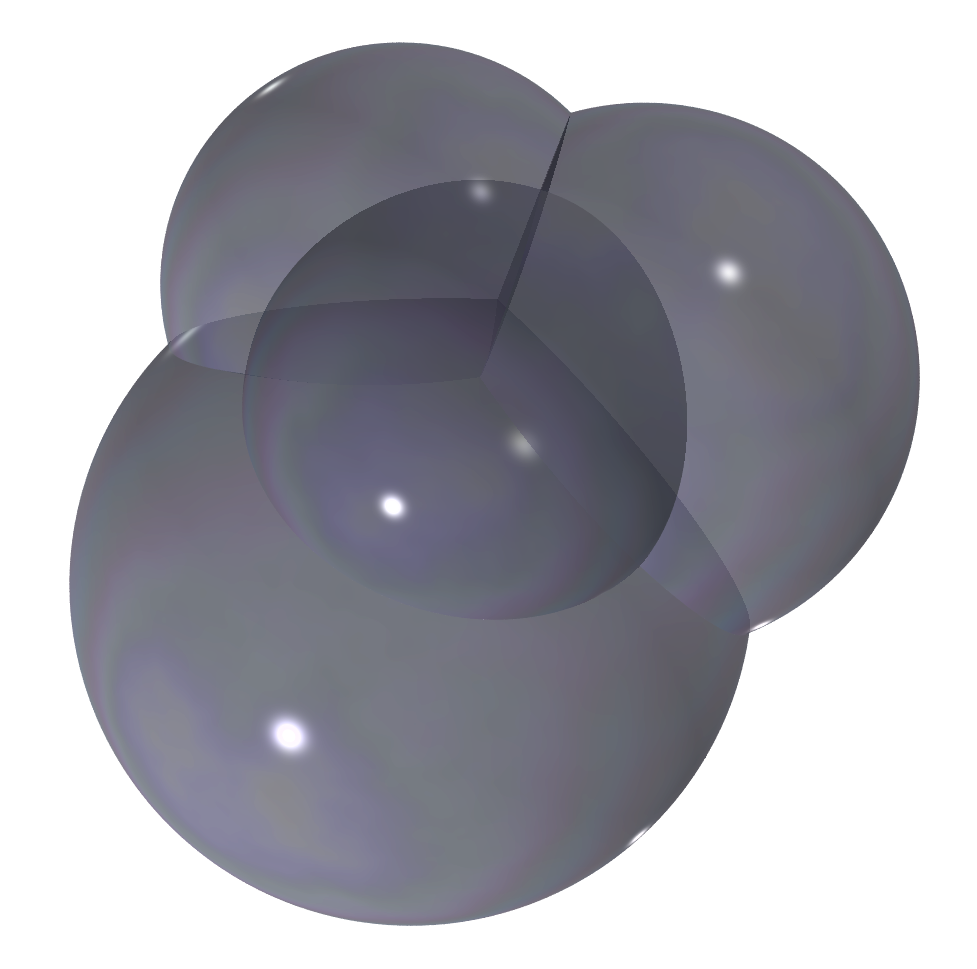}
     \end{center}
     \caption{
         \label{fig:quad-bubble-R3}
        A standard quadruple-bubble in $\R^3$ (also, the cross-section of a standard quadruple-bubble in $\R^4$ through its hyperplane of symmetry) from different angles. 
     }
\end{figure}

Note that the above construction ensures that a standard $(q-1)$-bubble $\Omega$ on $\R^n$ has a single unbounded cell $\Omega_q$, yielding a legal cluster whose boundary 
can be thought of as a soap film enclosing $q-1$ bounded (connected) air bubbles; we will henceforth omit the $q-1$ index when referring to standard bubbles. Also note that the equal-volume standard-bubble on $\S^n$ has flat interfaces lying on great spheres, and that whenever three of these interfaces meet, they do so at $120^\circ$-degree angles. As stereographic projection is conformal and preserves (generalized) spheres (of possibly vanishing curvature, i.e.~totally geodesic hypersurfaces), it follows that all standard bubbles on $\R^n$ and $\S^n$ have (generalized) spherical interfaces which meet in threes at $120^\circ$-degree angles (see Figures \ref{fig:triple-bubble}, \ref{fig:triple-bubble-S2} and \ref{fig:quad-bubble-R3}). The latter angles of incidence are a well-known necessary condition for any isoperimetric minimizing cluster candidate (see Lemma \ref{lem:boundary-normal-sum}).

\begin{conjecture*}[Multi-Bubble Isoperimetric Conjecture on $\R^n$]
For all $2 \leq q \leq n+2$, a standard bubble uniquely minimizes total perimeter among all $q$-clusters $\Omega$ on $\R^n$ of prescribed volume $V(\Omega) = v \in \interior \Delta^{(q-1)}_{\infty}$. 
\end{conjecture*}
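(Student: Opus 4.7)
The plan is to establish the conjecture by an induction on $q$, combining geometric-measure-theoretic regularity with the Möbius-geometric framework alluded to in the abstract. First, I would invoke standard existence/compactness results to produce a minimising $q$-cluster $\Omega$, and appeal to Almgren's regularity theory to conclude that the reduced boundary decomposes into smooth constant-mean-curvature interfaces $\Sigma_{ij}$ meeting in threes along codimension-$2$ strata at $120^\circ$ angles, with the remaining singular set of Hausdorff dimension at most $n-3$. These necessary conditions (Lemma \ref{lem:boundary-normal-sum} and its consequences) reduce the problem to classifying equilibrium clusters satisfying the second-variation stability inequality.

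Second, I would stereographically project to $\S^n$ and work simultaneously on $\R^n$ and $\S^n$, exploiting the $\binom{n+2}{2}$-dimensional Lie algebra of conformal Killing fields on $\S^n$. After imposing the $q-1$ volume-preserving linear constraints, a large space of admissible conformal deformations remains. Substituting these into the stability inequality, and exploiting that conformal Killing fields act on mean curvature in a controlled way, one should deduce that the Jacobi form is non-negative on a space so large that each interface must be totally umbilical, hence a piece of a generalised sphere. The resulting rigidity recovers the claim that a minimiser, after stereographic projection, is realised as the intersection of $\S^n$ with a convex polyhedral partition of $\R^{n+1}$, i.e.\ a Voronoi diagram of $q$ affine functions.

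Third, with this structural description in hand, the remaining task is to show that such an equilibrium Voronoi cluster is the standard bubble. This splits into two sub-steps: (a) the cells are connected (including the unbounded one in $\R^n$, which after stereographic projection becomes the cell containing the North pole) and meet a common hyperplane of symmetry, resolving a conjecture of Heppes; (b) all $\binom{q}{2}$ interfaces $\Sigma_{ij}$ are non-empty. Given (a) and (b), one can argue that a Voronoi partition by $q \le n+2$ points of $\S^n$ with all pairwise adjacencies and the correct $120^\circ$-incidences is uniquely (up to Möbius symmetry and the prescribed volume vector) the standard bubble, by a dimension count matching the number of moduli with the number of volume constraints.

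The main obstacle I anticipate is sub-step (b): showing that \emph{every} pair of cells must share a non-empty interface, particularly in the maximal case $q=n+2$. For $q \le n+1$ one can use the existence of a hyperplane of symmetry together with a direct deformation argument to force all pairs to meet; but for $q=n+2$ the combinatorics become maximally constrained (the Voronoi generators must form an affinely independent $(n+1)$-simplex in $\R^{n+1}$), and if any adjacency were missing, one would need to rule out a lower-dimensional degenerate configuration. I would attempt this by constructing an explicit conformal/Möbius variation supported near a putative ``missing'' interface which simultaneously preserves all volumes and strictly decreases perimeter, contradicting minimality. Carrying this out uniformly in $q$ and $n$ — and combining it cleanly with the connectedness argument for (a) — is the essential difficulty that presently restricts the paper's unconditional results to $q \le \min(5,n+1)$, and where new ideas would be required to reach the full conjectured range $q \le n+2$.
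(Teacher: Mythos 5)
Your proposal outlines the paper's framework accurately at the level of architecture (regularity, stereographic projection to $\S^n$, conformal Killing fields combined with stability, spherical Voronoi structure, and a final combinatorial step), and you are right that the decisive remaining gap is ruling out missing interfaces. But the statement in question is a \emph{conjecture} in the paper, not a theorem; neither the paper nor your sketch proves it in the claimed range $2\leq q\leq n+2$. Given that, it is worth being precise about where your plan, as written, is materially looser than what the paper actually achieves and why.

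First, in your step two you suggest that inserting ``a large space of admissible conformal deformations'' into the stability inequality and counting dimensions will force umbilicity for any $q\leq n+2$. This is not how the paper's sphericity argument runs, and there is a concrete reason it cannot be that simple: $Q(W_\theta)$ is \emph{not} known to have a sign on a general cluster, so one cannot simply drop a family of conformal Killing fields into stability. The paper instead first produces an $\S^0$-symmetric minimizer via Borsuk--Ulam — which requires $q\leq n+1$, since one needs to bisect $q-1$ finite-volume cells by a single hyperplane in $\R^n$ — and then tests the odd scalar field $\Psi=\langle N,p\rangle$ together with the single conformal field $W_N$; only the combination yields a sign-definite expression that kills $\II_0$ and $\langle N,\c_{ij}\rangle$ simultaneously. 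So the constraint $q\leq n+1$ already enters at the sphericity stage, not merely at the all-interfaces-present stage as your plan implies, and it is not a moduli-count restriction but a symmetry-existence restriction.

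Second, your claim that ``for $q\leq n+1$ one can use the existence of a hyperplane of symmetry together with a direct deformation argument to force all pairs to meet'' overstates what is available. The paper's deformation (sliding a loose bubble along $\Sigma_{ij}$ until it collides) needs to recognize that the collision point is an illegal singularity, which in turn relies on Taylor's classification of minimizing cones; that classification is only known through dimension three, which is why the sliding argument is controlled only for $q\leq 5$, not for all $q\leq n+1$. This is exactly the obstruction that caps Theorem~\ref{thm:intro-234} at quadruple bubbles.

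Third, your final uniqueness step (``a dimension count matching moduli with volume constraints'') is vaguer than what the paper does. The clean characterization is Proposition~\ref{prop:standard-char}: a spherical Voronoi cluster with all interfaces present has Minkowski Gram matrix $\tfrac12\Id_{E^{(q-1)}}$, and the orthochronous Lorentz group acts transitively on homogeneous parameter tuples with a fixed full-rank Gram matrix, so all such clusters are Möbius-conjugate to the equal-volume standard bubble. That argument settles uniqueness directly without a dimension count, and it also makes precise why $q\leq n+2$ is the natural ceiling (it is the condition under which $\frac12\Id_{E^{(q-1)}}+\k\k^T$ factors as $\CC\CC^T$ with $\CC$ a $q\times(n+1)$ matrix). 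Your own proposal would benefit from being rephrased in these Lorentzian terms rather than via a heuristic moduli count.

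In short: the skeleton is the right one and you correctly locate the hardest gap, but you misattribute the $q\leq n+1$ restriction to the adjacency step alone when it already constrains the sphericity step, and you overclaim what the hyperplane-plus-sliding argument delivers, which is $q\leq 5$, not $q\leq n+1$. For the genuine conjecture ($q=n+2$, or even $6\leq q\leq n+1$), both the loss of the bisecting hyperplane and the absence of a cone classification in dimension four and higher remain open problems that your sketch does not resolve.
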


\begin{conjecture*}[Multi-Bubble Isoperimetric Conjecture on $\S^n$]
For all $2 \leq q \leq n+2$, a standard bubble uniquely minimizes total perimeter among all $q$-clusters $\Omega$ on $\S^n$ of prescribed volume $V(\Omega) = v \in \interior \Delta^{(q-1)}_{1}$. 
\end{conjecture*}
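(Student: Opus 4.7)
The plan is to combine standard geometric-measure-theoretic regularity with a systematic exploitation of the full conformal group of $\S^n$. First I would invoke the Almgren-type existence and regularity theory for minimizing $q$-clusters: the reduced interfaces $\Sigma_{ij}$ are smooth $C^\infty$ hypersurfaces away from a closed singular set of codimension at least one, each $\Sigma_{ij}$ has constant mean curvature equal to a difference $\lambda_i - \lambda_j$ of Lagrange multipliers for the volume constraints, and along the codimension-one part of the singular set exactly three interfaces meet at equal $120^\circ$ angles. None of this is new, but it fixes the framework in which the genuinely novel input operates.

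The crux, as indicated by the abstract, is to prove that each interface lies on a generalized sphere, and then to identify the cluster with the Voronoi diagram of $q$ affine functions on $\R^{n+1}$ restricted to $\S^n$. I would test the minimizer against the $\binom{n+2}{2}$-dimensional space of conformal Killing fields on $\S^n$, which generate the M\"obius group and whose pullbacks under stereographic projection give the conformal group of $\R^n$. Such a field $X$ satisfies $\mathcal{L}_X g = 2\varphi g$ for some function $\varphi$; unlike ordinary Killing fields it does not preserve volume, so one must balance the induced perimeter variation against the induced volume variations on each cell using the Lagrange multipliers $\lambda_i$. The stationarity inherited from perimeter-minimality, applied to this rich family of fields, should produce enough overdetermined identities on each $\Sigma_{ij}$ to force its second fundamental form to be umbilical, and hence $\Sigma_{ij}$ to be a piece of a round (possibly great) sphere. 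The $\S^n$--$\R^n$ tandem enters here essentially: after stereographic projection some of these spherical interfaces become flat, and the Voronoi characterization by affine functions is the clean formulation.

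With spherical interfaces and $120^\circ$ junctions in hand, I would next establish the Heppes-type symmetry and connectedness. An Alexandrov moving-plane argument on $\S^n$, or alternatively an averaging over the stabilizer inside $\text{M\"ob}(\S^n)$ of the combinatorial type of the interface network, should produce a common hyperplane through which the entire cluster is reflection-symmetric; spherical cells meeting this hyperplane on both sides are then forced to be connected. This reduces matters to classifying configurations of $q$ affine functions on $\R^{n+1}$ whose Voronoi diagram on $\S^n$ has the specified combinatorial type, $120^\circ$ triple junctions, prescribed volumes, and a common hyperplane of symmetry, whereupon the standard bubble should be the unique solution, with uniqueness confirmed by a concluding comparison of total perimeters.

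The main obstacle I anticipate is the passage from \emph{critical under every conformal Killing field} to \emph{umbilical interface}. The first variation under a conformal $X$ is an integrated identity over each $\Sigma_{ij}$, and extracting pointwise geometric information requires a sufficiently rich family of test fields together with a Hutchings-type structural constraint on the combinatorics of the interface graph; the dimension count $\binom{n+2}{2}$ is marginal and degrades as $q$ grows, which is presumably why the theorem in the paper is restricted to $q \leq n+1$ (and further to $q \leq 5$ by the combinatorial classification step). A secondary difficulty, circumvented in the Gaussian multi-bubble setting by a PDI for the isoperimetric profile, is that no analogous scalar PDI appears available on $\S^n$ or $\R^n$, so the entire argument must be routed through M\"obius geometry rather than through a single differential inequality.
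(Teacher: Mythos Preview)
First, note that the statement you are addressing is labeled a \emph{Conjecture} in the paper and is not proved there in full generality; the paper establishes it only for $q \leq \min(5, n+1)$. Your proposal seems aware of this restriction, so I evaluate it as a sketch of the strategy for that range.

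The central gap is your reliance on \emph{first} variation (stationarity) under conformal Killing fields to force umbilicality of the interfaces. Stationarity says $\delta^1_X A = \langle \lambda, \delta^1_X V\rangle$ for \emph{every} compactly supported field $X$; specializing to the finite-dimensional family of conformal Killing fields therefore yields only finitely many global integrated identities and no pointwise information on $\II$ beyond constant mean curvature and the $120^\circ$ condition, both of which are already part of the standard regularity package. The paper's route is fundamentally \emph{second}-order: it tests \emph{stability} (the inequality $Q(X) \geq 0$ whenever $\delta^1_X V = 0$) against carefully chosen fields whose index-form contains $\|\II\|^2$ integrated over $\Sigma^1$, and then \emph{combines} the stability information from two distinct families---a ``skew'' scalar field $a_{ij}\langle N,p\rangle$ and the M\"obius field $W_N$---to produce a single signed inequality forcing $\II_0 \equiv 0$. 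There is no overdetermined first-order system doing this work.

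Relatedly, the role of symmetry is inverted in your plan. The paper does not derive a hyperplane of symmetry \emph{a posteriori} via moving planes; it takes $\S^0$-symmetry as the \emph{starting hypothesis}, available for some minimizer when $q \leq n+1$ by Borsuk--Ulam bisection. This symmetry is precisely what permits the use of \emph{odd} test functions with automatically vanishing first variation of volume, which is what makes the second-variation argument run. Connectedness of the cells is then likewise established by a further stability test using \emph{truncated} skew fields, coupled with a strong maximum principle on the component-adjacency graph---not by any Alexandrov-type reflection. Your plan correctly identifies the M\"obius group and the Voronoi-by-affine-functions endpoint as central, but the mechanism linking the two is stability, not stationarity.
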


To avoid constantly writing ``up to null sets" above and throughout this work, we will always modify an isoperimetric minimizing cluster on a null set so that its cells $\Omega_i$ are open and $\overline{\partial^* \Omega_i} = \partial \Omega_i$ (this is always possible by Theorem \ref{thm:Almgren}). Note that the single-bubble cases $q=2$ above correspond to the classical isoperimetric problems on $\R^n$ and $\S^n$, where geodesic balls are well-known to be the unique isoperimetric minimizers \cite{BuragoZalgallerBook}; this case was included in the formulation of the conjectures for completeness. That a standard bubble in $\R^n$ exists and is unique (up to isometries) for all volumes $v \in \interior \Delta^{(q-1)}_\infty$ and $2 \leq q \leq n+2$ was proved by Montesinos-Amilibia \cite{MontesinosStandardBubbleE!}. An analogous statement equally holds on $\S^n$ for all $v \in \interior \Delta^{(q-1)}_1$ (see Corollary \ref{cor:standard-volume}). See Figures~\ref{fig:S-and-B}, \ref{fig:triple-bubble-perturbation}, \ref{fig:almost}, \ref{fig:min-degree} and \ref{fig:quad-sliding} for a depiction of some non-standard bubbles.

\subsection{Previously known and related results}

Before going over the previously known results regarding the multi-bubble conjectures in $\R^n$ and $\S^n$, let us add to our discussion Gaussian space $\GG^n$, consisting of Euclidean space $(\R^n,|\cdot|)$ endowed with the standard Gaussian probability measure $\gamma^n = (2 \pi)^{-\frac{n}{2}} \exp(-|x|^2/2) dx$. The following conjecture was confirmed in our previous work \cite{EMilmanNeeman-GaussianMultiBubble}:

\begin{theorem*}[Multi-Bubble Isoperimetric Conjecture on $\GG^n$]
For all $2 \leq q \leq n+1$, the unique Gaussian-weighted isoperimetric minimizers on $\GG^n$ of prescribed Gaussian measure $v \in \interior \simplex^{(q-1)}_1$ are simplicial clusters, obtained as the Voronoi cells of $q$ equidistant points in $\R^n$ (appropriately translated). 
\end{theorem*}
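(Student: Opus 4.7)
The plan is to combine standard geometric measure theory with a sharp partial differential inequality (PDI) for the isoperimetric profile on $\GG^n$, mirroring at a high level Bobkov's approach to the single-bubble Gaussian isoperimetric inequality. First, existence of minimizers would follow from lower semi-continuity of $P_{\gamma^n}$ together with $BV_{loc}$ compactness, using that $\gamma^n$ is a probability measure. Almgren-type regularity then guarantees that, after modification on a null set, the cells are open, the interfaces $\Sigma_{ij}$ are smooth hypersurfaces meeting in triples at $120^\circ$, and the singular set has Hausdorff codimension at least $2$. The first-variation identity produces Lagrange multipliers $\lambda_1,\ldots,\lambda_q$ with $\sum_i \lambda_i = 0$, and each interface has constant Gaussian-weighted mean curvature equal to $\lambda_i - \lambda_j$.

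Next, I would introduce the isoperimetric profile
\begin{equation*}
I(v) := \inf\{ P_{\gamma^n}(\Omega) \; : \; \Omega \text{ is a } q\text{-cluster with } \gamma^n(\Omega) = v \}
\end{equation*}
on $\Delta^{(q-1)}_1$, together with the analogous profile $J(v)$ of the simplicial model, which admits an explicit expression in terms of $\Phi^{-1}$ and the Gaussian density $\varphi$. The target is to prove $I \equiv J$; since $I \le J$ is immediate by testing with a simplicial competitor, the work lies in establishing $I \ge J$, and for this I would show that $I$ is a viscosity supersolution of the same sharp second-order PDI that $J$ satisfies with equality. A comparison principle, combined with the boundary behavior of $I$ as $v \to \partial \Delta^{(q-1)}_1$ (where the problem degenerates to a lower-dimensional multi-bubble problem with fewer cells, handled by induction on $q$), would then force $I = J$.

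The core of the argument, and the main obstacle, is establishing the PDI for $I$. My plan is to test second-variation stability of a smooth minimizer $\Omega$ against a carefully chosen family of normal perturbations built from affine functions on $\R^n$: for each $a \in \R^n$ one forms a volume-preserving boundary deformation whose normal component on each $\Sigma_{ij}$ is linked to $\sscalar{a,\cdot}$, and the non-negativity of the second variation -- after trace-summing over an orthonormal basis of $\R^n$ and integrating by parts against the Gaussian weight, using the $120^\circ$ balance at triple junctions to absorb the boundary terms -- should yield precisely the claimed matrix inequality bounding $D^2 I(v)$. The constraint $q \le n+1$ enters because one needs $q-1$ independent affine test directions in $\R^n$ to probe the full tangent space $T_v \Delta^{(q-1)}_1$; beyond this threshold the method fails, which is consistent with simplicial clusters not even existing for $q > n+1$. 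The hardest technical point is controlling the contribution of the singular set in this second-variation computation and making the PDI rigorous at volumes where $I$ is only Lipschitz, which is why the viscosity formulation is unavoidable. Once the PDI and equality case are in hand, saturation forces each $\Sigma_{ij}$ to carry only translational Jacobi fields; combined with constant weighted mean curvature and the $120^\circ$ incidence condition, this identifies $\Sigma_{ij}$ as the perpendicular bisector of $x_i - x_j$ for some $q$ equidistant points $x_1,\ldots,x_q$, giving the uniqueness statement.
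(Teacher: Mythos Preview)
The present paper does not prove this theorem; it is quoted from the authors' earlier work \cite{EMilmanNeeman-GaussianMultiBubble} and only its proof strategy is summarized (see the bullet points under ``our prior resolution of the multi-bubble conjecture on $\GG^n$''). Your proposal matches that summarized strategy at the structural level: existence and regularity, stability testing against translation-type fields, a sharp matrix-valued PDI for the isoperimetric profile, a comparison/maximum principle, induction on $q$ via boundary behavior, and identification of the equality case. This is indeed the approach of \cite{EMilmanNeeman-GaussianMultiBubble}.

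Two points of divergence are worth flagging. First, the key stability input in the Gaussian setting is not ``affine functions $\sscalar{a,\cdot}$'' as scalar test fields, but rather the \emph{constant} (translation) vector-fields $T_a \equiv a$, for which one shows $Q(T_a) \le 0$; this immediately forces the interfaces to be flat and the cluster to have a product structure $\Omega = \Omega' \times \R^{n-(q-1)}$, which is then exploited via product test fields. Your sketch conflates the translation fields with their normal components and omits the product-structure step entirely, which in the actual argument is what reduces the problem to a finite-dimensional one and makes the MPDI tractable. Second, the technical handling of the singular set (your ``hardest technical point'') is addressed in \cite{EMilmanNeeman-GaussianMultiBubble} by constructing approximate outward fields and establishing curvature integrability away from $\Sigma^4$ --- machinery that the present paper recalls in Sections~\ref{sec:prelim}--\ref{sec:non-physical} --- rather than by a direct viscosity argument alone. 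So while your outline is on the right track and recognizably the same program, the actual execution leans more heavily on the flatness/product step than your sketch suggests.
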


\noindent When $q=2$, the cells of a simplicial cluster are precisely halfspaces, and the single-bubble conjecture on $\GG^n$ holds by the classical Gaussian isoperimetric inequality \cite{SudakovTsirelson,Borell-GaussianIsoperimetry} and its equality cases \cite{EhrhardGaussianIsopEqualityCases, CarlenKerceEqualityInGaussianIsop}; this case was included in the formulation above for completeness. Note that the interfaces of these Gaussian simplicial clusters are all flat, as opposed to the spherical interfaces of the standard bubbles on $\R^n$ and $\S^n$; this flattening when passing from $\S^N$ to $\GG^n$ is a well-known phenomenon, due to the need to rescale $\S^N$ so as to match the ``curvature" of $\GG^n$ (see below).

\medskip

Below we list some of the previously known results regarding the above three conjectures, and refer to the excellent book by F.~Morgan \cite[Chapters 13,14,18,19]{MorganBook5Ed} for additional information. 

\begin{itemize}
\item On $\R^n$ -- 
Long believed to be true, but appearing explicitly as a conjecture in an undergraduate thesis by J.~Foisy in 1991 \cite{Foisy-UGThesis}, 
the Euclidean double-bubble problem in $\R^n$ was considered in the 1990's by various authors. In the Euclidean plane $\R^2$, the conjecture was confirmed in \cite{SMALL93} (see also \cite{MorganWichiramala-StablePlanarDoubleBubble,LawlorEtAl-PlanarDoubleBubbleViaMetacalibration,CLM-StabilityInPlanarDoubleBubble}). In $\R^3$, the equal volumes case $v_1 = v_2$ was settled in \cite{HHS95,HassSchlafly-EqualDoubleBubbles}, and the structure of general double-bubbles was further studied in \cite{Hutchings-StructureOfDoubleBubbles}. This culminated in the work of Hutchings--Morgan--Ritor\'e--Ros \cite{DoubleBubbleInR3-Announcement,DoubleBubbleInR3}, who confirmed the double-bubble conjecture in $\R^3$; their method was subsequently extended by Reichardt-et-al to resolve the conjecture in $\R^4$ and $\R^n$ \cite{SMALL03,Reichardt-DoubleBubbleInRn} (see also Lawlor \cite{Lawlor-DoubleBubbleInRn} for an alternative proof using ``unification" which applies to arbitrary interface weights). The triple-bubble case $q=4$ in the Euclidean plane $\R^2$ was confirmed by Wichiramala in \cite{Wichiramala-TripleBubbleInR2} (see also \cite{Lawlor-TripleBubbleInR2AndS2}). 
\item On $\S^n$ --
The double-bubble and triple-bubble conjectures were resolved on $\S^2$ by Masters \cite{Masters-DoubleBubbleInS2} and Lawlor \cite{Lawlor-TripleBubbleInR2AndS2}, respectively, but on $\S^n$ for $n\geq 3$ only partial results are known \cite{CottonFreeman-DoubleBubbleInSandH, CorneliHoffmanEtAl-DoubleBubbleIn3D,CorneliCorwinEtAl-DoubleBubbleInSandG}.  In particular, Corneli--et-al \cite{CorneliCorwinEtAl-DoubleBubbleInSandG} confirmed the double-bubble conjecture on $\S^n$ for all $n \geq 3$ when the prescribed measure $v \in \simplex^{(2)}_1$ satisfies $\max_{i} \abs{v_i - 1/3} \leq 0.04$. Their proof employed a result of Cotton--Freeman \cite{CottonFreeman-DoubleBubbleInSandH}, stating that if the minimizing cluster's cells are known to be connected, then it must be the standard double-bubble. 
\item On $\GG^n$ -- 
In the Gaussian setting, the original proofs of the single-bubble case made use of the classical fact that the projection onto a fixed $n$-dimensional subspace of the uniform measure on a rescaled sphere $\sqrt{N} \S^N$, converges to the Gaussian measure $\gamma^n$ as $N \rightarrow \infty$. Building upon this idea, it was shown by Corneli--et-al \cite{CorneliCorwinEtAl-DoubleBubbleInSandG} that verification of the multi-bubble conjecture on $\S^N$ for a sequence of $N$'s tending to $\infty$ and a fixed $v \in \interior \simplex^{(q-1)}_1$ will verify the multi-bubble conjecture on $\GG^n$ for the same $v$ and for all $n \geq q-1$. As a consequence, they confirmed the double-bubble conjecture on $\GG^n$ for all $n \geq 2$ (without uniqueness, which is lost in the approximation procedure) when the prescribed measure $v \in \simplex^{(2)}_1$ satisfies $\max_{i} \abs{v_i - 1/3} \leq 0.04$. As already mentioned, the Gaussian double-bubble, and more generally, multi-bubble conjecture, was confirmed in its entirety for all $2 \leq q \leq n+1$ in \cite{EMilmanNeeman-GaussianMultiBubble}.
\end{itemize} 

One can say a bit more in the equal-volume cases:
\begin{itemize}
\item The equal-volume case $v_1 = \ldots = v_q = \frac{1}{q}$ of the multi-bubble conjecture on $\S^n$ for $2 \leq q \leq n+2$ follows immediately from the equal-volume case on $\GG^{n+1}$. Indeed, both measure and perimeter on $\S^n$ and $\GG^{n+1}$ coincide for \emph{centered} cones, and the unique equal-volumes minimizer on $\GG^{n+1}$ for all $2 \leq q \leq (n+1) + 1$ is the centered simplicial cluster (whose cells are centered cones). 
\item After informing each other of our respective results, we learned from Gary Lawlor (personal communication) that he has managed to confirm the equal-volume case $v_1 = v_2 = v_3$ of the triple-bubble conjecture on $\R^3$.
\item While this falls outside the scope of Sullivan's conjecture, we mention that Paolini, Tamagnini and Tortorelli \cite{PaoliniTamagnini-PlanarQuadraupleBubbleEqualAreas,PaoliniTortorelli-PlanarQuadrupleEqualAreas} have identified a unique (up to reordering cells, scaling and isometries) minimizing quadruple-bubble of equal volumes $v_1 = v_2 = v_3 = v_4$ in the plane $\R^2$.
\end{itemize}

To the best of our knowledge, with the exception of the above partial results in the equal-volume cases, no prior results were known for the triple-and-higher-bubble conjecture on $\R^n$ or $\S^n$ when $n \geq 3$, nor for the double-bubble conjecture on $\S^n$ when $n \geq 3$ with the exception of the almost-equal-volume cases mentioned above.

\subsection{Main results}

\begin{definition}[$\S^m$-symmetry]
A cluster $\Omega$ on $M^n$, $M^n \in \{ \R^n , \S^n \}$, is said to have $\S^m$-symmetry ($m \in \{ 0, \ldots,n-1\}$), if there exists a totally-geodesic $M^{n-1-m} \subset M^n$ so that each cell $\Omega_i$ is invariant under all isometries of $M^n$ which fix the points of $M^{n-1-m}$. \\
In particular, $\S^0$-symmetry means invariance under reflection about some hyperplane, referred to as the hyperplane of symmetry; the totally-geodesic hypersurface $M^{n-1}$ is referred to as the equator.
\end{definition}

The starting point of our analysis in this work is the well-known observation, based on the Borsuk--Ulam (``Ham Sandwich") theorem, that for any $q$-cluster in $\R^n$ or $\S^n$ with $q \leq n+1$, there exists a hyperplane bisecting all of its finite-volume cells. Consequently, symmetrizing a minimizing cluster about that hyperplane (reflecting the half with the smaller total surface-area), one can always find a minimizing $q$-cluster with $\S^0$-symmetry when $q \leq n+1$ \cite[Remark 2.7]{Hutchings-StructureOfDoubleBubbles} (but whether there may be additional minimizers violating $\S^0$-symmetry is a-priori unclear). In fact, extending upon this simple observation when $q \leq n$, an argument of B.~White written up by Foisy \cite{Foisy-UGThesis} and extended by Hutchings \cite{Hutchings-StructureOfDoubleBubbles} in $\R^n$ (which applies to all model spaces \cite[Proposition 2.4]{CottonFreeman-DoubleBubbleInSandH}), shows that \emph{any} minimizing $q$-cluster will \emph{necessarily} have $\S^{n+1-q}$-symmetry. We will not require the latter extension here, but rather deduce a much stronger statement in Theorem \ref{thm:intro-stable-spherical-Voronoi} below, which in fact applies to all $q \leq n+1$ (see Remark \ref{rem:intro-Sm-symmetry}).

\medskip

A cluster on $M^n \in \{\R^n,\S^n\}$ is called spherical if all connected components $\{ \Sigma^\ell_{ij} \}$ of its $(n-1)$-dimensional interfaces $\{\Sigma_{ij}\}$ are pieces of (generalized) geodesic spheres $S_{ij}^\ell$. It is called perpendicularly spherical if in addition all spheres $S^\ell_{ij}$ are perpendicular to a common hyperplane (note that the interfaces $\Sigma_{ij}$ themselves are not required to intersect this hyperplane). If the latter hyperplane is also a hyperplane of symmetry for the cluster, the cluster is called spherical perpendicularly to its hyperplane of symmetry (cf.~Definition \ref{def:spherical-S0}). Our first main theorem states the following:

\begin{theorem}[Perpendicular Sphericity] \label{thm:intro-stable-spherical}
Let $M^n \in \{ \R^n , \S^n \}$.  Then an isoperimetric minimizing $q$-cluster with $\S^0$-symmetry is necessarily spherical perpendicularly to its hyperplane of symmetry. In fact, this holds for any bounded stable regular $q$-cluster with $\S^0$-symmetry. In particular, whenever $q \leq n+1$, any minimizing $q$-cluster $\Omega$ is perpendicularly \textbf{spherical} (but, at this point, possibly without $\S^0$-symmetry -- this will only be established in Theorem \ref{thm:intro-stable-spherical-Voronoi}). 
\end{theorem}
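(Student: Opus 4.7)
The strategy is to combine the second-variation (stability) inequality with test variations generated by antisymmetric conformal Killing fields on $M^n$. Write $\sigma$ for the reflection about the hyperplane of symmetry $H$, and let $\mathcal{K}^- := \{X \in \mathrm{conf}(M^n) : \sigma_* X = -X\}$ denote the space of antisymmetric conformal Killing fields. This space has dimension $n+1$ in either model ($M^n = \R^n$ or $\S^n$); on $\R^n$ with $H = \{x_n = 0\}$ it is spanned by the normal translation $e_n$, the $n-1$ rotations $x_n e_i - x_i e_n$ (for $i < n$), and the normal special conformal generator $|x|^2 e_n - 2 x_n x$. Its one-parameter flows act transitively on the family of generalized spheres perpendicular to $H$, and a smooth hypersurface is contained in such a sphere precisely when a nontrivial subspace of $\mathcal{K}^-$ is tangent to it pointwise.

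The first step is admissibility: for $X \in \mathcal{K}^-$ with $\mathcal{L}_X g = 2\psi g$, one has $\psi \circ \sigma = -\psi$ and $\div X = n\psi$, so for each $\sigma$-invariant cell $\Omega_i$, the first variation of volume $\int_{\Omega_i}\div X\, d\mu = n\int_{\Omega_i}\psi\, d\mu$ vanishes, and similarly for the first variation of perimeter. Since conformal flows preserve angles, the $120^\circ$ incidence condition at the Y-singular triple junctions is infinitesimally preserved. Hence the flow of $X$ generates an admissible volume-preserving perturbation, and stability yields $\delta^2 \per(X) \geq 0$.

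The technical heart is Step 2: computing $\delta^2 \per(X)$ for a conformal Killing $X$ on a constant-mean-curvature cluster. Using the conformal Killing identity $\mathcal{L}_X g = 2\psi g$, the Simons/Codazzi equations and integration by parts on each interface, together with careful bookkeeping at the Y-singularities (where the boundary contributions cancel thanks to the $120^\circ$ condition and conformal invariance of angles), one should obtain an identity of the schematic form
\begin{equation*}
   \delta^2 \per(X) = - \sum_{i<j} \int_{\Sigma_{ij}} |\mathring{A}_{ij}|^2\, F_{ij}(X)\, d\H^{n-1},
\end{equation*}
where $\mathring{A}_{ij}$ is the traceless second fundamental form on $\Sigma_{ij}$ and $F_{ij}(X)$ is a non-negative weight built from $\psi$ and the cluster data. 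Stability forces the right-hand side to be $\geq 0$, which, by varying $X$ through a rich enough subset of $\mathcal{K}^-$, drives $\mathring{A}_{ij} \equiv 0$ on every interface. Thus every interface is totally umbilic in $(M^n, g)$, hence a piece of a generalized sphere. The $\S^0$-symmetry upgrades sphericity to \emph{perpendicular} sphericity: at any intersection with $H$, the tangent space of a $\sigma$-invariant smooth hypersurface contains the $H$-normal, forcing the underlying sphere to meet $H$ orthogonally, i.e.\ to have its center on $H$.

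Finally, the case $q \leq n+1$ without assumed $\S^0$-symmetry follows from a Borsuk--Ulam/Ham-Sandwich reduction. For any minimizer $\Omega$, pick a hyperplane $H$ simultaneously bisecting the $q-1 \leq n$ finite-volume cells, denote the halves $\Omega^\pm := \Omega \cap H^\pm$ and their symmetrizations $\tilde\Omega^\pm := \Omega^\pm \cup \sigma(\Omega^\pm)$. Since $\per(\tilde\Omega^\pm) = 2\per(\Omega^\pm)$ and $\per(\Omega) = \per(\Omega^+) + \per(\Omega^-)$, minimality forces $\per(\Omega^+) = \per(\Omega^-)$, so both $\tilde\Omega^\pm$ are $\S^0$-symmetric minimizers, and the first part of the theorem applies to them. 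Since $\Omega^\pm = \tilde\Omega^\pm \cap H^\pm$, the interfaces of $\Omega$ lie on the same perpendicular spheres as those of $\tilde\Omega^\pm$, giving perpendicular sphericity of $\Omega$. The \emph{main obstacle} is Step 2: establishing the precise Simons-type identity with the correct definite sign for $F_{ij}$, verifying the cancellation of the boundary contributions at the Y-singularities, and confirming that $\mathcal{K}^-$ contains enough independent test fields to force $\mathring{A}_{ij} \equiv 0$ pointwise on every interface; ruling out the potential pathology of $\sigma$-swapped component pairs lying on non-$\sigma$-invariant spheres must also be addressed, presumably by showing that such pairs would violate the stability inequality for some $X \in \mathcal{K}^-$.
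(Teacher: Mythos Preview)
Your strategy has the right flavor---testing stability with odd conformal Killing fields---but Step~2 contains a genuine gap that cannot be closed as you describe. The claimed identity
\[
Q(X) = -\sum_{i<j}\int_{\Sigma_{ij}} |\mathring{A}_{ij}|^2\, F_{ij}(X)\, d\H^{n-1}
\]
with $F_{ij}(X)\ge 0$ is false for the fields in your space $\mathcal{K}^-$. For the Killing members (the normal translation and the rotations $x_n e_i - x_i e_n$), one has $Q(X)=0$ identically on any stationary cluster, because their flows are isometries; these fields carry no information about $\mathring{A}$. For the genuinely conformal M\"obius field $W_N$ (your $|x|^2 e_n - 2x_n x$, up to normalization), the paper computes $L_{\text{Jac}} W_N^{\n} = (n-1)\langle N,\c\rangle$ with $\c=\n-\k p$, so
\[
Q(W_N) = -\int_{\Sigma^1} W_N^{\n}\, L_{\text{Jac}} W_N^{\n}\, dp = -(n-1)\int_{\Sigma^1} \langle N,\n\rangle\,\langle N,\c\rangle\, dp,
\]
which contains no $|\mathring{A}|^2$ term at all and has no evident sign. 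So neither class of fields in $\mathcal{K}^-$ produces your schematic formula.

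What the paper actually does is subtler and requires two ingredients you are missing. First, it introduces \emph{non-physical} scalar fields---the skew fields $f^a_{ij}=a_{ij}\langle N,p\rangle$---which are the normal components of a rotation only on a standard bubble, not on a general cluster; to test stability with these, one must develop a traced index-form $Q^0_{\tr}$ and an approximation of scalar fields by physical vector fields. Tracing over $a$ yields
\[
0 \le -\sum_{i<j}\int_{\Sigma_{ij}}\bigl(\langle N,p\rangle^2\|\II^{ij}_0\|^2 - (n-1)\k_{ij}\langle N,p\rangle\langle N,\c_{ij}\rangle\bigr)\,dp,
\]
which does exhibit the desired $|\mathring A|^2$ term, but with a sign-indefinite cross term. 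Second, the cross term is killed by \emph{combining} this inequality with the separate inequality $0\le Q(W_N)$ above (or, on $\R^n$, with the isotropicity identity $\int_{\Sigma^1}\n\otimes\c\,dp = 0$). Only after summing does one obtain
\[
0 \le -\sum_{i<j}\int_{\Sigma_{ij}}\bigl(\langle N,p\rangle^2\|\II^{ij}_0\|^2 + (n-1)\langle N,\c_{ij}\rangle^2\bigr)\,dp \le 0,
\]
which forces both $\II_0\equiv 0$ and $\langle N,\c_{ij}\rangle = 0$. The paper emphasizes that this combination step is the critical insight; the authors explicitly note that $Q(W_\theta)$ is \emph{not} expected to be sign-definite on a general cluster. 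Your final reduction for $q\le n+1$ via Borsuk--Ulam is correct and matches the paper.
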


We refer to Section \ref{sec:prelim} for the definitions of boundedness, regularity, stationarity and stability of a cluster. The proof of Theorem \ref{thm:intro-stable-spherical} is based on a second order variational argument, testing stability using a carefully chosen family of vector-fields, and \emph{combining} stability information from \emph{different} fields to obtain an expression with an appropriate sign. 

\medskip

With some additional work involving convex geometry and simplicial homology of the corresponding cell complex, we can obtain  stronger information. 
\begin{definition}[Spherical Voronoi Cluster] \label{def:intro-spherical-Voronoi}
A $q$-cluster $\Omega$ on $\S^n$ is called a spherical Voronoi cluster if there exist $\{ \c_i \}_{i=1,\ldots,q} \subset \R^{n+1}$ and $\{ \k_i \}_{i=1,\ldots,q} \subset \R$ so that for all $i=1,\ldots,q$:
\begin{enumerate}[(1)]
\item \label{it:intro-spherical-Voronoi}
For every non-empty interface $\Sigma_{ij} \neq \emptyset$, $\Sigma_{ij}$ lies on a single geodesic sphere $S_{ij}$ with quasi-center $\c_{ij} = \c_i - \c_j$ and curvature $\k_{ij} = \k_i - \k_j$. \\
The quasi-center $\c$ of a geodesic sphere $S$ is the vector $\c := \n - \k p$, where $\k$ is the curvature with respect to the unit normal $\n$ at $p \in S$. Note that $\c$  does not depend on $p \in S$ -- see Remark \ref{rem:quasi-center} and Figure \ref{fig:quasi-center}. 
\item \label{it:intro-spherical-Voronoi-2}
The following Voronoi representation holds:
\begin{equation} \label{eq:intro-Voronoi-rep}
\Omega_i  = \set{ p \in \S^n \; ; \; \argmin_{j=1,\ldots,q} \scalar{\c_j , p} + \k_j = \{i\} } = \bigcap_{j \neq i} \; \set{ p \in \S^n \; ;\; \scalar{\c_{ij},p} + \k_{ij} < 0 } .
\end{equation}
\end{enumerate}
A $q$-cluster $\Omega$ on $\R^n$ is called a spherical Voronoi cluster if there exists a stereographic projection onto $\S^n$ so that the projected cluster on $\S^n$ is spherical Voronoi. \\
As the parameters $\{\c_i\}$ and $\{\k_i\}$ are defined up to translation, we will always employ the convention that $\sum_{i=1}^q \c_i = 0$ and $\sum_{i=1}^q \k_i = 0$. 
\end{definition}
\begin{remark} \label{rem:intro-spherical-Voronoi}
While it may not be immediately apparent, all standard bubbles on $\R^n$ and $\S^n$ are spherical Voronoi clusters (Corollary \ref{cor:standard-Voronoi}); they are precisely characterized as those spherical Voronoi clusters for which the interfaces $\Sigma_{ij}$ are non-empty for all $1 \leq i < j \leq q$ -- see Proposition \ref{prop:standard-char}. Note that each cell of a spherical Voronoi cluster on $\S^n$ is the intersection of an open convex polyhedron in $\R^{n+1}$ with $\S^n$. Hence, while there isn't any apparent convexity in the multi-bubble conjectures on $\R^n$ or $\S^n$, convexity is nevertheless hidden in the structure of minimizers, as we shall verify in this work. 
\end{remark}
\begin{remark} \label{rem:intro-quasi-centers}
Note that a spherical Voronoi cluster on $\S^n$ is entirely determined by the vectors $\{ \c_i \}_{i=1,\ldots,q}$ and scalars $\{ \k_i \}_{i=1,\ldots,q}$, which we call its quasi-center and curvature parameters, respectively. Condition \ref{it:intro-spherical-Voronoi} is equivalent to the requirement that $|\c_{ij}|^2 = 1 + \k_{ij}^2$ for all $i < j$ such that $\Sigma_{ij} \neq \emptyset$ -- see Remark \ref{rem:quasi-center}. The property of being a spherical Voronoi cluster on $\R^n$ does not depend on the particular stereographic projection to $\S^n$ used, as follows from Lemma \ref{lem:Mobius-preserves-Voronoi}. See Lemma \ref{lem:Voronoi-rep-on-Rn} for a direct representation of the cells of a spherical Voronoi cluster on $\R^n$ as the intersection of Euclidean balls, their complements and halfplanes, avoiding the stereographic projection to $\S^n$. See Remark \ref{rem:ck-S-R-non-invertible} for the reason we insist on defining a spherical Voronoi cluster on $\R^n$ via stereographic projection to $\S^n$. 
\end{remark}

\begin{figure}
    \begin{center}
         \hspace*{37pt}
        \includegraphics[scale=0.108]{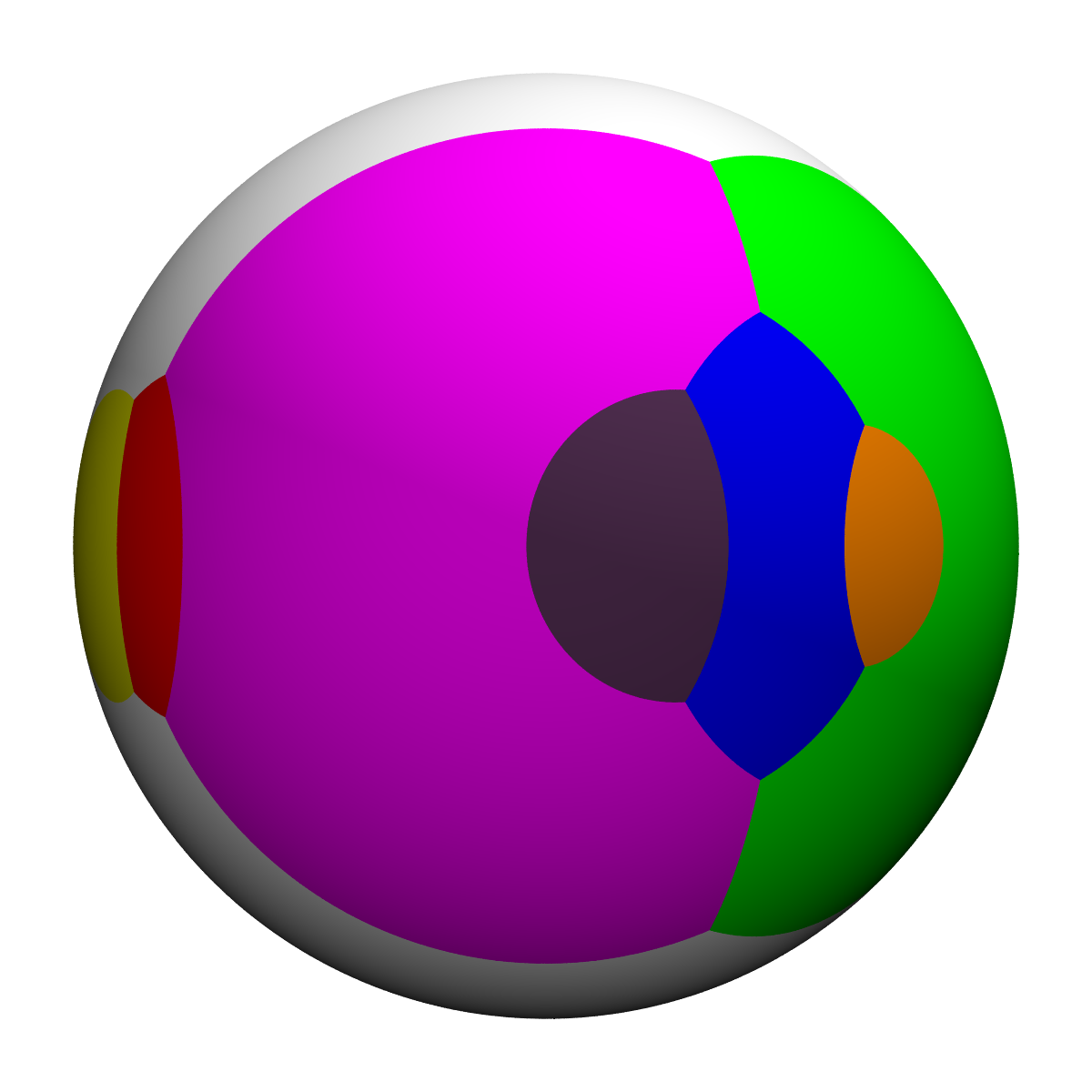}
        \hspace{30pt}
        \raisebox{0.01\height}{
        \begin{tikzpicture}[scale=1.76]
            \input{cluster-stereographic-color}
        \end{tikzpicture}
        }
        \newline
                \includegraphics[scale=0.108]{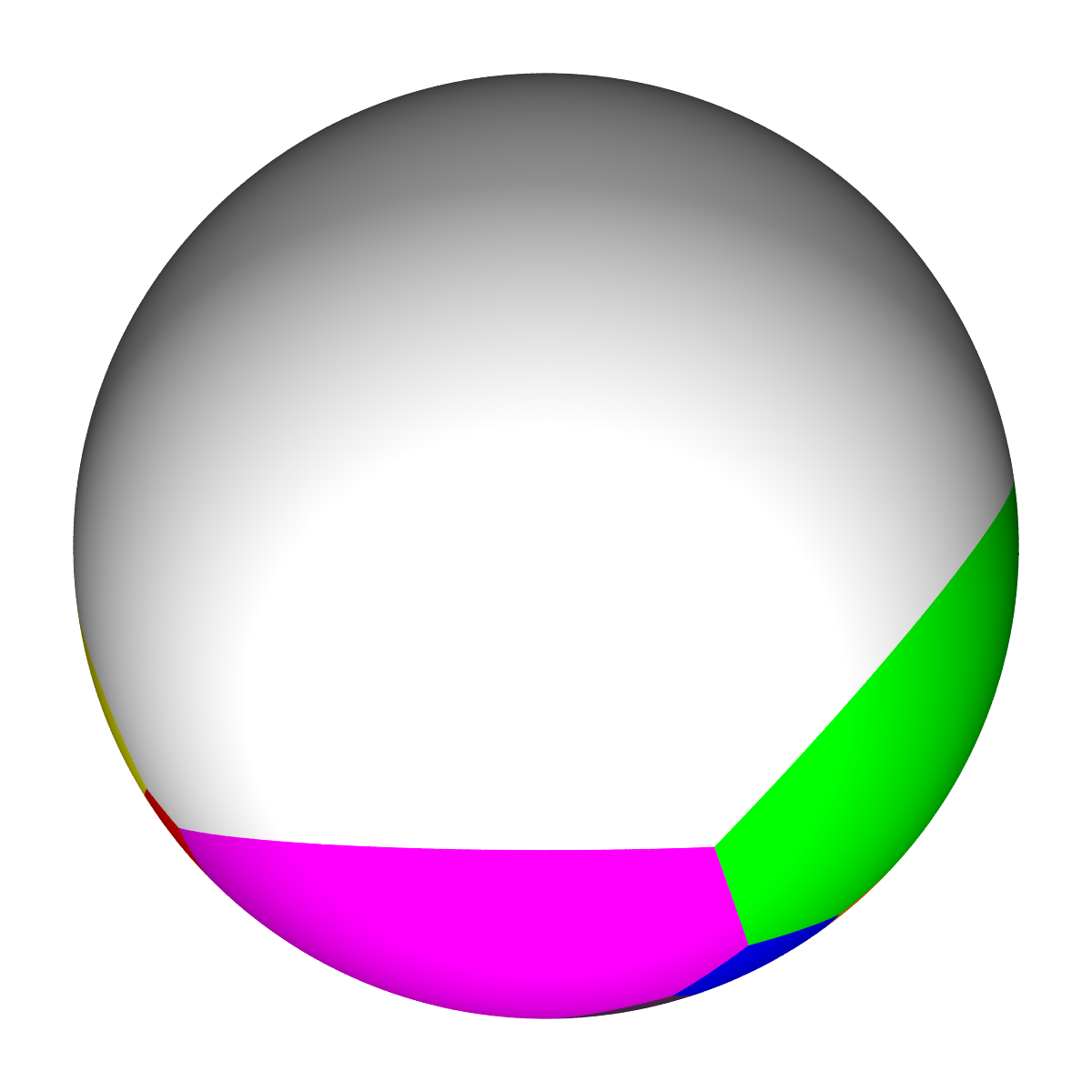}
        \hspace{60pt}
                \raisebox{0.08\height}{
        \begin{tikzpicture}[scale=1.97]
            \input{cluster-brep-color}
        \end{tikzpicture}
        }
     \end{center}
     \caption{
         \label{fig:S-and-B}
         A perpendicularly spherical Voronoi cluster $\Omega$. Top left: $\Omega^\S$ on $\S^2$ drawn with the North pole $N$ at the top (the equator of symmetry is the central parallel); Top right: $\Omega^\R$ on $\R^2$ after stereographic projection from a pole on the equator; Bottom left: $\Omega^\S$ drawn with North pole in the center (the equator coincides with the circumference); Bottom right: the orthogonal projection $\Omega^\B$ of $\Omega^\S$ onto the equatorial disc $\B^2$ consists of convex polyhedral cells (colors lightened for better contrast). 
     }
\end{figure}

\begin{definition}[Perpendicularly Spherical Voronoi Cluster] \label{def:intro-perp-spherical-Voronoi}
A $q$-cluster $\Omega$ with $\S^0$-symmetry on $\S^n$ is called perpendicularly spherical Voronoi if it is spherical Voronoi and all of its quasi-center parameters $\{ \c_i \}_{i=1,\ldots,q}$ lie on the hyperplane of symmetry. See Figure~\ref{fig:S-and-B}.\\
A $q$-cluster $\Omega$ on $\R^n$ with $\S^0$-symmetry is called perpendicularly spherical Voronoi if there exists a stereographic projection onto $\S^n$ so that the projected cluster on $\S^n$ is $\S^0$-symmetric and perpendicularly spherical Voronoi.
\end{definition}

Given an open set $A \subset M^n$ with $\S^0$-symmetry,  its connected components are either equatorial (i.e.~intersect the equator $M^{n-1}$) and hence $\S^0$-symmetric, or non-equatorial and hence come in pairs so that their union is $\S^0$-symmetric. We define $A$'s ``connected components modulo its $\S^0$-symmetry" as the collection of its connected components after declaring the above pairs as belonging to the same component (which we call ``connected component modulo its $\S^0$-symmetry", cf. Definition \ref{def:cc-S0}). 

\begin{theorem}[Connected Components modulo $\S^0$-symmetry are Perpendicularly Spherical Voronoi] \label{thm:intro-spherical-Voronoi-prelim} 
Let $M^n \in \{ \R^n , \S^n \}$, and let $\Omega$ be an isoperimetric minimizing $q$-cluster with $\S^0$-symmetry. Denote by $\underline{\Omega}$ the cluster whose cells are the connected components of $\Omega$'s (open) cells modulo their common $\S^0$-symmetry. Then $\underline{\Omega}$ is a perpendicularly spherical Voronoi cluster. In fact, this holds for any bounded stationary regular cluster with $\S^0$-symmetry $\Omega$ which is spherical perpendicularly to its hyperplane of symmetry, and whose (open) cells have finitely many connected components. 
\end{theorem}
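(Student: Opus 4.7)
The plan is to work on $\S^n$: if $M^n = \R^n$, stereographically project so that the hyperplane of symmetry becomes $E \cap \S^n$ for some linear hyperplane $E \subset \R^{n+1}$. By perpendicular sphericity, each connected interface component $\Sigma^\ell_{ab}$ between cells of $\underline{\Omega}$ lies on a geodesic sphere $S^\ell_{ab}$ whose quasi-center $\c^\ell_{ab}$ lies on $E$, with an associated curvature $\k^\ell_{ab}$; I would fix a consistent orientation convention so that $(\c^\ell_{ba}, \k^\ell_{ba}) = -(\c^\ell_{ab}, \k^\ell_{ab})$.

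The first step is a cocycle identity at triple-junctions. At each regular $Y$-point $p$ where three cells $a,b,c$ of $\underline{\Omega}$ meet, Lemma \ref{lem:boundary-normal-sum} yields $\n_{ab}(p) + \n_{bc}(p) + \n_{ca}(p) = 0$ for the outward unit normals. Since each normal on a geodesic sphere has the form $\n = \c + \k p$, letting $p$ vary along the positive-dimensional triple-junction stratum and comparing the $p$-independent and $p$-linear parts yields the decoupled identities
\[
\c^{\ell}_{ab} + \c^{\ell'}_{bc} + \c^{\ell''}_{ca} = 0, \qquad \k^{\ell}_{ab} + \k^{\ell'}_{bc} + \k^{\ell''}_{ca} = 0.
\]

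Next, I would assemble a finite combinatorial $2$-complex $K$ whose $0$-cells are cells of $\underline{\Omega}$, whose $1$-cells are connected interface components, and whose $2$-cells are indexed by triple-junction strata; finiteness follows from the hypothesis on connected components together with Almgren-type regularity. The preceding step shows that the $\R^{n+1}$-valued assignment $(\c^{\ell}_{ab}, \k^{\ell}_{ab})$ is a $1$-cocycle on $K$, and the heart of the proof is to exhibit it as a coboundary. I would do this by combining simplicial homology of $K$ with convex geometry: the interfaces, triple strata, and higher incidence data assemble into the skeleton of a polyhedral-like tessellation of $\S^n$ whose nerve has vanishing first cohomology. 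This yields vectors $\c_a \in E$ and scalars $\k_a$ with $\c^\ell_{ab} = \c_a - \c_b$, $\k^\ell_{ab} = \k_a - \k_b$ (the dependence on $\ell$ drops out, so all interfaces between a given pair of cells of $\underline{\Omega}$ lie on one common sphere), and I normalize so that $\sum_a \c_a = 0$, $\sum_a \k_a = 0$.

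Finally, I would verify the Voronoi representation: setting $U_a := \interior \set{p \in \S^n : \argmin_b (\scalar{\c_b,p} + \k_b) = a}$, each $U_a$ is the intersection with $\S^n$ of an open convex polyhedron in $\R^{n+1}$, the family $\{U_a\}$ partitions $\S^n$ up to a null set, and its interface walls lie on exactly the spheres $S_{ab}$. Since $\{\underline{\Omega}_a\}$ partitions $\S^n$ with the same interface spheres, a connectedness/monodromy argument—start at an interior point of $\underline{\Omega}_a$, track the identity of the ambient Voronoi cell, and note that it may change only upon crossing an interface, in the direction fixed by the sign conventions of step one—identifies $\underline{\Omega}_a = U_a$. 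The main obstacle I expect is the vanishing of $H^1(K;\R^{n+1})$: the local cocycle at triple points is easy, but closing arbitrary loops in $K$ requires genuinely global control, which is where convex geometry of the candidate spherical Voronoi polyhedra and the simplicial homology of the cluster's nerve jointly enter. The Voronoi identification at the end is also delicate because the cells $\underline{\Omega}_a$ are not a priori convex, but once the cocycle is resolved, it reduces to comparing two partitions of $\S^n$ with identical interface data.
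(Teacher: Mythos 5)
Your high-level architecture — cocycle at triple junctions, vanishing of first cohomology to resolve it, then Voronoi identification — matches the paper's skeleton, but you leave out the two steps that carry the actual mathematical content, and one of them is not something you can wave through.

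\textbf{Missing: the orthogonal projection to $\B^n$ and the convexity of the projected cells.} The paper's Section~\ref{sec:Voronoi-prelim} begins by orthogonally projecting $\Omega^\S \cap \S^n_+$ to the equatorial disc $\B^n$ and proving (Proposition~\ref{prop:convex}, via a local convexity observation near $\Sigma^1$ and $\Sigma^2$ plus a local-to-global convexity result) that the projected cells $\Omega^\B_i$ are \emph{convex} open polyhedra. This is the engine of the whole section: it is what forces each $\Sigma^\B_{ij}$ to be convex, flat, and in particular \emph{connected}, so that each interface lies on a single sphere without needing the cohomology argument to do double duty. It is also the input the paper needs to even state the polyhedral representation $\Omega^\B_i = \bigcap_{j}\{\langle \c^\S_{ij},p\rangle + \k^\S_{ij} < 0\}$. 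You never introduce this projection, so you have no convexity anywhere, and your later ``convex geometry'' is unanchored.

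\textbf{The cohomology vanishing is not proved, and your sketch would not prove it.} You say the complex is ``the skeleton of a polyhedral-like tessellation of $\S^n$ whose nerve has vanishing first cohomology.'' The paper explicitly warns that this is \emph{false} for a general finite tessellation of $\R^n$ by convex cells; one cannot invoke this as a generality. Proposition~\ref{prop:homology} is a genuine argument: take a cycle in the $1$-skeleton of $\SS$, realize it as a piecewise-linear loop $P$ threading through the convex cells of $\Omega^\B$ inside the convex ambient $\B^n$, contract $P$ linearly to a generic point $o$, and track how the combinatorial cycle changes as $P_t$ crosses $\Sigma^2$ (a $2$-simplex is appended) or a vertex of $P_t$ crosses $\Sigma^1$ (a pair $(i,j),(j,i)$ cancels). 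Avoiding $\Sigma^{\geq 3}$ throughout relies on $\H^{n-2}(\Sigma^3 \cup \Sigma^4)=0$, and the entire homotopy lives inside $\B^n$ and passes only through connected convex cells — which is precisely what the omitted projection/convexity step provides. Your plan names this as ``the main obstacle'' and then defers it; but it is the theorem.

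Two smaller points. Your derivation of the cocycle $\sum \c_{ij}=0$, $\sum \k_{ij}=0$ ``by comparing $p$-independent and $p$-linear parts'' needs $\Sigma_{ijk}$ to have positive dimension (so $n \geq 3$); for $n=2$ the triple-junction is a point and the decoupling fails. The paper avoids this by deriving $\sum \k_{ij}=0$ directly from stationarity (constancy of $H_{ij,\mu}$ and the existence of the Lagrange multiplier $\lambda$), then deducing $\sum \c_{ij}=0$ from $\sum \n_{ij}=0$; this is dimension-free and is what Lemma~\ref{lem:sum-c-k} and Corollary~\ref{cor:ciki} actually use. Finally, your monodromy argument for matching $\underline{\Omega}_a$ with the Voronoi cell $U_a$ is replaced in the paper by a cleaner observation: once one has the polyhedral representation $\Omega^\B_i = \bigcap_{j:\,\Sigma_{ij}\neq\emptyset}\{\cdots\}$, one shows the Voronoi cells $\tilde\Omega_i := \bigcap_{j\neq i}\{\cdots\}$ (over \emph{all} $j$) satisfy $\tilde\Omega_i \subset \Omega_i$ and both families are clusters, so equality follows from disjointness and full measure — no monodromy is needed.
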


Building upon Theorem \ref{thm:intro-spherical-Voronoi-prelim}, we can say more when $q \leq n+1$ by using stability again and elliptic regularity to establish the cells' \textbf{connectedness}. This resolves a conjecture of A.~Heppes \cite[Problem 5]{OpenProblemsInSoapBubbles96} and the open question of whether there can be empty chambers trapped by minimizing bubbles \cite[Chapter 13]{MorganBook5Ed}
in the latter range. 

\begin{theorem}[Connectedness of Spherical Voronoi Cells] \label{thm:intro-stable-spherical-Voronoi}
Let $M^n \in \{ \R^n , \S^n \}$, let $\Omega$ be an isoperimetric minimizing $q$-cluster, and assume that $q \leq n+1$. Then $\Omega$ is $\S^0$-symmetric, perpendicularly \textbf{spherical Voronoi}, and its open cells $\{\Omega_i\}_{i=1,\ldots,q}$ (including the unbounded cell $\Omega_q$ when $M^n = \R^n$) are all equatorial 
 (if non-empty) and \textbf{connected}. 
In fact, this holds for any bounded stable regular $q$-cluster with $\S^0$-symmetry so that $\Sigma = \cup_{i=1}^q \partial \Omega_i$ is connected and each $\Omega_i$ has (a-priori) finitely many connected components. 
\end{theorem}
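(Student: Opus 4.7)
The plan is to split the proof into two tasks: first, reduce the minimizer statement to the structural one; second, prove the structural statement.

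\textbf{Reduction.} For a minimizer $\Omega$ with $q \leq n+1$, the Borsuk--Ulam theorem furnishes a hyperplane $H$ simultaneously bisecting all finite-volume cells, and the usual ``reflect the cheaper half'' symmetrization across $H$ produces an $\S^0$-symmetric minimizer $\tilde\Omega$ with the same volumes and perimeter. Almgren's regularity theorem supplies boundedness (after stereographic compactification when $M^n = \R^n$), regularity, and finitely many connected components per cell. Connectedness of $\Sigma$ follows from a rigid-translation argument: were $\Sigma$ disconnected into two positive-perimeter pieces, rigidly translating one piece toward the other would strictly decrease total perimeter while preserving all volumes, contradicting minimality. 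To upgrade $\S^0$-symmetry from $\tilde\Omega$ to every minimizer, I would invoke the White--Foisy--Hutchings $\S^{n+1-q}$-symmetry argument for $q\leq n$ (which already implies $\S^0$-symmetry), and for the borderline case $q = n+1$ argue via boundary regularity: if $\Omega$'s interfaces did not meet $H$ perpendicularly, then $\tilde\Omega$ would develop a codimension-one corner along $H$, contradicting Almgren regularity of the minimizer $\tilde\Omega$.

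\textbf{Structural setup.} Assume now $\Omega$ is a bounded stable regular $\S^0$-symmetric $q$-cluster with $\Sigma$ connected and finitely many connected components per cell, and let $H$ denote its hyperplane of symmetry. By Theorem \ref{thm:intro-stable-spherical}, $\Omega$ is spherical perpendicularly to $H$; by Theorem \ref{thm:intro-spherical-Voronoi-prelim}, the quotient cluster $\underline\Omega$ (whose cells are the connected components of $\Omega$'s cells modulo the common $\S^0$-symmetry) is perpendicularly spherical Voronoi with quasi-center parameters $\{\c_{i^l}\} \subset H$ and curvature parameters $\{\k_{i^l}\}$. The task thus reduces to showing $\underline\Omega = \Omega$: each cell $\Omega_i$ contributes a single equatorial connected component.

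\textbf{Main obstacle: forcing $\underline\Omega = \Omega$.} Suppose some $\Omega_i$ contributed two distinct cells $\Omega_i^1, \Omega_i^2$ to $\underline\Omega$. Stationarity of $\Omega$ (as a $q$-cluster) under an infinitesimal transfer of volume between the two components already forces the pressures $\k_{i^1} = \k_{i^2}$ to agree; the remaining task is to force the quasi-centers $\c_{i^1} = \c_{i^2}$ to agree, for then the Voronoi representation \eqref{eq:intro-Voronoi-rep} would define $\Omega_i^1$ and $\Omega_i^2$ by identical strict inequalities, forcing them to coincide---a contradiction. My plan is to test the stability inequality against a vector field that infinitesimally shifts $\c_{i^1}$ and $\c_{i^2}$ against each other along $H$: realized as a pair of conformal Killing fields on $\S^n$ acting near each of the two components respectively, and coupled so as to preserve every $q$-volume of $\Omega$. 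Connectedness of $\Sigma$ translates into connectedness of the adjacency graph of $\underline\Omega$, permitting interface matching equations to chain between the two components, while elliptic regularity (analyticity of the interfaces together with unique continuation for the associated Jacobi operator) prevents second-variation degeneracies along the propagation chain. The stability inequality then pins $\c_{i^1} = \c_{i^2}$. The remaining degenerate case, in which $\Omega_i$ is a single non-equatorial reflected pair contributing one disconnected cell to $\underline\Omega$, is handled by the parallel argument using an antisymmetric variation across $H$, combined with the fact that all quasi-centers lying on $H$ make the defining halfspaces reflection-invariant. This orchestration of stability, elliptic propagation, and the global Voronoi graph topology is the main obstacle.
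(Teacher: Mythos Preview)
Your reduction sketch is broadly correct in spirit (Borsuk--Ulam bisection, reflect halves, each reflected half is a minimizer satisfying the structural hypotheses), though your ``corner regularity'' argument for forcing $\S^0$-symmetry of the \emph{original} minimizer when $q=n+1$ only yields perpendicularity of interfaces at $H$, not full reflection symmetry. The paper does not attempt to prove symmetry of $\Omega$ directly; instead it applies the structural theorem to each reflected half $\bar\Omega_\pm$ separately, deduces that each half of $\Omega$ is perpendicularly spherical Voronoi with quasi-centers $\{\c_i^\pm\}$, and then matches $\c_i^+ = \c_i^-$ via connectivity of the graph of cells adjacent \emph{along the equator} (since smoothness of $\Sigma_{ij}$ across $H$ forces $\c_{ij}^+ = \c_{ij}^-$ for equatorial interfaces, and this propagates).

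The genuine gap is in your structural argument. Your plan to ``shift $\c_{i^1}$ against $\c_{i^2}$'' via localized conformal Killing fields does not work as stated: M\"obius fields $W_\theta$ are global, and localizing them to act near one component destroys both the conformal boundary conditions and the clean formula $L_{\mathrm{Jac}} W_\theta^\n = (n-1)\langle\theta,\c\rangle$, leaving you with no control on the sign of $Q$. The paper's mechanism is entirely different and splits the problem in two. First, \emph{equatorial cells are connected}: one uses the truncated skew-fields $h^a_{ij} = a_{ij}|\langle p,N\rangle|$, which inherit $Q^0(h^a) = 0$ and conformal BCs from the skew-fields $a_{ij}\langle p,N\rangle$. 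If an equatorial cell $\Omega_1$ had two components, split it into a $(q+1)$-cluster $\tilde\Omega$ and choose $a \in E^{(q)}$ solving $L_{\tilde A}a = e_1 - e_{q+1}$ for the discrete Laplacian $L_{\tilde A}$ with weights $\tilde A^{ij} = \int_{\tilde\Sigma_{ij}}|\langle p,N\rangle|$; a strong maximum principle (requiring that removing vertex $1$ does not disconnect the adjacency graph---this is where connectedness of $\Sigma$ enters via $2$-connectivity of the component graph) forces $a_{1j} > 0$ for all $j$. The induced field on the original $q$-cluster then has $\delta^1 V = 0$ and $Q^0 = 0$, so elliptic regularity makes it smooth on $\Sigma^1$; but $a_{1j}|\langle p,N\rangle|$ is non-smooth where $\tilde\Sigma_{1j}$ crosses the equator, a contradiction. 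Second, \emph{all cells are equatorial when $q \le n+1$}: this is a pure convex-geometry argument in $N^\perp$ using the polyhedral Voronoi representation and a dimension count, with no stability invoked at all. You are missing both of these ingredients.
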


\begin{remark}
Heppes asked whether each cell of a minimizing cluster in $\R^n$ is necessarily connected. The question of whether the unbounded cell $\Omega_q$ must always be connected, or equivalently, of whether no empty chambers can be trapped by the bubbles, was also open. 
An interesting question of Almgren \cite[Problem 1]{OpenProblemsInSoapBubbles96} is whether there is a stable cluster $\Omega$ of bubbles in $\R^3$ with some bubble $\Omega_i$ being topologically a torus. Clearly, we may always reduce to the case that $\Sigma$ is connected by removing cells if necessary. For a single bubble, a classical result of Alexandrov \cite{Alexandrov-MethodOfMovingPlanes} asserts that an embedded closed (connected) hypersurface which is stationary (i.e.~of constant mean-curvature) must be a Euclidean sphere, and Barbosa--Do Carmo \cite{BarbosaDoCarmo-StabilityInRn} have shown that this also holds for stable immersed closed hypersurfaces. When $\Omega$ is a stable double bubble in $\R^3$ with $\S^1$-symmetry, it was shown by Hutchings--Morgan--Ritor\'e--Ros \cite{DoubleBubbleInR3} that $\Omega$ must be a standard bubble. Theorem \ref{thm:intro-stable-spherical-Voronoi} allows to relax the symmetry assumption to $\S^0$-symmetry. As for stable $q$-clusters in $\R^n$ with $\S^0$-symmetry and $4 \leq q \leq n+1$, Theorem \ref{thm:intro-stable-spherical-Voronoi} asserts that its cells are connected and obtained as stereographic projections of $P_i \cap \S^n$ for some convex $\S^0$-symmetric polyhedra $P_i$ with at most $q-1$ facets, which limits the topological complexity of the cells. See also Lemma \ref{lem:bubble-ring} for related information. 
\end{remark}

\begin{remark} \label{rem:intro-Sm-symmetry}
Note that the affine-rank of $\{ \c_i \}_{i=1,\ldots,q}$ from Definition \ref{def:intro-spherical-Voronoi} is at most $q-1$, and so when $q \leq n+1$, any spherical Voronoi $q$-cluster on $\S^n$ (and thus on $\R^n$ after stereographic projection) is $\S^{n+1-q}$-symmetric and perpendicularly spherical Voronoi. Consequently, Theorem \ref{thm:intro-stable-spherical-Voronoi} implies that a minimizing $q$-cluster is actually $\S^{n+1-q}$-symmetric for all $q \leq n+1$, recovering the White--Hutchings symmetry result \cite{Hutchings-StructureOfDoubleBubbles} when $q \leq n$ and extending it to $q=n+1$ (cf. \cite[Remark 2.7]{Hutchings-StructureOfDoubleBubbles}). 
\end{remark}

In view of Remark \ref{rem:intro-spherical-Voronoi}, we immediately deduce:
\begin{corollary}[Minimizers with Full Interfaces are Standard Bubbles] \label{cor:intro-bubble}
Let $M^n \in \{ \R^n , \S^n \}$, let $\Omega$ be an isoperimetric minimizing $q$-cluster, and assume that $q \leq n+1$. If the interfaces $\Sigma_{ij} = \partial^* \Omega_i \cap \partial^* \Omega_j$ are non-empty for all $1 \leq i < j \leq q$, then $\Omega$ is necessarily a standard bubble. 
\end{corollary}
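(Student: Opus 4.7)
The plan is to deduce this corollary directly from the preceding Theorem \ref{thm:intro-stable-spherical-Voronoi}, combined with the characterization of standard bubbles mentioned in Remark \ref{rem:intro-spherical-Voronoi}. There is essentially no new work to do here beyond invoking these two facts in the right order.

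First I would apply Theorem \ref{thm:intro-stable-spherical-Voronoi} to the minimizer $\Omega$, using the hypothesis $q \leq n+1$. This immediately yields that $\Omega$ is (after the customary modification on a null set) a perpendicularly \emph{spherical Voronoi} cluster in the sense of Definition \ref{def:intro-spherical-Voronoi}, with connected equatorial cells. At this point the cluster is described by quasi-center parameters $\{\c_i\}_{i=1}^q \subset \R^{n+1}$ and curvature parameters $\{\k_i\}_{i=1}^q \subset \R$, and every non-empty interface $\Sigma_{ij}$ lies on a single geodesic sphere $S_{ij}$ with quasi-center $\c_i - \c_j$ and curvature $\k_i - \k_j$, with the Voronoi representation \eqref{eq:intro-Voronoi-rep} for each cell.

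Next I would invoke the characterization of standard bubbles among spherical Voronoi clusters, stated in Remark \ref{rem:intro-spherical-Voronoi} and proved in Proposition \ref{prop:standard-char}: a spherical Voronoi cluster on $\R^n$ or $\S^n$ is a standard bubble if and only if all pairwise interfaces $\Sigma_{ij}$ are non-empty for $1 \leq i < j \leq q$. Since by assumption $\Sigma_{ij} \neq \emptyset$ for every $i < j$, this characterization applies verbatim to $\Omega$, and we conclude that $\Omega$ is a standard bubble, completing the proof.

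There is no real obstacle; the corollary is a packaging statement. The nontrivial content lies in Theorem \ref{thm:intro-stable-spherical-Voronoi} (producing the spherical Voronoi structure from minimality and $q \leq n+1$) and in Proposition \ref{prop:standard-char} (identifying standard bubbles as spherical Voronoi clusters with full interface graph), both of which have already been established or deferred to the body of the paper.
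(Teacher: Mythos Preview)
Your proposal is correct and matches the paper's approach exactly: the corollary is deduced immediately from Theorem \ref{thm:intro-stable-spherical-Voronoi} (giving the spherical Voronoi structure) together with the characterization in Remark \ref{rem:intro-spherical-Voronoi} / Proposition \ref{prop:standard-char} (spherical Voronoi clusters with all interfaces non-empty are standard bubbles).
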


\medskip

Since it is well-known that the boundary of an isoperimetric minimizing cluster on $M^n$ is necessarily connected (see Lemma \ref{lem:Sigma-connected}), Corollary \ref{cor:intro-bubble} already gives an immediate proof of the double-bubble conjecture in $\S^n$, as well as an alternative proof of the double-bubble theorem \cite{DoubleBubbleInR3,SMALL03,Reichardt-DoubleBubbleInRn} in $\R^n$, for all $n \geq 2$ (see Section \ref{sec:global} for details). 
However, to handle more general $q \geq 4$, additional work is needed building off Theorem \ref{thm:intro-stable-spherical-Voronoi} and Corollary \ref{cor:intro-bubble}. We are able to show the following:

\begin{theorem}[Double, Triple and Quadruple Bubble Conjectures on $\R^n$ and $\S^n$] \label{thm:intro-234}
The Multi-Bubble Conjectures on $\R^n$ and $\S^n$ hold for all $2 \leq q \leq \min(5,n+1)$. Namely, the double-bubble conjectures hold for all $n \geq 2$, the triple-bubble conjectures hold for all $n \geq 3$, and the quadruple-bubble conjectures hold for all $n \geq 4$.
\end{theorem}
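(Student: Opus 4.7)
The strategy is to combine Theorem \ref{thm:intro-stable-spherical-Voronoi} with Corollary \ref{cor:intro-bubble}: for $q \leq n+1$, a minimizing $q$-cluster is already perpendicularly spherical Voronoi with connected cells, so it suffices to verify that \emph{every} pairwise interface $\Sigma_{ij}$ is non-empty; Corollary \ref{cor:intro-bubble} then identifies the minimizer as a standard bubble, and the existence and uniqueness of a standard bubble of any prescribed volume (Corollary \ref{cor:standard-volume}) yields the conjectures.

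Introduce the adjacency graph $G = G(\Omega)$ on vertex set $\{1,\ldots,q\}$, with edge $\{i,j\}$ whenever $\Sigma_{ij} \neq \emptyset$. Two inputs constrain $G$: it is connected, because the full boundary $\Sigma = \bigcup_i \partial \Omega_i$ is connected for any minimizer (Lemma \ref{lem:Sigma-connected}), and each cell is itself connected (Theorem \ref{thm:intro-stable-spherical-Voronoi}). The case $q = 2$ is immediate. For $q = 3$, suppose some $\Sigma_{ij}$ were empty, say $\Sigma_{12}$; then, as any triple-junction point would be forced to lie in $\Sigma_{12}$, the sets $\partial \Omega_1$ and $\partial \Omega_2$ would be disjoint, closed, and together equal to $\Sigma$, contradicting connectedness. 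Hence $G = K_3$ and the double-bubble conjectures follow.

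For $q \in \{4, 5\}$ we enumerate the connected proper subgraphs of $K_q$ up to isomorphism and rule each out. In the simplest cases, where a pair of non-adjacent cells $\Omega_i, \Omega_j$ sits inside a common enclosing cell without meeting the rest of the cluster, the Alexandrov--Barbosa--Do Carmo rigidity for stable constant-mean-curvature hypersurfaces forces each to be a geodesic ball; replacing this pair by the standard double-bubble of the same two volumes, which has strictly smaller combined perimeter, contradicts minimality. The other cases are handled by exploiting the convex-polyhedral description in Remark \ref{rem:intro-spherical-Voronoi} -- each cell is the intersection of $\S^n$ with a convex polyhedron with at most $q-1$ facets -- together with the bubble-ring obstruction (Lemma \ref{lem:bubble-ring}), and by choosing variations of the spherical Voronoi parameters $(\c_i, \k_i)$ which preserve all $q-1$ volume constraints to first order while strictly decreasing perimeter to second order, contradicting stability.

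The main obstacle is the quadruple-bubble case $q = 5$, where the adjacency graph may be missing as few as a single edge while still supporting quadruple junctions and non-trivial cell topologies. After dimensional reduction via the $\S^{n-4}$-symmetry (Remark \ref{rem:intro-Sm-symmetry}), the argument reduces to a parametric analysis of the $5$-point Voronoi family on $\S^4$ subject to one prescribed missing interface; constructing a stability-violating perturbation of the quasi-centers $\{\c_i\}$ and curvature parameters $\{\k_i\}$ that simultaneously respects the four volume constraints and the missing-interface constraint is the technically delicate heart of this step.
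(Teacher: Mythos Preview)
Your overall strategy---reduce to showing the adjacency graph $G$ is complete, then invoke Corollary~\ref{cor:intro-bubble}---is exactly right, and your treatment of $q\le 3$ is essentially correct (though even there you implicitly need that $\overline{\Sigma_{13}}$ and $\overline{\Sigma_{23}}$ cannot meet without creating a $\Sigma_{12}$ interface; this is the content of Corollary~\ref{cor:quad-bubble-simplicial}).

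For $q\in\{4,5\}$, however, the proposal has a genuine gap. Your two ad hoc elimination tools do not cover the hard cases. The Barbosa--Do~Carmo/replacement argument only treats degree-$1$ vertices, which are already excluded by the $2$-connectedness of $G$ (Lemma~\ref{lem:2-connected}); what actually needs work are the $2$-connected non-complete graphs, and in particular the $q=5$ graph missing a single edge. For that graph you propose a ``stability-violating perturbation of the quasi-centers and curvature parameters,'' but there is no reason to expect such a cluster to be \emph{unstable}---the paper does not claim this, and stability is a local second-order condition which a one-missing-edge spherical Voronoi cluster may well satisfy. The obstruction is global, not infinitesimal.

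The paper's actual mechanism is completely different and is the missing idea: one \emph{slides} a cell by an isometry of $M^n$ along a lower-dimensional sphere $S_{12}$ (or $S_{123}$) on which it sits, preserving all volumes and all perimeters exactly, until its closure first touches the closure of a non-adjacent cell. At that moment Taylor's classification of area-minimizing cones in $\R^3$ (imported via the blow-up analysis of Lemma~\ref{lem:blow-up-cones} and Corollaries~\ref{cor:low-dimensional-meeting}--\ref{cor:quad-bubble-simplicial}) forces a genuine new interface to appear, strictly decreasing total perimeter and contradicting minimality. This, together with the simplicial-homology constraint that every edge lies in a triangle (Lemma~\ref{lem:edge-in-triangle}) and the bubble-ring obstruction (Lemma~\ref{lem:bubble-ring}), disposes of all non-complete graphs for $q\le 5$. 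No second-variation computation on the Voronoi parameters is used, and none would suffice.
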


\subsection{Further extensions}

With some additional (considerable) work, which we leave for another occasion, we can also obtain the following results, which we only state here without proof:
\begin{theorem}[Quintuple Bubble Conjecture on $\S^n$]
The quintuple bubble conjecture (case $q=6$) holds on $\S^n$ for all $n \geq 5$. 
\end{theorem}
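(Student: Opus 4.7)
The plan is to upgrade the framework already developed for $q\leq 5$ to the case $q=6$ on $\S^n$ with $n\geq 5$ (so that $q\leq n+1$). By Theorem~\ref{thm:intro-stable-spherical-Voronoi}, any minimizing $6$-cluster $\Omega$ on such $\S^n$ is $\S^0$-symmetric and perpendicularly spherical Voronoi, and each open cell $\Omega_i$ is equatorial and connected. By Corollary~\ref{cor:intro-bubble}, if the interface graph $G=([6],\{ij:\Sigma_{ij}\neq\emptyset\})$ equals $K_6$, we are done: $\Omega$ is the standard quintuple-bubble. The task therefore reduces to ruling out all proper subgraphs $G$ that could correspond to a minimizer.

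The second step is to list the admissible candidates for $G$. Since $\Sigma=\bigcup_i\partial\Omega_i$ is connected (Lemma~\ref{lem:Sigma-connected}), $G$ is connected. The $\S^0$-symmetry of $\Omega$ induces an involution on $[6]$ with each orbit corresponding either to an equatorial cell or to a mirror-pair of non-equatorial cells, and this involution must be a graph-automorphism of $G$. The Voronoi representation \eqref{eq:intro-Voronoi-rep} exhibits each $\Omega_i^{\S}$ as the intersection of $\S^n$ with a convex polyhedron whose facets correspond bijectively to the neighbors of $i$ in $G$, so the combinatorics of $G$ is constrained by the polyhedral face lattice; in particular the quasi-center vectors $\{\c_{ij}\}_{j\sim i}$ must be positively spanning, and the $120^\circ$-incidence law further restricts the local structure at each triple junction. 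Taking the quotient by the $\S^0$-involution cuts down substantially the number of cases to consider, leaving a manageable (though not short) list of candidate graphs.

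For each surviving $G$ one then combines three types of input to derive a contradiction: (a) the spherical Voronoi identities $|\c_{ij}|^2=1+\k_{ij}^2$ and the cocycle relations $\c_{ij}+\c_{jk}=\c_{ik}$, $\k_{ij}+\k_{jk}=\k_{ik}$, which propagate geometric constraints across $G$; (b) the fact that each cell is both connected and equatorial, which, combined with the convex polyhedral description, dictates incidence patterns of the supporting hyperplanes in $\R^{n+1}$; and (c) stability tested against the carefully chosen conformal and M\"obius-type vector-fields introduced in the proofs of Theorem~\ref{thm:intro-stable-spherical} and of Theorem~\ref{thm:intro-234} for $q\leq 5$, which one now adapts so as to exploit the specific combinatorics of $G$. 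In a number of cases the argument reduces, by slicing along the equator or by coning from a non-adjacent pair of cells, to a lower-$q$ multi-bubble problem on $\S^{n-1}$ already settled by Theorem~\ref{thm:intro-234}, allowing recursive elimination.

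The main obstacle is the sheer combinatorial explosion at $q=6$: unlike $q\leq 5$, where the short list of connected graphs on at most five vertices can be dispatched by a handful of uniform arguments (typically by identifying a vertex of low degree and using it to slide or delete a cell), the $6$-vertex case includes configurations such as hexagonal rings and prismatic structures that are not reducible to any smaller case by a single uniform perturbation, and each demands a tailored geometric argument. A secondary difficulty, and the reason the theorem is stated only for $\S^n$, is that the unbounded cell $\Omega_6$ in the Euclidean setting obstructs several of the above moves: the convex polyhedron associated with $\Omega_6$ need not be bounded, the recursion to $\S^{n-1}$ via stereographic projection is no longer symmetric in the cells, and certain stability variations supported near infinity fail to be admissible, so the $\R^n$ analogue requires further ideas that we postpone.
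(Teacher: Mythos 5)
The paper does not prove this theorem; it is explicitly stated without proof (``with some additional (considerable) work, which we leave for another occasion''), so there is no in-paper argument to compare yours against. Your proposal is a plan, not a proof, and it has a substantive gap at precisely the point the authors identify as the bottleneck.

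The framework you invoke (Theorem~\ref{thm:intro-stable-spherical-Voronoi}, Corollary~\ref{cor:intro-bubble}, reduction to ruling out non-complete adjacency graphs) is the correct starting point, and your observation that only $2$-connected, minimum-degree-$\geq 3$ graphs survive is also in line with Section~\ref{sec:global}. But you locate the difficulty at $q=6$ in ``combinatorial explosion'' and propose dispatching the extra graphs with sliding, slicing, and tailored stability tests. That misidentifies the real obstruction. The sliding arguments of Section~\ref{sec:global} hinge on Corollaries~\ref{cor:low-dimensional-meeting} and \ref{cor:quad-bubble-simplicial}, which say that a collision point $p$ at which several cells meet forces new interfaces — and therefore contradicts minimality. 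Those corollaries are proved by analyzing the blow-up cone at $p$: if $d = \dim \sspan\{\n_{ij}(p)\} \le 3$, Taylor's classification applies; if $d = q-1$, affine independence of the $\{\n_i(p)\}$ forces the cone to be simplicial. For $q = 5$ these two cases exhaust all possibilities. For $q = 6$ there is a third case, $d = 4$ with $|I_p| = 6$, in which six normals lie in a $4$-dimensional span: Taylor does not apply, affine independence fails, and the authors explicitly flag (see the discussion around \cite{Brakke-MinimalConesOnCubes}) that non-simplicial area-minimizing cones exist in $\R^4$. Any sliding argument that drives cells into a collision has to rule out exactly this kind of singularity, and your proposal does not say how. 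This is the ``considerable work'' the authors defer; without an argument replacing the cone classification in the $d=4$, $|I_p|=6$ case, the proof is incomplete.

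A secondary inaccuracy: your explanation for why the theorem is stated on $\S^n$ rather than $\R^n$ (unbounded cell obstructs the moves, stability variations fail near infinity) does not match the paper's own remark. The paper says the quintuple-bubble isoperimetric \emph{inequality} does follow on $\R^n$ by scaling and approximation into $\S^n$; what is lost in that approximation is \emph{uniqueness} of the minimizer, not the inequality itself. So the $\R^n$ obstruction is about rigidity of the limit, not about the unbounded cell preventing the combinatorial deductions. If you want to keep a remark along these lines, you should state that mechanism instead.
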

\begin{remark}
By scale-invariance and approximately embedding a small cluster in $\R^n$ into $\S^n$, it follows that a standard quintuple bubble in $\R^n$ for $n \geq 5$ is indeed an isoperimetric minimizer, confirming the quintuple-bubble isoperimetric \emph{inequality} on $\R^n$. However, uniqueness is lost in the approximation procedure, and so contrary to the $\S^n$ case, we cannot exclude the existence of additional quintuple-bubble minimizers on $\R^n$. 
\end{remark}

A spherical Voronoi cluster $\Omega$ on $\S^n$ is called conformally flat if there exists a conformal diffeomorphism $T : \S^n \rightarrow \S^n$ so that all of the interfaces of the cluster $T \Omega = (T \Omega_1,\ldots, T \Omega_n)$ are flat. By Liouville's classical theorem, when $n \geq 3$ then all such maps $T$ are described by M\"obius automorphisms of $\S^n$.
A spherical Voronoi cluster on $\S^n$ is called pseudo conformally flat if a certain more general condition holds (see Definition \ref{def:PCF}); in particular, a full-dimensional cluster (having affine-rank of $\{\c_i\}_{i=1,\ldots,q}$ equal to $q-1 \leq n+1$) is pseudo conformally flat. By construction, a standard bubble is a M\"obius image of the flat equal-volume standard bubble, and is therefore (pseudo) conformally flat. 

\begin{theorem}[Conditional verification assuming pseudo conformal flatness] \label{thm:intro-conditional}
Fix $n \geq 6$ and $7 \leq q \leq n+1$. Assume that for every $v \in \Delta^{(q-1)}_1$, there exists an isoperimetric minimizing $q$-cluster $\Omega$ on $\S^n$ with $V(\Omega) = v$ so that $\Omega$ is pseudo conformally flat. 
Then the multi-bubble conjecture for $p$-clusters on $\S^n$ holds for all $2 \leq p \leq q$. 
\end{theorem}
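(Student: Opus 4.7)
The plan is to proceed by strong induction on $p$, with the base cases $2 \leq p \leq 6$ covered by Theorem \ref{thm:intro-234} together with the (preceding) quintuple bubble theorem on $\S^n$. For the inductive step $7 \leq p \leq q$, I would first reduce the problem to showing that every isoperimetric minimizing $p$-cluster $\Omega$ which is pseudo conformally flat is necessarily a standard bubble. The existence of such $\Omega$ for every volume $v \in \interior \Delta^{(p-1)}_1$ is furnished directly by the hypothesis when $p = q$; for $7 \leq p < q$, one would derive it from the $q$-hypothesis by a limiting argument, letting $q - p$ cells of a PCF $q$-cluster minimizer shrink to zero volume and extracting a subsequential limit in the flat topology via Almgren's compactness, then verifying that the PCF property (which is an algebraic condition on the quasi-centers $\{\c_i\}$) passes to the limit.

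Fix a PCF minimizing $p$-cluster $\Omega$. By Theorem \ref{thm:intro-stable-spherical-Voronoi}, $\Omega$ is a perpendicularly spherical Voronoi cluster with $\S^0$-symmetry and connected equatorial cells. If every pair of cells has non-empty interface, Corollary \ref{cor:intro-bubble} identifies $\Omega$ as a standard $(p-1)$-bubble and the argument is complete. The remaining case is that the cell-incidence graph (vertices = cells, edges = non-empty interfaces $\Sigma_{ij}$) is strictly smaller than the complete graph $K_p$. Here the pseudo conformal flatness is decisive: unpacking Definition \ref{def:PCF}, there is a M\"obius transformation $T$ of $\S^n$ putting $T\Omega$ into a ``flat-interface'' form in which the Voronoi representation \eqref{eq:intro-Voronoi-rep} is realized with half-space quasi-centers, and the $120^\circ$-angle and stability information from Theorem \ref{thm:intro-stable-spherical} then translates to sharp algebraic constraints on the configuration $\{\c_i,\k_i\}$ in the M\"obius-normalized frame.

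The main obstacle is to rule out every incomplete cell-incidence graph for such a minimizer. I expect this to require three ingredients beyond what is set up in the excerpt: (i) a refined stability analysis testing the second variation against conformal Killing fields specifically adapted to the M\"obius frame provided by the PCF condition, exploiting the hypothesis $n \geq 6$ and $q \leq n+1$ to ensure enough degrees of freedom; (ii) a combinatorial/topological analysis of admissible incidence graphs for PCF spherical Voronoi clusters, extending the simplicial-homology arguments underlying Theorem \ref{thm:intro-spherical-Voronoi-prelim}; and (iii) the inductive hypothesis, invoked after using a missing interface to identify, after an appropriate merging or restriction, a sub-cluster that is forced by induction to coincide with a standard bubble of fewer cells. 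The delicate point is to combine (i)--(iii) to show that the only incidence graph surviving all three constraints is the complete graph $K_p$, at which moment Corollary \ref{cor:intro-bubble} closes the argument and, combined with the uniqueness of standard bubbles on $\S^n$, yields the multi-bubble conjecture for $p$-clusters.
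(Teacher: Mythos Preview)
The paper does \emph{not} prove this theorem. It is explicitly listed under ``Further extensions'' with the disclaimer ``we only state here without proof'', and later the authors say the proof ``will be presented in a separate work''. The only hint they give about the method is that it proceeds via a \emph{matrix-valued partial differential inequality} (MPDI) for the isoperimetric profile on $\S^n$, conditionally valid under the pseudo conformal flatness assumption (see the paragraph in Section~1.5 beginning ``Lastly, we don't know how to derive a reasonable PDE\ldots''). This is the same style of argument that established the Gaussian multi-bubble theorem in \cite{EMilmanNeeman-GaussianMultiBubble}, and it is an entirely different route from your inductive/combinatorial proposal.

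More importantly, your plan has a genuine gap at the key step. You propose to rule out incomplete cell-incidence graphs using stability, combinatorics, and the sliding arguments of Section~\ref{sec:global}. But the paper is explicit that this combinatorial route \emph{fails} for $q\ge 6$: Taylor's classification of area-minimizing cones is only available in ambient dimension $\le 3$, so Corollary~\ref{cor:low-dimensional-meeting} and Corollary~\ref{cor:quad-bubble-simplicial} (which drive the sliding arguments) cannot decide whether a meeting point of six or more cells is illegal. The number of admissible adjacency graphs also grows super-exponentially. Your item~(i) is vague about how PCF converts stability into ``sharp algebraic constraints'' strong enough to replace Taylor's classification, and your item~(iii) does not explain how a missing interface produces a genuine sub-minimizer to which induction applies; merging two cells of a $p$-minimizer need not yield a $(p-1)$-minimizer. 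The MPDI approach bypasses the incidence-graph analysis altogether, which is precisely why the authors invoke it for $q\ge 7$.

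A secondary gap: your reduction from the hypothesis at level $q$ to the existence of PCF minimizers at every $7\le p<q$ via ``shrinking cells and passing to a limit'' is not justified. Limits of minimizers need not be minimizers for the limiting volume constraint without further argument, and you would also need to show that the algebraic PCF condition survives the limit (the quasi-center parameters $\{\c_i\}$ are not continuous in any obvious sense under flat convergence of clusters).
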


\subsection{Method of proof and comparison with previous approaches}

The resolution of the double-bubble conjecture by Hutchings--Morgan--Ritor\'e--Ros on $\R^3$ \cite{DoubleBubbleInR3} (and subsequently on $\R^n$ \cite{SMALL03,Reichardt-DoubleBubbleInRn}) relied on the following crucial ingredients: 
\begin{itemize}
\item The $\S^{n-2}$-symmetry of a minimizing cluster following an argument of White, written up by Foisy \cite{Foisy-UGThesis} and extended by Hutchings \cite{Hutchings-StructureOfDoubleBubbles}. The symmetry argument also applies on $\S^n$ \cite{CottonFreeman-DoubleBubbleInSandH}. This reduces the double-bubble problem to the study of certain constant-mean-curvature (CMC) curves in the upper half-plane of $\R^2$ (or upper hemisphere of $\S^2$). 
\item Hutchings' estimates on the number of connected components of a minimizing cluster's cells and corresponding structure theory \cite{Hutchings-StructureOfDoubleBubbles}. According to Cotton--Freeman \cite{CottonFreeman-DoubleBubbleInSandH}, extending the bounds on the connected components to $\S^n$ seems difficult, and they were only able to provide a structure theorem assuming one of the cells is connected \cite[Theorem 6.5]{CottonFreeman-DoubleBubbleInSandH}. 
\item A minimizing double-bubble with connected cells must be a standard bubble. This argument extends to $\S^n$ \cite[Proposition 7.3]{CottonFreeman-DoubleBubbleInSandH}. 
\item Ruling out clusters with multiple connected components. Due to the difficulty in bounding the number of connecting components on $\S^n$, this step is absent in the analysis of \cite{CottonFreeman-DoubleBubbleInSandH}. Furthermore, this step in \cite{DoubleBubbleInR3} relies on the classification of Delauney hypersurfaces (CMC hypersurfaces of revolution). 
\end{itemize}

With the exception of the symmetry argument, it is not clear how to extend any of these ingredients to the triple-and-higher bubble case $q \geq 4$ in dimension $n \geq 3$. In that case, Hutchings' connected component analysis becomes extremely complicated even in $\R^n$ (however, for the triple-bubble problem in $\R^2$, this was carried out in \cite{Wichiramala-TripleBubbleInR2}). Furthermore, the $\S^{n+1-q}$-symmetry reduces the problem to the study of a collection of hypersurfaces in dimension $q-1 \geq 3$, which no longer satisfy an ODE but rather a PDE, and in particular are not classified as in the Delauney case. 

\bigskip

On the other hand, our prior resolution of the multi-bubble conjecture on $\GG^n$ in \cite{EMilmanNeeman-GaussianMultiBubble} relied on the following crucial ingredients:
\begin{itemize} 
\item Utilizing the fact that $Q(T_a) \leq 0$, where $\{T_a\}$ are the constant vector-fields generating the translation group, and $Q$ denotes the stability index-form (see Subsection \ref{subsec:Q} for a precise definition). 
\item In particular, the previous property easily implies that stable clusters on $\GG^n$ have a product structure, facilitating the use of product vector-fields for testing stability. 
\item Establishing a sharp matrix-valued partial differential inequality (MPDI) satisfied by the Gaussian isoperimetric profile. 
\end{itemize}

On $\R^n$ and $\S^n$, it is not the translation group but rather the M\"obius group which generates the standard bubble conjectured minimizers; modding out isometries, the quotient is generated by an $(n+1)$-dimensional family of vector-fields we call ``M\"obius fields" $\{W_\theta\}$. Contrary to the Gaussian setting, $Q(W_\theta)$ does not have a clear sign, and in fact we suspect that it is not always negative semi-definite on a general cluster (even though, a-posteriori, we can show that $Q(W_\theta) \leq 0$ for a stable $q$-cluster when $q \leq n+1$). For this reason, contrary to the Gaussian setting, we are not able to handle the case when $q$ is maximal according to the conjectures, i.e. $q= n+2$, and restrict our analysis to $q \leq n+1$. While we could use the known $\S^{n+1-q}$-symmetry of a minimizing $q$-cluster on $\R^n$ and $\S^n$ (at least, for $q \leq n$ \cite{Hutchings-StructureOfDoubleBubbles}), it only implies a warped-product representation of the cluster, which leads to convoluted formulas (in contrast to the convenient product case on $\GG^n$) and does not seem to be more useful than just using $\S^0$-symmetry. Lastly, we don't know how to derive a reasonable PDE for the model profile on $\R^n$ (even after modding out homogeneity to reduce to a compact set); 
and while it is possible to derive a corresponding PDE for the model profile on $\S^n$, we were not able to establish a sharp PDI for the actual isoperimetric profile on $\S^n$ in full generality (only conditionally, yielding Theorem \ref{thm:intro-conditional}, whose proof will be presented in a separate work). Consequently, \textbf{we do not invoke any MPDI argument in this work}.

\medskip

Instead, our argument proceeds rather differently:
\begin{enumerate}[(1)]
\setcounter{enumi}{-1}
\item
As already mentioned, instead of utilizing the full $\S^{n+1-q}$-symmetry of a minimizing cluster as guaranteed by \cite[Theorem 2.6]{Hutchings-StructureOfDoubleBubbles} when $q \leq n$ (but which does not extend to $q=n+1$ \cite[Remark 2.7]{Hutchings-StructureOfDoubleBubbles}), our starting point is the elementary (well-known) observation that there \emph{exists} a minimizer with $\S^0$-symmetry (i.e. reflection symmetry) whenever $q \leq n+1$ by the Borsuk--Ulam theorem. This is used to compensate 
for not a-priori knowing that $Q(W_\theta) \leq 0$, and allows us to ensure that the first variation of volume is zero for any field which is odd with respect to reflection about the hyperplane of symmetry. Note that contrary to previous approaches, $\S^0$-symmetry does not reduce the effective dimensionality of the minimizer. 
\item \label{it:intro-step-1}
Our very first task in Section \ref{sec:spherical} is then to directly derive Theorem \ref{thm:intro-stable-spherical} on the \emph{sphericity} of a stable cluster with $\S^0$-symmetry, in contrast to the approach of \cite{DoubleBubbleInR3}. To that end, we need to test \emph{several} families of vector-fields and combine their individual contributions to the index-form $Q$ before an appropriate sign is reached and stability may be invoked. This is in contrast to the argument from \cite{EMilmanNeeman-GaussianMultiBubble}, where a \emph{single} family sufficed to establish flatness. A high-level reason for why sphericity is more difficult to establish than flatness is because while flatness is encoded in the vanishing of the second fundamental form $\II = 0$, sphericity is encoded in the vanishing of the \emph{traceless} form $\II_0 = \II - \frac{H}{n-1} \Id = 0$, where $H$ is the mean-curvature; since $H$ is a-priori unknown, we have more degrees of freedom to account for. Another difference with the Gaussian setting is that, while flatness is trivially the same as translation flatness, i.e.~flatness after an appropriate translation, sphericity is strictly weaker than conformal flatness, i.e.~flatness (of the entire cluster) after an appropriate conformal map -- for this reason, we are only able to verify the conditional Theorem \ref{thm:intro-conditional}. This also provides some insight as to why we need to use the conformal M\"obius field $W_N$ as one of the family members in the above argument (in the direction of the North pole $N$). 

\textbf{This first step is the critical step in the present work}, before which we could not make any progress on the problem. The importance of conformal Killing fields and conformal boundary conditions (and in particular that of the M\"obius group and its generators $W_\theta$) to the isoperimetric problem for clusters on $\R^n$ and $\S^n$ is clarified in this work, and constitutes one of its main novel ingredients.

In the case of $\R^n$, we also need to employ the following remarkable isotropicity of a minimizing cluster's boundary $\Sigma^1 = \bigcup_{i<j} \Sigma_{ij}$ (regardless of the volumes of the cells or their number!), which may be of independent interest (see Remark \ref{rem:isotropic}): 
\[
\int_{\Sigma^1} \n \otimes \n \; dp = \frac{1}{n} \int_{\Sigma^1} \Id \; dp ;
\]
here $\n$ denotes the (outward) unit-normal. See Figure~\ref{fig:triple-bubble} to test if this is obviously apparent. 

\item The second step is to derive the bulk of Theorem \ref{thm:intro-spherical-Voronoi-prelim} in Section \ref{sec:Voronoi-prelim}, asserting that the connected components of a spherical cluster actually have a spherical Voronoi structure. To this end, we first show that after appropriately projecting these components, they are actually convex (!). We then establish the vanishing of the first cohomology of an appropriate two-dimensional simplicial complex constructed from the cluster's incidence structure. These ingredients were partly also observed in passing in the Gaussian setting, but were not required for establishing the minimality of the simplicial clusters thanks to the MPDI argument employed there. In contrast, these ingredients are completely crucial on $\R^n$ and $\S^n$, and the interplay between these two spaces is (surprisingly) essential for the argument. 
\item \label{it:intro-step-3}
The third step is to derive Theorem \ref{thm:intro-stable-spherical-Voronoi} in Section \ref{sec:Voronoi-connected}, asserting that whenever $q \leq n+1$, all cells are equatorial and connected. To establish that any equatorial cell must be connected, stability is invoked once again, but we also need to couple it with additional information on the higher-order connectivity of the adjacency graph of the cluster's connected components, and a strong maximum principle for the discrete Laplacian on the latter graph. We then use the aforementioned convexity in the spherical Voronoi representation to show that whenever $q \leq n+1$, all cells must be equatorial. 
\item 
In the final step we establish Theorem \ref{thm:intro-234} in Section \ref{sec:global}. Contrary to the previous steps, in which only local arguments (such as stationarity and stability) were applied, this step requires a global argument for excluding spherical Voronoi clusters having missing interfaces; it is shown in Section \ref{sec:Mobius} that a spherical Voronoi cluster having all of its interfaces present must be a standard bubble. By inspecting the cluster's adjacency graph, for each value of $q$ there are only finitely many possible graphs on $q$ vertices, representing the cluster's cells, to consider -- see Figures \ref{fig:4-graphs} and \ref{fig:5-graphs} in Section \ref{sec:global}. Some of these graphs may be excluded by utilizing the combinatorial and topological information derived in prior steps. Other graphs are excluded by arguments from Euclidean geometry -- see Figure \ref{fig:angles}. One final argument excludes non-rigid configurations, by sliding a ``loose" bubble until it hits some other bubbles in a manner which is prohibited by the known regularity theory for minimizing clusters -- see Figures \ref{fig:triple-bubble-perturbation}, \ref{fig:almost} and \ref{fig:min-degree}. The latter is a known argument in the double-bubble and planar settings \cite{SMALL93,Hutchings-StructureOfDoubleBubbles, Wichiramala-TripleBubbleInR2}, but various complications arise in the general multi-bubble setting: the number of the possible adjacency graphs grows super-exponentially with $q$, and furthermore, the classification of minimizing cones is only available in dimensions $2$ and $3$ thanks to Taylor's work \cite{Taylor-SoapBubbleRegularityInR3}, and so we can only determine if a meeting point of several bubbles is illegal when it involves at most $5$ cells. This explains why we cannot at present extend Theorem \ref{thm:intro-234} to handle arbitrary $q \leq n+1$. 
\end{enumerate}

\medskip

On a technical level, we continue to develop and expand the technical results obtained in \cite{EMilmanNeeman-GaussianMultiBubble}, which are summarized in the preliminary Section \ref{sec:prelim} and serve as the starting point of this work. The main technical difficulty consists of approximating a ``non-physical" Lipschitz scalar-field $f_{ij} = -f_{ji}$ defined on $\Sigma_{ij}$ (satisfying $f_{ij} + f_{jk} + f_{ki} = 0$ at triple-points, where three cells meet) as the normal component of a ``physical" global vector-field $X$, in a manner so that the first variations of volume remain the same, 
and the corresponding second variations encapsulated in the index-form $Q$ are arbitrarily close. Contrary to the double-bubble setting, where every smooth scalar-field is physical (i.e.~is the normal component of a smooth global vector-field $X$), this is not the case in the triple-and-higher-bubble setting due to several factors: the presence of quadruple-points (which are only known to have $C^{1,\alpha}$ regularity) and of additional possible singularities; the potential blowing-up of curvature near these points; and the existence of non-simplicial flat minimal cones in dimensions 4 and higher \cite{Brakke-MinimalConesOnCubes}, which allow for linear dependencies between the normals at meeting points of $5$ and more cells (even if the cluster is already known to be completely regular, e.g.~spherical Voronoi) -- see the discussion in Section \ref{sec:non-physical}. Consequently, we do not know how to rigorously approximate a non-physical scalar-field by a physical vector-field as above in general, but are able to do so in two cases -- in an averaged sense, so that the contribution to $Q$ of the boundary integral at the triple-points vanishes; and without averaging, but only after establishing that the cluster has locally bounded curvature. This approximation procedure is developed in Sections \ref{sec:bounded-curvature} and \ref{sec:non-physical}. In Section \ref{sec:Jacobi}, we rigorously justify the various formulas for $Q$ we subsequently require, which involve the second-order Jacobi operator and various equivalent forms of the boundary integral. In Section \ref{sec:conformal}, it is shown how these formulas simplify when applied to conformal Killing fields and other fields satisfying conformal boundary-conditions. 

\bigskip

\noindent
\textbf{Acknowledgments.} We thank Frank ``Chip" Morgan for his comments and interest. E.M.~warmly thanks the Oden Institute and the Math Department at the University of Texas in Austin for their generous support throughout his Sabbatical stay in Austin, during which this work had culminated; in particular, it is a pleasure to thank Rachel Ward and Francesco Maggi. Finally, we thank the referee for carefully reading the paper and their very helpful comments, which have improved the overall clarity and presentation.

\tableofcontents

\section{Preliminaries} \label{sec:prelim}

In the first couple of sections of this work, we further develop the theory of isoperimetric minimizing clusters on general weighted Riemannian manifolds (as this does not really pose a greater generality over the case of the model manifolds we are primarily interested in). Our concern in this work is the scenario when the minimizing cluster's finite volume cells are all bounded, and so we will only work with compactly supported fields. However, it is possible to extend the theory to the case when the cells are unbounded by using appropriately admissible fields, such as in the Gaussian setting of \cite{EMilmanNeeman-GaussianMultiBubble}.

\begin{definition}[Weighted Riemannian Manifold]
A smooth complete $n$-dimensional Riemannian manifold $(M^n,g)$ endowed with a measure $\mu$ with $C^\infty$ smooth positive density $\exp(-W)$ with respect to the Riemannian volume measure $\vol_g$ is called a weighted Riemannian manifold $(M^n,g,\mu)$. 
\end{definition}

The Levi-Civita connection on $(M,g)$ is denoted by $\nabla$. The Riemannian metric $g$ will often be denoted by $\scalar{\cdot,\cdot}$. It induces a geodesic distance on $(M,g)$, and we denote by $B(x,r)$ an open geodesic ball of radius $r >0$ in $(M,g)$ centered at $x \in M$. Recall that $\mu^k = e^{-W} \H^k$, where $\H^k$ denotes the $k$-dimensional Hausdorff measure.

Recall that $\simplex^{(q-1)}_T = \{ v \in \R^q_+ \; ; \; \sum_{i=1}^q v_i = T \}$ when $T < \infty$ and $\simplex^{(q-1)}_\infty := \R^{q-1}_+ \times \{ \infty\}$. We denote by $\simplex^{(q-1)} = \simplex^{(q-1)}_1$ the $(q-1)$-dimensional probability simplex. Its tangent space is denoted by $E^{(q-1)} := \{ x \in \R^q \; ; \; \sum_{i=1}^q x_i = 0 \}$.

Throughout this work we will often use the convention that $a_{ij}$ denotes $a_i - a_j$ whenever the individual objects $a_i$ are defined. For instance, if $\{e_i\}_{i=1,\ldots,q}$ denote unit-vectors in $\R^q$, then $e_{ij} = e_i - e_j \in E^{(q-1)}$. Denoting by $\delta^k_i$ the delta function $\textbf{1}_{k=i}$, we also have $\delta^k_{ij} = \delta^k_i - \delta^k_j$. 

Given distinct $i,j,k \in \{1,\ldots,q\}$, we define the set of cyclically ordered pairs in $\{i,j,k\}$:
\[
\cyclic(i,j,k) := \{ (i,j) , (j,k) , (k,i) \}  .
\]

\subsection{Weighted divergence and mean-curvature}

We write $\div X$ to denote divergence of a smooth vector-field $X$, and $\div_\mu X$ to denote its weighted divergence:
\begin{equation} \label{eq:weighted-div}
\div_{\mu} X := \div (X e^{-W}) e^{+W} = \div X - \nabla_X \pot . 
\end{equation}
For a smooth hypersurface $\Sigma \subset M^n$ co-oriented by a unit-normal field $\n$, let $H_\Sigma$ denote its mean-curvature, defined as the trace of its second fundamental form $\II_{\Sigma}$. We employ the sign convention that $\II_{\Sigma}(u,v) = \scalar{\nabla_u \n,v}$ for $u,v \in T_p \Sigma$ (so a sphere co-oriented by its outward normal has positive curvature). 
The weighted mean-curvature $H_{\Sigma,\mu}$ is defined as:
\[
H_{\Sigma,\mu} := H_{\Sigma} - \nabla_\n \pot .
\]
We write $\div_\Sigma X$ for the surface divergence of a vector-field $X$ defined on $\Sigma$, i.e. $\sum_{i=1}^{n-1} \scalar{\tang_i,\nabla_{\tang_i} X}$ where $\{\tang_i\}$ is a local orthonormal frame on $\Sigma$; this coincides with $\div X - \scalar{\n,\nabla_\n X}$ for any smooth extension of $X$ to a neighborhood of $\Sigma$. 
The weighted surface divergence $\div_{\Sigma,\mu}$ is defined as:
\[
\div_{\Sigma,\mu} X = \div_{\Sigma} X - \nabla_X \pot,
\]
so that $\div_{\Sigma,\mu} X = \div_{\Sigma} (X e^{-\pot}) e^{+\pot}$ if $X$ is tangential to $\Sigma$.  
Note that $\div_{\Sigma} \n = H_{\Sigma}$ and $\div_{\Sigma,\mu} \n = H_{\Sigma,\mu}$.  We will also abbreviate $\scalar{X,\n}$ by $X^\n$, and we will write $X^\tang$ for the tangential part of $X$, i.e. $X - X^{\n} \n$. 

Note that the above definitions ensure the following weighted version of Stokes' theorem: if $\Sigma$ is a smooth $(n-1)$-dimensional manifold with $C^1$ boundary, denoted $\partial \Sigma$, (completeness of $\Sigma \cup \partial \Sigma$ is not required), and $X$ is a smooth vector-field on $\Sigma$, continuous up to $\partial \Sigma$, with compact support in $\Sigma \cup \partial \Sigma$, then since:
\[
\div_{\Sigma,\mu} X = \div_{\Sigma,\mu} (X^\n \n) + \div_{\Sigma,\mu} X^{\tang} = H_{\Sigma,\mu} X^{\n} + \div_{\Sigma,\mu} X^{\tang} ,
\]
then:
\begin{equation} \label{eq:Stokes-classical}
\int_\Sigma \div_{\Sigma,\mu} X d\mu^{n-1} = \int_{\Sigma} H_{\Sigma,\mu} X^{\n} d\mu^{n-1} + \int_{\partial \Sigma} X^{\n_{\partial}} d\mu^{n-2} ,
\end{equation}
where $\n_{\partial}$ denotes the exterior unit co-normal to $\partial \Sigma$.

Finally, we denote the surface Laplacian of a smooth function $f$ on $\Sigma$ by $\Delta_{\Sigma} f := \div_{\Sigma} \nabla^\tang f$, which coincides with $\sum_{i=1}^{n-1} \nabla^2_{\tang_i,\tang_i} f - H_{\Sigma} \nabla_{\n} f$ for any smooth extension of $f$ to a neighborhood of $\Sigma$ in $M$. The weighted surface Laplacian is defined as:
\[
 \Delta_{\Sigma,\mu} f := \div_{\Sigma,\mu} \nabla^\tang f = \Delta_{\Sigma} f - \scalar{\nabla^\tang f, \nabla^\tang \pot} . 
 \]

\subsection{Reduced boundary and perimeter}

Given a Borel set $U \subset \R^n$ with locally-finite perimeter, its reduced boundary $\partial^* U$ is defined as the subset of $\partial U$ for which there is a uniquely defined outer unit normal vector to $U$ in a measure theoretic sense (see \cite[Chapter 15]{MaggiBook} for a precise definition). The definition of reduced boundary canonically extends to the Riemannian setting by using a local chart, as it is known that $T(\partial^* U) = \partial^* T(U)$ for any smooth diffeomorphism $T$ (see \cite[Lemma A.1]{KMS-LimitOfCapillarity}). It is known that $\partial^* U$ is a Borel subset of $\partial U$, and that modifying $U$ on a null-set does not alter $\partial^* U$. If $U$ is an open set with $C^1$ smooth boundary, it holds that $\partial^* U = \partial U$  (e.g. \cite[Remark 15.1]{MaggiBook}). Recall that the $\mu$-weighted perimeter of $U$ is defined as:
\[
\per_\mu(U) = \mu^{n-1}(\partial^* U). 
\]

\subsection{Cluster interfaces}

Let $\Omega = (\Omega_1, \dots, \Omega_q)$ denote a $q$-cluster on $(M^n,g,\mu)$. 
Recall that the cells $\set{\Omega_i}_{i=1,\ldots,q}$
of a $q$-cluster $\Omega$ are assumed to be pairwise disjoint Borel subsets of $M^n$ so that $\mu(M^n \setminus \cup_{i=1}^q \Omega_i) = 0$ and $\mu(\Omega) = (\mu(\Omega_1),\ldots,\mu(\Omega_q)) \in \Delta^{(q-1)}_{\mu(M^n)}$. In addition, they are assumed to have locally-finite perimeter, and moreover, finite $\mu$-weighted perimeter $P_\mu(\Omega_i) < \infty$.

We define the interface between cells $i$ and $j$ (for $i \ne j$) as:
\[
    \Sigma_{ij} = \Sigma_{ij}(\Omega) := \partial^* \Omega_i \cap \partial^* \Omega_j . 
\]
 It is standard to show (see \cite[Exercise 29.7, (29.8)]{MaggiBook}) that for any $S \subset \set{1,\ldots,q}$:
\begin{equation} \label{eq:nothing-lost-many}
\H^{n-1}\brac{\partial^*(\cup_{i \in S} \Omega_i) \setminus \cup_{i \in S , j \notin S} \Sigma_{ij}} = 0 .
\end{equation}
In particular: \begin{equation} \label{eq:nothing-lost}
\H^{n-1} \brac{ \partial^* \Omega_i  \setminus \cup_{j \neq i} \Sigma_{ij} } = 0 \;\;\; \forall i=1,\ldots,q ,
\end{equation}
and hence:
\[
\per_\mu(\Omega_i) = \sum_{j \neq i} \mu^{n-1}(\Sigma_{ij}) , 
\]
and:
\[
\per_\mu(\Omega) = \frac{1}{2} \sum_{i=1}^q \per_\mu(\Omega_i) = \sum_{i < j} \mu^{n-1}(\Sigma_{ij})  = \mu^{n-1}(\Sigma^1). 
\]
In addition, it follows (see \cite[(3.6)]{EMilmanNeeman-GaussianMultiBubble}) that:
\begin{equation} \label{eq:top-nothing-lost}
\forall i \;\;\; \overline{\partial^* \Omega_i} = \overline{\cup_{j \neq i} \Sigma_{ij}} .
\end{equation}

\subsection{Existence and boundedness of minimizing clusters} \label{subsec:prelim-minimizing}

\begin{definition} \label{def:bounded}
A cluster $\Omega$ is called bounded if, up to null-set modification of its cells, its boundary $\Sigma = \cup_{i=1}^q \partial \Omega_i$ is a bounded set. \end{definition}

\begin{theorem}[Existence and boundedness of isoperimetric minimizing clusters]  \label{thm:existence}
\hfill
  \begin{enumerate}[(i)]
\item \label{it:existence-prob}
   If $\mu$ is a probability measure, then for any prescribed $v \in \simplex^{(q-1)}_1$, an isoperimetric $\mu$-minimizing $q$-cluster $\Omega$ satisfying $\mu(\Omega) = v$ exists. 
\item \label{it:existence-non-compact}
  If $(M,g,\mu)$ is non-compact but has an isometry group (i.e. smooth automorphisms $T$ satisfying $T_* g = g$ and $T_* \mu = \mu$) so that the resulting quotient space is compact, then for any $v \in \Delta^{(q-1)}_\infty$, a $\mu$-minimizing $q$-cluster $\Omega$ satisfying $\mu(\Omega) = v$ exists.  \\ Moreover, a $\mu$-minimizing cluster is necessarily bounded. 
\end{enumerate}
\end{theorem}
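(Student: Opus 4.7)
My plan is to use the direct method of the calculus of variations for existence, together with a truncation-versus-isoperimetric-comparison argument for boundedness.

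For part (i), fix a minimizing sequence $\Omega^{(k)} = (\Omega_1^{(k)}, \dots, \Omega_q^{(k)})$ with $\mu(\Omega^{(k)}) = v$ and $\per_\mu(\Omega^{(k)}) \to \inf$. Since the density $e^{-W}$ is smooth and positive, weighted and unweighted perimeters are locally comparable, so the $BV$-norms of the indicator functions $\mathbf{1}_{\Omega_i^{(k)}}$ on every compact set are uniformly bounded. Standard local $BV$-compactness combined with a diagonal argument furnishes a subsequence so that $\mathbf{1}_{\Omega_i^{(k)}} \to \mathbf{1}_{\Omega_i}$ in $L^1_{\text{loc}}$ for each $i$, yielding a candidate cluster $\Omega$. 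Because $\mu$ is a probability measure (hence tight), no mass can escape to infinity and $\mu(\Omega)=v$. Lower semi-continuity of $\per_\mu$ under $L^1$-convergence of indicators then concludes.

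For part (ii) the obstacle is mass leaking to infinity, which I would control by a Lions-style concentration-compactness argument exploiting the cocompact isometry group $G$ of $(M,g,\mu)$. Fix a relatively compact fundamental domain $D$. For each $k$ and each finite-volume cell $\Omega_i^{(k)}$, one can translate by a suitable $g_k \in G$ so that a uniform positive fraction of the cell's mass lies in a fixed compact neighborhood of $D$; applying this successively and passing to a subsequence, the compactness alternative reduces us to case (i) on the quotient. The dichotomy's ``vanishing'' case is excluded because the cells have positive prescribed volume, while the ``splitting'' alternative --- where a positive fraction of some cell separates into widely distant pieces --- is ruled out by the strict subadditivity of the infimum of $\per_\mu$ as a function of the volume vector (widely separated pieces contribute additively to perimeter, and can be strictly improved by merging them into a single cluster of the same total volume via an admissible competitor). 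This shows compactness and yields a minimizer.

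For boundedness, let $\Omega$ be a minimizer and suppose some finite-volume cell $\Omega_i$ has unbounded support. Choose a base point $x_0$ and set $\epsilon(R) := \mu(\Omega_i \setminus B(x_0,R))$; by $\mu(\Omega_i)<\infty$ we have $\epsilon(R) \to 0$. The competitor I would use replaces $\Omega_i$ by $\Omega_i \cap B(x_0,R)$ and restores the lost $\mu$-mass $\epsilon(R)$ by a smooth, compactly supported, volume-preserving deformation of $\Omega$ near an interior regular point of $\Omega_i$ (using standard vector-field constructions that exchange volume between two adjacent cells). The restoration cost is linear, $O(\epsilon(R))$, whereas the perimeter \emph{saved} by discarding the tail is at least $c\,\epsilon(R)^{(n-1)/n}$ by a relative weighted isoperimetric inequality on $M \setminus B(x_0,R)$ (here the cocompact symmetry in case (ii), or tightness of $\mu$ in case (i), gives the requisite lower bound). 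For $R$ sufficiently large $c\,\epsilon(R)^{(n-1)/n} \gg O(\epsilon(R))$, contradicting minimality. The main obstacle I anticipate is making the relative isoperimetric estimate on the complement of a large ball effective and uniform in case (ii), and ensuring the volume-restoring perturbation does not itself reintroduce unboundedness in another cell; both are handled by localizing the volume exchange to a fixed small neighborhood of a chosen triple point.
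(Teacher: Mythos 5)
Your overall route is the same as the paper's, which simply invokes the classical results: the direct method with lower semicontinuity and tightness for part (i), the Almgren--Morgan concentration argument via the cocompact isometry group for part (ii), and Almgren's truncation argument for boundedness. Two points in your write-up need repair.

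First, in part (ii) you exclude dichotomy by appealing to ``strict subadditivity of the infimum of $\per_\mu$ as a function of the volume vector.'' Strict subadditivity of the multi-bubble isoperimetric profile is not known a priori and is not needed: since the cells of a cluster need not be connected, widely separated pieces are not a competitor to be strictly improved but a limit object to be assembled. The standard resolution (Almgren, Morgan) is to extract, by repeated use of the isometry group, finitely many limit ``pieces'' that together capture all the prescribed volume, and to recombine them into a single admissible cluster by placing them in disjoint bounded regions via isometries, which changes neither the volumes nor the total perimeter.

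Second, and more substantively, your boundedness argument as stated does not close. Discarding $\Omega_i \setminus B(x_0,R)$ does not ``save'' $c\,\epsilon(R)^{(n-1)/n}$ outright: the truncation creates new boundary along the slice $\Omega_i \cap \partial B(x_0,R)$, whose $\mu^{n-1}$-measure is not controlled by $\epsilon(R)$ for an arbitrary $R$. Writing $m(R) := \mu(\Omega_i\setminus B(x_0,R))$, the coarea formula gives $-m'(R) = \mu^{n-1}(\Omega_i \cap \partial B(x_0,R))$ for a.e.\ $R$, and minimality together with the linear volume-restoration cost yields only
\[
P_\mu\bigl(\Omega_i;\, M\setminus \overline{B(x_0,R)}\bigr) \;\le\; -m'(R) + C\,m(R) .
\]
One must combine this with the isoperimetric inequality $c\,m(R)^{(n-1)/n} \le P_\mu(\Omega_i\setminus B(x_0,R)) \le P_\mu(\Omega_i; M\setminus \overline{B(x_0,R)}) - m'(R)$ to obtain a differential inequality of the form $(m^{1/n})'(R) \le -c' < 0$ wherever $m>0$ and $R$ is large, which forces $m$ to vanish at some finite radius. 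Without this ODE step, the one-shot comparison ``$c\,\epsilon^{(n-1)/n} \gg O(\epsilon)$ for large $R$'' yields no contradiction, because the competitor's perimeter also picks up the uncontrolled slice term.
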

\begin{proof}
The first part is a classical argument based on the lower semi-continuity of (weighted) perimeter -- see the proof of \cite[Theorem 4.1 (i)]{EMilmanNeeman-GaussianMultiBubble}. The second part in Euclidean space $(\R^n,|\cdot|,dx)$ is due to Almgren \cite{AlmgrenMemoirs}, with the main challenge being that volume can ``escape to infinity". The extension to general (unweighted) manifolds with compact quotients by their isometry groups is due to Morgan (see the comments following \cite[Theorem 13.4]{MorganBook5Ed}). The extension to the weighted setting is straightforward (see e.g. the proof of \cite[Lemma 13.6]{MorganBook5Ed}). Boundedness of the finite-volume cells follows from the usual $n$-dimensional isoperimetric inequality (which continues to hold up to constants on any compact weighted Riemannian manifold) by a classical argument of Almgren (e.g. \cite[Lemma 13.6]{MorganBook5Ed} or \cite[Theorem 29.1]{MaggiBook}). It follows that necessarily $\Sigma$ is bounded (after applying null-set modifications to the cells as specified in Theorem \ref{thm:Almgren} \ref{it:Almgren-ii} below). 
\end{proof}

\subsection{Interface-regularity}

The following theorem is due to Almgren \cite{AlmgrenMemoirs} (see also~\cite[Chapter 13]{MorganBook5Ed} and~\cite[Chapters 29-30]{MaggiBook}). Recall that: 
\begin{equation} \label{eq:Sigma-Sigma1}
    \Sigma := \cup_{i} \partial \Omega_i ~,~  \Sigma^1 := \cup_{i < j} \Sigma_{ij} .
\end{equation}

\begin{theorem}[Almgren] \label{thm:Almgren}
    For every isoperimetric minimizing cluster $\Omega$ on $(M^n,g,\mu)$:
    \begin{enumerate}[(i)] 
\item \label{it:Almgren-ii}
  $\Omega$ may and will be modified on a $\mu$-null set (thereby not altering $\set{\partial^* \Omega_i}$) so that all of its cells are open, and so that for every $i$, 
$\overline{\partial^* \Omega_i} = \partial \Omega_i$  and $\mu^{n-1}(\partial \Omega_i \setminus \partial^* \Omega_i) = 0$. 
\item \label{it:Almgren-iii} 
    For all $i \neq j$ the interfaces $\Sigma_{ij} = \Sigma_{ij}(\Omega)$ are a locally-finite union of embedded
     $(n-1)$-dimensional $C^\infty$ manifolds, relatively open in $\Sigma$, and for every $x \in
    \Sigma_{ij}$ there exists $\epsilon > 0$ such that $B(x,\epsilon) \cap
    \Omega_k = \emptyset$ for all $k \neq i,j$ and $B(x,\epsilon) \cap \Sigma_{ij}$ is an embedded
     $(n-1)$-dimensional $C^\infty$ manifold.
\item \label{it:Almgren-density}
    For any compact set $K$ in $M$, there exist constants $\Lambda_K,r_K > 0$ so that:
    \[     \mu^{n-1}(\Sigma \cap B(x,r)) \leq \Lambda_K r^{n-1} \;\;\; \forall x \in \Sigma \cap K \;\;\; \forall r \in (0,r_K) . 
    \] \end{enumerate}
\end{theorem}
Whenever referring to the cells of a minimizing cluster or their topological boundary (and in particular $\Sigma$) in this work, we will always choose a representative such as in Theorem \ref{thm:Almgren} \ref{it:Almgren-ii}. In particular, there is no need to apply any null-set modifications in Definition \ref{def:bounded} of boundedness when using the latter representative; in addition, a cell $\Omega_i$ is non-empty if and only if $\mu(\Omega_i) > 0$. 

\begin{definition}[Interface--regular cluster] \label{def:interface-regular}
A cluster $\Omega$ satisfying parts \ref{it:Almgren-ii} and \ref{it:Almgren-iii} of Theorem \ref{thm:Almgren}
is called interface--regular. 
\end{definition}

The definition of interface-regular cluster should not be confused with the stronger definition of regular cluster, introduced below in Definition \ref{def:regular}.  Given an interface--regular cluster, let $\n_{ij}$ be the (smooth) unit normal field along $\Sigma_{ij}$ that points from $\Omega_i$ to $\Omega_j$. We use $\n_{ij}$ to co-orient $\Sigma_{ij}$, and since $\n_{ij} = -\n_{ji}$, note that $\Sigma_{ij}$ and $\Sigma_{ji}$ have opposite orientations. When $i$ and $j$ are clear from the context, we will simply write $\n$. We will typically abbreviate $H_{\Sigma_{ij}}$ and $H_{\Sigma_{ij},\mu}$ by $H_{ij}$ and $H_{ij,\mu}$, respectively. 

\medskip

It will also be useful to record the following result due to Leonardi (used in \cite[Corollary 30.3]{MaggiBook} for the proof given there of Theorem \ref{thm:Almgren} \ref{it:Almgren-iii}). 
Recall that the (lower) density of a measurable set $A \subset (M^n,g)$ at a point $p \in M^n$ is defined as:
\[
\Theta(A,p) := \liminf_{r \rightarrow 0+} \frac{\H^n(A \cap B(p,r))}{\H^n(B(p,r))} . 
\]

\begin{lemma}[Leonardi's Infiltration Lemma] \label{lem:infiltration}
Let $\Omega$ be an isoperimetric $\mu$-minimizing $q$-cluster on $(M^n,g,\mu)$, and recall our convention from Theorem \ref{thm:Almgren} \ref{it:Almgren-ii}. There exists a constant $\eps_n > 0$, depending solely on $n$, so that for any $p \in M^n$, $i=1,\ldots,q$ and connected component $\Omega_i^\ell$ of $\Omega_i$: 
\begin{equation} \label{eq:density}
\Theta(\Omega^\ell_i,p) < \eps_n \;\; \Rightarrow \;\; p \notin \overline{\Omega^\ell_i} . 
\end{equation}
\end{lemma}
\begin{proof}
This was proved by Leonardi in \cite[Theorem 3.1]{Leonardi-Infiltration} (cf. \cite[Lemma 30.2]{MaggiBook}) for the cells of a minimizing cluster on $\R^n$. In case the cell $\Omega_i$ has more than one connected component, simply define a new $(q+1)$-cluster by splitting $\Omega_i$ into $\Omega_i^\ell$ and $\Omega_i \setminus \Omega_i^\ell$, and note that this new cluster is itself minimizing. Lastly, the proof clearly extends to any weighted Riemannian manifold by passing to a local chart (and the positive smooth density is obviously immaterial). 
\end{proof}

\subsection{Linear Algebra}

\begin{lemma}[Graph connectedness]\label{lem:LA-connected}
Let $\Omega$ be a interface-regular $q$-cluster on $(M^n,g,\mu)$ with $\mu(\Omega) \in \interior \simplex^{(q-1)}_{\mu(M^n)}$. Consider the undirected graph $G$ with vertices $\{1, \dots, q\}$ and an edge between $i$ and $j$ iff $\Sigma_{ij} \neq \emptyset$. 
Then the graph $G$ is connected.
\end{lemma}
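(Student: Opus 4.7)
The plan is a direct contradiction argument, leveraging the identity \eqref{eq:nothing-lost-many} already recorded in the excerpt. Suppose $G$ is disconnected. Then there is a nontrivial partition $\{1,\ldots,q\} = S \sqcup S^c$ with $S, S^c \neq \emptyset$ such that $\Sigma_{ij} = \emptyset$ for every $i \in S$ and $j \in S^c$. Set $U := \bigcup_{i \in S} \Omega_i$. Applying \eqref{eq:nothing-lost-many} to this $S$ gives
\[
\H^{n-1}\!\bigl(\partial^* U \setminus \textstyle\bigcup_{i \in S, \, j \notin S} \Sigma_{ij}\bigr) = 0,
\]
and since every interface $\Sigma_{ij}$ in the right-hand union is empty by assumption, we conclude $\H^{n-1}(\partial^* U) = 0$, i.e.\ $\per_\mu(U) = \mu^{n-1}(\partial^* U) = 0$.

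Next, I would invoke the standard fact that on a smooth connected Riemannian manifold, a Borel set $U$ of locally finite perimeter with $\H^{n-1}(\partial^* U) = 0$ must satisfy either $\vol_g(U) = 0$ or $\vol_g(M^n \setminus U) = 0$ (this is the constancy theorem for sets of finite perimeter, equivalently the fact that $\mathbf{1}_U$ has zero distributional gradient and is therefore a.e.\ constant on the connected manifold). Because $\mu$ has a smooth positive density, the same dichotomy holds for $\mu$: either $\mu(U) = 0$ or $\mu(M^n \setminus U) = 0$.

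Finally, I would derive a contradiction from the hypothesis $\mu(\Omega) \in \interior \simplex^{(q-1)}_{\mu(M^n)}$. This condition forces $\mu(\Omega_i) > 0$ for every $i = 1, \ldots, q$ (with the convention that $\mu(\Omega_q) = \infty > 0$ in the unbounded case). Hence
\[
\mu(U) \;=\; \sum_{i \in S} \mu(\Omega_i) \;>\; 0 \qquad \text{and} \qquad \mu(M^n \setminus U) \;\geq\; \sum_{j \in S^c} \mu(\Omega_j) \;>\; 0,
\]
where the second inequality uses $\mu(M^n \setminus \bigcup_i \Omega_i) = 0$ from the definition of a cluster together with the pairwise disjointness of the cells. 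Both alternatives of the dichotomy are thus excluded, a contradiction. No step looks to be a serious obstacle; the only subtlety is ensuring that the ambient manifold is connected (which is implicit in the paper's conventions) so that the constancy theorem applies, and checking that the interior hypothesis on $\mu(\Omega)$ is indeed strict positivity of every coordinate — both of which are immediate from the definitions in Section~\ref{sec:prelim}.
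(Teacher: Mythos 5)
Your proof is correct and follows essentially the same route as the paper's: both apply \eqref{eq:nothing-lost-many} to $U = \bigcup_{i \in S} \Omega_i$ to conclude $\per_\mu(U) = 0$, and then derive a contradiction from the fact that a set of positive $\mu$-measure and positive complementary $\mu$-measure must have positive perimeter. The only difference is in the last step: the paper invokes the single-bubble isoperimetric inequality in the form $\mu(K) \in (0,\infty) \Rightarrow \per_\mu(K) > 0$, which requires $\mu(U) < \infty$, so the paper first replaces $S$ by its complement if necessary (using that a cluster has at most one infinite-measure cell). Your appeal to the constancy theorem --- $\mathbf{1}_U$ has vanishing distributional gradient, hence is a.e.\ constant on the connected manifold --- avoids that swap entirely, since it makes no finiteness assumption. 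This is a minor but genuine streamlining of the same argument; all the other ingredients (the reduction via \eqref{eq:nothing-lost-many}, the role of the interior condition $\mu(\Omega) \in \interior \simplex^{(q-1)}_{\mu(M^n)}$ in forcing each $\mu(\Omega_i) > 0$, and the implicit connectedness of $M^n$) match the paper exactly.
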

\begin{proof}
This was proved in \cite[Lemma 3.4]{EMilmanNeeman-GaussianMultiBubble} on $(\R^n,|\cdot|,\mu)$. The proof extends to any  weighted Riemannian manifold $(M^n,g,\mu)$ on which $\mu(K) \in (0,\mu(M^n))$ implies $P_{\mu}(K) > 0$, which is always the case by connectedness of $M^n$ and the strictly positive density of $\mu$. Indeed, if $S \subset \{1, \dots, q\}$ were a non-trivial connected component then $U = \bigcup_{i \in S} \Omega_i$ would satisfy $\mu(U) > 0$, and by replacing $S$ with its complement if necessary, we also have $\mu(U) < \mu(M^n)$ (recall that a cluster may have at most one cell of infinite measure). At the same time $\per_\mu(U) = \sum_{i \in S , j \notin S} \mu^{n-1}(\Sigma_{ij}) = 0$ by (\ref{eq:nothing-lost-many}), a contradiction to the single-bubble isoperimetric inequality stated above. 
\end{proof}

\begin{definition}[Quadratic form $\L_A$] \label{def:LA}
Given real-valued weights $A = \{ A^{ij} \}_{i,j =1,\ldots,q}$ which are non-oriented (i.e. satisfy $A^{ij} = A^{ji}$), define the following $q$ by $q$ symmetric matrix called the discrete (weighted) Laplacian:
\begin{equation} \label{eq:L_A}
        \L_A := \sum_{1 \leq i<j \leq q} A^{ij} e_{ij} \otimes e_{ij} .
\end{equation}
We will mostly consider $\L_A$ as a quadratic form on $E^{(q-1)}$. \\
Given an interface-regular $q$-cluster $\Omega$ on $(M^n,g,\mu)$, we set $\L_1 := \L_{\{\mu^{n-1}(\Sigma_{ij})\}}$. 
\end{definition}

Given an undirected graph $G$ on $\{1,\ldots,q\}$, we will say that the weights $A = \{ A^{ij} \}_{i,j=1,\ldots,q}$ are supported along the edges of $G$, and write $A = \{ A^{ij} \}_{i \sim j}$, if $A^{ij} = 0$ whenever there is no edge in $G$ between vertices $i$ and $j$. 

\begin{lemma}[$L_A$ is positive-definite] \label{lem:LA-positive}
 For all strictly positive non-oriented weights $A = \{ A^{ij} > 0\}_{i \sim j}$ supported along the edges of a connected graph $G$, $\L_A$ is positive-definite as a quadratic form on $E^{(q-1)}$ (in particular, it has full-rank $q-1$). 
\end{lemma}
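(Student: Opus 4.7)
The plan is to reduce the claim to the standard fact that the combinatorial (weighted) graph Laplacian of a connected graph has one-dimensional kernel spanned by the constant vector, so that after restricting to $E^{(q-1)}$ (the orthogonal complement of the constants) it becomes strictly positive-definite.

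First I would unfold the definition: for any $x \in \R^q$,
\[
\L_A(x,x) = \sum_{1 \leq i < j \leq q} A^{ij} \scalar{e_{ij},x}^2 = \sum_{1 \leq i < j \leq q} A^{ij} (x_i - x_j)^2 .
\]
Since all $A^{ij} \geq 0$, this expression is manifestly non-negative; moreover $A^{ij} > 0$ precisely for $i \sim j$ in $G$, so zero contributions occur only for non-edges. Hence $\L_A \geq 0$ on all of $\R^q$, and the task reduces to identifying its kernel.

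Next I would analyze when $\L_A(x,x) = 0$. Since $A^{ij} > 0$ on edges, this forces $x_i = x_j$ for every edge $\{i,j\}$ of $G$. By connectedness of $G$ (invoked from the hypothesis, which is exactly what Lemma~\ref{lem:LA-connected} supplies in the applications), traversing paths in $G$ yields $x_i = x_j$ for every pair $i,j \in \{1,\ldots,q\}$, so $x$ is a constant vector, i.e.\ $x \in \mathrm{span}(e_1+\cdots+e_q)$. Therefore $\ker \L_A = \mathrm{span}(\mathbf{1})$, which is precisely the orthogonal complement of $E^{(q-1)}$ inside $\R^q$.

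Finally, restricting to $x \in E^{(q-1)} = \{x : \sum_i x_i = 0\}$, the only constant vector there is $0$, so $\L_A(x,x) = 0$ forces $x = 0$. Combined with non-negativity, this yields strict positive-definiteness on $E^{(q-1)}$, and in particular $\mathrm{rank}(\L_A) = \dim E^{(q-1)} = q-1$.

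There is no real obstacle here; the only subtle point is to keep track that the quadratic form is being considered on $E^{(q-1)}$ (not on $\R^q$, where it is only positive semidefinite), and that connectedness of $G$ together with strict positivity of the weights on edges is exactly what is needed to pin down the kernel as the constants.
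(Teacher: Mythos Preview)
Your proof is correct and is precisely the standard argument for this well-known fact about the weighted graph Laplacian. The paper itself does not spell out the argument but simply cites it as a known result (referring to \cite[Lemma 3.4]{EMilmanNeeman-GaussianMultiBubble}), so your write-up in fact provides the details that the paper omits.
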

\begin{proof}
The discrete Laplacian on a connected graph is positive definite on the subspace $E^{(q-1)}$ perpendicular to the all-ones vector -- see \cite[Lemma 3.4]{EMilmanNeeman-GaussianMultiBubble}.
\end{proof}

We call the weights $A = \{ A^{ij} \}_{i\sim j}$ oriented if $A^{ji} = -A^{ij}$ for all $i\sim j$. 

\begin{lemma} \label{lem:LA}
Fix an undirected connected graph $G$ on $\{1,\ldots,q\}$ and oriented weights $A = \{ A^{ij} \}_{i \sim j}$. Then the following are equivalent:
\begin{enumerate}[(i)]
\item \label{it:LA1} $A^{ij} = a_i - a_j$ for some $a \in E^{(q-1)}$ and all $i \sim j$. 
\item \label{it:LA2} For every directed cycle $(i_1,\ldots,i_{N+1})$ on $G$ (with $i_k \sim i_{k+1}$ and $i_{N+1} = i_1$) it holds that $\sum_{k=1}^N A^{i_k i_{k+1}} = 0$. 
\item \label{it:LA3} For all oriented weights $B = \{ B^{ij} \}_{i \sim j}$ so that $\sum_{j \; ; \; j \sim i} B^{ij} = 0$ for all $i=1,\ldots,q$, it holds that $\sum_{i < j \; ; \; i\sim j} A^{ij} B^{ij} = 0$.
\end{enumerate}
\end{lemma}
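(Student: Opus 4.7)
\medskip

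\noindent\textbf{Proof plan.} The three conditions are the standard discrete de Rham / Hodge statement on a finite graph: oriented weights should be thought of as ``discrete $1$-forms,'' statement (i) says $A$ is \emph{exact}, statement (ii) says $A$ is \emph{closed}, and statement (iii) says $A$ is orthogonal to every discrete \emph{cycle} (equivalently, to the image of the ``boundary'' map, so that $A$ is a ``cocycle''). I therefore plan to prove the chain (i)$\Rightarrow$(ii)$\Rightarrow$(i) and the pair (i)$\Leftrightarrow$(iii) separately, using only elementary summation by parts and a spanning-tree argument.

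\medskip

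\noindent\textbf{Step 1: (i)$\Rightarrow$(ii).} If $A^{ij}=a_i-a_j$, then for any directed cycle $(i_1,\dots,i_{N+1})$ with $i_{N+1}=i_1$, the sum $\sum_{k=1}^N A^{i_k i_{k+1}}=\sum_{k=1}^N (a_{i_k}-a_{i_{k+1}})$ telescopes to $0$.

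\medskip

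\noindent\textbf{Step 2: (ii)$\Rightarrow$(i).} Fix any spanning tree $T$ of the connected graph $G$, rooted at vertex $1$. Define $\tilde a_1=0$, and for $i\neq 1$ set $\tilde a_i := \sum_{k=1}^{N_i} A^{j_k j_{k+1}}$ along the unique directed path $(1=j_1,j_2,\dots,j_{N_i+1}=i)$ in $T$ from $1$ to $i$; by construction $\tilde a_i-\tilde a_j = A^{ij}$ for every tree edge $i\sim_T j$. For an edge $i\sim j$ not in $T$, form the unique cycle in $T\cup\{ij\}$; applying (ii) to this cycle shows that $\tilde a_i-\tilde a_j = A^{ij}$ as well. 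Finally set $a := \tilde a - \bar{\tilde a}\cdot(1,\dots,1)$ with $\bar{\tilde a} := \frac{1}{q}\sum_i \tilde a_i$, so $a\in E^{(q-1)}$ and still $a_i-a_j=A^{ij}$.

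\medskip

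\noindent\textbf{Step 3: (i)$\Rightarrow$(iii).} Given $A^{ij}=a_i-a_j$ and $B$ as in (iii), use that $A^{ij}B^{ij}=A^{ji}B^{ji}$ (both weights flip sign) to write
\[
\sum_{\substack{i<j\\ i\sim j}} A^{ij}B^{ij} \;=\; \tfrac12\sum_{i\sim j} (a_i-a_j) B^{ij} \;=\; \tfrac12\sum_i a_i\sum_{j:\, j\sim i} B^{ij} \;-\; \tfrac12 \sum_j a_j\sum_{i:\, i\sim j} B^{ij} \;=\; 0,
\]
since both inner sums vanish by assumption on $B$.

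\medskip

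\noindent\textbf{Step 4: (iii)$\Rightarrow$(ii).} Given a directed cycle $\gamma=(i_1,\dots,i_{N+1})$ on $G$, define oriented weights $B_\gamma$ by
\[
B_\gamma^{ij} \;:=\; \#\{k:\, (i_k,i_{k+1})=(i,j)\} \;-\; \#\{k:\, (i_k,i_{k+1})=(j,i)\} \quad\text{for } i\sim j,
\]
and $B_\gamma^{ij}=0$ otherwise. Then $B_\gamma$ is oriented, and for each vertex $v$ the sum $\sum_{j\,:\,j\sim v} B_\gamma^{vj}$ equals the number of times $\gamma$ leaves $v$ minus the number of times $\gamma$ enters $v$, which is $0$. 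A direct computation gives $\sum_{i<j,\, i\sim j} A^{ij} B_\gamma^{ij} = \sum_{k=1}^N A^{i_k i_{k+1}}$, so (iii) forces (ii).

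\medskip

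\noindent\textbf{Main obstacle.} There is no conceptual difficulty: the only care needed is bookkeeping of signs, since both $A$ and $B$ are anti-symmetric in the two orderings of an edge (so one must consistently choose whether to sum over $\{i<j\}$ or over all ordered adjacent pairs with a factor $\frac12$). The spanning-tree construction in Step~2 is standard, and consistency off the tree follows from applying (ii) to the unique fundamental cycle of each non-tree edge. Adjusting by a constant multiple of $(1,\dots,1)$ at the end places $a$ in $E^{(q-1)}$ without affecting the differences $a_i-a_j$.
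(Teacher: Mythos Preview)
Your proposal is correct and follows essentially the same route as the paper: telescoping for (i)$\Rightarrow$(ii), a path/connectivity argument for (ii)$\Rightarrow$(i), summation by parts for (i)$\Rightarrow$(iii), and constructing a divergence-free $B$ from a cycle for (iii)$\Rightarrow$(ii) (the paper does this for simple cycles and then decomposes, whereas you handle general cycles directly via signed multiplicity---a harmless variation). One small slip in Step~2: with your definition $\tilde a_i := \sum_k A^{j_k j_{k+1}}$ along the tree path from $1$ to $i$, if $j$ is the parent of $i$ you get $\tilde a_i = \tilde a_j + A^{ji}$, hence $\tilde a_i - \tilde a_j = -A^{ij}$, the wrong sign; define instead $\tilde a_i := \sum_k A^{j_{k+1} j_k}$ (or insert a global minus sign) and the rest goes through verbatim.
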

\begin{proof}
The equivalence between statements \ref{it:LA1} and \ref{it:LA2} is straightforward, thanks to the connectivity of $G$ (vanishing of the zeroth reduced homology). Statement \ref{it:LA1} implies \ref{it:LA3} since:
\[
\sum_{i<j \; ; \; i \sim j } (a_i - a_j) B^{ij} = \sum_i a_i \sum_{j \; ; \; j \sim i} B^{ij} = \sum_i a_i 0 = 0 . 
\]
Finally, assume that statement \ref{it:LA3} holds and let $(i_1,\ldots,i_{N+1})$ be a simple directed cycle (with no repeating edges) on $G$. Define the oriented weights $B^{ij}$ by $B^{i_k i_{k+1}} = 1 = -B^{i_{k+1} i_{k}}$ along the cycle and zero everywhere else. Then $B$ satisfies $\sum_{j \; ; \; j \sim i} B^{ij} = 0$ for all $i$ (this trivially holds for vertices outside the cycle, and since $1-1=0$ on the cycle itself), and hence $\sum_{k=1}^N A^{i_k i_{k+1}} = \sum_{i < j \; ; \; i \sim j} A^{ij} B^{ij} = 0$, verifying statement \ref{it:LA2} for simple directed cycles. The case of general directed cycles follows by decomposing into simple cycles. 
\end{proof}

\subsection{First variation information -- stationary clusters}

Given a $C_c^\infty$ vector-field $X$ on $M$, let $F_t$ be the associated flow along $X$, defined as the family of $C^\infty$ diffeomorphisms $\set{F_t : M^n \to M^n}_{t \in \R}$ solving the following ODE:
\[ \frac{d}{dt} F_t(x) = X \circ F_t(x) ~,~ F_0 = \Id .
\] Clearly, $F_t(\Omega) = (F_t(\Omega_1), \ldots,F_t(\Omega_q))$ remains a cluster for all $t$. 
We define the $k$-th variations of weighted volume and perimeter of $\Omega$, $k \geq 1$, as:
\begin{align*}
  \delta_X^k V(\Omega)_i = \delta_X^k V(\Omega_i) & := \left. \frac{d^k}{(dt)^k}\right|_{t=0} \mu(F_t(\Omega_i)) \;\;  i = 1,\ldots,q-1 ~,\\
  \delta_X^k A(\Omega) & := \left. \frac{d^k}{(dt)^k}\right|_{t=0} P_\mu(F_t(\Omega)) .
\end{align*}
By \cite[Lemma 3.3]{EMilmanNeeman-GaussianMultiBubble}, $t \mapsto \mu(F_t(\Omega_i))$ ($i=1,\ldots,q-1$) and $t \mapsto P_\mu(F_t(\Omega))$ are $C^\infty$ functions in some open neighborhood of $t=0$, and so in particular the above variations are well-defined and finite. To properly treat the case when $\mu(M) = \infty$ and hence $\mu(\Omega_q) = \infty$, we define:
\[
\delta_X^k V(\Omega)_q = \delta_X^k V(\Omega_q) := -  \sum_{i=1}^{q-1} \delta_X^k V(\Omega_i) .
\]
Note that when $\mu(M) < \infty$, the above definition coincides with:
\[
 \delta_X^k V(\Omega)_q = \delta_X^k V(\Omega_q) := \left. \frac{d^k}{(dt)^k}\right|_{t=0} \mu(F_t(\Omega_q)) . 
\]
Our convention ensures that regardless of whether $\mu(M) = \infty$ or $\mu(M) < \infty$, we have $\delta_X^k V(\Omega) \in E^{(q-1)}$. When $\Omega$ is clear from the context, we will simply write $\delta_X^k V$ and $\delta_X^k A$. 

\bigskip

By testing the first-variation of a minimizing cluster, one obtains the following well-known information (see \cite[Lemmas 4.3 and 4.5]{EMilmanNeeman-GaussianMultiBubble} for a detailed proof in the case that $\mu(M) < \infty$; the proof immediately extends to the case $\mu(M)=\infty$ with the above convention): 

\begin{lemma}[First-order conditions] \label{lem:first-order-conditions}
  For any isoperimetric minimizing $q$-cluster $\Omega$ on $(M^n,g,\mu)$:
  \begin{enumerate}[(i)]
    \item \label{it:first-order-constant} 
    On each $\Sigma_{ij}$, $H_{ij,\mu}$ is constant. 
    \item \label{it:first-order-cyclic}
    There exists $\lambda \in E^{(q-1)}$ such that $H_{ij,\mu} = \lambda_i - \lambda_j$ for all $i \neq j$; moreover, $\lambda \in E^{(q-1)}$ is unique whenever $\mu(\Omega) \in \interior \simplex^{(q-1)}_{\mu(M^n)}$. 
        \item \label{it:weak-angles}
    $\Sigma^1$ does not have a boundary in the distributional sense -- for every $C_c^\infty$ vector-field $X$:
      \begin{equation} \label{eq:no-boundary}
        \sum_{i<j} \int_{\Sigma_{ij}} \div_{\Sigma,\mu} X^\tang\, d\mu^{n-1} = 0.
      \end{equation}
        \end{enumerate}
\end{lemma}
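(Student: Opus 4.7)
The plan is to use the standard first-variation formulas together with a Lagrange multiplier argument built on the surjectivity of the volume-variation map. Recall the general identities obtained by differentiating under the flow $F_t$ of $X \in C_c^\infty$:
\[
\delta_X V(\Omega_i) = \sum_{j \neq i} \int_{\Sigma_{ij}} X^{\n_{ij}}\, d\mu^{n-1}, \quad i = 1,\ldots,q-1,
\]
and, using the weighted Stokes' identity \eqref{eq:Stokes-classical} applied interface by interface (which is legitimate by the interface--regularity from Theorem \ref{thm:Almgren} \ref{it:Almgren-iii}, since $X$ has compact support and each $\Sigma_{ij}$ is a smooth open manifold),
\[
\delta_X A(\Omega) = \sum_{i<j} \int_{\Sigma_{ij}} \bigl( H_{ij,\mu}\, X^{\n_{ij}} + \div_{\Sigma,\mu} X^\tang \bigr)\, d\mu^{n-1}.
\]

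First I would establish the existence and uniqueness of $\lambda$. For each pair $i \neq j$ with $\Sigma_{ij} \neq \emptyset$, pick $p \in \Sigma_{ij}$ and, using Theorem \ref{thm:Almgren} \ref{it:Almgren-iii}, a ball $B(p,\eps)$ meeting only $\Sigma_{ij}$; any bump field supported there with nonzero normal component produces a volume variation proportional to $e_i - e_j \in E^{(q-1)}$. By the graph connectedness of Lemma \ref{lem:LA-connected}, the collection $\{e_i - e_j : \Sigma_{ij} \neq \emptyset\}$ spans $E^{(q-1)}$, so there exist fields $X_1,\ldots,X_{q-1}$ whose volume variations form a basis of $E^{(q-1)}$. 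Given any $Y \in C_c^\infty$, the implicit function theorem applied to the $C^\infty$ map $(t,s) \mapsto \mu\bigl(F_Y^t \circ F_{X_1}^{s_1} \circ \cdots \circ F_{X_{q-1}}^{s_{q-1}}(\Omega)\bigr)$ (smoothness is guaranteed by \cite[Lemma~3.3]{EMilmanNeeman-GaussianMultiBubble}) yields a $C^1$ family of volume-preserving perturbations of $\Omega$ whose first-order generator is $Y - \sum_k c_k(Y) X_k$ with $c_k(Y)$ depending linearly on $\delta_Y V$. Minimality forces the first variation of $A$ along this family to vanish, whence $\delta_Y A = \sum_k c_k(Y) \,\delta_{X_k} A$ is a linear functional of $\delta_Y V$. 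Identifying $(E^{(q-1)})^*$ with $E^{(q-1)}$ yields $\lambda \in E^{(q-1)}$ with
\[
\delta_X A = \sum_{i=1}^q \lambda_i\, \delta_X V(\Omega_i) \qquad \forall X \in C_c^\infty ;
\]
uniqueness of $\lambda$ follows from the surjectivity of $X \mapsto \delta_X V$.

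Next I would plug in the first-variation formulas. After pairing orientations on each $\Sigma_{ij}$, the right-hand side rewrites as $\sum_{i<j} \int_{\Sigma_{ij}} (\lambda_i - \lambda_j) X^{\n_{ij}}\, d\mu^{n-1}$, so we obtain, for every compactly supported $X$,
\[
\sum_{i<j} \int_{\Sigma_{ij}} \bigl( H_{ij,\mu} - (\lambda_i-\lambda_j)\bigr) X^{\n_{ij}}\, d\mu^{n-1} \; + \; \sum_{i<j} \int_{\Sigma_{ij}} \div_{\Sigma,\mu} X^\tang\, d\mu^{n-1} = 0.
\]
Localising to the $\eps$-neighbourhoods above (where only one $\Sigma_{ij}$ is touched), and taking $X$ supported there and purely normal, the tangential term drops and the fundamental lemma of the calculus of variations yields $H_{ij,\mu} = \lambda_i - \lambda_j$ pointwise on that patch. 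Running this over all interfaces gives (i) and (ii).

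Finally, substituting $H_{ij,\mu} = \lambda_i - \lambda_j$ back into the displayed identity, the normal integral cancels and we are left with $\sum_{i<j} \int_{\Sigma_{ij}} \div_{\Sigma,\mu} X^\tang\, d\mu^{n-1} = 0$ for every $X \in C_c^\infty$, which is \eqref{eq:no-boundary}, proving (iii). The main obstacle in this plan is the Lagrange multiplier step: one must produce an \emph{actual} one-parameter volume-preserving family (not merely a first-order correction) in order to legally invoke minimality to kill $\delta_X A$ on the kernel of $\delta_X V$; this is why it is crucial that $\mu(\Omega) \in \interior \Delta^{(q-1)}_{\mu(M^n)}$, so that the cells have strictly positive and (for $i < q$) finite mass and the implicit function theorem applies.
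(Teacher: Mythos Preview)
Your proposal is correct and follows the standard first-variation / Lagrange-multiplier approach that the paper invokes by citing \cite[Lemmas 4.3 and 4.5]{EMilmanNeeman-GaussianMultiBubble} rather than spelling out a proof. Two small clean-ups: the first-variation-of-area formula comes directly from $\delta_X P_\mu(\Omega_i) = \int_{\partial^* \Omega_i} \div_{\Sigma,\mu} X\, d\mu^{n-1}$ plus the pointwise splitting $\div_{\Sigma,\mu} X = H_{\Sigma,\mu} X^\n + \div_{\Sigma,\mu} X^\tang$, not from \eqref{eq:Stokes-classical}; and in the localization step, for $X$ supported in a ball meeting only $\Sigma_{ij}$, the tangential term $\int_{\Sigma_{ij}} \div_{\Sigma,\mu} X^\tang\, d\mu^{n-1}$ already vanishes by the divergence theorem on the smooth open manifold $\Sigma_{ij}$, so there is no need to arrange $X$ to be purely normal.
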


\begin{remark}
The physical interpretation of $\lambda_i$ is that of air pressure inside cell $\Omega_i$; the weighted mean-curvature $H_{ij,\mu}$ is thus the pressure difference across $\Sigma_{ij}$. 
\end{remark}
 
\begin{definition}[Stationary Cluster]
An interface-regular $q$-cluster $\Omega$ satisfying the three conclusions of Lemma \ref{lem:first-order-conditions} is called stationary (with Lagrange multiplier $\lambda \in E^{(q-1)}$). \end{definition}

\noindent The following lemma provides an interpretation of $\lambda \in E^{(q-1)}$ as a Lagrange multiplier for the isoperimetric constrained minimization problem:

\begin{lemma}[Lagrange Multiplier] \label{lem:Lagrange}      
Let $\Omega$ be stationary $q$-cluster with Lagrange multiplier $\lambda \in E^{(q-1)}$.
    Then for every $C_c^\infty$ vector-field $X$: 
    \begin{align}
    \nonumber
     \delta^1_X V(\Omega)_i &= \sum_{j \neq i} \int_{\Sigma_{ij}} X^{\n_{ij}}\, d\mu^{n-1} \;\;\; \forall i = 1,\ldots,q \;\; , \\
     \label{eq:1st-var-area}
      \delta^1_X A(\Omega) &= \sum_{i<j} H_{ij,\mu} \int_{\Sigma_{ij}} X^{\n_{ij}}\, d\mu^{n-1} .
    \end{align} 
    In particular, 
    \[
      \delta^1_X A = \scalar{\lambda,\delta^1_X V} .
    \]
  \end{lemma}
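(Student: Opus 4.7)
The plan is to derive each of the three identities in turn from the classical first-variation formulas and the two stationarity conditions, with the only genuinely delicate point being the bookkeeping for the (possibly unbounded) cell $\Omega_q$.

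For the volume identity at any $i$ with $\mu(\Omega_i) < \infty$, I would apply the change of variables $F_t$ to $\mu(F_t(\Omega_i))$, differentiate at $t=0$, and recognize the result via the weighted divergence theorem and the definition (\ref{eq:weighted-div}) of $\div_\mu$ as
\[
\delta^1_X V(\Omega)_i = \int_{\Omega_i} \div_\mu X \, d\mu = \int_{\partial^* \Omega_i} X^{\n_i}\, d\mu^{n-1},
\]
where $\n_i$ is the outward unit normal to $\Omega_i$. The $\H^{n-1}$-a.e.\ decomposition (\ref{eq:nothing-lost}) and the identification $\n_i = \n_{ij}$ on $\Sigma_{ij}$ then split this into the claimed sum $\sum_{j \neq i} \int_{\Sigma_{ij}} X^{\n_{ij}}\, d\mu^{n-1}$.

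Still for the first identity, the exceptional case $\mu(\Omega_q) = \infty$ is handled by appealing to the convention $\delta^1_X V(\Omega)_q = -\sum_{i<q} \delta^1_X V(\Omega)_i$. Substituting the already-proved identities gives a double sum $-\sum_{i<q}\sum_{j\ne i} \int_{\Sigma_{ij}} X^{\n_{ij}} d\mu^{n-1}$; using $\n_{ji}=-\n_{ij}$, every contribution from a pair $i,j<q$ cancels in pairs, leaving exactly $\sum_{j\ne q} \int_{\Sigma_{qj}} X^{\n_{qj}} d\mu^{n-1}$, as required. When $\mu(M)<\infty$ the same calculation works or one observes that the convention coincides with direct differentiation.

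For the area identity I would start from the classical weighted first variation of perimeter, summed over interfaces:
\[
\delta^1_X A(\Omega) = \sum_{i<j} \int_{\Sigma_{ij}} \div_{\Sigma,\mu} X \, d\mu^{n-1}.
\]
On each $\Sigma_{ij}$ decompose $X = X^{\n_{ij}} \n_{ij} + X^\tang$ and use the identity $\div_{\Sigma,\mu} X = H_{\Sigma,\mu} X^{\n} + \div_{\Sigma,\mu} X^\tang$ recorded in the preliminaries (just before (\ref{eq:Stokes-classical})). The tangential piece, summed over all interfaces, vanishes by the stationarity condition \ref{it:weak-angles}, equation (\ref{eq:no-boundary}), leaving precisely $\sum_{i<j} H_{ij,\mu} \int_{\Sigma_{ij}} X^{\n_{ij}} d\mu^{n-1}$.

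Finally, substituting $H_{ij,\mu} = \lambda_i - \lambda_j$ and setting $A_{ij} := \int_{\Sigma_{ij}} X^{\n_{ij}} d\mu^{n-1}$ (so $A_{ji} = -A_{ij}$), a short antisymmetrization gives
\[
\sum_{i<j}(\lambda_i - \lambda_j) A_{ij} = \tfrac{1}{2}\sum_{i\ne j}(\lambda_i - \lambda_j) A_{ij} = \sum_{i \ne j} \lambda_i A_{ij} = \sum_{i=1}^q \lambda_i \sum_{j\ne i} A_{ij} = \sum_{i=1}^q \lambda_i\, \delta^1_X V(\Omega)_i = \scalar{\lambda, \delta^1_X V},
\]
where the penultimate equality invokes the volume identity for every $i$, including $i=q$. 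The only mildly subtle obstacle is the unbounded-cell bookkeeping, which the convention together with $\n_{ij}=-\n_{ji}$ handles cleanly; everything else is a direct consequence of the weighted first-variation formulas and the stationarity hypotheses.
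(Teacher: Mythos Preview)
Your proof is correct and follows the standard approach. The paper itself does not give a proof of this lemma; it is simply stated as an interpretation of $\lambda$ as a Lagrange multiplier, with the underlying first-variation formulas already referenced to \cite[Lemmas~4.3 and~4.5]{EMilmanNeeman-GaussianMultiBubble} in the discussion preceding the first-order conditions. Your argument---divergence theorem for the volume variation together with the decomposition~\eqref{eq:nothing-lost}, the convention for $\delta^1_X V(\Omega)_q$ when $\mu(M)=\infty$, the splitting $\div_{\Sigma,\mu} X = H_{\Sigma,\mu} X^\n + \div_{\Sigma,\mu} X^\tang$ combined with the no-boundary condition~\eqref{eq:no-boundary} for the area variation, and the antisymmetrization to obtain $\scalar{\lambda,\delta^1_X V}$---is exactly the natural proof and is what the cited reference does.
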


\subsection{Regular clusters} \label{subsec:regularity}

Given a minimizing cluster $\Omega$, recall (\ref{eq:Sigma-Sigma1})
 and observe that $\Sigma = \overline{\Sigma^1}$ by (\ref{eq:top-nothing-lost}) and our convention from Theorem \ref{thm:Almgren} \ref{it:Almgren-ii}. We will require additional information on the higher codimensional structure of $\Sigma$. To this end, define two special cones:
\begin{align*}
    \Y &:= \{x \in E^{(2)}\; ; \; \text{ there exist $i \ne j \in \{1,2,3\}$ with $x_i = x_j = \max_{k \in \{1,2,3\}} x_k$}\} , \\
    \T &:= \{x \in E^{(3)}\; ; \; \text{ there exist $i \ne j \in \{1,2,3,4\}$ with $x_i = x_j = \max_{k \in \{1,2,3,4\}} x_k$}\}.
\end{align*}
Note that $\Y$ consists of $3$ half-lines meeting at the origin in $120^\circ$ angles, and that $\T$ consists of $6$ two-dimensional sectors meeting in threes at $120^{\circ}$ angles along $4$ half-lines, which in turn all meet at the origin in $\cos^{-1}(-1/3) \simeq 109^{\circ}$ angles. The next theorem asserts that on the codimension-$2$ and codimension-$3$ parts of a minimizing cluster, $\Sigma$ locally looks like $\Y \times \R^{n-2}$ and $\T \times \R^{n-3}$, respectively.

\begin{theorem}[Taylor, White, Colombo--Edelen--Spolaor] \label{thm:regularity}
    Let $\Omega$ be a minimizing cluster on $(M^n,g,\mu)$, with $\mu = \exp(-W) \vol_g$ and $W \in C^\infty(M)$. Then there exist $\alpha > 0$
          and sets $\Sigma^2, \Sigma^3, \Sigma^4 \subset \Sigma$ such that:
    \begin{enumerate}[(i)]
        \item \label{it:regularity-union}
        $\Sigma$ is the disjoint union of $\Sigma^1,\Sigma^2,\Sigma^3,\Sigma^4$;             
        \item \label{it:regularity-Sigma2}         
        $\Sigma^2$ is a locally-finite union of embedded $(n-2)$-dimensional $C^{\infty}$ manifolds, and for every $p \in \Sigma^2$ there is a $C^\infty$ diffeomorphism mapping a neighborhood of $p$ in $M$ to a neighborhood of the origin in $E^{(2)} \times \R^{n-2}$, so that $p$ is mapped to the origin and $\Sigma$ is locally mapped to $\Y \times \R^{n-2}$ ; 
        \item \label{it:regularity-Sigma3}
         $\Sigma^3$ is a locally-finite union of embedded $(n-3)$-dimensional $C^{1,\alpha}$ manifolds, and for every $p \in \Sigma^3$ there is a $C^{1,\alpha}$ diffeomorphism mapping a neighborhood of $p$ in $M$ to a neighborhood of the origin in $E^{(3)} \times \R^{n-3}$, so that $p$ is mapped to the origin and $\Sigma$ is locally mapped to $\T \times \R^{n-3}$ ;
         \item \label{it:regularity-dimension}
         $\Sigma^4$ is closed and $\dim_{\H}(\Sigma^4) \leq n-4$.             
    \end{enumerate}
\end{theorem}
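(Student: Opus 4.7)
The plan is to combine Almgren's regularity framework with Taylor's tangent cone classification and Federer-type dimension reduction, adapted to the weighted Riemannian setting. Since the statement is a synthesis of classical results (Almgren, Taylor, White, Colombo--Edelen--Spolaor), the task is to assemble the right pieces rather than to invent new machinery. I would begin by recalling that any minimizing cluster $\Omega$ is an $(M,\xi,\delta)$-minimal set in the sense of Almgren: the weight $e^{-W}$, being smooth and positive, only contributes a lower-order term to the perimeter density, so at small scales the cluster behaves as an almost-minimizer of the unweighted perimeter. Consequently Almgren's monotonicity formula applies and guarantees that at every $p \in \Sigma$, blow-ups along rescaled charts subsequentially converge to an area-minimizing cone $C_p$ with free boundary structure inherited from the cluster partition.

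Next I would classify these tangent cones by codimension. At points where $C_p$ is a hyperplane, Allard's regularity theorem already gives that $\Sigma$ is a smooth $(n-1)$-manifold near $p$; this is part \ref{it:regularity-Sigma2} of Theorem \ref{thm:Almgren} and corresponds to $\Sigma^1$. The remaining tangent cones are lower-dimensional minimizing partitions. Taylor's celebrated work classifies the two-dimensional area-minimizing cones in $\R^3$ as exactly $\Y \times \R$ and $\T$, which after taking products with Euclidean factors yields the candidates $\Y \times \R^{n-2}$ and $\T \times \R^{n-3}$ for cones arising at codimension $2$ and $3$ respectively. I would then define $\Sigma^2$ (resp.\ $\Sigma^3$) as the set of points whose tangent cones are of type $\Y \times \R^{n-2}$ (resp.\ $\T \times \R^{n-3}$), and $\Sigma^4$ as the set where no such tangent cone exists.

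The hard part will be upgrading tangent cone information to local regularity of $\Sigma$. For $\Sigma^2$, Taylor's $\epsilon$-regularity theorem (and its extension by White to clusters in arbitrary codimension) gives a $C^{1,\alpha}$ diffeomorphism to the model $\Y \times \R^{n-2}$; higher $C^\infty$ regularity then follows by bootstrap from the smooth system of PDEs (constant mean-curvature interfaces meeting at fixed $120^\circ$ angles) satisfied on each sheet, combined with Kinderlehrer--Nirenberg-type reflection arguments across the $\Y$-type singular set. For $\Sigma^3$, the analogous $\epsilon$-regularity result, proven in modern generality by Colombo--Edelen--Spolaor, produces a $C^{1,\alpha}$ diffeomorphism to $\T \times \R^{n-3}$; here $C^\infty$ regularity across quadruple points is not available and one must stop at $C^{1,\alpha}$. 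In both cases, the weight $e^{-W}$ enters only through smooth perturbations of the Euler--Lagrange system and does not affect the structural conclusion. The local-finiteness assertions in parts \ref{it:regularity-Sigma2} and \ref{it:regularity-Sigma3} follow from the $\epsilon$-regularity together with the density bound \ref{it:Almgren-density} of Theorem \ref{thm:Almgren}.

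Finally, for part \ref{it:regularity-dimension} I would apply Federer's dimension reduction argument: $\Sigma^4$ is closed because its complement $\Sigma^1 \cup \Sigma^2 \cup \Sigma^3$ is open by the $\epsilon$-regularity theorems, and if $\Sigma^4$ had Hausdorff dimension strictly greater than $n-4$, then by iterated blow-up (taking tangent cones of tangent cones) one would produce a minimizing cone of dimension at most $3$ in an ambient Euclidean space that is not of type hyperplane, $\Y \times \R$ or $\T$, contradicting Taylor's classification in dimension $\leq 3$. The main obstacle in carrying this out rigorously is verifying that the weighted setting and the cluster (as opposed to single-set) formulation do not disturb any of the above steps; this amounts to checking that the almost-minimality constant in Almgren's sense is indeed controlled by $\|\nabla W\|_\infty$ on compact sets, and that the free boundary conditions between cells enter the $\epsilon$-regularity theorems in the same way as in the model Euclidean, unweighted case. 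All of these verifications are by now standard and documented in the literature we cite, so the proof reduces to a careful invocation of these results.
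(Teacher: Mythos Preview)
Your outline is essentially correct as a high-level sketch of how this theorem is established in the cited literature, and it matches the paper's treatment: the paper does not give its own proof of this statement but rather cites Taylor, White, Simon, and Colombo--Edelen--Spolaor for the Euclidean result, and then refers to \cite[Theorem~5.1]{EMilmanNeeman-GaussianMultiBubble} for the passage to the weighted Riemannian setting (via local charts and inserting the smooth metric and density into the excess function). Your proposal is thus consistent with, and somewhat more detailed than, the paper's own handling, which is simply a remark collecting references rather than a proof.
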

\begin{remark}
In the classical unweighted Euclidean setting (when $\Omega$ is a minimizing cluster with respect to the Lebesgue measure in $\R^n$), the case $n=2$ was shown by F.~Morgan in \cite{MorganSoapBubblesInR2} building upon the work of Almgren \cite{AlmgrenMemoirs}, and also follows from the results of  J.~Taylor~\cite{Taylor-SoapBubbleRegularityInR3}. The case $n=3$  was established by Taylor \cite{Taylor-SoapBubbleRegularityInR3} for general $(\M,\eps,\delta)$ sets in the sense of Almgren. 
 When $n \geq 4$, Theorem \ref{thm:regularity} was announced by B.~White \cite{White-AusyAnnouncementOfClusterRegularity,White-SoapBubbleRegularityInRn} for general $(\M,\eps,\delta)$ sets. Theorem \ref{thm:regularity} with part \ref{it:regularity-Sigma3} replaced by $\dim_{\H}(\Sigma^3) \leq n-3$ follows from the work of L.~Simon \cite{Simon-Codimension2Regularity}.
A version of Theorem \ref{thm:regularity} for multiplicity-one integral varifolds in an open set $U \subset \R^n$ having associated cycle structure, no boundary in $U$, bounded mean-curvature and whose support is $(\M,\eps,\delta)$ minimizing, was established by M.~Colombo, N.~Edelen and L.~Spolaor  \cite[Theorem~1.3, Remark~1.4, Theorem~3.10]{CES-RegularityOfMinimalSurfacesNearCones}; in particular, this applies to isoperimetric minimizing clusters in $\R^n$ \cite[Theorem~3.8]{CES-RegularityOfMinimalSurfacesNearCones}. By working in smooth charts and inserting their effect as well as that of the smooth positive density into the excess function, the latter extends to the smooth weighted Riemannian setting --  see the proof of \cite[Theorem~5.1]{EMilmanNeeman-GaussianMultiBubble} for a verification. The work of Naber and Valtorta \cite{NaberValtorta-MinimizingHarmonicMaps} implies that $\Sigma^4$ is actually $\H^{n-4}$-rectifiable and has locally-finite $\H^{n-4}$ measure, but we will not require this here. 
\end{remark}
\begin{remark} \label{rem:extend-to-Riemannian}
 In the aforementioned references, \ref{it:regularity-Sigma2} is established with only $C^{1,\alpha}$ regularity, but elliptic regularity for systems of PDEs and a classical reflection argument of Kinderlehrer--Nirenberg--Spruck \cite{KNS} allows to upgrade this to the stated $C^\infty$ regularity -- see \cite[Corollary 5.6 and Appendix F]{EMilmanNeeman-GaussianMultiBubble} for a proof on $(\R^n,|\cdot|,\mu)$. The latter argument extends to the Riemannian setting by working in a smooth local chart and inserting the effect of the Riemannian metric into the system of PDEs for the constant (weighted) mean curvatures of $\Sigma_{ij}$, $\Sigma_{jk}$ and $\Sigma_{ki}$, which are already in quasi-linear form. 
\end{remark}

\begin{corollary} \label{cor:locally-finite-Sigma2}
For any compact set $K \subset \R^n$ which is disjoint from $\Sigma^4$, $\mu^{n-2}(\Sigma^2 \cap K) < \infty$. 
\end{corollary}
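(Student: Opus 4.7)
The plan is to run a straightforward open cover / compactness argument, using Theorem~\ref{thm:regularity} to control the local structure of $\Sigma$ at every point of $K$. Since $\Sigma^4$ is closed by \ref{it:regularity-dimension} and $K$ is compact with $K\cap \Sigma^4=\emptyset$, the distance $\delta:=d(K,\Sigma^4)$ is strictly positive, so we may restrict attention to the compact neighborhood $K':=\{x\in M:d(x,K)\leq \delta/2\}$, which avoids $\Sigma^4$ altogether. It therefore suffices to exhibit, for every $p\in K$, an open neighborhood $U_p$ of $p$ with $\overline{U_p}\subset K'$ and $\mu^{n-2}(\Sigma^2\cap U_p)<\infty$; compactness then extracts a finite subcover $\{U_{p_j}\}_{j=1}^N$ and yields $\mu^{n-2}(\Sigma^2\cap K)\leq \sum_{j=1}^N \mu^{n-2}(\Sigma^2\cap U_{p_j})<\infty$.

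To produce such neighborhoods, we split into cases according to which of the strata $\Sigma\setminus \Sigma^4 = \Sigma^1\sqcup \Sigma^2\sqcup \Sigma^3$ the point $p$ belongs to (with the trivial case $p\in K\setminus \Sigma$ handled by picking $U_p$ disjoint from the closed set $\Sigma$, giving $\Sigma^2\cap U_p=\emptyset$). If $p\in \Sigma^1$, Theorem~\ref{thm:Almgren}\ref{it:Almgren-iii} supplies $\eps>0$ such that $\Sigma\cap B(p,\eps)$ is a smooth embedded $(n-1)$-manifold and $B(p,\eps)$ meets no third cell, so $\Sigma^2\cap B(p,\eps)=\emptyset$. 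If $p\in \Sigma^2$, apply the $C^\infty$ chart of Theorem~\ref{thm:regularity}\ref{it:regularity-Sigma2} to map a small neighborhood of $p$ diffeomorphically onto a bounded ball in $E^{(2)}\times \R^{n-2}$ around the origin, taking $\Sigma^2\cap U_p$ to (a relatively open bounded piece of) the singular axis $\{0\}\times \R^{n-2}$; being a diffeomorphic image of a bounded subset of an $(n-2)$-plane, this set has finite $\H^{n-2}$-measure, and since the density $e^{-W}$ is bounded on the compact $\overline{U_p}$, it has finite $\mu^{n-2}$-measure.

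The only case that deserves a second look is $p\in \Sigma^3$, since here one might a priori worry that infinitely many components of $\Sigma^2$ accumulate at $p$. The content of Theorem~\ref{thm:regularity}\ref{it:regularity-Sigma3} is precisely that this cannot happen: in a $C^{1,\alpha}$ chart the local model is $\T\times \R^{n-3}$, and $\Sigma^2$ corresponds to exactly the four half-line edges of $\T$ crossed with $\R^{n-3}$ (minus the codimension-$3$ axis, which is $\H^{n-2}$-null). Intersecting with a bounded chart ball, one obtains a finite union of bounded $(n-2)$-dimensional $C^{1,\alpha}$ manifolds, which has finite $\H^{n-2}$-measure; bounded density on $\overline{U_p}$ again converts this to $\mu^{n-2}(\Sigma^2\cap U_p)<\infty$. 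This exhausts the cases, so the covering argument goes through; I do not anticipate any serious obstacle, as the entire difficulty of the local structure of $\Sigma$ has already been absorbed into Theorem~\ref{thm:regularity}, and the hypothesis $K\cap\Sigma^4=\emptyset$ is exactly what is needed to avoid the one stratum where that theorem provides no manifold structure.
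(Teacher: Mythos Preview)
Your argument is correct and is precisely the intended justification: the paper states the corollary without proof immediately after Theorem~\ref{thm:regularity}, and your compactness-plus-stratification argument is exactly how one unpacks it. The only point worth noting is that the local finiteness of $\Sigma^2$ as a union of $C^\infty$ $(n-2)$-manifolds (part~\ref{it:regularity-Sigma2}) does not by itself suffice, since those manifolds need not be closed and could in principle have infinite $\H^{n-2}$-mass accumulating near $\Sigma^3$; your explicit treatment of the case $p\in\Sigma^3$ via the $\T\times\R^{n-3}$ model is therefore the substantive step, and you handle it correctly.
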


  \begin{definition}[Regular Cluster] \label{def:regular} A $q$-cluster $\Omega$ with $V_\mu(\Omega) \in \interior \Delta^{(q-1)}_{\mu(M)}$ satisfying parts \ref{it:Almgren-ii}, \ref{it:Almgren-iii} and \ref{it:Almgren-density} of Theorem \ref{thm:Almgren} and the conclusions of Lemma \ref{lem:infiltration} and Theorem \ref{thm:regularity} 
  is called regular.  \end{definition}

Note that to avoid pathological cases, we have included the assumption $V_\mu(\Omega) \in \interior \Delta^{(q-1)}_{\mu(M)}$, namely that all (open) cells are non-empty, in the definition of regularity. 
For a regular cluster, every point in $\Sigma^2$ (called the \emph{triple-point set}) belongs to the closure of exactly three cells, as well as to the closure of exactly three interfaces. Given distinct $i,j,k$, we will write $\Sigma_{ijk}$ for the subset of $\Sigma^2$ which belongs to the closure of $\Omega_i$, $\Omega_j$ and $\Omega_k$, or equivalently, to the closure of $\Sigma_{ij}$, $\Sigma_{jk}$ and $\Sigma_{ki}$. Similarly, we will call $\Sigma^3$ the \emph{quadruple-point set}, and given distinct $i,j,k,l$, denote by $\Sigma_{ijkl}$ the subset of $\Sigma^3$ which belongs to the closure of $\Omega_i$, $\Omega_j$, $\Omega_k$ and $\Omega_l$, or equivalently, to the closure of all six $\Sigma_{ab}$ for distinct $a,b \in \{ i , j , k, l \}$. 
We will extend the normal fields $\n_{ij}$ to $\Sigma_{ijk}$ and $\Sigma_{ijkl}$ 
by continuity (thanks to $C^1$ regularity). 

\smallskip

Let us also denote:
\[
\partial \Sigma_{ij} := \bigcup_{k \neq i,j} \Sigma_{ijk}.
\]
Note that $\Sigma_{ij} \cup \partial \Sigma_{ij}$ is a (possibly incomplete) $C^\infty$ manifold with boundary.
We define $\n_{\partial ij}$ on $\partial \Sigma_{ij}$ to be the outward-pointing unit (boundary) co-normal to $\Sigma_{ij}$. We denote by $\II^{ij}$ the second fundamental form on $\Sigma_{ij}$, which may be extended by continuity to $\partial \Sigma_{ij}$.
We will abbreviate $\II^{ij}_{\partial \partial}$ for $\II^{ij}(\n_{\partial ij}, \n_{\partial ij})$ on $\partial \Sigma_{ij}$.
When $i$ and $j$ are clear from the context, we will write $\n_\partial$ for $\n_{\partial ij}$ and $\II$ for $\II^{ij}$.

\subsection{Angles at triple-points} 

\begin{lemma}\label{lem:boundary-normal-sum}
    For any stationary regular cluster:
    \begin{equation} \label{eq:sum-n-zero}
    \sum_{(i,j) \in \cyclic(u,v,w)} \n_{ij} = 0 ~,~ \sum_{(i,j) \in \cyclic(u,v,w)} \n_{\partial ij} = 0  \;\;\;\;\; \forall p \in \Sigma_{uvw} . 
    \end{equation}
      In other words, $\Sigma_{uv}$, $\Sigma_{vw}$ and $\Sigma_{wu}$ meet at $\Sigma_{uvw}$ in $120^{\circ}$ angles. 
\end{lemma}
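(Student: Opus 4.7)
The plan is to derive the lemma from the stationarity condition~(\ref{eq:no-boundary}) via a localized application of the weighted Stokes formula, and then convert the statement about boundary normals into the statement about interface normals by a simple two-dimensional geometric argument.

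Fix $p \in \Sigma_{uvw}$ and a small open ball $B = B(p,r)$ around $p$, chosen so small that $B$ is disjoint from $\Sigma^3 \cup \Sigma^4$ and from every other triple-point stratum $\Sigma_{u'v'w'}$ with $\{u',v',w'\}\neq\{u,v,w\}$. By Theorem~\ref{thm:regularity}, the only interfaces meeting $B$ are $\Sigma_{uv}$, $\Sigma_{vw}$, $\Sigma_{wu}$, and within $B$ each of $\partial\Sigma_{uv}$, $\partial\Sigma_{vw}$, $\partial\Sigma_{wu}$ reduces to $B\cap\Sigma_{uvw}$, which is a smooth embedded $(n-2)$-dimensional manifold. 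Let $X\in C_c^\infty(B;TM)$ be arbitrary. Applying the weighted Stokes formula~(\ref{eq:Stokes-classical}) on each interface to $X^\tang$ (the component tangent to that interface), the curvature term drops out since $\langle X^\tang,\n_{ij}\rangle=0$, and since $\n_{\partial ij}$ is tangent to $\Sigma_{ij}$ we have $\langle X^\tang,\n_{\partial ij}\rangle = \langle X,\n_{\partial ij}\rangle$. Thus
\[
\sum_{(i,j)\in \cyclic(u,v,w)} \int_{\Sigma_{ij}} \div_{\Sigma,\mu} X^\tang \, d\mu^{n-1}
= \int_{\Sigma_{uvw}} \Bigl\langle X,\;\sum_{(i,j)\in \cyclic(u,v,w)} \n_{\partial ij}\Bigr\rangle d\mu^{n-2} .
\]
(Note that the indexing ambiguity $\n_{\partial ij}$ vs.\ $\n_{\partial ji}$ is irrelevant since the outward boundary normal to $\Sigma_{ij}$ does not depend on the orientation of $\n_{ij}$.) The left-hand side vanishes by~(\ref{eq:no-boundary}), and arbitrariness of $X$ together with continuity yields the second identity in~(\ref{eq:sum-n-zero}) at every interior point of $\Sigma_{uvw}$, and on its $C^{1,\alpha}$-closure by continuous extension.

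For the first identity, I work pointwise in the $2$-plane $N_p := (T_p\Sigma_{uvw})^\perp \subset T_p M$. Since $T_p\Sigma_{uvw} \subset T_p\Sigma_{ij}$ for each $(i,j)\in\cyclic(u,v,w)$, both the interface normal $\n_{ij}$ (perpendicular to $T_p\Sigma_{ij}$) and the boundary normal $\n_{\partial ij}$ (tangent to $\Sigma_{ij}$ and perpendicular to $\Sigma_{uvw}$) lie in $N_p$, and they are mutually orthogonal within this $2$-plane. Three unit vectors in $N_p$ summing to zero must lie at $120^\circ$ to one another, so the three half-lines giving the local cross-sections of $\Sigma_{uv}$, $\Sigma_{vw}$, $\Sigma_{wu}$ inside $N_p$ (each spanned by $-\n_{\partial ij}$) meet at $120^\circ$ at $p$; this gives the geometric statement about the angles of incidence of the interfaces. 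To upgrade the boundary-normal identity to the interface-normal identity, fix an orientation on $N_p$ and let $R$ be the $90^\circ$ rotation in $N_p$. For each $(i,j)\in\cyclic(u,v,w)$ we have $\n_{ij} = \pm R(\n_{\partial ij})$; the sign is determined by the requirement that $\n_{ij}$ point from $\Omega_i$ to $\Omega_j$, i.e.\ by which of the two cells adjacent to $\Sigma_{ij}$ in $N_p$ is labeled $i$ and which is $j$. Since the three cells $\Omega_u,\Omega_v,\Omega_w$ occupy the three $120^\circ$-sectors of $N_p$ in a cyclic arrangement, and the cyclic ordering $\cyclic(u,v,w)$ traverses the three interfaces consistently around $p$, the sign is the \emph{same} for all three pairs. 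Consequently $\sum_{(i,j)\in \cyclic(u,v,w)} \n_{ij} = \pm R\bigl(\sum_{(i,j)\in \cyclic(u,v,w)} \n_{\partial ij}\bigr) = 0$.

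The only nontrivial step is the stationarity argument in the first paragraph; everything else is bookkeeping. The possible pitfall is the sign/orientation consistency in the final geometric argument, but the cyclic structure of the three cells around $p$ makes this automatic once one picture is drawn.
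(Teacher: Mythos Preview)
Your proof is correct and is essentially the standard argument: localize near $p\in\Sigma_{uvw}$, apply Stokes to the no-boundary condition~(\ref{eq:no-boundary}) to obtain $\sum \n_{\partial ij}=0$, and then read off $\sum \n_{ij}=0$ from the $2$-plane geometry. The paper does not prove this lemma in-text but refers to \cite[Corollary 5.5]{EMilmanNeeman-GaussianMultiBubble}, where the same argument appears; your write-up is a faithful reconstruction of it.
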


\noindent
See \cite[Corollary 5.5]{EMilmanNeeman-GaussianMultiBubble} for a proof. 
We will frequently use that for all $p \in \Sigma_{ijk}$:
\begin{equation} \label{eq:sqrt3}
\n_{\partial ij} = \frac{\n_{ik} + \n_{jk}}{\sqrt{3}} .
\end{equation}

As a consequence of the $120^\circ$ angles of incidence at triple-points, we have the following simple identities \cite[Lemmas 6.9 and 6.10]{EMilmanNeeman-GaussianMultiBubble} on any stationary regular cluster:

\begin{lemma}\label{lem:angles-form}
At every point $p \in \Sigma_{uvw}$ and for all $y,z \in T_p \Sigma_{uvw}$:
    \begin{enumerate}[(a)]
    \item \label{it:angles-form-a} $\II^{ij}(y,\n_{\partial ij})$ is independent of $(i,j) \in \cyclic(u,v,w)$.
    \item \label{it:angles-form-b} $\sum_{(i,j) \in \cyclic(u,v,w)} \II^{ij}(y,z) = 0$.
    \item \label{it:angles-form-c} $\sum_{(i,j) \in \cyclic(u,v,w)} \II^{ij}_{\partial \partial} = 0$. 
    \end{enumerate}
 \end{lemma}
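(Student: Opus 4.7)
\medskip

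\textbf{Proof proposal for Lemma \ref{lem:angles-form}.} My plan is to differentiate the two $120^\circ$-identities \eqref{eq:sum-n-zero} tangentially along $\Sigma_{uvw}$ and read off each of (a), (b), (c) from a different component of the resulting vector identity.

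First, fix $p \in \Sigma_{uvw}$ and recall from Theorem \ref{thm:regularity}\ref{it:regularity-Sigma2} that locally near $p$, $\Sigma_{uvw}$ is a $C^{\infty}$ $(n-2)$-dimensional manifold and each of the three adjoining sheets $\Sigma_{uv},\Sigma_{vw},\Sigma_{wu}$ is a $C^\infty$ manifold with boundary containing $\Sigma_{uvw}$, meeting along $\Sigma_{uvw}$ in the model $\Y\times \R^{n-2}$. Therefore the unit-normals $\n_{ij}$ and boundary-normals $\n_{\partial ij}$ extend smoothly in a neighborhood of $p$ in $\Sigma_{uvw}$, and for any $y\in T_p\Sigma_{uvw}$ we may pick a smooth curve $\gamma\subset \Sigma_{uvw}$ with $\gamma(0)=p$, $\gamma'(0)=y$ along which to differentiate the identity $\sum_{(i,j)\in \cyclic(u,v,w)} \n_{ij} \equiv 0$ from Lemma \ref{lem:boundary-normal-sum}. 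This yields
\[
\sum_{(i,j)\in \cyclic(u,v,w)} \nabla_y \n_{ij} \;=\; 0.
\]
Now split each $\nabla_y \n_{ij}\in T_p M$ according to the orthogonal decomposition $T_p M = T_p\Sigma_{uvw} \oplus \sspan\{\n_{ij},\n_{\partial ij}\}$. Since $|\n_{ij}|^2\equiv 1$, $\nabla_y \n_{ij}$ has no $\n_{ij}$-component; its $T_p \Sigma_{uvw}$-component is $z \mapsto \II^{ij}(y,z)$ and its $\n_{\partial ij}$-component is $\II^{ij}(y,\n_{\partial ij})$. Hence
\[
\sum_{(i,j)\in \cyclic(u,v,w)} \bigl[\II^{ij}(y,\cdot)|_{T_p\Sigma_{uvw}} + \II^{ij}(y,\n_{\partial ij})\,\n_{\partial ij}\bigr] \;=\; 0.
\]

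Reading off the $T_p\Sigma_{uvw}$-component gives $\sum \II^{ij}(y,z)=0$ for all $z\in T_p\Sigma_{uvw}$, which is (b). For (a), the $\n_{\partial ij}$-components give $\sum_{(i,j)} \II^{ij}(y,\n_{\partial ij})\,\n_{\partial ij}=0$ in the $2$-plane normal to $T_p\Sigma_{uvw}$; using $\sum_{(i,j)} \n_{\partial ij}=0$ to eliminate, say, $\n_{\partial wu}$, and the fact that $\n_{\partial uv}$ and $\n_{\partial vw}$ are linearly independent (they meet at $120^\circ$), we obtain that the three scalars $\II^{ij}(y,\n_{\partial ij})$ coincide, which is (a).

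Finally, for (c), take traces. The mean curvature on $\Sigma_{ij}$ decomposes as
\[
H^{ij} \;=\; \II^{ij}_{\partial\partial} + \sum_{a=1}^{n-2} \II^{ij}(e_a,e_a),
\]
for any orthonormal basis $\{e_a\}$ of $T_p\Sigma_{uvw}$. Summing over cyclic pairs and using (b) termwise,
\[
\sum_{(i,j)\in\cyclic(u,v,w)} H^{ij} \;=\; \sum_{(i,j)\in\cyclic(u,v,w)} \II^{ij}_{\partial\partial}.
\]
By stationarity (Lemma \ref{lem:first-order-conditions}\ref{it:first-order-cyclic}), $H_{ij,\mu}=\lambda_i-\lambda_j$ so the $H_{ij,\mu}$ sum cyclically to zero; and $\sum H^{ij} - \sum \nabla_{\n_{ij}} W = \sum H_{ij,\mu}=0$, while $\sum \nabla_{\n_{ij}}W = \nabla_{\sum \n_{ij}} W = 0$ by \eqref{eq:sum-n-zero}. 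Thus $\sum H^{ij}=0$, giving (c).

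The only real subtlety I anticipate is the legitimacy of smoothly extending $\n_{ij}$ and $\n_{\partial ij}$ across a neighborhood of $p$ in $\Sigma_{uvw}$ and differentiating them tangentially; this is entirely covered by the $C^\infty$ part of Theorem \ref{thm:regularity}\ref{it:regularity-Sigma2} (as upgraded in Remark \ref{rem:extend-to-Riemannian}), so no genuine obstacle remains.
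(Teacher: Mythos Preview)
Your proof is correct and follows essentially the same approach as the paper: differentiate the identity $\sum \n_{ij}=0$ along $\Sigma_{uvw}$ to obtain (b), and deduce (c) from (b) by tracing and using stationarity together with $\sum \n_{ij}=0$ to kill $\sum \nabla_{\n_{ij}} W$. The only difference is that for (a) the paper simply cites the proof of \cite[Lemma~6.9]{EMilmanNeeman-GaussianMultiBubble}, whereas you supply the explicit argument via the component of $\sum \nabla_y \n_{ij}=0$ in the $2$-plane $(T_p\Sigma_{uvw})^\perp$ and the linear independence of any two of the $\n_{\partial ij}$; this is presumably what that cited proof does as well.
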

 \begin{proof}
 Part \ref{it:angles-form-a} was shown in the proof of \cite[Lemma 6.9]{EMilmanNeeman-GaussianMultiBubble}. Part \ref{it:angles-form-b} is immediate since $\sum_{(i,j) \in \cyclic(u,v,w)} \n_{ij} = 0$ along $\Sigma_{uvw}$. Part \ref{it:angles-form-c} follows from part \ref{it:angles-form-b} since:
 \[
 \sum_{(i,j) \in \cyclic(u,v,w)} H_{\Sigma_{ij}} = \sum_{(i,j) \in \cyclic(u,v,w)} (H_{\Sigma_{ij},\mu} + \scalar{\nabla W,\n_{ij}}) = 0 .
 \]
 \end{proof}
 
 \begin{lemma}\label{lem:three-tensor-vanishes}
    At every point $p \in \Sigma_{uvw}$, the following $3$-tensor is identically zero:
    \[
    T^{\alpha \beta \gamma} = \sum_{(i,j) \in \cyclic(u,v,w)} 
    \brac{\n_{i j}^{\alpha} \n_{i j}^\beta \n_{\partial i j}^{\gamma} - \n_{\partial i j}^{\alpha} \n_{i j}^\beta \n_{i j}^{\gamma}} .
    \]
 \end{lemma}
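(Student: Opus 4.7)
The plan is to reduce the tensor identity to the two-dimensional normal slice $V := (T_p \Sigma_{uvw})^\perp$ and exploit that each pair $\{\n_{ij}, \n_{\partial ij}\}$ forms an orthonormal basis of $V$.

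At $p \in \Sigma_{uvw}$, every normal $\n_{ij}$ is perpendicular to $T_p \Sigma_{uvw}$, and by construction $\n_{\partial ij}$ lies in $T_p \Sigma_{ij} \cap (T_p \Sigma_{uvw})^\perp$; hence both $\n_{ij}$ and $\n_{\partial ij}$ lie in $V$. Since $\Sigma_{uvw}$ has codimension $2$ and $\n_{\partial ij} \perp \n_{ij}$ (as $\n_{\partial ij}$ is tangent to $\Sigma_{ij}$), the pair $\{\n_{ij}, \n_{\partial ij}\}$ is an orthonormal basis of $V$ for each $(i,j) \in \cyclic(u,v,w)$. Writing $\Pi_V$ for the orthogonal projection onto $V$, this gives the key identity
\begin{equation*}
\n_{ij} \otimes \n_{ij} + \n_{\partial ij} \otimes \n_{\partial ij} = \Pi_V  \qquad \forall (i,j) \in \cyclic(u,v,w).
\end{equation*}
Because all vectors appearing in $T$ lie in $V$, the tensor $T$ is supported on $V^{\otimes 3}$, so it suffices to verify the identity with indices interpreted within $V$.

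The second ingredient is Lemma \ref{lem:boundary-normal-sum}, which yields both $\sum_{(i,j) \in \cyclic(u,v,w)} \n_{ij} = 0$ and $\sum_{(i,j) \in \cyclic(u,v,w)} \n_{\partial ij} = 0$. I now substitute $\n_{ij}^{\alpha}\n_{ij}^{\beta} = \Pi_V^{\alpha\beta} - \n_{\partial ij}^{\alpha}\n_{\partial ij}^{\beta}$ into the first term of $T^{\alpha\beta\gamma}$ and $\n_{ij}^{\beta}\n_{ij}^{\gamma} = \Pi_V^{\beta\gamma} - \n_{\partial ij}^{\beta}\n_{\partial ij}^{\gamma}$ into the second. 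Using the vanishing of $\sum \n_{\partial ij}$, the $\Pi_V$-terms disappear, and both sums collapse to $-\sum_{(i,j)} \n_{\partial ij}^{\alpha}\n_{\partial ij}^{\beta}\n_{\partial ij}^{\gamma}$; their difference is therefore $0$, which is exactly $T^{\alpha\beta\gamma}=0$.

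There is no real obstacle here: once one notices that $\{\n_{ij},\n_{\partial ij}\}$ is an orthonormal frame of the $2$-dimensional normal slice $V$, the identity is a two-line algebraic consequence of the symmetry $\n_{ij} \otimes \n_{ij} = \Pi_V - \n_{\partial ij} \otimes \n_{\partial ij}$ combined with the triple-point relations $\sum \n_{ij} = \sum \n_{\partial ij} = 0$ from Lemma \ref{lem:boundary-normal-sum}. The only thing to be a bit careful about is bookkeeping the indices so that the two halves of $T$ genuinely reduce to the same cubic expression in $\n_{\partial ij}$ and thus cancel.
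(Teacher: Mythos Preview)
Your proof is correct. The paper does not give its own proof of this lemma, citing instead \cite[Lemma 6.10]{EMilmanNeeman-GaussianMultiBubble}; your argument---recognizing that $\{\n_{ij},\n_{\partial ij}\}$ is an orthonormal basis of the $2$-dimensional normal slice $V=(T_p\Sigma_{uvw})^\perp$, rewriting $\n_{ij}\otimes\n_{ij}=\Pi_V-\n_{\partial ij}\otimes\n_{\partial ij}$, and killing the $\Pi_V$-contributions via $\sum\n_{\partial ij}=0$---is a clean and self-contained verification that matches the spirit of the $120^\circ$ triple-point geometry emphasized in the paper.
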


\subsection{Local integrability of curvature away from $\Sigma^4$}

We denote the Hilbert-Schmidt norm of $\II^{ij}$ by $\norm{\II^{ij}}$.

\begin{proposition} \label{prop:curvature-integrability-Sigma4} 
Let $\Omega$ be a stationary regular cluster on $(M,g,\mu)$. 
For any compact set $K \subset M$ which is disjoint from $\Sigma^4$:
\begin{enumerate}[(1)]
\item \label{it:curvature-integrability-on-Sigma1} $\displaystyle \sum_{i,j} \int_{\Sigma_{ij} \cap K} \norm{\II^{ij}}^2 d\mu^{n-1} < \infty$. 
\item \label{it:curvature-integrability-on-Sigma2} $\displaystyle \sum_{i,j,k} \int_{\Sigma_{ijk} \cap K} \norm{\II^{ij}} d\mu^{n-2} < \infty$. 
\end{enumerate}
\end{proposition}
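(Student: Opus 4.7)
The plan is to decompose $K$ according to the stratification of $\Sigma$ from Theorem~\ref{thm:regularity}, uniformly bound $\norm{\II^{ij}}$ on each piece, and conclude via local finiteness of the relevant Hausdorff measures. Specifically, once $\norm{\II^{ij}}$ is uniformly bounded on $\Sigma_{ij} \cap K$ (as well as on $\Sigma_{ijk} \cap K$, by continuous extension), part~\ref{it:curvature-integrability-on-Sigma1} will follow from the global finiteness $\mu^{n-1}(\Sigma_{ij}) < \infty$ (built into the cluster definition via $\per_\mu(\Omega_i) < \infty$), and part~\ref{it:curvature-integrability-on-Sigma2} from $\mu^{n-2}(\Sigma^2 \cap K) < \infty$ (Corollary~\ref{cor:locally-finite-Sigma2}).

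Since $\Sigma^4$ is closed and $K \cap \Sigma^4 = \emptyset$ with $K$ compact, we have $\mathrm{dist}(K,\Sigma^4) > 0$, so $\Sigma \cap K \subset \Sigma^1 \cup \Sigma^2 \cup \Sigma^3$. I would cover $\Sigma \cap K$ by finitely many open sets of three types using Theorem~\ref{thm:regularity}: (a) neighborhoods disjoint from $\Sigma^2 \cup \Sigma^3$, where each $\Sigma_{ij}$ is a $C^\infty$ hypersurface; (b) neighborhoods of points in $\Sigma^2 \setminus \Sigma^3$, where by the $C^\infty$ upgrade of Remark~\ref{rem:extend-to-Riemannian} the local chart to $\Y \times \R^{n-2}$ is $C^\infty$, so each $\Sigma_{ij}$ is smooth up to its triple-point boundary $\Sigma_{ijk}$; and (c) neighborhoods of points in $\Sigma^3 \setminus \Sigma^4$, where only a $C^{1,\alpha}$ chart to $\T \times \R^{n-3}$ is guaranteed. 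On types~(a) and~(b), $\norm{\II^{ij}}$ extends smoothly and is therefore uniformly bounded on compact subsets, so these pieces contribute harmlessly to both integrals.

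The main obstacle is the type~(c) neighborhoods, where only $C^{1,\alpha}$ cluster regularity is available and \emph{a priori} $\norm{\II^{ij}}$ could blow up as one approaches $\Sigma^3$. To handle this I would argue by an $\varepsilon$-regularity / blow-up contradiction: suppose $\norm{\II^{ij}}(p_k) \to \infty$ along a sequence of smooth points $p_k \in \Sigma_{ij}$ converging to $p \in \Sigma^3 \cap K$; rescaling the cluster about $p_k$ by $r_k := \norm{\II^{ij}}(p_k)^{-1} \to 0$ and extracting, via the standard compactness theory for isoperimetric minimizing clusters, a subsequential limit of $r_k^{-1}(\Omega - p_k)$, the support of the limit is contained in the tangent cone of $\Sigma$ at $p$, which by Theorem~\ref{thm:regularity}\ref{it:regularity-Sigma3} is a linear image of the flat cone $\T \times \R^{n-3}$. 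In particular the smooth stratum of the limit passing through the origin is flat, with vanishing second fundamental form. On the other hand, Allard-type regularity upgrades the $C^{1,\alpha}$ convergence of the rescaled interfaces to smooth convergence on the interior of the smooth stratum, so curvature is preserved in the limit and $\norm{\II^{ij}} = 1$ at the origin of the limit, contradicting its flatness there.

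Equipped with a uniform bound on $\norm{\II^{ij}}$ over $\Sigma_{ij} \cap K$, both integrals are finite by the measure-finiteness facts noted in the first paragraph. The crux of the argument is thus the blow-up/$\varepsilon$-regularity step near $\Sigma^3 \setminus \Sigma^4$; all the work away from $\Sigma^3$ is routine consequence of the smooth regularity supplied by Theorem~\ref{thm:regularity} and Remark~\ref{rem:extend-to-Riemannian}.
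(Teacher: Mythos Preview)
Your approach aims to prove something strictly stronger than the proposition: you claim $\norm{\II^{ij}}$ is \emph{uniformly bounded} on $\Sigma_{ij}\cap K$, whereas the proposition only asserts $L^2$ (resp.\ $L^1$) integrability. The paper treats these as genuinely different statements: ``locally bounded curvature'' is introduced in Section~\ref{sec:bounded-curvature} as an \emph{additional} hypothesis beyond regularity and stationarity, and the text repeatedly emphasizes (e.g.\ in Section~\ref{sec:non-physical}) that near $\Sigma^3$ curvature could be blowing up, since only $C^{1,\alpha}$ regularity is available there. The paper's proof (deferred to \cite[Proposition~5.7]{EMilmanNeeman-GaussianMultiBubble}) establishes only integrability; bounded curvature for the clusters of interest is obtained much later, in Section~\ref{sec:spherical}, as a \emph{consequence} of sphericity.

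There are two concrete gaps in the blow-up step you flag as the crux. First, the proposition concerns \emph{stationary} regular clusters, not minimizing ones, so ``standard compactness theory for isoperimetric minimizing clusters'' is unavailable under the stated hypotheses. Second, even granting minimality, you conflate two different rescalings: you blow up about the smooth points $p_k$ by $r_k=\norm{\II^{ij}}(p_k)^{-1}$, but then assert the limit lies in the tangent cone of $\Sigma$ at $p=\lim p_k$. The tangent cone at $p$ arises from rescaling about $p$; rescaling about $p_k$ yields a limit that depends on the ratio $r_k/d(p_k,\Sigma^3)$, and without a point-picking argument controlling both this ratio and the curvature on a rescaled ball of radius tending to infinity, you cannot conclude that the origin of the limit is a smooth point at which curvature converges (let alone equals $1$). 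Your identification of $\Sigma^3$ as the hard case is correct, but its resolution via a uniform curvature bound is neither what the proposition provides nor what its proof establishes.
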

\begin{proof}
In the Euclidean setting, this is the content of \cite[Proposition 5.7]{EMilmanNeeman-GaussianMultiBubble}. As explained after its formulation there, by compactness, this is a local integrability statement, and the extension to the Riemannian setting is straightforward as explained in Remark \ref{rem:extend-to-Riemannian}. 
\end{proof}

\subsection{Cutoffs and Stokes' Theorem}

A function on $(M,g)$ which is $C^\infty$ smooth and takes values in $[0, 1]$ will be called a cutoff function. The following useful lemmas were proved in \cite[Section 6]{EMilmanNeeman-GaussianMultiBubble} for stationary regular clusters on $(\R^n,|\cdot|,\mu)$, but the proofs immediately extend to the weighted Riemannian setting: 

\begin{lemma}\label{lem:cutoff-Sigma4}
Let $\Omega$ be a stationary regular cluster on $(M,g,\mu)$. 
    For every $\epsilon > 0$, there is a compactly supported cutoff function $\eta$
    such that $\eta \equiv 0$ on a neighborhood of $\Sigma^{4}$, and:
    \[
        \int_{\Sigma^1} |\nabla \eta|^2 \, d\mu^{n-1} \le \epsilon ~,~ 
        \mu^{n-1} \{p \in \Sigma^1: \eta(p) < 1\} \le \epsilon.
    \]
\end{lemma}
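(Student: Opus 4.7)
The construction is a standard capacity-theoretic cutoff in geometric measure theory: a logarithmic cutoff around a covering of $\Sigma^4$ by small balls, exploiting that $\dim_{\H}(\Sigma^4) \le n-4$ is \emph{strictly} smaller than $n-3$ together with the upper density bound (\ref{eq:density}) which gives $\mu^{n-1}(\Sigma^1 \cap B(x,r)) \le \Lambda_K r^{n-1}$ for $x$ in a compact neighborhood $K$ and $r$ small. Since the relevant capacity being small concerns codimension $\ge 3$ inside the $(n-1)$-dimensional $\Sigma^1$, a log-Lipschitz cutoff of the form $\phi_\alpha(p) \sim \log(d(p,x_\alpha)/r_\alpha^2)/\log(1/r_\alpha)$ (suitably smoothed) on the annulus $r_\alpha^2 \le d(p,x_\alpha) \le r_\alpha$ will have an integrable squared gradient against $\mu^{n-1}|_{\Sigma^1}$, with estimate $\int_{\Sigma^1} |\nabla \phi_\alpha|^2 \, d\mu^{n-1} \lesssim \Lambda \, r_\alpha^{n-3}/(\log(1/r_\alpha))^2$ (by integration against the density bound using $\int_{r_\alpha^2}^{r_\alpha} s^{n-4}\, ds \lesssim r_\alpha^{n-3}$ for $n \ge 4$; for $n \le 3$ the set $\Sigma^4$ is empty and there is nothing to do).

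First, I would reduce to a compact set: since $\mu^{n-1}(\Sigma^1) = \per_\mu(\Omega) < \infty$, choose a geodesic ball $B_R = B(p_0, R)$ large enough that $\mu^{n-1}(\Sigma^1 \setminus B_{R/2}) < \epsilon/4$ and $R^{-2} \mu^{n-1}(\Sigma^1) < \epsilon/4$, and take a radial $C^\infty$ cutoff $\chi$ with $\chi = 1$ on $B_{R/2}$, $\chi = 0$ off $B_R$, and $|\nabla \chi| \le C/R$. Next, $\Sigma^4 \cap \overline{B_R}$ is compact with $\H^{n-3}(\Sigma^4 \cap \overline{B_R}) = 0$, so for any $\eta_0 , \delta > 0$ to be chosen small I can cover it by balls $\{B(x_\alpha, r_\alpha)\}_{\alpha=1}^N$ centered on $\Sigma^4$ with $r_\alpha \le \delta$ and $\sum_\alpha r_\alpha^{n-3} \le \eta_0$; by the Besicovitch covering theorem (applied in local charts) I may assume this covering has multiplicity bounded by a dimensional constant $C_n$. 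I then define $\phi_\alpha$ as the smoothed log-cutoff above and set $\eta := \chi \cdot \prod_{\alpha=1}^N \phi_\alpha$, which is $C^\infty$, compactly supported in $\overline{B_R}$, and vanishes on the open neighborhood $(M \setminus \overline{B_R}) \cup \bigcup_\alpha B(x_\alpha, r_\alpha^2)$ of $\Sigma^4$.

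For the two estimates, the measure bound is immediate: $\{\eta < 1\} \cap \Sigma^1 \subset (\Sigma^1 \setminus B_{R/2}) \cup \bigcup_\alpha (\Sigma^1 \cap B(x_\alpha, r_\alpha))$, with $\mu^{n-1}$ of the first term $< \epsilon/4$ and of the second $\le \Lambda \sum_\alpha r_\alpha^{n-1} \le \Lambda \delta^2 \eta_0$, both made $< \epsilon/2$ together by choosing $\delta, \eta_0$ small. For the Dirichlet bound, since $\phi_\alpha \equiv 1$ outside $B(x_\alpha, r_\alpha)$, at each $p$ only the $\phi_\alpha$ with $p \in B(x_\alpha, r_\alpha)$ contribute to $\nabla \eta$, and there are at most $C_n$ such $\alpha$. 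By Cauchy--Schwarz this gives
\[
|\nabla \eta|^2 \le 2 |\nabla \chi|^2 + 2 C_n \sum_\alpha |\nabla \phi_\alpha|^2 ,
\]
and integrating over $\Sigma^1$ bounds the right-hand side by $2 C R^{-2} \mu^{n-1}(\Sigma^1) + C C_n \Lambda \, \eta_0 / (\log(1/\delta))^2$, which is $< \epsilon$ for $R$ large and $\eta_0, \delta$ small.

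\textbf{Main obstacle.} The only delicate point is keeping the constant in the gradient estimate \emph{independent} of the number $N$ of covering balls, which could otherwise blow up as $\eta_0, \delta \to 0$. A naive use of $|\nabla \prod_\alpha \phi_\alpha| \le \sum_\alpha |\nabla \phi_\alpha|$ followed by Cauchy--Schwarz in $N$ terms would cost a factor of $N$ and defeat the argument; this is precisely why one needs a bounded-multiplicity refinement of the covering (via Besicovitch or by passing to a disjoint Vitali $5r$-subcovering), so that at each point only a dimensional number of balls contribute and the factor $N$ is replaced by $C_n$. All other ingredients --- smoothing the piecewise log-linear cutoff, handling the small weight and curved metric via a local chart, and the integration-by-parts with the density bound --- are routine.
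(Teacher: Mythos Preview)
Your argument is correct and follows the standard capacity-theoretic construction; the paper itself does not reprove the lemma but cites \cite{EMilmanNeeman-GaussianMultiBubble}, where the same scheme is used. The bounded-overlap step you correctly flag as the crux does go through via Besicovitch (after recentering each cover-ball at a point of $\Sigma^4$ it contains and doubling the radius, note that within each of the $C_n$ disjoint Besicovitch subfamilies no two recentered balls can share the same original cover-ball, so $\sum_\alpha r_\alpha^{n-3}$ only grows by a factor $2^{n-3}C_n$); as a minor simplification, since $\Sigma^4$ has codimension at least $3$ in $\Sigma^1$, a linear rather than logarithmic cutoff already yields $\int_{\Sigma^1}|\nabla\phi_\alpha|^2\,d\mu^{n-1}\lesssim \Lambda\,r_\alpha^{n-3}$.
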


\begin{lemma}[Stokes' theorem on $\Sigma_{ij} \cup \partial \Sigma_{ij}$] \label{lem:Stokes}
    Let $\Omega$ be a stationary regular cluster on $(M^n,g,\mu)$.  Suppose that $Z_{ij}$ is a vector-field which is $C^\infty$ on $\Sigma_{ij}$ and continuous
    up to $\partial \Sigma_{ij}$.
    Suppose, moreover, that
    \begin{equation} \label{eq:Stokes-assumptions}
        \int_{\Sigma_{ij}} |Z_{ij}|^2 d\mu^{n-1}, \quad
        \int_{\Sigma_{ij}} |\div_{\Sigma,\mu} Z_{ij}| d\mu^{n-1}, \quad
        \int_{\partial \Sigma_{ij}} |Z_{ij}^{\n_\partial}| d\mu^{n-2} < \infty . 
    \end{equation}
        Then:
    \[
        \int_{\Sigma_{ij}} \div_{\Sigma,\mu} Z_{ij} \, d\mu^{n-1}
        = \int_{\Sigma_{ij}} H_{ij,\mu} Z_{ij}^\n \, d\mu^{n-1} + \int_{\partial \Sigma_{ij}} Z_{ij}^{\n_{\partial}} \, d\mu^{n-2}.
    \]
\end{lemma}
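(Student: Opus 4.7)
The plan is to reduce to the classical weighted Stokes' theorem (\ref{eq:Stokes-classical}) via a cutoff argument, since the manifold $\Sigma_{ij} \cup \partial \Sigma_{ij}$ is not complete—its higher codimension singularities live in $\Sigma^3 \cup \Sigma^4$. More precisely, away from $\Sigma^4$, the set $\Sigma_{ij}$ is a smooth $(n-1)$-manifold and $\partial \Sigma_{ij}$ is a (possibly multi-sheeted) $C^{1,\alpha}$ $(n-2)$-manifold meeting in $C^{1,\alpha}$ fashion along $\Sigma^3$, to which $\Sigma_{ij} \cup \partial \Sigma_{ij}$ may be regarded as a $C^{1,\alpha}$ manifold with $C^{1,\alpha}$ boundary; at $\Sigma^4$ the structure becomes wild, and a cutoff is required.

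First, I would construct a family of cutoff functions $\eta_\epsilon \in C_c^\infty(M^n)$ vanishing in an open neighborhood of $\Sigma^4$. Lemma~\ref{lem:cutoff-Sigma4} directly gives such $\eta_\epsilon$ satisfying $\int_{\Sigma^1} |\nabla \eta_\epsilon|^2 \, d\mu^{n-1} \to 0$ and $\mu^{n-1}(\Sigma^1 \cap \{\eta_\epsilon < 1\}) \to 0$ as $\epsilon \to 0$. By an analogous logarithmic-distance cutoff construction based on the locally-finite $\mathcal{H}^{n-3}$ measure of $\Sigma^3 \setminus \Sigma^4$ (afforded by the $C^{1,\alpha}$ structure of Theorem~\ref{thm:regularity}\ref{it:regularity-Sigma3}) composed with $\eta_\epsilon$, I may also ensure $\mu^{n-2}(\Sigma^2 \cap \{\eta_\epsilon < 1\}) \to 0$ while keeping the first two estimates. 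On the open set $\{\eta_\epsilon > 0\}$, the vector-field $\eta_\epsilon Z_{ij}$ is $C^\infty$ on $\Sigma_{ij}$, continuous up to $\partial \Sigma_{ij}$, and has compact support inside the $C^{1,\alpha}$ manifold-with-boundary $(\Sigma_{ij} \cup \partial \Sigma_{ij}) \cap \{\eta_\epsilon > 0\}$.

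Second, I would apply the classical weighted Stokes formula (\ref{eq:Stokes-classical}) to $\eta_\epsilon Z_{ij}$, obtaining
\[
\int_{\Sigma_{ij}} \div_{\Sigma,\mu}(\eta_\epsilon Z_{ij}) \, d\mu^{n-1} = \int_{\Sigma_{ij}} H_{ij,\mu}\, \eta_\epsilon Z_{ij}^\n \, d\mu^{n-1} + \int_{\partial \Sigma_{ij}} \eta_\epsilon Z_{ij}^{\n_\partial} \, d\mu^{n-2}.
\]
Using the product rule $\div_{\Sigma,\mu}(\eta_\epsilon Z_{ij}) = \eta_\epsilon \div_{\Sigma,\mu} Z_{ij} + \scalar{\nabla^\tang \eta_\epsilon, Z_{ij}}$, I would rearrange to isolate the error:
\[
\int_{\Sigma_{ij}} \eta_\epsilon \div_{\Sigma,\mu} Z_{ij} \, d\mu^{n-1} = \int_{\Sigma_{ij}} H_{ij,\mu}\, \eta_\epsilon Z_{ij}^\n \, d\mu^{n-1} + \int_{\partial \Sigma_{ij}} \eta_\epsilon Z_{ij}^{\n_\partial} \, d\mu^{n-2} - \int_{\Sigma_{ij}} \scalar{\nabla^\tang \eta_\epsilon, Z_{ij}} \, d\mu^{n-1}.
\]

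Third, I would pass to the limit $\epsilon \to 0$. The error term is bounded by Cauchy--Schwarz,
\[
\Bigl| \int_{\Sigma_{ij}} \scalar{\nabla^\tang \eta_\epsilon, Z_{ij}} \, d\mu^{n-1} \Bigr| \leq \Bigl(\int_{\Sigma^1} |\nabla \eta_\epsilon|^2 d\mu^{n-1}\Bigr)^{1/2} \Bigl(\int_{\Sigma_{ij}} |Z_{ij}|^2 d\mu^{n-1}\Bigr)^{1/2} \to 0,
\]
using the first hypothesis in (\ref{eq:Stokes-assumptions}). The three remaining integrals converge to their un-cutoff versions by dominated convergence: $\eta_\epsilon \to 1$ pointwise off a set of $\mu^{n-1}$-measure (resp. $\mu^{n-2}$-measure) tending to zero on $\Sigma_{ij}$ (resp. $\partial \Sigma_{ij}$); the integrable dominants are $|\div_{\Sigma,\mu} Z_{ij}|$, $|H_{ij,\mu} Z_{ij}^{\n}| \leq |H_{ij,\mu}|\,|Z_{ij}|$ (which lies in $L^1$ by Cauchy--Schwarz as $H_{ij,\mu}$ is constant by stationarity and $\mu^{n-1}(\Sigma_{ij}) < \infty$), and $|Z_{ij}^{\n_\partial}|$, all guaranteed by (\ref{eq:Stokes-assumptions}).

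The main obstacle is the simultaneous cutoff construction near $\Sigma^3 \cup \Sigma^4$ controlling both the $L^2$ gradient norm on $\Sigma^1$ and the $\mu^{n-2}$-measure of $\{\eta_\epsilon < 1\}$ on $\Sigma^2$. For $\Sigma^4$ this is Lemma~\ref{lem:cutoff-Sigma4}; for $\Sigma^3$ one uses its locally-finite $(n-3)$-dimensional measure and a logarithmic truncation of the distance function, which gives an $L^2$ gradient bound of order $1/|\log\delta|$ on $\Sigma^1$ (dimension $n-1$, versus singular set dimension $n-3$, giving codimension $2$, the borderline case where logarithmic cutoffs succeed) and makes the bad set on $\Sigma^2$ (codimension $1$ therein) a tubular neighborhood whose $\mu^{n-2}$-measure tends to zero with the cutoff radius.
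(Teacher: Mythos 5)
Your proposal is correct and follows the paper's expected route (the proof is deferred to \cite[Section 6]{EMilmanNeeman-GaussianMultiBubble}): truncate near $\Sigma^3 \cup \Sigma^4$ by a compound cutoff---Lemma~\ref{lem:cutoff-Sigma4} for $\Sigma^4$, and a logarithmic cutoff for $\Sigma^3$, which is exactly the codimension-$2$ borderline relative to $\Sigma^1$---then apply (\ref{eq:Stokes-classical}) and let the error vanish via Cauchy--Schwarz against the assumed $L^2$ integrability of $Z_{ij}$. Two small clarifications: the reason the $\Sigma^3$ cutoff is essential is to give $\eta_\epsilon Z_{ij}$ compact support in the incomplete manifold-with-boundary $\Sigma_{ij} \cup \partial \Sigma_{ij}$ (that set is actually $C^\infty$ rather than merely $C^{1,\alpha}$; it is the incompleteness at $\Sigma^3 \cup \Sigma^4$ that obstructs a direct appeal to Stokes), and the smallness of $\mu^{n-2}(\Sigma^2 \cap \{\eta_\epsilon < 1\})$ plays no role, since dominated convergence on $\partial\Sigma_{ij}$ already follows from pointwise a.e.\ convergence $\eta_\epsilon \to 1$ together with the integrable dominant $|Z_{ij}^{\n_\partial}|$.
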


The latter version of Stokes' theorem extends the classical one (\ref{eq:Stokes-classical}), when the vector-field is assumed to be compactly supported in $\Sigma_{ij} \cup \partial \Sigma_{ij}$, which for applications is typically not the case whenever $\Sigma_{ij} \cup \partial \Sigma_{ij}$ is incomplete (as in our setting).

\subsection{Second variation information -- stable clusters and index-form} \label{subsec:Q}

\begin{definition}[Stability Index-Form $Q$] 
The stability index-form $Q$ associated to a stationary $q$-cluster $\Omega$ on $(M,g,\mu)$ with Lagrange multiplier $\lambda \in E^{(q-1)}$ is defined as the following quadratic form on $C_c^\infty$ vector-fields $X$ on $(M,g)$:
\[
Q(X) := \delta^2_X A - \scalar{\lambda,\delta^2_X V} .
\]
\end{definition}

\begin{lemma}[Stability]\label{lem:unstable}
For any isoperimetric minimizing cluster $\Omega$ and $C_c^\infty$ vector-field $X$:
    \begin{equation}  \label{eq:Q}
      \delta^1_X V = 0 \;\;\; \Rightarrow \;\;\; Q(X) \ge 0 .
     \end{equation}
\end{lemma}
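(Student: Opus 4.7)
My plan is to exploit the isoperimetric minimality of $\Omega$ by turning $X$ into a genuinely volume-preserving variation (not just volume-preserving to first order) and invoking second-order optimality. Since the flow of $X$ alone need not preserve cell-volumes for $t\ne 0$, I would first construct auxiliary $C_c^\infty$ vector-fields $Y_1,\dots,Y_{q-1}$ whose first volume-variations $\delta^1_{Y_j} V$ form a basis of $E^{(q-1)}$. Such fields exist: by Lemma \ref{lem:LA-connected} the adjacency graph of $\Omega$ is connected, so for each edge $\Sigma_{ij}\ne\emptyset$ I can pick a bump field supported in a small ball near a point of $\Sigma_{ij}$ and pointing along $\n_{ij}$, whose first volume-variation is a nonzero multiple of $e_j-e_i$; by graph connectivity a suitable linear combination produces the desired basis.

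Next I would consider the composite diffeomorphism $\Phi(t,s) := F_t^X \circ F_{s_1}^{Y_1} \circ \cdots \circ F_{s_{q-1}}^{Y_{q-1}}$ for $(t,s)$ near the origin in $\R\times\R^{q-1}$, together with the smooth maps $V(t,s) := (\mu(\Phi(t,s)\Omega_i))_{i=1,\dots,q-1}$ and $A(t,s) := \per_\mu(\Phi(t,s)\Omega)$. Joint smoothness near $(0,0)$ follows from \cite[Lemma~3.3]{EMilmanNeeman-GaussianMultiBubble} applied coordinate by coordinate together with the chain rule, using that $\Phi$ differs from the identity only on a fixed compact set. By construction the Jacobian $\partial_s V(0,0)$ has columns $\delta^1_{Y_j}V$ (expressed in the first $q-1$ coordinates) and is therefore invertible, so the implicit function theorem produces a smooth $s(t)$ with $s(0)=0$ and $V(t,s(t))=V(0,0)$ for all small $t$. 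Differentiating this identity once at $t=0$ and using the hypothesis $\delta^1_X V = 0$ gives $s'(0)=0$; differentiating twice gives
\[
\partial_s V(0,0)\cdot s''(0) \;=\; -\,\delta^2_X V.
\]

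Since $\Phi(t,s(t))\Omega$ is a $q$-cluster with the same $\mu$-volume vector as $\Omega$, minimality gives $A(t,s(t)) \ge A(0,0)$ for all sufficiently small $t$, so $\frac{d^2}{dt^2}\big|_{t=0} A(t,s(t)) \ge 0$. Expanding the second derivative using $s'(0)=0$ yields
\[
0 \;\le\; \delta^2_X A \;+\; \partial_s A(0,0)\cdot s''(0).
\]
Finally, I would invoke the Lagrange multiplier identity of Lemma \ref{lem:Lagrange} applied to each $Y_j$, namely $\partial_{s_j}A(0,0) = \delta^1_{Y_j}A = \sscalar{\lambda,\delta^1_{Y_j} V}$, which together with the formula above for $\partial_s V(0,0)\cdot s''(0)$ yields
\[
\partial_s A(0,0)\cdot s''(0) \;=\; \sscalar{\lambda,\,\partial_s V(0,0)\cdot s''(0)} \;=\; -\sscalar{\lambda,\delta^2_X V}.
\]
Combining, $Q(X) = \delta^2_X A - \sscalar{\lambda,\delta^2_X V} \ge 0$, as desired.

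This is a completely standard application of Lagrange multipliers and the implicit function theorem; the only mildly delicate point is the joint smoothness of $V$ and $A$ in the multi-parameter $(t,s)$, which is why compact support of the $Y_j$'s (and of $X$) is used. I do not expect any genuine obstacle, given the first-variation machinery already assembled in the preliminaries.
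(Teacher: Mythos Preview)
Your argument is correct and is precisely the standard Lagrange-multiplier/implicit-function-theorem argument that the paper has in mind; the paper itself states the lemma without proof, treating it as well-known. Your construction of the auxiliary fields $Y_j$ via bump fields near interfaces and graph connectivity, and the subsequent second-order expansion, are exactly what is needed.
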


\begin{definition}[Stable Cluster]
A stationary cluster satisfying the conclusion of Lemma \ref{lem:unstable} is called stable. \end{definition}

The following formula for $Q(X)$ may be derived under favorable conditions. 

\begin{definition}[Weighted Ricci Curvature]
The weighted Ricci curvature on $(M,g,\mu = \exp(-W) d\vol_g)$ is defined as the following symmetric $2$-tensor:
\[
\Ric_{g,\mu} := \Ric_g + \nabla^2 W . 
\]
\end{definition}

\begin{definition}[Vector-field Index-Form $Q^1$]
For any $C_c^\infty$ vector-field $X$ for which all integrals below are finite, the (vector-field) index-form $Q^1(X)$ is defined as:
\begin{align} 
\label{eq:Q1-def}
 Q^1(X) := \sum_{i < j} \Big[ & \int_{\Sigma_{ij}} \brac{ |\nabla^\tang X^\n|^2 -(\Ric_{g,\mu}(\n,\n) + \|\II\|_2^2)  (X^\n)^2 } d\mu^{n-1}  \\
 \nonumber & - \int_{\partial \Sigma_{ij}} X^\n X^{\n_\partial} \II_{\partial\partial} \, d\mu^{n-2}\Big] .
\end{align}
\end{definition}

\begin{theorem} \label{thm:Q-Sigma4}
Let $\Omega$ be a stationary regular cluster on $(M^n,g,\mu)$. Then for any $C_c^\infty$ vector-field $X$ whose support is disjoint from $\Sigma^4$, one has $Q(X) = Q^1(X)$. In particular, all terms appearing in the definition of $Q^1(X)$ above are integrable.
\end{theorem}
\begin{proof}
In unweighted Euclidean space $(M^n,g,\mu) = (\R^n,|\cdot|,\vol)$, for $q=3$ and when $\Sigma = \Sigma^1 \cup \Sigma^2$ does not contain the quadruple set $\Sigma^3$ nor any singularities $\Sigma^4$, the formula $Q = Q^1$ was derived (in an equivalent form) in \cite[Proposition 3.3]{DoubleBubbleInR3}. For general $q$ when $\Sigma = \Sigma^1 \cup \Sigma^2 \cup \Sigma^3 \cup \Sigma^4$ may contain singularities, for weighted Euclidean space $(M^n,g,\mu) = (\R^n,|\cdot|,\mu)$ when $\Ric_{g,\mu} = \nabla^2 W$ only accounts for the curvature of the measure, the above formula for $Q$ was proved in \cite[Theorem 6.2]{EMilmanNeeman-GaussianMultiBubble}, relying on the integrability of curvature away from $\Sigma^4$ derived in Proposition \ref{prop:curvature-integrability-Sigma4}. The extra $\Ric_g(\n,\n)$ term which appears in the Riemannian setting is well-known and expected -- we verify this in Appendix \ref{app:Ricci}.
\end{proof}

\subsection{Outward Fields}

\begin{proposition}[Existence of Approximate Outward Fields] \label{prop:inward-fields}
Let $\Omega$ be a stationary regular $q$-cluster on $(M,g,\mu)$. 
For every $\epsilon_1 > 0$, there is a subset $K \subset M$, such that for every $\epsilon_2 > 0$, there is a family of vector-fields $X_1, \dots, X_q$ with the following properties:
\begin{enumerate}
        \item \label{it:inward-fields-smooth}
        $K$ is compact, disjoint from $\Sigma^4$, satisfies $\mu^{n-1}(\Sigma \setminus K) \le \epsilon_1$, and each $X_k$ is $C_c^\infty$ and supported inside $K$;
            
        \item \label{it:inward-fields-gradient}
         for every $k$,   $\displaystyle
                \int_{\Sigma^1} |\nabla^\tang X_k^\n|^2 \, d\mu^{n-1} \le \epsilon_1;
            $
        \item \label{it:inward-fields-on-Sigma1}            
        $\sum_{i < j} \mu^{n-1}\{p \in \Sigma_{ij} \; ; \;\exists k \in \{ 1,\ldots,q \} \;\;\; X_k^{\n_{ij}}(p) \ne \delta^k_{ij} \} \le \epsilon_1 + \epsilon_2;$ 
       
        \item \label{it:inward-fields-pointwise}
          for every $i < j$, for every $p \in \Sigma_{ij}$, there is
            some $\alpha \in [0, 1]$ such that for every $k$, 
            $|X_k^{\n_{ij}}(p) - \alpha \delta^k_{ij}| \le \alpha \epsilon_2;$
            
        \item \label{it:inward-fields-bounded}
        for every $k$, and at every point in $M$, $|X_k| \le \sqrt {3/2}$.             
    \end{enumerate}
\end{proposition}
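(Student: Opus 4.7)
The goal is to construct, for each cell $\Omega_k$, a smooth compactly-supported ``approximate outward field'' whose normal trace on each interface $\Sigma_{ij}$ equals (up to errors in $\epsilon_2$ in norm and $\epsilon_1+\epsilon_2$ in measure) the prescribed constant $\delta^k_{ij}$. The natural scheme is to define a canonical boundary field on $\Sigma$ itself, extend it smoothly into the ambient manifold away from the singular set $\Sigma^4$, and then multiply by cutoffs.

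First I would apply Lemma \ref{lem:cutoff-Sigma4} with a parameter $\epsilon_1' := \epsilon_1/C$ (for some absolute $C$) to obtain a cutoff $\eta_0$ that is $C_c^\infty$, vanishes near $\Sigma^4$, and has both $\int_{\Sigma^1}|\nabla\eta_0|^2\,d\mu^{n-1}$ and $\mu^{n-1}\{\eta_0<1\}$ controlled by $\epsilon_1'$. Taking $K$ to be a compact neighborhood of $\supp(\eta_0)$ still disjoint from $\Sigma^4$, Theorem~\ref{thm:regularity} and Remark~\ref{rem:extend-to-Riemannian} provide a locally-finite stratification of $\Sigma\cap K$ into a $C^\infty$ interface part $\Sigma^1$, a $C^\infty$ triple-junction part $\Sigma^2$, and a $C^{1,\alpha}$ quadruple-junction part $\Sigma^3$. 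The key algebraic observation for each $k$ is that at any $p\in\Sigma_{uvw}$, the three prescribed values $\{\delta^k_{ij}\}_{(i,j)\in\cyclic(u,v,w)}$ are compatible with the $120^\circ$ relation $\sum_{(i,j)\in\cyclic(u,v,w)}\n_{ij}=0$ (since $\sum\delta^k_{ij}=0$), so they determine a \emph{unique} vector $Y_k(p)$ in the $2$-plane $\sspan(\n_{uv},\n_{vw},\n_{wu})$. A direct trigonometric calculation shows $|Y_k(p)|\le 2/\sqrt{3}<\sqrt{3/2}$. The analogous consistency at $p\in\Sigma^3$ follows by applying Lemma~\ref{lem:boundary-normal-sum} to each of the four triple-junctions abutting $p$, yielding a canonical $Y_k(p)$ in the $3$-plane spanned by the six normals with the same bound.

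Next I would extend this canonical boundary datum to a neighborhood of $\Sigma\cap K$ in $M$. Using the $C^\infty$ charts near $\Sigma^2$ and $C^{1,\alpha}$ charts near $\Sigma^3$ provided by Theorem~\ref{thm:regularity}, each interface $\Sigma_{ij}\cap K$ admits a tubular neighborhood with a smooth (resp.\ $C^{1,\alpha}$) retraction onto it; by patching extensions of $\delta^k_{ij}\n_{ij}$ via a partition of unity subordinate to the cover by these tubes, one obtains a globally defined vector field $\tilde Y_k$ of class $C^{1,\alpha}$ on a neighborhood of $\Sigma\cap K$ with the correct normal trace on each $\Sigma_{ij}$. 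A standard mollification at scale $\rho\ll 1$ then yields a $C^\infty$ field $Y_k^{\rho}$ whose normal trace on $\Sigma$ agrees with $\delta^k_{ij}$ except on a set of $\mu^{n-1}$-measure $\le\epsilon_2/2$ concentrated in an $O(\rho)$-tube around $\Sigma^2\cup\Sigma^3$, and with $|Y_k^\rho|\le 2/\sqrt 3 + O(\rho)\le\sqrt{3/2}$. Finally, letting $\psi$ be a $C_c^\infty$ cutoff with $\psi\equiv 1$ on $\supp(\eta_0)$ and $\supp(\psi)\subset K$, I set
\[
 X_k := \eta_0\,\psi\,Y_k^{\rho}.
\]
Properties \ref{it:inward-fields-smooth}, \ref{it:inward-fields-pointwise} (with $\alpha(p)=\eta_0(p)\psi(p)\in[0,1]$) and \ref{it:inward-fields-bounded} are then immediate. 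Property \ref{it:inward-fields-on-Sigma1} follows because $\{X_k^{\n_{ij}}\ne\delta^k_{ij}\}\subset\{\eta_0<1\}\cup(\text{mollification tube})$, of total $\mu^{n-1}$-measure $\le\epsilon_1+\epsilon_2$; and property \ref{it:inward-fields-gradient} follows from $X_k^{\n_{ij}} = \eta_0\psi\,\delta^k_{ij}$ wherever $Y_k^\rho$ is unmollified (so the tangential derivative is governed by $|\nabla\eta_0|$ plus the contribution of the tube where $|\nabla Y_k^\rho|$ may be large but the area is $O(\rho)$, chosen small).

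The main obstacle is the middle step: producing a smooth extension $Y_k^\rho$ satisfying the normal-trace constraint \emph{simultaneously on every} interface $\Sigma_{ij}$ despite the only $C^{1,\alpha}$ regularity of $\Sigma^3$ and the fact that a naive pointwise choice of tubular normal coordinates will not be consistent across different strata. The crucial points enabling this are the compatibility at each stratum encoded in $\sum_{(i,j)\in\cyclic}\n_{ij}=0$ and Lemma~\ref{lem:boundary-normal-sum}, the locally-finite stratification provided by Theorem~\ref{thm:regularity}, and the fact that mollification errors can be absorbed into $\epsilon_2$ (since $\epsilon_2$ is chosen \emph{after} $K$ is fixed). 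This last flexibility is the reason for the two-parameter structure of the statement.
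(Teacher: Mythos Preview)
Your architecture is sound and you correctly identify the key algebraic compatibility $\sum_{(i,j)\in\cyclic}\delta^k_{ij}=0$ matching $\sum\n_{ij}=0$ at triple points. But there are two genuine gaps.

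First, mollification does not preserve \emph{exact} normal traces. Even if $\tilde Y_k^{\n_{ij}}=\delta^k_{ij}$ holds on $\Sigma_{ij}$, the convolution $Y_k^\rho$ at $p\in\Sigma_{ij}$ averages values of $\tilde Y_k$ over an ambient ball, and $\langle Y_k^\rho(p),\n_{ij}(p)\rangle=\delta^k_{ij}+O(\rho)$ \emph{everywhere} on $\Sigma_{ij}$, not just in an $O(\rho)$-tube around $\Sigma^2\cup\Sigma^3$. Property~\ref{it:inward-fields-on-Sigma1} demands exact equality off a set of measure $\epsilon_1+\epsilon_2$, so this fails. Relatedly, your partition-of-unity patching of the single-sheet extensions $\delta^k_{ij}\n_{ij}$ already spoils the exact trace near $\Sigma^2$: at $q\in\Sigma_{ij}$ close to $\Sigma_{ijk}$, the overlap with the $\Sigma_{jk}$-tube contributes $\chi_{jk}(q)\,\delta^k_{jk}\langle\n_{jk},\n_{ij}\rangle\ne 0$, so the claimed ``correct normal trace on each $\Sigma_{ij}$'' does not follow from the construction as written.

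Second, the $\T$-model field at quadruple points has norm exactly $\sqrt{3/2}$, not $2/\sqrt 3$: with $v_1,\dots,v_4\in E^{(3)}$ the unit cell-directions satisfying $\langle v_a,v_b\rangle=-1/3$, the unique solution for $m\in\{1,\dots,4\}$ is $Y_m=\sqrt{3/2}\,v_m$. Since the diffeomorphism in Theorem~\ref{thm:regularity}~\ref{it:regularity-Sigma3} is only $C^{1,\alpha}$, its differential is only $C^{0,\alpha}$; the pushforward of the constant model field is therefore $C^{0,\alpha}$ (not $C^{1,\alpha}$ as you state), and its norm equals $\sqrt{3/2}$ only at the base point, with a H\"older error of either sign nearby. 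Property~\ref{it:inward-fields-bounded} is thus in jeopardy, and there is no slack to absorb such errors.

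The construction in \cite[Section~7]{EMilmanNeeman-GaussianMultiBubble} (as recalled in the proof of Proposition~\ref{prop:outward-fields-Sigma2} here) avoids both issues by introducing a \emph{second} cutoff $\xi$ that truncates a neighborhood of $\Sigma^3$; this --- not a mollification scale --- is the true origin of the two-parameter structure. With $\Sigma^3\cup\Sigma^4$ excised, one works on the $C^\infty$ stratification $\Sigma^1\cup\Sigma^2$, where a smooth field $Z_k$ with \emph{exact} trace $Z_k^{\n_{ij}}\equiv\delta^k_{ij}$ can be built locally and glued; then $X_k:=\eta\,\xi\,Z_k$, with $\alpha(p)=\eta(p)\xi(p)$ in property~\ref{it:inward-fields-pointwise}, $\nabla^\tang X_k^{\n_{ij}}=\delta^k_{ij}\nabla^\tang(\eta\xi)$ giving property~\ref{it:inward-fields-gradient} (a logarithmic cutoff around the codimension-two set $\Sigma^3$ makes $\int|\nabla\xi|^2$ small), and property~\ref{it:inward-fields-bounded} holding with room to spare since the saturated $\T$-model value is never invoked.
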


\begin{definition}[Approximate Outward Fields]
A family $X_1,\ldots,X_q$ of vector-fields satisfying properties (\ref{it:inward-fields-smooth})-(\ref{it:inward-fields-bounded}) above is called a family of $(\eps_1,\eps_2)$-approximate outward fields. 
\end{definition}

\begin{proof}[Proof of Proposition \ref{prop:inward-fields}]
Proposition \ref{prop:inward-fields} was proved in \cite[Section 7]{EMilmanNeeman-GaussianMultiBubble} for stationary regular clusters in Euclidean space $(\R^n,|\cdot|,\mu)$. The sign convention employed there was opposite to the one we use in this work, and so these fields were called ``inward fields" in \cite{EMilmanNeeman-GaussianMultiBubble}. 
Since the construction of the approximate inward fields in \cite{EMilmanNeeman-GaussianMultiBubble} is entirely local, by using a partition of unity and working in charts on the smooth manifold $M$, the exact same proof carried over to the Riemannian setting. 
\end{proof}

As useful corollaries, one easily obtains the following (see \cite[Lemmas 7.8, 7.9]{EMilmanNeeman-GaussianMultiBubble}). Recall the notation $L_1$ from Definition \ref{def:LA}.

\begin{lemma} \label{lem:inward-combination} 
Let $\Omega$ be a stationary regular cluster on $(M,g,\mu)$, and let $(X_1,\ldots,X_q)$ be a collection of $(\eps_1,\eps_2)$-approximate outward fields with $\eps_2 < 1$. Given $a \in \R^q$, set $X_a := \sum_{k=1}^q a_k X_k$. Then: 
\begin{enumerate}[(i)]
\item \label{it:inward-nabla} $\int_{\Sigma^1}  |\nabla^{\tang} X^{\n}_a|^2 d\mu^{n-1} \leq q |a|^2 \eps_1$ . 
\item \label{it:inward-L2} $\int_{\Sigma^1} (X^{\n}_a)^2 d\mu^{n-1} \geq a^T L_1 a -  2 |a|^2  (\eps_1 + \eps_2)$. 
\item \label{it:inward-delta-V} For all $i$, $\abs{(\delta^1_{X_a} V(\Omega) - L_1 a)_i} \leq \sqrt{q} |a| (\eps_1+\eps_2) $. 
\end{enumerate}
In addition, we have the following simple estimates:
\begin{enumerate}[(i)]
\setcounter{enumi}{3}
\item\label{it:inward-max1} $\sup_{p \in \Sigma_{ij}} | X_a^{\n_{ij}} - a_{ij} |\leq \sum_{k=1}^q |a_k| \leq \sqrt{q} |a|$ for all $i,j$. 
\item\label{it:inward-max2} $|X_a| \leq \sqrt{3/2}  \sum_{k=1}^q |a_k| \leq \sqrt{3q/2} |a|$. 
\end{enumerate}
\end{lemma}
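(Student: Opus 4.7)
All five estimates are routine consequences of the defining properties (1)--(5) of $(\eps_1,\eps_2)$-approximate outward fields, combined with the obvious linearity $X_a = \sum_k a_k X_k$. There is no real obstacle here; the proof is a bookkeeping exercise in Cauchy--Schwarz, triangle inequalities, and separating each interface $\Sigma_{ij}$ into its ``good'' part (where $X_k^{\n_{ij}}=\delta^k_{ij}$ for all $k$) and its ``bad'' part, whose measure is controlled by property~(3). Item~(v) is an immediate triangle inequality using property~(5), and item~(i) follows from Cauchy--Schwarz on $\nabla^\tang X_a^\n = \sum_k a_k \nabla^\tang X_k^\n$ (the factor of $q$ comes from summing $q$ copies of the integral bound $\eps_1$ from property~(2)).

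The key pointwise estimate is item~(iv), from which (ii) and (iii) will both follow. At any fixed $p\in\Sigma_{ij}$, property~(4) provides $\alpha\in[0,1]$ with $X_k^{\n_{ij}}(p) = \alpha\delta^k_{ij} + r_k$ where $|r_k|\le\alpha\eps_2$. Multiplying by $a_k$ and summing gives $X_a^{\n_{ij}}(p) - a_{ij} = (\alpha-1)\,a_{ij} + \sum_k a_k r_k$, whose absolute value is bounded by $(1-\alpha)|a_{ij}| + \alpha\eps_2 \sum_k |a_k|$. Using $|a_{ij}|\le\sum_k|a_k|$ and $\alpha\eps_2 + (1-\alpha)\le 1$ (valid because $\eps_2<1$) collapses this to $\sum_k|a_k|\le\sqrt{q}|a|$, proving (iv).

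For item~(ii), let $E_{ij} := \{p \in \Sigma_{ij} : \exists k,\ X_k^{\n_{ij}}(p)\ne\delta^k_{ij}\}$, so by property~(3) we have $\sum_{i<j}\mu^{n-1}(E_{ij})\le\eps_1+\eps_2$. On the complement $\Sigma_{ij}\setminus E_{ij}$ the remainder $r_k$ vanishes and $\alpha=1$, so $X_a^{\n_{ij}}\equiv a_{ij}$ there. Hence
\[
\int_{\Sigma_{ij}}(X_a^{\n_{ij}})^2\,d\mu^{n-1} \ge a_{ij}^2\bigl(\mu^{n-1}(\Sigma_{ij}) - \mu^{n-1}(E_{ij})\bigr).
\]
Summing over $i<j$, recognizing $\sum_{i<j}\mu^{n-1}(\Sigma_{ij})a_{ij}^2 = a^T L_1 a$, and using the crude bound $a_{ij}^2\le 2|a|^2$ together with property~(3) yields the claimed inequality.

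For item~(iii), Lemma~\ref{lem:Lagrange} gives $\delta^1_{X_a} V(\Omega)_i = \sum_{j\ne i}\int_{\Sigma_{ij}} X_a^{\n_{ij}}\,d\mu^{n-1}$, while a direct computation from the definition of $L_1$ gives $(L_1 a)_i = \sum_{j\ne i}\mu^{n-1}(\Sigma_{ij})\,a_{ij}$. Subtracting,
\[
\bigl(\delta^1_{X_a} V(\Omega) - L_1 a\bigr)_i = \sum_{j\ne i}\int_{E_{ij}}\bigl(X_a^{\n_{ij}} - a_{ij}\bigr)\,d\mu^{n-1},
\]
since the integrand vanishes off $E_{ij}$. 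Bounding the integrand pointwise by item~(iv) and applying property~(3) to $\sum_{j\ne i}\mu^{n-1}(E_{ij})$ delivers the estimate. The only minor point to verify is the numerical constant, but this is a direct computation requiring no new idea.
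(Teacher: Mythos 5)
Your proof is correct and matches the natural approach; the paper itself cites this lemma from the authors' earlier Gaussian paper without reproducing the argument, and your bookkeeping via the good/bad decomposition of each $\Sigma_{ij}$, Cauchy--Schwarz for item~(i), the pointwise estimate~(iv) feeding into~(ii) and~(iii), and the crude bound $a_{ij}^2\le 2|a|^2$ is exactly the expected route. The one step worth double-checking, that $\sum_{j\ne i}\mu^{n-1}(E_{ij})\le\eps_1+\eps_2$ for each fixed $i$ (not just when summed over all unordered pairs), does hold because $E_{ij}=E_{ji}$ and the $j$-sum for fixed $i$ is dominated by the full sum over unordered pairs, so item~(iii) goes through as you wrote it.
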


\begin{lemma} \label{lem:inward-trace} 
Under the same assumptions and notation as above, the following estimate holds at $p$ for all distinct $i,j,k$ and $p \in \Sigma_{ijk}$: 
\[
\abs{\tr(a \mapsto X^{\n_{ij}}_a X^{\n_{\partial  ij}}_a)} \leq C_q \eps_2  ,
\]
where $C_q > 0$ is a constant depending solely on $q$. 
\end{lemma}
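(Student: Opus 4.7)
The plan is to reinterpret the trace and exploit the three-fold relations among normals at a triple point. Identifying the quadratic form $a \mapsto X_a^{\n_{ij}} X_a^{\n_{\partial ij}}$ with its symmetric bilinear form, its trace equals
\[
T := \sum_{m=1}^q X_m^{\n_{ij}}(p)\, X_m^{\n_{\partial ij}}(p).
\]
At $p \in \Sigma_{ijk}$, combining (\ref{eq:sqrt3}) with Lemma \ref{lem:boundary-normal-sum} (specifically, the cyclic relation $\n_{ij} + \n_{jk} + \n_{ki} = 0$) gives $\n_{\partial ij} = (\n_{jk} - \n_{ki})/\sqrt 3$, so
\[
T = \frac{1}{\sqrt 3}\sum_{m=1}^q X_m^{\n_{ij}}(p)\bigl(X_m^{\n_{jk}}(p) - X_m^{\n_{ki}}(p)\bigr).
\]

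Next I would apply property \ref{it:inward-fields-pointwise} of approximate outward fields on each of $\Sigma_{ij}, \Sigma_{jk}, \Sigma_{ki}$ and extend to the triple point $p$ by continuity (the defining inequality is a closed condition and the scalars $\alpha$ lie in the compact interval $[0,1]$). This produces constants $\alpha_1, \alpha_2, \alpha_3 \in [0,1]$ and remainders $e^1_m, e^2_m, e^3_m$ with $|e^s_m|\leq \eps_2$ satisfying, at $p$,
\[
X_m^{\n_{ij}} = \alpha_1 \delta^m_{ij} + e^1_m, \;\; X_m^{\n_{jk}} = \alpha_2 \delta^m_{jk} + e^2_m, \;\; X_m^{\n_{ki}} = \alpha_3 \delta^m_{ki} + e^3_m.
\]
The key observation is that these three \emph{activation levels} must approximately coincide at the triple point: summing the three displays and using $\n_{ij} + \n_{jk} + \n_{ki} = 0$ on the left-hand side yields $\alpha_1 \delta^m_{ij} + \alpha_2 \delta^m_{jk} + \alpha_3 \delta^m_{ki} = -(e^1_m + e^2_m + e^3_m)$, and specializing $m$ to each of $i, j, k$ gives $|\alpha_s - \alpha_t| \leq 3\eps_2$ for all $s, t \in \{1,2,3\}$.

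In the final step I substitute the three expansions into the displayed formula for $T$ and use the elementary identities $\sum_m \delta^m_{ij} \delta^m_{jk} = \sum_m \delta^m_{ij} \delta^m_{ki} = -1$, valid for distinct $i,j,k$. The leading contribution simplifies to $\alpha_1(\alpha_3 - \alpha_2)/\sqrt 3$, which is $O(\eps_2)$ by the $\alpha$-consistency just established. The remaining cross-terms each carry at least one factor $e^s_m$ bounded by $\eps_2$: those involving a $\delta$-factor survive only at $m \in \{i,j,k\}$ and contribute $O(\eps_2)$, while the purely quadratic residual $\frac{1}{\sqrt 3}\sum_m e^1_m(e^2_m - e^3_m)$ is bounded by $\frac{2q}{\sqrt 3}\eps_2^2 \leq \frac{2q}{\sqrt 3}\eps_2$ since $\eps_2 < 1$. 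Summing yields $|T| \leq C_q \eps_2$ with $C_q = O(q)$. No step presents a serious obstacle; the crux of the argument is the recognition that property \ref{it:inward-fields-pointwise} \emph{forces} $\alpha_1 \approx \alpha_2 \approx \alpha_3$ at triple points, which is precisely what makes the otherwise nonvanishing main term small.
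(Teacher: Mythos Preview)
Your argument is correct. The paper does not give its own proof of this lemma but simply defers to \cite[Lemma~7.9]{EMilmanNeeman-GaussianMultiBubble}; your approach---extending property~\ref{it:inward-fields-pointwise} to the triple point by continuity, using $\n_{ij}+\n_{jk}+\n_{ki}=0$ to force the three activation levels to agree up to $O(\eps_2)$, and then expanding the trace---is exactly the natural route and is essentially what the cited reference does.
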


\section{Clusters with locally bounded curvature} \label{sec:bounded-curvature}

While Proposition \ref{prop:curvature-integrability-Sigma4} ensures the integrability of curvature away from the singular set $\Sigma^4$, and thus is of vital importance for ``kickstarting" any rigorous stability testing, we will also want to test fields whose compact support may not be disjoint from $\Sigma^4$. While we do not know how to justify such an attempt on a general stable regular cluster, we will establish later on in Section \ref{sec:spherical} that our minimizing cluster is spherical, and in particular, of bounded curvature. This will alleviate any concerns regarding potential curvature blow-up or non-integrability on compact sets, and thus enable a much more refined analysis. In anticipation of this, we point out in this section the additional crucial information provided once it is known that the curvature is (locally) bounded. 

\begin{definition}[Cluster with locally bounded curvature]
We say that a regular cluster $\Omega$ has locally bounded curvature if $\II_{\Sigma^1}$ is bounded on every compact set. 
\end{definition}

\begin{lemma} \label{lem:Sigma2}
Let $\Omega$ be a regular cluster on $(M,g,\mu)$ with locally bounded curvature and satisfying (\ref{eq:sum-n-zero}). Then for any compact subset $K \subset M$:
\begin{enumerate}
\item There exist $\Lambda_{K},r_{K} \in (0,\infty)$ so that:
\begin{equation} \label{eq:Sigma2-regularity}
\mu^{n-2}(\Sigma^2 \cap B(x,r)) \leq \Lambda_{K} r^{n-2} \;\;\; \forall x \in \Sigma \cap K \;\;\; \forall r \in (0,r_{K}) . 
\end{equation}
\item In particular:
\begin{equation} \label{eq:Sigma2-finite}
 \mu^{n-2}(\Sigma^2 \cap K) < \infty .
 \end{equation}
\item As a consequence, curvature is locally integrable:
\begin{equation} \label{eq:curvature-integrability}
\int_{\Sigma^1 \cap K} \norm{\II}^2 d\mu^{n-1} < \infty \text{ and } \int_{\Sigma^2 \cap K} \norm{\II}  d\mu^{n-2} < \infty .
\end{equation}
\end{enumerate}
\end{lemma}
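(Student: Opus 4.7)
The plan is to combine the stratified regularity of Theorem~\ref{thm:regularity} with the local bounded-curvature hypothesis to view $\Sigma^2 \cup \Sigma^3 \cup \Sigma^4$ as a rectifiable $(n-2)$-varifold of locally bounded first variation, and then to apply an Allard-type monotonicity formula to obtain (\ref{eq:Sigma2-regularity}); the remaining two parts will follow by standard covering and bounded-integrand arguments.

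The preparatory observation is that on the smooth stratum $\Sigma^2 \setminus (\Sigma^3 \cup \Sigma^4)$, the second fundamental form of $\Sigma^2$ \emph{as a submanifold of} $M$ is algebraically controlled by the $\{\II^{ij}\}$ on its three adjacent interfaces. Indeed, at $p \in \Sigma_{ijk}$ the normal space of $\Sigma^2$ in $M$ is spanned by any two of $\{\n_{ij}, \n_{jk}, \n_{ki}\}$ (which are linearly dependent by Lemma~\ref{lem:boundary-normal-sum}); the $\n_{ij}$-component of the second fundamental form of $\Sigma^2$ is simply the restriction $\II^{ij}|_{T_p \Sigma_{ijk}}$, while the tangential-in-$\Sigma_{ij}$ component along $\n_{\partial ij}$ is controlled via Lemma~\ref{lem:angles-form}(\ref{it:angles-form-a}). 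Hence $\Sigma^2 \setminus (\Sigma^3 \cup \Sigma^4)$, viewed as a smooth submanifold of $M$, has locally bounded second fundamental form. Moreover, the local model $\T \times \R^{n-3}$ of Theorem~\ref{thm:regularity}(\ref{it:regularity-Sigma3}) at any $p \in \Sigma^3$, combined with the fact that the four half-lines of $\T$ point to the vertices of a regular tetrahedron centered at the origin of $E^{(3)}$ and hence sum to zero, implies that the outward boundary-normals of the four pieces of $\Sigma^2$ meeting at $p$ also sum to zero.

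I now treat $V := \Sigma^2 \cup \Sigma^3 \cup \Sigma^4$ as a multiplicity-one integer-rectifiable $(n-2)$-varifold (noting that $\Sigma^3$ and $\Sigma^4$ are $\mu^{n-2}$-null by their Hausdorff dimensions $n-3$ and $\leq n-4$, respectively). The two properties above imply that $V$ has locally bounded first variation: the divergence theorem on the smooth part yields a bulk term against a mean-curvature vector bounded by the previous paragraph, while the potential boundary contributions at $\Sigma^3$ cancel thanks to the sum-to-zero property, and the contribution from $\Sigma^4$ is negligible by its $\H^{n-3}$-nullity. Allard's monotonicity formula for rectifiable varifolds with locally bounded first variation then yields $\mu^{n-2}(V \cap B(x,r)) \leq \Lambda_K r^{n-2}$ for every $x \in V \cap K$ and $r \in (0, r_K)$. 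For $x \in \Sigma \cap K$ with $x \notin V$, either $B(x,r) \cap \Sigma^2 = \emptyset$ and the desired bound is trivial, or else one picks $y \in B(x,r) \cap V$ and uses the inclusion $\Sigma^2 \cap B(x,r) \subset V \cap B(y, 2r)$, losing only a factor $2^{n-2}$ to conclude (\ref{eq:Sigma2-regularity}).

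Statement (\ref{eq:Sigma2-finite}) follows from (\ref{eq:Sigma2-regularity}) by covering the compact set $\Sigma \cap K$ by finitely many balls of radius $r_K/2$ centered in $\Sigma \cap K$; the analogous cover using Theorem~\ref{thm:Almgren}(\ref{it:Almgren-density}) also yields $\mu^{n-1}(\Sigma \cap K) < \infty$. Since $\|\II\|$ is bounded on $\Sigma^1 \cap K$ by hypothesis and extends by continuity from each $\Sigma_{ij}$ to $\overline{\Sigma_{ij}} \setminus \Sigma^3$, and $\Sigma^3$ is $\mu^{n-2}$-null, the integrability (\ref{eq:curvature-integrability}) is immediate by bounding the integrand by $\sup_K \|\II\|$ and multiplying by the above finite measures. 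The principal technical hurdle is rigorously justifying the locally bounded first variation of $V$ near $\Sigma^4$: strictly, we only know $\dim_{\H} \Sigma^4 \leq n-4$, not $\H^{n-4}$-rectifiability nor finite $\H^{n-3}$-content. This is handled by introducing cutoff functions $\eta_\varepsilon$ vanishing in a neighborhood of $\Sigma^4$, in the spirit of Lemma~\ref{lem:cutoff-Sigma4} adapted to the $(n-2)$-setting, applying Stokes to $X \eta_\varepsilon$, and sending $\varepsilon \to 0$, using $\H^{n-3}(\Sigma^4) = 0$ to discard the residual boundary contributions.
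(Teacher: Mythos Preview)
Your proposal takes a substantially different route from the paper's, and there is a circularity you have not closed.

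The paper's argument is elementary and avoids varifold theory entirely. It uses directly that, by Theorem~\ref{thm:regularity}\ref{it:regularity-Sigma2}, $\Sigma^2$ is a locally finite union of embedded $C^\infty$ $(n-2)$-manifolds: on any compact $K_1 \supset K$ there are only finitely many pieces $M_1,\ldots,M_{N_{K_1}}$. Your preparatory observation---that the second fundamental form of each $M_i$ in $M$ is controlled pointwise by the $\II^{ij}$---is correct and is precisely what the paper uses. But from there the paper simply invokes the standard fact that a smooth embedded submanifold with bounded second fundamental form is, at a uniform scale $r_K$ depending only on the curvature bound, a Lipschitz graph over any of its tangent planes (citing Tinaglia). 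This gives $\H^{n-2}(M_i \cap B(x,r)) \le C r^{n-2}$ directly; summing over the $N_{K_1}$ pieces yields (\ref{eq:Sigma2-regularity}). No analysis of $\Sigma^3$, no cancellation, no monotonicity, and no mention of $\Sigma^4$ are needed.

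Your Allard-monotonicity route, by contrast, requires $V = \Sigma^2 \cup \Sigma^3 \cup \Sigma^4$ to have locally finite mass and locally bounded first variation on all of $M$. Away from $\Sigma^4$ this is fine (Corollary~\ref{cor:locally-finite-Sigma2} gives finite mass; your $\Sigma^3$-cancellation is correct). But near $\Sigma^4$ both properties are exactly what you are trying to prove. Your proposed cutoff fix does not escape this: to send $\int_{\Sigma^2} X \cdot \nabla^\tang \eta_\varepsilon \, d\H^{n-2} \to 0$ you must control $\sum_i r_i^{-1} \H^{n-2}(\Sigma^2 \cap B(x_i,2r_i))$ over a cover of $\Sigma^4$ by balls $B(x_i,r_i)$, and bounding each summand requires the density estimate (\ref{eq:Sigma2-regularity}) itself. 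The hypothesis $\H^{n-3}(\Sigma^4)=0$ constrains $\Sigma^4$, not how $\Sigma^2$ accumulates near it. The analogy with Lemma~\ref{lem:cutoff-Sigma4} fails precisely here: that lemma succeeds because the density bound (\ref{eq:density}) for $\Sigma^1$ is already part of the definition of a regular cluster, whereas the analogue for $\Sigma^2$ is the content of the present lemma. If you patch this by first establishing the graph-over-tangent-plane bound on each $M_i$, the varifold machinery becomes superfluous.
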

The analogous estimate to (\ref{eq:Sigma2-regularity}) for $\Sigma^1$ holds on any regular cluster by definition, and hence on any isoperimetric minimizing cluster by Theorem \ref{thm:Almgren}; however, to establish (\ref{eq:Sigma2-regularity}) on $\Sigma^2$, it seems that additional information on the curvature is required, such as local boundedness.
\begin{proof}
Let $K_1$ denote the distance-one closed neighborhood of $K$. 
 By compactness, we may cover $K_1$ with a finite number of smooth local charts $\{ (U_j , \varphi_j)\}$ for $M$. Compactness and regularity imply by property \ref{it:regularity-Sigma2} of Theorem \ref{thm:regularity} 
   that 
  $\Sigma^2 \cap K_1$ is a union of $N_{K_1}$ embedded $(n-2)$-dimensional $C^\infty$ manifolds $\{M_i\}_{i=1,\ldots,N_{K_1}}$. As the curvature of each of these manifolds is bounded above\footnote{A normal to $\Sigma_{ijk}$ can be expressed as an affine combination of the normals $\n_{ij}$, $\n_{jk}$ and $\n_{ki}$ with bounded coefficients thanks to (\ref{eq:sum-n-zero}), and so the curvature is a bounded affine combination of the curvatures of $\Sigma_{ij}$, $\Sigma_{jk}$ and $\Sigma_{ki}$, and hence bounded on a compact set.}, it follows that if we set $r_{K} \in (0,1)$ to be small enough, then for all $x \in \Sigma \cap K$, $G_{i,j,x} := \varphi_j(M_i \cap B(x,r_{K}) \cap U_j)$ is a graph over any of its tangent planes $T_y G_{i,j,x}$ with Lipschitz constant uniformly bounded in $x \in \Sigma \cap K$, $i$, $j$ and $y \in G_{i,j,x}$  (see e.g. \cite[p. 166]{Tinaglia-CurvatureEstimatesForCMCSurfaces}).
Consequently:
\[
\H^{n-2}(M_i \cap B(x,r)) \leq \Lambda'_{K_1} r^{n-2} \;\;\; \forall x \in \Sigma \cap K \;\;\; \forall r \in (0,r_{K_0}) ,
\]
for some $\Lambda'_{K_1} < \infty$. Setting $\Lambda_{K} = N_{K_1} \norm{\exp(-W)}_{L^\infty(K_1)} \Lambda'_{K_1}$, (\ref{eq:Sigma2-regularity}) follows. 

Clearly, (\ref{eq:Sigma2-finite}) then follows by compactness, which thanks to the  boundedness of the curvature on $\Sigma^1 \cap K$ (and hence, by continuity, on $\Sigma^2 \cap K$), immediately implies (\ref{eq:curvature-integrability}). 
\end{proof}

This lemma will be crucially used throughout this work. For starters, we have:

\begin{lemma} \label{lem:equivalent-stationary}
Let $\Omega$ be a regular $q$-cluster with locally bounded curvature on $(M^n,g,\mu)$. Then $\Omega$ is stationary if and only if the following two properties hold:
\begin{enumerate}[(1)]
\item \label{it:stationarity-k}
For all $i < j$, $H_{\Sigma_{ij},\mu}$ is constant and equal to $\lambda_i - \lambda_j$, for some $\lambda \in E^{(q-1)}$.
\item \label{it:stationarity-n} 
$\sum_{(i,j) \in \cyclic(u,v,w)} \n_{ij} = 0$ for all $p \in \Sigma_{uvw}$ and $u < v < w$. 
\end{enumerate}
\end{lemma}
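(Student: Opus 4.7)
The ``only if'' direction is essentially already contained in earlier results: Lemma \ref{lem:first-order-conditions}\ref{it:first-order-constant}--\ref{it:first-order-cyclic} yields property \ref{it:stationarity-k} for any stationary cluster, and Lemma \ref{lem:boundary-normal-sum} yields property \ref{it:stationarity-n}. Neither of these uses local boundedness of curvature, so all the content of the statement is in the converse direction, where my plan is as follows.

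Since properties \ref{it:stationarity-k} and \ref{it:stationarity-n} already encode \ref{it:first-order-constant} and \ref{it:first-order-cyclic} of Lemma \ref{lem:first-order-conditions}, the only remaining task is to verify the distributional no-boundary identity \eqref{eq:no-boundary}. Given an arbitrary $X \in C_c^\infty$, I will apply the weighted Stokes' theorem (Lemma \ref{lem:Stokes}) separately on each interface $\Sigma_{ij}$ to the tangential projection $Z_{ij} := X^\tang = X - X^{\n_{ij}} \n_{ij}$, which is $C^\infty$ on $\Sigma_{ij}$ and extends continuously up to $\partial \Sigma_{ij}$ via the continuous extension of $\n_{ij}$. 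The three integrability hypotheses \eqref{eq:Stokes-assumptions} can be checked routinely: the $L^2$ bound follows from boundedness of $X$ and the density bound \eqref{eq:density}; writing $\div_{\Sigma,\mu} X^\tang = \div_\Sigma X - X^{\n_{ij}} H_{ij} - \nabla_{X^\tang} W$, the $L^1$ bound uses that $H_{ij}$ is bounded on the compact support of $X$ by the locally bounded curvature assumption; and the finiteness of $\int_{\partial \Sigma_{ij}} |Z_{ij}^{\n_\partial}|\, d\mu^{n-2}$ uses that $\partial \Sigma_{ij} \subseteq \Sigma^2$ has locally finite $\mu^{n-2}$-measure by \eqref{eq:Sigma2-finite} in Lemma \ref{lem:Sigma2}. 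This final point is where the locally-bounded-curvature hypothesis really bites---without it, we would have no a priori handle on the $\mu^{n-2}$-measure of the triple-point set, particularly near $\Sigma^4$.

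Once Stokes applies, the bulk term $\int_{\Sigma_{ij}} H_{ij,\mu} (X^\tang)^{\n_{ij}}\, d\mu^{n-1}$ vanishes automatically because $X^\tang$ is tangent to $\Sigma_{ij}$, and the relation $\n_{\partial ij} \perp \n_{ij}$ collapses $(X^\tang)^{\n_{\partial ij}}$ to $X^{\n_{\partial ij}}$. Summing over $i<j$, regrouping the resulting double sum by unordered triple $\{u,v,w\}$ (each such triple contributing exactly three terms, one per pair), and using $\n_{\partial ab} = \n_{\partial ba}$ to match the cyclic convention, I obtain
\[
\sum_{i<j} \int_{\Sigma_{ij}} \div_{\Sigma,\mu} X^\tang\, d\mu^{n-1} \;=\; \sum_{\{u,v,w\}} \int_{\Sigma_{uvw}} \Bigl\langle X,\, \sum_{(i,j) \in \cyclic(u,v,w)} \n_{\partial ij} \Bigr\rangle\, d\mu^{n-2}.
\]
Hypothesis \ref{it:stationarity-n} forces the inner vector sum to vanish pointwise on $\Sigma_{uvw}$, so the entire sum vanishes, establishing \eqref{eq:no-boundary}. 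The main obstacle is really just the bookkeeping of the boundary integrals and justifying Stokes' theorem despite the incompleteness of $\Sigma_{ij} \cup \partial \Sigma_{ij}$; both are resolved cleanly by the infrastructure provided by Lemmas \ref{lem:Stokes} and \ref{lem:Sigma2}, and no delicate cutoff or approximation argument is needed.
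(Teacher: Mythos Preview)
Your proof is correct and follows essentially the same approach as the paper's own proof: apply Lemma~\ref{lem:Stokes} to $Z_{ij}=X^\tang$ on each interface, use property~\ref{it:stationarity-k} (or, as you do, local boundedness of curvature directly) to bound $\div_{\Sigma,\mu} X^\tang$, invoke \eqref{eq:Sigma2-finite} for the boundary integrability, and then kill the resulting boundary sum via property~\ref{it:stationarity-n}. One small remark: your final sum involves $\sum_{(i,j)\in\cyclic(u,v,w)}\n_{\partial ij}$, whereas hypothesis~\ref{it:stationarity-n} gives $\sum\n_{ij}=0$; these are equivalent (the $\n_{\partial ij}$ are a common $90^\circ$ rotation of the $\n_{ij}$ in the $2$-plane $(T_p\Sigma_{uvw})^\perp$, cf.~\eqref{eq:sum-n-zero}), but it is worth saying so explicitly.
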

\begin{proof}
The claim boils down to justifying the equivalence between property \ref{it:stationarity-n} and (\ref{eq:no-boundary}) for an arbitrary $C_c^\infty$ vector-field $X$. This would be immediate by Stokes' theorem (Lemma \ref{lem:Stokes}) if one could justify the integrability assumptions (\ref{eq:Stokes-assumptions}) for the vector-field $Z_{ij} = X^\tang$ on $\Sigma_{ij}$. Note that $\div_{\Sigma,\mu} X^\tang = \div_{\mu} X - \scalar{\n, \nabla_{\n} X} - X^\n H_{\Sigma,\mu}$, and so thanks to property \ref{it:stationarity-k}, the latter expression is bounded and thus $\mu$-integrable on $\Sigma_{ij}$. It remains to verify the last requirement from (\ref{eq:Stokes-assumptions}), namely that $\int_{\partial \Sigma_{ij}} |X^{\n_\partial}| d\mu^{n-2} < \infty$, which is a consequence of the locally bounded curvature and (\ref{eq:Sigma2-finite}). 
\end{proof}

More importantly, we have:
\begin{proposition} \label{prop:Q-bounded-curvature}
Let $\Omega$ be a stationary regular cluster on $(M,g,\mu)$ with locally bounded curvature. Then for any $C_c^\infty$ vector-field $X$, the formula $Q(X) = Q^1(X)$ from Theorem \ref{thm:Q-Sigma4} remains valid, and in particular, all terms appearing in the definition of $Q^1(X)$ are integrable. 
\end{proposition}
\begin{proof}
Inspecting the proof of Theorem \ref{thm:Q-Sigma4} (which builds upon \cite[Theorem 6.2]{EMilmanNeeman-GaussianMultiBubble}), the assumption that the support $K$ of $X$ is disjoint from $\Sigma^4$ was only used to ensure by Corollary \ref{cor:locally-finite-Sigma2} and Proposition \ref{prop:curvature-integrability-Sigma4} that (\ref{eq:Sigma2-finite}) and (\ref{eq:curvature-integrability}) hold. Thanks to Lemma \ref{lem:Sigma2}, we know that this in fact holds whenever the curvature is locally bounded, without assuming that $K$ is disjoint from $\Sigma^4$, and so the proof and conclusions of \cite[Theorem 6.2]{EMilmanNeeman-GaussianMultiBubble} and Theorem \ref{thm:Q-Sigma4} apply. 
\end{proof}

In addition, we will require:

\begin{proposition} \label{prop:outward-fields-Sigma2}
Let $\Omega$ be a stationary regular cluster on $(M,g,\mu)$ with locally bounded curvature. Then for any compact set $K \subset M$, the $(\eps_1,\eps_2)$ outward fields of Proposition \ref{prop:inward-fields} may be constructed so that in addition to properties (\ref{it:inward-fields-smooth})-(\ref{it:inward-fields-bounded}), the following property holds:
\begin{enumerate}
\setcounter{enumi}{5}
 \item \label{it:inward-fields-on-Sigma2}            
        $\sum_{u< v<w} \mu^{n-2}\{p \in K \cap \Sigma_{uvw} \; ; \; \exists k \in \{ 1,\ldots,q \} \;\; \exists \{i,j\} \subset \{u,v,w\} \;\; X_k^{\n_{ij}}(p) \ne \delta^k_{ij} \} \le \epsilon_1 + \epsilon_2$. 
\end{enumerate}
\end{proposition}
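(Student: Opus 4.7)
The plan is to revisit the local construction of approximate outward fields in \cite[Section~7]{EMilmanNeeman-GaussianMultiBubble} underlying Proposition~\ref{prop:inward-fields}, and add an extra quantitative control on the triple-point set $\Sigma^2$. The crucial new tool available under local boundedness of curvature is Lemma~\ref{lem:Sigma2}: namely $\mu^{n-2}(\Sigma^2 \cap K) < \infty$, together with $(n-2)$-dimensional Ahlfors upper regularity of $\Sigma^2$ on compact sets. Without these, one cannot even make sense of the bad-set measure appearing in \ref{it:inward-fields-on-Sigma2}; with them in hand, the original construction goes through with only an additional bookkeeping step.

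Recall that the original construction is entirely local: near each point of $\Sigma^1 \setminus \Sigma^2$ one takes a local field of the form $\alpha\,\delta^k_{ij}\n_{ij}$ suitably extended off $\Sigma_{ij}$, while near each triple point one invokes the $\Y \times \R^{n-2}$ chart from Theorem~\ref{thm:regularity}\ref{it:regularity-Sigma2} to produce local fields whose normal components along \emph{all three} incident interfaces simultaneously realize $\delta^k_{ij}$. The global field is assembled by a single scalar partition of unity (the same across all $k$, which is exactly what makes the common factor $\alpha(p)$ in property~\ref{it:inward-fields-pointwise} available) subordinate to a finite cover of $\Sigma \cap K$; the bad points of property~\ref{it:inward-fields-on-Sigma1} lie in the overlap zones of the cover, and are made $\mu^{n-1}$-small on $\Sigma^1$ by shrinking overlap widths. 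I would refine this cover in two stages. First, excise a small open neighborhood $U$ of $\Sigma^3 \cap K$: because $\Sigma^3$ is at most $(n-3)$-dimensional, so that $\mu^{n-2}(\Sigma^3 \cap K) = 0$, and $\mu^{n-2}$ restricted to $\Sigma^2 \cap K$ is a finite Radon measure by Lemma~\ref{lem:Sigma2}, outer regularity produces an open $U \supset \Sigma^3 \cap K$ with $\mu^{n-2}(U \cap \Sigma^2) < \epsilon_1/2$; inside $U$ we allow the fields to transition freely, since any bad contribution there is already absorbed into the $\epsilon_1$ budget. Second, on $\Sigma^2 \cap K \setminus U$, a finite union of smooth embedded $(n-2)$-manifolds with locally bounded geometry, choose a finite cover of $M$ by small geodesic balls whose traces on $\Sigma^2$ have smooth boundaries of arbitrarily small $\mu^{n-3}$-content; within each such ball let the local field be exactly $\delta^k_{ij}\n_{ij}$ on the incident interfaces (using the $\Y$-model near $\Sigma^2$). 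Shrinking overlap widths to $\delta \to 0$ drives both the $\mu^{n-1}$-bad-set on $\Sigma^1$ and the $\mu^{n-2}$-bad-set on $\Sigma^2 \setminus U$ to zero, yielding \ref{it:inward-fields-on-Sigma2} with a budget of $\epsilon_1 + \epsilon_2$.

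The expected main obstacle is preserving property~\ref{it:inward-fields-pointwise}, which requires a \emph{single} scalar $\alpha(p) \in [0,1]$ to serve uniformly for all $k$: this couples the constructions for different $k$ and forbids refining the cover separately for each field. The resolution (already implicit in \cite{EMilmanNeeman-GaussianMultiBubble}) is to drive the rescaling of every $X_k$ by a common scalar partition-of-unity cutoff, so that $\alpha(p)$ is just that scalar and property~\ref{it:inward-fields-bounded} follows automatically. Checking that this single-scalar coupling remains compatible with the refined cover above, and in particular with the $\Y$-model patches near $\Sigma^2$, is the main technical point --- but no new idea beyond the covering refinement and Lemma~\ref{lem:Sigma2} is required.
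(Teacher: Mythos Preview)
Your identification of Lemma~\ref{lem:Sigma2} as the key new ingredient is correct, and your proposal would likely work, but it overcomplicates the argument by mischaracterizing where the bad set comes from in the original construction. In \cite[Section~7]{EMilmanNeeman-GaussianMultiBubble}, the fields are not merely approximately correct on overlap zones of a partition of unity: they satisfy $X_k^{\n_{ij}} = \delta^k_{ij}$ \emph{exactly} on both $\Sigma_{ij}$ and $\Sigma_{ij\ell}$ wherever the two global cutoffs $\eta$ (truncating near $\Sigma^4$) and $\xi$ (truncating near $\Sigma^3$) are both equal to $1$. Hence the bad set on $\Sigma^2$ is simply contained in $\{p \in \Sigma^2 : \eta(p) < 1 \text{ or } \xi(p) < 1\}$, with no contribution from overlap zones at all. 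The paper's proof therefore bypasses your cover refinement and your second stage entirely: it just notes that the existing constructions of $\eta$ and $\xi$ (from \cite[Lemmas~6.5 and~6.6]{EMilmanNeeman-GaussianMultiBubble}) can be made to satisfy $\mu^{n-2}(K \cap \Sigma^2 \cap \{\eta < 1\}) \le \epsilon_1$ and $\mu^{n-2}(K \cap \Sigma^2 \cap \{\xi < 1\}) \le \epsilon_2$, and this is exactly what the $(n-2)$-Ahlfors upper regularity estimate~\eqref{eq:Sigma2-regularity} of Lemma~\ref{lem:Sigma2} provides.

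One minor omission in your plan: you excise a neighborhood of $\Sigma^3$ but do not discuss the cutoff near $\Sigma^4$. Since the fields vanish off their compact support (which avoids $\Sigma^4$), the points of $\Sigma^2 \cap K$ near $\Sigma^4$ are also bad and must be controlled; this is the role of $\eta$ in the paper's argument and falls under the same Ahlfors-regularity estimate.
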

\begin{proof}
Property (\ref{it:inward-fields-on-Sigma2}) follows in exactly the same manner as property (\ref{it:inward-fields-on-Sigma1}) by inspecting the proof of \cite[Proposition 7.1]{EMilmanNeeman-GaussianMultiBubble}. Indeed, employing the notation used there (with our outward sign convention), recall that $\eta$ and $\xi$ denote two compactly-supported cutoff functions which truncate away neighborhoods of $\Sigma^4$ and $\Sigma^3$, respectively.  Since $X_k^{\n_{ij}} = Z_k^{\n_{ij}} = \delta^k_{ij}$ on $\Sigma_{ij}$ and $\Sigma_{ij\ell}$ for all $k = 1,\ldots,q$ whenever $\eta = \xi = 1$, it follows that:
\[
    \bigcup_{u<v<w} \{p \in \Sigma_{uvw} \; ; \; \exists k \;\; \exists \{i,j\} \subset \{u,v,w\} \;\; X_k^{\n_{ij}} \ne \delta^k_{ij} \} \subset \{p \in \Sigma^2 \; ; \; \eta < 1 \text{ or } \xi < 1  \}.
\]
Hence, by the union-bound, it is enough to make sure that $\eta$ and $\xi$ can be constructed to additionally ensure that $\mu^{n-2}(K  \cap \Sigma^2 \cap \{\eta < 1\}) \leq \eps_1$ and $\mu^{n-2}(K \cap \Sigma^2 \cap \{\xi < 1\}) \leq \eps_2$ for any given compact $K$. Inspecting the corresponding constructions in the proofs of \cite[Lemmas 6.5 and 6.6]{EMilmanNeeman-GaussianMultiBubble}, what is required is precisely the estimate (\ref{eq:Sigma2-regularity}) of Lemma \ref{lem:Sigma2}. This concludes the proof. 
\end{proof}

Additional applications of Lemma \ref{lem:Sigma2} will appear in subsequent sections.

\section{Non-physical scalar-fields vs. physical vector-fields} \label{sec:non-physical}

\begin{definition}[Scalar-Field]
Given a stationary regular cluster $\Omega$, a collection $f = \{ f_{ij} \}$ of uniformly continuous bounded functions defined on the (non-empty) interfaces $\Sigma_{ij}$ (and hence their closure) is called a scalar-field on $\Sigma$ if:
\begin{enumerate}
\item $f$ is oriented, i.e. $f_{ji} = -f_{ij}$; and
\item $f$ satisfies Dirichlet-Kirchoff boundary conditions: at every triple-point in $\Sigma_{ijk}$, we have $f_{ij} + f_{jk} + f_{ki} = 0$. 
\end{enumerate} 
We will say that $f$ has a certain property on $\Sigma$ if each $f_{ij}$ has that property on the closure of $\Sigma_{ij}$. For example, we will write $f \in C^\infty_c(\Sigma)$ or $f \in \Lip_c(\Sigma)$ if each $f_{ij}$ is $C^\infty$ smooth or Lipschitz on $\Sigma_{ij}$ and compactly supported on its closure. The support of $f$ is defined as the union of the supports of $f_{ij}$ in $\Sigma$. 
\end{definition}

\begin{definition}[Physical Scalar-Field]
A scalar-field $f = \{f_{ij} \} \in C^\infty_c(\Sigma)$ is called physical if there exists a $C^\infty_c$ vector-field $X$ on $(M,g)$ so that $f_{ij} = X^{\n_{ij}}$ on $\Sigma_{ij}$ (we will say that $f$ is derived from the physical vector-field $X$). Otherwise, it is called non-physical. 
\end{definition}

Working with scalar-fields is in practice much more convenient than with vector-fields. However, given a $C_c^\infty(\Sigma)$ scalar-field $f$ on a stationary regular cluster with $\Sigma \setminus \Sigma^1 \neq \emptyset$,  it is seldom the case that $f$ will be physical, and in general, the only way we know how to ensure this, is to start with a $C_c^\infty$ vector-field $X$ and set $f_{ij} := X^{\n_{ij}}$. This point deserves a brief discussion:
\begin{itemize}
\item Even in the single-bubble setting, one needs to approximate smooth scalar-fields by physical ones. For example,  consider the constant function $f_{12} = -f_{21} \equiv 1$ (making it compactly supported) on Simons' minimal cone $\Sigma = \{ x \in \R^8 \; ; \;  \sum_{i=1}^4 x_i^2 = \sum_{i=5}^8 x_i^2\}$, which has a singleton singularity $\Sigma \setminus \Sigma^1 = \{0\}$; it is not possible to find a smooth vector-field $X$ with $X^{\n_{ij}} = f_{ij}$. Fortunately, it is known that the singular set in the single-bubble setting on $(M^n,g,\mu)$ is of Hausdorff dimension at most $n-8$ \cite{MorganBook5Ed}, and hence it is easy to cut away from the singular set without influencing the first and second (in fact, up to sixth) variations. 
\item In the multi-bubble setting on $(M^n,g,\mu)$, Theorem \ref{thm:regularity} ensures that the singular set $\Sigma^4$ is of Hausdorff dimension at most $n-4$, and so again we may cut away from it without influencing the first and second variations. However, Theorem \ref{thm:regularity} only guarantees that $\Sigma^1$ is $C^{1,\alpha}$ smooth around quadruple points $p \in \Sigma^3$, meaning that $\n_{ij}$ is only $C^{0,\alpha}$ around those points, and so one cannot write $f_{ij} = X^{\n_{ij}}$ in the smooth or even Lipschitz class. Furthermore, this also means that curvature could be blowing up near $\Sigma^3$ which results in further complications. Unfortunately, one cannot simply cut away from $\Sigma^3$, since such a truncation would be felt by the second variation of area. 
\item It should be pointed out that on an $n$-dimensional \emph{model-space}, there is no need for any approximations at all in the single and double-bubble settings: there are enough symmetries to ensure that a minimizer has rotation symmetry, and so one can ensure that $\Sigma = \Sigma^1$ (single-bubble) and $\Sigma = \Sigma^1 \cup \Sigma^2$ (double-bubble) and there are no singularities \cite{Hutchings-StructureOfDoubleBubbles}. In that case, every smooth scalar-field is physical \cite{DoubleBubbleInR3}. 
\item However, this is no longer the case even in the triple-bubble case on a model-space when $n \geq 3$, since the rotation symmetry will only ensure that $\Sigma = \Sigma^1 \cup \Sigma^2 \cup \Sigma^3$, and the challenges expounded above of dealing with the fact that $\Sigma^1$ is only known to be $C^{1,\alpha}$ around points $p \in \Sigma^3$ apply. For general $q \geq 5$ on a model-space of dimension $n \geq 4$ (in fact, for $q \geq 3$ on a general weighted Riemannian manifold of that dimension), we have the decomposition $\Sigma = \Sigma^1 \cup \Sigma^2 \cup \Sigma^3 \cup \Sigma^4$, and nothing can be said regarding the behavior of $\Sigma$ around the singular set $\Sigma^4$. In all of these cases, an approximation by physical fields is necessary. 
\item Finally, an unexpected complication occurs when $q \geq 6$, even when the cluster is known to be completely regular, for instance when it is a spherical Voronoi cluster, and its cells meet in threes and fours as standard $\Y$ and $\T$ clusters, respectively. The reason is that Taylor's classification of minimizing cones \cite{Taylor-SoapBubbleRegularityInR3} is only available in dimensions two and three, and does not extend to dimension four and higher, where additional non-simplicial minimizing cones are known to exist \cite{Brakke-MinimalConesOnCubes}. Consequently, $m \geq 6$ cells of a minimizing spherical cluster  could potentially meet in a strange cone of affine dimension strictly smaller than $m-1$ (but necessarily strictly larger than $3$). This would incur various linear dependencies between the normals $\n_{ij}$ at the meeting point, and prevent writing $f_{ij} = X^{\n_{ij}}$ for a well-defined vector-field $X$. So an approximation by physical fields is necessary in this case as well (!). 
\end{itemize}

\medskip

The main technical results of this section are two approximation procedures of very general scalar-fields by physical ones (obtained as the normal components of a genuine $C_c^\infty$ vector-field), for the purpose of computing their index-form $Q$ and testing the cluster's stability:
\begin{itemize}
\item
On a general stable regular cluster, we can only do this in an averaged sense, after tracing out the boundary contribution -- see Subsection \ref{subsec:scalar-fields-trace}.
\item
On a stationary regular cluster with \emph{locally bounded curvature}, we obtain a very useful general approximation result in Subsection \ref{subsec:scalar-fields-bounded-curvature}, which should also prove useful for subsequent investigations.
\end{itemize}

We introduce the following definitions on a stationary regular cluster $\Omega$. 

\begin{definition}[Scalar-field first variation of volume $\delta^1_f V$]
Given a scalar-field $f = \{ f_{ij} \}$ on $\Sigma$, the first variation of volume $\delta^1_f V(\Omega) \in E^{(q-1)}$ is defined as:
\[
\delta^1_f V(\Omega)_i := \delta^1_f V(\Omega_i) = \sum_{j \neq i} \int_{\Sigma_{ij}} f_{ij} d\mu^{n-1} . 
\]
\end{definition}

\begin{definition}[Scalar-field Index-Form $Q^0$] \label{def:Q0}
For any $f = \{ f_{ij} \}$ locally Lipschitz scalar-field on $\Sigma$ with compact support for which all integrals appearing below are finite, the (scalar-field) index-form $Q^0(f)$ is defined as: 
\begin{align} 
\label{eq:Q0-def} 
Q^0(f) := \sum_{i<j} \big [ & \int_{\Sigma_{ij}} \brac{|\nabla^\tang f_{ij}|^2 - (\Ric_{g,\mu}(\n,\n) + \|\II\|_2^2) f^2_{ij}} d\mu^{n-1} \\
\nonumber & - \sum_{k \neq i,j} \int_{\Sigma_{ijk}} f_{ij} \frac{f_{ik} + f_{jk}}{\sqrt{3}} \II^{ij}_{\partial\partial} \, d\mu^{n-2} \big ] . 
\end{align}
\end{definition}

\begin{remark}
By Proposition \ref{prop:curvature-integrability-Sigma4} and Lemma \ref{lem:Sigma2}, $Q^0(f)$ is well-defined whenever the compact support of $f$ is disjoint from $\Sigma^4$, or whenever the cluster has locally bounded curvature.
\end{remark}

\begin{remark}
By Lemma \ref{lem:Lagrange}, whenever $f$ is a physical scalar-field derived from a vector-field $X$, we have:
\[
\delta^1_X V(\Omega) = \delta^1_f V(\Omega) .
\]
By Theorem \ref{thm:Q-Sigma4}, Proposition \ref{prop:Q-bounded-curvature} and (\ref{eq:sqrt3}), whenever $f$ is a physical scalar-field derived from a $C_c^\infty$ vector-field $X$ so that either the support of $X$ is disjoint from $\Sigma^4$, or the cluster has locally bounded curvature, we have:
\[
Q(X) = Q^1(X) = Q^0(f) . 
\]
\end{remark}

\subsection{Correcting variation of volume}

We will need to correct for small volume changes in our approximation procedure. 

\begin{proposition} \label{prop:volume-offset}
Let $\Omega$ denote a regular stationary cluster on $(M,g,\mu)$. 
There exist $B>0$ and a compact set $K$ in $M$ disjoint from $\Sigma^4$, so that the following holds: \\
For any $M_{1,2},M_\infty > 0$, there exists $A > 0$, so that for any vector-field $Y \in C_c^\infty(M \setminus \Sigma^4,TM)$ with $\int_{\Sigma^1 \cap K} |\nabla^{\tang} Y^{\n_{ij}}|^2 d\mu^{n-1} \leq M^2_{1,2}$ and $\norm{Y^{\n_{ij}}}_{L^\infty(\Sigma^1 \cap K)} \leq M_\infty$, and for every $v \in E^{(q-1)}$, there exists another vector-field $Z\in C_c^\infty(M \setminus \Sigma^4,TM)$  with:
\begin{itemize}
\item $\delta^1_Z V = \delta^1_Y V + v$, and
\item $Q^1(Z) \leq Q^1(Y) + A|v| + B |v|^2$. 
\end{itemize}
\end{proposition}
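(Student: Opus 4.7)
My plan is to construct $Z = Y + W$ where $W := \sum_{k=1}^q a_k X_k$ is a linear combination of approximate outward fields produced by Proposition \ref{prop:inward-fields}, with coefficients $a \in E^{(q-1)}$ chosen so that $\delta^1_W V = v$ holds \emph{exactly}. I would first fix once and for all a compact set $K \subset M \setminus \Sigma^4$ (with $\mu^{n-1}(\Sigma \setminus K)$ small) and $(\eps_1, \eps_2)$-approximate outward fields $X_1,\dots,X_q$ supported in $K$, taking $\eps_1 + \eps_2$ sufficiently small depending only on $\Omega$; crucially, $K$, $\{X_k\}$, and the constant $C_0$ appearing below are then frozen, independently of $Y, v, M_{1,2}, M_\infty$.

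The coefficients $a$ are determined by a linear-algebra argument. By Lemma \ref{lem:inward-combination}(iii), the linear map $a \mapsto \delta^1_{X_a} V$ from $E^{(q-1)}$ to $E^{(q-1)}$ is an $O(\eps_1+\eps_2)$-perturbation of $L_1|_{E^{(q-1)}}$, which is positive-definite by Lemma \ref{lem:LA-positive}. For $\eps_1+\eps_2$ small enough this perturbation is an isomorphism with bounded inverse, and for every $v \in E^{(q-1)}$ one obtains a unique $a = a(v) \in E^{(q-1)}$ with $\delta^1_W V = v$ and $|a| \leq C_0 |v|$, $C_0 = C_0(\Omega)$. Setting $Z := Y + W$ then delivers the volume identity by linearity of the first variation in the field.

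For the quadratic estimate I would polarize $Q(Z) = Q(Y) + 2 \tilde B(Y, W) + Q(W)$ (justified by Theorem \ref{thm:Q-Sigma4}, since $Y$ and $W$ are both compactly supported away from $\Sigma^4$). To bound $Q(W)$: Lemma \ref{lem:inward-combination}(i) gives $\int |\nabla^\tang W^\n|^2 \, d\mu^{n-1} \leq q|a|^2 \eps_1$; the uniform estimate $|W^\n| \leq C|a|$ from Lemma \ref{lem:inward-combination}(iv), together with the integrability of $\Ric_{g,\mu}(\n,\n) + \|\II\|^2$ on $\Sigma^1 \cap K$ and of $\|\II_{\partial,\partial}\|$ on $\Sigma^2 \cap K$ from Proposition \ref{prop:curvature-integrability-Sigma4}, controls the zeroth-order interior and boundary contributions; altogether $|Q(W)| \leq B|v|^2$ for some $B = B(\Omega)$. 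For the cross term, Cauchy--Schwarz on the gradient piece yields $\leq M_{1,2} \sqrt{q|a|^2 \eps_1} \leq C M_{1,2} |v|$; the zeroth-order interior and boundary pieces are bounded by $C M_\infty |v|$ using $\|Y^\n\|_\infty \leq M_\infty$ on $\Sigma^1 \cap K$ and the resulting pointwise estimate $|Y^{\n_\partial}| \leq (2/\sqrt 3)\, M_\infty$ on $\Sigma^2 \cap K$ obtained from the triple-point identity $\n_{\partial ij} = (\n_{ik} + \n_{jk})/\sqrt 3$ in \eqref{eq:sqrt3}. This gives $|2 \tilde B(Y, W)| \leq A |v|$ for some $A = A(M_{1,2}, M_\infty, \Omega)$.

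The only real delicacy is achieving the \emph{exact} (not approximate) volume correction: this is what forces the pre-selection of $\eps_1, \eps_2$ small enough that a bounded perturbation of $L_1|_{E^{(q-1)}}$ remains invertible, rather than re-tuning the fields for each $v$. A related point is that $B$ must be independent of $M_{1,2}, M_\infty$; this is automatic because $W$ depends only on $v$ and the pre-fixed $X_k$, so $Q(W)$ is bounded purely in terms of $\Omega$ and $|v|^2$, while all dependence on the bounds on $Y$ is confined to the cross term and thus to $A$.
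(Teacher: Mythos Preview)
Your proposal is correct and follows essentially the same approach as the paper: fix approximate outward fields $X_1,\dots,X_q$ supported in a compact $K$ disjoint from $\Sigma^4$ with $\eps_1+\eps_2$ small enough that $a \mapsto \delta^1_{X_a} V$ is invertible on $E^{(q-1)}$, set $Z = Y + X_a$ with $a$ solving $\delta^1_{X_a} V = v$, and bound $Q(X_a)$ and the cross term $Q(Y,X_a)$ via Theorem~\ref{thm:Q-Sigma4}, Lemma~\ref{lem:inward-combination}, and the curvature integrability of Proposition~\ref{prop:curvature-integrability-Sigma4} on $K$. Your observation that $B$ is independent of $M_{1,2},M_\infty$ because $W$ depends only on $v$ and the pre-fixed fields, while the $Y$-dependence is confined to the cross term, is exactly the point.
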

\begin{proof}
Let $X_1,\ldots,X_q$ be $(\eps,\eps)$ approximate outward-fields for an appropriately small $\eps > 0$. They are  supported in a compact set $K_\eps$ disjoint from $\Sigma^4$. Given $a \in \R^q$, set $X_a = \sum_{i=1}^q a_k X_k$. By Theorem \ref{thm:Q-Sigma4}, Lemma \ref{lem:inward-combination} \ref{it:inward-nabla} and \ref{it:inward-max2}, Proposition \ref{prop:curvature-integrability-Sigma4} and (\ref{eq:sqrt3}):
\begin{align*}
Q^1(X_a) &= \sum_{i < j} \Big[\int_{\Sigma_{ij}} \brac{ |\nabla^\tang X^\n_a|^2 -(\Ric_{g,\mu}(\n,\n) + \|\II\|_2^2)  (X^\n_a)^2 } d\mu^{n-1}  - \int_{\partial \Sigma_{ij}} X^\n_a X^{\n_\partial}_a \II_{\partial\partial} \, d\mu^{n-2}\Big] \\
& \leq \sum_{i < j} \Big[\int_{\Sigma_{ij}} |\nabla^\tang X^\n_a|^2 d\mu^{n-1} - \int_{\Sigma_{ij} \cap K_\eps} \Ric_{g,\mu}(\n,\n) (X^\n_a)^2  d\mu^{n-1}  + \int_{\partial \Sigma_{ij} \cap K_{\eps}} |X^\n_a| |X^{\n_\partial}_a| \norm{\II} \, d\mu^{n-2}\Big] \\
& \leq B_\eps |a|^2 ,
\end{align*}
for some $B_\eps < \infty$ depending on $\eps > 0$, as the boundary curvature and weighted Ricci curvature are integrable on $K_{\eps}$. 

Denote the linear map $E^{(q-1)} \ni a \mapsto \delta^1_{X_a} V \in E^{(q-1)}$ by $\tilde L_1 a$. 
By Lemma \ref{lem:inward-combination} \ref{it:inward-delta-V} we have:
\[
|\tilde L_1 a - L_1 a| \leq \sum_{i=1}^q |(\tilde L_1 a - L_1 a)_i| \leq 2 \sqrt{q} |a| \eps . 
\]
Since $L_1$ is positive-definite and in particular invertible on $E^{(q-1)}$ by Lemma \ref{lem:LA-positive}, it follows that for small enough $\eps > 0$, $\tilde L_1$ is also invertible. We proceed by fixing such an $\eps > 0$. 

Consequently, given $v \in E^{(q-1)}$, we define $a \in E^{(q-1)}$ to be such that $\delta^1_{X_a} V = \tilde L_1 a = v$. Given a smooth vector-field $Y$ compactly supported away from $\Sigma^4$, set $Z = Y + X_a$. Clearly $\delta^1_Z V = \delta^1_Y V + v$. Let us write:
\[
Q^1(Z) = Q^1(Y) + 2 Q^1(Y,X_a) + Q^1(X_a) ,
\]
where $Q^1(Y,X_a)$ is the polarized symmetric form (a more systematic study of which will be pursued in Section \ref{sec:Jacobi}): 
\begin{align*}
Q^1(Y,X_a) := \sum_{i < j} \Big[\int_{\Sigma_{ij}} \brac{ \scalar{\nabla^\tang Y^\n ,\nabla^\tang X^\n_a} -(\Ric_{g,\mu}(\n,\n) + \|\II\|_2^2)  Y^\n X^\n_a } d\mu^{n-1} \\
 -  \frac{1}{2} \int_{\partial \Sigma_{ij}} \brac{Y^\n X^{\n_\partial}_a + X^\n_a Y^{\n_\partial}} \II_{\partial\partial} \, d\mu^{n-2}\Big] .
\end{align*}
It remains to verify that $Q^1(Y,X_a) \leq A |a|$ for some $A$ depending on $M_{1,2},M_\infty,K_{\eps}$. Indeed, applying Cauchy-Schwarz and invoking Lemma \ref{lem:inward-combination} and Proposition \ref{prop:curvature-integrability-Sigma4}, since $X_a$ is supported on $K_\eps$ where curvature is integrable, we deduce:
\[
Q^1(Y,X_a) \leq M_{1,2} \sqrt{q} |a| \sqrt{\eps} + M_\infty \sqrt{3q /2} |a| A^{(1)}_\eps + M_\infty \sqrt{q} |a| A^{(2)}_\eps =: \frac{1}{2} A_\eps |a| . 
\]
Setting $B = B_\eps \snorm{(\tilde L_1)^{-1}}^2$ and $A = A_\eps \snorm{(\tilde L_1)^{-1}}$, the proof is complete. 
\end{proof}

\subsection{Traced Index-Form} \label{subsec:scalar-fields-trace}

It will be convenient to introduce the following boundary-less variant of the index-form for compactly supported functions $\Psi$; we shall call these functions ``non-oriented" to distinguish them from the (oriented) scalar-fields. Formally, this boundary-less variant is obtained by tracing:
\[
Q_{\tr}^0(\Psi) = \frac{1}{2} \tr\brac{a \mapsto Q^0(f^a)} ~,~ f^a = \{f^a_{ij}\} ~,~ f^a_{ij}  := a_{ij} \Psi ~,~ a_{ij} := a_i - a_j , 
\]
since it is immediate to check that the trace of the boundary's integrand vanishes pointwise: 
\begin{equation} \label{eq:trace-vanishes}
\frac{1}{2} \tr \brac{ a \mapsto a_{ij} \Psi \frac{a_{ik} \Psi + a_{jk} \Psi}{\sqrt{3}} \II^{ij}_{\partial\partial} } = 0 .
\end{equation}
Note that this is only formal, since unless the compact support of $\Psi$ is disjoint from $\Sigma^4$, or the cluster is already known to have locally bounded curvature, there is no guarantee that $\Psi^2 \II^{ij}_{\partial \partial}$ is integrable on $\partial \Sigma_{ij}$. 

\begin{definition}[Traced Index-Form] \label{def:Q0-tr} 
Let $\Omega$ be a stationary regular cluster on $(M,g,\mu)$. Given $\Psi \in C^\infty_c(M)$ so that $\int_{\Sigma^1} \norm{\II}^2 \Psi^2 d\mu^{n-1} < \infty$, define the traced index-form:
\[
Q_{\tr}^0(\Psi) := \int_{\Sigma^1} \brac{|\nabla^\tang \Psi|^2 - (\Ric_{g,\mu}(\n,\n) + \norm{\II}^2) \Psi^2} d\mu^{n-1}. 
\]
\end{definition}

We will show in Corollary \ref{cor:curvature-integrable-on-Sigma4} below that whenever the cluster is in addition stable, the assumption that $\int_{\Sigma^1} \norm{\II}^2 \Psi^2 d\mu^{n-1} < \infty$ is in fact always satisfied. 

\medskip

We can now formulate the following proposition, which allows us to rigorously deduce a boundary-less stability estimate for non-oriented test functions $\Psi$ on $\Sigma^1$. 

\begin{theorem}[Stability-in-trace] \label{thm:stability-in-trace}
Let $\Omega$ be a stable 
regular $q$-cluster on $(M,g,\mu)$. 
Let $\Psi \in C^\infty_c(M)$ with $\int_{\Sigma_{ij}} \Psi d\mu^{n-1} = 0$ for all $1 \leq i < j \leq q$. Then:
\begin{enumerate}
\item $\int_{\Sigma^1} \norm{\II}^2 \Psi^2 d\mu^{n-1} < \infty$.
\item $Q_{\tr}^0(\Psi) \geq 0$. 
\end{enumerate}
\end{theorem}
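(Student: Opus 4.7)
The definition of $Q_{\tr}^0$ is motivated by the identity $2Q_{\tr}^0(\Psi) = \sum_\alpha Q^0(f^{(\alpha)})$, where $\{a^{(\alpha)}\}$ is any orthonormal basis of $E^{(q-1)}$ and $f^{(\alpha)}_{ij} := a^{(\alpha)}_{ij}\Psi$. On each $\Sigma_{ij}$, the trace of the volume integrand gives exactly $2(|\nabla^\tang\Psi|^2 - (\Ric_{g,\mu}(\n,\n) + \norm{\II}^2)\Psi^2)$, via the projection identity $\sum_\alpha (a^{(\alpha)})_i (a^{(\alpha)})_j = \delta_{ij} - 1/q$ on $E^{(q-1)}$, which yields $\sum_\alpha (a^{(\alpha)}_{ij})^2 = 2$. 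At a triple point $p\in\Sigma_{uvw}$, parametrizing $x := a_{uv}$, $y := a_{vw}$ (so $a_{uw} = x+y$ by the cocycle), the boundary integrand summed over $(i,j)\in\cyclic(u,v,w)$ becomes a quadratic form in $(x,y)$ whose trace over the basis (using $\sum_\alpha x^2 = \sum_\alpha y^2 = 2$, $\sum_\alpha xy = -1$) combined with Lemma~\ref{lem:angles-form}(c) vanishes pointwise, justifying the absence of a boundary term in $Q_{\tr}^0$. Finally, the hypothesis $\int_{\Sigma_{ij}}\Psi\,d\mu^{n-1} = 0$ translates to $\delta^1_{f^{(\alpha)}}V = 0$.

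\textbf{Paragraph 2 (Rigorization via physical approximation):} To make the above rigorous, fix $\delta>0$ and apply Lemma~\ref{lem:cutoff-Sigma4} to obtain a cutoff $\eta_\delta$ vanishing in a neighborhood of $\Sigma^4$, with $\int_{\Sigma^1}|\nabla\eta_\delta|^2 d\mu^{n-1}\le\delta$ and $\mu^{n-1}(\Sigma^1\cap\{\eta_\delta<1\})\le\delta$; set $\Psi_\delta := \eta_\delta\Psi$, a smooth compactly supported function with support disjoint from $\Sigma^4$, so that $Q_{\tr}^0(\Psi_\delta)$ is finite by Proposition~\ref{prop:curvature-integrability-Sigma4}. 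For small $\eps_1,\eps_2>0$, apply Proposition~\ref{prop:inward-fields} to obtain approximate outward fields $X_1,\ldots,X_q$ supported in a compact set $K\subset M\setminus\Sigma^4$ containing $\supp\Psi_\delta$. For each $a\in E^{(q-1)}$, the \emph{physical} vector field $Y^{(a)} := \Psi_\delta \sum_k a_k X_k \in C_c^\infty(M\setminus\Sigma^4, TM)$ satisfies $(Y^{(a)})^{\n_{ij}} = a_{ij}\Psi_\delta$ on $\Sigma_{ij}$ outside a set of $\mu^{n-1}$-measure $O(\eps_1+\eps_2)$, with an analogous control on $\Sigma^2$ (cf.~Proposition~\ref{prop:outward-fields-Sigma2}, whose proof applies with $K$ disjoint from $\Sigma^4$, using Corollary~\ref{cor:locally-finite-Sigma2} in place of Lemma~\ref{lem:Sigma2}). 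Standard estimates using Lemma~\ref{lem:inward-combination} and Proposition~\ref{prop:curvature-integrability-Sigma4} then yield
\[
|Q(Y^{(a)}) - Q^0(f^{(a)}_\delta)| + |\delta^1_{Y^{(a)}}V - \delta^1_{f^{(a)}_\delta}V| \le C(\Psi,|a|)(\eps_1+\eps_2),
\]
where $f^{(a)}_{\delta,ij} := a_{ij}\Psi_\delta$.

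\textbf{Paragraph 3 (Stability and trace-summation):} Since $\int_{\Sigma_{ij}}\Psi\,d\mu^{n-1}=0$ and $\Psi - \Psi_\delta$ is supported where $\eta_\delta < 1$, we have $|\delta^1_{f^{(a)}_\delta}V|\le C(\Psi)|a|\delta$. Proposition~\ref{prop:volume-offset} then produces $Z^{(a)} \in C_c^\infty(M\setminus\Sigma^4, TM)$ with $\delta^1_{Z^{(a)}}V = 0$ and $Q(Z^{(a)}) \le Q(Y^{(a)}) + C(\Psi,|a|)(\delta+\eps_1+\eps_2)$. Stability gives $Q(Z^{(a)})\ge 0$, hence $Q^0(f^{(a)}_\delta) \ge -C(\Psi,|a|)(\delta+\eps_1+\eps_2)$. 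Summing over $a = a^{(\alpha)}$ in an orthonormal basis of $E^{(q-1)}$ (with $|a^{(\alpha)}| = 1$) and invoking the pointwise trace identities of Paragraph 1, we obtain $2Q_{\tr}^0(\Psi_\delta) \ge -C(\Psi)(\delta+\eps_1+\eps_2)$. Letting $\eps_1,\eps_2\to 0$ first yields $Q_{\tr}^0(\Psi_\delta)\ge -C(\Psi)\delta$.

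\textbf{Paragraph 4 (Conclusion and main obstacle):} The gradient term $\int_{\Sigma^1}|\nabla^\tang\Psi_\delta|^2 d\mu^{n-1}$ and the Ricci term are uniformly bounded in $\delta$ (using $|\nabla^\tang\Psi_\delta|^2 \le 2\eta_\delta^2|\nabla^\tang\Psi|^2 + 2\norm{\Psi}_\infty^2|\nabla^\tang\eta_\delta|^2$ and $\int|\nabla\eta_\delta|^2\le\delta$). Rearranging $Q_{\tr}^0(\Psi_\delta) \ge -C\delta$ therefore yields a uniform upper bound on $\int_{\Sigma^1}\norm{\II}^2\Psi_\delta^2\,d\mu^{n-1}$; taking $\eta_\delta\to 1$ pointwise on $\Sigma^1$ (arrangeable by shrinking the neighborhood of $\Sigma^4$ where $\eta_\delta$ vanishes), Fatou's lemma gives (1). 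With (1) in hand, $\norm{\II}^2\Psi_\delta^2 \le \norm{\II}^2\Psi^2 \in L^1(\Sigma^1)$ provides an integrable dominator, so dominated convergence passes $\delta\to 0$ in each term of the inequality to conclude (2). The main technical obstacle is the $\Sigma^2$-analogue of the $\mu^{n-1}$-smallness of the ``bad'' set on $\Sigma^1$, needed to approximate the boundary integral $\int_{\partial\Sigma_{ij}}(Y^{(a)})^\n (Y^{(a)})^{\n_\partial}\II^{ij}_{\partial,\partial}d\mu^{n-2}$ by its scalar counterpart; here we crucially exploit that $\supp Y^{(a)}\subset K$ is disjoint from $\Sigma^4$, so that Corollary~\ref{cor:locally-finite-Sigma2} combined with the finer truncation construction of \cite[Lemmas~6.5--6.6]{EMilmanNeeman-GaussianMultiBubble} delivers the required control without assuming locally bounded curvature.
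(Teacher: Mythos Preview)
Your overall strategy matches the paper's: approximate the non-physical scalar fields $a_{ij}\Psi$ by the physical fields $Y^{(a)} = \Psi \cdot X_a$ built from approximate outward fields, correct the volume via Proposition~\ref{prop:volume-offset}, invoke stability, and trace over $a$. The difficulty, as you correctly identify, is the boundary integral on $\Sigma^2$. However, your treatment of it contains a genuine gap.

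You assert that $|Q(Y^{(a)}) - Q^0(f^{(a)}_\delta)| \le C(\Psi,|a|)(\eps_1+\eps_2)$ for each \emph{individual} $a$, and for this you need to know that $(X_a)^{\n_{ij}} = a_{ij}$ on most of $\partial\Sigma_{ij}$ in the $\mu^{n-2}$-sense, i.e.\ Property~(\ref{it:inward-fields-on-Sigma2}) of Proposition~\ref{prop:outward-fields-Sigma2}. You claim this holds for $K$ disjoint from $\Sigma^4$ by substituting Corollary~\ref{cor:locally-finite-Sigma2} for Lemma~\ref{lem:Sigma2}; but the paper explicitly states that the proof of Proposition~\ref{prop:outward-fields-Sigma2} requires the \emph{density estimate}~\eqref{eq:Sigma2-regularity}, not mere finiteness of $\mu^{n-2}(\Sigma^2\cap K)$, and Corollary~\ref{cor:locally-finite-Sigma2} does not supply it. Without such $\Sigma^2$-smallness, Property~(\ref{it:inward-fields-pointwise}) only tells you that at $p\in\Sigma_{ijk}$ one has $(X_a)^{\n_{ij}}(p)\approx\alpha(p)\, a_{ij}$ for some unknown $\alpha(p)\in[0,1]$, so the per-$a$ boundary integrand differs from the scalar one by a factor $\alpha^2$ which is not controllably close to $1$ on $\Sigma^2$.

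The paper circumvents this entirely and never compares $Q(Y_a)$ to $Q^0(f^a)$ for individual $a$. Instead it bounds the \emph{trace} of the physical boundary integrand directly via Lemma~\ref{lem:inward-trace}: Property~(\ref{it:inward-fields-pointwise}) yields $|\tr(a\mapsto X_a^{\n_{ij}} X_a^{\n_{\partial ij}})|\le C_q\eps_2$ \emph{pointwise} at every $p\in\Sigma_{ijk}$---precisely because the offending factor $\alpha^2$ multiplies a quantity whose trace over $a$ already vanishes. Hence the traced boundary contribution is at most $C_q\eps_2\int_{\Sigma^2\cap K_{\eps_1}}\Psi^2\norm{\II}\,d\mu^{n-2}$, which is finite by Proposition~\ref{prop:curvature-integrability-Sigma4}~\ref{it:curvature-integrability-on-Sigma2} since $K_{\eps_1}$ is disjoint from $\Sigma^4$; one then sends $\eps_2\to0$ first and $\eps_1\to0$ second. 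This ``trace the integrand before integrating'' is the idea you are missing. A secondary remark: your additional cutoff $\eta_\delta$ is redundant, since the outward fields are already compactly supported away from $\Sigma^4$, so $Y_a=\Psi X_a$ already is; the paper works with $\Psi$ directly throughout.
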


The idea is to approximate the non-physical scalar-field $a_{ij} \Psi$ by $Y^{\n_{ij}}_a$, where $Y_a := X_a \Psi$ is a physical vector-field with $X_a = \sum_{k=1}^q a_k X_k$  constructed from our approximate outward fields, and then trace over $a$ using Lemma \ref{lem:inward-trace} to get rid of the boundary's contribution. Along the approximation, we will need to modify $Y_a$ using Proposition \ref{prop:volume-offset} to make sure that $\delta^1_{Y_a} V = 0$, so that $Q(Y_a) \geq 0$ is ensured by stability. A simpler instance of this idea was employed in \cite[Lemma 8.3]{EMilmanNeeman-GaussianMultiBubble}, where there was no need to make sure that $\delta^1_{Y_a} V = 0$, and using $\Psi \equiv 1$ sufficed. Actually, in our application in this work, the cluster $\Omega$ will be symmetric and the function $\Psi$ will be odd with respect to reflection about the hyperplane of symmetry, and so it is trivial to preserve the latter oddness along the approximation by $Y_a$, thus ensuring $\delta^1_{Y_a} V = 0$ without appealing to Proposition \ref{prop:volume-offset}; however, since we will need Proposition \ref{prop:volume-offset} later on anyway, we have formulated Theorem \ref{thm:stability-in-trace} more generally. 

\begin{proof}[Proof of Theorem \ref{thm:stability-in-trace}] 
Let $X^{\eps_1,\eps_2} = X = (X_1,\ldots,X_q)$ be a collection of $(\eps_1,\eps_2)$-approximate outward fields with $\eps_i \in (0,1)$ to be determined. They are supported inside a compact set $K_{\eps_1}$ disjoint from $\Sigma^4$ with $\mu^{n-1}(\Sigma^1 \setminus K_{\eps_1}) \leq \eps_1$. 
 Given $a \in \R^q$ with $|a| \leq 1$, denote as usual the vector-field $X_a = X_a^{\eps_1,\eps_2} := \sum_{k=1}^q a_k X_k$, and set $Y_a = Y_a^{\eps_1,\eps_2} := \Psi X_a$. 
 
Denote:
\[
M^2_2 := 3 q \norm{\nabla^\tang \Psi}_{L^2(\Sigma^1,\mu^{n-1})}^2 + 2 q \norm{\Psi}^2_{L^\infty(\Sigma^1)} ~,~ M_\infty := \sqrt{3q/2} \norm{\Psi}_{L^\infty(\Sigma^1)} .
\]
Observe by Lemma \ref{lem:inward-combination} \ref{it:inward-nabla} and \ref{it:inward-max2}  that $|Y_a| \leq M_\infty$ and that:
\[
\int_{\Sigma^1} |\nabla^\tang Y_a|^2 d\mu^{n-1} \leq 2 \int_{\Sigma^1} (X_a^2 |\nabla^\tang \Psi|^2 + \Psi^2 |\nabla^\tang X_a|^2) d\mu^{n-1} \leq M_{1,2}^2 . 
\]
In particular, note that $M_{1,2},M_\infty < \infty$ are independent of $\eps_i \in (0,1)$ and $|a| \leq 1$. 

Denote $v = v^{\eps_1,\eps_2,a} = \delta_{Y_a^{\eps_1,\eps_2}} V(\Omega) \in E^{(q-1)}$, and observe that:
 \[
 v_i = \sum_{j \neq i} \int_{\Sigma_{ij}} X_a^{\n_{ij}} \Psi d\mu^{n-1} = \sum_{j \neq i} \brac{\int_{\Sigma_{ij}} (X_a^{\n_{ij}} - a_{ij}) \Psi d\mu^{n-1} + a_{ij} \int_{\Sigma_{ij}} \Psi d\mu^{n-1}} .
 \]
Since $\int_{\Sigma_{ij}} \Psi d\mu^{n-1} = 0$, it follows that:
\[
\sum_i |v_i | \leq 2 \sum_{i < j} \int_{\Sigma_{ij}} |X_a^{\n_{ij}} - a_{ij}| |\Psi| d\mu^{n-1} . 
\]
Applying Cauchy--Schwarz and recalling property (\ref{it:inward-fields-on-Sigma1}) of outward-fields and Lemma \ref{lem:inward-combination} \ref{it:inward-max1}, it follows that:
\[
|v^{\eps_1,\eps_2,a}| \leq \sum_i |v_i | \leq 2  \sqrt{\eps_1 + \eps_2} \sqrt{q} |a| \norm{\Psi}_{L^2(\Sigma^1,\mu^{n-1})} . 
\]

Applying Proposition \ref{prop:volume-offset}, it follows that there exists a $B >0$ and an $A > 0$ depending on $\Psi$ via $M_{1,2},M_\infty$, so that for all  $\eps_1,\eps_2 \in (0,1)$ and $|a| \leq 1$, there exists a vector-field $Z_a = Z_a^{\eps_1,\eps_2} \in C^\infty_c(M \setminus \Sigma^4,TM)$ so that:
\[
\delta^1_{Z_a} V = 0 ~,~ Q^1(Z_a) \leq Q^1(Y_a) + A |v| + B |v|^2 .
\]
Since $Q^1(Z_a) = Q(Z_a) \geq 0$ by stability, it follows that:
\begin{equation} \label{eq:R0}
-R^{(0)}_a(\eps_1,\eps_2) \leq Q^1(Y^{\eps_1,\eps_2}_a) ,
\end{equation}
with $R^{(0)}_a(\eps_1,\eps_2) \rightarrow 0$ as $\eps_1,\eps_2 \rightarrow 0$, uniformly in $|a| \leq 1$. 

\medskip

Let us now relate $\frac{1}{2} \tr(a \mapsto Q^1(Y^{\eps_1,\eps_2}_a))$ to $Q^0_{\tr}(\Psi)$. Denote by $G_{\eps_1,\eps_2}$ the union (over $i<j$) of the good subsets of $\Sigma_{ij}$ where $X_k^{\n_{ij}} = \delta^k_{ij}$  for all $k=1,\ldots,q$ (and hence $X^{\n_{ij}}_a = a_{ij}$), and recall that $\mu^{n-1}(\Sigma^1 \setminus G_{\eps_1,\eps_2}) \leq \eps_1 + \eps_2$. 
By Theorem \ref{thm:Q-Sigma4}:
\[
Q^1(Y_a) = \sum_{i < j} \Big[\int_{\Sigma_{ij}} \brac{ |\nabla^\tang (X^{\n}_a \Psi)|^2 -(\Ric_{g,\mu}(\n,\n) + \|\II\|_2^2)  (X^\n_a \Psi)^2 } d\mu^{n-1}  - \int_{\partial \Sigma_{ij}} X^\n_a X^{\n_\partial}_a \Psi^2 \II_{\partial\partial} \, d\mu^{n-2}\Big] .
\]

Evaluating the first term using Lemma \ref{lem:inward-combination} and Cauchy-Schwarz:
\begin{align*}
&\sum_{i<j} \int_{\Sigma_{ij}}|\nabla^\tang (X^{\n}_a \Psi)|^2 d\mu^{n-1} \\
& = \sum_{i<j} \int_{\Sigma_{ij}} \left ( |\nabla^\tang X^{\n}_a|^2 \Psi^2 + 2 X^{\n}_a \Psi \scalar{\nabla^{\tang} X^{\n}_a,\nabla^{\tang} \Psi} \right .\\
& \left . \;\;\;\;\;\;\;\;\;\;\;\;\;\;\; + ((X^{\n}_a)^2 - a_{ij}^2) |\nabla^{\tang} \Psi|^2 + a_{ij}^2 |\nabla^{\tang} \Psi|^2 \right) d\mu^{n-1} \\
& \leq  q |a|^2 \eps_1 \norm{\Psi}_\infty^2 + 2 \brac{\int_{\Sigma^1} |X^{\n}_a|^2 |\nabla^{\tang} X^{\n}_a|^2 d\mu^{n-1} \int_{\Sigma^1} \Psi^2 |\nabla^\tang \Psi|^2 d\mu^{n-1} }^{\frac{1}{2}} \\
& + q |a|^2 \int_{\Sigma^1 \setminus G_{\eps_1,\eps_2}} |\nabla^\tang \Psi|^2 d\mu^{n-1} + \sum_{i<j} \int_{\Sigma_{ij}} a_{ij}^2 |\nabla^\tang \Psi|^2 d\mu^{n-1} \\
& \leq  q |a|^2 \eps_1 \norm{\Psi}_\infty^2 + \sqrt{6} q |a|^2 \sqrt{\eps_1} \norm{\Psi}_{L^\infty(\Sigma^1)} \norm{\nabla^\tang \Psi}_{L^2(\Sigma^1,\mu^{n-1})} \\
& + q |a|^2 \int_{\Sigma^1 \setminus G_{\eps_1,\eps_2}} |\nabla^\tang \Psi|^2 d\mu^{n-1} + \sum_{i<j} \int_{\Sigma_{ij}} a_{ij}^2 |\nabla^\tang \Psi|^2 d\mu^{n-1} \\
& =: R^{(1)}_a(\eps_1,\eps_2) + \sum_{i<j} \int_{\Sigma_{ij}} a_{ij}^2 |\nabla^\tang \Psi|^2 d\mu^{n-1} ,
\end{align*}
with $R^{(1)}_a(\eps_1,\eps_2) \rightarrow 0$ as $\eps_1,\eps_2 \rightarrow 0$ uniformly in $|a| \leq 1$ (as $|\nabla^{\tang} \Psi| \in L^2(\Sigma^1, \mu^{n-1})$).

As for the boundary term in the formula for $Q^1(Y_a)$, Lemma \ref{lem:inward-trace} implies that its trace satisfies:
\[
\abs{ \frac{1}{2} \tr \brac{a \mapsto  \sum_{i<j}  \int_{\partial \Sigma_{ij}} X^\n_a X^{\n_\partial}_a \Psi^2 \II_{\partial\partial} \, d\mu^{n-2}}} \leq \frac{1}{2} C_q \eps_2 \int_{\Sigma^2 \cap K_{\eps_1}} \Psi^2 3 \norm{\II} d\mu^{n-2} =: R^{(2)}(\eps_1,\eps_2) . 
\]
Note that $R^{(2)}(\eps_1,\eps_2) < \infty$ for any fixed $\eps_1 > 0$ by Proposition \ref{prop:curvature-integrability-Sigma4}. It follows that for any fixed $\eps_1 > 0$, $\lim_{\eps_2 \rightarrow 0} R^{(2)}(\eps_1,\eps_2) = 0$. 
Finally, we bound the remaining terms as follows:
\[
\sum_{i<j} \int_{\Sigma_{ij}} (\Ric_{g,\mu}(\n,\n) + \|\II\|_2^2)  (X^\n_a \Psi)^2  d\mu^{n-1} \geq \int_{\Sigma^1 \cap G_{\eps_1,\eps_2}}  (\Ric_{g,\mu}(\n,\n) + \|\II\|_2^2) a_{ij}^2 \Psi^2 d\mu^{n-1} . 
\]

Tracing the above quadratic forms in $a$ and combining all of our estimates together with (\ref{eq:R0}), we deduce that:
\[
0 \leq \int_{\Sigma^1} |\nabla^\tang \Psi|^2 d\mu^{n-1} - \int_{\Sigma^1 \cap G_{\eps_1,\eps_2}}  (\Ric_{g,\mu}(\n,\n) + \|\II\|_2^2) \Psi^2 d\mu^{n-1} + R^{(3)}(\eps_1,\eps_2) ,
\]
where:
\[
R^{(3)}(\eps_1,\eps_2) = \frac{1}{2} \tr(a \mapsto R^{(0)}_a(\eps_1,\eps_2) + R^{(1)}_a(\eps_1,\eps_2)) + R^{(2)}(\eps_1,\eps_2) 
\]
tends to zero if we first take the limit as $\eps_2 \rightarrow 0$ and only then tend $\eps_1 \rightarrow 0$. As $\Psi \in C_c^\infty(M)$ and $\mu^{n-1}(\Sigma^1) < \infty$, both integrals $\int_{\Sigma^1} |\nabla^\tang \Psi|^2 d\mu^{n-1}$ and $\int_{\Sigma^1} \Ric_{g,\mu}(\n,\n) \Psi^2 d\mu^{n-1}$ are finite, and since $\mu^{n-1}(\Sigma^1 \setminus G_{\eps_1,\eps_2}) \rightarrow 0$ as $\eps_1,\eps_2\rightarrow 0$, we deduce that $\int_{\Sigma^1} \norm{\II}^2 \Psi^2 d\mu^{n-1}$ must be finite as well (e.g. by Fatou's lemma). It follows that $Q^0_\tr(\Psi) \geq 0$, thereby concluding the proof. 
\end{proof}

As a simple corollary of Theorem \ref{thm:stability-in-trace}, we obtain: 

\begin{corollary}[Integrability of $\norm{\II}^2$ for stable clusters] \label{cor:curvature-integrable-on-Sigma4}
Let $\Omega$ be a stable regular cluster on $(M^n,g,\mu)$. Then for any compact subset $K \subset M$:
\[
\int_{\Sigma^1 \cap K} \norm{\II}^2  d\mu^{n-1} < \infty . 
\]
\end{corollary}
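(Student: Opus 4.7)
The plan is to reduce the statement to a local problem via compactness of $K$, and then to solve each local problem by a direct application of Theorem~\ref{thm:stability-in-trace}. Specifically, for every $p \in K$ I will construct a test function $\Psi_p \in C_c^\infty(M)$ satisfying $\int_{\Sigma_{ij}} \Psi_p \, d\mu^{n-1} = 0$ for every $1 \leq i < j \leq q$, together with $\Psi_p \geq 1$ on a geodesic ball $B(p, r_p)$ around $p$. Granted such a family, Theorem~\ref{thm:stability-in-trace} applied to each $\Psi_p$ yields
\[
\int_{\Sigma^1 \cap B(p, r_p)} \norm{\II}^2 \, d\mu^{n-1} \;\le\; \int_{\Sigma^1} \norm{\II}^2 \Psi_p^2 \, d\mu^{n-1} \;<\; \infty .
\]
Extracting a finite subcover of $K$ by such balls and summing over its elements then yields the claim.

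To construct $\Psi_p$, I fix a nonnegative cutoff $\chi_p \in C_c^\infty(M)$ with $\chi_p \equiv 1$ on $B(p, r_p)$ and $\supp(\chi_p) \subset B(p, 2r_p)$, and set $\Psi_p := \chi_p - \sum_{i<j} \alpha_{ij} \phi_{ij}$, where for each pair $(i,j)$ with $\Sigma_{ij} \neq \emptyset$ the bump $\phi_{ij} \in C_c^\infty(M)$ is taken to be nonnegative and supported in a small ball $B(q_{ij}, s_{ij})$ chosen so that (a) $q_{ij} \in \Sigma_{ij}$ with $B(q_{ij}, s_{ij}) \cap \Sigma = B(q_{ij}, s_{ij}) \cap \Sigma_{ij}$ (a single smooth embedded piece of $\Sigma_{ij}$), which is possible for $s_{ij}$ small enough by Almgren's regularity (Theorem~\ref{thm:Almgren}\ref{it:Almgren-iii}); and (b) the balls $\{B(q_{ij}, s_{ij})\}$ are pairwise disjoint and each is disjoint from $\overline{B(p, 2r_p)}$. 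Property (a) forces $\int_{\Sigma_{k\ell}} \phi_{ij} \, d\mu^{n-1} = 0$ for $(k,\ell) \neq (i,j)$, so the linear system $\{\int_{\Sigma_{k\ell}} \Psi_p \, d\mu^{n-1} = 0\}_{k<\ell}$ decouples into $q(q-1)/2$ scalar equations that are solved by $\alpha_{ij} := \int_{\Sigma_{ij}} \chi_p \, d\mu^{n-1} / \int_{\Sigma_{ij}} \phi_{ij} \, d\mu^{n-1}$ (the denominator is strictly positive since $\phi_{ij}$ is a nontrivial nonnegative bump on a relatively open piece of $\Sigma_{ij}$). Property (b) guarantees that all $\phi_{ij}$ vanish on $B(p, r_p)$, whence $\Psi_p = \chi_p \geq 1$ there.

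The one delicate point, which I expect to be the main obstacle, is ensuring property (b)~---~namely, arranging $\Sigma_{ij} \setminus \overline{B(p, 2r_p)} \neq \emptyset$ for every nonempty pair. For \emph{bounded} interfaces $\Sigma_{ij}$ it is a priori conceivable that $\Sigma_{ij}$ lies entirely inside $\overline{B(p, 2r_p)}$, which would block the construction. I resolve this by noting that every nonempty interface, being an embedded $(n-1)$-dimensional $C^\infty$ manifold, contains a geodesic disc and hence has strictly positive ambient diameter $D_{ij} > 0$ in $(M,g)$. Since there are only $\binom{q}{2}$ pairs, $D_\ast := \min_{(i,j):\Sigma_{ij} \neq \emptyset} D_{ij} > 0$, and choosing $r_p < D_\ast / 5$ yields $\mathrm{diam}(B(p, 2r_p)) \leq 4 r_p < D_\ast$, so no nonempty $\Sigma_{ij}$ can be contained in $\overline{B(p, 2r_p)}$. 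A point $q_{ij} \in \Sigma_{ij} \setminus \overline{B(p, 2r_p)}$ can then be chosen for each nonempty pair, and shrinking the $s_{ij}$ further makes the resulting balls pairwise disjoint and disjoint from $\overline{B(p, 2r_p)}$, as required.

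Once this local construction and its consequence are in place, the claim follows by taking a finite subcover of $K$ and summing the finite local bounds on $\int_{\Sigma^1 \cap B(p_m, r_{p_m})} \norm{\II}^2 \, d\mu^{n-1}$. No ingredients beyond Theorem~\ref{thm:stability-in-trace}, Almgren's interface regularity, and the uniform diameter lower bound are needed.
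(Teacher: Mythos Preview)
Your proof is correct. Both you and the paper build a $C_c^\infty$ test function with zero mean on every interface and invoke Theorem~\ref{thm:stability-in-trace}, but the strategies diverge from there. The paper constructs a \emph{single} $\Psi$ equal to $1$ only on a neighborhood $N$ of $K \cap \Sigma^4$, appeals to Theorem~\ref{thm:stability-in-trace} for $\Sigma^1 \cap N$, and then invokes the already-established Proposition~\ref{prop:curvature-integrability-Sigma4} (integrability away from $\Sigma^4$) to handle $\Sigma^1 \cap (K \setminus N)$. Your argument instead applies Theorem~\ref{thm:stability-in-trace} locally around every point of $K$ and covers by compactness, which forces you to manufacture a separate $\Psi_p$ for each $p$ and to ensure the correction bumps avoid $B(p,2r_p)$ --- hence the diameter lower bound $D_\ast$. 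The paper's route is shorter because it leverages Proposition~\ref{prop:curvature-integrability-Sigma4}; your route is slightly more laborious but is self-contained in that it does not rely on that proposition at all, using only Theorem~\ref{thm:stability-in-trace} and Almgren's interface regularity.
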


\noindent
Note that by Proposition \ref{prop:curvature-integrability-Sigma4}, we already know that this holds on any stationary cluster for any compact $K$ disjoint from $\Sigma^4$,  so the added information is that the latter assumption can be removed whenever the cluster is stable. Also note that we \emph{do not} claim an analogous integrability $\int_{\Sigma^2 \cap K} \norm{\II}  d\mu^{n-2} < \infty$ on $\Sigma^2$. 

\begin{proof}
Define $\Psi$ to be identically equal to $1$ on an appropriate bounded open neighborhood $N$ of $K \cap \Sigma^4$. Now extend $\Psi$ to a $C_c^\infty(M)$ function so that $\int_{\Sigma_{ij}} \Psi d\mu^{n-1} = 0$ for all $1 \leq i < j \leq q$; as these are only a finite number of conditions, this is always possible. Theorem \ref{thm:stability-in-trace} ensures that:
\[
\int_{\Sigma^1 \cap N} \norm{\II}^2 d\mu^{n-1} \leq \int_{\Sigma^1} \norm{\II}^2 \Psi^2 d\mu^{n-1} \leq \int_{\Sigma^1} \brac{|\nabla^\tang \Psi|^2 - \Ric_{g,\mu}(\n,\n) \Psi^2} d\mu^{n-1} < \infty .
\]
On the other hand, we also have $\int_{\Sigma^1 \cap (K \setminus N)} \norm{\II}^2 d\mu^{n-1} < \infty$ by Proposition \ref{prop:curvature-integrability-Sigma4}. This concludes the proof. 
\end{proof}

\subsection{On clusters with locally bounded curvature} \label{subsec:scalar-fields-bounded-curvature}

Perhaps the most important consequence of the local boundedness of the curvature, is that it makes it possible to conveniently relate between $Q^0(f)$ for a (possibly non-physical) Lipschitz scalar-field $f$ of compact support and $Q^1(X)$ for an appropriate approximating physical vector-field $X$.

\begin{definition}[Non-oriented Lipschitz functions]
Given a closed set $K \subset M$, we denote by $\Lip_{\no}(K)$ (respectively, $\Lip_{\no,c}(K)$) the collection of all (non-oriented) functions  obtained as the restriction to $K$ of Lipschitz functions (respectively, having compact support) on $(M,g)$ . 
\end{definition}

\begin{definition}[Delta Lipschitz scalar-fields] \label{def:Delta-Lip-Fields}
We denote by $\Lip_{\Delta}(\Sigma)$ the collection of all scalar-fields $f = \{f_{ij}\}$ on $\Sigma$ generated by a family of non-oriented Lipschitz functions $\Psi_k \in \Lip_{\no,c}(\partial \Omega_k)$, $k = 1,\ldots,q$, in the sense that $f_{ij} = \Psi_i|_{\Sigma_{ij}} - \Psi_j|_{\Sigma_{ij}}$. We will write: 
\[
f_{ij} = \sum_{k=1}^q \delta^k_{ij} \Psi_k  .
\]
\end{definition}

\begin{remark}
Under mild assumptions on the cluster $\Omega$, it is not too hard to show that any scalar-field $f = \{f_{ij} \} \in \Lip_c(\Sigma)$ can be written in the above form; the corresponding $\{\Psi_k\}$ are then uniquely defined, up to replacing $\Psi_k$ by $\Psi_k + \Psi|_{\partial \Omega_k}$ for all $k$ and some global $\Psi \in \Lip_{\no,c}(\Sigma)$. 
\end{remark}

\begin{theorem}[Approximating non-physical Lipschitz scalar-fields by smooth vector-fields on clusters of locally bounded curvature] \label{thm:scalar-to-vector-Q}
Let $\Omega$ be a stationary regular cluster on $(M,g,\mu)$ with locally bounded curvature $\II_{\Sigma^1}$. For any (possibly non-physical) scalar-field $f = \{f_{ij}\} \in \Lip_{\Delta}(\Sigma)$ and $\eps > 0$, there exists a vector-field $Y_{\eps} \in C_c^\infty(M \setminus \Sigma^4 , TM)$ so that:
\begin{enumerate}
\item  $\delta^1_{Y_{\eps}} V(\Omega) = \delta^1_{f} V(\Omega)$; and
\item $Q^1(Y_{\eps}) \leq Q^0(f) + \eps$. 
\end{enumerate}
In particular, if $\Omega$ is in addition assumed to be stable, then for any $f$ as above:
\[
\delta^1_f V(\Omega) = 0 \;\; \Rightarrow \;\; 0 \leq Q^0(f) . 
\]
\end{theorem}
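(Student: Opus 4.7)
I will build an honest smooth vector field realizing $f$ as its normal component, starting from the approximate outward fields of Proposition~\ref{prop:outward-fields-Sigma2}. Write $f_{ij} = \sum_{k=1}^q \delta^k_{ij}\Psi_k$ with $\Psi_k \in \Lip_{\no,c}(\partial\Omega_k)$. After a McShane extension to $M$, mollify and multiply by a cutoff to obtain $\tilde\Psi_k \in C_c^\infty(M)$ approximating $\Psi_k$ uniformly and in $W^{1,2}(\Sigma^1,\mu^{n-1})$; call the approximation error $\delta$. For $\eps_1,\eps_2 \in (0,1)$, let $X_1,\ldots,X_q$ be the $(\eps_1,\eps_2)$-approximate outward fields of Proposition~\ref{prop:outward-fields-Sigma2}, supported in a fixed compact set $K$ disjoint from $\Sigma^4$, and define
\[
\tilde Y := \sum_{k=1}^q \tilde\Psi_k X_k \;\in\; C_c^\infty(M\setminus \Sigma^4, TM).
\]

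\textbf{Key pointwise identities.} On the ``good'' subset $G \subset \Sigma^1$ where $X_k^{\n_{ij}} = \delta^k_{ij}$ for all $k$ (complement of $\mu^{n-1}$-measure $\le \eps_1+\eps_2$), we have $\tilde Y^{\n_{ij}} = \sum_k\tilde\Psi_k\delta^k_{ij} = \tilde\Psi_i - \tilde\Psi_j$. On the enhanced good subset of $\Sigma^2$ from property~(6) of Proposition~\ref{prop:outward-fields-Sigma2} (complement of $\mu^{n-2}$-measure $\le \eps_1+\eps_2$), formula (\ref{eq:sqrt3}) yields at each $p \in \Sigma_{ijk}$ the identity $X_\ell^{\n_{\partial ij}} = (X_\ell^{\n_{ik}} + X_\ell^{\n_{jk}})/\sqrt{3} = (\delta^\ell_i + \delta^\ell_j - 2\delta^\ell_k)/\sqrt{3}$, so $\tilde Y^{\n_{\partial ij}} = (\tilde\Psi_i + \tilde\Psi_j - 2\tilde\Psi_k)/\sqrt{3}$ and thus
\[
\tilde Y^{\n_{ij}}\,\tilde Y^{\n_{\partial ij}} \;=\; (\tilde\Psi_i-\tilde\Psi_j)\,\frac{(\tilde\Psi_i-\tilde\Psi_k)+(\tilde\Psi_j-\tilde\Psi_k)}{\sqrt{3}} \;\xrightarrow{\tilde\Psi\to\Psi}\; f_{ij}\,\frac{f_{ik}+f_{jk}}{\sqrt{3}},
\]
which is precisely the boundary integrand of $Q^0(f)$.

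\textbf{Passing to the limit in $Q$.} Since $\Omega$ has locally bounded curvature and $\tilde Y$ is smooth with support disjoint from $\Sigma^4$, Proposition~\ref{prop:Q-bounded-curvature} applies to $\tilde Y$. Splitting each integral into the (enhanced) good set plus its complement, the bulk term converges to the bulk of $Q^0(f)$ using Lemma~\ref{lem:inward-combination}(i)--(v), the smoothness of $\tilde\Psi_k$, and $\int_{\Sigma^1\cap K}\|\II\|^2\,d\mu^{n-1}<\infty$ (Lemma~\ref{lem:Sigma2}). The principal obstacle, and precisely the reason local boundedness of $\II$ is needed, is the boundary integral: in contrast to the traced setting of Theorem~\ref{thm:stability-in-trace}, the integrand does not vanish pointwise and must be controlled as a genuine integral on $\Sigma^2$. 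Lemma~\ref{lem:Sigma2} supplies the integrable majorant $\int_{\Sigma^2\cap K}\|\II\|\,d\mu^{n-2} < \infty$, and combined with the uniform bound $|\tilde Y| \le \sqrt{3/2}\sum_k\|\tilde\Psi_k\|_\infty$ and the smallness of the $\mu^{n-2}$-bad set in $\Sigma^2$, dominated convergence delivers the boundary contribution on the good set while the contribution on the bad set tends to $0$. Sending $\eps_2 \to 0$ before $\eps_1 \to 0$ before $\delta \to 0$ gives $Q(\tilde Y) \to Q^0(f)$.

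\textbf{Volume correction and the corollary.} The discrepancy $w := \delta^1_{\tilde Y}V - \delta^1_f V \in E^{(q-1)}$ satisfies $|w| \to 0$ along the same limit, since on $G$ the identity $\tilde Y^{\n_{ij}} = f_{ij}$ (in the limit $\delta\to 0$) holds while $\mu^{n-1}(\Sigma^1\setminus G) \le \eps_1+\eps_2$ and $\tilde Y^{\n_{ij}}-f_{ij}$ is uniformly bounded. As $\tilde Y$ obeys the hypotheses of Proposition~\ref{prop:volume-offset} with $M_{1,2},M_\infty$ independent of $\eps_1,\eps_2$ (inherited from $|X_k|\le\sqrt{3/2}$, $\int_{\Sigma^1}|\nabla^\tang X_k^\n|^2 \le \eps_1$, and the fixed bounds on $\tilde\Psi_k$), applying Proposition~\ref{prop:volume-offset} with $v=-w$ yields $Y_\eps \in C_c^\infty(M\setminus\Sigma^4, TM)$ with $\delta^1_{Y_\eps}V = \delta^1_f V$ and $Q(Y_\eps) \le Q(\tilde Y) + A|w| + B|w|^2 \le Q^0(f) + \eps$ once the parameters are small. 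Finally, if $\Omega$ is stable and $\delta^1_f V = 0$ then stability gives $0 \le Q(Y_\eps) \le Q^0(f) + \eps$, and letting $\eps \to 0$ yields $Q^0(f) \ge 0$.
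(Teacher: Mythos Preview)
Your proposal is correct and follows essentially the same approach as the paper's proof: mollify the $\Psi_k$, set $\tilde Y = \sum_k \tilde\Psi_k X_k$ using the enhanced outward fields of Proposition~\ref{prop:outward-fields-Sigma2}, control the bulk of $Q$ via the $\mu^{n-1}$-smallness of the bad set in $\Sigma^1$, control the boundary integral via property~(6) together with the curvature integrability on $\Sigma^2$ from Lemma~\ref{lem:Sigma2}, and finally correct the volume via Proposition~\ref{prop:volume-offset}. The paper differs only in bookkeeping: it performs the mollification step first (reducing to $\Psi_k \in C_c^\infty$) and then uses $(\eps,\eps)$-outward fields with a single parameter, whereas you keep all three parameters $(\eps_1,\eps_2,\delta)$ and send them to zero in sequence; both orderings work since under locally bounded curvature all error terms are independently small.
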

\begin{proof}
Let $f_{ij} = \sum_{k=1}^q \delta^k_{ij} \Psi_k$, and set $M_\infty := \max_k \norm{\Psi_k}_{L^\infty(\partial \Omega_k)}$, $M_{1,\infty} := \max_k \norm{\nabla^\tang \Psi_k}_{L^\infty(\partial \Omega_k)}$ and $M_{1,2} := \max_k \norm{\nabla^\tang \Psi_k}_{L^2(\partial \Omega_k, \mu^{n-1})}$. Let $K$ denote the union of the compact supports of $\Psi_k$ in $\Sigma$.

As a first approximation step, it is completely straightforward to reduce from the case that $\Psi_k \in \Lip_{\no,c}(\partial \Omega_k)$ to the case that $\Psi_k \in C_c^\infty(\partial \Omega_k)$ (by which we mean that $\Psi_k$ is the restriction of a function $\Phi_k \in C_c^\infty(M)$ onto $\partial \Omega_k$). Indeed, denoting by $\tilde \Phi_k \in C_c^\infty(M)$ a standard mollification of $\Phi_k$ on $(M,g)$ and setting $\tilde \Psi_k$ to be its restriction onto $\partial \Omega_k$, we may approximate $\Psi_k$ by $\tilde \Psi_k$ arbitrarily well in the norm:
\[
\norm{F}^2_k  := \sum_{j \neq k} \big [ \int_{\Sigma_{kj}} (|F|^2 + |\nabla^\tang F|^2) d\mu^{n-1} + \int_{\partial \Sigma_{kj}} |F|^2 d\mu^{n-2} \big ] ; 
\]
note that for this we appeal to Lemma \ref{lem:Sigma2}, which ensures that $\mu^{n-2}(\Sigma^2 \cap K) < \infty$ for compact sets $K$ (and of course $\mu^{n-1}(\Sigma^1) < \infty$). Denoting $\tilde f_{ij} = \sum_{k=1}^q \delta^k_{ij} \tilde \Psi_k$, we obtain an approximating scalar-field $\tilde f \in C_c^\infty(\Sigma)$ to our original $f \in \Lip_\Delta(\Sigma)$. Invoking the local boundedness of curvature again, it follows that $Q^0(\tilde f)$ and $\delta^1_{\tilde f} V$ approximate $Q^0(f)$ and $\delta^1_f V$ (respectively) arbitrarily well. We will make sure to correct for the discrepancy $|\delta^1_{\tilde f} V - \delta^1_{f} V|$ in the first variation of volume in our main approximation step below. Clearly, this approximation procedure can be performed using functions $\{\tilde \Psi_k\}$ having uniform bounds $M_\infty$, $M_{1,\infty}$ and $M_{1,2}$, as well as a common compact support $K$. Consequently, we have reduced to the case that $\Psi_k \in C_c^\infty(\partial \Omega_k)$, and so we proceed under this assumption. 

\bigskip

Let $X_1,\ldots,X_q$ be $(\eps,\eps)$ approximate outward-fields, which by Proposition \ref{prop:outward-fields-Sigma2} in addition satisfy property (\ref{it:inward-fields-on-Sigma2}). Set $Y = Y_\eps := \sum_{k=1}^q X_k \Psi_k$. We have:
\[
Q^1(Y) = Q_1(Y) - Q_2(Y) - Q_3(Y) , 
\]
where:
\begin{align*}
Q_1(Y) & := \sum_{i<j} \int_{\Sigma_{ij}} |\nabla^\tang Y^\n|^2 d\mu^{n-1} , \\
Q_2(Y) & := \sum_{i<j} \int_{\Sigma_{ij}} (\Ric_{g,\mu}(\n,\n) + \|\II\|_2^2)  (Y^{\n})^2 d\mu^{n-1} , \\
Q_3(Y) & := \sum_{i<j} \int_{\partial \Sigma_{ij}} Y^\n Y^{\n_\partial} \II_{\partial\partial} \, d\mu^{n-2}.
\end{align*}

Let $G^1_{\eps} \subset \Sigma^1$ denote the good subset from Proposition \ref{prop:inward-fields} (\ref{it:inward-fields-on-Sigma1}) where $X_k^{\n_{ij}} = \delta^k_{ij}$ on $\Sigma_{ij}$ for all $k$. Note that by Lemma \ref{lem:inward-combination} \ref{it:inward-max1} and \ref{it:inward-max2}, we have on $\Sigma_{ij}$:
\[
Y^{\n_{ij}}|_{\Sigma_{ij} \cap G^1_\eps} = f_{ij}|_{\Sigma_{ij} \cap G^1_\eps} ,
\]
and:
\begin{equation} \label{eq:Y-on-Sigma1}
 |f_{ij}| \leq 2 M_\infty ~,~ |Y^{\n_{ij}}|,|Y^{\n_{\partial ij}}| \leq \sqrt{3/2} q M_\infty ~,~ |Y^{\n_{ij}} - f_{ij}| \leq q M_\infty . \end{equation}
Hence:
\[
\abs{Q_2(Y) -\sum_{i<j} \int_{\Sigma_{ij}} (\Ric_{g,\mu}(\n,\n) + \|\II\|_2^2) f_{ij}^2 d\mu^{n-1}} \leq C_q M_\infty^2 \int_{(K \cap \Sigma^1) \setminus G^1_\eps} (\Ric_{g,\mu}(\n,\n) + \|\II\|_2^2) d\mu^{n-1} .
\]
As $\Ric_{g,\mu}(\n,\n)$ and $\norm{\II}$ are bounded on $K$ and $\mu^{n-1}(\Sigma^1 \setminus G^1_\eps) \leq 2 \eps$ by Proposition \ref{prop:inward-fields}, we conclude that the right-hand side is at most $C_2 \eps$ for some $C_2 = C_2(\Sigma,f) < \infty$. 

Similarly, let $G^2_\eps \subset \Sigma^2$ denote the good subset from Proposition \ref{prop:outward-fields-Sigma2} (\ref{it:inward-fields-on-Sigma2}) where $X_\ell^{\n_{ij}} = \delta^\ell_{ij}$ on $\Sigma_{uvw}$ for all $\ell$ and $\{i,j\} \subset \{u,v,w\}$. Then on $\Sigma_{ijk}$: 
\[
Y^{\n_{ij}}|_{\Sigma_{ijk} \cap G^2_\eps} = f_{ij}|_{\Sigma_{ijk} \cap G^2_\eps} ~,~ Y^{\n_{\partial ij}}|_{\Sigma_{ijk} \cap G^2_\eps} = \left . \frac{f_{ik} + f_{jk}}{\sqrt{3}} \right |_{\Sigma_{ijk} \cap G^2_\eps} ,
\]
and, recalling (\ref{eq:Y-on-Sigma1}):
\[
\abs{Y^{\n_{ij}} Y^{\n_{\partial ij}} - f_{ij} \frac{f_{ik} + f_{jk}}{\sqrt{3}} } \leq (3 q^2 /2 + 8/\sqrt{3}) M_\infty^2 =: C'_q M_\infty^2 . 
\]
Hence:
\[
\abs{Q_3(Y) - \sum_{i<j} \sum_{k \neq i,j} \int_{\Sigma_{ijk}} f_{ij} \frac{f_{ik}+f_{jk}}{\sqrt{3}} \II^{ij}_{\partial\partial} \, d\mu^{n-2}} \leq 3 C_q' M_\infty^2 \int_{(K \cap \Sigma^2) \setminus G^2_\eps} \norm{\II} d\mu^{n-2} .
\]
As $\norm{\II}$ is bounded on $K$ and $\mu^{n-2}((K \cap \Sigma^2) \setminus G^2_\eps) \leq 2 \eps$ by Proposition \ref{prop:outward-fields-Sigma2}, we conclude that the right-hand side is at most $C_3 \eps$ for some $C_3 = C_3(\Sigma,f) < \infty$. 

As for $Q_1(Y)$, write:
\[
\nabla^\tang Y^\n = W + U ~,~ W = \sum_k X_k^{\n} \nabla^\tang  \Psi_k ~,~ U = \sum_k \Psi_k \nabla^\tang X_k^{\n} .
\]
By Cauchy-Schwarz and Proposition \ref{prop:inward-fields} (\ref{it:inward-fields-gradient}):
\[
\int_{\Sigma^1} |U|^2 d\mu^{n-1} \leq \int \sum_k \Psi_k^2 \sum_k |\nabla^\tang X_k^{\n}|^2 d\mu^{n-1} \leq q^2 M_\infty^2 \eps .
\]
In addition, on $\Sigma_{ij}$ we have:
\begin{align*}
W|_{\Sigma_{ij} \cap G^1_\eps} & = (\nabla^\tang \Psi_i - \nabla^\tang \Psi_j) |_{\Sigma_{ij} \cap G^1_\eps} = \nabla^\tang f_{ij}|_{\Sigma_{ij} \cap G^1_\eps} ~,~\\
 \abs{|W|^2 - |\nabla^\tang f_{ij}|^2} & \leq \sum_k (X_k^{\n_{ij}})^2 \sum_k |\nabla^\tang \Psi_k|^2 + 2 (|\nabla^\tang \Psi_i|^2 + |\nabla^\tang \Psi_j|^2) \leq (3 q^2/2 + 4) M^2_{1,\infty} .
\end{align*}
Since $\mu^{n-1}(\Sigma^1 \setminus G^1_\eps) \leq 2 \eps$ by Proposition \ref{prop:inward-fields}, we deduce:
\[
\abs{\int_{\Sigma^1} |W|^2 d\mu^{n-1}  - \sum_{i<j} \int_{\Sigma_{ij}} |\nabla^\tang f_{ij}|^2 d\mu^{n-1}} \leq \int_{\Sigma^1 \setminus G^1_\eps} \abs{|W|^2 - |\nabla^\tang f_{ij}|^2}  d\mu^{n-1} \leq (3 q^2+ 8) M^2_{1,\infty} \eps .
\]
It follows that:
\begin{align}
\nonumber Q_1(Y) & = \int_{\Sigma^1} |W+U|^2 d\mu^{n-1} \leq \norm{W}_{L^2}^2 + \norm{U}_{L^2}^2 + 2 \norm{W}_{L^2} \norm{U}_{L^2} \\
\nonumber & \leq \sum_{i<j} \int_{\Sigma_{ij}} |\nabla^\tang f_{ij}|^2 d\mu^{n-1} + C''_q M_{1,\infty}^2 \eps + q^2 M_\infty^2 \eps + 2 q M_\infty \sqrt{\eps} \sqrt{M_{1,2}^2 + C''_q M_{1,\infty}^2 \eps} \\
\label{eq:nabla-Y-on-Sigma1} & \leq \sum_{i<j} \int_{\Sigma_{ij}} |\nabla^\tang f_{ij}|^2 d\mu^{n-1}  + C_1 \sqrt{\eps} ,
\end{align}
for some $C_1 = C_1(\Sigma,f) < \infty$ and small enough $\eps \in (0,\eps_0(f))$. Combining the contributions from $Q_1(Y)$, $Q_2(Y)$ and $Q_3(Y)$, we deduce that:
\[
Q^1(Y_{\eps}) \leq Q^0(f) + C_0(\Sigma,f) \sqrt{\eps} \;\;\;\;\; \forall \eps \in (0,\eps_0(f)) . 
\]

It remains to compensate for the discrepancy in the first variation of volume. By (\ref{eq:Y-on-Sigma1}), we have for all $\eps > 0$: 
\[
\norm{Y^{\n}}_{L^\infty(\Sigma^1)} \leq (q+2) M_\infty .
\]
In addition, by (\ref{eq:nabla-Y-on-Sigma1}), we have:
\[
\norm{\nabla^\tang Y^{\n}}^2_{L^2(\Sigma^1,\mu^{n-1})} = Q_1(Y) \leq M_{1,2}^2 + C_1(\Sigma,f) \sqrt{\eps} \leq 4 M_{1,2}^2 ,
\]
for all $\eps \in (0,\eps_1(\Sigma,f))$. It follows by Proposition \ref{prop:volume-offset} that there exist $A,B > 0$ independent of $\eps$ in the latter range so that for every $v \in E^{(q-1)}$, there exists a vector-field $Z_\eps \in C_c^\infty(M \setminus \Sigma^4 , TM)$ so that $\delta^1_{Z_\eps} V = \delta^1_{Y_\eps} V + v$ and $Q^1(Z_\eps) \leq Q^1(Y_\eps) + A |v| + B |v|^2$. We apply this to $v = \delta^1_f V - \delta^1_Y V$, which by (\ref{eq:Y-on-Sigma1}) satisfies:
\[
|v| \leq \sum_{i=1}^q |v_i| \leq \sum_{i=1}^q \int_{\partial \Omega_i} |Y^{\n_{ij}} - f_{ij}| d\mu^{n-1} \leq 2 q M_\infty \mu^{n-1}(\Sigma^1 \setminus G^1_\eps) \leq 4 q M_\infty \eps . 
\]
At this point, we may also add the discrepancy in the first variation of volume we incurred in our initial approximation step, which can be made smaller than $\eps$. 
It follows that for all $\eps \in (0,\eps_1(\Sigma,f))$:
\[
\delta^1_{Z_\eps} V(\Omega)= \delta^1_f V(\Omega) ~,~ Q^1(Z_\eps) \leq Q^0(f) + C_0(\Sigma,f) \sqrt{\eps} + C_A \eps + C_B \eps^2 . 
\]
Readjusting constants and relabeling $Z_\eps$ as $Y_\eps$, the assertion of the proposition is established. The ``in particular" part follows by Lemma \ref{lem:unstable} and Theorem \ref{thm:Q-Sigma4}. 
\end{proof}

\section{Jacobi Operator and Symmetric Bilinear Index-Form} \label{sec:Jacobi}

Throughout this section, we assume that $\Omega$ is a stationary regular cluster on $(M^n,g,\mu)$. 

\subsection{Symmetric bilinear index-forms}

We will need to use polarized versions of the index-forms $Q^1$ and $Q^0$. It will be convenient at this point to rewrite the boundary integrand in an equivalent form, which highlights the symmetry of the polarized forms. To this end, we denote at a point $p \in \Sigma_{ijk}$:
\begin{equation} \label{eq:def-II-partial}
\bar \II^{\partial ij} := \frac{\II^{ik}(\n_{\partial ik}, \n_{\partial ik}) + \II^{jk}(\n_{\partial jk}, \n_{\partial jk})}{\sqrt{3}} = \frac{ \II^{ik}_{\partial \partial} + \II^{jk}_{\partial \partial}}{\sqrt{3}} . 
\end{equation}

\begin{lemma} \label{lem:cyclic-XYII}
For all vectors $X,Y$ at a point $p \in \Sigma_{uvw}$, we have:
\begin{equation} \label{eq:cyclic-XYII}
\sum_{(i,j) \in \cyclic(u,v,w)} X^{\n_{ij}} Y^{\n_{\partial ij}} \II^{ij}_{\partial\partial} = 
\sum_{(i,j) \in \cyclic(u,v,w)} X^{\n_{ij}} Y^{\n_{ij}} \bar \II^{\partial ij} . 
\end{equation} 
\end{lemma}

Here and below we apply Einstein's summation convention of summing over repeated upper and lower indices.

\begin{proof}By Lemma \ref{lem:angles-form} \ref{it:angles-form-c}, at a point $p \in \Sigma_{uvw}$, we can write $\II^{ij}_{\partial \partial} = \scalar{\Lambda,\n_{ij}}$ for some vector $\Lambda$ in the two-dimensional subspace $\sspan(\n_{uv},\n_{vw},\n_{wu}) = (T_p \Sigma_{uvw})^{\perp}$ of $T_p M$. Hence:
\[
\sum_{(i,j) \in \cyclic(u,v,w)} X^{\n_{ij}} Y^{\n_{\partial ij}} \II^{ij}_{\partial\partial} = 
\sum_{(i,j) \in \cyclic(u,v,w)} X_\alpha Y_\beta \Lambda_\gamma \n_{ij}^\alpha \n_{\partial ij}^\beta \n_{ij}^\gamma . 
\]
Applying Lemma \ref{lem:three-tensor-vanishes}, we can exchange $\n_{\partial ij}^\beta$ with $\n^\gamma_{ij}$ in the above cyclic sum. It remains to note by (\ref{eq:sqrt3}) that:
\[
 \Lambda^{\n_{\partial ij}} = \frac{\Lambda^{\n_{ik}} + \Lambda^{\n_{jk}}}{\sqrt{3}} = \frac{\II^{ik}_{\partial \partial} + \II^{jk}_{\partial \partial}}{\sqrt{3}} = \bar \II^{\partial ij} ,
\]
and (\ref{eq:cyclic-XYII}) follows. 
\end{proof}

\begin{corollary}[Polarization]
The symmetric bilinear polarization of $Q^1$ defined in (\ref{eq:Q1-def}), for $C_c^\infty$ vector-fields $X,Y$ for which all integrals appearing below are finite, is given by:
\begin{align}
\nonumber Q^1(X,Y) := \sum_{i<j} \Big [ &  \int_{\Sigma_{ij}} \brac{ \scalar{\nabla^\tang X^\n,\nabla^\tang Y^\n} -(\Ric_{g,\mu}(\n,\n) + \|\II\|_2^2)  X^\n Y^\n} d\mu^{n-1}  \\
\label{eq:Q1} & - \int_{\partial \Sigma_{ij}} X^\n Y^\n \bar \II^{\partial ij} \, d\mu^{n-2}\Big] .
\end{align}
Similarly, the symmetric bilinear polarization of $Q^0$ defined in (\ref{eq:Q0-def}), for locally Lipschitz scalar-fields $f = \{ f_{ij} \}, h = \{ h_{ij} \}$ on $\Sigma$ for which all integrals appearing below are finite, is given by:
\begin{align} 
\nonumber 
Q^0(f,h) := \sum_{i<j} \Big [ & \int_{\Sigma_{ij}} \brac{\scalar{\nabla^\tang f_{ij} , \nabla^\tang h_{ij}} - (\Ric_{g,\mu}(\n,\n) + \|\II\|_2^2) f_{ij} h_{ij}} d\mu^{n-1} \\
\label{eq:Q0} & - \int_{\partial \Sigma_{ij}} f_{ij} h_{ij} \bar \II^{\partial ij} \, d\mu^{n-2} \Big ] . 
\end{align}
In other words, we have $Q^1(X) = Q^1(X,X)$ and $Q^0(f) = Q^0(f,f)$. 
\end{corollary}

\begin{remark}
We will use $Q^1(X)$ given by (\ref{eq:Q1-def}) and $Q^1(X,X)$ given by (\ref{eq:Q1}) interchangeably. Similarly for $Q^0(f)$ given by (\ref{eq:Q0-def}) and $Q^0(f,f)$ given by (\ref{eq:Q0}). 
\end{remark}

\begin{remark}
The above symmetric bilinear forms were already introduced in \cite{DoubleBubbleInR3} in the case that $q=3$, $\Sigma = \Sigma^1 \cup \Sigma^2$, and there is no quadruple set $\Sigma^3$ nor singularities $\Sigma^4$. Note that $\nabla^\tang X^\n = \nabla_\alpha X^\beta \n_\beta + X^\beta \II_{\beta \alpha}$, and hence by Proposition \ref{prop:curvature-integrability-Sigma4} and Lemma \ref{lem:Sigma2}, $Q^1(X,Y)$ is well-defined whenever the compact supports of $X,Y$ are disjoint from $\Sigma^4$, or whenever the cluster is of locally bounded curvature. 
Similarly, $Q^0(f,h)$ is well-defined whenever the compact supports of $f,h$ are disjoint from $\Sigma^4$, or whenever the cluster is of locally bounded curvature. 
\end{remark}

\begin{remark}
Whenever $f,h$ are physical scalar-fields derived from vector-fields $X,Y$, respectively, we clearly have:
\[
Q^0(f,h) = Q^1(X,Y) . 
\]
\end{remark}

\subsection{Jacobi operator $L_{Jac}$}

\begin{definition}[Jacobi operator $L_{Jac}$]
Let $\Sigma$ be a smooth hypersurface on a weighted Riemannian manifold $(M,g,\mu)$ with unit-normal $\n$. The associated Jacobi operator $L_{Jac}$ acting on smooth functions $f \in C^\infty(\Sigma)$ is defined as:
\[
L_{Jac} f := \Delta_{\Sigma,\mu} f + (\Ric_{g,\mu}(\n,\n) + \|\II\|_2^2) f . 
\]
\end{definition}

\begin{theorem} \label{thm:Q1-LJac-Sigma4}
Let $X,Y$ be $C_c^\infty$ vector-fields on $M$ whose compact supports are disjoint from $\Sigma^4$. Then:
\begin{equation} \label{eq:Q1-LJac}
Q^1(X,Y) = \sum_{i<j} \Big[ - \int_{\Sigma_{ij}} X^{\n} L_{Jac} Y^{\n} d\mu^{n-1} + \int_{\partial \Sigma_{ij}} X^{\n} \brac{\nabla_{\n_{\partial ij}} Y^{\n} -  \bar \II^{\partial ij} Y^{\n}  } d\mu^{n-2} \Big] .
\end{equation}
In particular:
\begin{equation} \label{eq:Q-LJac}
Q^1(X) = \sum_{i<j} \Big[ - \int_{\Sigma_{ij}} X^{\n} L_{Jac} X^{\n} d\mu^{n-1} + \int_{\partial \Sigma_{ij}} X^{\n} \brac{\nabla_{\n_{\partial ij}} X^{\n} - \bar \II^{\partial ij}  X^{\n}  } d\mu^{n-2} \Big] . 
\end{equation}
\end{theorem}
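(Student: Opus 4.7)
The plan is to derive (\ref{eq:Q1-LJac}) by a single integration-by-parts on each interface $\Sigma_{ij}$, transferring the tangential gradient off $Y^{\n}$ and collecting terms; the formula (\ref{eq:Q-LJac}) then follows by specialization to $X=Y$ together with the identity $Q(X)=Q^1(X)$ provided by Lemma \ref{lem:cyclic-XYII} (whose hypothesis, that the support of $X$ is disjoint from $\Sigma^4$, is precisely what is assumed).

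Concretely, starting from Definition \ref{def:bilinear-Q1}, I would introduce on each $\Sigma_{ij}$ the tangential vector field $Z_{ij} := X^{\n}\,\nabla^\tang Y^{\n}$. A direct product-rule computation gives
\[
\div_{\Sigma,\mu} Z_{ij} \;=\; \scalar{\nabla^\tang X^{\n},\nabla^\tang Y^{\n}} \;+\; X^{\n}\,\Delta_{\Sigma,\mu} Y^{\n},
\]
and since $Z_{ij}$ is tangent to $\Sigma_{ij}$ one has $Z_{ij}^{\n}=0$ while $Z_{ij}^{\n_{\partial}} = X^{\n}\nabla_{\n_{\partial}} Y^{\n}$ on $\partial \Sigma_{ij}$. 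Applying the Stokes-type identity of Lemma \ref{lem:Stokes} to $Z_{ij}$ therefore yields
\[
\int_{\Sigma_{ij}}\!\scalar{\nabla^\tang X^{\n},\nabla^\tang Y^{\n}}\,d\mu^{n-1} \;=\; -\int_{\Sigma_{ij}} X^{\n}\,\Delta_{\Sigma,\mu} Y^{\n}\,d\mu^{n-1} \;+\; \int_{\partial \Sigma_{ij}} X^{\n}\,\nabla_{\n_\partial} Y^{\n}\,d\mu^{n-2}.
\]
Substituting into (\ref{eq:Q1}), the bulk integrand becomes $-X^{\n}\bigl(\Delta_{\Sigma,\mu}+\Ric_{g,\mu}(\n,\n)+\|\II\|^2\bigr)Y^{\n} = -X^{\n}\,L_{Jac} Y^{\n}$, while the two boundary integrands fuse into $X^{\n}\bigl(\nabla_{\n_\partial} Y^{\n} - \bar{\II}^{\partial} Y^{\n}\bigr)$, giving exactly (\ref{eq:Q1-LJac}).

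The only genuine technical obstacle is verifying the integrability hypotheses (\ref{eq:Stokes-assumptions}) needed to invoke Lemma \ref{lem:Stokes}, since $\Sigma_{ij}\cup \partial \Sigma_{ij}$ is typically incomplete (the triple set $\partial \Sigma_{ij}$ is only $C^{1,\alpha}$ up to the quadruple set $\Sigma^3$, and curvature can a priori blow up approaching $\Sigma^3$). Here the assumption that the compact supports of $X,Y$ avoid $\Sigma^4$ is crucial, as it allows us to apply Proposition \ref{prop:curvature-integrability-Sigma4}, giving $\|\II\|^2 \in L^1(\Sigma^1\cap K,\mu^{n-1})$ and $\|\II\|\in L^1(\Sigma^2\cap K,\mu^{n-2})$ on the relevant compact $K$. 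Expanding $\nabla^\tang Y^{\n}$ via $\nabla^\tang(Y\cdot \n) = (\nabla^\tang Y)\cdot \n + \II(Y^\tang,\cdot)$ then immediately gives $|Z_{ij}|^2\in L^1(\Sigma_{ij},\mu^{n-1})$ and $|Z_{ij}^{\n_{\partial}}|\in L^1(\partial \Sigma_{ij},\mu^{n-2})$; a parallel expansion of $\Delta_{\Sigma,\mu} Y^{\n}$ together with the same curvature bounds yields the divergence integrability. This is essentially the verification already carried out in the Euclidean case in \cite{EMilmanNeeman-GaussianMultiBubble}, and it transfers to the weighted Riemannian setting unchanged by working in smooth local charts, with the curvature of $g$ absorbed into the smooth ambient data. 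Once (\ref{eq:Q1-LJac}) is established, setting $Y=X$ and applying Lemma \ref{lem:cyclic-XYII} concludes (\ref{eq:Q-LJac}).
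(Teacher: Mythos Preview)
Your proposal is correct and follows essentially the same approach as the paper: apply Stokes' theorem (Lemma \ref{lem:Stokes}) to $Z_{ij}=X^{\n}\nabla^\tang Y^{\n}$, combine with the definition of $L_{Jac}$, and invoke Lemma \ref{lem:cyclic-XYII} for the second identity. The one point you undersell is the divergence integrability $\int_{\Sigma_{ij}}|\div_{\Sigma,\mu}Z_{ij}|\,d\mu^{n-1}<\infty$: expanding $\Delta_{\Sigma,\mu}Y^{\n}$ produces a term $\div_{\Sigma,\mu}(\II\, Y^{\tang})$ involving \emph{derivatives} of $\II$, not controlled by Proposition \ref{prop:curvature-integrability-Sigma4} alone; the paper isolates this as the ``delicate'' step in Lemma \ref{lem:Stokes-integrability} and handles it via a polarization trick (applying the known $L^1$-bound for $W^{\n}\div_{\Sigma,\mu}(\II\,W^{\tang})$ from \cite{EMilmanNeeman-GaussianMultiBubble} to $W\in\{X,Y,X+Y\}$ and then splitting according to $|X^{\n}|\gtrless |Y^{\n}|$).
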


\begin{remark}
It is well-known and classical in the unweighted setting that the first variation of mean-curvature of a hypersurface $\Sigma$ in the normal direction is given by the unweighted Jacobi operator, and this easily extends to the weighted setting:
\[
\delta^1_{\varphi \n} H_{\Sigma,\mu} = - L_{Jac} \; \varphi . 
\]
Consequently, on a hypersurface $\Sigma$ with constant weighted mean-curvature $H_{\Sigma,\mu}$, we have:
\begin{equation} \label{eq:LJac-deltaH}
\delta^1_X H_{\Sigma_{ij},\mu} = -L_{Jac} X^\n + \delta^1_{X^\tang} H_{\Sigma_{ij},\mu} =  -L_{Jac} X^\n . 
\end{equation}
Using this, it is easy to obtain a heuristic derivation of (\ref{eq:Q-LJac}) by differentiating inside the integral in the formula for the first variation of area (\ref{eq:1st-var-area}). However, as explained in \cite[Section 1]{EMilmanNeeman-GaussianMultiBubble}, in order to justify exchanging limit and integration, one would need (at the very least) knowing the a-priori integrability of the curvature term $\norm{\II}^2$ appearing in the definition of $L_{Jac}$ on $\Sigma_{ij}$. Contrary to the single-bubble and double bubble settings, this is a genuine issue for $q \geq 4$, as the curvature may be blowing up near the quadruple set $\Sigma^3$, around which $\Sigma_{ij}$ is only known to be  $C^{1,\alpha}$ smooth according to Theorem \ref{thm:regularity}. Consequently, Proposition \ref{prop:curvature-integrability-Sigma4} is indispensable in any attempt at a rigorous argument. 
\end{remark}

For the proof of Theorem \ref{thm:Q1-LJac-Sigma4}, we will require a useful technical lemma:

\begin{lemma} \label{lem:Stokes-integrability}
Let $\Omega$ be a stationary regular cluster on $(M,g,\mu)$. Let $X,Y$ be  $C^\infty$ vector-fields on $(M,g)$, and define the vector-field $Z_{ij} := X^{\n_{ij}} \nabla^\tang Y^{\n_{ij}}$ on $\Sigma_{ij}$. Then for any compact $K \subset M$ which is disjoint from $\Sigma^4$, we have:
\[
\int_{\Sigma_{ij} \cap K} |Z_{ij}|^2 d\mu^{n-1} , \quad
        \int_{\Sigma_{ij} \cap K} |\div_{\Sigma,\mu} Z_{ij}| \, d\mu^{n-1} , \quad
        \int_{\partial \Sigma_{ij} \cap K} |Z_{ij}^{\n_{\partial ij}}| \, d\mu^{n-2} < \infty . 
\]
\end{lemma}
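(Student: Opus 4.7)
The strategy is to establish pointwise bounds on $|Z_{ij}|$, $|Z_{ij}^{\n_\partial}|$ and $|\div_{\Sigma,\mu} Z_{ij}|$ of the form $C(1 + \|\II\|^{\alpha})$ on $\Sigma_{ij} \cap K$, and then combine these with the curvature integrability estimates of Proposition~\ref{prop:curvature-integrability-Sigma4} together with the Hausdorff-measure estimates from Theorem~\ref{thm:Almgren}\ref{it:Almgren-density} (used on $\Sigma^1 \cap K$) and Corollary~\ref{cor:locally-finite-Sigma2} (used on $\Sigma^2 \cap K$, which requires $K \cap \Sigma^4 = \emptyset$).

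First I would prove the basic inequality
\[
    |\nabla^\tang Y^{\n_{ij}}| \le C(1 + \|\II\|) \quad \text{on } \Sigma_{ij} \cap K,
\]
with $C$ depending on $K$ and $\sup_K(|X|+|\nabla X|+|Y|+|\nabla Y|)$. This follows from differentiating $Y^{\n_{ij}} = \langle Y,\n_{ij}\rangle$ in a tangent direction $\tang_\alpha$ and invoking Weingarten: $\tang_\alpha(Y^{\n_{ij}}) = \langle \nabla_{\tang_\alpha} Y,\n_{ij}\rangle - \II(Y^{\tang},\tang_\alpha)$. The same bound holds for $X$. This immediately yields $|Z_{ij}| \le C(1+\|\II\|)$ on $\Sigma_{ij}\cap K$, so assertion (1) follows from $\mu^{n-1}(\Sigma_{ij}\cap K) < \infty$ (a consequence of the density bound and compactness) together with Proposition~\ref{prop:curvature-integrability-Sigma4}\ref{it:curvature-integrability-on-Sigma1}. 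Assertion (3) is analogous: since $\partial \Sigma_{ij}\cap K \subset \Sigma^2 \cap K$, we have $|Z_{ij}^{\n_\partial}| \le |Z_{ij}| \le C(1 + \|\II\|)$, which is $L^1(\mu^{n-2})$ on $\partial\Sigma_{ij}\cap K$ by Corollary~\ref{cor:locally-finite-Sigma2} and Proposition~\ref{prop:curvature-integrability-Sigma4}\ref{it:curvature-integrability-on-Sigma2}.

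For (2), I would expand
\[
    \div_{\Sigma,\mu} Z_{ij} = \langle \nabla^\tang X^{\n_{ij}}, \nabla^\tang Y^{\n_{ij}}\rangle + X^{\n_{ij}}\,\Delta_{\Sigma,\mu} Y^{\n_{ij}}.
\]
The cross term is pointwise $\le C(1+\|\II\|)^2 \le 2C(1+\|\II\|^2)$ by the basic inequality above, hence integrable on $\Sigma_{ij}\cap K$ as in (1). The main obstacle is controlling $\Delta_{\Sigma,\mu} Y^{\n_{ij}}$, which a priori involves first derivatives of $\II$. Differentiating $Y^{\n_{ij}} = \langle Y,\n\rangle$ twice in a locally parallel orthonormal tangent frame $\{\tang_\alpha\}$ yields
\[
    \Delta_\Sigma Y^{\n_{ij}} = \langle \sum_\alpha \nabla_{\tang_\alpha}\nabla_{\tang_\alpha} Y,\n\rangle + 2\sum_\alpha\langle \nabla_{\tang_\alpha} Y, \nabla_{\tang_\alpha}\n\rangle + \langle Y, \sum_\alpha \nabla_{\tang_\alpha}\nabla_{\tang_\alpha}\n\rangle.
\]
The first two terms are bounded by $C(1+\|\II\|)$ because $Y$ is smooth and $|\nabla_{\tang_\alpha}\n|\le \|\II\|$. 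For the third term, Weingarten gives $\nabla_{\tang_\alpha}\nabla_{\tang_\alpha}\n = -\sum_\beta(\tang_\alpha \II_{\alpha\beta})\tang_\beta - \sum_\beta \II_{\alpha\beta}^2\n$ at the frame point, and summing over $\alpha$ followed by the Codazzi identity in the Riemannian setting rewrites $\sum_\alpha \tang_\alpha \II_{\alpha\beta}$ as $\tang_\beta H_{\Sigma_{ij}}$ plus terms involving the ambient Ricci tensor. Because $\Omega$ is stationary, $H_{\Sigma_{ij},\mu}$ is constant on $\Sigma_{ij}$, so $\nabla^\tang H_{\Sigma_{ij}} = \nabla^\tang(\nabla_\n W)$ is bounded on $K$. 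Combining everything,
\[
    |\Delta_{\Sigma,\mu} Y^{\n_{ij}}| \le C(1 + \|\II\|^2) \quad \text{on } \Sigma_{ij}\cap K,
\]
and assertion (2) follows from Proposition~\ref{prop:curvature-integrability-Sigma4}\ref{it:curvature-integrability-on-Sigma1}.

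The most delicate step is thus the Codazzi-based absorption of $\nabla \II$ into $\nabla^\tang H$, which is only tame thanks to stationarity; all other bounds are essentially algebraic applications of Weingarten, and the integrability conclusions are immediate once the pointwise estimates are in hand.
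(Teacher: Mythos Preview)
Your treatment of assertions (1) and (3) matches the paper's. For assertion (2), your route is correct but genuinely different from the paper's. The paper does \emph{not} bound $\Delta_{\Sigma,\mu} Y^{\n}$ pointwise. Instead it splits $Z_{ij} = X^{\n}(\nabla_\alpha Y^\gamma)\n_\gamma + X^{\n}\nabla_{Y^{\tang}}\n$, handles the divergence of the first piece directly (this only involves $\|\II\|$ linearly and the bounded quantity $\Delta_{\Sigma,\mu} Y$, the surface Laplacian applied componentwise to the ambient vector field), and for the second piece invokes a result from \cite[Section~6.4]{EMilmanNeeman-GaussianMultiBubble} asserting that $\div_{\Sigma,\mu}(\bar W^{\n}\nabla_{\bar W^{\tang}}\n)\in L^1$ for any single compactly supported field $\bar W$. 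It then polarizes over $\bar W\in\{\bar X,\bar Y,\bar X+\bar Y\}$ and uses a level-set splitting $\{|Y^\n|\ge |X^\n|\}$ versus $\{|Y^\n|<|X^\n|\}$ to deduce integrability of the off-diagonal term $X^{\n}\div_{\Sigma,\mu}(\II\, Y^{\tang})$. Your approach is more direct and self-contained: the contracted Codazzi identity combined with stationarity ($H_{\Sigma_{ij},\mu}$ constant, so $\nabla^{\tang}H_{\Sigma_{ij}}=\nabla^{\tang}(\nabla_{\n}W)$ is bounded modulo an $O(\|\II\|)$ term) absorbs the derivative of $\II$ and yields the stronger pointwise estimate $|\Delta_{\Sigma,\mu} Y^{\n}|\le C(1+\|\II\|^2)$. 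The paper's detour avoids re-deriving the Codazzi step by recycling a computation already performed in the quadratic case $X=Y$; your argument trades that citation for an explicit use of the constant-mean-curvature hypothesis.
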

\begin{proof}
We will use $\alpha,\beta$ to denote tensor indices on $T \Sigma_{ij}$ and $\gamma$ on $T M$.  We freely raise and lower indices using the metric as needed; in particular, $(\II Y^{\tang})^{\alpha} = \II^{\alpha}_\beta Y^\beta$ denotes the tangential vector-field obtained by applying the Weingarten map to $Y^{\tang}$. As the indices $i,j$ are fixed, we omit them here and below. 

Write:
\begin{equation} \label{eq:tang-base}
\nabla_\alpha Y^\n = \nabla_\alpha Y^\gamma \n_\gamma + \II_{\alpha \beta} Y^\beta ~,~ Z = X^\n \nabla^\tang Y^\n = X^\n (\nabla^{\tang} Y^\gamma \n_\gamma + \II Y^\tang) .  
\end{equation}
Clearly:
\begin{equation} \label{eq:tang-base2}
|\nabla^\tang Y^{\n}| \leq \norm{\nabla Y} +  \norm{\II} |Y| ,
\end{equation}
and hence:
\[
|Z|^2 \leq 2 (\norm{\nabla Y}^2 + \norm{\II}^2 |Y|^2) |X^{\n}|^2 ,
\]
establishing the first integrability requirement by Proposition \ref{prop:curvature-integrability-Sigma4} (and $\mu^{n-1}(\Sigma^1) < \infty$). In addition:
\[
Z^{\n_{\partial}} =  X^{\n} (\scalar{\nabla_{\n_{\partial}} Y , \n} + \II(\n_{\partial} , Y^\tang)) ,
\]
and so the third integrability requirement follows by Proposition \ref{prop:curvature-integrability-Sigma4} and Corollary \ref{cor:locally-finite-Sigma2}. 

The second and final integrability requirement is delicate. Recalling (\ref{eq:tang-base}), we have:
\begin{equation} \label{eq:Z}
\div_{\Sigma,\mu} Z = \div_{\Sigma,\mu}(X^\n \nabla^{\tang} Y^\gamma \n_\gamma) + \div_{\Sigma,\mu}(X^\n \II Y^\tang ). 
\end{equation}
Let us first show that $\int_{\Sigma_{ij} \cap K} |\div_{\Sigma,\mu} (X^\n \nabla^{\tang} Y^\gamma \n_\gamma)| d\mu^{n-1} < \infty$. Indeed: 
\[
\div_{\Sigma,\mu} (X^\n \nabla^{\tang} Y^\gamma \n_\gamma) = \nabla^\alpha X^\n \nabla_\alpha Y^\gamma \n_\gamma + X^\n \brac{ \Delta_{\Sigma,\mu} Y^\gamma \n_\gamma + \nabla^\alpha Y^\beta \II_{\alpha \beta} } ,
\]
where $\Delta_{\Sigma,\mu} Y$ denotes the natural extension of the weighted surface-Laplacian $\Delta_{\Sigma,\mu}$ from scalar functions to vector-fields $Y$ using covariant differentiation. 
Consequently, by (\ref{eq:tang-base2}) (applied to $X$ instead of $Y$):
\[
|\div_{\Sigma,\mu} (X^\n \nabla^{\tang} Y^\gamma \n_\gamma)| \leq 
(\norm{\nabla X} + \norm{\II} |X|) \norm{\nabla Y} + |X^{\n}| ( |\Delta_{\Sigma,\mu} Y| + \norm{\nabla Y} \norm{\II}) ,
\]
which by Proposition \ref{prop:curvature-integrability-Sigma4} (and $\mu^{n-1}(\Sigma^1) < \infty$) is in $L^1(\Sigma_{ij} \cap K , \mu^{n-1})$. 

Recalling (\ref{eq:Z}), it remains to establish that:
\begin{equation} \label{eq:tedious}
\int_{\Sigma_{ij} \cap K} |\div_{\Sigma,\mu} (X^{\n} \II Y^\tang)| d\mu^{n-1} < \infty .
\end{equation}
To this end, a calculation performed in Lemma \ref{lem:tedious} in Appendix \ref{app:tedious} verifies that:
\begin{align*}
\div_{\Sigma,\mu}(X^\n \II Y^\tang) = & \II^2_{\alpha \beta} X^{\alpha} Y^{\beta} + \II_{\alpha\beta} (\nabla^{\alpha} X^\gamma) \n_{\gamma} Y^{\beta} + X^\n \II_{\alpha\beta} \nabla^\alpha Y^\beta - 
\norm{\II}^2 X^\n Y^\n \\
& + \Ric_{\Sigma,\mu}(Y^{\tang}, X^\n \n) + X^\n \nabla_{Y^\tang} H_{\Sigma,\mu} . 
\end{align*}
Here $\II^2$ denotes the square of the Weingarten map as an operator on $T \Sigma_{ij}$, and $\Ric_{\Sigma,\mu}$ is the weighted tangential Ricci curvature defined in (\ref{eq:Ric-Sigma-weighted}), which is smooth on the entire manifold $(M,g)$. 
Since $\Sigma_{ij}$ is of constant weighted mean-curvature $H_{\Sigma,\mu}$, the last term above vanishes, and so (\ref{eq:tedious}) follows by Proposition \ref{prop:curvature-integrability-Sigma4} (and $\mu^{n-1}(\Sigma^1) < \infty$). This concludes the proof.

\end{proof}

\begin{proof}[Proof of Theorem \ref{thm:Q1-LJac-Sigma4}]
Apply Stokes' theorem (Lemma \ref{lem:Stokes}) to the tangential field $Z_{ij} = X^{\n_{ij}} \nabla^{\tang} Y^{\n_{ij}}$ on each $\Sigma_{ij} \cup \partial \Sigma_{ij}$. As $X,Y$ are supported away from $\Sigma^4$, the integrability conditions (\ref{eq:Stokes-assumptions}) hold by Lemma \ref{lem:Stokes-integrability}. 
Using that $\div_{\Sigma,\mu} Z_{ij} = \scalar{\nabla^\tang X^{\n_{ij}}, \nabla^\tang Y^{\n_{ij}}} + X^{\n_{ij}} \Delta_{\Sigma,\mu} Y^{\n_{ij}}$ and summing over $i < j$, we obtain:
\[
\sum_{i < j} \int_{\Sigma_{ij}} \brac{ \scalar{\nabla^\tang X^\n, \nabla^\tang Y^\n} + X^\n \Delta_{\Sigma,\mu} Y^{\n}} d\mu^{n-1} = \sum_{i < j} \int_{\partial \Sigma_{ij}} X^{\n_{ij}} \nabla_{\n_{\partial ij}} Y^{\n_{ij}} d\mu^{n-2} .
\]
Plugging this into the formula (\ref{eq:Q1}) for $Q^1(X,Y)$ and recalling the definition of $L_{Jac}$, the assertion follows.
\end{proof}

We will also need the following more general version of Theorem \ref{thm:Q1-LJac-Sigma4}:
\begin{theorem} \label{thm:Q1-cutoff}
Let $\Omega$ be a stationary regular cluster on $(M,g,\mu)$. Then for any $C^\infty$ vector-fields $X,Y$ and $C_c^\infty$ cutoff function $\eta : M \rightarrow [0,1]$ whose compact support is disjoint from $\Sigma^4$, we have:
 \begin{align*}
 Q^1(\eta X,\eta Y) = \sum_{i<j} \Big [ &  \int_{\Sigma_{ij}} \brac{|\nabla^\tang \eta|^2 X^\n Y^\n + \eta \scalar{\nabla^\tang \eta , Y^\n \nabla^\tang X^\n - X^\n \nabla^\tang Y^\n} - \eta^2 X^\n L_{Jac} Y^\n} d\mu^{n-1} \\
 & + \int_{\partial \Sigma_{ij}} \eta^2 X^{\n} (\nabla_{\n_{\partial ij}} Y^{\n} - \bar \II^{\partial ij} Y^{\n}) d\mu^{n-2} \Big ] . 
 \end{align*}
 In particular:
 \[
Q^1(\eta X) = \sum_{i<j} \Big [ \int_{\Sigma_{ij}} \brac{|\nabla^\tang \eta|^2 (X^\n)^2  - \eta^2 X^\n L_{Jac} X^\n} d\mu^{n-1} + \int_{\partial \Sigma_{ij}} \eta^2 X^{\n} (\nabla_{\n_{\partial ij}} X^{\n} - \bar \II^{\partial ij} X^{\n}) d\mu^{n-2} \Big ] . 
\]
\end{theorem}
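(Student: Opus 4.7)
The plan is to apply Theorem~\ref{thm:Q1-LJac-Sigma4} directly to the $C_c^\infty$ vector fields $\eta X$ and $\eta Y$, whose supports lie inside $\supp \eta$ and are therefore compact and disjoint from $\Sigma^4$. Since $(\eta X)^\n = \eta X^\n$ and $(\eta Y)^\n = \eta Y^\n$, the main task is to expand the two ingredients entering Theorem~\ref{thm:Q1-LJac-Sigma4}: the bulk integrand via the Leibniz rule for the weighted surface Laplacian,
\[
L_{Jac}(\eta Y^\n) = \eta\, L_{Jac}(Y^\n) + Y^\n \Delta_{\Sigma,\mu}\eta + 2\scalar{\nabla^\tang \eta,\, \nabla^\tang Y^\n},
\]
and the boundary integrand via the ordinary product rule, $\nabla_{\n_\partial}(\eta Y^\n) = \eta \nabla_{\n_\partial} Y^\n + Y^\n \nabla_{\n_\partial}\eta$, together with $\bar\II^{\partial ij}(\eta Y^\n) = \eta \bar\II^{\partial ij} Y^\n$.

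After substitution, the expanded expression for $Q^1(\eta X,\eta Y)$ matches the claimed formula up to the correction
\[
\mathcal{R} := \sum_{i<j} \Big[\!-\!\!\int_{\Sigma_{ij}}\!\!\big(\eta X^\n Y^\n \Delta_{\Sigma,\mu}\eta + 2\eta X^\n \scalar{\nabla^\tang\eta, \nabla^\tang Y^\n} + |\nabla^\tang\eta|^2 X^\n Y^\n + \eta\scalar{\nabla^\tang\eta,\, Y^\n\nabla^\tang X^\n - X^\n \nabla^\tang Y^\n}\big)d\mu^{n-1} + \int_{\partial \Sigma_{ij}}\!\! \eta X^\n Y^\n \nabla_{\n_\partial}\eta\, d\mu^{n-2}\Big],
\]
which I would show vanishes by a single application of Stokes' theorem. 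The key observation is that, setting $W_{ij} := \eta X^\n Y^\n \nabla^\tang \eta$ (a tangential vector field on $\Sigma_{ij}$), a direct Leibniz computation using $\div_{\Sigma,\mu}(\varphi \nabla^\tang \eta) = \scalar{\nabla^\tang \varphi, \nabla^\tang \eta} + \varphi \Delta_{\Sigma,\mu}\eta$ with $\varphi = \eta X^\n Y^\n$ yields
\[
\div_{\Sigma,\mu} W_{ij} = |\nabla^\tang \eta|^2 X^\n Y^\n + \eta \scalar{\nabla^\tang \eta,\, Y^\n \nabla^\tang X^\n + X^\n \nabla^\tang Y^\n} + \eta X^\n Y^\n \Delta_{\Sigma,\mu}\eta,
\]
which, combined with $2\eta X^\n \scalar{\nabla^\tang\eta,\nabla^\tang Y^\n} = \eta\scalar{\nabla^\tang\eta, X^\n\nabla^\tang Y^\n} + \eta\scalar{\nabla^\tang\eta, X^\n\nabla^\tang Y^\n}$, shows that the bulk integrand in $\mathcal{R}$ is exactly $-\div_{\Sigma,\mu} W_{ij}$. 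Applying Stokes' theorem (Lemma~\ref{lem:Stokes}) to each $W_{ij}$ (which is tangential, so has no mean-curvature contribution) cancels $\mathcal{R}$ term by term. The integrability hypotheses of Lemma~\ref{lem:Stokes} are satisfied by the same estimates as in Lemma~\ref{lem:Stokes-integrability}, since everything is integrated over the compact set $\supp \eta$, disjoint from $\Sigma^4$, where curvature and $\mu^{n-2}$-measure of $\Sigma^2$ are controlled by Proposition~\ref{prop:curvature-integrability-Sigma4} and Corollary~\ref{cor:locally-finite-Sigma2}.

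The specialization to $X = Y$ for the second formula is immediate: the cross term $\eta \scalar{\nabla^\tang \eta,\, Y^\n \nabla^\tang X^\n - X^\n \nabla^\tang Y^\n}$ vanishes identically, and $Q^1(\eta X) = Q(\eta X)$ by Lemma~\ref{lem:cyclic-XYII}. There is no genuine analytic obstacle; the whole argument is a careful bookkeeping exercise of Leibniz rules against a single integration by parts, and the only step requiring any attention is verifying that the various cross-terms collect into the divergence of the tangential field $W_{ij}$ above.
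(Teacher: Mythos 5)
Your proof is correct, but it takes a genuinely different route from the paper's. The paper's proof starts from the defining formula~\eqref{eq:Q1} for $Q^1(\eta X,\eta Y)$, expands $\nabla^\tang(\eta X^\n)$ and $\nabla^\tang(\eta Y^\n)$ by the Leibniz rule, and then applies Stokes' theorem \emph{once} to the tangential field $Z_{ij}:=\eta^2 X^\n\nabla^\tang Y^\n$ (the same field used internally in the proof of Theorem~\ref{thm:Q1-LJac-Sigma4}, with the $\eta^2$ built in); in effect, the paper re-derives the cutoff version of Theorem~\ref{thm:Q1-LJac-Sigma4} from scratch. You instead invoke Theorem~\ref{thm:Q1-LJac-Sigma4} as a black box applied to $\eta X,\eta Y$ (legitimate since their compact supports sit in $\supp\eta$, disjoint from $\Sigma^4$), expand the Jacobi operator and boundary operator via the product rule, and then kill the discrepancy with a \emph{second} Stokes' theorem applied to the different auxiliary field $W_{ij}=\eta X^\n Y^\n\nabla^\tang\eta$. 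Your identity $\div_{\Sigma,\mu}W_{ij} = |\nabla^\tang\eta|^2 X^\n Y^\n + \eta\scalar{\nabla^\tang\eta, Y^\n\nabla^\tang X^\n + X^\n\nabla^\tang Y^\n} + \eta X^\n Y^\n\Delta_{\Sigma,\mu}\eta$ is correct, and since $W_{ij}$ is tangential the mean-curvature term in Lemma~\ref{lem:Stokes} drops out, yielding $\mathcal R = 0$ exactly as you claim. The integrability checks for $W_{ij}$ are actually \emph{easier} than for the paper's $Z_{ij}$: since $\nabla^\tang\eta$ carries no curvature (unlike $\nabla^\tang Y^\n$), only $\nabla^\tang X^\n$ and $\nabla^\tang Y^\n$ contribute a $\|\II\|$ factor via~\eqref{eq:tang-base2}, which is $L^2$ (hence $L^1$, as $\mu^{n-1}(\Sigma^1)<\infty$) on compacta disjoint from $\Sigma^4$ by Proposition~\ref{prop:curvature-integrability-Sigma4}, while $\Delta_{\Sigma,\mu}\eta$ is bounded there because $H_{\Sigma_{ij}}$ is bounded (stationarity plus smooth potential). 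The upshot: your approach is conceptually tidier because it reuses an established theorem, at the cost of one extra integration by parts; the paper's is marginally more self-contained but repeats work.
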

\begin{proof}
As $\eta X,\eta Y$ are smooth vector-field with compact support disjoint from $\Sigma^4$, we have:
\begin{align} 
\nonumber Q^1(\eta X,\eta Y) = \sum_{i < j} \Big[ & \int_{\Sigma_{ij}} \left ( \eta^2 \scalar{\nabla^\tang X^\n , \nabla^\tang Y^\n} + \eta \scalar{\nabla^\tang \eta, X^\n \nabla^\tang Y^\n + Y^\n \nabla^\tang X^\n} + |\nabla^\tang \eta|^2 X^\n Y^\n \right . \\
\label{eq:conforma-Q-proof3} & \left .  -(\Ric_{g,\mu}(\n,\n) + \|\II\|_2^2)  \eta^2 X^\n Y^\n \right ) d\mu^{n-1}  
  - \int_{\partial \Sigma_{ij}} \eta^2 X^\n Y^{\n} \bar \II^{\partial ij} \, d\mu^{n-2}\Big]  .
\end{align}
Now consider the $C^\infty$ vector-field $Z_{ij} := \eta^2 X^{\n_{ij}} \nabla^\tang Y^{\n_{ij}}$ defined on $\Sigma_{ij}$. Let us verify the integrability conditions (\ref{eq:Stokes-assumptions}) required to apply Lemma \ref{lem:Stokes} (Stokes' theorem) to $Z_{ij}$. Since: 
\begin{equation} \label{eq:conformal-Q-proof2}
 \div_{\Sigma,\mu}(Z_{ij}) =  \eta^2 \div_{\Sigma,\mu} (X^{\n_{ij}} \nabla^\tang Y^{\n_{ij}}) + \scalar{\nabla^\tang \eta^2 , X^{\n_{ij}} \nabla^\tang Y^{\n_{ij}}}  ,
 \end{equation}
 and $\eta$ is compactly supported on $K$ away from $\Sigma^4$, Lemma \ref{lem:Stokes-integrability} reduces the verification of (\ref{eq:Stokes-assumptions}) to verifying the integrability of the last term above. This is indeed the case since by Cauchy-Schwarz, using $\mu^{n-1}(\Sigma^1) < \infty$, and another application of Lemma \ref{lem:Stokes-integrability}, we have:
 \[
 \brac{\int_{\Sigma_{ij}} \abs{\scalar{\nabla^\tang \eta^2 , X^{\n_{ij}} \nabla^\tang Y^{\n_{ij}}} } d\mu^{n-1}}^2 \leq \int_{\Sigma_{ij}} 4 \eta^2 |\nabla \eta|^2 d\mu^{n-1} \int_{\Sigma_{ij} \cap K} |X^{\n_{ij}} \nabla^\tang Y^{\n_{ij}}|^2 d\mu^{n-1} < \infty . 
 \]
 
 Recalling (\ref{eq:conformal-Q-proof2}) and using 
 \[
 \div_{\Sigma,\mu} (X^{\n} \nabla^\tang Y^{\n}) = X^{\n} \Delta_{\Sigma,\mu} Y^\n + \scalar{\nabla^\tang X^\n, \nabla^\tang Y^\n},
 \]
 Lemma \ref{lem:Stokes} yields:
 \[
 \int_{\Sigma_{ij}} \brac{\eta^2 X^\n \Delta_{\Sigma,\mu} Y^\n  + \eta^2 \scalar{\nabla^\tang X^\n,\nabla^\tang Y^\n}+ 2 \eta  \scalar{\nabla^\tang \eta, X^\n \nabla^\tang Y^\n} } d\mu^{n-1}  =  \int_{\partial \Sigma_{ij}} \eta^2 X^\n \nabla_{\n_{\partial ij}} Y^\n d\mu^{n-2} . 
 \]
 Summing over $i<j$ and plugging the result into (\ref{eq:conforma-Q-proof3}), the assertion follows. 
\end{proof}

\subsection{Additional formulas involving $L_{Jac}$}

\begin{proposition} \label{prop:Q1-LJac-bounded-curvature}
Let $\Omega$ be a stationary regular cluster on $(M,g,\mu)$ with locally bounded curvature. Then for any $C_c^\infty$ vector-fields $X,Y$, the formulas for $Q^1(X,Y)$ (and $Q^1(X)$) from Theorem \ref{thm:Q1-LJac-Sigma4} remain valid. 
\end{proposition}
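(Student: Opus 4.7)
The plan is to apply the already-established formula in Theorem~\ref{thm:Q1-cutoff} to the truncated fields $\eta X, \eta Y$, with $\eta$ a cutoff from Lemma~\ref{lem:cutoff-Sigma4} vanishing on a neighborhood of $\Sigma^4$, and then to pass to the limit as $\eta \uparrow 1$. The local boundedness of curvature will be crucial at every step, as it provides the pointwise bounds and integrability required to justify the limit, removing the obstruction which forced the disjointness-from-$\Sigma^4$ assumption in Theorem~\ref{thm:Q1-LJac-Sigma4}.

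Fix $C_c^\infty$ vector-fields $X, Y$ and let $K \subset M$ be a compact set containing their supports. By Lemma~\ref{lem:Sigma2}, $\mu^{n-2}(\Sigma^2 \cap K) < \infty$ and $\|\II\|$ is bounded on $\Sigma^1 \cap K$ (hence on $\Sigma^2 \cap K$ by continuity). Consequently $|\nabla^\tang X^\n|$, $|\nabla^\tang Y^\n|$, $|L_{Jac} Y^\n|$, and $|\bar \II^{\partial ij}|$ are all uniformly bounded on $K$, so every integral appearing on either side of the target identity is a priori finite. For each $\eps > 0$ I would take $\eta = \eta_\eps$ from Lemma~\ref{lem:cutoff-Sigma4}, which satisfies $\eta \equiv 0$ on some neighborhood $U_\eps$ of $\Sigma^4$ (which can be arranged to shrink to $\Sigma^4$ as $\eps \to 0$), $\int_{\Sigma^1} |\nabla \eta|^2 d\mu^{n-1} \leq \eps$, and $\mu^{n-1}(\Sigma^1 \cap \{\eta < 1\}) \leq \eps$. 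Since $\eta X, \eta Y \in C_c^\infty$ with support disjoint from $\Sigma^4$, Theorem~\ref{thm:Q1-cutoff} applies and expresses $Q^1(\eta X, \eta Y)$ as the target RHS with $\eta^2$ inserted into each integrand, plus the remainder
\[
R_\eps = \sum_{i<j} \int_{\Sigma_{ij}} \brac{|\nabla^\tang \eta|^2 X^\n Y^\n + \eta \scalar{\nabla^\tang \eta, Y^\n \nabla^\tang X^\n - X^\n \nabla^\tang Y^\n}} d\mu^{n-1},
\]
which, by the $L^\infty$ bounds above, Cauchy--Schwarz, $\mu^{n-1}(\Sigma^1) < \infty$, and $\int |\nabla \eta_\eps|^2 d\mu^{n-1} \leq \eps$, satisfies $|R_\eps| \lesssim \sqrt{\eps} \to 0$.

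It then remains to check that $Q^1(\eta_\eps X, \eta_\eps Y) \to Q^1(X,Y)$ and that every $\eta^2$-weighted integral on the RHS converges to its unweighted target. The interior integrals on $\Sigma^1$ are handled by dominated convergence, using $\mu^{n-1}(\Sigma^1) < \infty$, boundedness of the integrands, and $\eta_\eps \to 1$ $\mu^{n-1}$-a.e.\ on $\Sigma^1$. The step I expect to be the main obstacle is the convergence of the boundary integrals on $\partial \Sigma_{ij}$, because Lemma~\ref{lem:cutoff-Sigma4} only controls the $\mu^{n-1}$-measure of $\{\eta_\eps < 1\}$, not its $\mu^{n-2}$-measure. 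I would resolve this by exploiting $\{\eta_\eps < 1\} \subset U_\eps \downarrow \Sigma^4$, combined with $\mu^{n-2}(\Sigma^2 \cap K) < \infty$ (Lemma~\ref{lem:Sigma2}) and $\mu^{n-2}(\Sigma^4) = 0$ (which holds since $\dim_{\H} \Sigma^4 \leq n - 4 < n - 2$ by Theorem~\ref{thm:regularity}~\ref{it:regularity-dimension}). Continuity from above of the Radon measure $\mu^{n-2}$ restricted to $\Sigma^2 \cap K$ then gives $\mu^{n-2}(\Sigma^2 \cap K \cap U_\eps) \to 0$, which combined with the uniform boundedness of the boundary integrands yields the required $L^1(\mu^{n-2})$ convergence. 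The asserted formula for $Q(X)$ follows at once from the $Q^1(X,X)$ case together with $Q(X) = Q^1(X,X)$, which holds under local boundedness of curvature by Proposition~\ref{prop:Q-bounded-curvature} and Lemma~\ref{lem:cyclic-XYII}.
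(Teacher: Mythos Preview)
Your approach is correct but differs from the paper's. The paper's proof is much shorter: it simply observes that in the proof of Theorem~\ref{thm:Q1-LJac-Sigma4}, the hypothesis that the supports of $X,Y$ avoid $\Sigma^4$ was only used (via Lemma~\ref{lem:Stokes-integrability}) to invoke Corollary~\ref{cor:locally-finite-Sigma2} and Proposition~\ref{prop:curvature-integrability-Sigma4}; under locally bounded curvature, Lemma~\ref{lem:Sigma2} supplies exactly the same finiteness and integrability, so the original proof runs verbatim. Your cutoff-plus-limit route via Theorem~\ref{thm:Q1-cutoff} also works and has the advantage of treating Theorem~\ref{thm:Q1-LJac-Sigma4} as a black box, but two steps in your write-up deserve more care. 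First, the claim that $|L_{Jac} Y^\n|$ is uniformly bounded is not an immediate consequence of $\|\II\|$ being bounded: $\Delta_\Sigma Y^\n$ contains a $\nabla^\Sigma \II$ term, and one needs the contracted Codazzi identity together with constancy of $H$ (stationarity) to see that only the bounded quantity $\div_\Sigma \II = \pm\Ric_M(\cdot,\n)$ survives; alternatively one may fall back on $L^1$-integrability via Lemma~\ref{lem:Stokes-integrability} (extended by Lemma~\ref{lem:Sigma2}), which is precisely the paper's route. Second, your statement ``$\{\eta_\eps<1\}\subset U_\eps$'' is backwards as written (the neighborhood on which $\eta\equiv 0$ sits \emph{inside} $\{\eta<1\}$); what you need is that $\{\eta_\eps<1\}\cap K$ can be arranged to lie in a shrinking neighborhood of $\Sigma^4$, which is true from the standard construction but not part of the statement of Lemma~\ref{lem:cutoff-Sigma4}. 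A cleaner substitute is the observation (used in the proof of Proposition~\ref{prop:outward-fields-Sigma2}) that the density estimate~\eqref{eq:Sigma2-regularity} of Lemma~\ref{lem:Sigma2} lets one choose $\eta$ with $\mu^{n-2}(K\cap\Sigma^2\cap\{\eta<1\})\le\epsilon$ directly.
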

\begin{proof}
Inspecting the proof of Theorem \ref{thm:Q1-LJac-Sigma4}, which indirectly builds upon the proof of \cite[Theorem 6.2]{EMilmanNeeman-GaussianMultiBubble} via Lemma \ref{lem:Stokes-integrability}, the assumption that $X$ and $Y$ are supported in $K$ which is disjoint from $\Sigma^4$ was only used to ensure by Corollary \ref{cor:locally-finite-Sigma2} and Proposition \ref{prop:curvature-integrability-Sigma4} that (\ref{eq:Sigma2-finite}) and (\ref{eq:curvature-integrability}) hold. Thanks to Lemma \ref{lem:Sigma2}, we know that this in fact holds whenever the curvature is locally bounded, without assuming that $K$ is disjoint from $\Sigma^4$, and so the proof of Theorem \ref{thm:Q1-LJac-Sigma4} applies. Surprisingly, one still needs to appeal to the technical Lemma \ref{lem:Stokes-integrability}. 
\end{proof}

\begin{proposition} \label{prop:Q0-LJac}
Let $\Omega$ be a stationary regular cluster on $(M,g,\mu)$, and let $f = \{f_{ij} \} , h = \{h_{ij}\} \in C_c^\infty(\Sigma)$ be two scalar-fields. Assume that either the supports of $f$ and $h$ are disjoint from $\Sigma^4$, or that $\Omega$ has locally bounded curvature. Then:
\begin{equation} \label{eq:Q0-LJac}
Q^0(f,h) = \sum_{i<j} \Big [ - \int_{\Sigma_{ij}} f_{ij} L_{Jac} h_{ij}  \, d\mu^{n-1} + \int_{\partial \Sigma_{ij}} f_{ij}  \brac{\nabla_{\n_{\partial ij}} h_{ij} -  \bar \II^{\partial ij} h_{ij}} d\mu^{n-2} \Big ] .
\end{equation}
\end{proposition}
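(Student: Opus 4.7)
The plan is to apply Stokes' theorem (Lemma \ref{lem:Stokes}) on each $\Sigma_{ij} \cup \partial \Sigma_{ij}$ to the tangential vector-field $Z_{ij} := f_{ij} \nabla^\tang h_{ij}$, in close analogy with the proof of Theorem \ref{thm:Q1-LJac-Sigma4}, but working directly with scalar-fields so that no appeal to Lemma \ref{lem:Stokes-integrability} (and its somewhat delicate polarization argument for vector-fields) is needed.

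First I would verify the integrability conditions (\ref{eq:Stokes-assumptions}) for $Z_{ij}$. Since $f_{ij}, h_{ij}$ are smooth with compact support inside $\Sigma_{ij} \cup \partial \Sigma_{ij}$, the quantities $|Z_{ij}|^2 = f_{ij}^2|\nabla^\tang h_{ij}|^2$ and $Z_{ij}^{\n_\partial} = f_{ij}\nabla_{\n_{\partial ij}} h_{ij}$ are bounded on compact sets, and $\div_{\Sigma,\mu} Z_{ij} = \scalar{\nabla^\tang f_{ij},\nabla^\tang h_{ij}} + f_{ij}\Delta_{\Sigma,\mu} h_{ij}$ is likewise bounded on compact sets. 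The integrability on $\Sigma_{ij}$ against $\mu^{n-1}$ follows from $\mu^{n-1}(\Sigma^1) < \infty$ (as $\Omega$ has finite weighted perimeter), while integrability on $\partial \Sigma_{ij}$ against $\mu^{n-2}$ requires that $\mu^{n-2}$ be locally finite on a neighborhood of the support of $f$ and $h$; this is precisely what is guaranteed by Corollary \ref{cor:locally-finite-Sigma2} when the supports are disjoint from $\Sigma^4$, and by Lemma \ref{lem:Sigma2} (equation (\ref{eq:Sigma2-finite})) when the cluster has locally bounded curvature.

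Since $Z_{ij}$ is tangential, $Z_{ij}^\n = 0$, and Stokes' theorem yields
\[
\int_{\Sigma_{ij}} \brac{ \scalar{\nabla^\tang f_{ij},\nabla^\tang h_{ij}} + f_{ij}\Delta_{\Sigma,\mu} h_{ij} } d\mu^{n-1} = \int_{\partial \Sigma_{ij}} f_{ij}\nabla_{\n_{\partial ij}} h_{ij} \, d\mu^{n-2} .
\]
Substituting the resulting expression for $\int_{\Sigma_{ij}} \scalar{\nabla^\tang f_{ij},\nabla^\tang h_{ij}} d\mu^{n-1}$ into the definition (\ref{eq:Q0}) of $Q^0(f,h)$, the bulk integrals on $\Sigma_{ij}$ combine into $-\int_{\Sigma_{ij}} f_{ij}(\Delta_{\Sigma,\mu} + \Ric_{g,\mu}(\n,\n) + \|\II\|_2^2) h_{ij} \, d\mu^{n-1} = -\int_{\Sigma_{ij}} f_{ij} L_{Jac} h_{ij} \, d\mu^{n-1}$ by the definition of $L_{Jac}$, while the boundary integrals combine into $\int_{\partial \Sigma_{ij}} f_{ij}(\nabla_{\n_{\partial ij}} h_{ij} - \bar\II^{\partial ij} h_{ij}) d\mu^{n-2}$. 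Summing over $i < j$ gives the asserted identity (\ref{eq:Q0-LJac}).

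The only real obstacle is the verification of the integrability hypotheses of Stokes' theorem near the singular set; all other steps are a direct integration by parts. Under the two stated hypotheses (support disjoint from $\Sigma^4$, or locally bounded curvature), this obstacle is resolved exactly as in the discussion preceding and within the proof of Proposition \ref{prop:Q1-LJac-bounded-curvature}, and no additional input is needed.
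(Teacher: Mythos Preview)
Your proof is correct and follows essentially the same approach as the paper: apply Stokes' theorem (Lemma \ref{lem:Stokes}) to the tangential field $Z_{ij} = f_{ij}\nabla^\tang h_{ij}$, verify the integrability hypotheses (\ref{eq:Stokes-assumptions}) using $\mu^{n-1}(\Sigma^1) < \infty$ for the bulk terms and local finiteness of $\mu^{n-2}$ on $\Sigma^2$ (via Corollary \ref{cor:locally-finite-Sigma2} or Lemma \ref{lem:Sigma2}) for the boundary term, and then substitute into the definition (\ref{eq:Q0}) of $Q^0(f,h)$. Your write-up is in fact slightly more explicit than the paper's, spelling out the divergence computation and the vanishing of $Z_{ij}^\n$.
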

\begin{proof}
The equivalent formula for $Q^0(f,h)$ above follows from Lemma \ref{lem:Stokes} (Stokes' theorem) applied to the vector-field $Z_{ij} = f_{ij} \nabla^{\tang} h_{ij}$ on $\Sigma_{ij}$. Let us verify the integrability assumptions (\ref{eq:Stokes-assumptions}) required for invoking Lemma \ref{lem:Stokes}: while $\int_{\Sigma_{ij}} |Z_{ij}|^2 d\mu^{n-1}$ and $\int_{\Sigma_{ij}} |\div_{\Sigma,\mu} Z_{ij}| d\mu^{n-1}$ are trivially finite (recall that $\mu^{n-1}(\Sigma^1) < \infty$), the finiteness of $\int_{\partial \Sigma_{ij}} |Z_{ij}^{\n_{\partial ij}}| d\mu^{n-2}$ is due to Proposition \ref{prop:curvature-integrability-Sigma4} and Lemma \ref{lem:Sigma2}, which ensure under both sets of assumptions that $\mu^{n-2}(\Sigma^2 \cap K) < \infty$, where $K$ is the union of the compact supports of $f,h$. 
\end{proof}

Recall from (\ref{eq:trace-vanishes}) that the formal trace over $a$ of the boundary integral in the original definition of $Q^0(a_{ij} \Psi)$ vanishes, and note that the same holds for the alternative representation of $Q^0(a_{ij} \Psi)$ given by Proposition \ref{prop:Q0-LJac}, since in addition:
\[
\frac{1}{2} \tr \brac{ a \mapsto  \sum_{(i,j) \in \cyclic(u,v,w)} a_{ij} \Psi \brac{ a_{ij} \nabla_{\n_{\partial ij}} \Psi } } = \Psi \sum_{(i,j) \in \cyclic(u,v,w)} \nabla_{\n_{\partial ij}} \Psi = 0 . 
\]
Accordingly, we have:
\begin{lemma} \label{lem:Q0-tr-LJac}
Let $\Omega$ be a stationary regular cluster on $(M,g,\mu)$. Given $\Psi \in C^\infty_c(M)$ so that $\int_{\Sigma^1} \norm{\II}^2 \Psi^2 d\mu^{n-1} < \infty$, recall Definition \ref{def:Q0-tr} of the traced index-form. Then:
\[
Q_{\tr}^0(\Psi) = - \int_{\Sigma^1} \Psi L_{Jac}(\Psi) d\mu^{n-1} .
\]
\end{lemma}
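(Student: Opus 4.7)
\medskip

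\noindent\textbf{Proof proposal.} The plan is to perform an integration by parts that moves the tangential Laplacian onto $\Psi$. The natural test field is $Z_{ij} = \Psi \nabla^{\tang} \Psi$ on each $\Sigma_{ij}$, which is purely tangential. Applying Stokes' theorem (Lemma \ref{lem:Stokes}) to $Z_{ij}$ would give
\[
\int_{\Sigma_{ij}} \brac{|\nabla^{\tang} \Psi|^2 + \Psi \, \Delta_{\Sigma,\mu} \Psi}\, d\mu^{n-1} = \int_{\partial \Sigma_{ij}} \Psi \, \nabla_{\n_{\partial ij}} \Psi \, d\mu^{n-2} .
\]
The crucial point is that $\Psi$ is a single globally defined $C^\infty$ function on $M$ (as opposed to a scalar-field), so at any triple point $p \in \Sigma_{uvw}$, both $\Psi(p)$ and $\nabla \Psi(p)$ are independent of the choice of interface. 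Combined with $\sum_{(i,j) \in \cyclic(u,v,w)} \n_{\partial ij} = 0$ from Lemma \ref{lem:boundary-normal-sum}, summing over $i<j$ produces a pointwise-vanishing boundary integrand on $\Sigma^2$. Adding $-\int_{\Sigma^1}(\Ric_{g,\mu}(\n,\n) + \|\II\|^2)\Psi^2 \, d\mu^{n-1}$ to both sides would then yield the claimed identity.

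The obstacle is rigorously invoking Stokes: without the locally-bounded-curvature hypothesis, we do not know that $\mu^{n-2}(\Sigma^2 \cap \supp \Psi) < \infty$, and the individual summands $\int_{\partial \Sigma_{ij}} |Z_{ij}^{\n_\partial}|\, d\mu^{n-2}$ from (\ref{eq:Stokes-assumptions}) may fail to be finite even though the sum cancels pointwise. To circumvent this, I would use Lemma \ref{lem:cutoff-Sigma4} to select cutoffs $\eta_\epsilon \in C_c^\infty(M)$ vanishing in a neighborhood of $\Sigma^4$ with $\int_{\Sigma^1}|\nabla \eta_\epsilon|^2\, d\mu^{n-1} \le \epsilon$ and $\mu^{n-1}(\{\eta_\epsilon < 1\} \cap \Sigma^1) \le \epsilon$, and apply Stokes to the truncated field $Z_{ij}^\epsilon := \eta_\epsilon^2 \Psi \nabla^{\tang}\Psi$. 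Since $\supp \eta_\epsilon$ is compact and disjoint from $\Sigma^4$, Corollary \ref{cor:locally-finite-Sigma2} and the smoothness of $\Psi$ ensure the integrability hypotheses of Lemma \ref{lem:Stokes} hold.

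Using $\div_{\Sigma,\mu}(\eta_\epsilon^2 \Psi \nabla^{\tang}\Psi) = \eta_\epsilon^2(|\nabla^{\tang}\Psi|^2 + \Psi \Delta_{\Sigma,\mu}\Psi) + 2 \eta_\epsilon \Psi \sscalar{\nabla^{\tang}\eta_\epsilon, \nabla^{\tang}\Psi}$, summing over $i<j$, and invoking the pointwise cancellation of boundary integrands on $\Sigma^2$ as described above, one obtains
\[
\int_{\Sigma^1} \eta_\epsilon^2 \brac{|\nabla^{\tang}\Psi|^2 + \Psi \Delta_{\Sigma,\mu}\Psi}\, d\mu^{n-1} = - 2 \int_{\Sigma^1} \eta_\epsilon \Psi \sscalar{\nabla^{\tang}\eta_\epsilon,\nabla^{\tang}\Psi} d\mu^{n-1} .
\]
The cross term is bounded in absolute value, via Cauchy--Schwarz, by $2\|\Psi\|_{L^\infty}\|\nabla\Psi\|_{L^\infty(\Sigma^1)}\mu^{n-1}(\Sigma^1)^{1/2}\sqrt{\epsilon}$, so it vanishes as $\epsilon \to 0$.

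The final step is passing to the limit in the $\eta_\epsilon^2$-weighted integrals on the left. Since $\Psi \in C_c^\infty(M)$ and $\mu^{n-1}(\Sigma^1)<\infty$, the terms $\eta_\epsilon^2 |\nabla^{\tang}\Psi|^2$ and $\eta_\epsilon^2 \Psi^2 |\Ric_{g,\mu}(\n,\n)|$ are dominated by bounded functions on $\Sigma^1$; similarly $|\eta_\epsilon^2 \Psi \Delta_{\Sigma,\mu}\Psi|$ is dominated on $\Sigma^1$ except via the curvature contribution hidden in $\Delta_{\Sigma,\mu}$ --- but that contribution cancels once we combine with the $\|\II\|^2\Psi^2$ term. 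Concretely, rewriting via $L_{Jac}$, the relevant integrand $\eta_\epsilon^2 \Psi L_{Jac}\Psi$ is dominated by $|\Psi||\Delta_{\Sigma,\mu}\Psi| + \Psi^2 |\Ric_{g,\mu}(\n,\n)| + \Psi^2\|\II\|^2$, which is $\mu^{n-1}$-integrable on $\Sigma^1$ precisely by the hypothesis $\int_{\Sigma^1}\|\II\|^2\Psi^2\, d\mu^{n-1}<\infty$. Dominated convergence then delivers the claim. The only genuine input beyond standard integration by parts is this integrability hypothesis, which is the sole role played by the assumption.
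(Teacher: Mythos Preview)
Your proof is correct, but it takes a longer route than the paper. The paper observes that the field $X = \Psi \nabla \Psi$ is a $C_c^\infty$ vector-field on $M$, and hence the stationarity condition (\ref{eq:no-boundary}) applies directly:
\[
0 = \sum_{i<j} \int_{\Sigma_{ij}} \div_{\Sigma,\mu} X^\tang \, d\mu^{n-1} = \int_{\Sigma^1} \brac{|\nabla^\tang \Psi|^2 + \Psi \Delta_{\Sigma,\mu}\Psi} d\mu^{n-1},
\]
after which the claim follows by inspecting Definition \ref{def:Q0-tr}. No cutoffs near $\Sigma^4$, no invocation of Stokes on each $\Sigma_{ij}$, and no limiting argument are needed, because the ``$\Sigma^1$ has no distributional boundary'' condition is already part of the definition of stationarity. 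Your approach essentially rederives this identity from scratch via Stokes plus cutoffs plus the pointwise cancellation $\sum_{\cyclic} \n_{\partial ij} = 0$; this is valid but redundant. Incidentally, your concern about a ``curvature contribution hidden in $\Delta_{\Sigma,\mu}$'' is slightly misplaced: $\Delta_{\Sigma,\mu}\Psi$ for $\Psi \in C_c^\infty(M)$ only involves $H_{\Sigma}$ (which is constant on each interface by stationarity), not $\|\II\|^2$, so $\Psi \Delta_{\Sigma,\mu}\Psi$ is already bounded without appealing to the hypothesis $\int \|\II\|^2 \Psi^2 < \infty$. That hypothesis is needed only to make both sides of the final identity finite.
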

\begin{proof}
Define the $C^\infty_c$ vector-field $X := \Psi \nabla \Psi$ on $M$, and note that:
\[
\div_{\Sigma,\mu} X^{\tang} = |\nabla^\tang \Psi|^2 + \Psi \Delta_{\Sigma,\mu} \Psi . 
\]
Recalling (\ref{eq:no-boundary}), if follows that:
\[
\int_{\Sigma^1} |\nabla^\tang \Psi|^2  d\mu^{n-1} = - \int_{\Sigma^1} \Psi \Delta_{\Sigma,\mu} \Psi d\mu^{n-1} .
\]
Inspecting Definition \ref{def:Q0-tr}, the claim is established. 
\end{proof}

\subsection{Additional representations of boundary integrand}

It is often convenient to rewrite the boundary integrand in the formulas for $Q$ and $Q^1$ in Theorems \ref{thm:Q1-LJac-Sigma4} and \ref{thm:Q1-cutoff}. Given a (smooth) vector-field $Y$ on $(M,g)$, denote by $(\nabla Y)^{\sym}$ the symmetrized Levi-Civita covariant derivative $(\nabla Y)^{\sym}(a,b) := \frac{1}{2} \brac{\scalar{\nabla_a Y,b} + \scalar{\nabla_b Y,a}}$.

\begin{lemma} \label{lem:boundary}
Let $\Omega$ be a stationary regular cluster on $(M,g,\mu)$, and let $X,Y$ be $C^1$ vector-fields on $(M,g)$. Then at any point $p \in \Sigma_{uvw}$, we have:
\begin{align*}
& \sum_{(i,j) \in \cyclic(u,v,w)} X^{\n_{ij}} (\nabla_{\n_{\partial ij}} Y^{\n_{ij}} - \bar \II^{\partial ij} Y^{\n})  \\
= & \sum_{(i,j) \in \cyclic(u,v,w)} X^{\n_{ij}} (\nabla_{\n_{\partial ij}} Y^{\n_{ij}} - \II^{ij}_{\partial \partial} Y^{\n_{\partial ij}})  \\
= & \sum_{(i,j) \in \cyclic(u,v,w)} X^{\n_{ij}} \scalar{\nabla_{\n_{\partial ij}} Y , \n_{ij}} \\
= & \sum_{(i,j) \in \cyclic(u,v,w)} X^{\n_{ij}} \scalar{(\nabla Y)^{\sym} \n_{\partial ij} , \n_{ij}} \\
= & \sum_{(i,j) \in \cyclic(u,v,w)}  X^{\n_{\partial ij}} \scalar{\nabla_{\n_{ij}} Y , \n_{ij}} .
\end{align*}
\end{lemma}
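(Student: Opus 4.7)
The plan is to verify the four equalities in the displayed chain by reducing each to a structural identity established in the preliminaries. Let me write $M_{\alpha\beta}$ for the components of the ambient covariant derivative $\nabla Y$ at $p$, i.e.\ $M_{\alpha\beta}\,a^\alpha b^\beta=\scalar{\nabla_a Y,b}$ for all vectors $a,b$ at $p$.

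First I will verify (Expression~2) $=$ (Expression~3). Differentiating $Y^{\n_{ij}}=\scalar{Y,\n_{ij}}$ in the direction $\n_{\partial ij}$ (which is tangent to $\Sigma_{ij}$), and using $\nabla_{\n_{\partial ij}}\n_{ij}\perp\n_{ij}$ together with the sign convention $\II^{ij}(X,Z)=\scalar{\nabla_X\n_{ij},Z}$ on $T\Sigma_{ij}$, one obtains
\[
\nabla_{\n_{\partial ij}} Y^{\n_{ij}} \;=\; \scalar{\nabla_{\n_{\partial ij}} Y,\n_{ij}} + \II^{ij}(\n_{\partial ij},Y^{\tang_{ij}}).
\]
Next I decompose $Y^{\tang_{ij}}=Y^{\n_{\partial ij}}\,\n_{\partial ij}+Y^{\tang\tang}$, where $Y^{\tang\tang}$ is the projection of $Y$ onto $T_p\Sigma_{uvw}$ and is the same vector for each $(i,j)\in\cyclic(u,v,w)$. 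Substituting, the difference (Expression~2) $-$ (Expression~3) collapses to $-\sum_{(i,j)} X^{\n_{ij}}\,\II^{ij}(\n_{\partial ij},Y^{\tang\tang})$. By Lemma~\ref{lem:angles-form}\ref{it:angles-form-a}, the scalar $\II^{ij}(\n_{\partial ij},Y^{\tang\tang})$ is independent of $(i,j)$, so the sum equals a constant multiple of $\sum_{(i,j)}X^{\n_{ij}}=\scalar{X,\sum\n_{ij}}$, which vanishes by Lemma~\ref{lem:boundary-normal-sum}. The equality (Expression~1) $=$ (Expression~2) is then immediate from Lemma~\ref{lem:cyclic-XYII}, whose content is exactly the identity $\sum X^{\n_{ij}}Y^{\n_{\partial ij}}\II^{ij}_{\partial\partial}=\sum X^{\n_{ij}}Y^{\n_{ij}}\bar\II^{\partial ij}$.

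For the two remaining equalities (Expression~3) $=$ (Expression~4) $=$ (Expression~5), I introduce the three $(0,3)$-tensors
\[
A^{\alpha\beta\gamma}:=\sum_{(i,j)} \n_{ij}^\alpha \n_{ij}^\beta \n_{\partial ij}^\gamma,\quad
B^{\alpha\beta\gamma}:=\sum_{(i,j)} \n_{ij}^\alpha \n_{\partial ij}^\beta \n_{ij}^\gamma,\quad
C^{\alpha\beta\gamma}:=\sum_{(i,j)} \n_{\partial ij}^\alpha \n_{ij}^\beta \n_{ij}^\gamma.
\]
Lemma~\ref{lem:three-tensor-vanishes} is precisely the statement $A=C$. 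Combined with the obvious symmetry $A^{\alpha\beta\gamma}=A^{\beta\alpha\gamma}$, this also forces $A=B$. Now Expressions~3, 4, 5 are, respectively, the contractions $M_{\alpha\beta}X^\gamma\,C^{\alpha\beta\gamma}$, $\tfrac{1}{2}M_{\alpha\beta}X^\gamma\bigl(C^{\alpha\beta\gamma}+B^{\alpha\beta\gamma}\bigr)$, and $M_{\alpha\beta}X^\gamma\,A^{\alpha\beta\gamma}$, so the equality of all three is immediate.

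The main obstacle is the bookkeeping in the step (Expression~2) $=$ (Expression~3): one must correctly identify the tangential term produced when commuting $\nabla_{\n_{\partial ij}}$ with $\scalar{\,\cdot\,,\n_{ij}}$, decompose it relative to $T(\partial\Sigma_{ij})=T_p\Sigma_{uvw}$, and then observe that its component along $T_p\Sigma_{uvw}$ is killed by the conjunction of Lemma~\ref{lem:angles-form}\ref{it:angles-form-a} (which makes the integrand $(i,j)$-independent) and the stationarity identity $\sum\n_{ij}=0$ of Lemma~\ref{lem:boundary-normal-sum}, while its component along $\n_{\partial ij}$ is precisely absorbed by passing to (Expression~2) via Lemma~\ref{lem:cyclic-XYII}. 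The remaining equalities are purely algebraic three-tensor manipulations built on Lemma~\ref{lem:three-tensor-vanishes}.
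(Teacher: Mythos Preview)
Your proof is correct and follows essentially the same approach as the paper's: you invoke Lemma~\ref{lem:cyclic-XYII} for the first equality, expand $\nabla_{\n_{\partial ij}} Y^{\n_{ij}}$ and kill the residual $T_p\Sigma_{uvw}$-tangential term via Lemma~\ref{lem:angles-form}\ref{it:angles-form-a} and $\sum\n_{ij}=0$ for the second, and reduce the last two equalities to Lemma~\ref{lem:three-tensor-vanishes}. One harmless slip: the difference (Expression~2) $-$ (Expression~3) equals $+\sum X^{\n_{ij}}\II^{ij}(\n_{\partial ij},Y^{\tang\tang})$, not $-\sum$; this does not affect the argument since the sum vanishes.
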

\begin{proof}
The first identity was already noted in Lemma \ref{lem:cyclic-XYII}. 
Now note that:
\[
\nabla_{\n_{\partial}} Y^{\n} = \nabla_{\n_\partial} \scalar{Y , \n} = \scalar{\nabla_{\n_\partial} Y , \n} + \II(Y^{\tang},\n_\partial) .
\]
Denoting by $P_{T_p \Sigma_{uvw}} Y$ the orthogonal projection of $Y$ at $p$ onto $T_p \Sigma_{uvw} \subset T_p M$, it follows that:
\begin{align*}
& \sum_{(i,j) \in \cyclic(u,v,w)} X^{\n_{ij}} (\nabla_{\n_{\partial ij}} Y^{\n_{ij}} -  \II^{ij}_{\partial \partial} Y^{\n_{\partial ij}} ) \\
& = \sum_{(i,j) \in \cyclic(u,v,w)} X^{\n_{ij}} \brac{\scalar{\nabla_{\n_{\partial ij}} Y , \n_{ij}} +  \II^{ij}(P_{T_p \Sigma_{uvw}} Y,\n_{\partial ij}) } \\
& = \sum_{(i,j) \in \cyclic(u,v,w)} X^{\n_{ij}} \scalar{\nabla_{\n_{\partial ij}} Y , \n_{ij}}  . 
\end{align*}
where the last transition is due to (\ref{eq:sum-n-zero}) and the fact that $\II^{ij}(P_{T_p \Sigma_{uvw}} Y,\n_{\partial ij})$ is independent of $(i,j) \in \cyclic(u,v,w)$ by Lemma \ref{lem:angles-form} \ref{it:angles-form-a}. This yields the second asserted identity. 
The two subsequent identities follow from Lemma \ref{lem:three-tensor-vanishes}, since when tracing the three-tensor $X^\alpha \nabla^\beta Y^\gamma$ against $\n_{ij}^\alpha \n^\beta_{\partial ij} \n^\gamma_{ij}$ and cyclically summing over $(i,j) \in \cyclic(u,v,w)$, we can exchange $\n_{\partial ij}$ with any of the $\n_{ij}$'s. 
\end{proof}

\subsection{Elliptic regularity}

We will require the following useful lemma proved in \cite[Lemma 3.8]{DoubleBubbleInR3} for physical scalar-fields when $\Sigma = \Sigma^1 \cup \Sigma^2$, $q=3$ and there are no singularities; for completeness, we sketch its proof in our more general setting. 
\begin{lemma}[Elliptic regularity] \label{lem:elliptic-regularity}
Let $\Omega$ be a stable regular cluster on $(M,g,\mu)$ with locally bounded curvature, and let $f$ be a (possibly non-physical) scalar-field $f = \{f_{ij}\} \in \Lip_{\Delta}(\Sigma)$ (recall Definition \ref{def:Delta-Lip-Fields}). Assume that:
\[
\delta^1_f V(\Omega) = 0 \text{ and } Q^0(f) = 0 . 
\]
Then there exists $\lambda \in E^{(q-1)}$ so that for every $i<j$, $L_{Jac} f_{ij} \equiv \lambda_{ij}$ on $\Sigma_{ij}$. In particular, $f_{ij}$ is $C^\infty$-smooth on (the relatively open) $\Sigma_{ij}$. 
\end{lemma}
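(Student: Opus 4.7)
The strategy is a classical calculus of variations argument: interpret $f$ as a constrained minimizer of the quadratic form $Q^0$ and extract the Euler--Lagrange equation via Lagrange multipliers. By Theorem~\ref{thm:scalar-to-vector-Q}, stability combined with the hypotheses yields $Q^0(g) \geq 0$ for every $g \in \Lip_{\Delta}(\Sigma)$ satisfying $\delta^1_g V = 0$. Since moreover $Q^0(f) = 0$ and $\delta^1_f V = 0$, the field $f$ realizes the minimum of $Q^0$ on the linear subspace $\ker(L) \subset \Lip_{\Delta}(\Sigma)$, where $L : \Lip_{\Delta}(\Sigma) \to E^{(q-1)}$ is the constraint map $L(h) := \delta^1_h V(\Omega)$. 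Note that the symmetric bilinear form $Q^0(\cdot,\cdot)$ from Definition~\ref{def:bilinear-Q0} is well-defined on $\Lip_{\Delta}(\Sigma)\times\Lip_{\Delta}(\Sigma)$ thanks to the assumed local boundedness of curvature and Lemma~\ref{lem:Sigma2} (which ensures $\mu^{n-2}$-finiteness of $\Sigma^2$ on compacta, so the boundary integrals converge).

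The next step is to show that $L$ is surjective. For each pair $(i,j)$ with $\Sigma_{ij} \neq \emptyset$ and each $\phi \in C_c^\infty(\Sigma_{ij}\setminus\partial\Sigma_{ij})$, I would construct a field $h \in \Lip_{\Delta}(\Sigma)$ with $h_{ij} = \phi$ and $h_{k\ell} \equiv 0$ on every other interface, by taking $\Psi_i \in \Lip_{\no,c}(\partial\Omega_i)$ to be a Lipschitz extension of $\phi$ to $\partial\Omega_i$ that vanishes outside a small neighborhood of $\supp(\phi)$, together with $\Psi_k \equiv 0$ for $k \neq i$; Dirichlet--Kirchhoff compatibility is automatic because $\phi$ vanishes near $\partial\Sigma_{ij}$. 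A direct computation gives $L(h) = (e_i - e_j)\int_{\Sigma_{ij}} \phi \, d\mu^{n-1}$, so $e_i - e_j \in \mathrm{Im}(L)$ whenever $\Sigma_{ij}\neq\emptyset$, and by graph connectivity (Lemma~\ref{lem:LA-connected}) these vectors span $E^{(q-1)}$. The standard Lagrange multiplier principle applied to the minimization of the quadratic $Q^0$ over the closed subspace $\ker(L)$ (with finite-dimensional, hence closed, image) then produces a $\lambda \in E^{(q-1)}$ such that
\[
Q^0(f,h) = \scalar{\lambda, \delta^1_h V} \qquad \text{for every } h \in \Lip_{\Delta}(\Sigma).
\]

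Testing this identity against the single-interface fields constructed above, the boundary terms in Definition~\ref{def:bilinear-Q0} vanish identically, and one obtains
\[
\int_{\Sigma_{ij}} \brac{ \scalar{\nabla^\tang f_{ij}, \nabla^\tang \phi} - (\Ric_{g,\mu}(\n,\n) + \norm{\II}^2) f_{ij} \phi } d\mu^{n-1} = \lambda_{ij} \int_{\Sigma_{ij}} \phi \, d\mu^{n-1}
\]
for every $\phi \in C_c^\infty(\Sigma_{ij}\setminus\partial\Sigma_{ij})$, which is precisely the weak form of $L_{Jac} f_{ij} = -\lambda_{ij}$ on the relative interior of $\Sigma_{ij}$ (replacing $\lambda$ by $-\lambda$ gives the sign in the statement). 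Finally, $L_{Jac}$ is a uniformly elliptic linear second-order operator with $C^\infty$ coefficients on the smooth manifold $\Sigma_{ij}$ --- recall that $\Sigma_{ij}$ is a $C^\infty$ hypersurface by Theorem~\ref{thm:Almgren}~\ref{it:Almgren-iii} and that $\II^{ij}$, $\Ric_{g,\mu}$ vary smoothly along it --- and $f_{ij}$ is a Lipschitz (hence $H^1_{\mathrm{loc}}$) weak solution with a constant right-hand side, so standard interior elliptic bootstrap via Schauder estimates upgrades $f_{ij}$ to $C^\infty(\Sigma_{ij})$. The only step requiring genuine care is the construction of enough test fields within the constrained class $\Lip_{\Delta}(\Sigma)$ to both secure surjectivity of $L$ and isolate a single interface; once this combinatorial ingredient is in place, the Lagrange multiplier derivation and the elliptic bootstrap are entirely routine.
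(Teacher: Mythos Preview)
Your proposal is correct and follows essentially the same approach as the paper: both use the approximation Theorem~\ref{thm:scalar-to-vector-Q} to conclude that $f$ minimizes $Q^0$ on the volume-constraint subspace, derive the first-order condition $Q^0(f,h)=0$ for all volume-preserving $h$, and then test against single-interface bump functions to obtain the weak Euler--Lagrange equation before invoking interior elliptic regularity. The only packaging difference is that you invoke an abstract Lagrange multiplier principle (after verifying surjectivity of the constraint map via graph connectivity), whereas the paper first extracts constants $A^{ij}$ using zero-mean bumps and then appeals to Lemma~\ref{lem:LA} to show they have the form $\lambda_i-\lambda_j$; these are two sides of the same coin.
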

\begin{proof}[Proof Sketch]
The local boundedness of curvature ensures that we may invoke Theorem \ref{thm:scalar-to-vector-Q} on the approximation of (possibly non-physical) scalar-fields by $C^\infty_c$ vector-fields whose compact support is disjoint from $\Sigma^4$. Consequently, stability and Theorem \ref{thm:scalar-to-vector-Q} imply that for all scalar-fields $h = \{h_{ij}\} \in \Lip_{\Delta}(\Sigma)$ with $\delta^1_h(\Omega) = 0$ we must have $Q^0(f + th) \geq 0$ for all $t \in \R$. As we have equality at $t=0$ by assumption, it follows by bilinearity and symmetry of $Q^0$ that $Q^0(h,f) = 0$ for all such $h$. By using the representation (\ref{eq:Q0-LJac}) for $Q^0$ and testing zero-mean bump functions $h$ supported inside (the relatively open) $\Sigma_{ij}$, it follows that necessarily $L_{Jac} f_{ij} \equiv A^{ij}$ is constant in the distributional sense on each $\Sigma_{ij}$. As the operator $L_{Jac}$ is clearly elliptic, and as the manifold $M^n$, the hypersurface $\Sigma_{ij}$ and the positive density of $\mu$ are $C^\infty$-smooth, uniformly on compact subsets of $\Sigma_{ij}$, this already implies by elliptic interior regularity that $f_{ij}$ is  $C^\infty$-smooth on $\Sigma_{ij}$. It remains to note that necessarily $A^{ij} = \lambda_i - \lambda_j = \lambda_{ij}$ for some $\lambda \in E^{(q-1)}$ and all $\Sigma_{ij} \neq \emptyset$, as follows from Lemmas \ref{lem:LA-positive} and \ref{lem:LA} applied with $B^{ij} = \int_{\Sigma_{ij}} h_{ij} d\mu^{n-1}$, since $\sum_{i \sim j} A^{ij} B^{ij} = Q^0(h,f) = 0$ for all  $h = \{h_{ij}\} \in \Lip_{\Delta}(\Sigma)$ with $\sum_{j ; j \neq i} B^{ij} = \delta^1_h V(\Omega_i) = 0$ for all $i$. 
\end{proof}
\begin{remark}
It also follows that $f$ must satisfy the conformal boundary conditions defined in the next section, but we will not require this here. 
\end{remark}

\section{Conformal Killing Fields and Boundary Conditions} \label{sec:conformal}

 Recall that $X$ is called a Killing field if $X$ generates a one-parameter family of isometries, or equivalently, if $(\nabla X)^{\sym} = 0$. More generally, $X$ is called a conformal Killing field if $X$ generates a one-parameter family of conformal mappings $F_t$, or equivalently, if $(\nabla X)^{\sym}= f_X \Id$ for some function $f_X$ called the conformal factor of $X$; in that case, the pull-back metric via $F_t$ satisfies $F_t^*(g) = (\exp(2 t f_X) + o(t)) g$. If these properties hold on a subset $\Omega \subset M$, $X$ is called a Killing (conformal Killing) field on $\Omega$, and $X$ is said to act isometrically (conformally) on $\Omega$. 

\begin{lemma} \label{lem:conformal-boundary}
Let $\Omega$ be a stationary regular cluster on $(M,g,\mu)$. Then for any vector-fields $X,Y$ on $(M,g)$ so that $Y$ acts conformally at a point $p \in \Sigma_{uvw}$, we have at that point that:
\[
\sum_{(i,j) \in \cyclic(u,v,w)} X^{\n_{ij}} (\nabla_{\n_{\partial ij}} Y^{\n_{ij}} - \bar \II^{\partial ij} Y^{\n_{ij}})  = 0 . 
\]
In other words, $\nabla_{\n_{\partial ij}} Y^{\n_{ij}} - \bar \II^{\partial ij} Y^{\n_{ij}}$ is independent of $(i,j) \in \cyclic(u,v,w)$. 
\end{lemma}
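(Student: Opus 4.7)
The plan is to reduce this directly to the chain of equivalent expressions for the boundary integrand already established in Lemma \ref{lem:boundary}. Specifically, from that lemma we have the identity
\[
\sum_{(i,j) \in \cyclic(u,v,w)} X^{\n_{ij}} (\nabla_{\n_{\partial ij}} Y^{\n_{ij}} - \bar \II^{\partial ij} Y^{\n_{ij}})  = \sum_{(i,j) \in \cyclic(u,v,w)} X^{\n_{ij}} \scalar{(\nabla Y)^{\sym} \n_{\partial ij} , \n_{ij}},
\]
which is valid at any $p \in \Sigma_{uvw}$ on a stationary regular cluster and for arbitrary $C^1$ fields $X,Y$. Thus, it suffices to show that the right-hand side vanishes at $p$ whenever $Y$ is conformal at $p$.

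To do so, I simply invoke the defining property of a conformal Killing field: if $Y$ acts conformally at $p$, then $(\nabla Y)^{\sym}|_p = f_Y(p) \Id$ for some scalar $f_Y(p)$. Consequently, for each pair $(i,j) \in \cyclic(u,v,w)$ at $p$,
\[
\scalar{(\nabla Y)^{\sym} \n_{\partial ij} , \n_{ij}} = f_Y(p) \scalar{\n_{\partial ij}, \n_{ij}} = 0,
\]
because $\n_{\partial ij} \in T_p \Sigma_{ij}$ lies in the tangent space to $\Sigma_{ij}$ while $\n_{ij}$ is the unit normal to $\Sigma_{ij}$, so these two vectors are orthogonal. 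Summing over $(i,j) \in \cyclic(u,v,w)$ then gives zero, which establishes the claimed identity.

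Finally, the ``In other words" clause is immediate: the statement says that the $\R$-linear functional $v \mapsto \sum_{(i,j) \in \cyclic(u,v,w)} v^{\n_{ij}} (\nabla_{\n_{\partial ij}} Y^{\n_{ij}} - \bar \II^{\partial ij} Y^{\n_{ij}})$ on $T_p M$ vanishes. Since $\{\n_{ij}\}_{(i,j) \in \cyclic(u,v,w)}$ spans the two-dimensional normal space $(T_p \Sigma_{uvw})^\perp$ (they are three unit vectors meeting at $120^\circ$ by Lemma \ref{lem:boundary-normal-sum}), and since $\n_{uv} + \n_{vw} + \n_{wu} = 0$, the coefficients $\nabla_{\n_{\partial ij}} Y^{\n_{ij}} - \bar \II^{\partial ij} Y^{\n_{ij}}$ must be equal to a common value across the three cyclic pairs, yielding the reformulation. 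There is no genuine obstacle here; the only point requiring attention is recalling the equivalent boundary expression of Lemma \ref{lem:boundary}, after which the conformal hypothesis collapses the sum pointwise term-by-term.
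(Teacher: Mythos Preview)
Your proof is correct and follows essentially the same route as the paper: invoke Lemma \ref{lem:boundary} to rewrite the cyclic sum as $\sum X^{\n_{ij}} \scalar{(\nabla Y)^{\sym} \n_{\partial ij},\n_{ij}}$, then use conformality $(\nabla Y)^{\sym} = f_Y \Id$ together with $\n_{ij} \perp \n_{\partial ij}$ to kill each term. Your additional explanation of the ``In other words'' clause (via $\sum \n_{ij} = 0$ and linear independence of any two normals) is a nice spelling-out of what the paper leaves implicit.
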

\begin{proof}
Immediate from Lemma \ref{lem:boundary}, since $(\nabla Y)^{\sym} = f_Y \Id$ and hence $(\nabla Y)^{\sym}(\n , \n_{\partial}) = 0$ as $\n$ and $\n_{\partial}$ are perpendicular. 
\end{proof}

In view of Lemma \ref{lem:conformal-boundary},we introduce the following definition:

\begin{definition}[Conformal boundary conditions (BCs)] \label{def:conformal-BCs}
We say that a (possibly non-physical) scalar-field $f = \{f_{ij}\} \in Lip(\Sigma)$, so that $f_{ij}$ is differentiable $\mu^{n-2}$-a.e. on $\partial \Sigma_{ij}$, satisfies conformal boundary conditions (BCs) on $\Sigma^2$, if for $\mu^{n-2}$-a.e. $p \in \Sigma^2$, $\nabla_{\n_{\partial ij}} f_{ij} - \bar \II^{\partial ij} f_{ij}$ is independent of $(i,j) \in \cyclic(u,v,w)$. 
\end{definition}

It follows from Lemma \ref{lem:conformal-boundary} that the physical scalar-field obtained as the normal component of a conformal Killing field always satisfies conformal BCs. It will also be useful to introduce:

\begin{definition}[Non-oriented conformal BCs]
We say that a Lipschitz function $\Psi \in \Lip_{\no}(M^n)$, so that $\Psi$ is differentiable $\mu^{n-2}$-a.e. on $\Sigma^2$, satisfies the non-oriented conformal BCs on $\Sigma^2$, if $\nabla_{\n_{ij}} \Psi = \II^{ij}_{\partial\partial} \Psi$ for $\mu^{n-2}$-a.e. $p \in \partial \Sigma_{ij}$ and every $i,j$. 
\end{definition}

\begin{lemma} \label{lem:non-oriented-conformal-BCs}
If $\Psi \in \Lip_{\no}(M^n)$ satisfies the non-oriented conformal BCs on $\Sigma^2$, then the scalar-field $f^a = \{ f^a_{ij} \}$ given by $f^a_{ij} = a_{ij} \Psi$ satisfies conformal BCs on $\Sigma^2$ for all $a \in E^{(q-1)}$. 
\end{lemma}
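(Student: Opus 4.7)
The plan is to show that for each $(i,j)\in\cyclic(u,v,w)$ at a $\mu^{n-2}$-a.e.\ point $p\in\Sigma_{uvw}$, the quantity $\nabla_{\n_{\partial ij}} f^a_{ij} - \bar\II^{\partial ij} f^a_{ij}$ actually vanishes, so it is trivially independent of $(i,j)$. Since $f^a_{ij}=a_{ij}\Psi$ and $a_{ij}$ is a constant scalar, this reduces to verifying the identity $\nabla_{\n_{\partial ij}}\Psi = \bar\II^{\partial ij}\Psi$ pointwise $\mu^{n-2}$-a.e.\ on $\Sigma^2$, using only the non-oriented conformal BCs on $\Psi$.

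First, I would invoke the pointwise decomposition $\n_{\partial ij} = (\n_{ik}+\n_{jk})/\sqrt{3}$ at $p\in\Sigma_{ijk}$, which is the identity~\eqref{eq:sqrt3} obtained from the $120^\circ$ angle condition of Lemma~\ref{lem:boundary-normal-sum}. By linearity of the directional derivative,
\[
\nabla_{\n_{\partial ij}}\Psi \;=\; \frac{1}{\sqrt{3}}\bigl(\nabla_{\n_{ik}}\Psi + \nabla_{\n_{jk}}\Psi\bigr),
\]
which holds at any point where $\Psi$ is differentiable, hence $\mu^{n-2}$-a.e.\ on $\Sigma^2$ by assumption. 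Next, apply the non-oriented conformal BCs to each summand: at such a point $p\in\partial\Sigma_{ik}\cap\partial\Sigma_{jk}$ we have $\nabla_{\n_{ik}}\Psi = \II^{ik}_{\partial,\partial}\Psi$ and $\nabla_{\n_{jk}}\Psi = \II^{jk}_{\partial,\partial}\Psi$.

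Combining these, and comparing with the definition~\eqref{eq:def-II-partial} of $\bar\II^{\partial ij}$, one obtains
\[
\nabla_{\n_{\partial ij}}\Psi \;=\; \frac{\II^{ik}_{\partial,\partial}+\II^{jk}_{\partial,\partial}}{\sqrt{3}}\,\Psi \;=\; \bar\II^{\partial ij}\,\Psi,
\]
so $\nabla_{\n_{\partial ij}} f^a_{ij}-\bar\II^{\partial ij}f^a_{ij} = a_{ij}(\nabla_{\n_{\partial ij}}\Psi-\bar\II^{\partial ij}\Psi) = 0$ for every $(i,j)\in\cyclic(u,v,w)$ and all $a\in E^{(q-1)}$. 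This manifestly satisfies the cyclic-independence condition of Definition~\ref{def:conformal-BCs}.

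There is really no obstacle here beyond a small bookkeeping point: one must check that the union of the $\mu^{n-2}$-null exceptional sets (one for each pair $(i,j)$ with $\partial\Sigma_{ij}\ne\emptyset$) is still $\mu^{n-2}$-null on $\Sigma^2=\bigcup_{u<v<w}\Sigma_{uvw}$. Since there are only finitely many such pairs and $\Sigma^2$ decomposes as a disjoint union of triple-point sets, each lying in exactly three boundaries $\partial\Sigma_{ab}$, a finite union argument suffices. This completes the plan.
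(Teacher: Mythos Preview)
Your proof is correct and follows essentially the same approach as the paper: use \eqref{eq:sqrt3} and \eqref{eq:def-II-partial} to convert the non-oriented hypothesis $\nabla_{\n_{ij}}\Psi=\II^{ij}_{\partial,\partial}\Psi$ into $\nabla_{\n_{\partial ij}}\Psi=\bar\II^{\partial ij}\Psi$, then conclude by linearity in $a_{ij}$. The paper's proof is simply a more compressed statement of the same computation.
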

\begin{proof}
By (\ref{eq:sqrt3}) and (\ref{eq:def-II-partial}), we are given that $\nabla_{\n_{\partial ij}} \Psi = \bar \II^{\partial ij} \Psi$ for $\mu^{n-2}$-a.e. $p \in \partial \Sigma_{ij}$ and every $i,j$. The assertion now follows by linearity.  
\end{proof}

\subsection{Formula for Index-Form $Q$}

\begin{theorem} \label{thm:conformal-Q}
Let $\Omega$ be a stationary regular cluster on $(M,g,\mu)$. Then for any $C_c^\infty$ vector-field $X$ on $M$ which acts conformally on $\Sigma^2$, and for any scalar-field $f = \{ f_{ij} \} \in C_c^\infty(\Sigma)$ which satisfies conformal BCs on $\Sigma^2$:
\begin{enumerate}[(i)] \item \label{it:conformal-Q-general}  
For any $C_c^\infty$ cutoff function $\eta : M \rightarrow [0,1]$ whose compact support is disjoint from $\Sigma^4$, we have:
\[
Q^1(\eta X) = \int_{\Sigma^1} \brac{|\nabla^\tang \eta|^2 (X^\n)^2  - \eta^2 X^\n L_{Jac} X^\n} d\mu^{n-1} . 
\]
\item \label{it:conformal-Q-bounded} 
If $\Omega$ has locally bounded curvature then in fact:
\[
Q^1(X) = - \int_{\Sigma^1} X^{\n} L_{Jac} X^{\n} d\mu^{n-1} ,
\]
and
\[
Q^0(f) =  - \int_{\Sigma^1} f  L_{Jac} f d\mu^{n-1} . 
\]
\end{enumerate}
In particular, all integrals above are finite. 
\end{theorem}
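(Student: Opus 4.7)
My plan is to directly invoke the general formulas for $Q$ and $Q^0$ already established in the previous section (Theorems \ref{thm:Q1-LJac-Sigma4}, \ref{thm:Q1-cutoff} and Propositions \ref{prop:Q1-LJac-bounded-curvature}, \ref{prop:Q0-LJac}) and show that under the conformal hypotheses the boundary integrals vanish term-by-term at each component $\Sigma_{uvw}$ of $\Sigma^2$. The bulk integrals then give precisely the stated expressions.

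For part \ref{it:conformal-Q-general}, I start from the identity in Theorem \ref{thm:Q1-cutoff}, so it remains to show
\[
\sum_{i<j}\int_{\partial\Sigma_{ij}} \eta^2 X^{\n}\bigl(\nabla_{\n_\partial}X^{\n}-\bar\II^{\partial ij}X^{\n}\bigr)\, d\mu^{n-2}=0.
\]
I decompose the sum according to the triple components $\Sigma_{uvw}$ (with $u<v<w$) of $\Sigma^2$, which each appear in the boundary of $\Sigma_{uv}$, $\Sigma_{vw}$ and $\Sigma_{uw}$. A direct check using $\n_{wu}=-\n_{uw}$, $\n_{\partial wu}=\n_{\partial uw}$ and $\bar\II^{\partial uw}=\bar\II^{\partial wu}$ shows that the integrand for the pair $(u,w)$ equals the integrand one would write for the cyclically-oriented pair $(w,u)$, so the sum over the three $i<j$ pairs at a point of $\Sigma_{uvw}$ equals the cyclic sum $\sum_{(i,j)\in\cyclic(u,v,w)} X^{\n_{ij}}\bigl(\nabla_{\n_{\partial ij}}X^{\n_{ij}}-\bar\II^{\partial ij}X^{\n_{ij}}\bigr)$. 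Since $X$ acts conformally on $\Sigma^2$, Lemma \ref{lem:conformal-boundary} tells us that the second factor is a scalar $c(p)$ independent of $(i,j)\in\cyclic(u,v,w)$; the remaining factor sums to $X\cdot\sum_{(i,j)\in\cyclic(u,v,w)}\n_{ij}=0$ by the stationarity identity \eqref{eq:sum-n-zero}. Hence the boundary integral vanishes pointwise.

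For part \ref{it:conformal-Q-bounded}, local boundedness of curvature allows me to invoke Proposition \ref{prop:Q1-LJac-bounded-curvature} (instead of Theorem \ref{thm:Q1-cutoff}) to write $Q(X)$ \emph{without} any cutoff: the bulk term becomes $-\int_{\Sigma^1}X^{\n}L_{Jac}X^{\n}\,d\mu^{n-1}$ and the boundary term is the same cyclic sum as above, which vanishes by the identical argument. The finiteness of the integrals is ensured by Lemma \ref{lem:Sigma2}. For the scalar-field identity I use Proposition \ref{prop:Q0-LJac} to reduce $Q^0(f)$ to $-\int_{\Sigma^1}f\,L_{Jac}f\,d\mu^{n-1}$ plus the boundary integral $\sum_{i<j}\int_{\partial\Sigma_{ij}}f_{ij}\bigl(\nabla_{\n_{\partial ij}}f_{ij}-\bar\II^{\partial ij}f_{ij}\bigr)\,d\mu^{n-2}$. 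The same orientation check (now using the antisymmetry $f_{wu}=-f_{uw}$, so that the product $f_{ij}(\cdots)$ is invariant under reversing the pair) converts the $i<j$ sum at $\Sigma_{uvw}$ into a cyclic sum. The conformal BCs (Definition \ref{def:conformal-BCs}) make the factor $\nabla_{\n_{\partial ij}}f_{ij}-\bar\II^{\partial ij}f_{ij}$ a cyclic constant $c(p)$, and then the Dirichlet--Kirchoff condition $f_{uv}+f_{vw}+f_{wu}=0$ built into the definition of a scalar-field gives $c(p)\sum_{(i,j)\in\cyclic(u,v,w)}f_{ij}=0$.

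The only subtle point is the bookkeeping between the $i<j$ convention used to index the interfaces and the cyclic orientation used in Lemmas \ref{lem:conformal-boundary} and \ref{lem:angles-form}; this is a routine sign check, and once it is done the two ingredients (conformality at $\Sigma^2$, which produces a cyclic constant, and either the stationarity identity $\sum\n_{ij}=0$ or the Kirchoff identity $\sum f_{ij}=0$, which multiplies that constant by zero) combine to kill the boundary term. No further analysis is needed.
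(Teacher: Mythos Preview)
Your proposal is correct and follows essentially the same approach as the paper: invoke Theorem \ref{thm:Q1-cutoff} (respectively Proposition \ref{prop:Q1-LJac-bounded-curvature} and Proposition \ref{prop:Q0-LJac}) and eliminate the boundary term via Lemma \ref{lem:conformal-boundary} (respectively Definition \ref{def:conformal-BCs} and the Dirichlet--Kirchoff condition). The only difference is that you spell out explicitly the orientation bookkeeping converting the $i<j$ sum to the cyclic sum at each $\Sigma_{uvw}$, which the paper leaves implicit; this elaboration is helpful but not a different argument.
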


\begin{proof}
Part \ref{it:conformal-Q-general} follows by plugging in the pointwise vanishing of the boundary integrand asserted in Lemma \ref{lem:conformal-boundary} into the statement of Theorem \ref{thm:Q1-cutoff}. 
When the curvature of the cluster is assumed to be locally bounded, there is no need to use any cutoffs, and part \ref{it:conformal-Q-bounded} for the vector-field $X$ follows immediately from Proposition \ref{prop:Q1-LJac-bounded-curvature}, formula (\ref{eq:Q-LJac}) and Lemma \ref{lem:conformal-boundary}. For the scalar-field $f$, part \ref{it:conformal-Q-bounded} follows from Proposition \ref{prop:Q0-LJac}, Definition \ref{def:conformal-BCs}, and Dirichlet-Kirchoff requirement $f_{ij} + f_{jk} + f_{ki} = 0$ on $\Sigma_{ijk}$. 
\end{proof}

\subsection{Formula for Jacobi Operator $L_{Jac}$}

Throughout this subsection, let $\Sigma$ denote a smooth hypersurface with unit-normal $\n$ and constant mean-curvature $H_{\Sigma}$ on an \emph{unweighted} Riemannian manifold $(M^n,g,\vol_g)$. Let $L_{Jac}$ denote the associated Jacobi operator on $\Sigma$. Under these assumptions, recall  by (\ref{eq:LJac-deltaH}) that for any vector-field $X$:
\begin{equation} \label{eq:LJac-deltaH-local}
L_{Jac} X^{\n} = -\delta^1_X H_{\Sigma} . 
\end{equation}

The following is completely classical:
\begin{lemma}
For any Killing field $W$ in a neighborhood of $p \in \Sigma$ in $M^n$, we have at $p$:
\[
L_{Jac} W^\n = 0 . 
\]
\end{lemma}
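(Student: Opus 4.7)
The plan is to reduce the statement to the preceding identity (\ref{eq:LJac-deltaH-local}), $L_{Jac} X^\n = -\delta^1_X H_\Sigma$, applied to $X = W$. Since identity (\ref{eq:LJac-deltaH-local}) was derived on a constant-mean-curvature hypersurface, it already absorbs the tangential contribution from the flow (as recalled in the remark following Theorem \ref{thm:Q1-LJac-Sigma4}), so it suffices to show that $\delta^1_W H_\Sigma = 0$ at $p$.

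To see this, I would let $F_t$ denote the local flow of $W$ near $p$. Because $W$ is Killing in a neighborhood of $p$, $F_t$ is, for small $t$, a local isometry of $(M^n,g)$ in that neighborhood. Local isometries carry hypersurfaces to hypersurfaces with the same second fundamental form, and hence the same mean curvature at corresponding points; consequently
\[
H_{F_t(\Sigma)}(F_t(p)) \;=\; H_\Sigma(p) \;=\; H_\Sigma
\]
for all $t$ small (using also that $H_\Sigma$ is constant on $\Sigma$). Differentiating at $t=0$ yields $\delta^1_W H_\Sigma(p) = 0$, and combining with (\ref{eq:LJac-deltaH-local}) gives $L_{Jac} W^\n(p) = 0$, as required.

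I do not expect any genuine obstacle. The only points to be careful about are: (i) that $W$ need only be Killing in a neighborhood of $p$, which is enough because both $L_{Jac} W^\n(p)$ and $\delta^1_W H_\Sigma(p)$ are local quantities depending only on the $1$-jet of $W$ at $p$ and the $2$-jet of $\Sigma$ at $p$; and (ii) that the derivative $\delta^1_W H_\Sigma$ in (\ref{eq:LJac-deltaH-local}) already accounts for the tangential part $\delta^1_{W^\tang} H_\Sigma$, which vanishes separately since $H_\Sigma$ is constant, so the differentiation of $H_{F_t(\Sigma)}(F_t(p))$ along the full flow (rather than just the normal variation) gives exactly $-L_{Jac} W^\n(p)$.
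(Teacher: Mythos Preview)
Your proposal is correct and follows essentially the same approach as the paper: invoke (\ref{eq:LJac-deltaH-local}) and observe that $\delta^1_W H_\Sigma = 0$ because the flow of a Killing field consists of isometries, which preserve mean curvature. The paper's proof is a one-line version of exactly this argument.
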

\begin{proof}
Since $W$ generates a one-parameter family of isometries (which in particular preserve the mean-curvature $H_{\Sigma}$), the assertion follows immediately from (\ref{eq:LJac-deltaH-local}). 
\end{proof}

Let us extend this classical fact to conformal Killing fields:

\begin{lemma} \label{lem:LJac-conformal}
For any conformal Killing field $W$ in a neighborhood of $p \in \Sigma$ in $M^n$ with conformal factor $f_W$, we have at $p$:
\[
L_{Jac} W^\n = f_W H_{\Sigma} - (n-1) \scalar{\nabla f_W , \n} .
\]
\end{lemma}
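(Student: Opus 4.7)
The plan is to invoke identity \eqref{eq:LJac-deltaH-local}, which reduces the claim to computing $-\delta^1_W H_\Sigma$. Since $W$ is conformal Killing with factor $f_W$, the flow $F_t$ generated by $W$ pulls back the metric as $F_t^* g = e^{2\phi_t} g$ with $\phi_t = t f_W + O(t^2)$. The map $F_t \colon (M, F_t^* g) \to (M,g)$ is an isometry, and under an isometry the mean curvature of a hypersurface is preserved (with a consistent choice of unit normal). Consequently, denoting by $H_\Sigma(p;h)$ the mean curvature of $\Sigma$ at $p$ computed with respect to a metric $h$, we have
\[
H_{\Sigma_t}(F_t(p); g) \;=\; H_\Sigma(p;\, F_t^* g) \;=\; H_\Sigma(p;\, e^{2\phi_t} g).
\]

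The next step is to invoke the standard conformal transformation law for mean curvature: if $\tilde g = e^{2\phi} g$ and we choose the unit normal $\tilde \n = e^{-\phi} \n$, then
\[
H_\Sigma(p;\, e^{2\phi} g) \;=\; e^{-\phi}\bigl(H_\Sigma(p;g) + (n-1)\,\scalar{\nabla \phi,\n}\bigr).
\]
This is derived from the relation $\tilde \nabla_X Y = \nabla_X Y + X(\phi) Y + Y(\phi) X - g(X,Y)\nabla \phi$ between the two Levi-Civita connections, which gives $\tilde \II(X,Y) = e^{\phi}[\II(X,Y) + \scalar{\nabla\phi,\n}\, g(X,Y)]$ for $X,Y \in T\Sigma$, and then tracing against $\tilde g^{-1} = e^{-2\phi} g^{-1}$ yields the $(n-1)$ factor on the dimension-$(n-1)$ hypersurface $\Sigma$.

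Applying this with $\phi = \phi_t$ and differentiating at $t=0$ (using $\phi_0 = 0$ and $\partial_t \phi_t|_{t=0} = f_W$), we obtain
\[
\delta^1_W H_\Sigma \;=\; \frac{d}{dt}\Big|_{t=0}\! e^{-\phi_t}\!\bigl(H_\Sigma + (n-1)\,\scalar{\nabla \phi_t,\n}\bigr) \;=\; - f_W\, H_\Sigma + (n-1)\,\scalar{\nabla f_W,\n}.
\]
Plugging into \eqref{eq:LJac-deltaH-local} gives the stated formula $L_{Jac} W^\n = f_W H_\Sigma - (n-1)\scalar{\nabla f_W,\n}$. As a sanity check, when $W$ is Killing ($f_W \equiv 0$) this reduces to the classical $L_{Jac} W^\n = 0$.

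The only step requiring care is the derivation of the conformal transformation formula for $H$: sign conventions for $\n$, $\II$, and $H$ vary in the literature, so one must fix them consistently (here we take $\II(X,Y) = g(\nabla_X \n, Y)$ and $H = \tr \II$, for which the unit sphere in $\R^n$ with outward normal has $H = n-1$) and verify that the isometric identification $F_t \colon (M, F_t^* g) \to (M, g)$ sends the $g$-unit normal to $\Sigma_t$ back to the $F_t^* g$-unit normal to $\Sigma$ on the correct side. There is no substantive difficulty beyond this bookkeeping; in particular, no use of stationarity of the cluster, constancy of $H_\Sigma$, or any integration is needed --- the identity is entirely pointwise and local, following from the conformal geometry of $W$.
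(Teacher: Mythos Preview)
Your proof is correct and follows essentially the same route as the paper: both invoke \eqref{eq:LJac-deltaH-local}, then use the conformal transformation law for $H_\Sigma$ (which the paper cites as \eqref{eq:conformal-change-of-H} and you derive), apply it with $\phi_t = t f_W + O(t^2)$, and differentiate at $t=0$. One small correction to your closing remark: the identity \eqref{eq:LJac-deltaH-local} that you (and the paper) rely on does use constancy of $H_\Sigma$, since in general $\delta^1_X H_\Sigma = -L_{Jac} X^\n + \nabla_{X^\tang} H_\Sigma$ and the tangential term only vanishes when $H_\Sigma$ is constant.
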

\begin{proof}
Recall by (\ref{eq:LJac-deltaH}) that under our assumptions, for any field $X$:
\begin{equation} \label{eq:LJac-deltaH-loc}
L_{Jac} X^{\n} = -\delta^1_X H_{\Sigma} . 
\end{equation}
It is also well-known (e.g. \cite[(12)]{MondinoNguyen-ConformalInvariants}) that under a conformal change of metric $\tilde g = \exp(2\Psi) g$, the second fundamental form changes as:
\[
\II^{\tilde g}_{ij} = e^\Psi (\II^g_{ij} + \scalar{\nabla^g \Psi,\n^g} g^{\Sigma}_{ij}) ,
\]
where $g^{\Sigma}$ is the induced metric on $T \Sigma$. Hence, contracting with respect to $\tilde g^\Sigma$, the corresponding mean-curvatures are related by:
\begin{equation} \label{eq:conformal-change-of-H}
H^{\tilde g}_{\Sigma} = e^{-\Psi} ( H^g_{\Sigma} + \scalar{\nabla^g \Psi,\n^g} (n-1)) .
\end{equation}
Applying this to $\Psi = t f_W$, the conformal factor change induced by the flow along $W$ for an infinitesimal time $t$, and taking derivative at $t=0$, we obtain:
\[
\delta^1_{W} H_{\Sigma} =  -f_W H_{\Sigma} + \scalar{\nabla f_W , \n} (n-1) . 
\]
The assertion now follows by (\ref{eq:LJac-deltaH-loc}). 
\end{proof}

\subsection{M\"obius Fields and Quasi-Centers on $\R^n$ and $\S^n$} \label{subsec:dilation}

We shall treat $\R^n$ and $\S^n$ simultaneously by using the standard embedding $\S^n \subset \R^{n+1}$, and using $\scalar{\cdot,\cdot}$ to denote the standard scalar product on $\R^n$ or $\R^{n+1}$, respectively. 

\medskip

By Liouville's classical theorem \cite{Udo-MobiusDifferentialGeometry,Blair-InversionTheory}, all conformal automorphisms of $\bar \R^n$, the one-point-at-infinity compactification of $\R^n$, when $n \geq 3$, are given by M\"obius transformations, obtained by composing isometries (orthogonal linear transformations and translations) with scaling ($p \mapsto e^\lambda p$) and spherical inversion ($p \mapsto p / |p|^2$). 
Using stereographic projection, this also classifies all conformal automorphisms of $\S^n$, as well as all global conformal Killing fields on both model spaces. The conformal Killing fields which are non-Killing (i.e.~do not generate isometries) constitute an $(n+1)$-dimensional linear space -- on $\S^n$, these are precisely given by the $(n+1)$-dimensional family of ``M\"obius fields", introduced below, whereas on $\R^n$ they are spanned by the $n$-dimensional family of ``M\"obius fields" as well as by the scaling field $p \mapsto \lambda p$ (which can be thought of as a M\"obius field in the direction of infinity). 

\begin{definition}[M\"obius Field $W_\theta$]  \label{def:dilation-field}
\hfill \\
On $\R^n$, the M\"obius field $W_\theta$ in the direction of $\theta \in \R^n$ is the section of $T \R^n$ given by:
\[
W_\theta(p) := \frac{|p|^2}{2} \theta - \scalar{\theta,p} p  . 
\]
On $\S^n$, the M\"obius field $W_\theta$ in the direction of $\theta \in \R^{n+1}$ is the section of $T \S^n$ given by:
\[
W_\theta(p) := \theta - \scalar{\theta,p} p . 
\]
\end{definition}

\begin{remark} \label{rem:dilation-conformal} 
Note that:
\[
\nabla W_\theta(p) = \begin{cases} p \otimes \theta - \theta \otimes p - \scalar{p,\theta} \Id & M = \R^n \\
-\scalar{p,\theta} \Id & M = \S^n \end{cases} ,
\]
confirming that these are conformal Killing fields with conformal factor $f_{W_\theta} = -\scalar{p,\theta}$. It is easy to check the geometric significance of the field $W_\theta$: on $\S^n$, it is obtained by pulling back the scaling field $p \mapsto |\theta| p$ on $\R^n$ via the stereographic projection from $\S^n$ using $\theta / |\theta| \in \S^n$ as the North pole; on $\R^n$, it is obtained by pulling back the rotation field in the $\{e_{n+1},\theta\}$ plane on $\S^n \subset \R^n \times \R$ via the standard stereographic projection from $\R^n$. 
\end{remark}

\begin{definition}[Quasi-center $\c$]
Given an oriented smooth hypersurface $\Sigma$ on the model space $(M,g)$, $M \in \{ \R^n , \S^n \}$, we define the quasi-center vector $\c \in \{ \R^n , \R^{n+1} \}$ (respectively) at $p \in \Sigma$, as:
\[
\c := \n - \k p ,
\]
where $\k = \frac{H_{\Sigma}}{n-1}$ is the normalized mean-curvature with respect to the unit-normal $\n$ at $p$. 
\end{definition}

\begin{remark} \label{rem:quasi-center}
The geometric meaning (and hence our choice of nomenclature) of the quasi-center $\c$ on $\R^n$ is easy to see -- whenever $\Sigma$ is a subset of a sphere $S$ (of curvature $\k \neq 0$ with respect to $\n$), $-\c / \k = p - \n/\k$ is the sphere's center; if $S$ is a hyperplane (generalized sphere of curvature $\k=0$), then $\c = \n$ is the unit-normal to $S$. On $\S^n$, whenever $\Sigma$ is a subset of a geodesic sphere $S$ (of curvature $\k$ with respect to $\n$), we have $S = \S^n \cap E(\c,\k)$ where $E(\c,\k) := \{ x \in \R^{n+1} \; ; \; \scalar{\c,x} + \k = 0\}$; note that this representation of the affine hyperplane $E(\c,\k)$ is uniquely determined (up to orientation) by the property that $|\c|^2 = |\n - \k p|^2 = 1 + \k^2$, and hence the signed distance of $E(\c,\k)$ from the origin is $\k / \sqrt{1 + \k^2}$. In all of these cases, $\c$ remains constant on the (generalized) sphere $S$, and we call $\c$ the quasi-center of $S$. See Figure \ref{fig:quasi-center}. 
\end{remark}

\begin{figure} 
    \begin{center}
                \includegraphics[scale=0.3]{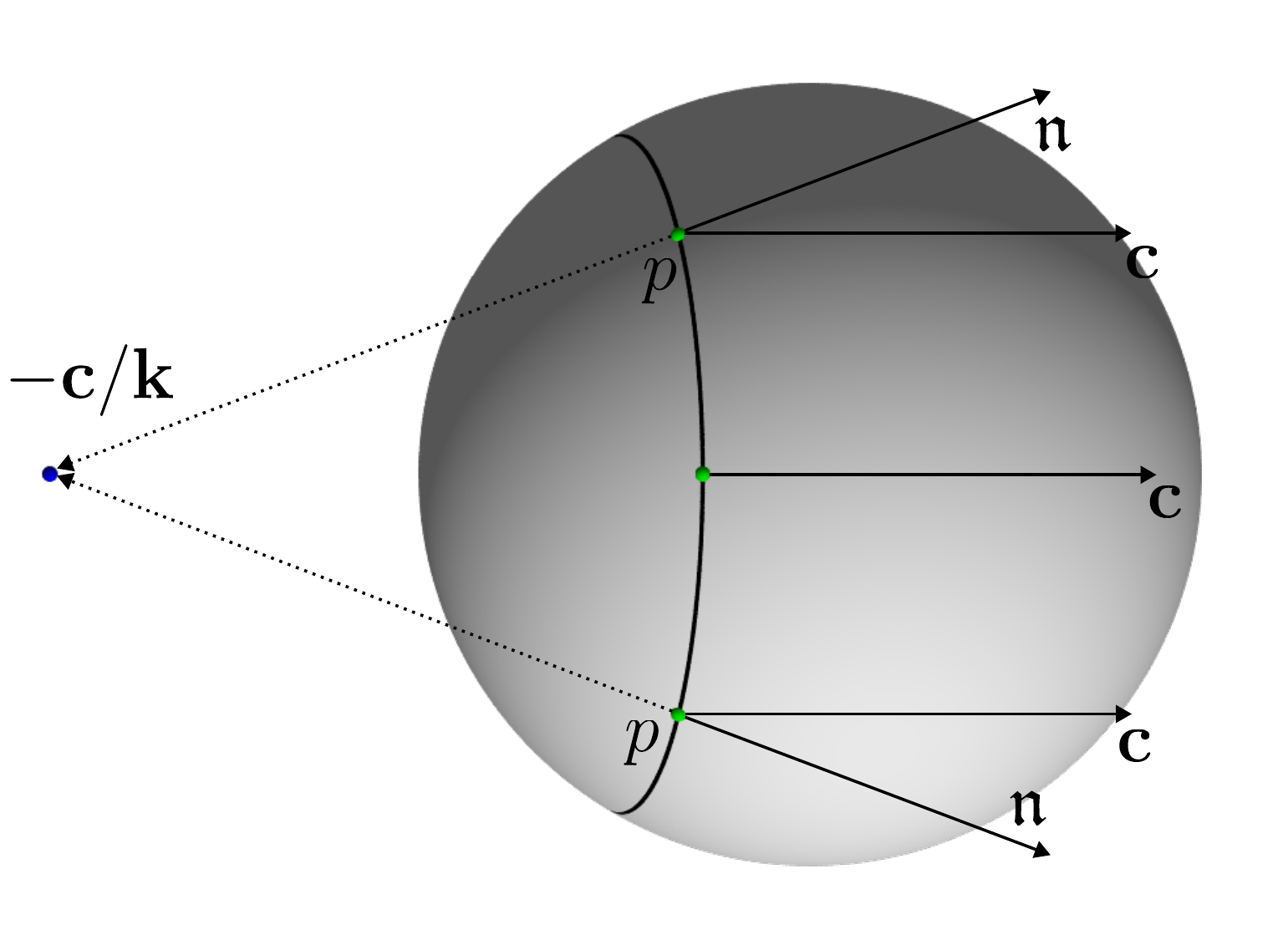}
             \end{center}
     \caption{
         \label{fig:quasi-center}
         The (constant) quasi-center of a spherical cap on $\S^n$ is $\c = \n - \k p$; the usual center is at $-\c/\k = p - \n/\k$. 
     }
\end{figure}

\begin{lemma} \label{lem:LJac-dilation}
Let $\Sigma$ denote a smooth hypersurface of constant mean-curvature on the model space $(M^n,g)$, $M^n \in \{ \R^n , \S^n \}$. Then:
\[
L_{Jac} W_\theta^\n = (n-1) \scalar{\theta,\c} . 
\]
\end{lemma}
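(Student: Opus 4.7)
The plan is a direct computation applying Lemma~\ref{lem:LJac-conformal} to the conformal Killing field $W_\theta$, using the explicit description of its conformal factor from Remark~\ref{rem:dilation-conformal}. Note that the model spaces $\R^n$ and $\S^n$ are unweighted with trivial (respectively, unit) sectional curvature, so $L_{Jac}$ is the unweighted Jacobi operator and the hypothesis of Lemma~\ref{lem:LJac-conformal} (constant unweighted mean-curvature) is precisely what is given.

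First I would record that by Remark~\ref{rem:dilation-conformal}, $W_\theta$ is a conformal Killing field on the ambient model space with conformal factor
\[
f_{W_\theta}(p) = -\scalar{\theta, p} .
\]
Next, I would compute $\scalar{\nabla f_{W_\theta}, \n}$ in both cases: on $\R^n$ this is immediate since $\nabla f_{W_\theta} = -\theta$, giving $\scalar{\nabla f_{W_\theta}, \n} = -\scalar{\theta, \n}$. On $\S^n$, viewing $f_{W_\theta}$ as the restriction of the ambient function $p \mapsto -\scalar{\theta, p}$ and projecting the ambient gradient $-\theta$ onto $T_p \S^n$ yields $\nabla^{\S^n} f_{W_\theta} = -\theta + \scalar{\theta, p} p$; since $\n \in T_p \S^n$ satisfies $\scalar{p, \n} = 0$, again $\scalar{\nabla f_{W_\theta}, \n} = -\scalar{\theta, \n}$.

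Plugging these into Lemma~\ref{lem:LJac-conformal} with $H_\Sigma = (n-1)\k$ gives
\[
L_{Jac} W_\theta^\n = f_{W_\theta} H_\Sigma - (n-1) \scalar{\nabla f_{W_\theta}, \n} = -(n-1) \k \scalar{\theta, p} + (n-1) \scalar{\theta, \n} = (n-1) \scalar{\theta, \n - \k p},
\]
which is $(n-1)\scalar{\theta, \c}$ by the definition of the quasi-center. There is no real obstacle here; the only point requiring a moment of care is that on $\S^n$ one must interpret $\nabla f_{W_\theta}$ intrinsically (as in Lemma~\ref{lem:LJac-conformal}), and verify that projecting the ambient gradient onto $T_p\S^n$ does not alter the inner product with $\n$ because $\n$ is tangent to $\S^n$.
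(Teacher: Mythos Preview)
Your proposal is correct and follows essentially the same approach as the paper: both apply Lemma~\ref{lem:LJac-conformal} with $f_{W_\theta} = -\scalar{\theta,p}$ and simplify $f_{W_\theta} H_\Sigma - (n-1)\scalar{\nabla f_{W_\theta},\n}$ to $(n-1)\scalar{\theta,\n-\k p}$. The paper's version is slightly terser, omitting your explicit verification on $\S^n$ that $\scalar{\nabla f_{W_\theta},\n} = -\scalar{\theta,\n}$, but the argument is the same.
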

\begin{proof}
Recall from Remark \ref{rem:dilation-conformal} that the conformal factor of $W_\theta$ is $f_{W_\theta} = -\scalar{p,\theta}$. Consequently, by Lemma \ref{lem:LJac-conformal}, we have:
\[
L_{Jac} W_\theta^\n = - H_{\Sigma} \scalar{p,\theta} + (n-1) \scalar{\theta, \n} = (n-1) \scalar{\theta , \n - \k p} . 
\]
\end{proof}

\section{Stable clusters with $\S^0$-symmetry are spherical} \label{sec:spherical}

We are finally ready to give a proof of Theorem \ref{thm:intro-stable-spherical}. By removing empty cells, we may reduce to the case that all cells are non-empty. Note that a minimizing cluster on a weighted Riemannian manifold, all of whose cells are non-empty, is regular, stationary and stable by the results of Section \ref{sec:prelim}. Moreover, on all model space-forms, any minimizing cluster is bounded by Theorem \ref{thm:existence}. 
Theorem \ref{thm:intro-stable-spherical} is therefore a consequence of the following theorem, which we state separately for convenience:

\begin{theorem} \label{thm:stable-spherical}
Let $\Omega$ be a bounded stable (and in particular, stationary) regular $q$-cluster on the model space $M^n$, $M^n \in \{ \R^n , \S^n \}$, with $\S^0$-symmetry. Then $\Omega$ is necessarily spherical perpendicularly to its hyperplane of symmetry. 
\end{theorem}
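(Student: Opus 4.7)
The strategy is a second-order variational argument combining stability information from the M\"obius conformal Killing field $W_N$ (in the direction $N$ normal to the hyperplane of symmetry) with first-order identities derived from stationarity, the latter including an isotropicity identity on $\R^n$. Throughout, let $R$ denote reflection about the hyperplane of symmetry, fix the normalized curvature $\k_{ij}:=H_{ij}/(n-1)$ on each interface (constant by stationarity), and set $\c:=\n-\k\,p$ on $\Sigma^1$. Recall that sphericity of a connected component of $\Sigma_{ij}$ is equivalent to $\c$ being constant on it, or equivalently to the vanishing of the traceless second fundamental form $\II_0:=\II-\k\,\Id$; perpendicularity to the hyperplane of symmetry is equivalent to $\scalar{N,\c}\equiv 0$ on each such component.

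First I would verify by a direct computation from Definition \ref{def:dilation-field} that $W_N$ is odd under $R$, i.e.\ $W_N\circ R=-R\circ W_N$. Since the cluster is $\S^0$-symmetric, each $\delta^1_{W_N}V(\Omega)_i$ is the integral of an odd function over an $R$-symmetric set, hence vanishes. To test stability rigorously, since we do not yet know that $\Omega$ has locally bounded curvature, I would insert an $R$-symmetric cutoff $\eta$ from Lemma \ref{lem:cutoff-Sigma4} (keeping $\eta W_N$ odd and hence volume-preserving) and apply Theorem \ref{thm:conformal-Q}\ref{it:conformal-Q-general}; using boundedness of $W_N$ on the bounded cluster and passing $\eta\uparrow 1$, the gradient term $\int|\nabla^\tang\eta|^2(W_N^\n)^2\,d\mu^{n-1}$ vanishes, yielding
\[
0\le Q(W_N)=-(n-1)\int_{\Sigma^1}W_N^\n\scalar{N,\c}\,d\mu^{n-1},
\]
where the formula for $L_{Jac}W_N^\n=(n-1)\scalar{N,\c}$ comes from Lemma \ref{lem:LJac-dilation}. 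The conformal boundary conditions of Lemma \ref{lem:conformal-boundary} are what kill the triple-point boundary integral without any additional work.

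Next I would derive first-order identities by testing stationarity (Lemma \ref{lem:Lagrange}) with Killing fields (translations on $\R^n$, rotations on $\S^n$) and with auxiliary conformal fields; these include, in the $\R^n$ case, the isotropicity identity
\[
\int_{\Sigma^1}\n\otimes\n\,dp=\tfrac{1}{n}\int_{\Sigma^1}\Id\,dp.
\]
These identities, used to rewrite the inequality above, convert the scalar $N$-dependent stability inequality into a pointwise integrated tensor inequality that is sensitive to all components of $\II_0$ simultaneously, not just the $N$-direction. The aim is to algebraically rearrange everything into the form
\[
\int_{\Sigma^1}|\II_0|^2\,F\,d\mu^{n-1}\le 0
\]
for a manifestly nonnegative weight $F$ depending on the cluster's embedding data, forcing $\II_0\equiv 0$ and hence sphericity of each connected component. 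Perpendicularity then follows by exploiting the $\S^0$-symmetry: equatorial components lie on $R$-invariant spheres (whose quasi-centers automatically lie on the hyperplane of symmetry), and the remaining components pair up with their $R$-images, forcing the common quasi-center to again lie on that hyperplane after an elementary geometric argument.

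The main obstacle is the combining step: $W_N$ by itself yields only a \emph{single scalar} stability inequality, whereas sphericity is a \emph{tensorial} pointwise condition on $\II_0$. The key trick is that one must test \emph{several families simultaneously}---most naturally $W_N$ together with a spanning family of Killing fields and other conformal pieces that individually do not preserve volume but whose carefully weighted combinations do---then exploit cancellations coming from the conformal factor formula $f_{W_\theta}=-\scalar{\theta,p}$ and from the isotropicity identity. Additional care is required to justify all manipulations: curvature integrability must be guaranteed by cutoffs disjoint from $\Sigma^4$ (Proposition \ref{prop:curvature-integrability-Sigma4}), and the boundary integrands at triple-points must be handled via Lemma \ref{lem:conformal-boundary} (which relies on $W_N$ being conformally Killing) so that no unknown curvature of $\Sigma^2$ appears. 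A posteriori, sphericity implies locally bounded curvature, which retroactively vindicates the cutoff-free formulas of Theorem \ref{thm:conformal-Q}\ref{it:conformal-Q-bounded} for subsequent use in the next steps of the paper.
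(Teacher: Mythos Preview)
Your proposal has the right overall architecture (test stability with odd fields, combine inequalities, force $\II_0\equiv 0$), and your treatment of $W_N$ is correct and matches the paper. But there is a genuine gap: you are missing the field that actually produces the $\|\II_0\|^2$ term.

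Testing stability with $W_N$ alone yields
\[
0\le -(n-1)\int_{\Sigma^1} W_N^{\n}\,\scalar{N,\c}\,d\mu^{n-1},
\]
and on $\S^n$ this is just $-(n-1)\int\scalar{N,\n}\scalar{N,\c}\,d\mu^{n-1}$. There is no second fundamental form in this expression, and first-order stationarity identities (including isotropicity) are integral identities involving at most first derivatives---they cannot conjure a quadratic curvature term $\|\II_0\|^2$ out of such an inequality. Likewise, Killing fields satisfy $L_{Jac}X^{\n}=0$ and contribute nothing, and M\"obius fields $W_\theta$ with $\theta\perp N$ are not odd under $R$, so they do not preserve volume. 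Your sentence ``convert the scalar $N$-dependent stability inequality into a pointwise integrated tensor inequality that is sensitive to all components of $\II_0$'' has no mechanism behind it.

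The missing ingredient is the \emph{skew scalar-field} $\Psi=\scalar{N,p}$, tested via the traced index-form $Q^0_{\tr}$ (Theorem~\ref{thm:stability-in-trace}). By Lemma~\ref{lem:LJac-skew},
\[
L_{Jac}\scalar{N,p}=\|\II_0\|^2\scalar{N,p}-(n-1)\k\scalar{N,\c},
\]
so stability with this field yields
\[
0\le -\int_{\Sigma^1}\Bigl(\scalar{N,p}^2\|\II_0\|^2-(n-1)\k\scalar{N,p}\scalar{N,\c}\Bigr)\,d\mu^{n-1}.
\]
The $W_N$ inequality (on $\S^n$) or the identity $\int\scalar{N,\n}\scalar{N,\c}=0$ (on $\R^n$, from Lemma~\ref{lem:nc-identity}) is then added to this, and using $\n-\k p=\c$ the cross-term completes to $-(n-1)\scalar{N,\c}^2$, giving
\[
0\le -\int_{\Sigma^1}\Bigl(\scalar{N,p}^2\|\II_0\|^2+(n-1)\scalar{N,\c}^2\Bigr)\,d\mu^{n-1}\le 0.
\]
This simultaneously forces $\II_0\equiv 0$ and $\scalar{N,\c}\equiv 0$. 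Your separate geometric argument for perpendicularity is also incomplete: for a non-equatorial pair of components, reflection only tells you the two quasi-centers are $R$-images of each other, not that each lies in $N^\perp$; the paper obtains $\scalar{N,\c}=0$ pointwise directly from the combined inequality above.
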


Let us make the definition of ``sphericity perpendicularly to the hyperplane of symmetry", already presented in the Introduction, more precise. 
Let $N \in M^n$ with $|N|=1$ representing a North Pole, and denote by $M^{n-1} := M^n \cap N^{\perp}$ the equator. We denote by $M^n_{\pm}$ the two open connected components of $M^n \setminus N^{\perp}$ (called hemispheres), with $M^n_+$ the Northern hemisphere containing $N$. 

\begin{definition}[Connectedness modulo $\S^0$-symmetry] \label{def:connected-S0}
A set $A \subset M^n$ which is symmetric with respect to $M^{n-1}$, is said to be ``connected modulo its $\S^0$-symmetry", if $A \cap \overline{M^n_+}$ is connected (note that $A$ itself will have two connected components if $A \cap M^{n-1} = \emptyset$). 
\end{definition}

\begin{definition}[Connected components modulo $\S^0$-symmetry] \label{def:cc-S0}
Given a set $A \subset M^n$ with $\S^0$-symmetry with respect to $M^{n-1}$, its connected components modulo its $\S^0$-symmetry, or connected (modulo $\S^0$-symmetry) components, are its maximal subsets which are symmetric with respect to $M^{n-1}$ and connected modulo $\S^0$-symmetry.  
\end{definition}

In other words, if $\{\underline A_\ell\}_{\ell}$ are the usual connected components of $A$, then its connected components modulo its $\S^0$-symmetry are $\{ \underline A_\ell \; ; \; \underline A_\ell \cap M^{n-1} \neq \emptyset \} \cup \{ \underline A_\ell \cup -\underline A_\ell \; ; \; \underline A_\ell \subset M^n_+ \}$, where $-\underline A_\ell$ denotes the reflection of $\underline A_\ell$ about $M^{n-1}$. 

\begin{definition}[Spherical cluster perpendicularly to its hyperplane of symmetry] \label{def:spherical-S0}
A regular cluster on the model space $M^n$, $M^n \in \{ \R^n , \S^n \}$, with reflection symmetry with respect to $N^{\perp}$,
is called ``spherical perpendicularly to its hyperplane of symmetry", if for all $i,j$:
\begin{enumerate}
\item $\II^{ij}_0 \equiv 0$ on all of $\Sigma_{ij}$.  
\item $\c_{ij}\equiv \c_{ij}^\ell$ and $\k_{ij} \equiv \k_{ij}^\ell$ are constant on every connected (modulo $\S^0$-symmetry) component $\Sigma^\ell_{ij}$ of $\Sigma_{ij}$, and $\sscalar{\c^\ell_{ij} , N} = 0$. 
\item Every connected (modulo $\S^0$-symmetry) component $\Sigma^\ell_{ij}$ is a subset of a (generalized) geodesic sphere with quasi-center $\c^\ell_{ij}$ and curvature $\k^\ell_{ij}$. 
\end{enumerate}
Here $\II^{ij}_0 = \II^{ij} - \k_{ij} \Id$ denotes the traceless second fundamental form of $\Sigma_{ij}$, $\k_{ij} = \frac{1}{n-1} H_{\Sigma_{ij}}$ denotes the normalized mean-curvature of $\Sigma_{ij}$ and $\c_{ij} = \n_{ij} - \k_{ij} p$ is the quasi-center vector-field. 
\end{definition}

\begin{remark} \label{rem:finite-ccs}
Note that we deliberately do not assume stationarity in the above definition. When the cluster is in addition stationary (with Lagrange multiplier $\lambda \in E^{(q-1)}$) as in Theorem \ref{thm:stable-spherical}, then necessarily $\k^\ell_{ij} =\k_{ij} = \k_i - \k_j$ for all $\ell$, with $\k := \frac{1}{n-1} \lambda$. Also note that for any bounded regular cluster, $\Sigma_{ij}$ always consists of a finite number of connected components $\{\Sigma^\ell_{ij}\}_{\ell \in \Lambda_{ij}}$ by compactness of $\Sigma$ and the local-finiteness in Theorem \ref{thm:Almgren} \ref{it:Almgren-iii} which is part of the definition of regularity. 
\end{remark}

\begin{proof}[Proof of Theorem \ref{thm:intro-stable-spherical} given Theorem \ref{thm:stable-spherical}]
It remains to establish the ``in particular" part of Theorem \ref{thm:intro-stable-spherical}. 
As explained in the Introduction, if $\Omega$ is a minimizing $q$-cluster on $M^n$ with $q \leq n+1$ (no $\S^0$-symmetry assumed), 
one can always find a hyperplane $N^{\perp}$ bisecting all finite-volume cells of $\Omega$ (after a possible translation of $\Omega$ when $M^n = \R^n$). Reflecting each of the two halves $\Omega_{\pm}$ of $\Omega$ about this hyperplane, we obtain two clusters $\bar \Omega_{\pm}$ with $\S^0$-symmetry and $V(\bar \Omega_{\pm}) = V(\Omega)$, each of which must be minimizing (otherwise a contradiction to the minimality of $\Omega$ would follow). It follows by Theorem \ref{thm:stable-spherical} that both $\bar \Omega_{\pm}$ are spherical perpendicularly to $N^{\perp}$, and so both halves $\Omega_{\pm}$ of the original cluster $\Omega$ also have this property in their respective hemispheres $M^n_{\pm}$. It remains to note that a connected component $\Sigma^\ell_{ij}$ of the interface $\Sigma_{ij}$ of the original cluster $\Omega$ is a smooth connected (relatively open) manifold by regularity, and so since each $\Sigma^\ell_{ij} \cap M^n_{\pm}$ is either empty or a subset of a (generalized) sphere $(S^\ell_{ij})_{\pm}$ which intersects $N^{\perp}$ perpendicularly (with quasi-center $(\c^{\ell}_{ij})_{\pm}$ and curvature $(\k^\ell_{ij})_{\pm}$), we must have $(S^\ell_{ij})_+ = (S^\ell_{ij})_-$ in case both parts are non-empty. Consequently, $\Sigma^\ell_{ij}$ is a subset of the (generalized) sphere $(S^\ell_{ij})_+ = (S^\ell_{ij})_-$, and so $\Omega$ itself is spherical perpendicularly to $N^{\perp}$ (yet, potentially does not have $\S^0$-symmetry -- we will see that this is not the case after establishing Theorem \ref{thm:intro-stable-spherical-Voronoi}). 
\end{proof}

This section is dedicated to proving Theorem \ref{thm:stable-spherical} via a second variation argument. 

\subsection{Stability}

Consider a smooth function $\Psi$ on $M^n$ which is odd with respect to reflection about $N^{\perp}$. On $\R^n$, since $\Omega$ is assumed bounded, we may always smoothly truncate $\Psi$ outside a large enough ball without altering its values on $\Sigma$, so as to ensure that $\Psi \in C_c^\infty(M^n)$. The cluster's symmetry and oddness of $\Psi$ with respect to reflection about $N^{\perp}$ obviously imply that $\int_{\Sigma_{ij}} \Psi dp = 0$ for all interfaces $\Sigma_{ij}$. We may therefore invoke Theorem \ref{thm:stability-in-trace}, which asserts that stability of $\Omega$ implies that $\int_{\Sigma^1} \Psi^2 \norm{\II}^2 dp < \infty$ and that the traced-index-form is non-negative. Recalling Lemma \ref{lem:Q0-tr-LJac}, this means:
\begin{equation} \label{eq:Q0-non-negative}
0 \leq Q^0_{\tr}(\Psi) = -\int_{\Sigma^1} \Psi L_{Jac} \Psi \; dp   . 
\end{equation}

\begin{remark}
It is worthwhile repeating what is encoded in Theorem \ref{thm:stability-in-trace} by means of the assertion that $Q^0_{\tr}(\Psi) \geq 0$: for all coefficients $a \in \R^q$, $f^a_{ij} := a_{ij} \Psi$ is a scalar-field on $\Sigma$ which can be approximated by a smooth vector-field $X_a \in C_c^\infty(M \setminus \Sigma^4 , TM)$ so that $X^{\n_{ij}}_a \simeq f^a_{ij}$ on each $\Sigma_{ij}$, $\delta^1_{X_a} V(\Omega) = 0$, and $\tr(a \mapsto Q^1(X_a))$ is non-negative and has vanishing boundary term in (\ref{eq:Q-LJac}). 
\end{remark}

Our goal will be to find an odd function $\Psi$ so that $Q^0_{\tr}(\Psi) \leq 0$, which together with stability and Theorem \ref{thm:stability-in-trace} would imply that $Q^0_{\tr}(\Psi) = 0$, and hence (hopefully) that the cluster is spherical. As we shall see, the sign of $Q^0_{\tr}(\Psi)$ is not easily recognizable, and moreover, will require superimposing with another field. We invest a few paragraphs to provide motivation for our choice of test fields.

\subsection{Skew-Fields}

Recall from Section \ref{sec:conformal} that one trivial way to obtain $Q(X) = 0$ is to use a Killing field $X$ (generating a one-parameter family of isometries). Of course a genuine Killing field carries no information, so our goal will be to find a function $\Psi$ so that \emph{on a model cluster with $\S^0$-symmetry}, $a_{ij} \Psi$ is the normal component of a Killing field -- specifically, a rotation field in the plane spanned by $N$ and $\theta \perp N$:
\[
R_{\theta,N} = \scalar{N,p} \theta - \scalar{\theta,p} N .
\]
Recalling the definition of the quasi-centers $\c_{ij}$, we have:
\[
R^{\n_{ij}}_{\theta,N} = \scalar{N,p} \scalar{\theta,\n_{ij}} - \scalar{\theta,p} \scalar{N,\n_{ij}} = 
\scalar{N,p} \scalar{\theta,\c_{ij}} - \scalar{\theta,p} \scalar{N,\c_{ij}} . 
\]
On a standard bubble with reflection symmetry with respect to $N^{\perp}$, the quasi-center vector $\c_{ij}$ is constant on the entire interface $\Sigma_{ij}$ and perpendicular to $N$, and so on such a standard bubble we have:
\[
R^{\n_{ij}}_{\theta,N} = a_{ij} \scalar{N,p} ~,~ a_{ij} = \scalar{\theta,\c_{ij}} . 
\]
Consequently, we recognize $\Psi := \scalar{N,\cdot}$ as a potentially useful function for stability testing. 
\begin{definition}[Skew scalar-fields]
Given a stationary regular cluster with $\S^0$-symmetry with respect to $N^{\perp}$ and $a \in E^{q-1}$, the scalar-field $f^a = \{ f^a_{ij}\}$ given by $f^a_{ij} := a_{ij} \scalar{N,p}$ is called a skew-field. 
\end{definition}

\begin{lemma} \label{lem:LJac-skew}
Let $\Sigma$ denote a smooth hypersurface on the model space $(M^n,g)$, $M^n \in \{ \R^n , \S^n \}$. Then for any direction $\theta$:
\[
L_{Jac} \scalar{\theta,p} = \norm{\II}^2 \scalar{\theta,p} - (n-1) \k \scalar{\theta,\n} = \norm{\II_0}^2 \scalar{\theta,p} - (n-1) \k \scalar{\theta,\c} .
\]
\end{lemma}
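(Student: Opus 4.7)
The plan is to compute $L_{Jac}\scalar{\theta,p} = \Delta_{\Sigma}\scalar{\theta,p} + (\Ric_g(\n,\n) + \norm{\II}^2)\scalar{\theta,p}$ directly in each of the two model spaces, using the standard formula $\Delta_{\Sigma} f = \Delta_M f - \nabla^2_M f(\n,\n) - H_{\Sigma}\nabla_\n f$ for the surface Laplacian of the restriction of an ambient function. The two equivalent forms of the final expression will then follow by an elementary algebraic rearrangement.

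On $\R^n$, the function $f(p) := \scalar{\theta,p}$ is affine, so $\Delta_{\R^n} f = 0$ and $\nabla^2 f \equiv 0$. Hence $\Delta_{\Sigma} \scalar{\theta,p} = -H_{\Sigma}\scalar{\theta,\n} = -(n-1)\k\scalar{\theta,\n}$. Since $\Ric_{\R^n} \equiv 0$, adding $\norm{\II}^2 \scalar{\theta,p}$ immediately yields the first asserted identity. On $\S^n$, I will use the classical facts that the restriction of a linear form on $\R^{n+1}$ is a degree-one spherical harmonic, giving $\Delta_{\S^n}\scalar{\theta,p} = -n\scalar{\theta,p}$, and that its spherical Hessian satisfies $\nabla^2_{\S^n}\scalar{\theta,p}(X,Y) = -\scalar{X,Y}\scalar{\theta,p}$ (an easy computation using $\nabla^{\R^{n+1}}_X Y = \nabla^{\S^n}_X Y - \scalar{X,Y} p$). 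Together with $\nabla^{\S^n}_\n \scalar{\theta,p} = \scalar{\theta,\n}$ (using $\n \perp p$), this yields $\Delta_{\Sigma}\scalar{\theta,p} = -(n-1)\scalar{\theta,p} - (n-1)\k\scalar{\theta,\n}$. Now $\Ric_{\S^n}(\n,\n) = n-1$, so the $-(n-1)\scalar{\theta,p}$ term is cancelled exactly by the Ricci contribution when assembling $L_{Jac}\scalar{\theta,p}$, leaving $\norm{\II}^2\scalar{\theta,p} - (n-1)\k\scalar{\theta,\n}$.

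For the second equality, I will use that $\II = \II_0 + \k\Id$ on the $(n-1)$-dimensional tangent space and $\tr(\II_0) = 0$, so $\norm{\II}^2 = \norm{\II_0}^2 + (n-1)\k^2$. Substituting the quasi-center $\c = \n - \k p$ then gives
\[
\norm{\II}^2\scalar{\theta,p} - (n-1)\k\scalar{\theta,\n} = \norm{\II_0}^2\scalar{\theta,p} + (n-1)\k^2\scalar{\theta,p} - (n-1)\k\scalar{\theta,\n} = \norm{\II_0}^2\scalar{\theta,p} - (n-1)\k\scalar{\theta,\c},
\]
as claimed.

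There is no serious obstacle here; the only points requiring care are (i) consistency of the sign convention for $\II$ (so that spheres have positive mean curvature with respect to their outward normal) when reducing $\Delta_{\Sigma}$ to ambient objects, and (ii) the correct value of the ambient Ricci curvature $\Ric_{\S^n}(\n,\n) = n-1$, whose interplay with $\Delta_{\S^n}\scalar{\theta,p} = -n\scalar{\theta,p}$ is precisely what produces the cancellation of the zeroth-order non-curvature term $-(n-1)\scalar{\theta,p}$ on the sphere.
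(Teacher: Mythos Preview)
Your proof is correct and follows essentially the same approach as the paper: a direct computation of $\Delta_\Sigma\scalar{\theta,p}$ followed by adding the Ricci and $\norm{\II}^2$ terms, with the second identity obtained from $\norm{\II}^2=\norm{\II_0}^2+(n-1)\k^2$ and $\c=\n-\k p$. The only organizational difference is that the paper works uniformly on $\frac{1}{s}\S^n$ (computing $\nabla^\tang\scalar{\theta,p}$ directly and taking $\div_\Sigma$), then specializes to $s=1$ and $s\to 0$, whereas you treat $\R^n$ and $\S^n$ separately via the ambient-to-surface formula $\Delta_\Sigma f = \Delta_M f - \nabla^2_M f(\n,\n) - H_\Sigma\nabla_\n f$; both routes are standard and equivalent.
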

\begin{proof}
Let us perform the calculation on $\frac{1}{s} \S^n$, an $n$-sphere of curvature $s$. As:
\[
\nabla^\tang \scalar{\theta,p} = \theta - \scalar{\theta,\n} \n - s^2 \scalar{\theta,p} p ,
\]
we deduce that:
\[
\Delta_{\Sigma} \scalar{\theta,p} = -\scalar{\theta,\n} H_{\Sigma} - s^2 (n-1) \scalar{\theta,p} . 
\]
Recalling that $L_{Jac} = \Delta_{\Sigma} + \Ric(\n,\n) + \norm{\II}^2$ and that $\k = \frac{1}{n-1} H_{\Sigma}$, we deduce:
\[
L_{Jac} \scalar{\theta,p} = - (n-1)  \scalar{\theta, \n \k + s^2 p} + ((n-1) s^2 + \norm{\II}^2) \scalar{\theta,p} = \norm{\II}^2 \scalar{\theta,p} - (n-1) \k \scalar{\theta,\n} ,
\]
and we obtain the assertion on $\frac{1}{s} \S^n$ with no dependence on $s$. Applying this to $s=1$ and $s \rightarrow 0$, the cases of $\S^n$ and $\R^n$ follow.  
\end{proof}

Applying (\ref{eq:Q0-non-negative}) to the odd function $\Psi = \scalar{N,\cdot}$ and using Lemma \ref{lem:LJac-skew}, we deduce:
\begin{equation} \label{eq:skew-Q0}
0 \leq - \sum_{i<j} \int_{\Sigma_{ij}} \brac{\scalar{N,p}^2 \snorm{\II^{ij}_0}^2 - (n-1) \k_{ij} \scalar{N,p} \scalar{N,\c_{ij}}} dp . 
\end{equation}
Note that indeed $\int_{\Sigma^1} \scalar{N,p}^2 \snorm{\II_0}^2 dp < \infty$ by Theorem \ref{thm:stability-in-trace}.

\smallskip
Unfortunately, the second term in the above expression does not have a clear sign, which would have enabled us to conclude the sphericity of the cluster $\II_0 \equiv 0$. To conclude the argument, we need to use an additional vector-field -- the conformal Killing field $W_N$, which applies an infinitesimal M\"obius transformation in the direction of $N$ (recall Definition \ref{def:dilation-field}). 

\subsection{Concluding on $\S^n$}

Note that $W_N^\n = \scalar{N,\n}$ and that $L_{Jac} W_N^\n = (n-1) \scalar{N,\c}$ by Lemma \ref{lem:LJac-dilation}. As $\scalar{N,\n_{ij}}$ is clearly odd on $\Sigma_{ij}$ with respect to reflection about $N^{\perp}$, then so is $\eta W_N^\n$ for any function $\eta : \S^n \rightarrow \R$ which is even with respect to said reflection, and therefore $\delta^1_{\eta W_N} V(\Omega) = 0$. 

Consequently, stability, Theorem \ref{thm:Q-Sigma4} and Theorem \ref{thm:conformal-Q} \ref{it:conformal-Q-general} imply that:
\[
0 \leq Q^1(\eta W_N) = \int_{\Sigma^1} \brac{|\nabla^\tang \eta|^2 \scalar{N,\n}^2  - \eta^2 (n-1) \scalar{N,\n} \scalar{N,\c}} dp 
\]
whenever $\eta : \S^n \rightarrow [0,1]$ is a cutoff function whose compact support is disjoint from $\Sigma^4$ and which is in addition even with respect to $N^{\perp}$. By Lemma \ref{lem:cutoff-Sigma4}, given $\eps > 0$, we may always select such a cutoff function so that $\int_{\Sigma^1} |\nabla^\tang \eta|^2 dp \leq \eps$ and $\int_{\Sigma^1} 1_{\{ \eta < 1 \}} dp \leq \eps$. Note that $\eta$ may be selected to respect any finite number of symmetries of $\Sigma$, either by construction or (simpler) by symmetrizing $\eta$ while incurring only a finite constant factor in the second error estimate by the union-bound. Consequently, $\eta$ may be selected to be even with respect to $N^{\perp}$, and we deduce:
\[
-\eps (1 + (n-1) \sup_{p \in \Sigma^1} |\c|)  \leq -(n-1) \int_{\Sigma^1} \scalar{N,\n} \scalar{N,\c} dp \;\;\; \forall \eps > 0 . 
\]
But since:
\[ |\c_{ij}|^2 = |\n_{ij} - \k_{ij} p|^2 = 1 + \k^2_{ij} = 1 + \frac{H_{\Sigma_{ij}}^2}{(n-1)^2} ,
\]and $H_{\Sigma_{ij}} = \lambda_i - \lambda_j$ is constant on $\Sigma_{ij}$ by stationarity and hence uniformly bounded on $\Sigma^1$, the left-hand-side above is finite, and we may take the limit as $\eps \rightarrow 0$ to deduce:
\[
0 \leq -(n-1) \sum_{i<j} \int_{\Sigma_{ij}} \scalar{N,\n_{ij}} \scalar{N,\c_{ij}} dp  . 
\]

Combining this with (\ref{eq:skew-Q0}) and recalling that $\n - \k p = \c$, we conclude that:
\[
0 \leq - \sum_{i<j} \int_{\Sigma_{ij}} \brac{\scalar{N,p}^2 \snorm{\II^{ij}_0}^2 + (n-1) \scalar{N,\c_{ij}}^2} dp \leq 0 . 
\]
Note that if the smooth (relatively open) $\Sigma_{ij}$ intersects the equator $M^{n-1} = M^n \cap N^{\perp}$, it must do so perpendicularly by $\S^0$-symmetry. It follows that $\II^{ij}_0 = 0$ and $\scalar{N,\c_{ij}} = 0$ almost-everywhere on $\Sigma_{ij}$, and hence everywhere as $\n_{ij}$ is $C^\infty$ smooth on $\Sigma_{ij}$. 

\subsection{Concluding on $\R^n$}

It turns out that on $\R^n$, there is no need to obtain a second inequality beyond (\ref{eq:skew-Q0}) by testing stability on another vector-field, but it is not easy to realize this without first understanding the case of $\S^n$ and the role played by the conformal Killing field $W_N$. It was thus in this order that we were able to obtain a proof of Theorem \ref{thm:stable-spherical}, and so it is in this order that we have chosen to present our argument. 

\begin{lemma} \label{lem:nc-identity} Let $\Omega$ be a bounded stationary regular cluster on $\R^n$, which in addition is assumed to either be stable, or of bounded curvature. Then:
\[
\int_{\Sigma^1} \n \otimes \c \; dp = 0 . 
\]
\end{lemma}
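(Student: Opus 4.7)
The plan is to evaluate the symmetric bilinear index-form $Q^1$ on the pair $X = \theta$ (the constant Killing field in direction $\theta \in \R^n$) and $Y = W_\eta$ (the M\"obius conformal Killing field of Definition \ref{def:dilation-field}), both multiplied by an appropriate $C_c^\infty$ cutoff $\chi$ that equals $1$ on a neighborhood of the bounded set $\Sigma$. The symmetry $Q^1(\chi X, \chi Y) = Q^1(\chi Y, \chi X)$, combined with the cutoff representation from Theorem \ref{thm:Q1-cutoff}, will produce (up to vanishing cutoff errors) the scalar identity
\[
(n-1)\int_{\Sigma^1} \langle\theta, \n\rangle\,\langle\eta, \c\rangle \, dp = 0,
\]
which is equivalent to the claimed tensor identity $\int_{\Sigma^1} \n\otimes \c\,dp = 0$ since $\theta, \eta \in \R^n$ are arbitrary.

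Two structural cancellations drive the argument. First, Lemma \ref{lem:conformal-boundary} applied to the conformal Killing $Y = W_\eta$ ensures that the quantity $\nabla_{\n_{\partial ij}} Y^{\n_{ij}} - \bar{\II}^{\partial ij} Y^{\n_{ij}}$ is independent of $(i,j)\in \cyclic(u,v,w)$ at every triple point; combined with the stationarity relation $\sum_{(i,j)\in\cyclic(u,v,w)} \n_{ij} = 0$, the pointwise integrand of the boundary term $\chi^2 X^\n(\nabla_{\n_\partial} Y^\n - \bar{\II}^\partial Y^\n)$ in $Q^1(\chi X, \chi Y)$ vanishes on all of $\Sigma^2$ for \emph{any} vector field $X$. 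The same cancellation occurs in $Q^1(\chi Y, \chi X)$ since $X = \theta$ is itself Killing. Second, $\theta$ generates isometries which preserve the constant mean curvature of each $\Sigma_{ij}$, so $L_{Jac} \langle\theta, \n\rangle = 0$ on each interface, eliminating one of the two $L_{Jac}$ integrals in the subtracted representations of $Q^1$. Combining these with Lemma \ref{lem:LJac-dilation}, which gives $L_{Jac} W_\eta^\n = (n-1)\langle\eta, \c\rangle$, yields the displayed identity after the cutoff limit.

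The remaining (and only) technical issue is the cutoff limit. In the bounded-curvature case, Proposition \ref{prop:Q1-LJac-bounded-curvature} applies to any $C_c^\infty$ vector field, so we simply choose $\chi$ equal to $1$ on a large ball containing $\Sigma$, and all cutoff error terms vanish identically. In the stable case we must keep $\supp \chi$ disjoint from $\Sigma^4$ to apply Theorem \ref{thm:Q1-cutoff}; subtracting the two symmetric representations of $Q^1(\chi X, \chi Y)$ leaves an error of the form
\[
\sum_{i<j}\int_{\Sigma_{ij}} 2\chi\,\bigl\langle\nabla^\tang \chi,\; Y^\n\nabla^\tang X^\n - X^\n \nabla^\tang Y^\n\bigr\rangle\, dp,
\]
which by Cauchy--Schwarz is $O\!\bigl((\int_{\Sigma^1}|\nabla \chi|^2 dp)^{1/2}\bigr)$, since $X, Y, \nabla X, \nabla Y$ are uniformly bounded on the bounded cluster. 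Choosing $\chi$ from the family of Lemma \ref{lem:cutoff-Sigma4} with $\int |\nabla \chi|^2 dp \le \epsilon$ and letting $\epsilon \to 0$ makes this error vanish, while the bulk term converges by bounded convergence (the integrand $\langle\theta, \n\rangle\langle\eta, \c\rangle$ is bounded on $\Sigma^1$ because $\k$ is constant on each interface by stationarity and $|p|$ is bounded on the bounded cluster, and $\mu^{n-1}(\Sigma^1)<\infty$). This completes the argument; the main conceptual obstacle is identifying the correct pair of conformal Killing fields, while the main technical obstacle is the cutoff limit in the stable case, handled routinely as above.
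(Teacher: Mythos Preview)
Your approach is essentially the same as the paper's: pair the translation field $T_\theta$ with the M\"obius field $W_\eta$, use the symmetry of $Q^1$ together with the cutoff representation of Theorem \ref{thm:Q1-cutoff}, kill the boundary terms via Lemma \ref{lem:conformal-boundary}, and use $L_{Jac}T_\theta^\n = 0$, $L_{Jac}W_\eta^\n = (n-1)\langle\eta,\c\rangle$. The bounded-curvature case is handled exactly as you say.

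There is, however, a real gap in your treatment of the stable case. You assert that the cutoff error term is $O\bigl((\int_{\Sigma^1}|\nabla\chi|^2)^{1/2}\bigr)$ ``since $X,Y,\nabla X,\nabla Y$ are uniformly bounded.'' But $\nabla^\tang X^\n$ and $\nabla^\tang Y^\n$ are \emph{not} controlled by the ambient derivatives $\nabla X,\nabla Y$ alone: recall from (\ref{eq:tang-base}) that $\nabla_\alpha X^\n = \nabla_\alpha X^\gamma \n_\gamma + \II_{\alpha\beta}X^\beta$, so for $X=\theta$ constant one has $\nabla^\tang\theta^\n = \II\,\theta^\tang$, and similarly $\nabla^\tang W_\eta^\n$ contains terms proportional to $\II$. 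After Cauchy--Schwarz, the second factor is $\bigl(\int_{\Sigma^1}|Y^\n\nabla^\tang X^\n - X^\n\nabla^\tang Y^\n|^2\,dp\bigr)^{1/2}$, which requires $\int_{\Sigma^1}\|\II\|^2\,dp < \infty$ to be finite and independent of the cutoff. This is \emph{exactly} where stability enters, via Corollary \ref{cor:curvature-integrable-on-Sigma4}; Proposition \ref{prop:curvature-integrability-Sigma4} alone would only give integrability away from $\Sigma^4$, producing an $\epsilon$-dependent bound that defeats the limit. The paper singles out this point explicitly. Once you invoke Corollary \ref{cor:curvature-integrable-on-Sigma4}, your argument goes through and matches the paper's.
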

\begin{remark} \label{rem:isotropic}
Using that $0 = \int_{\Sigma^1} \frac{1}{2} \Delta_{\Sigma} \scalar{\theta,p}^2 dp = \int_{\Sigma^1} (-(n-1) \k \scalar{\theta,\n} \scalar{\theta,p} + |\theta^\tang|^2) dp $ by stationarity, it is easy to check that the assertion is equivalent to following one of isotropicity:
\[
\int_{\Sigma^1} \n \otimes \n \; dp = \frac{1}{n} \int_{\Sigma^1} \Id \; dp . 
\]
\end{remark}
\begin{proof}[Proof of Lemma \ref{lem:nc-identity}]
Let $T_{\theta_1} \equiv \theta_1$ denote the Killing field of translation by $\theta_1 \in \R^n$, and recall that $W_{\theta_2}$ is the conformal Killing field generating a M\"obius transformation in the direction of $\theta_2 \in \R^n$. Heuristically, the argument is simply to invoke the conformality of these fields (which results in the boundary integrand in the expression (\ref{eq:Q1-LJac}) for $Q^1$ to vanish by Lemma \ref{lem:conformal-boundary}), the formulas for  $L_{Jac} T_{\theta_1}^\n = 0$ and $L_{Jac} W_{\theta_2}^\n = (n-1) \scalar{\theta_2 , \c}$, and the symmetry of $Q^1$, as follows:
\begin{align*}
& (n-1) \int_{\Sigma^1} \scalar{\theta_1,\n} \scalar{\theta_2,\c} dp = \int_{\Sigma^1} T_{\theta_1}^\n L_{Jac} W_{\theta_2}^\n dp = -Q^1(T_{\theta_1},W_{\theta_2}) \\
& = -Q^1(W_{\theta_2},T_{\theta_1}) =\int_{\Sigma^1} W_{\theta_2}^\n L_{Jac} T_{\theta_1}^\n dp = 0 . 
\end{align*}

By Proposition \ref{prop:Q1-LJac-bounded-curvature}, this is a perfectly rigorous argument whenever the curvature is known to be (locally) bounded, but we would like to invoke Lemma \ref{lem:nc-identity} before deducing this from sphericity. 
Consequently, given $\eps > 0$, we invoke Lemma \ref{lem:cutoff-Sigma4} to select a $C_c^\infty$ cutoff function $\eta : \R^n \rightarrow [0,1]$ whose compact support is disjoint from $\Sigma^4$ so that $\int_{\Sigma^1} |\nabla^\tang \eta|^2 dp \leq \eps$ and $\int_{\Sigma^1} 1_{\{ \eta < 1 \}} dp \leq \eps$. Using conformality, we have by Theorem \ref{thm:Q1-cutoff} that:
 \begin{align*}
 &Q^1(\eta T_{\theta_1},\eta W_{\theta_2}) \\
 & = \sum_{i<j} \int_{\Sigma_{ij}} \brac{|\nabla^\tang \eta|^2 T_{\theta_1}^\n W_{\theta_2}^\n + \eta \scalar{\nabla^\tang \eta , W_{\theta_2}^\n \nabla^\tang T_{\theta_1}^\n - T_{\theta_1}^\n \nabla^\tang W_{\theta_2}^\n} - \eta^2 T_{\theta_1}^\n L_{Jac} W_{\theta_2}^\n} d\mu^{n-1} .
 \end{align*}
 Note that $R := \sup_{p \in \Sigma^1} |p| < \infty$ by boundedness of the cluster, and hence $|W_{\theta_2}^\n| \leq \frac{3 R^2}{2} |\theta_2|$ and $|L_{Jac} W_{\theta_2}^\n| = (n-1) |\scalar{\theta_2,\c}| \leq (n-1) |\theta_2| (1 + R \max|\k_{ij}|)$. Of course $|T_{\theta_1}^\n| \leq |\theta_1|$.  It follows by Cauchy--Schwarz that for some constant $C<\infty$:
 \[
\abs{Q^1(\eta T_{\theta_1},\eta W_{\theta_2}) + \int_{\Sigma^1} T_{\theta_1}^\n L_{Jac} W_{\theta_2}^\n dp} \leq C |\theta_1| |\theta_2| \eps + \sqrt{ \eps \int_{\Sigma^1} |W_{\theta_2}^\n \nabla^\tang T_{\theta_1}^\n - T_{\theta_1}^\n \nabla^\tang W_{\theta_2}^\n|^2 dp} . 
\]
Compute $\nabla^\tang T_{\theta_1}^\n = \II \theta_1^\tang$ and $\nabla^\tang W_{\theta_2}^\n = p^\tang \theta_2^\n + \frac{|p|^2}{2} \II \theta_2^\tang - \theta_2^\tang p^\n - \theta^p_2 \II p^\tang$. Now observe that $\int_{\Sigma^1} \norm{\II}^2 dp < \infty$ by invoking stability via Corollary \ref{cor:curvature-integrable-on-Sigma4} (note that Proposition \ref{prop:curvature-integrability-Sigma4} would \emph{not} be enough here, as cutting away from $\Sigma^4$ would yield a problematic dependence on $\eps$). Using this and that $R < \infty$, we can finally conclude that:
 \[
\abs{Q^1(\eta T_{\theta_1},\eta W_{\theta_2}) + \int_{\Sigma^1} T_{\theta_1}^\n L_{Jac} W_{\theta_2}^\n dp} \leq C' |\theta_1| |\theta_2| \sqrt{\eps} .
\]
An analogous estimate holds with $T_{\theta_1}$ and $W_{\theta_2}$ exchanged. Taking the limit as $\eps \rightarrow 0$, we obtain a rigorous verification of the initial heuristic argument. 
\end{proof}

Applying Lemma \ref{lem:nc-identity} in the directions $\theta_1 = \theta_2 = N$, we deduce:
\[
0  = -(n-1) \sum_{i<j} \int_{\Sigma_{ij}} \scalar{N,\n_{ij}} \scalar{N,\c_{ij}} dp  . 
\]
Combining this with (\ref{eq:skew-Q0}), the rest of the argument is identical to that on $\S^n$. 

\subsection{Final remarks}

Since $\k_{ij} = \frac{1}{n-1} H_{\Sigma_{ij}}$ is constant on $\Sigma_{ij}$ by stationarity, we have on both $\S^n$ and $\R^n$ that:
\[
\nabla^\tang \c_{ij} = \II^{ij} - \k_{ij} \Id = \II^{ij}_0 = 0 ,
\]
and we deduce that $\c_{ij}(p) \equiv \c_{ij}^\ell$ is constant on every connected component $\Sigma_{ij}^\ell$ of $\Sigma_{ij}$ (in fact, on every connected component modulo its $\S^0$-symmetry, since $\Sigma_{ij}^\ell$ is invariant under reflection). It follows by Remark \ref{rem:quasi-center} that $\Sigma^\ell_{ij}$ is a subset of a (generalized) geodesic sphere with quasi-center $\c^\ell_{ij}$ and curvature $\frac{1}{n-1} H_{\Sigma_{ij}} = \k_{ij}$.

\medskip

\begin{remark} \label{rem:equator-perp} Note that if $\Omega$ is a cluster with $\S^0$-symmetry which is spherical perpendicularly to its hyperplane of symmetry, such as in the conclusion of Theorem \ref{thm:stable-spherical}, then 
wherever $\Sigma = \overline{\Sigma^1}$ transverses the equator, it does so perpendicularly (in the sense that if $p \in \overline{\Sigma_{ij}^\ell} \cap M^{n-1}$ then $\overline{\Sigma_{ij}^\ell}$ is perpendicular to $M^{n-1}$ at $p$). Consequently, the same holds for $\overline{\Sigma^2}$ and $\overline{\Sigma^3}$. In particular, $\H^{n-k}(M^{n-1} \cap \Sigma^k) = 0$ for $k=1,2,3$, and hence $\H^{n-1}(M^{n-1} \cap \Sigma) = 0$. 
\end{remark}

The sphericity of the cluster already implies that it has bounded curvature, which enables us to proceed with the subsequent analysis. 
Before concluding this section, we also deduce:
\begin{proposition} \label{prop:skew-fields-properties}
The function $\Psi = \scalar{p,N}$ satisfies non-oriented conformal BCs on $\Sigma^2$ and $L_{Jac} \Psi = 0$ on $\Sigma^1$. Consequently, all skew scalar-fields $f^a = \{ f^a_{ij} = a_{ij} \Psi \}$ satisfy conformal BCs on $\Sigma^2$, $L_{Jac} f^a = 0$ and $Q^0(f^a) = 0$, for all $a \in E^{(q-1)}$. 
\end{proposition}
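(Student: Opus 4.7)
The plan is to read off both properties of $\Psi = \scalar{p,N}$ directly from the sphericity conclusion of Theorem \ref{thm:stable-spherical}, and then obtain the three consequences for $f^a$ by linearity combined with Lemma \ref{lem:non-oriented-conformal-BCs} and Theorem \ref{thm:conformal-Q}. Since we are applying Theorem \ref{thm:stable-spherical}, we already know that $\II^{ij}_0 \equiv 0$ on every $\Sigma_{ij}$, that the quasi-center $\c_{ij}^\ell$ is constant on each connected (modulo $\S^0$-symmetry) component $\Sigma_{ij}^\ell$, and that $\scalar{\c_{ij}^\ell, N} = 0$. In particular $\II^{ij} = \k_{ij} \Id$ pointwise, so the cluster has (globally) bounded curvature, and $\k_{ij}$ is the constant value of the normalized mean-curvature on $\Sigma_{ij}$ (by stationarity, independently of the connected component).

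For the first claim, apply Lemma \ref{lem:LJac-skew} with $\theta = N$ at any $p \in \Sigma^1$:
\[
L_{Jac} \Psi = \snorm{\II_0}^2 \scalar{N,p} - (n-1) \k \scalar{N, \c} .
\]
Both terms vanish: the first by $\II_0 \equiv 0$, the second by $\scalar{N, \c_{ij}^\ell} \equiv 0$ on each component. For the non-oriented conformal BCs, fix $\mu^{n-2}$-a.e. $p \in \partial \Sigma_{ij}$. Both on $\R^n$ and on $\S^n$ we have $\nabla_{\n_{ij}} \scalar{N,p} = \scalar{N, \n_{ij}}$ (on $\S^n$ this uses $\scalar{p, \n_{ij}} = 0$). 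Writing $\n_{ij} = \c_{ij} + \k_{ij} p$ and using $\scalar{N, \c_{ij}} = 0$ yields $\nabla_{\n_{ij}} \Psi = \k_{ij} \scalar{N,p} = \k_{ij} \Psi$, while sphericity $\II^{ij} = \k_{ij} \Id$ gives $\II^{ij}_{\partial,\partial} = \k_{ij}$, so $\nabla_{\n_{ij}} \Psi = \II^{ij}_{\partial,\partial} \Psi$ as required.

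For the consequences, $L_{Jac} f^a_{ij} = a_{ij} L_{Jac} \Psi = 0$ by linearity, and Lemma \ref{lem:non-oriented-conformal-BCs} immediately gives the conformal BCs for $f^a$ on $\Sigma^2$. Finally, since $\Omega$ has locally bounded curvature and is bounded, on $\S^n$ we may apply Theorem \ref{thm:conformal-Q} \ref{it:conformal-Q-bounded} directly to the smooth scalar-field $f^a$; on $\R^n$ we first multiply $\Psi$ by a cutoff function which equals $1$ on a neighborhood of the bounded set $\Sigma$ (this does not affect $f^a|_{\Sigma}$, its boundary conditions, or $L_{Jac} f^a$ on $\Sigma^1$) to place $f^a \in C^\infty_c(\Sigma)$. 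Theorem \ref{thm:conformal-Q} \ref{it:conformal-Q-bounded} then yields
\[
Q^0(f^a) = -\int_{\Sigma^1} f^a \, L_{Jac} f^a \, dp = 0 ,
\]
completing the proof. There is essentially no obstacle here: the only subtlety is the bookkeeping needed on $\R^n$ to produce a compactly supported representative of $\Psi$, which is harmless since $\Sigma$ is bounded.
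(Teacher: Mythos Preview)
Your proof is correct and follows essentially the same approach as the paper: both verify the non-oriented conformal BCs via $\nabla_{\n_{ij}} \Psi = \scalar{N,\n_{ij}} = \scalar{N,\c_{ij} + \k_{ij} p} = \k_{ij}\Psi = \II^{ij}_{\partial,\partial}\Psi$, compute $L_{Jac}\Psi = 0$ from Lemma~\ref{lem:LJac-skew} using $\II_0 \equiv 0$ and $\scalar{N,\c_{ij}} = 0$, and then invoke Lemma~\ref{lem:non-oriented-conformal-BCs} and Theorem~\ref{thm:conformal-Q}~\ref{it:conformal-Q-bounded}. Your explicit remark about truncating $\Psi$ on $\R^n$ to obtain compact support is a detail the paper handled earlier in the section rather than in this proof, but it is a valid point to reiterate.
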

\begin{proof}
Note that by Theorem \ref{thm:stable-spherical}, for any $p \in \Sigma^\ell_{ij}$, 
\[
\scalar{\nabla \Psi , \n_{ij}} = \scalar{N,\n_{ij}} = \sscalar{N , \c^\ell_{ij} + \k_{ij} p} = \k_{ij} \Psi = \II^{ij}_{\partial\partial} \Psi , 
\]
and so by regularity, the same holds for all $p \in \partial \Sigma_{ij}$, confirming that $\Psi$ satisfies non-oriented conformal BCs on $\Sigma^2$. It follows by Lemma \ref{lem:non-oriented-conformal-BCs} that all skew-fields satisfy conformal BCs on $\Sigma^2$. 

Finally, Lemma \ref{lem:LJac-skew} verifies that on $\Sigma^\ell_{ij}$:
\[
L_{Jac} \Psi = \norm{\II_0}^2 \sscalar{N,p} - (n-1) \k_{ij} \sscalar{N,\c^\ell_{ij}} = 0,
\]
since both terms vanish by Theorem \ref{thm:stable-spherical}. It follows that $L_{Jac} f^a = 0$ for all skew-fields $f^a$ by linearity. It follows that $Q^0(f^a) = 0$ by Theorem \ref{thm:conformal-Q} \ref{it:conformal-Q-bounded} (as the curvature is already known to be  bounded). 
\end{proof}

\section{Spherical Voronoi clusters} \label{sec:Voronoi-prelim}

Let $M^n \in \{ \R^n , \S^n \}$. Recall that $N \in M^n$ represents a North Pole, that $M^{n-1} := M^n \cap N^{\perp}$ is the equator, and that $M^n_{\pm}$ are the two corresponding open hemispheres.

\medskip

This section is dedicated to proving the following version of Theorem \ref{thm:intro-spherical-Voronoi-prelim}.

\begin{theorem} \label{thm:spherical-Voronoi-prelim}
Let $\Omega$ be a bounded stationary regular cluster with $\S^0$-symmetry on $M^n \in \{ \R^n , \S^n \}$, which is spherical perpendicularly to its hyperplane of symmetry (recall Definition \ref{def:spherical-S0}). Assume in addition that all of its cells $\{\Omega_i\}$ are connected modulo their common $\S^0$-symmetry (recall Definition \ref{def:connected-S0}). Then $\Omega$ is a perpendicularly spherical Voronoi cluster (recall Definition \ref{def:intro-perp-spherical-Voronoi}).
\end{theorem}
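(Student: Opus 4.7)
The plan is to reduce to a flat picture in the equatorial ball, extract the Voronoi parameters by a convex-geometric argument, and derive the cocycle structure from triple points via a simplicial cohomology vanishing.

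I will first reduce the $\R^n$ case to the $\S^n$ case by stereographic projection from a point on $N^\perp$; this preserves perpendicular sphericity, $\S^0$-symmetry, and connectedness of cells modulo $\S^0$-symmetry. Assume henceforth $M^n = \S^n$. By the sphericity hypothesis, each connected (modulo $\S^0$-symmetry) component $\Sigma^\ell_{ij}$ lies on a single geodesic sphere $S^\ell_{ij}$ with quasi-center $\c^\ell_{ij} \in N^\perp$, and stationarity forces the curvature $\k_{ij} = \k_i - \k_j$ to be independent of $\ell$ (cf.~Remark~\ref{rem:finite-ccs}). The orthogonal projection $\pi : \overline{M^n_+} \to \overline{\B^n}$ onto the equatorial closed ball is a diffeomorphism of interiors; since the spheres $S^\ell_{ij}$ meet $N^\perp$ orthogonally, $\pi$ carries each $\Sigma^\ell_{ij} \cap \overline{M^n_+}$ into the affine hyperplane $H^\ell_{ij} := \{y \in N^\perp : \sscalar{\c^\ell_{ij}, y} + \k_{ij} = 0\}$. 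The projected cells $\Omega^\B_i := \pi(\Omega_i \cap \overline{M^n_+})$ are then connected (by hypothesis) open subsets of $\B^n$ with piecewise-affine boundaries, tiling $\B^n$ modulo measure zero.

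The heart of the argument will be to prove that each $\Omega^\B_i$ is a \emph{convex} subset of $\B^n$. My strategy will be to exhibit $\Omega^\B_i$ as the (relative-to-$\B^n$) intersection of the open halfspaces bounded by its supporting hyperplanes $H^\ell_{ij}$: local one-sidedness is immediate from smoothness, and global one-sidedness should follow by combining (i)~the $120^\circ$ rigidity at triple points (Lemma~\ref{lem:boundary-normal-sum}), which rules out local concavity along codimension-$2$ edges of the projected cells, (ii)~connectedness of $\Omega^\B_i$, and (iii)~the tiling structure. Once convexity is in hand, for any two cells $\Omega^\B_i, \Omega^\B_j$ with a common boundary of positive $(n{-}1)$-measure, that common boundary is the intersection of two convex sets with disjoint interiors and hence lies in a single affine hyperplane; this forces $\c^\ell_{ij}$ to be independent of $\ell$, and I denote the common value by $\c_{ij}$. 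The halfspace characterization $\Omega^\B_i = \bigcap_{j \neq i}\{y \in \B^n : \sscalar{\c_{ij},y} + \k_{ij} < 0\}$ follows.

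To close, I extract consistent parameters $\{\c_i\} \subset N^\perp$ from the triple-point cocycle. At every $p \in \Sigma_{uvw}$, Lemma~\ref{lem:boundary-normal-sum} gives $\sum_{(i,j) \in \cyclic(u,v,w)} \n_{ij} = 0$; substituting $\n_{ij} = \c_{ij} + \k_{ij} p$ and using $\sum_{\cyclic} \k_{ij} = 0$ yields $\c_{ij} + \c_{jk} + \c_{ki} = 0$. Since the adjacency graph of $\Omega$ is connected (Lemma~\ref{lem:LA-connected}), Lemma~\ref{lem:LA} then produces $\{\c_i\} \subset N^\perp$, unique under the normalization $\sum_i \c_i = 0$, such that $\c_{ij} = \c_i - \c_j$. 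Reflecting across the equator and inverting $\pi$ transports the halfspace characterization to the Voronoi representation~\eqref{eq:intro-Voronoi-rep} on all of $\S^n$ (and thence on $\R^n$).

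The hard part will be the convexity step, where one must synthesize the local metric rigidity at triple points (and at quadruple points and higher-order singularities, if present) with the global connectedness-modulo-$\S^0$-symmetry hypothesis to exclude inward ``dents''. This is where I expect the two-dimensional simplicial complex invoked in the introduction to play its role: organizing the incidence data (cells as vertices, interface components as edges, triple-point components as $2$-cells) into a complex whose first cohomology, with coefficients in $\R^{n+1}$, encodes precisely the obstruction to both the halfspace representation and the cocycle structure. Vanishing of $H^1$, which is what the connectedness hypothesis is designed to guarantee, will then deliver both the convexity of the projected cells and the required coboundary $\c^\ell_{ij} = \c_i - \c_j$ in a single stroke.
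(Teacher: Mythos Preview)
Your overall architecture---stereographic projection to $\S^n$, orthogonal projection to $\B^n$, convexity of the projected cells, cocycle extraction---matches the paper's. But you have the logical dependency between convexity and the vanishing of $H^1$ \emph{reversed}, and this is a genuine gap.

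In the paper, convexity is established \emph{first} and \emph{without} any cohomological input: at each point of $\Sigma^1$ the projected cell is locally a halfspace, and at each point of $\Sigma^2$ it is locally an intersection of two halfspaces (this is a consequence of the local $\Y$-structure from regularity, not of the $120^\circ$ angle value per se). These cover $\partial \Omega^\B_i$ modulo an $\H^{n-2}$-null set ($\Sigma^{\ge 3}$), and a self-contained local-to-global convexity lemma (Proposition~\ref{prop:local-to-global-convexity}) then upgrades local convexity plus connectedness to global convexity. Only \emph{after} convexity is in hand does the paper prove $H^1(\SS) = 0$ (Proposition~\ref{prop:homology}), and the proof genuinely uses convexity: one contracts a polygonal loop emulating a given $1$-cycle onto a point inside the convex ball $\B^n$, tracking how the loop crosses $\Sigma^2$ to exhibit the cycle as a boundary of triangles. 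Your proposal to have ``vanishing of $H^1$\ldots\ deliver both the convexity\ldots\ and the required coboundary in a single stroke'' therefore has no mechanism: you would need to prove $H^1 = 0$ before convexity, and neither connectedness of cells alone nor the $120^\circ$ condition gives this.

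There is a second, related gap in your cocycle step. From triple points you correctly extract $\c_{ij} + \c_{jk} + \c_{ki} = 0$ whenever $\Sigma_{ijk} \neq \emptyset$, but Lemma~\ref{lem:LA} requires the sum to vanish along \emph{every} cycle in the adjacency graph, not just triangles. Bridging this is exactly what $H^1(\SS) = 0$ provides: every $1$-cycle is a sum of boundaries of $2$-simplices, so the $3$-cycle identities propagate. You seem aware of this, but since the cohomology vanishing itself rests on convexity, the correct order is connectedness $\Rightarrow$ convexity $\Rightarrow$ $H^1 = 0$ $\Rightarrow$ $\c_{ij} = \c_i - \c_j$.
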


In the next section, we will apply Theorem \ref{thm:spherical-Voronoi-prelim} to the cluster obtained from the connected components (modulo $\S^0$-symmetry) of our minimizing $q$-cluster, and a-posteriori deduce when $q \leq n+1$ that all of its cells had to already be connected. We will also verify there that Theorem \ref{thm:spherical-Voronoi-prelim} implies Theorem \ref{thm:intro-spherical-Voronoi-prelim}.

\subsection{Stereographic and orthogonal projections}

For the proof of Theorem \ref{thm:spherical-Voronoi-prelim} , it will be essential to consider several equivalent representations of a cluster on $M^n \in \{ \R^n , \S^n \}$. 

The first step is to pass from $\R^n$ to $\S^n$  by means of an appropriate stereographic projection which preserves the $\S^0$-symmetry. Fix an isometric embedding $\S^n \subset \R^{n+1} = \R^n \times \sspan(P)$ for some $P \in \S^n$. 
The standard stereographic projection from $\R^n$ to $\S^n$ is then defined by mapping $x \in \R^n$ to the unique point $T(x) \in \S^n \setminus \{P\}$ so that $x$, $T(x)$ and the (North) Projection Pole $P \in \S^n$ are colinear. Recall that $\bar \R^n = \R^n \cup \{\infty\}$ denotes the one-point-at-infinity compactification of $\R^n$, and set $T(\infty) = P$. It is well-known that $T$ is a conformal diffeomorphism between $\bar \R^n$ and $\S^n$, which maps (generalized) spheres onto spheres. 

Given a diffeomorphism $T : M \rightarrow M'$ between two smooth differentiable manifolds and a cluster $\Omega = (\Omega_1,\ldots,\Omega_q)$ on $M$, we write $T \Omega$ for the cluster $(T \Omega_1,\ldots,T \Omega_q)$ on $M'$. If $A \subset M$ is a measurable subset, we similarly write $\Omega \cap A$ for the cluster $(\Omega_1 \cap A,\ldots, \Omega_q \cap A)$. Clearly, a diffeomorphism between two weighted Riemannian manifolds $(M_1,g_1,\mu_1)$ and $(M_2,g_2,\mu_2)$ preserves the regularity of a cluster, and this is still the case if $T$ is only a diffeomorphism between $M_1 \setminus A_1$ and $M_2 \setminus A_2$ for some closed null-sets $A_1$ and $A_2$ which lie in the interior of the cells of $\Omega$ and $T \Omega$, respectively.

\begin{lemma} \label{lem:stereo}
Let $\Omega = \Omega^\R$ be a bounded regular $q$-cluster on $\R^n$ with $\S^0$-symmetry about $N^{\perp}$ which is spherical perpendicularly to its hyperplane of symmetry. 
Then for any stereographic projection $T$ obtained by first translating $\Omega^\R$ by an element of $N^{\perp}$ and then applying the standard stereographic projection, the resulting cluster $\Omega^{\S} = T \Omega^\R$ is a regular cluster on $\S^n$ with $\S^0$-symmetry about $N^{\perp}$ which is spherical perpendicularly to its hyperplane of symmetry. The Projection Pole $P \in \S^n$ is an interior point of $\Omega^\S_q$, where $\Omega^\R_q$ is the unique unbounded cell of $\Omega^\R$. \\
Conversely, let $\Omega = \Omega^\S$ be a regular cluster on $\S^n$ with $\S^0$-symmetry about $N^{\perp}$ which is spherical perpendicularly to its hyperplane of symmetry. Then for any stereographic projection $T$ obtained by choosing a Projection Pole $P \in \S^n \cap N^{\perp}$ which lies in the interior of some cell (a non-vacuous condition), and applying the corresponding stereographic projection onto $\R^n$, the resulting cluster $\Omega^\R = T \Omega^\S$ is a bounded regular cluster on $\R^n$ with $\S^0$-symmetry about $N^{\perp}$ which is spherical perpendicularly to its hyperplane of symmetry.  
\end{lemma}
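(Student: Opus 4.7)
The plan is to verify each asserted property of the image cluster separately, leaning on three elementary facts about stereographic projection: (i) when the Projection Pole $P \in \S^n$ satisfies $P \in N^\perp$ (which is automatic in the standard setup since $N \in \R^n = P^\perp$), the stereographic map $T$ commutes with reflection about $N^\perp$; (ii) $T$ is a smooth conformal diffeomorphism between $\S^n \setminus \{P\}$ and $\R^n$, hence preserves smoothness and orthogonal intersections; and (iii) $T$ maps generalized geodesic spheres to generalized geodesic spheres. In the forward direction, translating $\Omega^\R$ by a vector in $N^\perp$ preserves $\S^0$-symmetry about $N^\perp$, perpendicular sphericity, and regularity; these translations are used simply to realize arbitrary choices of ``effective origin'' relative to which the standard stereographic map operates.

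For $\S^0$-symmetry: fact (i) ensures that the reflection symmetry carries over under $T$. For regularity: by Theorem \ref{thm:Almgren} and the definition of regular cluster, this is a local property on $\Sigma$, and $T$ is a smooth diffeomorphism in a neighborhood of every point of $\Sigma$ (in the forward direction, the Pole $P$ is the image of infinity, which lies in the open unbounded cell $\Omega^\R_q$ of the bounded cluster $\Omega^\R$, so $P \in \interior \Omega^\S_q$; in the reverse direction, the assumption that $P$ lies in the interior of some cell places $P$ away from $\Sigma$). Hence the cone-model structure of $\Sigma^2, \Sigma^3, \Sigma^4$ required by Theorem \ref{thm:regularity} and the density upper bound (\ref{eq:density}) all pull/push through the diffeomorphism.

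For perpendicular sphericity: by Definition \ref{def:spherical-S0} each connected (modulo $\S^0$-symmetry) component of $\Sigma_{ij}$ is a subset of a (generalized) geodesic sphere that, because it meets $M^{n-1}$ orthogonally by Remark \ref{rem:equator-perp} (equivalently, has quasi-center in $N^\perp$ --- a short check using $S = \S^n \cap E(\c,\k)$ on $\S^n$ or $\n = \c + \k p$ on $\R^n$). Fact (iii) sends generalized spheres to generalized spheres, and fact (ii) preserves the orthogonal intersection with the equator (since $P \in N^\perp$, the image of $M^{n-1}$ under $T$ is again the equator $M^{n-1}$). So each image interface-component lies on a generalized geodesic sphere perpendicular to $N^\perp$, and its quasi-center lies in $N^\perp$, verifying all conditions of Definition \ref{def:spherical-S0} for the image cluster.

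For boundedness in the reverse direction: since $P$ is in the open cell $\Omega^\S_q$, there is $r > 0$ with $B(P, r) \subset \Omega^\S_q$; the complement $\S^n \setminus B(P, r)$ is at positive distance from $P$, and thus its image under $T$ is a bounded subset of $\R^n$ containing all cells $\Omega^\R_i$ for $i \ne q$ and all of $\Sigma$. For the non-vacuous claim that such a $P$ exists in $\S^n \cap N^\perp$: by Remark \ref{rem:equator-perp} we have $\H^{n-1}(M^{n-1} \cap \Sigma) = 0$, while $M^{n-1} = \S^n \cap N^\perp$ has positive $\H^{n-1}$ measure, so $(\S^n \cap N^\perp) \setminus \Sigma$ is non-empty, and every point of it lies in the interior of some cell by Theorem \ref{thm:Almgren} \ref{it:Almgren-ii}. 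The main obstacle I anticipate is purely bookkeeping: keeping track of the interplay between the $\R^{n+1}$-ambient descriptions of perpendicularity (quasi-center in $N^\perp$) under $T$, but this reduces to facts (i)--(iii) above.
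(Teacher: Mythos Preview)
Your proposal is correct and follows essentially the same approach as the paper's proof, which is in fact considerably terser (relying on ``sphericity is obviously preserved'' and ``reflection symmetry about $N^\perp$ is obvious by construction''). The only noteworthy difference is your argument for the non-vacuous condition: you invoke Remark~\ref{rem:equator-perp} to get $\H^{n-1}(\S^{n-1} \cap \Sigma) = 0$, whereas the paper argues directly that if the entire equator lay in $\Sigma$, then by reflection symmetry some cell would have an interface with itself, contradicting regularity---both arguments are valid and short.
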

\begin{proof}
Sphericity is obviously preserved under stereographic projections. As $\Omega^\R_q$ contains a neighborhood of infinity, $\Omega^\S_q$ contains the Projection Pole $P$ in its interior. Conversely, a Projection Pole $P \in \S^n \cap N^{\perp}$ which lies in the interior of some cell always exists, since otherwise the entire equator $\S^{n-1}$ would be contained in $\Sigma^\S$; the latter is impossible since this would mean by reflection symmetry that a cell has an interface with itself, contradicting regularity. 
In either case, the cluster's regularity follows by the remarks preceding the lemma, and reflection symmetry about $N^{\perp}$ is obvious by construction.
\end{proof}
\begin{remark} \label{rem:no-canonical}
Note that stationarity was not assumed in the above lemma, and it is not obvious that it will be preserved -- we will deduce this later on in Corollary \ref{cor:ciki}. For this reason, the assumption that the curvatures of different connected components of $\Sigma_{ij}$ are the same was deliberately omitted from Definition \ref{def:spherical-S0}. 
Also note that there is no canonical way to pass between $\R^n$ and $\S^n$ while preserving the $\S^0$-symmetry, as we have the freedom to select which point in $N^{\perp}$ will be projected onto the South Projection Pole $-P$. 
\end{remark}

We will assume throughout this section that $\Omega$ is as in Lemma \ref{lem:stereo}. If $\Omega = \Omega^\R$ is such a cluster on $\R^n$, we denote by $\Omega^\S$ the corresponding cluster on $\S^n$ obtained by applying any stereographic projection as in Lemma \ref{lem:stereo}; and vice versa, if $\Omega = \Omega^\S$ is such a cluster on $\S^n$, we denote by $\Omega^\R$ the corresponding cluster on $\R^n$.
For both $\M \in \{\R,\S\}$, every connected component $\Sigma^{\M,\ell}_{ij}$ of $\Sigma^\M_{ij}$ lies in a (generalized) geodesic sphere with quasi-center $\c^{\M,\ell}_{ij} \in N^{\perp}$ and curvature $\k^{\M,\ell}_{ij}$. Recall that there are finitely many such connected components by Remark \ref{rem:finite-ccs}. 
Note that in view of Remark \ref{rem:no-canonical},
 there is no canonical relation between $\c^{\R,\ell}_{ij}$ and $\c^{\S,\ell}_{ij}$ nor $\k^{\R,\ell}_{ij}$ and $\k^{\S,\ell}_{ij}$. However, we can state:

\begin{lemma} \label{lem:sum-c-k}
Given $\{\M , \bar \M \} = \{\R , \S \}$ and $p \in \Sigma^\M_{uvw}$, let $\bar p = T p$ denote the corresponding point on $\Sigma^{\bar \M}_{uvw}$ given by the stereographic projection $T : \M^n \rightarrow \bar \M^n$ from Lemma \ref{lem:stereo}. Then the following are equivalent:
\begin{enumerate}[(i)]
\item \label{it:sums1}
$\sum_{(i,j) \in \cyclic(u,v,w)} \c^{\M,\ell}_{ij} = 0$ and $\sum_{(i,j) \in \cyclic(u,v,w)} \k^{\M,\ell}_{ij} = 0$ at $p$. 
\item \label{it:sums1b}
$\sum_{(i,j) \in \cyclic(u,v,w)} \n^{\M}_{ij} = 0$ and $\sum_{(i,j) \in \cyclic(u,v,w)} \k^{\M,\ell}_{ij} = 0$ at $p$. 
\item \label{it:sums2}
 $\sum_{(i,j) \in \cyclic(u,v,w)} \c^{\bar \M,\ell}_{ij} = 0$ and $\sum_{(i,j) \in \cyclic(u,v,w)} \k^{\bar \M,\ell}_{ij} = 0$ at $\bar p$. 
 \item \label{it:sums2b}
 $\sum_{(i,j) \in \cyclic(u,v,w)} \n^{\bar \M}_{ij} = 0$ and $\sum_{(i,j) \in \cyclic(u,v,w)} \k^{\bar \M,\ell}_{ij} = 0$ at $\bar p$. 
\end{enumerate}
\end{lemma}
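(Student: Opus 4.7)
The plan is to decompose the four-way equivalence into two separate pieces: first, the within-space equivalences $(\text{i}) \Leftrightarrow (\text{ii})$ and $(\text{iii}) \Leftrightarrow (\text{iv})$; and second, the across-space equivalence $(\text{ii}) \Leftrightarrow (\text{iv})$, which is the main content.

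The within-space equivalences are immediate from the definition $\c^{\M,\ell}_{ij} = \n^\M_{ij} - \k^{\M,\ell}_{ij}\, p$, valid on both model spaces. Indeed, at any $p \in \Sigma^\M_{uvw}$,
\[
\sum_{(i,j) \in \cyclic(u,v,w)} \c^{\M,\ell}_{ij} \;=\; \sum_{(i,j) \in \cyclic(u,v,w)} \n^{\M}_{ij} \;-\; p \sum_{(i,j) \in \cyclic(u,v,w)} \k^{\M,\ell}_{ij} ,
\]
and since all four statements share the hypothesis $\sum \k^{\M,\ell}_{ij} = 0$, this reduces to $\sum \c^{\M,\ell}_{ij} = \sum \n^{\M}_{ij}$, giving $(\text{i}) \Leftrightarrow (\text{ii})$. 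The same argument yields $(\text{iii}) \Leftrightarrow (\text{iv})$.

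For the across-space equivalence, let $T$ denote the stereographic projection relating $\M$ and $\bar\M$ from Lemma \ref{lem:stereo}, and let $\Psi$ be its conformal factor, so $T^* g_{\bar \M} = e^{2\Psi} g_\M$ in a neighborhood of $p$ (disjoint from the projection pole, which lies in the interior of some cell by Lemma \ref{lem:stereo} and hence avoids $\Sigma^2$). Conformality forces $dT$ to scale every tangent vector by the same factor $e^{\Psi}$, and hence the corresponding unit normals at $p$ and $\bar p = Tp$ are related by $\n^{\bar \M}_{ij} = e^{-\Psi(p)}\, dT(\n^{\M}_{ij})$. Summing over the cyclic triple and using linearity of $dT$,
\[
\sum_{(i,j) \in \cyclic(u,v,w)} \n^{\bar \M}_{ij} \;=\; e^{-\Psi(p)}\, dT \Bigl( \sum_{(i,j) \in \cyclic(u,v,w)} \n^{\M}_{ij} \Bigr),
\]
so the two normal sums vanish simultaneously, as $dT$ is a linear isomorphism.

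For the curvatures, apply the conformal change-of-mean-curvature formula (\ref{eq:conformal-change-of-H}), viewing $T$ as an isometry $(M,e^{2\Psi}g_\M) \to (\bar M,g_{\bar\M})$ and dividing by $n-1$, to obtain at each interface:
\[
\k^{\bar \M,\ell}_{ij}(Tp) \;=\; e^{-\Psi(p)} \bigl( \k^{\M,\ell}_{ij}(p) + \sscalar{\nabla \Psi, \n^{\M}_{ij}}(p) \bigr) .
\]
Summing cyclically gives
\[
\sum_{(i,j) \in \cyclic(u,v,w)} \k^{\bar \M,\ell}_{ij} \;=\; e^{-\Psi(p)} \Bigl( \sum_{(i,j) \in \cyclic(u,v,w)} \k^{\M,\ell}_{ij} + \sscalar{\nabla \Psi, \sum_{(i,j) \in \cyclic(u,v,w)} \n^{\M}_{ij}} \Bigr) .
\]
Under the common hypothesis $\sum \n^{\M}_{ij} = 0$ of (ii) (and the just-established equivalent condition $\sum \n^{\bar\M}_{ij}=0$ of (iv)), the gradient term vanishes, so $\sum \k^{\bar\M,\ell}_{ij} = e^{-\Psi(p)} \sum \k^{\M,\ell}_{ij}$; the non-vanishing conformal factor yields $(\text{ii}) \Leftrightarrow (\text{iv})$. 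There is no substantive obstacle here — the argument is a direct unpacking of the conformal transformation laws for unit normals and for normalized mean curvature, applied pointwise at a triple point — the only care needed is to verify that the projection pole can be chosen so as to avoid $\Sigma$, which is exactly what Lemma \ref{lem:stereo} provides.
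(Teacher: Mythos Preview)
Your proof is correct and follows essentially the same approach as the paper: the within-space equivalences via $\c = \n - \k p$, and the across-space equivalence via the conformal transformation laws for unit normals and mean curvature. The only minor stylistic difference is that the paper handles the normal-sum condition by observing that three unit vectors in a $2$-plane sum to zero iff they meet at $120^\circ$ angles, and then invokes angle preservation under conformal maps; your argument bypasses this by applying the linear isomorphism $dT$ directly to the sum, which is arguably cleaner.
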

\begin{proof}
Recall by definition of the quasi-center that $\c = \n  - \k p$, and so assertions \ref{it:sums1} and \ref{it:sums1b} are trivially equivalent, and so are assertions \ref{it:sums2} and \ref{it:sums2b}. The three unit normals $\{ \n^\M_{ij} \}_{(i,j) \in \cyclic(u,v,w)}$ lie in the two-dimensional subspace perpendicular to $T_p \Sigma^\M_{uvw}$; consequently, the statement that they sum to zero is equivalent to the statement that any pair forms $120^\circ$ degree angles. As a stereographic projection is a conformal map, it preserves angles, and therefore $\sum_{(i,j) \in \cyclic(u,v,w)} \n^{\bar \M}_{ij} = 0$ at $\bar p$. Furthermore, recall from (\ref{eq:conformal-change-of-H}) that under a conformal change of metric $\tilde g = e^{2 \Psi} g$, the normalized mean-curvature $\k$ of a hypersurface $\Sigma$ transforms as follows:
\[
\k^{\tilde g} = e^{-\Psi} ( \k^g + \scalar{\nabla^g \Psi,\n^g} ) .
\]
Summing this at $\bar p$ over $(i,j) \in \cyclic(u,v,w)$, and using that $\sum_{(i,j) \in \cyclic(u,v,w)} \k^{\M,\ell}_{ij} = 0$ and again that $\sum_{(i,j) \in \cyclic(u,v,w)} \n^{\M}_{ij} = 0$ at $p$, it follows that $\sum_{(i,j) \in \cyclic(u,v,w)} \k^{\bar \M,\ell}_{ij} = 0$ at $\bar p$. This establishes the equivalence of assertions \ref{it:sums1b} and \ref{it:sums2b}, thereby concluding the proof. \end{proof}

The next step is to orthogonally project from $\S^n$ to $\B^n$. \begin{definition}[$\B^n$ and $\Pi$]
We denote by $\B^n$ the \textbf{open} unit-ball in $N^{\perp}$, and by $\Pi: \S^n \to N^\perp$ the orthogonal projection from $\S^n$ to $N^{\perp}$. 
\end{definition}
Clearly, $\Pi$ is a diffeomorphism between $\S^n_+$ and $\B^n$, and so $\Omega^\B := \Pi (\Omega^\S \cap \S^n_+)$ is a regular cluster. By assumption, all of its cells are connected. Note that thanks to reflection symmetry about $N^{\perp}$, the openness of the cells, and the fact that a regular cluster cannot have an interface between a cell and itself, the cluster $\Omega^\B$ uniquely determines $\Omega^\S$ on the entire $\S^n$ (not just outside the equator). 
Also note that $\Pi$ is certainly not conformal, and so $\Sigma^\B_{ij}$, $\Sigma^\B_{jk}$ and $\Sigma^\B_{ki}$ will no longer meet  in $120^\circ$ angles at $\Sigma^\B_{ijk}$ (in general). See Figure~\ref{fig:S-and-B}.

\subsection{$\Omega^\B$ has flat convex cells}

\begin{proposition}\label{prop:convex}
    Let $\Omega$ be as in Theorem \ref{thm:spherical-Voronoi-prelim}. Then:
       \begin{enumerate}[(i)]
      \item For all $i< j$ so that $\Sigma^\B_{ij} \neq \emptyset$, $\Sigma^\B_{ij}$ is convex and flat, i.e. $\II^{ij} \equiv 0$. 
      \item For all $i<j$ so that $\Sigma^\B_{ij} \neq \emptyset$, $\Sigma^\S_{ij}$ and $\Sigma^\R_{ij}$ are connected modulo their $\S^0$-symmetry; consequently, there are $\c^\S_{ij} , \c^\R_{ij} \in N^{\perp}$ and $\k^\S_{ij} , \k^\R_{ij} \in \R$ so that  $\Sigma^\B_{ij}$ is contained in the affine hyperplane $\{ p \in N^{\perp} \; ; \; \sscalar{\c^\S_{ij},p} + \k^\S_{ij} = 0\}$, $\Sigma^\S_{ij}$ is contained in the geodesic sphere on $\S^n$ with quasi-center $\c^\S_{ij}$ and curvature $\k^\S_{ij}$, and $\Sigma^\R_{ij}$ is contained in a (generalized) sphere in $\R^n$ with quasi-center $\c^\R_{ij}$ and curvature $\k^\R_{ij}$. 
      \item  The (non-empty) cells of $\Omega^\B$ in $\B^n$ are convex polyhedra given by:
    \begin{equation} \label{eq:polyhedron-formula}
    \Omega^\B_i = \bigcap_{j \neq i \; ; \; \Sigma^\B_{ij} \neq \emptyset } \set{ p \in \B^n \; ;\;  \sscalar{\c^\S_{ij},p} + \k^\S_{ij} < 0 } .
    \end{equation}
        \end{enumerate}
\end{proposition}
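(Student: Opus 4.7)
The plan is to pass to the orthogonal projection $\Pi : \S^n_+ \to \B^n$ and reduce the proposition to a problem of convex geometry in $\B^n$. The starting observation for (i) is that each connected component $\Sigma^{\S,\ell}_{ij}$ lies in a geodesic sphere $\S^n \cap E^\ell_{ij}$ whose affine hyperplane $E^\ell_{ij} = \{x \in \R^{n+1} : \scalar{\c^{\S,\ell}_{ij}, x} + \k^{\S,\ell}_{ij} = 0\}$ contains the direction $N$, since $\c^{\S,\ell}_{ij} \in N^\perp$ by sphericity perpendicularly to $N^\perp$. Therefore $\Pi$ sends $\Sigma^{\S,\ell}_{ij} \cap \S^n_+$ into the $(n-1)$-dimensional affine hyperplane $F^\ell_{ij} := E^\ell_{ij} \cap N^\perp$ of $N^\perp$, establishing flatness $\II^{ij} \equiv 0$.

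The heart of the argument is the convexity of each cell $\Omega^\B_i$. The cell is open, and connected because $\Omega^\S_i \cap \overline{\S^n_+}$ is connected (by the connected-modulo-$\S^0$-symmetry hypothesis) and $\Pi$ is a diffeomorphism $\S^n_+ \to \B^n$. I would verify local convexity of $\overline{\Omega^\B_i}$ at every boundary point and invoke the Tietze--Nakajima theorem. The essential simplification is that all interfaces of the cluster in $\B^n$ are flat: combined with the local finiteness of Theorem~\ref{thm:Almgren} and the stratification of Theorem~\ref{thm:regularity}, this implies that only finitely many interfaces meet at any point of $\Sigma^\B$, so a neighborhood of any boundary point is partitioned by a finite hyperplane arrangement into open convex polyhedral cones. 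Each such cone must coincide with the local piece of a single cluster cell, for otherwise two adjacent cones belonging to the same cluster cell would share an $(n-1)$-dimensional face lying in a hyperplane of the arrangement which (by construction) contains an interface of the cluster, contradicting the defining property that interfaces separate distinct cells. Hence $\overline{\Omega^\B_i}$ is locally a convex polyhedral cone at every boundary point; intersected with the (convex) ambient $\overline{\B^n}$, it remains locally convex at boundary points on $\partial \B^n$ as well, and Tietze--Nakajima then yields that $\overline{\Omega^\B_i}$, being a connected closed locally convex subset of $N^\perp$, is globally convex.

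Given convexity of the cells, (ii) and (iii) follow by standard convex geometry. Between two adjacent convex cells $\Omega^\B_i, \Omega^\B_j$, the Hahn--Banach theorem produces a unique hyperplane containing the $(n-1)$-dimensional common boundary $\Sigma^\B_{ij}$; this forces all components $\Sigma^{\B,\ell}_{ij}$ to share a common $(\c^\S_{ij}, \k^\S_{ij})$, and $\Sigma^\B_{ij}$ -- the relative interior of the convex intersection $\overline{\Omega^\B_i} \cap \overline{\Omega^\B_j}$ inside this hyperplane -- is connected, giving (ii). For the polyhedral formula \eqref{eq:polyhedron-formula}, one inclusion is immediate from convexity. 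For the reverse, any $p$ in the asserted intersection but outside $\Omega^\B_i$ lies in some other cell $\Omega^\B_k$ with $k \neq i$; the segment from $p$ to an interior point of $\Omega^\B_i$, staying in the convex set $\bigcap_j H_{ij}$, must enter $\Omega^\B_i$ for the first time across an interface $\Sigma^\B_{il}$ for some neighbor $l$ of $i$; but just before this crossing the segment was in $\Omega^\B_l$, which lies in the open half-space opposite to $H_{il}$ relative to $F_{il}$, contradicting containment in $H_{il}$. The assertions on $\S^n$ and $\R^n$ follow by inverting $\Pi$ and applying the stereographic projection of Lemma~\ref{lem:stereo}, which maps generalized spheres to generalized spheres.

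The main obstacle I anticipate is in the local convexity step at points of $\Sigma^{\B,4}$, where Theorem~\ref{thm:regularity} provides only the dimension bound $\dim_{\H} \Sigma^{\B,4} \leq n-4$ without any structural restriction on the tangent cone (non-simplicial flat minimal cones being known to arise once $n \geq 5$). The argument above circumvents this by exploiting that the interfaces have already been shown to be flat, so the local structure of $\Sigma^\B$ near $\Sigma^{\B,4}$ is entirely encoded by the finitely many hyperplanes containing the adjacent interface pieces -- one does not require any further control on the singularity itself beyond the local finiteness already built into regularity.
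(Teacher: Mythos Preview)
Your overall architecture matches the paper's: establish flatness of the projected interfaces, then convexity of each $\Omega^\B_i$ via a local-to-global principle, and derive (i)--(iii) from convexity. Parts (ii) and (iii) are handled essentially as in the paper (modulo minor genericity issues in your segment argument for (iii), which are easily fixed). The genuine gap is precisely where you anticipated it, but your proposed workaround does not close it.

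The flaw in the hyperplane-arrangement argument is twofold. First, ``if a cell's local piece contained more than one cone, it would contain two \emph{adjacent} cones'' presumes that $\Omega^\B_i \cap W$ is connected for small $W$ around $p$; global connectedness of $\Omega^\B_i$ does not imply this, and at a point of $\Sigma^{\B,4}$ nothing rules out, say, two opposite cones of the arrangement belonging to the same cell. Second, even granting two adjacent cones $C_k, C_l \subset \Omega^\B_i$ with shared face in a hyperplane $F^{\ell_0}_{a_0 b_0}$, there is no contradiction: that hyperplane was included because some interface component $\Sigma^{\B,\ell_0}_{a_0 b_0}$ lies in it and accumulates at $p$, but the \emph{face} between $C_k$ and $C_l$ is merely another sub-cone of $F^{\ell_0}_{a_0 b_0} \cap W$ and need not overlap $\Sigma^{\B,\ell_0}_{a_0 b_0}$ at all. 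The face having $\Omega_i$ on both sides says nothing about the interface, which separates $\Omega_{a_0}$ from $\Omega_{b_0}$ elsewhere in the same hyperplane. So ``interfaces separate distinct cells'' is not violated. Knowing the finite collection of hyperplanes through $p$ simply does not determine which cones belong to which cells.

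The paper avoids this entirely. Rather than verifying local convexity at $\Sigma^{\B,\ge 3}$, it invokes a refined Tietze--Nakajima theorem (Proposition~\ref{prop:local-to-global-convexity}, from \cite{EMilmanNeeman-GaussianMultiBubble}): an open connected set that is locally convex at every boundary point \emph{outside a Borel set $B$ with $\H^{n-2}(B)=0$} is convex. One then takes $B = \Pi(\Sigma^{\S,\ge 3})$, which has $\H^{n-2}$-measure zero since $\dim_\H \Sigma^{\S,3} \le n-3$ and $\dim_\H \Sigma^{\S,4} \le n-4$, and verifies local convexity only at points of $\Pi(\Sigma^{\S,\le 2})$ (and $\partial \B^n$), where the $\Y$-structure from Theorem~\ref{thm:regularity} combined with flatness gives it immediately (Lemma~\ref{lem:local-convexity}). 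This is the key technical ingredient your proposal is missing.
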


Before presenting a proof of Proposition \ref{prop:convex} at the end of this subsection, we first collect some preliminary observations. Recall that at $p \in \Sigma_{ij}^{\S,\ell}$, $\c_{ij}^{\S,\ell}$ is related to
the normal $\n_{ij}$ (pointing from $\Omega_i^\S$ to $\Omega_j^\S$) by $\n_{ij} = \c_{ij}^{\S,\ell} + \k_{ij}^{\S,\ell} p$.
Since $\Sigma_{ij}^{\S,\ell}$ coincides with the sphere $\{x \in \S^n \; ;\; \sscalar{\c_{ij}^{\S,\ell}, x} + \k_{ij}^{\S,\ell} = 0\}$
in a neighborhood of $p$, it follows by regularity of the cluster (described in Theorem \ref{thm:Almgren} \ref{it:Almgren-iii})
that for every $p \in \Sigma_{ij}^{\S,\ell}$ there is a neighborhood $U_p$ in $\S^n$ such that
\begin{equation}\label{eq:local-convexity-Sigma1}
\Omega_i^\S \cap U_p = \{x \in U_p\; ;\; \sscalar{\c_{ij}^{\S,\ell}, x} + \k^{\S,\ell}_{ij} < 0\}
\end{equation}
(the direction of the inequality may be seen from the orientation of the normal $\n_{ij}$).
We can extend this observation to triple-points: around every $p \in \Sigma_{ijk}^{\S}$ there is a neighborhood $U_p$ on which
$\Sigma^\S$ is diffeomorphic to the standard tripod $\Y$. On the neighborhood $U_p$,
$\Sigma_{ij}^\S$ and $\Sigma_{ik}^\S$ have one connected component each, so let
$\c^{\S,\ell_j}_{ij}$ and $\c^{\S,\ell_k}_{ik} \in N^\perp$ be the quasi-centers associated with these components.
Since again we know the orientation of the interfaces on $U_p$, it follows that 
\begin{equation}\label{eq:local-convexity-Sigma2}
    \Omega_i^\S \cap U_p = \{x \in U_p\; ;\; \sscalar{\c^{\ell_j}_{ij}, x} + \k^{\ell_j}_{ij} < 0 \text{ and } \sscalar{\c^{\ell_k}_{ik}, x} + \k^{\ell_k}_{ik} < 0\}.
\end{equation}
After projecting to $\B^n$,
we can summarize~\eqref{eq:local-convexity-Sigma1}
and~\eqref{eq:local-convexity-Sigma2} as follows. We denote $\Sigma^{\S, \leq 2} := \Sigma^{\S,1} \cup \Sigma^{\S,2}$ and $\Sigma^{\S, \geq 3} := \Sigma^{\S,3} \cup \Sigma^{\S,4}$. 

\begin{lemma}\label{lem:local-convexity}
    For every $b \in \Pi (\Sigma^{\S,\leq 2})$ and every $i$, there is a
    neighborhood $W_b$ of $b$ in $N^{\perp}$ and an open, convex set $A \subset N^\perp$ such
    that $\Omega_i^\B \cap W_b = A \cap \B^n \cap W_b$. Moreover, $A$ can be
    written as the intersection of sets of the form
    $\{ x \in N^{\perp} \; ; \; \sscalar{\c^{\S,\ell}_{ij},x} + \k^{\S,\ell}_{ij} < 0 \}$
    for $\c^{\S,\ell}_{ij}$ and $\k^{\S,\ell}_{ij}$ the quasi-center and curvature of a connected component 
     $\Sigma^{\S,\ell}_{ij}$ of a non-empty interface $\Sigma^{\S}_{ij} \neq \emptyset$. \end{lemma}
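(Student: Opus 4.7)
The plan is to transport the local sphere-side descriptions \eqref{eq:local-convexity-Sigma1} and \eqref{eq:local-convexity-Sigma2} through the orthogonal projection $\Pi:\S^n\to N^\perp$. The algebraic key is that every quasi-center $\c^{\S,\ell}_{ij}$ lies in $N^\perp$ by Definition \ref{def:spherical-S0}, so the affine functional $x\mapsto \sscalar{\c^{\S,\ell}_{ij},x}+\k^{\S,\ell}_{ij}$ on $\R^{n+1}$ factors as $y\mapsto \sscalar{\c^{\S,\ell}_{ij},y}+\k^{\S,\ell}_{ij}$ through $\Pi$, since any $x\in\R^{n+1}$ decomposes as $x=\Pi(x)+\sscalar{x,N}N$ and $N\perp \c^{\S,\ell}_{ij}$. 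Hence each halfspace appearing in the sphere-side descriptions descends to a genuine affine halfspace of $N^\perp$, and we can read off $W_b$ and $A$ directly from the local picture on $\S^n$.

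First I would handle the interior case $b\in\B^n$. Here $b=\Pi(p)$ for a unique $p\in\S^n_+$, and since $b\in\Pi(\Sigma^{\S,\leq 2})$, necessarily $p\in\Sigma^{\S,\leq 2}$. Shrinking the neighborhood $U_p$ from \eqref{eq:local-convexity-Sigma1} or \eqref{eq:local-convexity-Sigma2} we may assume $U_p\subset\S^n_+$, on which $\Pi$ is a diffeomorphism onto its image. Setting $W_b:=\Pi(U_p)\subset\B^n$ and letting $A\subset N^\perp$ be the corresponding single halfspace (when $p\in\Sigma^{\S,1}$) or intersection of two halfspaces (when $p\in\Sigma^{\S,2}$), the descent observation gives $\Omega^\B_i\cap W_b=A\cap W_b=A\cap\B^n\cap W_b$. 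Since $A$ is the intersection of at most two halfspaces, it is open and convex.

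Next I would handle the boundary case $b\in\partial\B^n$, which forces $b=p\in\Sigma^{\S,\leq 2}\cap M^{n-1}$. By Remark \ref{rem:equator-perp} interfaces and triple-point strata cross the equator perpendicularly, so the local models provided by Theorem \ref{thm:regularity} around $p$ (a smooth hypersurface in the $\Sigma^{\S,1}$ case, or a $\Y\times\R^{n-2}$ piece in the $\Sigma^{\S,2}$ case) are symmetric under reflection about $M^{n-1}$. Accordingly I can choose $U_p$ reflection-symmetric, so that \eqref{eq:local-convexity-Sigma1} or \eqref{eq:local-convexity-Sigma2} holds verbatim on $U_p$ with the same quasi-centers used on both hemispheres; this is consistent because $\c^{\S,\ell}_{ij}$ is constant on each connected (modulo $\S^0$-symmetry) component of $\Sigma^\S_{ij}$, and the reflection-symmetry of the sphere $\{\sscalar{\c^{\S,\ell}_{ij},x}+\k^{\S,\ell}_{ij}=0\}$ is automatic from $\c^{\S,\ell}_{ij}\in N^\perp$. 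Taking $W_b:=\Pi(U_p)$, which is now an open neighborhood of $b$ in $N^\perp$ straddling $\partial\B^n$, and defining $A$ as before, the identity $\Omega^\B_i\cap W_b=A\cap\B^n\cap W_b$ holds: restricting $U_p$ to $\overline{\S^n_+}$ before projecting is exactly what the extra ``$\cap\B^n$'' factor records, while the halfspace conditions on $A$ are untouched by the descent through $\Pi$.

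The one substantive point is the descent identity $\sscalar{\c^{\S,\ell}_{ij},x}=\sscalar{\c^{\S,\ell}_{ij},\Pi(x)}$, which is precisely the perpendicularity ``$\c^{\S,\ell}_{ij}\in N^\perp$'' guaranteed by our hypothesis that $\Omega$ is spherical \emph{perpendicularly to its hyperplane of symmetry}; this is what turns geodesic spheres on $\S^n$ into flat affine hyperplanes on $N^\perp$. The remainder is bookkeeping: verifying that $b\in\Pi(\Sigma^{\S,\leq 2})$ rules out more than two incident interface components in $U_p$, and that the resulting $A$ is convex as a finite intersection of halfspaces. I do not expect any genuine obstacle beyond carefully setting up the boundary case near the equator.
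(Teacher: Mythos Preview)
Your interior case ($b\in\B^n$) is correct and matches the paper's argument verbatim, including the key descent identity $\sscalar{\c^{\S,\ell}_{ij},x}=\sscalar{\c^{\S,\ell}_{ij},\Pi x}$.

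The boundary case has a genuine gap. When $p\in\Sigma^{\S,\leq 2}\cap M^{n-1}$ lies on the equator, your choice $W_b:=\Pi(U_p)$ cannot be an open neighborhood of $b$ in $N^\perp$: since $U_p\subset\S^n$ and $\Pi(\S^n)=\overline{\B^n}$, the set $\Pi(U_p)$ is always contained in the closed ball and does not ``straddle $\partial\B^n$'' as you claim. Making $U_p$ reflection-symmetric does not help, because both hemispheres project into the same region of $\overline{\B^n}$; the underlying issue is that $d\Pi$ drops rank along the equator, so $\Pi(U_p)$ has $b$ on its frontier in $N^\perp$ rather than in its interior.

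The paper's remedy is short and sidesteps the symmetry considerations entirely: one does not take $W_b=\Pi U_p$, but instead chooses an open ball $W_b\subset N^\perp$ around $b$ small enough that $W_b\cap\overline{\B^n}\subset\Pi U_p$. The halfspace description of $\Omega_i^\S\cap U_p$ then transfers, via the descent identity, to $\Omega_i^\B\cap W_b=A\cap\B^n\cap W_b$ exactly as in the interior case; the factor ``$\cap\,\B^n$'' accounts for the portion of $W_b$ lying outside the ball, not for any hemisphere restriction. Your appeal to Remark~\ref{rem:equator-perp} and a reflection-symmetric $U_p$ is unnecessary once $W_b$ is chosen this way.
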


\begin{proof}
    If $b = \Pi p$ with $p \in \Sigma_{ij}^\S \cap \S^n_+$, we
    apply~\eqref{eq:local-convexity-Sigma1} while taking $U_p$ small enough so
    that $U_p \subset \S^n_+$. Then $W_b = \Pi U_p \subset \B^n$ is a neighborhood of $b$, and
    because $\sscalar{\c_{ij}^{\S,\ell},\Pi x} = \sscalar{\c_{ij}^{\S,\ell},
    x}$, $\Omega_i^\B \cap W_b = \Pi(\Omega_i^\S \cap U_p) = \{x \in W_b \; ; \; \sscalar{\c_{ij}^{\S,\ell}, x} +\k_{ij}^{\S,\ell} < 0\}$.

    If $b = p \in \Sigma_{ij}^\S \cap N^\perp$ then again we apply~\eqref{eq:local-convexity-Sigma1}. This
    time, $\Pi U_p$ is no longer a neighborhood of $b$ in $N^{\perp}$ because $\Pi$ is not a diffeomorphism at the equator.
    However, we may find an open ball $W_b \subset N^\perp$ containing $b$ such that $W_b \cap \overline{\B^n} \subset \Pi U_p$.
    Then $(\Pi \Omega_i^\S) \cap W_b = \{x \in W_b \cap \overline{\B^n}\; ; \; \sscalar{\c_{ij}^{\S,\ell}, x} + \k^{\S,\ell}_{ij} < 0\}$.
    Since $\Omega_i^\B = \Pi \Omega_i^\S \setminus \partial \B^n$, it follows that
    $\Omega_i^\B \cap W_b = \B^n \cap \{x \in W_b \; ; \; \sscalar{\c_{ij}^{\S,\ell}, x} + \k^{\S,\ell}_{ij} < 0\}$. 
        
    This completes the case of $\Sigma^{\S,1}$. The argument for $\Sigma^{\S,2}$ is identical except that one
    uses~\eqref{eq:local-convexity-Sigma2} instead of~\eqref{eq:local-convexity-Sigma1}.
\end{proof}

We next recall the following local-to-global convexity result from \cite[Proposition 8.7]{EMilmanNeeman-GaussianMultiBubble}:
\begin{proposition}\label{prop:local-to-global-convexity}
    Let $K$ be an open, connected subset of $\R^n$, and let $B$ be a Borel set with $\H^{n-2}(B) = 0$.
    Assume that for every $p \in \partial K \setminus B$ there exists an open neighborhood $U_p$ of $p$ so that
    $K \cap U_p$ is convex. Then $K$ is convex.
\end{proposition}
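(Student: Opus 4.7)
The plan is to argue by contradiction. Suppose $K$ is not convex, so there exist $x_0, y_0 \in K$ with $[x_0, y_0] \not\subset K$. First I would use the hypothesis $\H^{n-2}(B) = 0$ to perturb the endpoints: by a Fubini-type argument on the $2n$-parameter family of segments with endpoints in $K$, the set of pairs $(x,y) \in K\times K$ whose segment hits $B$ is $\H^{2n-1}$-null in $\R^{2n}$. Meanwhile, the condition $[x, y] \not\subset K$ is an open condition on pairs $(x, y) \in K \times K$ whenever $[x_0, y_0]$ meets the open set $\interior(K^c)$; a short separate reduction handles the tangential case in which $[x_0,y_0] \subset \overline{K}$ only touches $\partial K$. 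Combining these, I may choose $x, y \in K$ close to $x_0, y_0$ with $[x, y] \not\subset K$ and $[x, y] \cap B = \emptyset$, so that the local convexity hypothesis is available at every boundary point of $K$ hit by the segment.

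Next I would carry out a local analysis at the first exit point. Let $t_1 := \inf\{t \in [0, 1] : x + t(y-x) \notin K\}$; since $K$ is open, $t_1 > 0$ and $p_1 := x + t_1(y-x) \in \partial K \setminus B$. Local convexity supplies a convex open neighborhood $U$ of $p_1$ with $K \cap U$ convex and nonempty. Since $K \cap U$ is an open convex set having $p_1$ on its boundary, there is a supporting functional $v \neq 0$ with $K \cap U \subset \{z : \scalar{v, z - p_1} < 0\}$. Segment points for $t$ slightly less than $t_1$ lie in $K \cap U$, forcing $\scalar{v, y-x} > 0$; hence for $t$ slightly greater than $t_1$, $\scalar{v, x + t(y-x) - p_1} > 0$, so the segment lies outside $K \cap U$ and (being still in $U$) outside $K$. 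Thus the segment remains outside $K$ on a half-open interval $[t_1, t_1 + \epsilon)$.

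To close the argument, define $\sigma := \sup\{s \in [t_1, 1] : x + t(y-x) \notin K \text{ for all } t \in [t_1, s)\}$. If $\sigma = 1$, then by closedness of $K^c$ we conclude $y \notin K$, contradicting $y \in K$. Otherwise $\sigma < 1$ and $p^\sigma := x + \sigma(y-x) \in \partial K \setminus B$ is a re-entry point; the analogous supporting-hyperplane analysis at $p^\sigma$, applied in the opposite direction, forces the segment to lie in $K$ on some interval $(\sigma, \sigma + \delta)$. The main obstacle I expect is closing this loop: the two local supporting-hyperplane pictures at $p_1$ and $p^\sigma$ are each internally self-consistent, so a genuinely global ingredient is needed to forbid re-entry. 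My preferred route is to leverage $\H^{n-2}(B) = 0$ a second time to show that $\overline{K}$ is in fact locally convex at \emph{every} point of $\partial K$ (at points of $\partial K \setminus B$ this follows from $\overline{K} \cap U = \overline{K \cap U} \cap U$ being the closure in $U$ of a convex set; at points of $B$, local convexity is inherited by a limiting argument, since a codimension-$2$ set cannot locally disconnect an open neighborhood for $n \geq 2$), and then invoke the classical Tietze--Nakajima theorem that a closed, connected, locally convex subset of $\R^n$ is convex. This yields convexity of $\overline{K}$, and convexity of $K$ then follows as the (convex) interior of a convex set.
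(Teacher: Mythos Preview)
The paper does not prove this proposition itself; it is recalled from \cite[Proposition~8.7]{EMilmanNeeman-GaussianMultiBubble}, so there is no in-paper argument to compare against.

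Your Tietze--Nakajima route has a genuine gap at the last step. From convexity of $\overline{K}$ you conclude convexity of $K$ ``as the interior of a convex set,'' but $K$ need not equal $\interior\overline{K}$. Indeed, take $K=\R^n\setminus\{0\}$ with $n\ge 3$ and $B=\{0\}$: then $\H^{n-2}(B)=0$, $\partial K\setminus B=\emptyset$ so the local-convexity hypothesis is vacuous, and $\overline{K}=\R^n$ is convex, yet $K$ is not. This example shows simultaneously that your final inference fails and that the proposition, as transcribed here, is not literally true; the original statement presumably carries an additional hypothesis (for instance $K=\interior\overline{K}$, which does hold for the regular-cluster cells $\Omega_i^\B$ in the paper's application).

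Two earlier steps inherit the same defect. In the example above every offending segment is ``tangential'' --- it meets $\partial K$ only at $0\in B$ --- so your unspecified ``short separate reduction'' for that case cannot succeed in general, and there is no way to perturb the endpoints to make the segment meet $\interior(K^c)=\emptyset$. Likewise, your ``limiting argument'' that $\overline{K}$ is locally convex at points of $B$ is asserted but not proved; the bare codimension-two condition on $B$ does not by itself propagate local convexity across $B$ without some control on $\partial K$ near those points. Your own diagnosis that the exit/re-entry supporting-hyperplane pictures at $p_1$ and $p^\sigma$ are each self-consistent and do not close is correct --- a global ingredient is indeed required --- but the one you reach for is not available without strengthening the hypotheses.
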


We can now establish:
\begin{lemma} \label{lem:cells-convex}
The cells of $\Omega^\B$ are all convex. 
\end{lemma}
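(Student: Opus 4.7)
The plan is to apply Proposition \ref{prop:local-to-global-convexity} to $K := \Omega^\B_i$ for each fixed $i$. First I would verify the topological hypotheses. $K$ is open in $N^{\perp} \simeq \R^n$ because $\Pi$ restricts to a diffeomorphism between $\S^n_+$ and $\B^n$ and $\Omega^\S_i$ is open. For connectedness, the hypothesis that $\Omega^\S_i$ is connected modulo its $\S^0$-symmetry means $\Omega^\S_i \cap \overline{\S^n_+}$ is connected; a short path-lifting argument, using that every $p \in \Omega^\S_i \cap \S^{n-1}$ admits a full ball neighborhood in $\S^n$ inside $\Omega^\S_i$ (so that any path in $\Omega^\S_i \cap \overline{\S^n_+}$ can be locally detoured into $\S^n_+$), shows that $\Omega^\S_i \cap \S^n_+$ is itself connected, and hence so is $K = \Pi(\Omega^\S_i \cap \S^n_+)$. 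As the exceptional set for Proposition \ref{prop:local-to-global-convexity}, I would take $B := \Pi(\Sigma^{\S,\geq 3})$. By Theorem \ref{thm:regularity}, $\dim_{\H}(\Sigma^{\S,\geq 3}) \leq n-3$, and since $\Pi$ is $1$-Lipschitz, $\dim_{\H}(B) \leq n-3$ and in particular $\H^{n-2}(B) = 0$.

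The main body of the argument then verifies local convexity at every $p \in \partial K \setminus B$. If $p \in \B^n$, there is a unique lift $p' \in \S^n_+$ under $\Pi|_{\S^n_+}$; since $p \in \overline{K} \setminus K$, we have $p' \in \partial \Omega^\S_i \subset \Sigma^\S$, and since $p \notin B$, in fact $p' \in \Sigma^{\S,\leq 2}$. Lemma \ref{lem:local-convexity} then supplies a neighborhood $W_p$ and a convex set $A \subset N^{\perp}$ with $K \cap W_p = A \cap \B^n \cap W_p$, which is an intersection of convex sets and hence convex. If instead $p \in \partial \B^n = \S^{n-1}$, I would split further: when $p$ lies in the interior of $\Omega^\S_i$, openness together with $\S^0$-symmetry yields a spherical neighborhood $U$ of $p$ inside $\Omega^\S_i$, and choosing a Euclidean ball $W_p \subset N^{\perp}$ small enough that $W_p \cap \overline{\B^n} \subset \Pi(U)$ we obtain $K \cap W_p = \B^n \cap W_p$, a half-ball and therefore convex. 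Otherwise $p \in \partial \Omega^\S_i = \Sigma^\S$, and since $p \notin B$ necessarily $p \in \Sigma^{\S,\leq 2}$, so Lemma \ref{lem:local-convexity} applies exactly as in the first case.

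The main subtlety will be handling the equatorial boundary points, where $\Pi$ is no longer a diffeomorphism and the interaction between the cluster's boundary and $\partial \B^n$ has to be analyzed case by case. Remark \ref{rem:equator-perp}, which asserts that $\Sigma^\S$ meets the equator $\S^{n-1}$ perpendicularly under the hypotheses of Theorem \ref{thm:spherical-Voronoi-prelim}, is what makes this step manageable: it guarantees that the local picture at an equatorial interface point lifts cleanly through both hemispheres and that the exceptional set $B$ retains the correct dimension bound when restricted to the equator.
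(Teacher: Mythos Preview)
Your proposal is correct and follows essentially the same approach as the paper: apply Proposition~\ref{prop:local-to-global-convexity} with exceptional set $B = \Pi(\Sigma^{\S,\geq 3})$, use Lemma~\ref{lem:local-convexity} for boundary points arising from $\Sigma^{\S,\leq 2}$, and observe that equatorial points interior to $\Omega^\S_i$ have a half-ball neighborhood. Your case organization (split by $\B^n$ versus $\partial\B^n$, then sub-split) differs cosmetically from the paper's (split by $\Pi\partial\Omega^\S_i$ versus $\partial\B^n \cap \Omega^\S_i$), but the content is the same; you also supply a path-detour argument for connectedness where the paper simply asserts it. One small overreach: Remark~\ref{rem:equator-perp} is not actually needed here---Lemma~\ref{lem:local-convexity} already handles equatorial interface points directly, and the dimension bound on $B$ comes straight from regularity and the Lipschitz property of $\Pi$.
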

\begin{proof}
Lemma~\ref{lem:local-convexity} implies that the hypothesis of Proposition~\ref{prop:local-to-global-convexity} holds
for each (open) cell $\Omega_i^\B$, with $B = \Pi \Sigma^{\S,\ge 3}$. Indeed, if $b \in \partial \Omega_i^\B \setminus B$ then
either $b \in \Pi \partial \Omega_i^\S \setminus B$ (in which case $b \in \Pi \Sigma^{\S,\le 2}$ and so we may apply Lemma~\ref{lem:local-convexity} to find a convex neighborhood)
or else $b \in \partial \B^n \cap \Pi \Omega_i^\S = \partial \B^n \cap \Omega_i^\S$, in which case $W_b \cap \Omega_i^\B = W_b \cap \B^n$ for a small
enough neighborhood $W_b$ of $b$ in $N^\perp$ (by openness of $\Omega_i^\S$). Also, $\H^{n-2}(\Sigma^{\S,\ge 3}) = 0$ by regularity and $\Pi$ is a contraction so $\H^{n-2}(B) = 0$. Finally, $\Omega_i^\B$ is clearly connected since $\Omega_i^\S$ was assumed to be connected (modulo its $\S^0$-symmetry). It follows from Proposition~\ref{prop:local-to-global-convexity} that $\Omega_i^\B$ is convex for every $i$.
\end{proof}

\begin{proof}[Proof of Proposition~\ref{prop:convex}]
Note that $\Sigma_{ij}^\B \subset \overline{\Omega_i^\B} \cap \overline{\Omega_j^\B}$ and $\overline{\Sigma_{ij}^\B} = \overline{\Omega_i^\B} \cap \overline{\Omega_j^\B} = \partial \Omega_i^\B \cap \partial \Omega_j^\B$. The latter set  
is convex as the intersection of two convex sets (by Lemma \ref{lem:cells-convex}) and has Hausdorff dimension at most $n - 1$ as the intersection of their boundaries. Therefore, $\overline{\Sigma_{ij}^\B}$ is convex and lies in an affine hyperplane $E_{ij}$ in $N^{\perp}$. As $\Sigma_{ij}^\B$ is an open $(n-1)$-dimensional manifold in $E_{ij}$, it must also be convex and flat, establishing the first claim. 

In particular $\Sigma^\B_{ij}$ is connected, and therefore $\Sigma^\S_{ij}$ and thus $\Sigma^\R_{ij}$ must also be connected modulo their $\S^0$-symmetry. 
Consequently, the sphericity perpendicularly to $N^{\perp}$ of $\Omega^\S$ (and of $\Omega^\R$ by Lemma \ref{lem:stereo}) imply the second claim.

Finally, we check the third claim. Recall that $\Omega_i^\B$ is an open convex set by Lemma \ref{lem:cells-convex} and non-empty (by regularity). As such, it is the intersection of its open supporting halfspaces over all boundary points.
By continuity (because the boundary of a convex set is a Lipschitz manifold), this is the same as the interior of the intersection of its open supporting halfspaces at a dense subset of the boundary.
For a dense subset of
$\partial \Omega_i^\B$ (specifically, $\partial \Omega_i^\B \cap (\Sigma^{\B,1} \cup \partial \B^n)$ by regularity),
those supporting halfspaces are either supporting halfspaces of $\B^n$, or else take the form 
$\{x \in N^\perp\; ; \; \sscalar{\c_{ij}^{\S}, x} + \k_{ij}^{\S} < 0 \}$ for some $j \ne i$ so that $\Sigma^\B_{ij} \neq \emptyset$ (by the second claim). Moreover, every such halfspace supports $\Omega_i^\B$ on $\Sigma_{ij}^\B \neq \emptyset$. The representation~\eqref{eq:polyhedron-formula} therefore follows. 
\end{proof}

\subsection{First Simplicial Homology Vanishes}  \label{subsec:homology}

Given a regular cluster $\Omega$, we associate to it the following two-dimensional abstract simplicial complex $\SS = \SS(\Omega)$: 
its vertices are given by $\SS_0 := \{ \{i\} \; ; \; \Omega_i \neq \emptyset \}$, its edges are given by $\SS_1 := \{ \{i,j\} \; ; \;\Sigma_{ij} \neq \emptyset \}$, and its triangles are given by $\SS_2 := \{ \{i,j,k\} \; ; \; \Sigma_{ijk} \neq \emptyset \}$. This is indeed an abstract simplicial complex since the face of any simplex in $\SS$ is clearly also in $\SS$ by regularity. Note that it may be that $\partial \Omega_i \cap \partial \Omega_j \neq \emptyset$ and yet $\{i,j\} \notin \SS_1$, or that $\partial \Omega_i \cap \partial \Omega_j \cap \partial \Omega_k \neq \emptyset$ and yet $\{ i,j,k\} \notin \SS_2$ -- the complex $\SS$ only records the incidence of pair of cells along their codimension-one mutual boundary in $\Sigma^1$, and of triple cells along their codimension-two mutual boundary in $\Sigma^2$. We write $\SS_1 \simeq \SS_2$ if these simplicial complexes are isomorphic. In our setting, all cells are always assumed to be non-empty. 

\begin{lemma} \label{lem:SS-isomorphic}
For any bounded regular cluster $\Omega^\R$ on $\R^n$ with $\S^0$-symmetry which is spherical perpendicularly to its hyperplane of symmetry, $\SS(\Omega^\R) \simeq \SS(\Omega^\S)$. For any regular cluster $\Omega^\S$ on $\S^n$ with $\S^0$-symmetry which is spherical perpendicularly to its hyperplane of symmetry, $\SS(\Omega^\S) \simeq \SS(\Omega^\B)$. 
\end{lemma}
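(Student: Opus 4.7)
The plan is to verify, for each of the two correspondences, that the bijection between cell labels preserves non-emptiness of the cells $\Omega_i$, of the interfaces $\Sigma_{ij}$, and of the triple-point sets $\Sigma_{ijk}$; since this data precisely determines the $0$-, $1$-, and $2$-simplices of $\SS(\Omega)$, such a bijection yields the claimed simplicial isomorphism. In both cases the underlying map is a diffeomorphism away from a lower-dimensional exceptional set, and the crux is to argue that no non-empty face of $\SS$ is absorbed into that exceptional set.

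For $\SS(\Omega^\R) \simeq \SS(\Omega^\S)$, I will use the stereographic projection $T$ from Lemma \ref{lem:stereo}, which is a diffeomorphism between $\R^n$ and $\S^n \setminus \{P\}$ where $P$ lies in the interior of $\Omega^\S_q$. Since diffeomorphisms preserve reduced boundaries, and since $P$ belongs neither to any $\Sigma^\S_{ij}$ nor to any $\Sigma^\S_{ijk}$ (both being disjoint from the open cell $\Omega^\S_q$), we obtain $T(\Sigma^\R_{ij}) = \Sigma^\S_{ij}$ and $T(\Sigma^\R_{ijk}) = \Sigma^\S_{ijk}$, alongside the trivial bijection of cells. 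Non-emptiness is therefore transferred in both directions.

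For $\SS(\Omega^\S) \simeq \SS(\Omega^\B)$, I will work with $\Pi|_{\S^n_+}$, which is a diffeomorphism onto $\B^n$. Since reduced boundary commutes both with restriction to open sets and with diffeomorphisms, the definition $\Omega^\B_i = \Pi(\Omega^\S_i \cap \S^n_+)$ yields $\Sigma^\B_{ij} = \Pi(\Sigma^\S_{ij} \cap \S^n_+)$ and $\Sigma^\B_{ijk} = \Pi(\Sigma^\S_{ijk} \cap \S^n_+)$. The only substantive point is to rule out the possibility that a non-empty $\Omega^\S_i$, $\Sigma^\S_{ij}$ or $\Sigma^\S_{ijk}$ could lie entirely in the equator $M^{n-1}$. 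For cells this is immediate since they are open and $M^{n-1}$ has empty interior in $\S^n$. For the higher-codimension strata, I will invoke Remark \ref{rem:equator-perp}, which under the perpendicular sphericity hypothesis gives $\H^{n-k}(M^{n-1} \cap \Sigma^{\S,k}) = 0$ for $k=1,2,3$; combined with regularity (Theorem \ref{thm:Almgren} \ref{it:Almgren-iii} and Theorem \ref{thm:regularity} \ref{it:regularity-Sigma2}), which forces any non-empty $\Sigma_{ij}$ (resp.~$\Sigma_{ijk}$) to carry positive $\H^{n-1}$ (resp.~$\H^{n-2}$) mass as an embedded manifold, this shows that every non-empty face must meet $\S^n_+$, and hence its $\Pi$-image is non-empty in the corresponding stratum of $\Omega^\B$.

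I do not expect a deep obstacle here: the argument is essentially bookkeeping, the one non-tautological ingredient being Remark \ref{rem:equator-perp}. That remark is exactly what makes the perpendicular sphericity hypothesis (as opposed to mere $\S^0$-symmetry) enter, and without it there would be nothing preventing an interface or triple-point stratum from being swallowed by the equator and thus becoming invisible in $\Omega^\B$.
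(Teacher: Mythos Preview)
Your proposal is correct and follows essentially the same approach as the paper: both arguments use that the stereographic projection $T$ is a diffeomorphism away from the pole $P$ (an interior point of a cell) for the first isomorphism, and that $\Pi|_{\S^n_+}$ is a diffeomorphism together with Remark~\ref{rem:equator-perp} (positive Hausdorff mass of non-empty strata cannot be absorbed by the equator) for the second. One minor point worth making explicit: when you reduce to ``rule out that a non-empty face lies entirely in the equator,'' you are implicitly using the $\S^0$-symmetry of $\Omega^\S$ to pass from ``does not meet $\S^n_+$'' to ``is contained in $M^{n-1}$''; the paper makes this step visible by interposing $\SS(\Omega^\S \cap \S^n_+) \simeq \SS(\Omega^\S \cap (\S^n \setminus N^{\perp}))$ via reflection symmetry before invoking Remark~\ref{rem:equator-perp}.
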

\begin{proof}
The first assertion is obvious because the stereographic projection $T$ described in Lemma \ref{lem:stereo} is a diffeomorphism between $\R^n$ and $\S^n \setminus \{P\}$ with $P$ lying in the interior of a cell; in particular, $T$ is a diffeomorphism between an $\eps$-neighborhood of $\Sigma^\S$ for some $\eps > 0$ and an open set containing $\Sigma^\R$, and therefore a bijection between $\Sigma^\R_{ij}$ and $\Sigma^\S_{ij}$ as well as between $\Sigma^\R_{ijk}$ and $\Sigma^\S_{ijk}$. \\
As $\Pi$ is a diffeomorphism between $\S^n_+$ and $\B^n$, it trivially follows that $\SS(\Omega^\B) \simeq \SS(\Omega^\S \cap \S^n_+)$, so it remains to show that $\SS(\Omega^\S \cap \S^n_+) \simeq \SS(\Omega^\S)$. By reflection symmetry, it is obvious that $\SS(\Omega^\S \cap \S^n_+) \simeq \SS(\Omega^\S \cap (\S^n \setminus N^{\perp}))$, so the remaining task is to show that absence of the equator does not affect the incidence structures of $\Sigma^1$ and $\Sigma^2$. And indeed, as $\Sigma^k$ ($k=1,2$) are smooth $(n-k)$-dimensional manifolds by regularity, we have $\Sigma^2_{ijk} \neq \emptyset$ iff $\H^{n-2}(\Sigma^2_{ijk}) > 0$ iff $\H^{n-2}(\Sigma^2_{ijk} \setminus N^{\perp}) > 0$ by Remark \ref{rem:equator-perp}, and similarly for $\Sigma^1_{ij}$. 
\end{proof}

Consequently, we can study the properties of $\SS := \SS(\Omega^\R) \simeq \SS(\Omega^\S) \simeq \SS(\Omega^\B)$ in any of our spaces $\R^n$, $\S^n$ or $\B^n$. By Lemma \ref{lem:LA-connected}, we know that the $1$-skeleton of $\SS$, i.e.~the graph obtained by considering only its edges and vertices, is necessarily connected. In other words, the zeroth (reduced, simplicial) homology of $\SS$ is trivial. To this we add: 
\begin{proposition} \label{prop:homology}
Let $\Omega$ be as in Theorem \ref{thm:spherical-Voronoi-prelim}.
Then the first (simplicial) homology and cohomology groups of $\SS = \SS(\Omega)$ are trivial (over any field $\F$). 
\end{proposition}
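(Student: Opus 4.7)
The plan is to pass via Lemma \ref{lem:SS-isomorphic} to the complex $\SS := \SS(\Omega^\B)$ and exploit the convex polyhedral structure of Proposition \ref{prop:convex} to prove that every $1$-cycle in $\SS$ bounds a $2$-chain, i.e.\ that the $2$-complex $\SS$ is simply connected (it is connected by Lemma \ref{lem:LA-connected}). By the Hurewicz theorem and the universal coefficient theorem this yields $H_1(\SS;\F) = H^1(\SS;\F) = 0$ for any field $\F$.

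Any $1$-cycle in $\SS$ reduces, modulo boundaries of $2$-simplices, to a sum of simple closed loops $\gamma = (i_1,i_2,\ldots,i_N,i_1)$ with $\Sigma^\B_{i_k i_{k+1}}\neq\emptyset$ for each $k$. To fill such a loop, pick relative-interior points $p_k\in\Sigma^\B_{i_k i_{k+1}}$ and interior points $c_k\in\Omega^\B_{i_k}$. By convexity of the cells and of each interface (Proposition \ref{prop:convex}), the closed polygonal loop $L\subset\overline{\B^n}$ through $c_1,p_1,c_2,p_2,\ldots,c_N,p_N,c_1$ satisfies $[c_k,p_k]\subset\overline{\Omega^\B_{i_k}}$ and $[p_k,c_{k+1}]\subset\overline{\Omega^\B_{i_{k+1}}}$. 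Since $\overline{\B^n}$ is simply connected, $L$ bounds a topological disk $D$; after a generic smooth perturbation of $D$ (keeping $\partial D = L$ fixed) and using the codimension bound $\dim_{\H}\Sigma^{\B,\geq 3}\leq n-3$ from Theorem \ref{thm:regularity}, $D$ may be arranged to avoid $\Sigma^{\B,\geq 3}$ entirely, to meet the smooth $(n-1)$-manifold $\Sigma^{\B,1}$ transversely in a $1$-dimensional submanifold (with boundary on $L$ at the $p_k$'s), and to meet the smooth $(n-2)$-manifold $\Sigma^{\B,2}$ transversely in finitely many isolated interior points.

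The transverse decomposition yields a cellular structure on $D$ whose $2$-cells (the open $2$-dim regions) each lie in a single cell $\overline{\Omega^\B_i}$, whose $1$-cells (the arcs of $D\cap\Sigma^{\B,1}$) each lie between two such regions labeled by cells $i\neq j$ with $\{i,j\}\in\SS_1$, and whose interior $0$-cells (the points of $D\cap\Sigma^{\B,2}$) each witness a meeting of three such regions labeled by cells $i,j,k$ with $\{i,j,k\}\in\SS_2$. Forgetting the geometry and keeping only this labeling produces a simplicial map from the dual $2$-complex of $D$ into $|\SS|$ sending $\partial D = L$ to $|\gamma|$. Since the dual $2$-complex of a disk is again (homotopy equivalent to) a disk, $\gamma$ is null-homotopic in $|\SS|$ and hence a boundary in $C_1(\SS)$.

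The main technical obstacle is the transversality step. Although $\Sigma^{\B,1}$ and $\Sigma^{\B,2}$ are smooth manifolds away from $\Sigma^{\B,\geq 3}$, the stratification is only $C^{1,\alpha}$ in a neighborhood of $\Sigma^{\B,3}$, and $\Sigma^{\B,4}$ carries no a priori regularity. This is handled by first perturbing $D$ to avoid the closed set $\Sigma^{\B,\geq 3}$ of Hausdorff codimension at least $3$, which is possible for a $2$-disk by a generic smoothing argument, and then applying Thom's transversality theorem separately to each smooth stratum $\Sigma^{\B,1}$ and $\Sigma^{\B,2}$ to ensure transverse intersection there. The perturbation is carried out relative to the boundary, using that the $c_k$ lie in the interior of cells and the $p_k$ in the relative interior of codimension-$1$ interfaces, so that these incidences are stable under small perturbations of $D$.
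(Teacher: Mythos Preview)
Your approach is correct and follows the same geometric-filling strategy as the paper: realize the combinatorial cycle as a loop in $\B^n$, fill it with a $2$-disk using that $\B^n$ is contractible and the cells are convex, and read off a $2$-chain from the way the disk meets the stratification $\Sigma^{\B,1},\Sigma^{\B,2}$. The paper implements this with a specific disk, namely the straight-line cone $P_t=(1-t)P+t\cdot o$ contracting a piecewise-linear emulating path $P$ to a fixed interior point $o$; it then tracks the evolving combinatorial cycle $\mathcal C_t$ explicitly, adding an oriented $2$-simplex each time $P_t$ crosses $\Sigma^{\B,2}$, and arrives at the $2$-chain directly without invoking Hurewicz. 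The practical payoff of the paper's choice is that transversality becomes a finite-dimensional perturbation problem: one only needs to perturb the finitely many vertices $\{y_\ell\}\cup\{o\}$ so that the resulting union of triangles avoids $\Sigma^{\B,\ge 3}$ and meets $\Sigma^{\B,1},\Sigma^{\B,2}$ generically, which is elementary to justify even though $\Sigma^{\B,4}$ is only a closed set of small Hausdorff dimension rather than a manifold. Your version, perturbing a general smooth $2$-disk rel boundary, reaches the same conclusion but the ``generic smoothing argument'' for avoiding $\Sigma^{\B,4}$ requires a bit more care to make fully rigorous. Also, your final step is cleaner stated as ``there is a continuous map $D\to|\SS|$ restricting to $|\gamma|$ on $\partial D$'' rather than via the dual complex, since the dual of a cellular disk with boundary is not literally a disk.
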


Recall that for an oriented 1-cochain, namely an oriented function $f : \SS^{\pm}_1 \rightarrow \F$ on the oriented edges $\SS^{\pm}_1 := \{ (i,j) \; ; \; \{i,j\} \in \SS_1\}$ with  $f(j,i) = -f(i,j)$, its coboundary $\delta f : \SS^{\pm}_2 \rightarrow \F$ is the oriented 2-cochain on the oriented triangles $\SS^{\pm}_2 := \{ (i,j,k) \; ; \; \{i,j,k\} \in \SS_2 \}$ given by $\delta f(i,j,k) = f(i,j) + f(j,k) + f(k,i)$. Triviality of the cohomology means that if $\delta f = 0$ then necessarily $f = \delta g$ for some $g : \SS_0 \rightarrow \F$, namely $f(i,j) = g(i) - g(j)$. While our treatment will be self-contained, we refer to \cite{Munkres-AlgebraicTopology} for standard background on simplicial homology theory.

For the proof, it will be enough to show that for any directed cycle $\C = (i_1,\ldots,i_N,i_{N+1} = i_1)$ in the $1$-skeleton of $\SS$, the corresponding oriented $1$-chain $C^{1} := \sum_{\ell=1}^N (i_\ell,i_{\ell+1})$ can be written as the boundary of an oriented $2$-chain  $C^{2} := \sum_{h=1}^T \Tr_h$, where  $\Tr_h$ is an oriented triangle $(i_h,j_h,k_h) \in \SS^{\pm}_2$ (whose boundary is $\partial \Tr_h =  (i_h,j_h) + (j_h,k_h) + (k_h,i_h)$). It is easy to check that Proposition \ref{prop:homology} is false for a general finite tessellation of $\R^n$ by convex cells for example. The crucial property used in the proof is the combinatorial incidence structure of cells along triple-points $\Sigma^2$, and the fact that $\H^{n-2}(\Sigma^3 \cup \Sigma^4) = 0$ (by regularity). 

\begin{proof}[Proof of Proposition \ref{prop:homology}]
By Lemma \ref{lem:SS-isomorphic}, it is enough to establish the claim for $\Omega = \Omega^\B$. 
When $\SS$ is constructed from a stable regular $q$-cluster on Euclidean space $(\R^n,|\cdot|)$ endowed with the Gaussian measure, a proof of Proposition \ref{prop:homology} was already sketched in \cite[Theorem 12.3]{EMilmanNeeman-GaussianMultiBubble}. However, the proof only utilized the finiteness of $\SS$, the convexity of the cells $\Omega_i$ (and of the ambient space) and the regularity of the cluster, and did not require the stationarity nor stability of the cluster. Consequently, the proof applies to the regular cluster $\Omega^\B$ on the convex $\B^n$, whose cells are convex by Proposition \ref{prop:convex}.
Let us inspect the argument, and in doing so provide some additional details which were skimmed over in \cite{EMilmanNeeman-GaussianMultiBubble}. 

Let $\C$ denote a directed cycle in the $1$-skeleton of $\SS$. 
Consider a closed piecewise linear directed path $P$ in $\B^n$ which emulates $\C$, meaning that it crosses from $\Omega_i$ to $\Omega_j$ transversely through $\Sigma_{ij}$ in the order specified by $\C$ (and without intersecting $\Sigma^3 \cup \Sigma^4$). Note that it is always possible to construct such a path thanks to the convexity (and in particular, connectedness) of the cells. Assume that $P = \cup_{\ell=1}^{N} [y_\ell,y_{\ell+1}]$ with $y_{N+1} = y_1$, and fix a point $o$ in any of the (non-empty open) cells $\{\Omega_i\}_{i=1,\ldots,q}$, such that $o$ is not co-linear with any of these segments. Consider the convex interpolation $P_t =  (1-t) P + t \cdot o$ contracting $P$ onto $\{ o\}$ as $t$ ranges from $0$ to $1$, i.e. $P_t = \cup_{\ell=1}^{N} [y^t_\ell,y^t_{\ell+1}]$ for $y^t_\ell = (1-t) y_\ell + t \cdot o$. We denote $\mathcal{V}(P_t) := \{y^t_\ell\}_{\ell=1,\ldots,N}$. 
It was shown in the proof of \cite[Theorem 12.3]{EMilmanNeeman-GaussianMultiBubble} that there exists a perturbation of the points $\{y_\ell\}_{\ell=1,\ldots,N} \cup \{o \}$ so that:
\begin{enumerate}[(i)]
\item $o$ remains in its original cell,  $P$ still emulates $\C$, and $o$ is not co-linear with any of the segments $[y_\ell,y_{\ell+1}]$. \label{it:homology-1}
\item $P_t$ does not intersect $\Sigma^3 \cup \Sigma^4$ for all times $t \in [0,1]$. \label{it:homology-2}
\item $P_t$ does not intersect $\Sigma^2$ except for a finite set of times $t \in (0,1)$. \label{it:homology-3}
\item For all $\ell=1,\ldots,N$, $y_{\ell+1}-y_\ell$ and $y_\ell - o$ are not perpendicular to the finite set $\{\c^\S_{ij} \}_{i<j}$; consequently, for all times $t \in [0,1]$, $P_t$ and $\{[o,v]\}_{v \in \mathcal{V}(P_t)}$ are transversal to $\Sigma^1$.\label{it:homology-4}
\end{enumerate}
Now for every $t \in [0,1)$, consider the directed cycle $\C_t$ in the $1$-skeleton of $\SS$ which the path $P_t$ traverses; it is well-defined since $P_t$ crosses from one cell to the next transversely through $\Sigma^1$, and is of finite length (the length is bounded e.g. by $N (q-1)$ since every segment can cross at most $q-1$ cells by convexity).
By openness, $\C_t$ remains constant between consecutive times when $P_t$ intersects $\Sigma^2$ (``$\Sigma^2$ crossing") or when $\mathcal{V}(P_t)$ intersects $\Sigma^1$ (``$\Sigma^1$ crossing"). A $\Sigma^2$ crossing can only happen at a finite set of times $t \in (0,1)$ by \ref{it:homology-3}, and a $\Sigma^1$ crossing can only happen at a finite set of times $t \in (0,1)$ since each $[o, y_\ell]$ is transversal to $\Sigma^1$ and intersects it at most $q-1$ times by convexity of the cells. 

Let $C^{1}_t$ denote the oriented $1$-chain corresponding to $\C_t$, and set $C^{2}_0 = 0$ to be the zero $2$-chain at time $t=0$. 
In a $\Sigma^2$ crossing at time $t_0$, there are two possibilities, depending on how $P_t$ traverses $\Sigma_{ijk}$ as $t$ crosses $t_0$: either a directed edge $(i,j)$ in $\C_t$ will transform into two consecutive directed edges $(i,k),(k,j)$, in which case we will add $(i,j,k)$ to $C^{2}_t$, or vice versa, two consecutive directed edges $(i,k),(k,j)$ will collapse into a single directed edge $(i,j)$, in which case we will add $(i,k,j)$ to $C^{2}_t$. We thereby ensure by induction that $\C^{1}_0 = \C^{1}_t + \partial \C^{2}_t$ for $t$ on either side of the crossing time $t_0$. In a $\Sigma^1$ crossing, two consecutive directed edges $(i,j) , (j,i)$ in $\C_t$ will vanish, and since $(i,j) + (j,i) = 0$, $\C^{1}_t$ does not change through the crossing and $\C^{1}_0 = \C^{1}_t + \partial \C^{2}_t$ remains valid. 

For times $T$ close enough to $1$, $\C_{T}$ must be a singleton since the path $P_T$ is already close enough to $o$ so as to remain inside a single cell, and therefore $\C^{1}_{T} = 0$. It follows that $\C^{1}_0 = \partial \C^{2}_{T}$, and we have managed to express our initial oriented $1$-chain at time $t=0$ as the boundary of the resulting oriented $2$-chain at time $t = T$. 

We now verify that the first simplicial cohomology vanishes. Let $f : \SS^{\pm}_1 \rightarrow \F$ be an oriented function with $\delta f = 0$. We claim that for any directed cycle $\C$ in the $1$-skeleton of $\SS$, $\sum_{(i,j) \in \C} f(i,j) = 0$. 
Indeed, we have shown above that the $1$-chain $C^1$ corresponding to $\C$ is the boundary of a $2$-chain $C^2$, and therefore:
\begin{equation} \label{eq:sum-on-cycle}
\sum_{(i,j) \in \C} f(i,j) = \scalar{f , C^1} = \scalar{f , \partial C^2} = \scalar{\delta f , C^2} = 0 . 
\end{equation}
Defining $g : \SS_0 \rightarrow \F$ by $g(k) = \sum_{(i,j) \in \C_{1k}} f(i,j)$ for any directed path $\C_{1k}$ in $\SS$ connecting vertex $1$ and $k$ (such a path always exists by connectivity of the $1$-skeleton, but we could avoid using this by using any fixed selection of vertices in every connected component), (\ref{eq:sum-on-cycle}) implies that this does not depend on the particular path $\C_{1k}$ and is thus well-defined. It follows that $f(i,j) = g(j) - g(i)$, i.e. $f = \delta g$, as required. 

Finally, a well-known consequence of the Universal Coefficient Theorem for cohomology \cite[pp. 324--326]{Munkres-AlgebraicTopology} is that the simplicial homology vector-space is isomorphic to the dual of the cohomology vector-space over any field. 
Consequently, the first simplicial homology vanishes as well. 
\end{proof}

\begin{corollary} \label{cor:ciki} 
Let $\Omega$ be as in Theorem \ref{thm:spherical-Voronoi-prelim}. 
Then for both $\M \in \{\R , \S \}$:
\begin{enumerate}[(i)]
\item \label{it:ciki-c}
There exist vectors $\{\c^\M_i\}_{i=1,\ldots,q} \subset N^{\perp}$ so that the quasi-center $\c^\M_{ij}$ of any non-empty $\Sigma^{\M}_{ij} \neq \emptyset$ is equal to $\c^\M_{i} - \c^\M_{j}$. 
\item \label{it:ciki-k}
 There exist scalars $\{ \k^\M_i \}_{i=1,\ldots,q} \subset \R$ so that the curvature $\k^\M_{ij}$ of any non-empty $\Sigma^\M_{ij} \neq \emptyset$ is equal to $\k^\M_{i} - \k^\M_j$. 
\end{enumerate}
In particular, both $\Omega^\R$ and $\Omega^\S$ are stationary. 
\end{corollary}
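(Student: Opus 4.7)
The plan is to exploit two ingredients already in hand: the sphericity/convexity structure of Proposition \ref{prop:convex}, which endows each non-empty interface $\Sigma^\M_{ij}$ (for either $\M \in \{\R,\S\}$) with a \emph{single} globally-defined quasi-center $\c^\M_{ij} \in N^\perp$ and curvature $\k^\M_{ij} \in \R$; and the vanishing of the first simplicial cohomology of $\SS = \SS(\Omega)$ from Proposition \ref{prop:homology}. The assertion of the corollary is exactly the statement that $(i,j) \mapsto \c^\M_{ij}$ and $(i,j) \mapsto \k^\M_{ij}$ are \emph{coboundaries} on $\SS$, so once the cocycle condition $\delta \c^\M = 0$ and $\delta \k^\M = 0$ is verified, Proposition \ref{prop:homology} applied coordinate-wise over $\R$ will produce the desired $\{\c^\M_i\}$ and $\{\k^\M_i\}$. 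The key mechanism for verifying the cocycle condition in both spaces is Lemma \ref{lem:sum-c-k}, which lets me transport triple-point identities freely between $\R^n$ and $\S^n$.

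I will first treat the space $\M \in \{\R, \S\}$ on which $\Omega$ is \emph{assumed} to be stationary, and denote the other model space by $\bar \M$. Lemma \ref{lem:first-order-conditions} produces a Lagrange multiplier $\lambda \in E^{(q-1)}$ with $H_{\Sigma^\M_{ij}} = \lambda_i - \lambda_j$ (the measure on $\R^n,\S^n$ being unweighted), so setting $\k^\M_i := \lambda_i/(n-1)$ gives part \ref{it:ciki-k} for $\M$. For part \ref{it:ciki-c}, Lemma \ref{lem:boundary-normal-sum} supplies $\sum_{(i,j) \in \cyclic(u,v,w)} \n^\M_{ij} = 0$ at every point of $\Sigma^\M_{uvw}$, while $\sum_{(i,j) \in \cyclic(u,v,w)} \k^\M_{ij} = 0$ is now trivial, so the equivalence \ref{it:sums1b}$\,\Leftrightarrow\,$\ref{it:sums1} in Lemma \ref{lem:sum-c-k} yields $\sum_{(i,j) \in \cyclic(u,v,w)} \c^\M_{ij} = 0$ on every $\Sigma^\M_{uvw}$. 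This is precisely the cocycle condition for the oriented $1$-cochain $(i,j) \mapsto \c^\M_{ij}$ on $\SS$, so Proposition \ref{prop:homology} (applied coordinate-wise with real coefficients) supplies vectors $\{\c^\M_i\} \subset N^\perp$ realizing $\c^\M_{ij} = \c^\M_i - \c^\M_j$.

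The transfer to $\bar \M$ is nearly automatic. The complex $\SS$ is the same by Lemma \ref{lem:SS-isomorphic}, and Proposition \ref{prop:convex} produces well-defined $\c^{\bar \M}_{ij} \in N^\perp$ and $\k^{\bar \M}_{ij} \in \R$ on each non-empty interface in $\bar \M$. The triple-point identities $\sum_{(i,j) \in \cyclic(u,v,w)} \c^\M_{ij} = 0$ and $\sum_{(i,j) \in \cyclic(u,v,w)} \k^\M_{ij} = 0$ just verified on $\M$ are hypothesis \ref{it:sums1} of Lemma \ref{lem:sum-c-k} at every triple-point, and the equivalence \ref{it:sums1}$\,\Leftrightarrow\,$\ref{it:sums2} delivers in one stroke the corresponding vanishing of both sums at the image triple-points in $\bar \M$. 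A second application of Proposition \ref{prop:homology} then yields $\{\c^{\bar \M}_i\}$ and $\{\k^{\bar \M}_i\}$ as required by parts \ref{it:ciki-c} and \ref{it:ciki-k} for $\bar \M$.

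Finally, the ``in particular'' clause reduces to verifying stationarity of $\Omega^{\bar \M}$. Sphericity (Lemma \ref{lem:stereo}) guarantees that $\Omega^{\bar \M}$ has locally bounded curvature, so I may invoke Lemma \ref{lem:equivalent-stationary}: condition \ref{it:stationarity-k} holds with $\lambda^{\bar \M}_i := (n-1)\k^{\bar \M}_i$ because $\k^{\bar \M}_{ij}$ is constant across each non-empty interface, and condition \ref{it:stationarity-n} is furnished by the equivalence \ref{it:sums1b}$\,\Leftrightarrow\,$\ref{it:sums2b} of Lemma \ref{lem:sum-c-k}. I do not foresee any genuine obstacle beyond bookkeeping; the conceptual point worth stressing is that the stereographic map generally does \emph{not} preserve individual quasi-centers or curvatures, so the $\bar \M$-conclusion cannot simply be pulled back from $\M$ term-by-term, but the combinatorial cocycle condition at triple-points \emph{is} preserved by Lemma \ref{lem:sum-c-k}, and running the cohomology argument independently on both sides suffices.
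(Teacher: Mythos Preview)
Your proposal is correct and follows essentially the same route as the paper: establish the cocycle conditions $\sum_{\cyclic(u,v,w)} \c^\M_{ij} = 0$ and $\sum_{\cyclic(u,v,w)} \k^\M_{ij} = 0$ on the stationary side via Lemma~\ref{lem:boundary-normal-sum} and Lemma~\ref{lem:first-order-conditions}, transfer them to $\bar\M$ via Lemma~\ref{lem:sum-c-k}, invoke Proposition~\ref{prop:homology} coordinate-wise on both sides to produce the potentials $\{\c^\M_i\},\{\k^\M_i\}$, and conclude stationarity of $\Omega^{\bar\M}$ from Lemma~\ref{lem:equivalent-stationary}. The only cosmetic difference is that the paper packages the stationarity input on $\M$ through Lemma~\ref{lem:equivalent-stationary} rather than citing Lemmas~\ref{lem:first-order-conditions} and~\ref{lem:boundary-normal-sum} separately, but the content is identical.
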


\begin{remark}
As usual, to make the choice of $\{ \c^\M_i \}$ and $\{ \k^\M_i \}$ canonical, recall our convention that $\sum_{i=1}^q \c^\M_i = 0$ and that $\sum_{i=1}^q \k^\M_i = 0$, i.e. that $\k^\M \in E^{(q-1)}$. It follows that the Lagrange multiplier of the stationary $\Omega^\M$ is given by $\lambda^\M = (n-1) \k^\M$. We subsequently define $\c_{ij}^\M := \c^\M_i - \c^\M_j$ and $\k_{ij}^\M := \k^\M_i - \k^\M_j$ for all $i,j \in \{1,\ldots,q\}$, extending the definition when $\Sigma^\M_{ij} \neq \emptyset$. 
\end{remark}

\begin{proof}
Recall the equivalent characterization of stationarity for a regular cluster of locally bounded curvature on $(M^n,g,\mu)$ given by Lemma \ref{lem:equivalent-stationary}, and note that $H^\M_{\Sigma_{ij},\mu} = (n-1) \k^\M_{ij}$ in our setting. 

Now let $\Omega$ be a cluster on $\M^n$, $\{ \M , \bar \M \} = \{ \R , \S \}$, satisfying the assumptions of Theorem \ref{thm:spherical-Voronoi-prelim}. In particular, $\Omega^\M$ is stationary and of bounded curvature. Properties \ref{it:stationarity-k} and \ref{it:stationarity-n} of Lemma \ref{lem:equivalent-stationary} verify the assertion \ref{it:sums1b} of Lemma \ref{lem:sum-c-k} for $\Omega^\M$, and therefore also assertion \ref{it:sums1} as well as assertions \ref{it:sums2} and \ref{it:sums2b} for $\Omega^{\bar \M}$. It follows that property \ref{it:stationarity-n} of Lemma \ref{lem:equivalent-stationary} holds on $\Omega^{\bar \M}$, and it remains to establish assertion \ref{it:ciki-k} of Corollary \ref{cor:ciki} to establish property \ref{it:stationarity-k} and conclude that $\Omega^{\bar \M}$ is stationary. 

Indeed, by assertions \ref{it:sums1} and \ref{it:sums2} of Lemma \ref{lem:sum-c-k}, we know for both $\M \in \{\R,\S\}$ that for all $\Sigma_{uvw} \neq \emptyset$:
\[
\delta \c^\M(u,v,w) = \sum_{(i,j) \in \cyclic(u,v,w)} \c^\M_{ij} = 0  \; ~,~\;  \delta \k^\M(u,v,w) = \sum_{(i,j) \in \cyclic(u,v,w)} \k^\M_{ij} = 0 . 
\]
Using that the first simplicial cohomology of $\SS \simeq \SS(\Omega^\R) \simeq \SS(\Omega^\S)$ vanishes by Proposition \ref{prop:homology}, the two assertions \ref{it:ciki-c} and \ref{it:ciki-k} of Corollary \ref{cor:ciki} immediately follow (perhaps after considering the real-valued oriented function $\scalar{\c^\M,\xi}$ on $\SS^{\pm}_1$ for all $\xi \in N^{\perp}$ and using linearity). 
\end{proof}

\subsection{Voronoi representation}

\begin{proposition}
Let $\Omega$ be as in Theorem \ref{thm:spherical-Voronoi-prelim}. Then we have the following representation for the cells of $\Omega^\S$:
\begin{equation} \label{eq:Voronoi}
\Omega^\S_i  = \bigcap_{j \neq i} \; \set{ p \in \S^n \; ;\; \sscalar{\c^\S_{ij},p} + \k^\S_{ij} < 0 } = \set{ p \in \S^n \; ; \; \argmin_{j=1,\ldots,q} \; \sscalar{\c^\S_j , p} + \k^\S_j = \{i\} } .
\end{equation}
\end{proposition}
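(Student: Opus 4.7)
The plan is to derive the Voronoi representation on $\S^n$ by first establishing its analogue on $\B^n$ via Proposition~\ref{prop:convex} and Corollary~\ref{cor:ciki}, and then lifting it back to $\S^n$ using the orthogonal projection $\Pi$ together with the $\S^0$-symmetry of $\Omega^\S$. Note first that the two sets on the right-hand side of \eqref{eq:Voronoi} agree tautologically: writing $f_a(p) := \sscalar{\c^\S_a, p} + \k^\S_a$, the inequality $\sscalar{\c^\S_{ij}, p} + \k^\S_{ij} < 0$ is literally $f_i(p) < f_j(p)$, and $\argmin_{j} f_j(p) = i$ (interpreted as being uniquely minimized) is equivalent to $f_i(p) < f_j(p)$ for every $j \neq i$. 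Thus the substantive content is
\[
  \Omega^\S_i \;=\; V_i^{\S^n} \;:=\; \{\, p \in \S^n : f_i(p) < f_j(p)\ \ \forall\, j \neq i\,\}.
\]

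The first step is to prove the analogous identity $\Omega^\B_i = V_i^{\B^n}$ on $\B^n$, where $V_i^{\B^n}$ is defined by the same strict inequalities in $\B^n$. After using Corollary~\ref{cor:ciki} to rewrite $\c^\S_{ij} = \c^\S_i - \c^\S_j$ and $\k^\S_{ij} = \k^\S_i - \k^\S_j$, the inclusion $V_i^{\B^n} \subseteq \Omega^\B_i$ is immediate from \eqref{eq:polyhedron-formula}, since $V_i^{\B^n}$ satisfies a larger family of inequalities than those defining $\Omega^\B_i$. Once the reverse inclusion is established (see the next paragraph), the lift to $\S^n$ is straightforward: as $\c^\S_a \in N^\perp$, each $f_a$ is invariant under reflection about $N^\perp$ and satisfies $f_a(p) = f_a(\Pi p)$ for every $p \in \S^n$. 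Since $\Pi : \S^n_+ \to \B^n$ is a diffeomorphism mapping $\Omega^\S_i \cap \S^n_+$ onto $\Omega^\B_i$, the $\B^n$ identity yields $\Omega^\S_i \cap \S^n_+ = V_i^{\S^n} \cap \S^n_+$, and $\S^0$-symmetry of both sides extends this to $\S^n_-$. For a point $p$ on the equator with $p \in \Omega^\S_i$, openness and the hemispherical identity force $f_i(p) \leq f_j(p)$ for every $j \neq i$; any equality would mean the reflection-invariant subsphere $\{f_i = f_j\} \cap \S^n$ passes through $p$ and locally separates any neighborhood $U \subseteq \Omega^\S_i$ of $p$ into two non-empty regions, the $\{f_i > f_j\}$ side of which would contain non-equatorial points in $\Omega^\S_i$, contradicting $\Omega^\S_i \cap \S^n_{\pm} \subseteq V_i^{\S^n}$. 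Hence $p \in V_i^{\S^n}$, completing the lift.

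The main obstacle is the reverse inclusion $\Omega^\B_i \subseteq V_i^{\B^n}$, which amounts to extending \eqref{eq:polyhedron-formula} from the neighbor set $\{j \neq i : \Sigma^\B_{ij} \neq \emptyset\}$ to all $j \neq i$. My plan is a volume comparison argument. The Voronoi cells $\{V_i^{N^\perp}\}_{i=1}^q$ defined on all of $N^\perp$ by the affine functions $\{f_j\}$ form a polyhedral tiling: their closures cover $N^\perp$, they are pairwise disjoint, and their boundaries lie in a finite union of hyperplanes, hence have Lebesgue measure zero. Intersecting with $\B^n$ gives $\sum_i \vol(V_i^{\B^n}) = \vol(\B^n)$. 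Since $V_i^{\B^n} \subseteq \Omega^\B_i$ (already shown) and $\{\Omega^\B_i\}$ also tile $\B^n$ up to a null set, equality of sums forces $\vol(\Omega^\B_i \setminus V_i^{\B^n}) = 0$ for every $i$. For each $j \neq i$, the set $\Omega^\B_i \cap \{f_j < f_i\}$ is open (as the intersection of two open sets) and disjoint from $V_i^{\B^n}$, hence is an open null set and therefore empty. Thus $\Omega^\B_i \subseteq \{f_i \leq f_j\}$ for every $j \neq i$, and openness of $\Omega^\B_i$ together with the fact that $\{f_i = f_j\}$ is a lower-dimensional hyperplane rules out equality at any point (else a neighborhood in $\Omega^\B_i$ would have to cross into the already-excluded set $\{f_j < f_i\}$). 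This yields $\Omega^\B_i \subseteq V_i^{\B^n}$ and concludes the proof.
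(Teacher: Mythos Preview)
Your proof is correct and follows essentially the same strategy as the paper: use Proposition~\ref{prop:convex} and Corollary~\ref{cor:ciki} to obtain the neighbor-only polyhedral representation, observe that the full Voronoi cells $V_i$ are contained in the corresponding $\Omega_i$, invoke the fact that both families tile the space up to null sets to force equality in measure, and then use openness to upgrade to actual equality. The only organizational difference is that the paper performs the tiling comparison directly on $\S^n$ (asserting that the neighbor formula on $\S^n$ follows from Proposition~\ref{prop:convex}), whereas you carry it out on $\B^n$ and then lift, giving a more explicit treatment of the equatorial points; the content is the same.
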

\begin{proof} 
By Proposition \ref{prop:convex}, we know that:
\[
 \Omega^\S_i  = \bigcap_{j \neq i : \Sigma^\S_{ij} \neq \emptyset} \set{ p \in \S^n \; ; \;\sscalar{\c^\S_{ij},p} + \k^\S_{ij} < 0 } .
\]
Note the subtle difference with (\ref{eq:Voronoi}), where the intersection is taken over all $j \neq i$ (no requirement that $\Sigma^\S_{ij} \neq \emptyset$). Denoting the variant in (\ref{eq:Voronoi}) by $\tilde \Omega^\S_i$, we shall show that $\Omega^\S_i = \tilde \Omega^\S_i$ for all $i$. 

To see this, let $I \subset \{1,\ldots,q\}$ denote a minimal subset of indices such that $\{(\c^\S_i,\k^\S_i)\}_{i \in I} = \{ (\c^\S_i,\k^\S_i) \}_{i=1,\ldots,q}$; in particular, $\{(\c^\S_i,\k^\S_i)\}_{i \in I}$ are distinct vectors in $\R^{n+2}$. We will first show that $I = \{1,\ldots,q\}$. To this end, denote for $i \in I$:
\[
\hat \Omega^\S_i := \bigcap_{j \in I \setminus \{i\}} \; \set{ p \in \S^n \; ;\; \sscalar{\c^\S_{ij},p} + \k^\S_{ij} < 0 } = \set{ p \in \S^n \; ; \; \argmin_{j \in I} \; \sscalar{\c^\S_j , p} + \k^\S_j = \{i\} } .
\]
Then $\hat \Omega^\S$ is a genuine cluster, as $\{\hat \Omega^\S_i\}_{i \in I}$ are clearly disjoint, and 
\[
\S^n \setminus \bigcup_{i \in I} \hat \Omega^\S_i \; \subset \bigcup_{i \neq j : i,j \in I} \set{p \in \S^n \; ; \; \sscalar{\c^\S_{ij},p} + \k^\S_{ij} = 0}
\]
 is a null-set, thanks to the fact that all $\{(\c^\S_i,\k^\S_i)\}_{i \in I}$ are distinct. 
Note that given $i \in I$, if $j \in \{1,\ldots,q\} \setminus \{i\}$ is such that $\Sigma^\S_{ij} \neq \emptyset$ then $\c_{ij}^\S \neq 0$ (being a quasi-center). Consequently, there exists $j' \in I \setminus \{i\}$ so that $(\c^\S_{j'},\k^\S_{j'}) = (\c^\S_j,\k^\S_j)$, hence $(\c^\S_{ij'} , \k^\S_{ij'}) = (\c^\S_{ij} , \k^\S_{ij})$ and therefore $\Omega^\S_i \supset \hat \Omega^\S_i$ for all $i \in I$. Since $\hat \Omega^\S$ covers $\S^n$ up to null-sets, it follows that $V(\Omega^\S_i) = 0$ for all $i \notin I$. But this is impossible unless $I = \{1,\ldots,q\}$, since all (open) cells of $\Omega^\S$ are assumed non-empty (by regularity). It follows that $\hat \Omega^\S = \tilde \Omega^\S$,  $\tilde \Omega^\S_i \subset \Omega^\S_i$, $\H^{n}(\Omega^\S_i \setminus \tilde \Omega^\S_i) = 0$ and $\tilde \Omega^\S_i \neq \emptyset$ for all $i=1,\ldots,q$.

Now observe that no hyperplane $\{ x \in \R^{n+1} \; ; \; \scalar{x,\c_{ij}} + \k_{ij} = 0 \}$ with $i \neq j$ can be tangential to $\S^n$ since this would mean that either $\tilde \Omega_i^\S$ or $\tilde \Omega_j^\S$ are empty, a contradiction. 
Assume in the contrapositive that $\exists p \in \Omega^\S_i \setminus \tilde \Omega^\S_i$. Since $\H^{n}(\Omega^\S_i \setminus \tilde \Omega^\S_i) = 0$, necessarily $p \in \partial \tilde \Omega^\S_i$ and so $\scalar{p,\c_{ij}}+\k_{ij} = 0$ for some $j \neq i$. Since $\Omega^\S_i$ is open, there is some small open neighborhood $U$ of $p$ contained in $\Omega^\S_i$. But on the other hand, $U \cap \{ x \in \R^{n+1} \; ; \; \scalar{x,\c_{ij}} + \k_{ij} > 0 \}$ has positive $\H^n$-measure (since the hyperplane is non-tangential) and lies outside of $\tilde \Omega^\S_i$. It follows that $\H^n(\Omega^\S_i \setminus \tilde \Omega^\S_i) > 0$, a contradiction.
We have shown that $\Omega^\S_i  = \tilde \Omega^\S_i$ for all $i$, concluding the proof.
\end{proof}

The contents of this section complete the proof of Theorem \ref{thm:spherical-Voronoi-prelim}. 

\subsection{Explicit representation of spherical Voronoi cluster on $\R^n$}

Before concluding this section, we collect several additional properties of spherical Voronoi clusters, mostly for completeness and clarification.

\begin{lemma} \label{lem:spherical-Voronoi-is-stationary}
A regular spherical Voronoi cluster on $\S^n$ is stationary.  \end{lemma}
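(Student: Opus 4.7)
The plan is to verify directly the definition of stationarity: namely, that $\Omega$ is interface-regular (Definition~\ref{def:interface-regular}) and satisfies the three conclusions of Lemma~\ref{lem:first-order-conditions} with Lagrange multiplier $\lambda := (n-1)\k \in E^{(q-1)}$. The key algebraic observation throughout will be the identity $\n_{ij} = \c_{ij} + \k_{ij} p$ on $\Sigma_{ij}$, a direct consequence of the definition of the quasi-center.

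Interface-regularity together with the constant weighted mean-curvature condition in cyclic form are immediate from Definition~\ref{def:intro-spherical-Voronoi}. Each cell $\Omega_i$ is the intersection of $\S^n$ with an open convex polyhedron in $\R^{n+1}$ by~\eqref{eq:intro-Voronoi-rep}, hence relatively open; each non-empty $\Sigma_{ij}$ is a relatively open subset of the smooth geodesic sphere $S_{ij} = \{p \in \S^n : \sscalar{\c_{ij},p}+\k_{ij}=0\}$, which has constant curvature $\k_{ij}$, so (noting that the density of the Haar measure is constant) $H_{\Sigma_{ij},\mu} = H_{\Sigma_{ij}} = (n-1)\k_{ij} = \lambda_i-\lambda_j$. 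The $120^\circ$-angle condition at each triple-point $p \in \Sigma_{uvw}$ then follows from the two telescoping identities
\begin{equation*}
\sum_{(i,j) \in \cyclic(u,v,w)} \n_{ij} \;=\; \sum_{(i,j) \in \cyclic(u,v,w)} \c_{ij} \;+\; p\sum_{(i,j) \in \cyclic(u,v,w)} \k_{ij} \;=\; 0,
\end{equation*}
and by~\eqref{eq:sqrt3} this equivalently yields $\sum_{(i,j) \in \cyclic(u,v,w)} \n_{\partial ij} = 0$.

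The one remaining task, and the main technical point, is the distributional no-boundary condition~\eqref{eq:no-boundary}: for any $X \in C_c^\infty(\S^n,T\S^n)$, I plan to apply the classical Stokes' theorem~\eqref{eq:Stokes-classical} to $X^\tang$ on each $\Sigma_{ij}$ and regroup the resulting boundary integrals by triple-points. This is legitimate for a spherical Voronoi cluster since $\Sigma^2$ lies in the finite union of pairwise geodesic-sphere intersections $\{S_{ij} \cap S_{kl}\}$, each a smooth $(n-2)$-submanifold of locally finite $\mu^{n-2}$-measure, while the higher-codimension singular set is contained in triple intersections $S_{ij}\cap S_{kl} \cap S_{mn}$ of Hausdorff dimension at most $n-3$, hence $\mu^{n-2}$-null. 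After this regrouping, each triple-point $\Sigma_{uvw}$ contributes the pointwise integrand $\sum_{(i,j)\in \cyclic(u,v,w)} X^{\n_{\partial ij}}$, which vanishes by the cyclic cancellation above, yielding~\eqref{eq:no-boundary}.

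The main obstacle, though technical rather than conceptual, is that a spherical Voronoi cluster is not a-priori regular in the full sense of Definition~\ref{def:regular} (its local structure at meeting points of four or more cells need not match the $\T \times \R^{n-3}$ model of Theorem~\ref{thm:regularity}), so the cutoff-based Stokes Lemmas~\ref{lem:cutoff-Sigma4}--\ref{lem:Stokes} developed earlier for regular stationary clusters cannot be invoked out of the box. This necessitates a direct truncation argument away from the $\mu^{n-2}$-null higher-codimensional singular set, but this is standard given the explicit finite-union structure of $\Sigma^{\ge 3}$ and the boundedness of curvature everywhere on $\Sigma^1$.
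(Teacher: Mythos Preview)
Your proposal is correct and follows essentially the same route as the paper, which simply invokes Lemma~\ref{lem:equivalent-stationary} after noting bounded curvature and the identity $\n_{ij} = \c_{ij} + \k_{ij} p$. Your observation that a general spherical Voronoi cluster need not be regular in the full sense of Definition~\ref{def:regular} (so that Lemma~\ref{lem:equivalent-stationary} does not literally apply as stated) is well-taken; the paper glosses over this, presumably because the proof of Lemma~\ref{lem:equivalent-stationary} only uses regularity to secure $\mu^{n-2}(\Sigma^2 \cap K) < \infty$ via Lemma~\ref{lem:Sigma2}, and this is immediate for spherical Voronoi clusters from the explicit description of $\Sigma^2$ as contained in a finite union of codimension-two geodesic spheres. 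Your direct truncation argument away from $\Sigma^{\ge 3}$ makes this point explicit and is arguably the cleaner presentation.
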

\begin{proof}
Immediate from Definition \ref{def:intro-spherical-Voronoi} and Lemma \ref{lem:equivalent-stationary} (as the cluster is of bounded curvature), after recalling that $\c = \n - \k p$. 
\end{proof}

As already commented on in Remark \ref{rem:intro-quasi-centers}, it will follow from Lemma \ref{lem:Mobius-preserves-Voronoi} that the property of being a spherical Voronoi cluster on $\R^n$ does not depend on the particular stereographic projection to $\S^n$ used. Consequently, we may use the standard stereographic projection to compare the spherical Voronoi representations on $\R^n$ and $\S^n$.

\begin{lemma} \label{lem:kc-S-R}
Let $\S^n$ be canonically embedded in $\R^{n+1} = \R^n \times \R$. Let $S^\R$ be a generalized sphere on $\bar \R^n$ and let $S^\S$ be the corresponding sphere on $\S^n$, obtained from $S^\R$ via the standard stereographic projection $T$. For $\M \in \{\R,\S\}$, let $\c^\M$ and $\k^\M$ be the quasi-center and curvature of $S^\M$, respectively. Write $\c^\S = (\underline \c^\S , \c^\S_0) \in \R^n \times \R$. Then:
\begin{equation} \label{eq:ck-S-R}
\c^\R = \underline \c^\S ~,~ \k^\R = \k^\S + \c^\S_0. 
\end{equation}
Furthermore, the spherical cap $B^\S_{\c^\S,\k^\S} := \{ p \in \S^n \; ; \; \scalar{\c^\S,p} + \k^\S \leq 0\}$ on $\S^n$ is stereographically projected via $T^{-1}$ to:
\[
B^\R_{\c^\S,\k^\S} := \set{x \in \R^n \; ; \; \k^\R |x|^2 + 2 \sscalar{\c^\R,x} + 2 \k^\S - \k^\R \leq 0} ,
\]
which is a Euclidean ball of curvature $|\k^\R|$ and center $-\c^\R / \k^\R$ if $\k^\R > 0$, its complement if $\k^\R < 0$, or the halfplane $\{ x \in \R^n \; ; \; \scalar{\c^\R,x} + \k^\S \leq 0\}$ if $\k^\R = 0$. 
\end{lemma}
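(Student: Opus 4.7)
The plan is to work the two statements of the lemma in reverse order: first compute the $T^{-1}$-image of the spherical cap $B^\S_{\c^\S,\k^\S}$ explicitly, and only then identify the quasi-center/curvature of the resulting generalized sphere to deduce the conversion formulas $\c^\R = \underline\c^\S$ and $\k^\R = \k^\S + \c^\S_0$. Concretely, I would start by recording the standard formula
\[
T(x) = \left(\frac{2x}{1+|x|^2},\, \frac{|x|^2-1}{|x|^2+1}\right) \in \R^n \times \R,
\]
and then substitute into $\scalar{\c^\S,p} + \k^\S$. Multiplying through by the positive factor $1+|x|^2$, the affine condition defining $S^\S$ becomes
\[
(\c^\S_0 + \k^\S)\,|x|^2 + 2\scalar{\underline\c^\S,x} + (\k^\S - \c^\S_0) \;=\; 0,
\]
and the inequality defining the cap $B^\S_{\c^\S,\k^\S}$ becomes the same expression $\le 0$. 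Setting $\c^\R := \underline\c^\S$ and $\k^\R := \k^\S + \c^\S_0$, this is exactly the claimed description of $B^\R_{\c^\S,\k^\S}$ (with $2\k^\S - \k^\R = \k^\S - \c^\S_0$).

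The second step is to verify that this $\c^\R$ and $\k^\R$ really are the quasi-center and curvature of the resulting generalized sphere $S^\R$, per the convention from Remark~\ref{rem:quasi-center}. Here I would use the sphere-constraint $|\c^\S|^2 = 1 + (\k^\S)^2$, i.e.\ $|\underline\c^\S|^2 + (\c^\S_0)^2 = 1 + (\k^\S)^2$, which after the substitution $\c^\S_0 = \k^\R - \k^\S$ rearranges to the key identity
\[
|\c^\R|^2 + (\k^\R)^2 - 2\k^\R \k^\S \;=\; 1.
\]
I would then treat the three regimes: when $\k^\R>0$, completing the square in the equation shows $|x + \c^\R/\k^\R|^2 = 1/(\k^\R)^2$, so we recognize a Euclidean ball of curvature $\k^\R$ centered at $-\c^\R/\k^\R$; the outward unit normal on its boundary is $\n = \k^\R x + \c^\R$, giving $\c^\R = \n - \k^\R x$ as required. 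For $\k^\R<0$ the inequality flips orientation to the exterior of the same kind of ball, and a direct computation of $\nabla f/|\nabla f|$ (where $f(x)$ is the defining polynomial) together with the identity above again yields $|\k^\R x + \c^\R| = 1$ on the sphere, so the same relation $\c^\R = \n - \k^\R x$ holds with the correct outward orientation. For $\k^\R=0$ the equation degenerates to the halfplane $\scalar{\c^\R,x} + \k^\S \le 0$, and the identity reduces to $|\c^\R|=1$, so $\n = \c^\R$ and trivially $\c^\R = \n - 0\cdot x$.

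The only mild subtlety — and the ``hard'' part, though it is really just bookkeeping — is matching \emph{orientations} of $\n$ between $S^\S$ and $S^\R$ in a sign-consistent way across the three cases of $\k^\R$, since the cap $B^\S_{\c^\S,\k^\S}$ is specified by a single (non-strict) inequality that corresponds in $\R^n$ to a ball, its complement, or a halfspace depending on the sign of $\c^\S_0 + \k^\S$. Using the inward-to-outward convention already imposed by Remark~\ref{rem:quasi-center} (where $\c$ is determined once $\n$ is chosen), the computations above handle all three cases uniformly via the single identity $|\k^\R x + \c^\R|^2 = 1$ on $S^\R$, which I would emphasize as the conceptual heart of the lemma.
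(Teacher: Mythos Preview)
Your proposal is correct and follows essentially the same approach as the paper: substitute the standard stereographic formula $T(x) = \bigl(\tfrac{2x}{1+|x|^2}, \tfrac{|x|^2-1}{1+|x|^2}\bigr)$ into the cap inequality, clear the positive denominator, and then identify the resulting quadratic with the claimed $\c^\R,\k^\R$. You actually supply more detail than the paper does --- in particular the explicit verification via $|\c^\S|^2 = 1 + (\k^\S)^2$ that the pair $(\c^\R,\k^\R)$ really is the quasi-center/curvature in each of the three sign cases, which the paper leaves as ``it remains to verify.''
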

\begin{remark} \label{rem:ck-S-R-non-invertible}
Note that the mapping $(\c^\S,\k^\S) \mapsto (\c^\R,\k^\R)$ is not invertible when $\k^\R = 0$, as $\k^\S = -\c_0^\S$ cannot be recovered in that case. In particular, the parameters $(\c^\R,\k^\R)$ do not determine $B^\R_{\c^\S,\k^\S}$ uniquely when $\k^\R = 0$. For this reason, Definition \ref{def:intro-spherical-Voronoi} of a spherical Voronoi cluster is given on $\S^n$, with the case of $\R^n$ obtained by stereographic projection. 
\end{remark}
\begin{proof}[Proof of Lemma \ref{lem:kc-S-R}]
Recall the standard formula $p = T(x) = (\frac{2x}{|x|^2 + 1} , \frac{|x|^2 - 1}{|x|^2 + 1})$. It immediately follows that $B^\S_{\c^\S,\k^\S}$ on $\S^n$ is projected to $\set{x \in \R^n \; ; \; (\k^\S + \c^\S_0) |x|^2 + 2 \sscalar{\underline \c^\S,x} + \k^\S - \c^\S_0 \leq 0}$ on $\R^n$. Given (\ref{eq:ck-S-R}), this coincides with $B^\R_{\c^\S,\k^\S}$, and it remains to verify that the quasi-center and curvature of the generalized sphere $\partial B^\R_{\c^\S,\k^\S}$ are indeed $\c^\R$ and $\k^\R$, respectively.
\end{proof}

\begin{lemma} \label{lem:Voronoi-rep-on-Rn}
Let $\Omega^\R$ be a regular spherical Voronoi $q$-cluster on $\R^n$. 
Then there exist vectors $\{ \c^\R_i \}_{i=1,\ldots,q} \subset \R^n$ and scalars $\{ \k^\R_i \}_{i=1,\ldots,q} \subset \R$, so that every non-empty interface $\Sigma^\R_{ij}$ lies on a single (generalized) sphere $S^\R_{ij}$ with quasi-center $\c^\R_{ij} = \c^\R_i - \c^\R_j$ and curvature $\k^\R_{ij} = \k^\R_i - \k^\R_j$. In particular, $\Omega^\R$ is stationary. 
 
Moreover, if $\Omega^\S$ is the spherical Voronoi $q$-cluster on $\S^n$ obtained from $\Omega^\R$ via the standard stereographic projection $T$, having curvature parameters $\{ \k^\S_i \}_{i=1,\ldots,q}$, then the following representation for $\Omega^\R$ holds (where as usual $\k^\S_{ij} := \k^\S_i - \k^\S_j$):
\begin{align} 
\nonumber \Omega^\R_i & = \set{ x \in \R^n \; ; \; \argmin_{j=1,\ldots,q} \; (\k^\R_j |x|^2 + 2 \sscalar{\c^\R_{j},x} + 2 \k^\S_{j} - \k^\R_{j}) = \{i\} } \\
\label{eq:Voronoi-rep-Rn}
 & = \bigcap_{j \neq i} \set{x \in \R^n \; ; \; \k^\R_{ij} |x|^2 + 2 \sscalar{\c^\R_{ij},x} + 2 \k^\S_{ij} - \k^\R_{ij} < 0} ~,~ i = 1,\ldots,q . 
\end{align}
\end{lemma}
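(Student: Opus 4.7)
The plan is to reduce everything to the sphere via the standard stereographic projection $T : \R^n \to \S^n$, where the spherical Voronoi structure is given by hypothesis, and then translate each piece of data back using the explicit formulas of Lemma~\ref{lem:kc-S-R}. By Definition~\ref{def:intro-spherical-Voronoi}, being a spherical Voronoi cluster on $\R^n$ is stipulated via \emph{some} stereographic projection to $\S^n$; as indicated in Remark~\ref{rem:intro-quasi-centers}, Lemma~\ref{lem:Mobius-preserves-Voronoi} ensures this property is independent of the projection chosen, so I may work with the standard $T$. Let $\{\c_i^\S\}_{i=1}^q \subset \R^{n+1}$ and $\{\k_i^\S\}_{i=1}^q \subset \R$ be the quasi-center and curvature parameters of the spherical Voronoi cluster $\Omega^\S := T\Omega^\R$ on $\S^n$, and split $\c_i^\S = (\underline{\c_i^\S},(\c_i^\S)_0) \in \R^n \times \R$.

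Define $\c_i^\R := \underline{\c_i^\S}$ and $\k_i^\R := \k_i^\S + (\c_i^\S)_0$, so that $\c_{ij}^\R = \c_i^\R - \c_j^\R$ and $\k_{ij}^\R = \k_i^\R - \k_j^\R$. For each non-empty interface $\Sigma_{ij}^\R$, the corresponding $\Sigma_{ij}^\S$ lies on the sphere $\{p \in \S^n : \scalar{\c_{ij}^\S,p} + \k_{ij}^\S = 0\}$ with quasi-center $\c_{ij}^\S$ and curvature $\k_{ij}^\S$. Applying Lemma~\ref{lem:kc-S-R} coordinate-by-coordinate to $\c_{ij}^\S$ (which lies in $N^\perp \oplus \R$ in the sense of the decomposition used there), this sphere pulls back under $T^{-1}$ to a generalized sphere in $\R^n$ whose quasi-center equals $\underline{\c_{ij}^\S} = \c_{ij}^\R$ and whose curvature equals $\k_{ij}^\S + (\c_{ij}^\S)_0 = \k_{ij}^\R$, establishing the first claim.

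For the explicit representation (\ref{eq:Voronoi-rep-Rn}), I apply $T^{-1}$ to the Voronoi representation (\ref{eq:intro-Voronoi-rep}) on $\S^n$. Lemma~\ref{lem:kc-S-R} describes precisely how a spherical half-cap $\{p \in \S^n : \scalar{\c_{ij}^\S,p} + \k_{ij}^\S \leq 0\}$ is stereographically projected, namely to
\[
\set{x \in \R^n \; ; \; \k_{ij}^\R |x|^2 + 2\sscalar{\c_{ij}^\R,x} + 2\k_{ij}^\S - \k_{ij}^\R \leq 0},
\]
and taking the interior of the intersection over $j \neq i$ (or equivalently the strict inequality in each factor, since $T$ is a diffeomorphism preserving interiors) yields the second line of (\ref{eq:Voronoi-rep-Rn}). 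The first line then follows since
\[
\k_{ij}^\R |x|^2 + 2\sscalar{\c_{ij}^\R,x} + 2\k_{ij}^\S - \k_{ij}^\R = \phi_i(x) - \phi_j(x), \quad \phi_k(x) := \k_k^\R|x|^2 + 2\sscalar{\c_k^\R,x} + 2\k_k^\S - \k_k^\R,
\]
so that the intersection of $\{\phi_i < \phi_j\}$ over $j \neq i$ is exactly the interior of the argmin set.

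Finally, stationarity of $\Omega^\R$ follows immediately from Lemma~\ref{lem:equivalent-stationary} (applicable because $\Omega^\R$ has bounded curvature, each $\Sigma_{ij}^\R$ lying on a generalized sphere of curvature $\k_{ij}^\R$). Indeed, property~\ref{it:stationarity-k} holds with Lagrange multiplier $\lambda^\R = (n-1)\k^\R$ since $H_{\Sigma_{ij}^\R} = (n-1)\k_{ij}^\R = (n-1)(\k_i^\R - \k_j^\R)$ is constant, and property~\ref{it:stationarity-n} holds because $\Omega^\S$ is stationary by Lemma~\ref{lem:spherical-Voronoi-is-stationary}, whence $\sum_{(i,j) \in \cyclic(u,v,w)} \n_{ij}^\S = 0$ at every triple-point, and conformality of $T^{-1}$ (which preserves angles between meeting hypersurfaces) transfers this identity to $\Omega^\R$. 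There is no substantial obstacle here: the entire argument is a bookkeeping translation via Lemma~\ref{lem:kc-S-R}, with the only mild subtlety being the verification that the two forms in (\ref{eq:Voronoi-rep-Rn}) agree, which reduces to rewriting the pairwise inequality as a difference of the scalar potentials $\phi_k$.
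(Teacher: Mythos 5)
Your proof is correct and follows essentially the same approach as the paper's: both define $\c_i^\R = \underline{\c_i^\S}$, $\k_i^\R = \k_i^\S + (\c_i^\S)_0$, reduce to the sphere via the standard stereographic projection, and invoke Lemma~\ref{lem:kc-S-R} together with linearity and the Voronoi representation~\eqref{eq:intro-Voronoi-rep} to derive~\eqref{eq:Voronoi-rep-Rn}. The only minor divergence is in the stationarity step: the paper cites Lemma~\ref{lem:spherical-Voronoi-is-stationary} (which uses the already-established linear structure $\c_{ij}^\R = \c_i^\R - \c_j^\R$, $\k_{ij}^\R = \k_i^\R - \k_j^\R$ directly to verify $\sum_{(i,j)\in\cyclic(u,v,w)} \n_{ij} = 0$ via Lemma~\ref{lem:equivalent-stationary}), whereas you transfer the angle condition from $\S^n$ using conformality of $T^{-1}$ — an equally valid route, and in fact the same reasoning that underpins the equivalence~\ref{it:sums1b}$\Leftrightarrow$\ref{it:sums2b} in Lemma~\ref{lem:sum-c-k}.
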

\begin{proof}
Denote in addition the quasi-center parameters $\{ \c^\S_i \}_{i=1,\ldots,q} \subset \R^{n+1}$ of the spherical Voronoi cluster $\Omega^\S$ on $\S^n$. Note that by definition, for every non-empty $\Sigma^\S_{ij}$, $\c^\S_{ij} = \c^\S_i - \c^\S_j$ and $\k^\S_{ij} = \k^\S_i - \k^\S_j$ are the quasi-center and curvature of $\Sigma^\S_{ij}$.  Denote $\c^\R_i := \underline{\c}^\S_i$ and $\k^\R_i := \k^\S_i + (\c^\S_0)_i$ for $i=1,\ldots,q$. Then by (\ref{eq:ck-S-R}) and linearity, $\c^\R_{ij} = \c^\R_i - \c^\R_j$ and $\k^\R_{ij} = \k^\R_i - \k^\R_j$ are the quasi-center and curvature (respectively) of every non-empty $\Sigma^\R_{ij}$. 
The representation (\ref{eq:Voronoi}) together with Lemma \ref{lem:kc-S-R} yield the representation (\ref{eq:Voronoi-rep-Rn}). 
Finally, stationarity follows exactly as in Lemma \ref{lem:spherical-Voronoi-is-stationary}.
\end{proof}

\section{Cells are connected when $q \leq n+1$} \label{sec:Voronoi-connected}

This section is dedicated to concluding the proof of Theorem \ref{thm:intro-stable-spherical-Voronoi}, by establishing the connectedness of the cluster's cells when $q \leq n+1$. We also show the almost-obvious implication that Theorem \ref{thm:spherical-Voronoi-prelim} implies \ref{thm:intro-spherical-Voronoi-prelim}. 

We assume throughout this section that $\Omega$ is a bounded stable regular $q$-cluster on the model space $M^n \in \{ \R^n , \S^n \}$ with $\S^0$-symmetry, i.e. reflection symmetry with respect to the equator $M^{n-1} := M^n \cap N^{\perp}$ for some North Pole $N \in M^n$ with $|N|=1$. We know by Theorem \ref{thm:stable-spherical} that $\Omega$ is spherical perpendicularly to its hyperplane of symmetry, and in particular has bounded curvature.

\subsection{Truncated Skew-Fields}

\begin{definition}[Truncated skew scalar-fields]
Given a stationary regular cluster with $\S^0$-symmetry with respect to $N^{\perp}$ and $a \in E^{(q-1)}$, the scalar-field $h^a = \{ h^a_{ij}\}$ given by $h^a_{ij} := a_{ij} \abs{\scalar{N,p}}$ is called a truncated skew-field. 
\end{definition}

We could equally have defined $h^a_{ij} := a_{ij} \scalar{N,p}_+$, using the positive-part instead of the absolute-value operation above, which would better correspond to ``truncation". The truncation operation of a scalar-field on its nodal-domain is inspired by the work of Hutchings--Morgan--Ritor\'e--Ros \cite{DoubleBubbleInR3} in the double-bubble case. 

\begin{proposition} \label{prop:truncated-skew-fields}
A truncated skew scalar-field $h^a = \{ h^a_{ij}\}$ satisfies conformal BCs on $\Sigma^2$ and $Q^0(h^a) = 0$. 
\end{proposition}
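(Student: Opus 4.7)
The plan is to compare the truncated skew-field $h^a_{ij} = a_{ij} |\scalar{N,p}|$ with the ordinary skew-field $f^a_{ij} = a_{ij} \scalar{N,p}$ from Proposition \ref{prop:skew-fields-properties}, and to show that both the conformal BCs at $\Sigma^2$ and the entire index-form $Q^0(h^a)$ only ``see'' the squared quantities, which are identical to those of $f^a$. Note first that $h^a$ is genuinely an oriented scalar-field satisfying the Dirichlet--Kirchoff conditions (both $h^a_{ji} = -h^a_{ij}$ and $h^a_{ij} + h^a_{jk} + h^a_{ki} = 0$ follow from the corresponding linear identities in $a$), and lies in $\Lip_\Delta(\Sigma)$ via $\Psi_k := a_k |\scalar{N,\cdot}|$.

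The first key observation is that $\mu^{n-1}(\{p \in \Sigma^1 \; ;\; \scalar{N,p} = 0\}) = 0$ and $\mu^{n-2}(\{p \in \Sigma^2 \; ;\; \scalar{N,p} = 0\}) = 0$. Indeed, by Theorem \ref{thm:stable-spherical} the cluster is spherical perpendicularly to $N^\perp$, so by Remark \ref{rem:equator-perp} the relatively open interfaces $\Sigma_{ij}$ meet the equator $M^{n-1}$ transversely, and the same holds for $\Sigma^2$; the intersections $\Sigma^1 \cap M^{n-1}$ and $\Sigma^2 \cap M^{n-1}$ are therefore lower-dimensional. In particular, away from a $\mu^{n-2}$-null subset of $\Sigma^2$, we have $\scalar{N,p} \neq 0$, and in a neighborhood of any such point we have $h^a = \eps f^a$ where $\eps = \sgn \scalar{N,p} \in \{\pm 1\}$ is locally constant; since $f^a$ satisfies conformal BCs on $\Sigma^2$ by Proposition \ref{prop:skew-fields-properties}, the same holds for $\eps f^a$, hence for $h^a$ at that point. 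This establishes the conformal BCs for $h^a$.

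For the identity $Q^0(h^a) = 0$, I compute directly from Definition \ref{def:Q0}. Pointwise on $\Sigma_{ij}$ we have $(h^a_{ij})^2 = a_{ij}^2 \scalar{N,p}^2 = (f^a_{ij})^2$; and on the full-measure subset where $\scalar{N,p} \ne 0$, the Lipschitz function $|\scalar{N,p}|$ equals $\pm \scalar{N,p}$ on a neighborhood, so $|\nabla^\tang h^a_{ij}|^2 = a_{ij}^2 |\nabla^\tang \scalar{N,p}|^2 = |\nabla^\tang f^a_{ij}|^2$ $\mu^{n-1}$-a.e. The two $\mu^{n-1}$-integrals in Definition \ref{def:Q0} therefore agree with those defining $Q^0(f^a)$. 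For the boundary integral, note that at every $p \in \Sigma_{ijk}$,
\[
h^a_{ij}(p)\,\frac{h^a_{ik}(p)+h^a_{jk}(p)}{\sqrt 3} = \scalar{N,p}^2 \cdot \frac{a_{ij}(a_{ik}+a_{jk})}{\sqrt 3} = f^a_{ij}(p)\,\frac{f^a_{ik}(p)+f^a_{jk}(p)}{\sqrt 3}
\]
pointwise, so the boundary integrals also agree. All integrals are finite since the cluster is bounded and has bounded curvature (by sphericity and Lemma \ref{lem:Sigma2}), so invoking $Q^0(f^a) = 0$ from Proposition \ref{prop:skew-fields-properties} yields $Q^0(h^a) = 0$.

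The only delicate point, and the place where sphericity really enters, is the $\mu^{n-1}$-a.e. identification of the tangential gradients: one must know that the zero set of $\scalar{N,\cdot}$ on $\Sigma^1$ is negligible, which depends essentially on Remark \ref{rem:equator-perp} and hence on the perpendicular sphericity established in Theorem \ref{thm:stable-spherical}. Everything else is a direct consequence of $|x|^2 = x^2$ and the fact that $Q^0$ depends quadratically on the scalar-field via squared norms and symmetric boundary products.
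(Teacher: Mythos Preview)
Your proof is correct and follows essentially the same route as the paper: compare $h^a$ to the skew-field $f^a$ hemisphere-by-hemisphere, invoke Remark~\ref{rem:equator-perp} to see that the equatorial intersections $\Sigma^1\cap M^{n-1}$ and $\Sigma^2\cap M^{n-1}$ are null, and conclude that all integrands in $Q^0$ coincide with those for $f^a$. The only cosmetic difference is that the paper works with the bilinearized boundary form $(h^a_{ij})^2\,\bar\II^{\partial ij}$ from Definition~\ref{def:bilinear-Q0} (invoking Lemma~\ref{lem:cyclic-XYII}), whereas you work directly with the original boundary integrand from Definition~\ref{def:Q0} and observe explicitly that $h^a_{ij}(h^a_{ik}+h^a_{jk}) = f^a_{ij}(f^a_{ik}+f^a_{jk})$ because the common factor $|\langle N,p\rangle|$ appears squared; both are equivalent.
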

\begin{proof} 
Recall from Proposition \ref{prop:skew-fields-properties} that all skew-fields $f^a_{ij} = a_{ij} \scalar{p,N}$ satisfy conformal BCs. However, truncated skew-fields $h^a$ and skew-fields $f^a$ coincide in each of the open hemispheres $M^n_{\pm}$ up to sign, and so the truncated skew-fields $h^a$ also satisfy the conformal BCs outside the equator on $\Sigma^2 \setminus M^{n-1}$. They actually also satisfy the conformal BCs on $\Sigma^2 \cap M^{n-1}$; even though this set has zero $\H^{n-2}$ measure by Remark \ref{rem:equator-perp} and is thus immaterial for the definition of conformal BCs, let us verify this nevertheless. Indeed, since $\c_{ij}^\ell \perp N$ and we have $\n_{ij} = \c_{ij}^\ell + \k_{ij} p$ on $\Sigma^\ell_{ij}$, it follows by continuity of $\n_{ij}$ that $\n_{ij} \perp N$ on $\closure \Sigma_{ij} \cap M^{n-1}$. Consequently $\n_{\partial ij} \perp N$ on $\Sigma^2 \cap M^{n-1}$ as well, and hence $\nabla_{\n_{\partial ij}} h^a_{ij} = 0$ there, as $h^a \equiv 0$ on $M^{n-1}$ by definition; in particular, $\nabla_{\n_{\partial ij}} h^a_{ij} - \bar \II^{\partial ij} h^a_{ij} = 0$ is independent of $(i,j) \in \cyclic(u,v,w)$ on $\Sigma^2 \cap M^{n-1}$, verifying the conformal BCs there. 

A formal argument for establishing that $Q^0(h^a) = 0$ is to use that $h^a_{ij} L_{Jac} h^a_{ij} = 0$ in the distributional sense (since $h^a_{ij}$ and $f^a_{ij}$ coincide up to sign and $L_{Jac} f^a_{ij} = 0$ by Proposition \ref{prop:skew-fields-properties}), and apply Theorem \ref{thm:conformal-Q} \ref{it:conformal-Q-bounded}. However, the latter theorem for $Q^0$ was only proved for $C_c^\infty$ scalar-fields, and $L_{Jac} h^a_{ij}$ is a delta-measure at the equator. To make this rigorous, recall the formula (\ref{eq:Q0}) for $Q^0(h^a) = Q^0(h^a,h^a)$ (by Lemma \ref{lem:cyclic-XYII}): 
\[
Q^0(h^a) = \sum_{i<j} \Big [ \int_{\Sigma_{ij}} \brac{|\nabla^\tang h^a_{ij}|^2 - (\Ric_{g,\mu}(\n,\n) + \|\II\|_2^2) (h^a_{ij})^2} d\mu^{n-1}  - \int_{\partial \Sigma_{ij}} (h^a_{ij})^2 \bar \II^{\partial ij} \, d\mu^{n-2} \Big ] . 
\]
Since the integrands in the above formula for $Q^0(h^a)$ and $Q^0(f^a)$ coincide outside the equator, and since $h^a = f^a \equiv 0$ on the equator and $\H^{n-1}(\Sigma^1 \cap M^{n-1}) = 0$ by Remark \ref{rem:equator-perp}, it follows that the above integrals on the entire $\Sigma^2$ and $\Sigma^1$ are the same for $h^a$ and $f^a$, and therefore $Q^0(h^a) = Q^0(f^a) = 0$ by Proposition \ref{prop:skew-fields-properties}. 
\end{proof}

\subsection{Properties of $\Sigma$ on minimizing clusters}

We assume in addition throughout this section that $\Sigma$ is connected (and thus, by reflection symmetry, must intersect the equator), and that $M^n \setminus \Sigma$ has finitely many connected components. Let us first justify these assumptions for a minimizing cluster.

\begin{lemma} \label{lem:Sigma-connected}
For any isoperimetric minimizing cluster $\Omega$ on $M^n$, $\Sigma$ is connected. 
\end{lemma}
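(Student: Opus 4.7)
The plan is to argue by contradiction via a sliding deformation. Suppose $\Sigma$ were disconnected: by the boundedness of $\Sigma$ guaranteed by Theorem~\ref{thm:existence}, we could write $\Sigma = A \sqcup B$ with $A,B$ non-empty compact and $d_0 := \mathrm{dist}(A,B) > 0$. The idea is to construct, for a suitable sliding direction, a one-parameter family of volume-preserving diffeomorphisms $\{\phi_t\}_{t \in [0,T)}$ of $M^n$ satisfying (i)~$\phi_0 = \Id$; (ii)~on an open neighborhood of $A$, $\phi_t$ acts as a global isometry of $M^n$ (a translation when $M^n = \R^n$, a rotation when $M^n = \S^n$); (iii)~on an open neighborhood of $B$, $\phi_t = \Id$; (iv)~the transition region where $\phi_t$ is neither this isometry nor the identity lies in $M^n \setminus \Sigma$; and (v)~the sliding direction is chosen so that $\phi_t(A)$ approaches $B$ and $\lim_{t \to T^-} \mathrm{dist}(\phi_t(A), B) = 0$.

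Concretely, $\phi_t$ may be obtained as the time-$t$ flow of a time-dependent vector field $X_s := \eta_s Z + Y_s$, where $Z$ is a fixed Killing field of $M^n$ realizing the desired infinitesimal isometry, $\eta_s$ is a smooth cutoff equal to $1$ on a neighborhood of $\phi_s(A)$ and $0$ on a neighborhood of $B$, and $Y_s$ is a smooth correction supported in the annular transition region $\{0 < \eta_s < 1\}$ solving $\div_g Y_s = -Z(\eta_s)$ there. This makes $\div_g X_s \equiv 0$, so $\phi_t$ preserves the volume measure; the existence of such $Y_s$ on a topologically simple annulus is a standard fact from vector analysis. With this setup each $\tilde\Omega_t := \phi_t(\Omega)$ is an isoperimetric minimizer: $V(\tilde\Omega_t) = V(\Omega)$ by volume-preservation, and $P_\mu(\tilde\Omega_t) = P_\mu(\Omega)$ because its boundary $\phi_t(A) \cup B$ is the union of an isometric image of $A$ with $B$ itself (the transition region is disjoint from $\Sigma$ and contributes nothing). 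By lower semi-continuity of perimeter and compactness of minimizing clusters, the limit cluster $\tilde\Omega_T := \lim_{t \to T^-} \tilde\Omega_t$ is again a minimizer, with boundary $A' \cup B$ for some compact $A'$ satisfying $A' \cap B \ne \emptyset$.

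The main obstacle is extracting a contradiction at a first-contact point $p \in A' \cap B$. Blowing up $\tilde\Omega_T$ at $p$, the tangent cone is the union of two non-trivial cluster cones -- one being the isometric image of the regular cluster cone of $A$ at $\phi_T^{-1}(p)$, the other being the regular cluster cone of $B$ at $p$ -- superimposed through the common apex. Such a superposition does not fit any of the admissible local models of Theorem~\ref{thm:regularity}: a smooth hypersurface, a $\Y \times \R^{n-2}$ cone, or a $\T \times \R^{n-3}$ cone each consist of a single minimizing cluster cone. By genericity of the sliding direction $Z$ (non-generic directions forming a closed set of positive codimension in the finite-dimensional space of Killing fields of $M^n$), one may further arrange the contact to be transverse, so that $A' \cap B$ near $p$ has Hausdorff dimension $n-2$, strictly exceeding the $\dim_{\H}(\Sigma^4) \le n-4$ bound of Theorem~\ref{thm:regularity}. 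This contradicts the regularity of the minimizer $\tilde\Omega_T$, closing the argument.
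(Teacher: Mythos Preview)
Your overall strategy---slide one piece of $\Sigma$ by an isometry until it touches the other, then derive a contradiction at the contact point---matches the paper's. The gap is in the contradiction step.

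You assert that the blow-up of $\tilde\Omega_T$ at the contact point $p$ is a ``superposition of two non-trivial cluster cones'' which fits none of the admissible local models. But this need not yield a contradiction. A classical lemma of Gromov (cited in the paper) shows that distance-realizing points between two closed sets must have \emph{half-space} tangent cones; hence both $p \in B$ and its preimage in $A$ lie in $\Sigma^1$, and their tangent cones are hyperplanes. At first contact the two hyperplanes are tangent and therefore coincide, so the blow-up cone of the limiting boundary at $p$ is a single hyperplane---a perfectly legitimate $\Sigma^1$ model. Your attempted repair, arranging ``transverse contact'' by a generic choice of sliding direction, cannot work: first contact between two closed sets is \emph{always} tangential (a separating hyperplane through $p$ exists by definition of first contact), regardless of the direction of approach.

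The paper closes the argument with a different idea: having established that the contact occurs at regular points where two smooth constant-mean-curvature hypersurfaces touch tangentially from opposite sides, it invokes a strong maximum principle (via \cite[Corollary~30.3]{MaggiBook}) to deduce that the two interfaces actually \emph{coincide} on an open $(n-1)$-dimensional set. This overlap strictly reduces total perimeter, contradicting minimality. This step---passing from tangential contact to local coincidence---is the missing ingredient in your argument.

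A minor remark: your divergence-free interpolated flow $\phi_t$ is more machinery than needed. On $\R^n$ and $\S^n$ one simply applies the global isometry to the collection of cells enclosed by $A$; this trivially preserves all volumes and perimeters without any cutoff or correction term.
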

This is well-known, but we provide some details (cf. \cite[Lemma 6.2]{CottonFreeman-DoubleBubbleInSandH}).
\begin{proof}
Otherwise, consider a connected component $\underline \Sigma_1$ of $\Sigma$, and let $y \in \underline \Sigma_1$ and $x \in \underline \Sigma_2 := \Sigma \setminus \underline \Sigma_1$ be two points realizing the (positive) distance between $\underline \Sigma_1$ and $\underline \Sigma_2$. Now move $\underline \Sigma_1$ using an isometry of $M^n$ along the geodesic connecting $y$ to $x$ until it touches $\underline \Sigma_2$ at $x$. The volumes and perimeters of all cells remain unaltered in this process, and so the resulting cluster is still an isoperimetric minimizing cluster, and hence regular by Theorem \ref{thm:regularity}. On the other hand, by a classical lemma of Gromov from \cite{GromovGeneralizationOfLevy} (see \cite[p. 523]{Gromov}), both points $y \in \underline \Sigma_1$ and $x \in \underline \Sigma_2$ must be points whose tangent cones are hyperplanes, and thus belong to $\Sigma^1$. 
Consequently, $\Sigma_{ij}$ and the translated $\Sigma_{jk}$ meet tangentially at the point $x$ in their relative interior, and therefore must coincide in some neighborhood of the meeting point by \cite[Corollary 30.3]{MaggiBook}. But this means that total perimeter has been reduced at the moment of contact, in contradiction to minimality. 
\end{proof}

\begin{lemma} \label{lem:finite-CCs}
For any isoperimetric minimizing cluster $\Omega$ on $M^n$, $M^n \setminus \Sigma$ has finitely many connected components.
\end{lemma}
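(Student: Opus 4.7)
The strategy is to combine the boundedness of a minimizing cluster with a uniform lower bound on the $\mu$-volume of each connected component that follows from Leonardi's Infiltration Lemma, and then use disjointness inside the bounded region containing $\Sigma$.

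First, by Theorem \ref{thm:existence} the minimizing cluster is bounded, so after applying the null-set modification of Theorem \ref{thm:Almgren}~\ref{it:Almgren-ii}, $\Sigma$ is contained in some compact set $K \subset M^n$. Consequently every connected component of $M^n \setminus \Sigma$ is either contained in $K$ or, in the case $M^n = \R^n$, is the unique unbounded connected component of $\Omega_q$. It therefore suffices to show that $\bigcup_{i=1}^q \Omega_i \cap K$ has only finitely many connected components.

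Next, I would invoke Lemma~\ref{lem:infiltration} in its full ``strong'' form coming from \cite[Theorem~3.1]{Leonardi-Infiltration} (see also \cite[Lemma~30.2]{MaggiBook}): there exist constants $\eps_n, r_n > 0$ so that whenever $\mu(\Omega_i^\ell \cap B(p,r)) < \eps_n r^n$ with $r < r_n$, one has $\Omega_i^\ell \cap B(p,r/2) = \emptyset$. The density formulation stated in Lemma~\ref{lem:infiltration} is an immediate consequence of this uniform-volume statement, and the proof transfers from Euclidean space to the weighted Riemannian setting via the local chart argument already noted in the proof of Lemma~\ref{lem:infiltration}. The contrapositive gives, for every $p \in \overline{\Omega_i^\ell}$ and every $r \in (0, r_n)$, the uniform lower bound $\mu(\Omega_i^\ell \cap B(p, r)) \ge \eps_n r^n$. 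Since $M^n$ is connected and the cluster has at least two non-empty cells, every connected component $\Omega_i^\ell$ satisfies $\partial \Omega_i^\ell \neq \emptyset$; choosing any $p \in \partial \Omega_i^\ell \subset \Sigma$ and any fixed $r \in (0, r_n)$ yields the uniform a priori bound
\[
\mu(\Omega_i^\ell) \;\ge\; \eps_n r^n \;=:\; c_n \;>\; 0,
\]
independent of $i$ and $\ell$.

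Finally, since the components $\{\Omega_i^\ell\}$ contained in $K$ are pairwise disjoint and each has $\mu$-mass at least $c_n$, there can be at most $\mu(K)/c_n < \infty$ of them; adding the at most one unbounded component of $\Omega_q$ (in the $\R^n$ case) concludes the proof.

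The only non-routine point I foresee is extracting the \emph{uniform} volume lower bound from Lemma~\ref{lem:infiltration}: as stated in the paper, the lemma gives density bounds only through $\liminf$, which a priori does not preclude the component's mass in a ball of a fixed radius from being arbitrarily small. Fortunately this is a well-known strengthening already present in Leonardi's original argument, and it is the only ingredient of the proof that requires a short additional comment.
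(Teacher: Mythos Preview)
Your proof is correct, but it follows a different path from the paper's. The paper argues by contradiction via the isoperimetric inequality: if there were infinitely many bounded components $\{\Omega_b^\ell\}$ arranged in non-increasing volume, then for the tail $A^\Lambda = \bigcup_{\ell \ge \Lambda} \Omega_b^\ell$ one has $P(A^\Lambda) \ge c_n V(\Omega_b^\Lambda)^{-1/n} V(A^\Lambda)$, so the ratio $P(A^\Lambda)/V(A^\Lambda)$ can be made arbitrarily large. This contradicts minimality, since by Almgren's volume-fixing lemma one could delete $A^\Lambda$ and restore the volumes at perimeter cost $\le C_0 V(A^\Lambda)$. Your route instead extracts from the \emph{uniform} (non-asymptotic) form of Leonardi's Infiltration Lemma a lower bound $\mu(\Omega_i^\ell) \ge c > 0$ for every component, and then concludes by a packing count inside the compact set containing $\Sigma$. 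Your argument is shorter and avoids the volume-fixing machinery, at the cost of invoking the quantitative Infiltration Lemma (which is indeed how Leonardi and Maggi state it) rather than only the density version recorded in Lemma~\ref{lem:infiltration}; you correctly flag this as the one point needing comment. Note also that in the Riemannian setting the constants $\eps_n, r_n$ become $\eps_K, r_K$ depending on the compact set $K \supset \Sigma$, which is harmless for your argument.
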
 
\begin{proof} 
For $q=3$, this was verified by Hutchings \cite[Corollary 4.3]{Hutchings-StructureOfDoubleBubbles} on $\R^n$ and extended to all model spaces in \cite[Proposition 4.11]{CottonFreeman-DoubleBubbleInSandH}. For general $q$, one can see this by employing the single-bubble isoperimetric inequality $V(A) \in (0,V(M^n)/2) \Rightarrow P(A) \geq c_n V(A)^{\frac{n-1}{n}}$ on $M^n$ and adapting an argument of Morgan \cite[Theorem 2.3, Step 2]{MorganSoapBubblesInR2}. Assume in the contrapositive that there were an infinite number of bounded connected components $\{\Omega^\ell_b\}_{\ell=1,\ldots,\infty}$, 
and denote $A^\Lambda :=\cup_{\ell=\Lambda}^\infty \Omega^\ell_b$. Since the cluster's total perimeter is finite, the single-bubble isoperimetric inequality (applied to each component) implies that $V(A^{\Lambda})$ is finite and hence tends to $0$ as $\Lambda \rightarrow \infty$. 
Consequently, by the single-bubble isoperimetric inequality, for all $C > 0$ arbitrarily large, we could find $\Lambda$ so that $V(A^\Lambda)$ is arbitrarily small and $P(A^\Lambda) \geq C \cdot V(A^\Lambda)$. 

Now note that each component $\Omega^{\ell}_b$ is assigned to one of the $q$ cells, and that by reassigning the components to different cells, the total perimeter of the resulting cluster can never increase, decreasing where two neighboring components have been assigned to the same cell. Consequently, we can always find an index $i_0 \in \{1,\ldots,q\}$ so that if we reassign all of the components $\{\Omega^\ell_b\}_{\ell=\Lambda}^\infty$ to the $i_0$-th cell, the decrease in perimeter would be at least $\frac{1}{q} P(A^{\Lambda}) \geq \frac{C}{q} V(A^\Lambda)$. However, this would contradict the minimality of the cluster, since it was shown by Almgren \cite[Lemma 13.5]{MorganBook5Ed} that for any stationary regular cluster $\Omega$, there exist constants $C_0,\eps > 0$ so that arbitrary volume adjustments with $v := \sum_{i=1}^q V(\Omega'_i \Delta \Omega_i) \leq \eps$ may be accomplished inside fixed small balls (intersecting a finite number of connected components $\{\Omega_1^k\}_{k=1,\ldots,K}$ of $M^n \setminus \Sigma$) so that $P(\Omega') \leq P(\Omega) + C_0 v$. Consequently, it would be more efficient to reassign the components comprising $A^\Lambda$ for $\Lambda$ large enough to the $i_0$-th cell and compensate for the change in volumes as above. Note that $\Lambda$ can always be chosen large enough so as to ensure that $A^\Lambda$ is disjoint from $\{\Omega_1^k\}_{k=1,\ldots,K}$. 

This concludes the proof on $\S^n$ since all components are bounded. On $\R^n$, as an isoperimetric minimizer is always bounded by Theorem \ref{thm:existence}, it follows that there is only one unbounded connected component of $\R^n \setminus \Sigma$. 
\end{proof}

\subsection{Component adjacency graph is $2$-connected}  \label{subsec:adjacency-graph}

Let us denote by $\{ \underline \Omega_\ell \}_{\ell = 1 , \ldots, \Lambda}$ the totality of all connected components of all of $\Omega$'s (open) cells modulo their common $\S^0$-symmetry (recall Definition \ref{def:cc-S0}). Consider the cluster $\underline \Omega = (\underline \Omega_1,\ldots,\underline \Omega_\Lambda)$; as $\underline \Sigma = \Sigma$, it is still bounded stationary with $\S^0$-symmetry, spherical perpendicularly to its hyperplane of symmetry, and in addition, all of its cells are connected modulo its $\S^0$-symmetry. Consequently, it is in addition perpendicularly spherical Voronoi by Theorem \ref{thm:spherical-Voronoi-prelim}. In particular, $\underline \Omega^\B_\ell$ is an open convex subset of $\B^n$, and $\underline \Omega^\S_\ell$ is of the form $\S^n \cap \Pi^{-1} P_\ell$ for some open convex polyhedron $P_\ell \subset N^{\perp} \subset \R^{n+1}$. 

\medskip

We already know that $M^n \setminus \cup_{\ell \in I} \underline \Omega_\ell$ is connected for any subset $I \subset \{1,\ldots,\Lambda\}$, since otherwise it would mean 
that $\Sigma = M^n \setminus \cup_{\ell=1,\ldots,\Lambda} \underline \Omega_\ell$ is disconnected, in contradiction to our standing assumption in this section. To this we add:

\begin{lemma} \label{lem:connected}
For any connected (modulo $\S^0$-symmetry) component $\underline \Omega_\ell$, $M^n \setminus \overline{(\underline \Omega_\ell)}$ is connected. 
\end{lemma}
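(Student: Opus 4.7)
My plan is to reduce to $M^n = \S^n$ via stereographic projection, then use the orthogonal projection $\Pi: \S^n \to \overline{\B^n}$ together with the perpendicular Voronoi structure from Theorem \ref{thm:spherical-Voronoi-prelim} to reformulate the question inside $\overline{\B^n}$, and finally derive a contradiction with the connectedness of $\Sigma$ from a combinatorial argument about how the cells partition among the components of the complement.

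\emph{Reduction to $\S^n$.} When $M^n = \R^n$, apply Lemma \ref{lem:stereo} with Projection Pole $P$ in the interior of some cell. Then $M^n \setminus \overline{\underline \Omega_\ell}$ is homeomorphic either to $\S^n \setminus \overline{\underline \Omega^\S_\ell}$ (when $P \in \underline \Omega^\S_\ell$) or to $(\S^n \setminus \overline{\underline \Omega^\S_\ell}) \setminus \{P\}$ (otherwise); as $n \geq 2$, removing a point preserves connectedness, so I may assume $M^n = \S^n$. By Theorem \ref{thm:spherical-Voronoi-prelim}, $\overline{\underline \Omega^\S_\ell} = \Pi^{-1}(K) \cap \S^n$, where $K := \overline{\underline \Omega^\B_\ell} \cap \overline{\B^n}$ is a compact convex subset of $\overline{\B^n}$; therefore $\S^n \setminus \overline{\underline \Omega^\S_\ell} = \Pi^{-1}(\overline{\B^n} \setminus K) \cap \S^n$.

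\emph{The main step is to prove that $\overline{\B^n} \setminus K$ is connected.} Suppose otherwise, so that $\overline{\B^n} \setminus K = V_1 \sqcup V_2$ for nonempty, disjoint, relatively-open $V_i$. Each other open cell $\underline \Omega^\B_{\ell'}$ ($\ell' \neq \ell$) is convex, hence connected, and disjoint from $K$, so it lies entirely in one $V_i$; partition indices accordingly as $I_1 \sqcup I_2$. An interface $\Sigma^\B_{\ell_1 \ell_2}$ with $\ell_1, \ell_2 \neq \ell$ is convex by Proposition \ref{prop:convex}, has its relative interior in $\overline{\B^n} \setminus K$, and lies in the closures of both $\underline \Omega^\B_{\ell_1}, \underline \Omega^\B_{\ell_2}$; since each component of $\overline{\B^n} \setminus K$ is both open and closed in that space, having the two bordering cells in distinct $V_i$'s would force the interface to intersect both $V_1$ and $V_2$, a contradiction. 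Hence $\ell_1, \ell_2$ must lie in the same $I_i$. The same argument rules out any triple point $\Sigma^\B_{\ell \ell_1 \ell_2}$ with $\ell_1 \in I_1$ and $\ell_2 \in I_2$ (since its existence forces $\Sigma^\B_{\ell_1 \ell_2} \neq \emptyset$), and analogously for higher-order faces of $\partial K$. Combined with the fact that each interface $\Sigma^\B_{\ell \ell'}$ on $\partial K$ lies in $\overline{V_i}$ for the unique $i$ with $\ell' \in I_i$, this shows $\Sigma^\B = (\Sigma^\B \cap \overline{V_1}) \sqcup (\Sigma^\B \cap \overline{V_2})$ is a disjoint union of two closed, nonempty subsets---each $V_i$ contains at least one cell, and every cell has at least one incident interface by Lemma \ref{lem:LA-connected}---contradicting the connectedness of $\Sigma^\B = \Pi(\Sigma^\S)$, which follows from the standing assumption that $\Sigma = \Sigma^\S$ is connected.

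\emph{Lifting.} Since there is more than one cell, $K \subsetneq \overline{\B^n}$, so $\overline{\B^n} \setminus K$ meets $\partial \B^n$. For any $x, y \in \S^n \setminus \overline{\underline \Omega^\S_\ell}$, pick a continuous path $\gamma$ in the now-connected set $\overline{\B^n} \setminus K$ from $\Pi(x)$ to $\Pi(y)$ passing through some $z \in \partial \B^n \setminus K$, and split $\gamma$ at $z$ into $\gamma_1, \gamma_2$. Lift by setting $\tilde \gamma_i(t) = \gamma_i(t) \pm \sqrt{1 - |\gamma_i(t)|^2}\, N$, choosing the sign on each subpath to match the hemisphere of the corresponding endpoint. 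The two lifts agree at $z$ (where the square root vanishes), yielding a continuous path in $\Pi^{-1}(\overline{\B^n} \setminus K) \cap \S^n = \S^n \setminus \overline{\underline \Omega^\S_\ell}$ from $x$ to $y$. The main obstacle is the combinatorial step: one must carefully track how the triple- and higher-order points of $\Sigma^\B$ on $\partial K$ get assigned to the putative components $V_i$, ensuring the resulting decomposition of $\Sigma^\B$ is a genuine disjoint union rather than a mere covering with overlaps.
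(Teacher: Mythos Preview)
Your approach differs substantially from the paper's. The paper reduces to $\S^n$ and uses the representation $\underline \Omega^\S_\ell = \S^n \cap \Pi^{-1} P_\ell$ for an open convex polyhedron $P_\ell$, but then argues directly on $\S^n$ via a short geometric lemma (Lemma~\ref{lem:density0}): if $\S^n \setminus P$ is connected while $\S^n \setminus \overline{P}$ is disconnected, then at a point $p$ where the closures of the two components meet one can analyze the facet normals of $P$ and conclude that $\Theta(P \cap \S^n, p) = 0$. This contradicts the infiltration Lemma~\ref{lem:infiltration}, which is part of the definition of regularity. No reference to the other cells or to the connectedness of $\Sigma$ is needed.

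Your combinatorial route---partitioning the remaining cells into $I_1, I_2$ and arguing that $\Sigma^\B$ disconnects---has a real gap, precisely the one you flag. You need $\Sigma^\B \cap \overline{V_1} \cap \overline{V_2} = \emptyset$; any such intersection point lies on $\partial K$. For $p \in \Sigma^{\B,2}$ or $\Sigma^{\B,3}$ your argument works because the local $\Y$- or $\T$-cone structure forces all incident cells (other than $\ell$) to share pairwise codimension-one interfaces, hence to lie in the same $I_i$. But for $p \in \Sigma^{\B,4}$ the paper's regularity hypotheses give no structure theorem: nothing prevents many cells from meeting at $p$ in a non-simplicial cone, so you cannot conclude that all cells near $p$ are mutually adjacent. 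Thus $p$ may genuinely lie in $\overline{V_1} \cap \overline{V_2}$, and your decomposition of $\Sigma^\B$ is only a covering by two closed sets, from which disconnectedness does not follow. The paper's density argument bypasses this entirely: it never classifies the local structure at such a point, only observes that the density of the cell there must be positive.
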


While the difference between the connectedness of $M^n \setminus \underline \Omega_\ell$ and the latter statement is subtle, it nevertheless requires proof. Moreover, simple examples show that variations on this statement might actually be false for higher-dimensional minimizing cones (such as those over a hypercube, see \cite{Brakke-MinimalConesOnCubes}). Since the latter is a topological property, it is enough to establish it on $\S^n$, and deduce it on $\R^n$ via stereographic projection and Lemma \ref{lem:stereo}. 

Recall by regularity that the (lower) density $\Theta$ of a component at any point in its closure is strictly positive (the conclusion of Lemma \ref{lem:infiltration}). Consequently, Lemma \ref{lem:connected} is a consequence of the representation  $\underline \Omega^\S_\ell = \S^n \cap \Pi^{-1} P_\ell$ and the following:

\begin{lemma} \label{lem:density0} 
Suppose $P \subset \R^{n+1}$ is an open convex polyhedron. If $\S^n \setminus P$ is connected while $\S^n \setminus \overline P$ is disconnected, then there exists some $p \in \overline P \cap \S^n$ such that $\Theta(P \cap \S^n,p) = 0$.
\end{lemma}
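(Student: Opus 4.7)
The plan is to argue the contrapositive: assuming $\Theta(P \cap \S^n, p) > 0$ for every $p \in \overline P \cap \S^n$, I will show that whenever $\S^n \setminus P$ is connected, so is $\S^n \setminus \overline P$. The key reformulation starts at any $p \in \partial P \cap \S^n$ (the only places where the density could drop below $1$): since $P$ is polyhedral, $\overline P$ coincides in a neighborhood of $p$ with the closed convex cone $C_p$ with apex $p$ cut out by the facets of $\overline P$ passing through $p$. A standard blow-up argument then identifies $\Theta(P \cap \S^n, p)$ with the Lebesgue density at $p$ of $(\interior C_p) \cap T_p \S^n$ inside $T_p \S^n$, so $\Theta > 0$ at $p$ if and only if $T_p \S^n$ is not a supporting hyperplane of $\overline P$ at $p$; equivalently, $K := C_p \cap T_p \S^n$ is a full-dimensional proper closed convex cone with apex $p$ in $T_p \S^n \cong \R^n$.

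From this I first deduce $\overline{\S^n \setminus \overline P} = \S^n \setminus P$: at every $p \in \partial P \cap \S^n$, the properness of $K$ yields a direction $v \in T_p \S^n \setminus K$, and moving infinitesimally along $\S^n$ from $p$ in this direction produces points of $\S^n \setminus \overline P$ accumulating at $p$. Now suppose for contradiction that $\S^n \setminus P$ is connected while $\S^n \setminus \overline P$ splits into distinct open components $\{U_i\}$. Since each $\partial U_i \subset \partial P \cap \S^n$ and the closures $\overline{U_i}$ together cover the connected set $\S^n \setminus P$, elementary topology forces some pair to meet at a \emph{bridge point} $p \in \overline{U_1} \cap \overline{U_2} \cap \partial P \cap \S^n$. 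The target is then to contradict the existence of $p$ by showing $(\S^n \setminus \overline P) \cap B(p, r)$ is connected for all sufficiently small $r$, since it then cannot accommodate the two disjoint nonempty open subsets $B(p, r) \cap U_i$ for $i = 1, 2$.

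This local connectedness is the heart of the argument and the main obstacle. The strategy is to exploit the scale-invariance of $C_p$ under the dilations $D_\lambda(x) := p + \lambda(x - p)$ and reduce to the blow-up. In the linear model, $(T_p \S^n \setminus K) \cap B(p, r)$ is path-connected for every $r > 0$: any two points can be joined by sliding each radially toward $p$ (safe, since $K$ is a cone with apex $p$), then along a small sphere $S_p^{r'}$ centered at $p$ through $S_p^{r'} \setminus K$, and finally sliding radially outward. The sphere slice $S_p^{r'} \setminus K$ is itself connected, being the continuous radial image of the connected set $T_p \S^n \setminus K$---the complement of the full-dimensional proper closed convex set $K$ in $\R^n$. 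To transfer this to $\S^n$, I plan to use a local diffeomorphism $\phi : U \subset T_p \S^n \to \S^n$ with $\phi(p) = p$ and $d\phi_p = \mathrm{id}$; the positive-density hypothesis guarantees that every facet hyperplane of $C_p$ intersects $T_p \S^n$ transversally at $p$, so a facet-by-facet application of the implicit function theorem produces, for small enough $r$, a homeomorphism between $\phi^{-1}(C_p) \cap B(p, r)$ and $K \cap B(p, r)$. This transfers the connectedness of $(T_p \S^n \setminus K) \cap B(p, r)$ to $(\S^n \setminus \overline P) \cap B(p, r)$, completing the proof. The most delicate piece is executing this transversality/stability step when $p$ lies on a low-dimensional face of $\overline P$ where several facet hyperplanes meet simultaneously, but the polyhedrality of $P$ and the smoothness of $\S^n$ reduce it to a routine local computation.
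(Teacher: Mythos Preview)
Your overall architecture matches the paper's: both locate a ``bridge point'' $p \in \overline{U_1} \cap \overline{U_2}$ on $\partial P \cap \S^n$ and then perform a local analysis there. The difference lies entirely in that local analysis, and this is where your proposal is weakest.

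The paper does \emph{not} attempt to build a homeomorphism between $(\S^n \setminus \overline P) \cap W$ and $(T_p \S^n \setminus K) \cap B_r$. Instead it works directly with the half-spaces: writing $W \setminus \overline P = W \cap \bigcup_i H_{i,+}$, it observes that each $W \cap H_{i,+}$ is connected and that $W \cap H_{i,+}$ and $W \cap H_{j,+}$ overlap unless $\Pi_p \n_i$ and $\Pi_p \n_j$ are negative multiples of one another. Hence local disconnection forces \emph{all} the projected normals $\Pi_p \n_i$ to be parallel to a single $v \in T_p \S^n$, with both signs present; the two spherical caps $\S^n \cap H_{i,+}$ and $\S^n \cap H_{j,+}$ corresponding to opposite signs then meet tangentially at $p$ with combined density $1$, giving $\Theta(P \cap \S^n, p) = 0$ immediately. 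This is a two-line computation once the overlap criterion is stated.

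Your route replaces this with the claim that ``a facet-by-facet application of the implicit function theorem produces \ldots\ a homeomorphism between $\phi^{-1}(C_p) \cap B(p,r)$ and $K \cap B(p,r)$.'' This is where there is a genuine gap: the implicit function theorem straightens one hypersurface $\{f_i = 0\}$ at a time, but when several facets meet at $p$ the individual straightenings need not assemble into a single homeomorphism of the pair, and the combinatorics of $\bigcap_i\{f_i \le 0\}$ can genuinely differ from that of $K$ (e.g.\ a linearly redundant facet may become active after the quadratic correction from the curvature of $\S^n$). Such a homeomorphism likely does exist under your hypothesis that $K$ is full-dimensional --- one expects a star-shapedness argument from an interior point of $K$ to work --- but this is not ``routine,'' and it is far more than what is needed. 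The paper's overlap argument gives exactly the connectedness you want with no topology beyond ``a finite union of overlapping connected open sets is connected.''

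Two smaller points: your claimed equivalence ``$\Theta>0$ iff $K$ is a full-dimensional \emph{proper} cone'' and your claim $\overline{\S^n \setminus \overline P} = \S^n \setminus P$ both fail when some facet normal $\n_i$ is parallel to $p$ (the tangent case). The paper handles this explicitly by observing that $\n_i = p$ gives a redundant facet for $P \cap \S^n$ and $\n_i = -p$ forces $\overline P \cap \S^n = \{p\}$. You should dispose of this case before your density characterization, not after.
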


\begin{proof}
    Since $\S^n \setminus \overline P$ is disconnected and open, we may find
    open, disjoint sets $U_1, U_2 \subset \S^n$ such that $U_1 \cup U_2 = \S^n
    \setminus \overline P$.
    Since $\S^n \setminus P$ is connected, $\overline{U_1}$ and $\overline{U_2}$
    are not disjoint: if they were disjoint, then by compactness they would have positive distance,
    and then by taking open neighborhoods of $\overline{U_1}$ and $\overline{U_2}$ we would have
    open disjoint sets covering $\S^n \setminus P$. So
    choose $p \in \overline{U_1} \cap \overline{U_2}$.
    For every neighborhood $W$ of $p$ the sets $U_1 \cap W$ and $U_2 \cap W$
    are open disjoint sets covering $W \setminus \overline P$, and whose closures overlap at $p$ (and possibly elsewhere).

    Let $\n_1, \dots, \n_k$ be the outward normals supporting the facets of $P$ that meet at $p$,
    so that on a sufficiently small ball $W$ around $p$, $P$ coincides with $\{x \in \R^{n+1} \; ;\; \sscalar{\n_i,x - p} < 0 \text{ for all $i$}\}$. Clearly, $k \geq 2$. 
    Let $H_{i,+} = \{x \in \R^{n+1}\; ; \; \sscalar{\n_i,x - p} > 0\}$. Then $W \setminus \overline P = W \cap \bigcup_i H_{i,+}$.
    We may assume that no $\n_i$ is parallel to $p$: if $\n_i = p$ then the corresponding facet of $P$ supports $\S^n$,
    and so removing it from $P$ will not change $P \cap \S^n$ or $\overline P \cap \S^n$, while if $\n_i = -p$ then $\overline P \cap \S^n = \{p\}$ which does not disconnect $\S^n$. Denoting by $\Pi_p$ the orthogonal projection onto $T_{p} \S^n$, it follows that $\Pi_p \n_i \neq 0$ for all $i$. For each $i$, the set $W \cap H_{i,+}$ is connected.  
    As long as $\Pi_{p} \n_i$ and $\Pi_{p} \n_j$ are not negative multiples of one another, $H_{i,+} \cap W$
    and $H_{j,+} \cap W$ overlap, and hence $(H_{i,+} \cup H_{j,+}) \cap W$ is connected. Thus, in order for
    $W \cap \bigcup_i H_{i,+}$ to be disconnected there must be some non-zero $v \in T_{p} \S^n$ such that every $\Pi_p \n_i$ 
    is parallel to $v$. Moreover, there must be some $i$ and $j$ for which $\Pi_p \n_i$ is a positive multiple of $v$ and $\Pi_p \n_j$
    is a negative multiple of $v$. It follows that $\S^n \setminus \overline{P}$ contains $\S^n \cap (H_{i,+} \cup H_{j,+})$,
    and since $\S^n \cap H_{i,+}$ and $\S^n \cap H_{j,+}$ are two spherical caps meeting tangentially at $p$, it follows that
    $\S^n \cap (H_{i,+} \cup H_{j,+})$ has density 1 at $p$. Therefore, $P \cap \S^n$ has density $0$ at $p$.
\end{proof}

\begin{definition}[Component Adjacency Graph]
The ($\S^0$-symmetric) component adjacency graph is the undirected graph whose vertices are $\{1,\ldots,\Lambda\}$, and there is an edge between vertex $i$ and $j$ iff $\underline \Sigma_{ij} \neq \emptyset$. 
\end{definition}

As a consequence of Lemma \ref{lem:connected}, we deduce:
\begin{lemma} \label{lem:2-connected}
The component adjacency graph is $2$-connected: removing any single vertex does not disconnect it. 
\end{lemma}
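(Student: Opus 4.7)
The plan is to argue by contradiction by reducing the claim to the relative isoperimetric statement on the connected open set $U := M^n \setminus \overline{\underline{\Omega}_{\ell_0}}$ provided by Lemma \ref{lem:connected}. Suppose that removing some vertex $\ell_0 \in \{1,\ldots,\Lambda\}$ disconnects the component adjacency graph. Then there exist non-empty disjoint sets $S, S' \subset \{1,\ldots,\Lambda\} \setminus \{\ell_0\}$ partitioning this complement, with $\underline{\Sigma}_{\ell\ell'} = \emptyset$ for every $\ell \in S$ and $\ell' \in S'$. The aim is to derive a contradiction by exhibiting a non-trivial subset of $U$ whose relative perimeter vanishes.

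Set $U_S := \bigcup_{\ell \in S} \underline{\Omega}_\ell$ and $U_{S'} := \bigcup_{\ell' \in S'} \underline{\Omega}_{\ell'}$, both of which are open subsets of $U$. By the standard identity (\ref{eq:nothing-lost-many}) applied to the cluster $\underline{\Omega}$, the reduced boundary $\partial^* U_S$ is contained (up to $\H^{n-1}$-null sets) in the union $\bigcup_{\ell \in S,\, m \notin S} \underline{\Sigma}_{\ell m}$. By our disconnection assumption, the only $m \notin S$ for which $\underline{\Sigma}_{\ell m}$ can be non-empty is $m = \ell_0$, and hence up to null sets $\partial^* U_S \subset \bigcup_{\ell \in S} \underline{\Sigma}_{\ell\ell_0} \subset \overline{\underline{\Omega}_{\ell_0}}$. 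Since $U$ is disjoint from $\overline{\underline{\Omega}_{\ell_0}}$ by definition, this shows that the relative $\mu$-perimeter of $U_S$ inside $U$ is zero.

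Now $U$ is a connected open subset of the weighted Riemannian manifold $(M^n, g, \mu)$ with smooth positive density, so the single-bubble isoperimetric inequality holds on $U$ (with $U$ playing the role of the ambient space). Consequently, the argument in the proof of Lemma \ref{lem:LA-connected} applies verbatim on $U$: any subset with vanishing relative perimeter must be $\mu$-null or $\mu$-conull in $U$. However, $\mu(U_S) > 0$, since $S$ is non-empty and each $\underline{\Omega}_\ell$ is a non-empty open set of positive $\mu$-measure, and similarly $\mu(U \setminus U_S) \geq \mu(U_{S'}) > 0$. This contradiction will complete the proof.

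The main point to watch is that one must work with the \emph{reduced} boundary $\partial^* U_S$ rather than the topological boundary: cells which do not share a codimension-one interface can still meet at codimension-$\geq 2$ singularities (e.g.\ at triple or quadruple points, or possibly on $\Sigma^4$ where the local structure is uncontrolled), so $\partial U_S$ need not be contained in $\overline{\underline{\Omega}_{\ell_0}}$. Fortunately this is exactly what (\ref{eq:nothing-lost-many}) handles automatically, and the connectedness of $U$ provided by Lemma \ref{lem:connected} is precisely what makes the resulting null-perimeter argument yield a contradiction rather than a trivial statement.
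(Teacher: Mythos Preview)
Your argument is correct and takes a genuinely different route from the paper's. Both proofs reduce to Lemma~\ref{lem:connected} (connectedness of $M^n \setminus \overline{\underline\Omega_{\ell_0}}$), but they extract the contradiction differently.

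The paper argues topologically: if removing vertex $\ell_0$ disconnects the graph, then any path in $M^n \setminus \overline{\underline\Omega_{\ell_0}}$ from a cell in $S$ to a cell in $S'$ would have to cross an interface $\underline\Sigma_{\ell\ell'}$ with $\ell \in S$, $\ell' \in S'$; since no such interface exists, one perturbs the path off the higher-codimension strata $\Sigma^2 \cup \Sigma^3 \cup \Sigma^4$ and concludes that $M^n \setminus \overline{\underline\Omega_{\ell_0}}$ is disconnected, contradicting Lemma~\ref{lem:connected}. Your argument is measure-theoretic: you use~(\ref{eq:nothing-lost-many}) to show that $U_S$ has vanishing relative perimeter in the connected open set $U = M^n \setminus \overline{\underline\Omega_{\ell_0}}$, and then invoke the standard fact (underlying the proof of Lemma~\ref{lem:LA-connected}) that a set of zero perimeter in a connected open set is null or conull.

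Your route has the advantage of recycling machinery already present in the paper (Lemma~\ref{lem:LA-connected} and~(\ref{eq:nothing-lost-many})) and of sidestepping the path-perturbation step entirely; as you correctly note, the reduced-boundary formulation automatically ignores the lower-dimensional meeting sets that the paper must explicitly perturb away from. The paper's route is more hands-on and makes the geometric picture (paths crossing interfaces) explicit. One small point of phrasing: what you call ``the single-bubble isoperimetric inequality on $U$'' is really just the BV fact that $\nabla 1_{U_S} = 0$ distributionally on a connected open set forces $1_{U_S}$ to be a.e.\ constant; no quantitative inequality is needed, and in particular the possible incompleteness or infinite volume of $U$ is harmless.
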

\begin{proof}
If removing vertex $\ell$ and its edges disconnects the graph, this would mean that $M^n \setminus (\underline\Omega_\ell \cup (\partial \underline \Omega_\ell \cap \underline \Sigma^1))$ is path-disconnected if one only uses paths which pass through the open components and $\underline \Sigma^1$. Consequently, this is equivalent to the statement that $M^n \setminus (\underline \Omega_\ell \cup \partial \underline \Omega_\ell)$ is disconnected using paths as above. But the latter is an open set, and so any path can always be slightly perturbed to have the property that it only passes through the open components and $\underline \Sigma^1$ (by compactness). So this means that $M^n \setminus \overline{(\underline \Omega_\ell)}$ is disconnected (as pathwise and regular connectedeness are the same for (open) manifolds),  in contradiction to Lemma \ref{lem:connected}. Consequently, removing the vertex could not have disconnected the component adjacency graph. 
\end{proof}

The following is an immediate corollary. Recall that $G'$ is called a quotient graph of $G$ if it is obtained by identifying disjoint subsets of vertices from $G$.

\begin{corollary} \label{cor:doesnot-disconnect}
Let $G'$ be a quotient of the component adjacency graph $G$. Then removing a vertex of $G'$ which was a single vertex in the original graph $G$ does not disconnect $G'$.
\end{corollary}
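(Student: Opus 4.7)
The plan is to reduce the statement for the quotient $G'$ directly to the $2$-connectedness of $G$ established in Lemma \ref{lem:2-connected}, by lifting and projecting paths through the quotient map $\pi : V(G) \to V(G')$. The only structural fact I will use about $G'$ is that if $v \in V(G)$ is not identified with any other vertex of $G$ under the quotient, then $\pi^{-1}(\pi(v)) = \{v\}$, so a vertex of $G \setminus \{v\}$ is mapped to a vertex of $G' \setminus \{\pi(v)\}$. No geometry of the cluster re-enters at this stage; the argument is purely combinatorial.

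Concretely, given any two vertices $a', b' \in V(G') \setminus \{\pi(v)\}$, I would first pick arbitrary pre-images $a, b \in V(G)$ under $\pi$, which by the above observation necessarily satisfy $a, b \neq v$. Lemma \ref{lem:2-connected} guarantees that $G \setminus \{v\}$ is connected, and so it contains a path $a = u_0, u_1, \ldots, u_m = b$ with every $u_i \neq v$. Applying $\pi$ produces a sequence $\pi(u_0) = a', \pi(u_1), \ldots, \pi(u_m) = b'$ in $V(G') \setminus \{\pi(v)\}$, in which consecutive vertices are either equal or connected by an edge of $G'$ (since quotient maps send edges to edges or to loops). Discarding repetitions yields a walk, and hence a path, from $a'$ to $b'$ in $G' \setminus \{\pi(v)\}$. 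Thus $G' \setminus \{\pi(v)\}$ is connected, which is exactly the claim.

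I do not anticipate any obstacle here: the content of the corollary is entirely combinatorial, and the essential input, namely that $G \setminus \{v\}$ is connected, has already been supplied by Lemma \ref{lem:2-connected}. The only point that warrants being stated explicitly is the observation that since $v$ forms its own equivalence class under the quotient, paths avoiding $v$ in $G$ project to walks avoiding $\pi(v)$ in $G'$; this is the sole place where the hypothesis ``single vertex in $G$'' is used, and it is exactly what prevents the same argument from applying to a vertex of $G'$ obtained by identifying two or more vertices of $G$.
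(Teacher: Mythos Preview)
Your argument is correct and is exactly the natural unpacking of what the paper leaves implicit: the paper simply declares this an ``immediate corollary'' of Lemma~\ref{lem:2-connected}, and your path-lifting through the quotient map is the standard way to make that immediacy explicit. There is nothing to add.
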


\subsection{Equatorial cells are connected} 

\begin{definition}
A cell $\Omega_i$ is called equatorial if it intersects the equator: $\Omega_i \cap M^{n-1} \neq \emptyset$. 
\end{definition}

\begin{proposition} \label{prop:equatorial}
An equatorial cell is connected. 
\end{proposition}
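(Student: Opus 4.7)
The plan is to argue by contradiction, assuming the equatorial cell $\Omega_i$ has at least two connected (modulo $\S^0$-symmetry) components, say $\underline\Omega_{\ell_1}$ and $\underline\Omega_{\ell_2}$, within the family $\{\underline\Omega_\ell\}_{\ell=1,\dots,\Lambda}$ making up $\underline\Omega$. By Theorem~\ref{thm:spherical-Voronoi-prelim}, the cluster $\underline\Omega$ is perpendicularly spherical Voronoi --- hence stationary, regular, $\S^0$-symmetric, and of bounded curvature --- so I may freely invoke Proposition~\ref{prop:truncated-skew-fields} and Lemma~\ref{lem:elliptic-regularity}. The entire argument revolves around a single family of test scalar-fields on $\Sigma = \underline\Sigma$: for $\underline a \in \R^\Lambda$,
\[
  h^{\underline a}\bigl|_{\underline\Sigma_{\ell\ell'}} \;:=\; (\underline a_\ell - \underline a_{\ell'})\,\abs{\scalar{N,p}},
\]
which I view either as a (canonical) truncated skew-field for $\underline\Omega$ or as a non-physical element of $\Lip_\Delta(\Sigma)$ for the original cluster $\Omega$.

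First I would check that the two readings produce identical index-form and variation data. The integrands in Definition~\ref{def:Q0} of $Q^0$ depend only on pointwise values of the field and the local geometry of $\Sigma$, both of which agree for $\Omega$ and $\underline\Omega$, so Proposition~\ref{prop:truncated-skew-fields} applied to $\underline\Omega$ yields $Q^0_\Omega(h^{\underline a}) = Q^0_{\underline\Omega}(h^{\underline a}) = 0$. A direct reorganization of sums gives
\[
  \delta^1_{h^{\underline a}} V(\Omega)_i \;=\; \sum_{\ell\,:\,i(\ell)=i}(\underline L\,\underline a)_\ell,
\]
where $i(\ell)$ denotes the cell containing $\underline\Omega_\ell$ and $\underline L$ is the discrete Laplacian on the component adjacency graph $G$ with edge weights $\underline A^{\ell\ell'} := \int_{\underline\Sigma_{\ell\ell'}}\abs{\scalar{N,p}}\,dp$. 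Letting $\pi\colon\R^q\to\R^\Lambda$, $\pi(b)_\ell := b_{i(\ell)}$, the test space of $\underline a$'s with vanishing first $\Omega$-variation is $V:=\ker(\pi^T\underline L)$. Since $\pi^T\underline L\pi$ equals the cell-Laplacian $L_A$ (whose image is $(\R\mathbf 1)^\perp$ by Lemma~\ref{lem:LA-connected}), a rank count gives $\dim V = \Lambda - q + 1$ and $V\cap\pi(\R^q) = \R\mathbf 1$. In particular, under our contradictory assumption $\Lambda > q$ there exists $\underline a^*\in V\setminus\pi(\R^q)$, hence some cell has two components on which $\underline a^*$ takes different values.

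For such $\underline a^*$ the equality $Q^0_\Omega(h^{\underline a^*})=0=\delta^1_{h^{\underline a^*}} V(\Omega)$ realizes equality in the stability inequality, and Lemma~\ref{lem:elliptic-regularity} produces $\lambda\in E^{(q-1)}$ with $L_{Jac}\,h^{\underline a^*}_{ij}\equiv\lambda_{ij}$ and $h^{\underline a^*}_{ij}\in C^\infty(\Sigma_{ij})$. Since on each hemisphere $\abs{\scalar{N,p}}=\pm\scalar{N,p}$ and $L_{Jac}\scalar{N,p}=0$ by Proposition~\ref{prop:skew-fields-properties}, the first condition forces $\lambda=0$; while the smoothness is incompatible with the kink of $\abs{\scalar{N,p}}$ across the equator unless $\underline a^*_\ell = \underline a^*_{\ell'}$ for every \emph{equatorial} connected (mod $\S^0$) component $\underline\Sigma_{\ell\ell'}$. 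Denoting by $\equiv_{eq}$ the equivalence relation on $\{1,\dots,\Lambda\}$ generated by these equatorial adjacencies, every $\underline a\in V$ must be constant on $\equiv_{eq}$-classes.

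The main obstacle is the final propagation step: to turn this equatorial-adjacency constancy into the full assertion $\underline a^*\in\pi(\R^q)$, which would contradict the choice of $\underline a^*$ above. This is where the $2$-connectedness of $G$ (Lemma~\ref{lem:2-connected} together with Corollary~\ref{cor:doesnot-disconnect} applied to the quotient identifying components of $\Omega_i$) and a strong maximum principle for the discrete Laplacian $\underline L$ on the $\equiv_{eq}$-quotient graph enter. The equatoriality of $\Omega_i$ is what guarantees that each of its components has at least one equatorial interface --- since the boundary of an equatorial cell must meet the equator --- so no component of $\Omega_i$ is isolated in the equatorial subgraph; combined with $2$-connectivity (which prevents a single component from cutting $G$), the cell-wise balance $\pi^T\underline L\underline a = 0$, read on the quotient, becomes a discrete Laplace-type equation to which the strong maximum principle applies, forcing $\underline a^*$ to descend to $\pi(b)$ for some $b\in\R^q$. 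Implementing this graph-theoretic step rigorously --- in particular verifying that any non-equatorial components of $\Omega_i$ do not provide loopholes, and that the cell-wise (rather than vertex-wise) balance equation is still rigid enough for the maximum principle --- is the technical crux where I expect the bulk of the effort to lie.
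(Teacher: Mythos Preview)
Your setup—truncated skew-fields, $Q^0=0$, elliptic regularity forcing equality of coefficients across equatorial interfaces—matches the paper's skeleton. The divergence is in how the test coefficient is chosen, and this is where your proposal has a genuine gap.

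The paper does not pass to the full $\Lambda$-component cluster. It splits off a \emph{single equatorial} connected component $\tilde\Omega_1$ of the given equatorial cell, forming a $(q{+}1)$-cluster $\tilde\Omega$, and \emph{constructs} the test coefficient by solving $L_{\tilde A}\,a = e_1 - e_{q+1}$ on the adjacency graph $\tilde G$ of $\tilde\Omega$. The strong maximum principle (Lemma~\ref{lem:strong-max-principle}) is applied \emph{forward}: since removing vertex~$1$ does not disconnect $\tilde G$ (this is where $2$-connectedness enters, via Corollary~\ref{cor:doesnot-disconnect} applied to the quotient $\tilde G$ of $G$), one obtains $a_1 > a_j$ for all $j\geq 2$, hence $a_{1j}\ne 0$ for every $j$. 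A short connectedness argument on the equator then shows $\tilde\Omega_1$ has some equatorial interface $\tilde\Sigma_{1j}$, and the kink of $a_{1j}\,|\langle N,p\rangle|$ there contradicts the smoothness from elliptic regularity. Your approach instead picks an \emph{arbitrary} $\underline a^{*}\in V\setminus\pi(\R^q)$ and tries to propagate $\equiv_{eq}$-constancy to cell-wise constancy. Two concrete problems: (i) the claim that ``each of $\Omega_i$'s components has at least one equatorial interface'' is false—any non-equatorial (symmetric-pair) component of $\Omega_i$ has none and is a singleton $\equiv_{eq}$-class; (ii) the balance $\pi^{T}\underline L\,\underline a = 0$ is a cell-wise aggregation constraint, not a Laplace equation on the $\equiv_{eq}$-quotient, and no evident maximum principle for it forces $\underline a^{*}\in\pi(\R^q)$. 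The maximum principle and $2$-connectedness are used by the paper in the opposite direction—to \emph{build} a field with a prescribed strict extremum at the chosen equatorial component—rather than to constrain an arbitrary kernel element after the fact.
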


For the proof, we require a type of strong maximum principle for the discrete Laplacian $L_A$ (recall Definition \ref{def:LA}). Let $G$ denote an undirected connected graph on the finite set of vertices $\{1,\ldots,V\}$, $V \geq 2$. Recall that we write $i \sim j$ if there is an edge between vertices $i$ and $j$. Let  $A = \{ A^{ij} > 0\}_{i \sim j}$ be strictly positive non-oriented weights supported along the edges of $G$. Recall from Lemma \ref{lem:LA-positive} that $L_A > 0$ is positive-definite on $E^{(V-1)}$, and in particular full-rank.

\begin{lemma} \label{lem:strong-max-principle}
Let $a \in E^{(V-1)}$ be the solution to $L_A a = e_1 - e_V$. Assume that removing vertex $1$ does not disconnect the graph $G$. Then $a_1 > a_i$ for all $i=2,\ldots,V$. 
\end{lemma}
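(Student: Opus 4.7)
The plan is to prove this by a discrete maximum principle adapted to the two-source setting. First I would expand the equation $L_A a = e_1 - e_V$ componentwise: using the definition $L_A = \sum_{i<j} A^{ij} e_{ij} \otimes e_{ij}$ together with the symmetry $A^{ij} = A^{ji}$, the $k$-th component becomes
\[
\sum_{j \sim k} A^{kj}(a_k - a_j) \;=\; \delta^1_k - \delta^V_k,
\]
so that $a$ is \emph{discrete-harmonic} at every interior vertex $k \notin \{1, V\}$, with a source of strength $+1$ at vertex $1$ and a sink of strength $-1$ at vertex $V$.

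Next I would establish a weak maximum principle. Let $M = \max_i a_i$ and $S = \{i : a_i = M\}$. At vertex $V$, the equation forces $\sum_{j \sim V} A^{Vj}(a_V - a_j) = -1 < 0$, so some neighbor of $V$ exceeds $a_V$, giving $V \notin S$. For any $k \in S$ with $k \notin \{1, V\}$, the harmonic equation $\sum_{j \sim k} A^{kj}(M - a_j) = 0$ has nonnegative summands, so every $G$-neighbor $j$ of $k$ must also lie in $S$. Consequently, if $1 \notin S$ then $S$ would be closed under taking $G$-neighbors, hence a non-empty union of connected components of $G$; by connectivity of $G$ this would force $S = \{1, \ldots, V\}$, contradicting $V \notin S$. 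Therefore $1 \in S$, i.e., $a_1 = M$.

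The strict inequality is where I expect the main subtlety to lie, and where the hypothesis that $G \setminus \{1\}$ remains connected enters decisively. Suppose for contradiction that some $i \neq 1$ also satisfies $a_i = M$. Setting $G' := G \setminus \{1\}$, by assumption $G'$ is connected, so I can choose a path $i = k_0, k_1, \ldots, k_m = V$ in $G'$. I will show by induction on $j$ that $k_j \in S$: the base case is given, and if $k_j \in S$ then $k_j \neq 1$ (since $k_j$ lies in $G'$) and $k_j \neq V$ (since $V \notin S$), so $k_j$ is an interior vertex belonging to $S$, and the closure property established above forces its $G$-neighbor $k_{j+1}$ to lie in $S$ as well. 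At $j = m$ this would give $V \in S$, a contradiction. Hence $S = \{1\}$, yielding $a_1 > a_i$ for all $i \neq 1$.

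The hard part is really just this final step: without the 2-connectivity hypothesis only the weak inequality $a_1 \geq a_i$ can be concluded, as illustrated by the degenerate case in which $G \setminus \{1\}$ has a component not containing $V$, on which the solution $a$ could be identically equal to $M$. Everything else is a routine manipulation of the discrete-Laplacian equation together with the standard observation that a weighted average with positive weights cannot strictly exceed any of its entries.
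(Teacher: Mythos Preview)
Your proof is correct and follows essentially the same discrete maximum principle argument as the paper. The only cosmetic difference is in how the strict inequality is concluded: the paper observes that the boundary $\partial W$ of the maximizing set $W$ must equal $\{1\}$ (since only vertex $1$ has strictly positive Laplacian), and then argues that removing $1$ would separate $W\setminus\{1\}$ from its nonempty complement; you instead propagate the maximum along a path in $G\setminus\{1\}$ from any putative maximizer $i\neq 1$ down to $V$. These are two phrasings of the same idea.
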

\begin{proof}
Let $W$ be the subset of vertices $i$ for which $a_i$ is maximal. Note that by definition of the discrete Laplacian, $(L_A a)_i$ is the weighted average of $a_i - a_j$ over all $j \sim i$ (with weights $A^{ij} > 0$). Consequently, we have $(L_A a)_i \geq 0$ for every $i \in W$, and since $(L_A a)_{V} = -1$ we see that $V \notin W$. As $W$ is a proper subset of $\{1,\ldots,V\}$ and the graph is connected, we also have $\sum_{i \in W} (L_A a)_i = \sum_{i \in \partial W} (L_A a)_i > 0$, where $\partial W \subset W$ denotes those vertices in $W$ which have a neighbor outside of $W$. As the only vertex $i$ for which $(L_A a)_i > 0$ is $i=1$, it follows that necessarily $1 \in W$. Finally, since for a vertex $i  \in W$ we have $(L_A a)_i > 0$ if and only if $i \in \partial W$, we deduce that $\partial W = \{1\}$. It follows that removing the vertex $1$ disconnects $W \setminus \{1\}$ from $\{1,\ldots,V\} \setminus W \neq \emptyset$. Since we assumed this is impossible, we must have $W \setminus \{1\} = \emptyset$, i.e. $W = \{1\}$, thereby concluding the proof. 
\end{proof}

\begin{proof}[Proof of Proposition \ref{prop:equatorial}]
Fix an equatorial cell, which after relabeling, we can assume is $\Omega_1$. Assume in the contrapositive that it has more than one connected component. Let $\tilde \Omega_1$ denote an equatorial connected component of $\Omega_1$, and set $\tilde \Omega_{q+1} = \Omega_1 \setminus \tilde \Omega_1$. Note that $\tilde \Omega_1$ is also an $\S^0$-symmetric connected component of $\Omega_1$ since it is equatorial. Denoting $\tilde \Omega_i = \Omega_i$ for $i=2,\ldots,q$, the resulting $\tilde \Omega$ is a $(q+1)$-cluster with $\S^0$-symmetry. 
 Note that $\partial \Omega_1 = \partial \tilde \Omega_1 \cup \partial \tilde \Omega_{q+1}$ and that $\partial^* \Omega_1 = \partial^* \tilde \Omega_1 \cup \partial^* \tilde \Omega_{q+1}$. Consequently $\tilde \Sigma = \Sigma$, $\tilde \Omega$ is still a stationary and regular cluster, and $\tilde \Sigma^k = \Sigma^k$ for $k=1,2,3,4$. In particular, a scalar-field $\tilde h = \{ \tilde h_{ij} \}_{i,j=1,\ldots,q+1}$ on $\tilde \Sigma$ induces the obvious scalar-field $h = \{ h_{ij} \}_{i,j=1,\ldots,q}$ on $\Sigma$ by setting $h_{ij} = \tilde h_{ij}$ for $i,j \in \{2,\ldots,q\}$, $h_{1 j}(p) = \tilde h_{1 j}(p)$ for $p \in \tilde \Sigma_{1 j}$ and $h_{1 j}(p) = \tilde h_{(q+1) j}(p)$ for $p \in \tilde \Sigma_{(q+1) j}$ (and requiring $h_{ji} = -h_{ij}$). Consequently, $\delta^1_{h} V(\Omega_j) = \delta^1_{\tilde h} V(\tilde \Omega_j)$ for $j=2,\ldots,q$, $\delta^1_{h} V(\Omega_1) = \delta^1_{\tilde h} V(\tilde \Omega_1) + \delta^1_{\tilde h} V(\tilde \Omega_{q+1})$, and $Q^0_{\Sigma}(h) = Q^0_{\tilde \Sigma}(\tilde h)$. 
 
Now, consider a truncated skew scalar-field $\tilde h^a = \{ \tilde h^a_{ij} = a_{ij} \abs{\scalar{p,N}} \}$ on $\tilde \Omega$ for some appropriate $a \in E^{(q)}$ to be chosen. We will show that there exists $a \in E^{(q)}$ so that $\delta^1_{\tilde h^a} V(\tilde \Omega) = e_1 - e_{q+1}$ and so that $a_{1 j} > 0$ for all $j=2,\ldots,q$. Since $Q^0_{\tilde \Sigma}(\tilde h^a) = 0$ by Proposition \ref{prop:truncated-skew-fields}, it will follow that $Q^0_{\Sigma}(h^a) = 0$ and $\delta^1_{h^a} V(\Omega) = 0$. Since $\Omega$ is stable and of bounded curvature, it will follow by elliptic regularity (Lemma \ref{lem:elliptic-regularity}) that necessarily $h^a$ (and hence $\tilde h^a$) is $C^\infty$-smooth on the entire (relatively open) $\Sigma^1 = \tilde \Sigma^1$. But since $\tilde h^a_{1 j} = a_{1 j} \abs{\scalar{p,N}}$ on $\tilde \Sigma_{1 j}$ and $a_{1 j} \neq 0$, we see that $\tilde h^a_{1 j}$  is necessarily non-smooth wherever $\tilde \Sigma_{1 j}$ intersects the equator (as it does so perpendicularly by Remark \ref{rem:equator-perp}). To reach the desired contradiction, it remains to show that $\tilde \Sigma_{1j} \cap M^{n-1} \neq \emptyset$ for some $j=2,\ldots,q$, and that an $a \in E^{(q)}$ as above may always be found.

To establish the first point, let $\E \subset \{1 , \ldots, q+1\}$ denote the indices of the equatorial cells among the cells of $\tilde \Omega$. As  $\tilde \Omega_1$ is equatorial, we have $1 \in \E$. Note that we must have $|\E|\geq 2$, since otherwise this would mean that $\overline{\tilde \Omega_1}$ covers the entire equator $M^{n-1}$, and so by reflection symmetry, $M^n \setminus \overline{\tilde \Omega_1} \neq \emptyset$ is disconnected, in contradiction to Lemma \ref{lem:connected}. 
Consider the undirected graph $G_{\E}$ on the set of vertices $\E$, having an edge between $i,j \in \E$ iff  $\tilde \Sigma_{ij} \cap M^{n-1} \neq \emptyset$, i.e. $\tilde \Omega_i$ and $\tilde \Omega_j$ are adjacent through an interface passing \emph{through the equator}. As $M^{n-1}$ is connected and thus satisfies a single-bubble isoperimetric inequality $\vol_{M^{n-1}}(A) \in (0,\vol(M^{n-1})) \Rightarrow P_{M^{n-1}}(A) > 0$, 
a repetition of the argument used to establish Lemma \ref{lem:LA-connected} implies that $G_\E$ must be connected. Since  $|\E| \geq 2$, the vertex $1 \in \E$
must have some neighbor $j \in \E \setminus \{1\}$ in $G_\E$, implying that  $\tilde \Sigma_{1j} \cap M^{n-1} \neq \emptyset$, as required. 

As for the second remaining point, let $\tilde G$ be the adjacency graph of $\tilde \Omega$, defined on the vertices $\{1,\ldots,q+1\}$ and having an edge between $i$ and $j$ iff $\tilde \Sigma_{ij} \neq \emptyset$. Note that $\tilde G$ is connected by Lemma \ref{lem:LA-connected}. Also note that $\tilde G$ is a quotient graph of the component adjacency graph $G$ (by reflection symmetry), and that the vertex $1$ representing the single equatorial connected component $\tilde \Omega_1$ is a vertex of both $\tilde G$ and $G$. Consequently, by Corollary \ref{cor:doesnot-disconnect}, removing the vertex $1$ from $\tilde G$ does not disconnect it. Now set $\tilde A^{ij} := \int_{\tilde \Sigma_{ij}} \abs{\scalar{p,N}} dp$ for $i,j=1,\ldots,q+1$ and note that $\tilde A = \{\tilde A^{ij} > 0\}_{i \sim j}$ is supported along the edges of $\tilde G$ and strictly positive on them (as $\H^{n-1}(\tilde \Sigma_{ij}) > 0$ iff $i \sim j$ and $\H^{n-1}(\tilde \Sigma_{ij} \cap M^{n-1}) = 0$ by Remark \ref{rem:equator-perp}). 
Now observe that:
\[
\delta^1_{\tilde h^a} V(\tilde \Omega_i) = \sum_{j \neq i} \int_{\tilde \Sigma_{ij}} (a_i - a_j) \abs{\scalar{p,N}} dp = (L_{\tilde A} a)_i  . 
\]
By Lemma \ref{lem:LA-positive}, $L_{\tilde A}$ is positive-definite and in particular invertible on $E^{(q)}$.
By Lemma \ref{lem:strong-max-principle}, the solution $a \in E^{(q)}$ to the linear equation $\delta^1_{\tilde h^a} V(\tilde \Omega) = L_{\tilde A} a = e_1 - e_{q+1}$ satisfies $a_{1 j} > 0$ for all $j=2,\ldots,q+1$. This concludes the proof. 
\end{proof}

\subsection{All cells are equatorial whenever $q \leq n+1$}

\begin{proposition} \label{prop:all-cells-are-equatorial}
Assume in addition that $q \leq n+1$. Then all cells are equatorial, and therefore connected. 
\end{proposition}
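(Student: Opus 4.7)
The plan is to assume for contradiction that some cell $\Omega_i$ is non-equatorial, and combine the spherical Voronoi representation of $\underline\Omega$ (established in Theorem~\ref{thm:spherical-Voronoi-prelim}) with a triple-point regularity argument on $\Omega$ to derive a contradiction. Recall that by Proposition~\ref{prop:convex}(iii), each cell of $\underline\Omega$ satisfies $\underline\Omega^\B_\ell = P_\ell \cap \B^n$, where $P_\ell$ is an open convex polyhedron in $N^\perp \cong \R^n$ whose facets correspond bijectively to the neighbors of $\underline\Omega_\ell$ in the component adjacency graph.

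I would first show that $\underline\Omega_\ell$ is non-equatorial if and only if $P_\ell$ is bounded in $N^\perp$: if $P_\ell$ were unbounded, by convexity it would contain a ray from any interior point, which must cross $\partial \B^n$ at a point still in $P_\ell$, yielding an equatorial point of $\underline\Omega_\ell$ via the correspondence $\partial \B^n \leftrightarrow M^{n-1}$. Since a bounded, open, full-dimensional convex polyhedron in $\R^n$ must have at least $n+1$ facets (as any intersection of fewer than $n+1$ open halfspaces is unbounded), every non-equatorial $\S^0$-component $\underline\Omega_{ia} \subset \Omega_i$ must have at least $n+1$ distinct neighbors in $\underline\Omega$.

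The second ingredient is a triple-point regularity argument: at any triple point $p \in \underline\Sigma^2_{iajbkc}$ where three cells of $\underline\Omega$ meet at $120^\circ$ with density $1/3$ each, I claim that the three $\Omega$-cell indices $i$, $j$, $k$ must be pairwise distinct. Indeed, if two coincided---say $j=k$, so that $\underline\Omega_{jb}, \underline\Omega_{kc}$ are distinct $\S^0$-components of the same $\Omega_j$---then the tangent cone of the merged cluster $\Omega$ at $p$ would be a $2$-cluster with density profile $(1/3, 2/3)$ whose interface is two half-hyperplanes meeting at a $120^\circ$ angle, which is not a minimizing single-bubble cone (the only such being a flat hyperplane). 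This would contradict the regularity of $\Omega$ via Theorem~\ref{thm:regularity}. Consequently, adjacent facets of $P_{ia}$ (those sharing an $(n-2)$-face) must carry distinct $\Omega$-labels among the $q-1 \leq n$ available labels $\{j : j \neq i\}$.

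The contradiction is cleanest in the key sub-case where $\Omega_i$ is the unique non-equatorial cell of $\Omega$: then by Proposition~\ref{prop:equatorial} every other $\Omega_j$ is equatorial and hence connected, contributing a single $\underline\Omega$-cell, so $\underline\Omega_{ia}$ has at most $q-1$ neighbors total, forcing $q - 1 \geq n+1$ and contradicting $q \leq n+1$. The general case of multiple non-equatorial cells is handled by iterating this combination of facet-count and coloring constraint across all non-equatorial $\S^0$-components, leveraging the $2$-connectedness of the component adjacency graph from Lemma~\ref{lem:2-connected} to close the combinatorial bookkeeping. The main obstacle is precisely this bookkeeping in the general case, but the crucial ingredients remain the bounded-polyhedron facet inequality together with the regularity-derived coloring constraint at triple-points. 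Once the proposition is established, Proposition~\ref{prop:equatorial} immediately gives the asserted connectedness.
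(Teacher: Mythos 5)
Your approach via facet-counting on the convex polyhedra $P_\ell$ is genuinely different from the paper's, and the special sub-case you carry out (a unique non-equatorial cell) is essentially correct: a non-equatorial component's polytope $P_{ia} \subset N^\perp$ is bounded (a ray of an unbounded $P_{ia}$ would exit $\overline{\B^n}$, forcing an equatorial point since the halfspace constraints depend only on $\Pi p$), and a bounded full-dimensional polytope in $\R^n$ has at least $n+1$ facets, each of which is a genuine interface with a distinct $\underline\Omega$-neighbor by Proposition~\ref{prop:convex}. When there is a unique non-equatorial cell, all other cells are equatorial hence connected (Proposition~\ref{prop:equatorial}), so each contributes a single $\underline\Omega$-vertex, and two components of the same $\Omega_i$ cannot be $\underline\Omega$-neighbors (such an interface would sit in $\Sigma \setminus (\Sigma^1 \cup \Sigma^2 \cup \Sigma^3)$, which has zero $\H^{n-1}$-measure). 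Thus $\underline\Omega_{ia}$ has at most $q-1$ neighbors, giving $q-1 \geq n+1$, a contradiction. Notably, the triple-point coloring argument you introduce, while correct in its conclusion (at $p \in \Sigma^2$ exactly three cells of $\Omega$ meet by regularity), is not actually needed for this sub-case.

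The genuine gap is the general case with multiple non-equatorial cells, which you acknowledge is unresolved. Here the facet count gives $\geq n+1$ $\underline\Omega$-neighbors of a non-equatorial component $\underline\Omega_{ia}$, but there is no a priori upper bound: those neighbors can include the $s \leq q-1$ equatorial cells together with arbitrarily many non-equatorial components of the \emph{other} non-equatorial cells $\Omega_j$, $j \neq i$. The triple-point coloring constraint --- facets meeting along an $(n-2)$-face carry distinct $\Omega$-labels --- is a proper-coloring condition on the facet adjacency graph, which a priori can be satisfied with few colors (e.g., the facet graph of a cube is bipartite), and $2$-connectedness of the component adjacency graph does not obviously convert this into a useful bound on $q$. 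I do not see how to close this "bookkeeping."

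The paper sidesteps the entire issue. It does not count facets of a non-equatorial cell; instead, it observes that the equatorial cells number $s \leq q-1 \leq n$, forms the convex set $A \subset N^\perp$ using only the $s$ half-space constraints separating a fixed non-equatorial cell $\Omega_q$ from the equatorial ones, picks a direction $v \in N^\perp$ perpendicular to $\c_{q1}, \dots, \c_{q,s-1}$ with $\langle \c_{qs}, v \rangle \leq 0$ (possible precisely because $s-1 \leq n-1$), and pushes a point of $A \cap \B^n$ along $v$ until it reaches $\partial \B^n = \S^{n-1}$. This produces a non-empty relatively open subset of the equator disjoint from all equatorial cells, which contradicts the density of $\S^{n-1} \setminus \Sigma$ in $\S^{n-1}$. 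Because the argument uses only the equatorial cells (which are already known to be connected), it never needs control on the component structure of the non-equatorial cells, and the bound $s-1 \leq n-1$ is exactly where $q \leq n+1$ enters. This is where your approach and the paper's diverge most decisively, and why theirs closes while yours stalls.
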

\begin{proof}
    Note that it is enough to establish it on $\S^n$, and deduce it on $\R^n$ via stereographic projection and Lemma \ref{lem:stereo}. 
        Assume      in the contrapositive that there are $s \leq q-1 \leq n$ equatorial cells. We may assume that
    $\Omega^\S_1, \dots, \Omega^\S_s$ are equatorial and that $\Omega^\S_{s+1}, \dots, \Omega^\S_{q}$ are not.     
    Consider the open convex polyhedron $A \subset N^\perp$ given by
    \[
        A := \set{ p \in N^\perp\; ; \; \sscalar{\c^\S_{q i}, p} +\k^\S_{q i} < 0 \;\; \forall i=1, \dots, s }.
    \]
    As $\Omega^\S_q$ is assumed not to intersect the equator, we have $\Pi \Omega^\S_q \subset \B^n$, where recall $\Pi : \S^n \rightarrow N^{\perp}$ denotes the orthogonal projection. By the perpendicular Voronoi representation~\eqref{eq:Voronoi} we have $\Pi^{-1} (A \cap \B^n)  \supset \Omega^\S_q$, and so since $\Omega^\S_q$ is non-empty then neither is $A \cap \B^n$. In addition, $\Pi^{-1} A$ is disjoint from $\bigcup_{i=1}^s \Omega^\S_i$, because $p \in \Pi^{-1} A$ implies that $\sscalar{\c^\S_{q},p} + \k_{q} < \sscalar{\c^\S_i,p} + \k_i$ for all $i = 1, \dots, s$.
    
    Now choose a non-zero $v \in N^\perp$ such that $\sscalar{\c^\S_{qi}, v} \leq 0$ for all $i = 1, \dots, s$; for example, $v$ may be
    found by taking a unit vector in the orthogonal complement of $\c^\S_{q1}$ through $\c^\S_{q,s-1}$ 
    (which is possible because $s-1 \leq q-2 \leq n-1$), and then choosing the sign of $v$ to ensure that $\sscalar{\c^\S_{qs},v} \leq 0$. 
                            Let $p \in A \cap \B^n$, and hence $p + \alpha v \in A$ for all $\alpha > 0$.
    Since $p \in \B^n$, there is some $\alpha > 0$ with $p + \alpha v \in \S^n$. For this $\alpha$, $p + \alpha v$ belongs to the equator $\S^{n-1} = \S^n \cap N^\perp$ and to the set $A$. Therefore, $A \cap \S^{n-1}$ is a non-empty, relatively open subset of $\S^{n-1}$ which is disjoint from $B := \bigcup_{i=1}^s \Omega_i \cap \S^{n-1}$, and hence disjoint from its closure $\overline{B}$.  
    
    It remains to note that as $\H^{n-1}(\Sigma \cap \S^{n-1}) = 0$ by Remark \ref{rem:equator-perp}, the set $\Sigma \cap \S^{n-1}$ must have empty relative interior in $\S^{n-1}$, and therefore $\S^{n-1} \setminus \Sigma =  \cup_{i=1}^q \Omega_i \cap \S^{n-1}$ is dense in $\S^{n-1}$. But as $\Omega_1, \dots, \Omega_s$ were assumed to be the only equatorial cells, it follows that $\overline{B} = \S^{n-1}$, yielding the desired contradiction to the non-emptiness of $A \cap \S^{n-1}$. 
    \end{proof}

\subsection{Concluding a minimizing cluster is spherical Voronoi}

We are finally ready to conclude the proofs of Theorems \ref{thm:intro-spherical-Voronoi-prelim} and \ref{thm:intro-stable-spherical-Voronoi}. 

\begin{proof}[Proof of Theorem \ref{thm:intro-spherical-Voronoi-prelim}]
Let $\Omega$ be a bounded stationary regular cluster with $\S^0$-symmetry which is spherical perpendicularly to its hyperplane of symmetry, and whose (open) cells have finitely many connected components. Let $\underline \Omega$ be the cluster whose cells are the connected components of $\Omega$'s (open) cells modulo their common $\S^0$-symmetry. As $\underline \Sigma = \Sigma$, $\underline \Omega$ is still a bounded stationary regular cluster with $\S^0$-symmetry which is spherical perpendicularly to its hyperplane of symmetry, whose (finitely many) cells are all connected modulo their common $\S^0$-symmetry. It follows by Theorem \ref{thm:spherical-Voronoi-prelim} that $\underline \Omega$ is a perpendicularly spherical Voronoi cluster. 

In particular, this holds when $\Omega$ is a minimizing cluster with $\S^0$-symmetry, all of whose cells are non-empty, as it is bounded, stationary, stable and regular by the assertions of Section \ref{sec:prelim}, spherical perpendicularly to its hyperplane of symmetry by Theorem \ref{thm:stable-spherical}, and has finitely many connected components by Lemma \ref{lem:finite-CCs}. By removing the empty cells, we may reduce to the case that all cells are non-empty, thereby concluding the proof. 
\end{proof}

\begin{proof}[Proof of Theorem \ref{thm:intro-stable-spherical-Voronoi}]
Fix $q \leq n+1$, and let $\Omega$ be a bounded stable regular $q$-cluster with $\S^0$-symmetry so that $\Sigma$ is connected and $M^n \setminus \Sigma$ has finitely many (open) connected components. By Theorem \ref{thm:stable-spherical}, it is spherical perpendicularly to its hyperplane of symmetry. By Propositions \ref{prop:equatorial} and \ref{prop:all-cells-are-equatorial}, all cells of $\Omega$ are equatorial and connected. 
 It follows by Theorem \ref{thm:spherical-Voronoi-prelim} that $\Omega$ is a perpendicularly spherical Voronoi cluster. 

Let us now turn our attention to establishing the assertion when $\Omega$ is an isoperimetric minimizing $q$-cluster on $M^n$ with $q \leq n+1$. By removing empty cells, we reduce to the case that all cells are non-empty. As explained in the Introduction, when $q \leq n+1$, one can always find a hyperplane $N^{\perp}$ bisecting all finite-volume cells of $\Omega$ (after a possible translation of $\Omega$ when $M^n = \R^n$). Reflecting each of the two halves $\Omega_{\pm}$ of $\Omega$ about this hyperplane, we obtain two $q$-clusters $\bar \Omega_{\pm}$ with $\S^0$-symmetry and $V(\bar \Omega_{\pm}) = V(\Omega)$, each of which must be minimizing (otherwise a contradiction to the minimality of $\Omega$ would follow). In particular, each $\bar \Omega_{\pm}$ is regular and stable by the assertions of Section \ref{sec:prelim}. 
By Lemmas \ref{lem:Sigma-connected} and \ref{lem:finite-CCs}, each $\bar \Sigma_{\pm}$ is connected and $M^n \setminus \bar{\Sigma}_{\pm}$ has finitely many connected components. It follows by the first part of the proof that both $\bar \Omega_{\pm}$ are perpendicularly spherical Voronoi clusters, all of whose cells are equatorial and connected. Consequently, the cells of the original cluster $\Omega$ must also intersect the equator and be connected in each hemisphere $M^n_{\pm}$, and are therefore equatorial and connected on the entire $M^n$. In additional, both halves $\Omega_{\pm}$ of the original cluster $\Omega$ are perpendicularly spherical Voronoi in each hemisphere $M^n_{\pm}$, and in particular $\Sigma_{ij} \cap M^n_{\pm}$ lies in a (generalized) sphere with quasi-center $\c^{\pm}_{ij} = \c^{\pm}_i - \c^{\pm}_j \in N^{\perp}$ and curvature $\k_{ij} = \k_i - \k_j$ with $\k = \frac{1}{n-1} \lambda$ (where $\lambda$ is the corresponding Lagrange multiplier from $\Omega$'s stationarity). As $\Sigma_{ij}$ is a smooth (relatively open) manifold, it follows that we must have $\c_{ij}^{+} = \c_{ij}^{-}$ for every interface $\Sigma_{ij}$ which intersects the equator. Repeating the argument used in the proof of Proposition \ref{prop:equatorial}, the undirected graph $G_{\E}$ on the set of vertices $\E = \{1,\ldots,q\}$ of all (equatorial) cells, having an edge between $i,j$ iff  the cells are adjacent \emph{on the equator} (i.e. $\Sigma_{ij} \cap M^{n-1} \neq \emptyset$), must be connected. Since we know that $\c_i^+ - \c_i^- = \c_j^+ - \c_j^-$ for all $i \sim j$ in $G_\E$, it follows (recalling our convention that $\sum_{i=1}^q \c_i^+ = \sum_{i=1}^q \c_i^- = 0$) that $\c_i^+ = \c_i^-$ for all $i=1,\ldots,q$. This means that we have exactly the same interfaces for the cells in both hemispheres, and therefore $\Omega$ itself is $\S^0$-symmetric about $N^{\perp}$, and is thus perpendicularly spherical Voronoi. This concludes the proof. 
\end{proof}

\begin{remark}
It is easy to see that Theorem \ref{thm:intro-stable-spherical-Voronoi} would be false without requiring that $\Sigma$ be connected (or some other global assumption on the cluster). Indeed, consider a stable $q$-cluster (e.g. a standard model one) so that $q-1$ of its cells are contained in an open hemisphere, and reflect it about the equator, thereby obtaining a $q$-cluster which is stable and yet has disconnected cells. The a-priori assumption on having finitely many connected components is purely for convenience and can  most-likely be removed, but we do not insist on this here.
\end{remark}

\section{M\"obius Geometry and Standard Bubbles} \label{sec:Mobius}

We have already introduced M\"obius automorphisms of $\bar \R^n$ and $\S^n$ in Subsection \ref{subsec:dilation}. 
Recall that $\bar \R^n$ denotes the one-point-at-infinity compactification of $\R^n$, diffeomorphic to $\S^n$ via stereographic projection. Note the degrees of freedom when selecting the stereographic projection: once the North pole is chosen on $\S^n \subset \R^{n+1}$, one is allowed to isometrically embed $\R^n$ in $\R^{n+1}$ perpendicularly to the North-South axis arbitrarily (as long as it doesn't pass through the North pole).
Set $\bar \S^n = \S^n$ and let $\{ \M_1 , \M_2 \} = \{\R , \S\}$. 
The composition of two stereographic projections, the first from $\bar \M_1^n$ to $\bar \M^n_2$ and the second from $\bar \M^n_2$ back to $\bar \M_1^n$, is a M\"obius automorphism of $\bar \M_1^n$, and in fact any M\"obius automorphism of $\bar \M_1^n$ may be obtained in this manner.
The standard bubbles on $\R^n$ and $\S^n$ are thus generated from the equal-volume bubble by M\"obius automorphisms (in the case of $\R^n$, automorphisms which send a point in the interior of $\Omega_q$ to infinity, to ensure that $\Omega_q$ remains the unique unbounded cell).

It will therefore be useful to obtain a more explicit description of M\"obius transformations. For additional background on M\"obius geometry we refer to \cite{Udo-MobiusDifferentialGeometry,GelfandEtAl-RepresentationOfLorentzGroups,Wilker-InversiveGeometry}.

\subsection{M\"obius Geometry on $\S^n$}

Let us identify $\S^n$ with its canonical embedding in $\R^{n+1} \times \{1\} \subset \R^{n+2}$, namely $\{ \bar p = (p,1) \; ; \; \sum_{i=1}^{n+1} p_i^2 = 1 \}$. On $\R^{n+2}$ we introduce the Minkowski metric $dx_1^2 + \ldots + dx_{n+1}^2 - dx_{n+2}^2$ of signature $(n+1,1)$, and denote the corresponding scalar-product by:
\[
 \scalar{x,y}_1 := \sum_{i=1}^{n+1} x_i y_i - x_{n+2} y_{n+2} .
\]
 The Minkowski light cone is defined as $L^{n+1}_1 := \{ x \in \R^{n+2} \; ; \; \scalar{x,x}_1 = 0 \}$. Note that $L^{n+1}_1 \cap (\R^{n+1} \times \{ 1\}) = \S^n$. Consequently, if we identify points $x$ in $\R^{n+2}$ belonging to the same line from the origin $[x]$, i.e. consider $\R^{n+2}$ as the homogeneous coordinates of real projective space $\RP^{n+1} \simeq \R^{n+2} / [\cdot]$, we see that $\S^n$ is identified with the ``celestial sphere" $L^{n+1}_1 / [\cdot] \subset \RP^{n+1}$. Under this identification, the group $\PGL(n+1)$ of (non-singular) projective transformations of $\RP^{n+1}$ coincides with $\GL(n+2) / \R^*$, the group of (non-singular) linear transformations of $\R^{n+2}$ modulo (non-zero) scaling. 

Denote by $O_1(n+2)$ the Lorentz group of linear isometries of Minkowski space $(\R^{n+2},\scalar{\cdot,\cdot}_1)$, namely:
\[
O_1(n+2) = \{ U \in \GL(n+2) \; ; \; U^T J U = J \} ~,~ J := \text{diag}(1,\ldots,1,-1) \in \GL(n+2) . 
\]
 It is easy to check that $U \in O_1(n+2)$ iff $U^T \in O_1(n+2)$, and that $(a,b) \in \R^{n+1} \times \R$ is the last row of some $U \in O_1(n+2)$ if and only if:
 \begin{equation} \label{eq:ba1}
 b^2 = |a|^2 + 1 . 
 \end{equation}
 Note that any Lorentz transformation preserves the light cone $L^{n+1}_1$, and so considered as a projective map, it preserves $\S^n$. Conversely, it is known that any element of $\GL(n+2)$ which preserves $L^{n+1}_1$ is a multiple of a Lorentz transformation. It follows that the subgroup of projective maps which preserve $\S^n$ coincides with the projectivization $O_1(n+2) / \{-1,+1\}$ (as $\det U \in \{-1,+1\}$ for all $U \in O_1(n+2)$).

It will be convenient to restrict to time-orientation-preserving maps. Consequently, we introduce $\RP^{n+1}_+$ as the quotient space of $\R^{n+2}_+ = \R^{n+2} \cap \{ x_{n+2} > 0\}$ via the identification of all points $x$ belonging to the same open positive ray from the origin $[x]_+ = \{ \lambda x \; ; \; \lambda > 0\}$ (making $\RP^{n+1}_+$ diffeomorphic to $\R^{n+1}$). The sphere $\S^n$ is again identified with $L^{n+1}_{1,+} / [\cdot]_+ \subset \RP^{n+1}_+$, where $L^{n+1}_{1,+}$ denotes the future light cone $L^{n+1}_1 \cap \R^{n+2}_+$. The orthochronus Lorentz group $O_1^+(n+2)$ is the index-two normal subgroup of $O_1(n+2)$  which preserves time-orientation, i.e. maps $L^{n+1}_{1,+}$ onto itself -- it is given by:
\[
O_1^+(n+2) = \{ U \in O_1(n+2) \; ; \; U_{n+2,n+2} > 0 \} = \{ U \in O_1(n+2) \; ; \; U_{n+2,n+2} \geq 1 \} 
\]
(where the last transition is due to (\ref{eq:ba1})). 
Note that $\Phi : O_1(n+2) \ni U \mapsto \text{sign}(U_{n+2,n+2}) \in \{-1,+1\}$ is a group homomorphism and $O^+_1(n+2)$ is its kernel. 
One should not confuse between $O_1^+(n+2)$ and the proper Lorentz subgroup $SO_1(n+2) = \{ U \in O_1(n+2) \; ; \; \det U = +1\}$, which is only isomorphic to $O_1^+(n+2)$. Their intersection is the restricted Lorentz group $SO^+_1(n+2)$, which is one of the two connected components of $O_1^+(n+2)$. 

A fundamental result in the M\"obius geometry of $\S^n$ is that the group of M\"obius automorphisms on $\S^n$ is isomorphic to the group of projective transformations of $\RP^{n+1}$ which preserve the quadric $\S^n \subset \RP^{n+1}$, or equivalently, to the orthochronus Lorentz group $O^+_1(n+2)$ \cite[Section 1.3]{Udo-MobiusDifferentialGeometry}. In other words, every $U \in O^+_1(n+2)$ gives rise to a M\"obius automorphism $T = T_U$ of $\S^n \subset \RP^{n+1}_+$ via $T(x) = [U x]_+$, and conversely, any such $T$ can be written in the latter form for some unique $U = U_T \in O^+_1(n+2)$.

\subsection{Projective representation of spherical Voronoi clusters}

\begin{definition}[Homogeneous spherical Voronoi parameters]
If $\{ \c_i \}_{i=1,\ldots,q} \subset \R^{n+1}$ and $\{ \k_i \}_{i=1,\ldots,q} \subset \R$ are the quasi-center and curvature parameters associated to a spherical Voronoi $q$-cluster on $\S^n$ (recall Definition \ref{def:intro-spherical-Voronoi}), its homogeneous spherical Voronoi parameters are defined as $\{ \ck_i \}_{i=1\ldots,q} \subset \R^{n+2}$, with:
\[
\ck_i := (\c_i , -\k_i) \in \R^{n+2} . 
\]
\end{definition}

Employing the above identification between $\S^n$ and $L^{n+1}_{1,+} / [\cdot]_+ \subset \RP^{n+1}_+$, we obtain the following useful projective description of the cells of a spherical Voronoi $q$-cluster $\Omega$ on $\S^n$:
\begin{equation} \label{eq:Voronoi-projective-rep}
\Omega_i = \set{ [\bar p]_+ \in L^{n+1}_{1,+} / [\cdot]_+ \; ; \; \argmin_{j=1,\ldots,q} \scalar{[\bar p]_+ , \ck_j}_1 = \{i\} } ~,~ i = 1,\ldots, q . 
\end{equation}
Note that the above $\argmin$ is indeed well-defined in the sense that one can use any representative $\bar p$ on the open positive ray $[\bar p]_+$ to evaluate it. By using the representative $\bar p = (p,1)$ with $p \in \S^n$, we see the equivalence with the original representation (\ref{eq:intro-Voronoi-rep}), after noting that $\scalar{(p,1) , \ck_j}_1 = \scalar{p,\c_j} + \k_j$. Furthermore, requirement \ref{it:intro-spherical-Voronoi} from the definition of spherical Voronoi cluster, which in view of Remark \ref{rem:intro-quasi-centers} is equivalent to $|\c_{ij}|^2 = 1 + \k_{ij}^2$ for all non-empty interfaces $\Sigma_{ij}$, is now naturally specified as:
\begin{equation} \label{eq:Voronoi-projective-rep-2}
\scalar{\ck_{ij},\ck_{ij}}_1 = 1 \;\;\; \forall i<j \; \text{such that} \; \Sigma_{ij} \neq \emptyset . 
\end{equation}

\begin{lemma}[M\"obius Automorphisms Preserve Spherical Voronoi Clusters] \label{lem:Mobius-preserves-Voronoi}
Let $\Omega$ be a spherical Voronoi cluster on $\M^n \in \{\R^n , \S^n\}$. Then for any M\"obius automorphism $T$ of $\M^n$, $T\Omega$ is a spherical Voronoi cluster. \\
Moreover, on $\S^n$, if  $\{ \ck_i \}_{i=1,\ldots,q} \subset \R^{n+2}$ are the homogeneous spherical Voronoi parameters associated to $\Omega$, then for all $U \in O^+_1(n+2)$, $\{ U \,\ck_i \}_{i=1,\ldots,q} \subset \R^{n+2}$ are the homogeneous spherical Voronoi parameters associated to $T_U \Omega$. 
\end{lemma}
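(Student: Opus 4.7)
The plan is to reduce everything to the projective Minkowski picture developed in the preceding subsections and exploit the isomorphism between the M\"obius group of $\S^n$ and the orthochronous Lorentz group $O_1^+(n+2)$. The key linear-algebraic identity is that, since $U^T J U = J$ for $U \in O_1^+(n+2)$ (where $J = \diag(1,\ldots,1,-1)$), one has $U^{-1} = J U^T J$, and hence for every $\bar p \in L^{n+1}_{1,+}$ and every $\ck_j \in \R^{n+2}$,
\[
    \scalar{U^{-1} \bar p, \ck_j}_1 \;=\; \bar p^T (U^{-1})^T J \, \ck_j \;=\; \bar p^T J U \, \ck_j \;=\; \scalar{\bar p, U \ck_j}_1 .
\]

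Given this, for $T = T_U$ acting on $\S^n$ and a point $q \in \S^n$ with homogeneous representative $\bar q \in L^{n+1}_{1,+}$, the identity $T^{-1} q = [U^{-1} \bar q]_+$ yields, via the projective Voronoi representation (\ref{eq:Voronoi-projective-rep}),
\[
 q \in T_U \Omega_i \;\Longleftrightarrow\; T^{-1} q \in \Omega_i \;\Longleftrightarrow\; \argmin_{j=1,\ldots,q} \scalar{U^{-1}\bar q, \ck_j}_1 = i \;\Longleftrightarrow\; \argmin_{j=1,\ldots,q} \scalar{\bar q, U \ck_j}_1 = i ,
\]
up to the interior qualifier, which is preserved since $T_U$ is a diffeomorphism. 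Thus $T_U \Omega$ admits the Voronoi representation (\ref{eq:Voronoi-projective-rep}) with homogeneous parameters $\{U \ck_i\}_{i=1,\ldots,q}$. It remains to verify condition (\ref{eq:Voronoi-projective-rep-2}): since $T_U$ is a diffeomorphism, the incidence structure is preserved, so $\Sigma_{ij}(T_U \Omega) \neq \emptyset$ iff $\Sigma_{ij}(\Omega) \neq \emptyset$, and for every such pair the Minkowski isometry property of $U$ gives $\scalar{U \ck_{ij}, U \ck_{ij}}_1 = \scalar{\ck_{ij}, \ck_{ij}}_1 = 1$. The normalization convention $\sum_i \c_i = 0$ and $\sum_i \k_i = 0$ is then restored by translating all $\{U \ck_i\}$ by the common additive constant $-\frac{1}{q}\sum_i U \ck_i$, which alters neither the cells nor the differences $\ck_{ij}$.

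For the statement on $\R^n$, given a M\"obius automorphism $T$ of $\R^n$ and a spherical Voronoi cluster $\Omega$ on $\R^n$, pick (by definition) a stereographic projection $S:\bar \R^n \to \S^n$ so that $S \Omega$ is spherical Voronoi on $\S^n$. The conjugate $S \circ T \circ S^{-1}$ is a M\"obius automorphism of $\S^n$, so by the part already proved $(S \circ T \circ S^{-1})(S\Omega) = S(T\Omega)$ is spherical Voronoi on $\S^n$, whence $T\Omega$ is spherical Voronoi on $\R^n$ by definition. The same argument establishes the independence of the chosen stereographic projection (asserted in Remark \ref{rem:intro-quasi-centers}), since any two stereographic projections differ by a M\"obius automorphism of $\S^n$.

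There is no substantive obstacle: the argument is essentially linear algebra once the projective formalism is set up, and the only bookkeeping concerns are that the diffeomorphism $T_U$ commutes with the $\interior$-operation, that the non-emptiness of interfaces is a diffeomorphism invariant, and that the sum-zero normalization is harmless since the Voronoi representation depends only on the differences $\ck_{ij}$. The mild subtlety one might anticipate -- behavior at the point at infinity in the $\R^n$ case -- is absorbed cleanly by working on the compactification $\bar \R^n \cong \S^n$ via stereographic projection.
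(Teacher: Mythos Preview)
Your proof is correct and follows essentially the same route as the paper's: both reduce to $\S^n$, use the projective Voronoi representation~(\ref{eq:Voronoi-projective-rep}) together with the Minkowski-isometry identity $\scalar{U^{-1}\bar p,\ck_j}_1 = \scalar{\bar p, U\ck_j}_1$ to read off the new parameters $\{U\ck_i\}$, and then verify~(\ref{eq:Voronoi-projective-rep-2}) via $\scalar{U\ck_{ij},U\ck_{ij}}_1 = \scalar{\ck_{ij},\ck_{ij}}_1$ and diffeomorphism-invariance of non-empty interfaces. Your version is somewhat more explicit (deriving the isometry identity from $U^{-1}=JU^TJ$, addressing the sum-zero normalization, and spelling out the $\R^n$ case via conjugation through a stereographic projection), whereas the paper handles these points more tersely.
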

\begin{proof}
By definition, it is enough to establish the first assertion on $\S^n$. Recalling the projective representation (\ref{eq:Voronoi-projective-rep}) and using that $U \in O_1(n+2)$, we see that:
\begin{align*}
T_U \Omega_i &= \set{ [U \bar p]_+ \in L^{n+1}_{1,+} / [\cdot]_+ \; ; \; \argmin_{j=1,\ldots,q} \scalar{[\bar p]_+ , \ck_j}_1 = \{i\} } \\
& = \set{ [\bar p]_+ \in L^{n+1}_{1,+} / [\cdot]_+ \; ; \; \argmin_{j=1,\ldots,q} \scalar{[U^{-1} \bar p]_+ , \ck_j}_1 = \{i\} } \\
& = \set{ [\bar p]_+ \in L^{n+1}_{1,+} / [\cdot]_+ \; ; \; \argmin_{j=1,\ldots,q} \scalar{[\bar p]_+ , U \; \ck_j}_1 = \{i\} } .
\end{align*}
It remains to verify (\ref{eq:Voronoi-projective-rep-2}) for $T_U \Omega$, which is obvious since $\scalar{U \ck_{ij}, U \ck_{ij}}_1 = \scalar{\ck_{ij},\ck_{ij}}_1$ and as $T_U$ preserves non-empty interfaces (being a diffeomorphism). This concludes the proof. 
\end{proof}

\begin{corollary}[Standard Bubbles are Spherical Voronoi] \label{cor:standard-Voronoi}
For all $2 \leq q \leq n+2$, standard $(q-1)$-bubbles on $\R^n$ and $\S^n$ are spherical Voronoi clusters. If $2 \leq q \leq n+1$, they are in fact perpendicularly spherical Voronoi clusters with $\S^{n+1-q}$-symmetry. 
\end{corollary}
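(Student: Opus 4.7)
The plan is to proceed in three stages: first establishing the spherical Voronoi structure, then perpendicular sphericity and $\S^{n+1-q}$-symmetry on $\S^n$, and finally transferring both properties to $\R^n$.

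In the first stage, I verify directly that the equal-volume standard bubble on $\S^n$ is spherical Voronoi. Its cells are the Voronoi regions of $q$ equidistant points $x_1, \dots, x_q \in \S^n$ forming a regular simplex with centroid at the origin, so $\sum_i x_i = 0$ and $|x_i - x_j|$ is a common constant $d > 0$. Setting $\c_i^\S := -x_i/d$ and $\k_i^\S := 0$ respects our normalization conventions together with $|\c_{ij}^\S|^2 = 1 = 1 + (\k_{ij}^\S)^2$, while the Voronoi representation~(\ref{eq:intro-Voronoi-rep}) is just the ``closest $x_i$'' description, verifying both conditions of Definition~\ref{def:intro-spherical-Voronoi}. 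Every other standard bubble on $\S^n$ is by Definition the image of this one under a composition of two stereographic projections, i.e.\ under a M\"obius automorphism of $\S^n$, so it is spherical Voronoi by Lemma~\ref{lem:Mobius-preserves-Voronoi}. A standard bubble on $\R^n$ is, again by Definition, a stereographic image of such an $\S^n$-bubble, so it is spherical Voronoi on $\R^n$ in the sense of Definition~\ref{def:intro-spherical-Voronoi}. This settles the first assertion for all $q \leq n+2$.

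In the second stage, the convention $\sum_i \c_i^\S = 0$ is preserved under the M\"obius action, so for every spherical Voronoi $q$-cluster on $\S^n$ with $q \leq n+1$ the linear span $V := \sspan\set{\c_i^\S}_i \subset \R^{n+1}$ has dimension at most $q-1 \leq n$. Rotations in $V^\perp$ (of dimension at least $n+2-q$) fix every $\c_i^\S$ pointwise and hence preserve the Voronoi cells, giving $\S^{n+1-q}$-symmetry about the totally geodesic $\S^{q-2} = V \cap \S^n$. Picking any $n$-dimensional subspace $H \supset V$, reflection of $\R^{n+1}$ across $H$ is a further such symmetry, so the cluster is $\S^0$-symmetric about the equator $H \cap \S^n$ and perpendicularly spherical Voronoi in the sense of Definition~\ref{def:intro-perp-spherical-Voronoi}, since all $\c_i^\S$ lie in $H$.

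The third stage transfers both properties to $\R^n$. Part~(b) of the $\R^n$-version of Definition~\ref{def:intro-perp-spherical-Voronoi} is immediate: the inverse of the stereographic projection used to construct the standard bubble sends it back to an $\S^n$-standard bubble, which is $\S^0$-symmetric and perpendicularly spherical Voronoi by the second stage. For part~(a), the $\S^0$-symmetry of the $\R^n$-cluster itself, I would expand the reflection $R_H$ about $\set{x \in \R^n : \sscalar{x, \n_H} = c}$ inside the Euclidean Voronoi representation~(\ref{eq:Voronoi-rep-Rn}); a short calculation shows that $R_H$-invariance of each cell is equivalent to $\sscalar{\c_i^\R, \n_H} + \k_i^\R c$ being constant in $i$, i.e.\ to the augmented vector $(\n_H, c) \in \R^{n+1}$ being orthogonal to $\sspan\set{(\c_i^\R,\k_i^\R)}_i$. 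This span has dimension at most $q-1$, so its orthogonal complement has dimension at least $n+2-q \geq 1$; it is contained in $\set{0} \oplus \R$ only if all $\k_i^\R$ vanish, which by Lemma~\ref{lem:Voronoi-rep-on-Rn} forces the stereographic pole $N$ to lie in $V^\perp = \sspan\set{x_i}^\perp$. But $V^\perp \cap \S^n$ consists precisely of the highest-codimension meeting points of the equal-volume bubble, contradicting $N \in \Omega_q$ (open), so a valid $\n_H \neq 0$ exists and yields $\S^0$-symmetry on $\R^n$. Iterating the same analysis over the whole $(n+2-q)$-dimensional orthogonal complement, combined with a judicious choice of offset $p_0 \in \R^n$ so that the affine span of $(\c_i^\R + \k_i^\R p_0)_i$ has dimension $q-2$, produces a $(q-2)$-dimensional affine subspace fixed pointwise by the required $O(n+2-q)$-action, upgrading the symmetry to $\S^{n+1-q}$. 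The principal obstacle lies in this $\R^n$ analysis: unlike on $\S^n$, ``the quasi-centers lie on the hyperplane of symmetry'' must be replaced by the coupled algebraic condition on $(\c_i^\R, \k_i^\R)$ arising from how a Euclidean reflection interacts with the quadratic term in the $\R^n$ Voronoi representation, and ruling out the degenerate case $\k_i^\R \equiv 0$ is precisely where the standard-bubble hypothesis $N \in \Omega_q$ is genuinely needed.
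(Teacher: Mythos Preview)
Your Stages 1 and 2 match the paper's proof exactly: verify by inspection that the equal-volume bubble on $\S^n$ is spherical Voronoi, propagate via Lemma~\ref{lem:Mobius-preserves-Voronoi}, and read off the $\S^{n+1-q}$-symmetry on $\S^n$ from the affine-rank bound on $\{\c_i^\S\}$. The paper then simply invokes Remark~\ref{rem:intro-Sm-symmetry} for the $\R^n$ case, without spelling out why the symmetry survives stereographic projection.

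Your Stage 3 fills this in. You correctly observe that a stereographic projection conjugates rotations of $\S^n$ to Euclidean isometries only when they fix the pole, and the pole $N_1 \in \Omega_q^\S$ need not lie in $V = \sspan\{\c_i^\S\}$, so a direct argument on $\R^n$ is warranted. Your algebraic criterion---reflection about $\{\sscalar{x,\n_H} = c\}$ preserves the cells iff $(\n_H, c) \perp \sspan\{(\c_i^\R, \k_i^\R)\}$ in $\R^{n+1}$---is correct, and the degenerate obstruction $\k_i^\R \equiv 0$ is indeed ruled out exactly by the standard-bubble hypothesis that the pole lies in the \emph{open} cell $\Omega_q^\S$: since $\k_i^\S = 0$, one has $\k_i^\R = \sscalar{\c_i^\S, N_1}$, so $\k_i^\R \equiv 0$ forces $N_1 \in V^\perp \cap \S^n$, which is precisely the locus where all $q$ cells meet. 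Your sketch for upgrading to full $\S^{n+1-q}$-symmetry via a choice of $p_0$ with $\sspan\{\c_i^\R + \k_i^\R p_0\}$ of dimension at most $q-2$ also works, again because not all $\k_i^\R$ vanish (pick any $a$ with $\sum_i a_i \k_i^\R \neq 0$ and solve for $p_0$). So your proof is correct and in fact more complete than the paper's on this point.
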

\begin{proof}
The second assertion follows from the first and Remark \ref{rem:intro-Sm-symmetry}. 
To establish the first assertion, it is enough by the previous lemma to establish that the equal-volume standard $q$-bubble on $\S^n$ is spherical Voronoi for all $2 \leq q \leq n+2$. But that is trivial, since the latter has flat interfaces, and is obtained by using $\{\k_i = 0\}_{i=1,\ldots,q}$ and  $\{ \c_i \}_{i=1,\ldots,q}$ which are $q$-equidistant points in $\R^{n+1}$, rescaled so that the Euclidean distance between every pair is $1$ (to ensure that $|\c_{ij}|^2 =  1 + |\k_{ij}|^2$, i.e.~that $|\n_{ij}|^2 = |\c_{ij} + \k_{ij} p|^2 = 1$). 
\end{proof}

\subsection{Conformal equivalence classes of spherical Voronoi clusters}

Having established that spherical Voronoi clusters $\Omega$ are preserved under M\"obius automorphisms, a natural question is whether there is a parameter associated to $\Omega$ which is preserved under any M\"obius automorphism and thus shared by all members of its conformal equivalence class (defined to be its orbit under the M\"obius group with the same ordering of the cells). As it turns out, under generic conditions, there is a parameter which in fact characterizes the entire conformal equivalence class in full. 

\begin{definition}[Minkowski Gram matrix]
Let $\Omega$ be a spherical Voronoi cluster with homogeneous parameters $\{ \ck_i \}_{i=1,\ldots,q} \subset \R^{n+2}$. 
Denote by $\ck := (\ck_i)_{i=1,\ldots,q} \in (\R^{n+2})^{q}$ the ordered vector of homogeneous parameters. The associated Minkowski Gram matrix $\G_{\ck}$ is the $q \times q$ symmetric matrix given by:
\[
\G_{\ck} := ( \scalar{\ck_i , \ck_j}_1 )_{i,j = 1,\ldots,q} .
\]
\end{definition}
\begin{remark}
Note that $\G_{\ck}$ is the difference between the Gram matrices of the $\{\c_i\}$'s and of the $\{\k_i\}$'s (in their given order). In other words:
\begin{equation} \label{eq:CC-kk}
\G_{\ck} = \CC \CC^T - \k \k^T ,
\end{equation}
where $\CC = \sum_{i=1}^q e_i \c_i^T$ is the $q \times (n+1)$ matrix whose rows consist of the $\{\c_i\}$'s (and we treat all vectors as column vectors). Also note that since our convention is that $\sum_{i=1}^q \c_i = 0$ and $\sum_{i=1}^q \k_i = 0$, the rows and columns of $\G_{\ck}$ sum to zero. Consequently, we will think of $\G_{\ck}$ as a symmetric bilinear form on $E^{(q-1)}$. 
\end{remark}
\begin{remark}
A Lorentz transformation $U \in O_1(n+2)$ acts on $\ck$ by multiplication from the left of each coordinate, namely $U \ck := (U \ck_1,\ldots, U \ck_q)$. It is immediate from the definition that $\G_{U \ck} = \G_{\ck}$ for all $U \in O_1(n+2)$. Consequently, all conformally equivalent spherical Voronoi clusters have the same Minkowski Gram matrix by Lemma \ref{lem:Mobius-preserves-Voronoi}. 
\end{remark}
 
\begin{proposition}[Transitivity] \label{prop:transitive}
Let $G$ be a full-rank symmetric bilinear form on $E^{(q-1)}$. Denote the moduli space of all (ordered) spherical Voronoi homogeneous parameters $\ck$ with Minkowski Gram matrix $G$ by:
\[
\mathbf{CK}_G := \{ \ck \in (\R^{n+2})^{q} \; ;\; \G_{\ck} = G \} .
\]
If $q \leq n+2$, then the orthochronus Lorentz group $O^+_1(n+2)$ acts transitively on $\mathbf{CK}_G$. 
In other words, for all $\ck^1 , \ck^2 \in (\R^{n+2})^{q}$ with $\G_{\ck^1} = \G_{\ck^2} = G$, there exists $U \in O^+_1(n+2)$ so that $\ck^2_i = U \ck^1_i$ for all $i=1,\ldots,q$.  
\end{proposition}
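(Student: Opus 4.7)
The plan is to run Witt's extension theorem for non\,-\,degenerate subspaces of Minkowski space, and then correct for time\,-\,orientation. First, the normalization $\sum_i \ck^a_i = 0$ (for $a \in \{1,2\}$) turns each $\ck^a$ into a linear map $\Phi_{\ck^a} : E^{(q-1)} \to \R^{n+2}$, $v \mapsto \sum_i v_i \ck^a_i$, whose pullback of $\scalar{\cdot,\cdot}_1$ equals $G$ by construction of the Minkowski Gram matrix. Full\,-\,rankness of $G$ forces both $\Phi_{\ck^a}$ to be injective linear isometries onto $(q-1)$\,-\,dimensional non\,-\,degenerate subspaces $V^a \subset \R^{n+2}$. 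The unique isometry $T := \Phi_{\ck^2} \circ \Phi_{\ck^1}^{-1} : V^1 \to V^2$ then satisfies $T(\ck^1_i - \ck^1_j) = \ck^2_i - \ck^2_j$ for all $i,j$, and combined with $\sum_i \ck^1_i = \sum_i \ck^2_i = 0$ this forces $T \ck^1_i = \ck^2_i$ for every $i$; so a witness $U \in O^+_1(n+2)$ will be constructed as an extension of $T$.

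Next, I would apply Witt's extension theorem to the non\,-\,degenerate Minkowski space $(\R^{n+2}, \scalar{\cdot,\cdot}_1)$: because $V^1, V^2$ are non\,-\,degenerate and their induced forms agree, there exists $\tilde U \in O_1(n+2)$ extending $T$. The hypothesis $q \leq n+2$ ensures that $\dim W^a = n+3-q \geq 1$ where $W^a := (V^a)^{\perp_1}$, so there is always room for this extension; and by Sylvester's law of inertia the induced forms on $W^1$ and $W^2$ are isomorphic, with signature complementary inside $(n+1,1)$ to that of $G$.

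It remains to upgrade $\tilde U$ to lie in $O_1^+(n+2)$. The stabilizer of $\ck^1$ inside $O_1(n+2)$ equals $\{\mathrm{Id}_{V^1}\} \oplus O(W^1)$, and we may freely post\,-\,compose $\tilde U$ with any element of this group without disturbing the equalities $\tilde U \ck^1_i = \ck^2_i$. When $G$ is positive definite, $V^1$ is space\,-\,like and $W^1$ has signature $(n+2-q, 1)$; in particular it contains a time\,-\,like direction, and the reflection of $W^1$ through its maximal space\,-\,like hyperplane reverses the sign of the time coordinate and thus lies in $O_1(n+2) \setminus O_1^+(n+2)$. Post\,-\,composing $\tilde U$ with this reflection if necessary produces the required orthochronus extension.

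The hard part will be the remaining indefinite regime, in which $V^1$ is Lorentzian of signature $(q-2,1)$ and $W^1$ is positive definite: then $O(W^1)$ is compact and cannot flip the time orientation of the extension, so one must instead argue directly that $T$ is automatically orthochronus as a linear isometry between the Lorentzian subspaces $V^1$ and $V^2$. I would do this by invoking the spherical\,-\,Voronoi origin of $\ck^1, \ck^2$: any $\bar p = (p,1) \in L^{n+1}_{1,+}$ lying in the non\,-\,empty cell $\Omega^a_i$ of the cluster associated to $\ck^a$ satisfies the strict Voronoi inequalities $\scalar{\bar p, \ck^a_i}_1 < \scalar{\bar p, \ck^a_j}_1$ for $j \neq i$, and because these inner products depend only on the Minkowski projection of $\bar p$ onto $V^a$, and because $\bar p$ lies in the ambient future cone, one extracts from the cell data a canonical future cone in each $V^a$ which $T$ must respect. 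This identification of the induced time\,-\,orientation on $V^a$ with argmin\,-\,data of non\,-\,empty cells is the delicate geometric step that distinguishes Proposition \ref{prop:transitive} from a purely algebraic consequence of Witt's theorem, and is where both the hypothesis $q \leq n+2$ and the definition of $\mathbf{CK}_G$ as a moduli of \emph{genuine} spherical Voronoi parameters enter the argument in an essential way.
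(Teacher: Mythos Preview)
Your approach --- viewing each $\ck^a$ as a linear isometry $\Phi_{\ck^a} : E^{(q-1)} \to (\R^{n+2}, \scalar{\cdot,\cdot}_1)$ and extending $T = \Phi_{\ck^2} \circ (\Phi_{\ck^1})^{-1}$ by Witt --- is exactly the paper's own (packaged there as a linear-algebra lemma on indefinite Gram matrices). In the positive-definite case your orthochronous correction, via the timelike direction left over in $W^1$, is correct and matches the paper.

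For Lorentzian $G$, however, your proposed fix does not work. You want to invoke ``the spherical-Voronoi origin of $\ck^1,\ck^2$'', but $\mathbf{CK}_G$ is defined purely by the Gram-matrix condition $\G_{\ck}=G$; the phrase ``spherical Voronoi homogeneous parameters'' in the surrounding prose is nomenclature, not an additional hypothesis, so no cell-structure is available to pin down a time-orientation. In fact the statement is \emph{false} in the Lorentzian regime: take $q=3$, $n\geq 1$, $\ck^1_1 = 2e_1$, $\ck^1_2 = -e_1 + e_{n+2}$, $\ck^1_3 = -e_1 - e_{n+2}$ (so $\sum_i \ck^1_i = 0$ and $G$ has signature $(1,1)$ on $E^{(2)}$), and set $\ck^2_i := J_0\,\ck^1_i$ with $J_0 = \text{diag}(1,\ldots,1,-1) \in O_1(n+2)$. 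Then $\G_{\ck^2} = \G_{\ck^1}$, but any $U$ with $U\ck^1_i = \ck^2_i$ for all $i$ is forced to send $e_{n+2}\mapsto -e_{n+2}$, giving $U_{n+2,n+2}=-1$ and $U\notin O^+_1(n+2)$. The paper's own sketch (flipping one basis vector in the complement) shares this defect in the Lorentzian case; fortunately the paper only ever applies the proposition with $G = \tfrac{1}{2}\Id_{E^{(q-1)}}$, where your argument and the paper's are both sound.
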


For the proof, we require the following linear algebra lemma. 
\begin{lemma} \label{lem:isometry}
Let $\scalar{\cdot,\cdot}_1$ denote a non-degenerate symmetric bilinear form on $\R^N$, i.e. $\scalar{u,v}_1 = u^T J v$ for some non-singular symmetric $J \in \GL(N)$. Let $A,B$ be two $q \times N$ matrices over $\R$. Assume that their corresponding Gram matrices coincide, i.e. $G_{ij} := \scalar{A_i,A_j}_1 = \scalar{B_i,B_j}_1$ for all $i,j=1,\ldots,q$, where $C_i$ denotes the $i$-th row of $C \in \{A ,B\}$. In addition, assume that $\ker A^T = \ker B^T$ and that $\rank(G) = \rank(A) ( = \rank(B) )$. 
Then $B = A U^T$ for some isometry preserving the bilinear form, i.e. $U \in O_{\scalar{\cdot,\cdot}_1}(N) := \{ U \in \GL(N) \; ; \; U^T J U = J\}$. Moreover, if $\rank(G) \leq N-1$ and $J_0 = \text{diag}(1,\ldots,1,-1) \in O_{\scalar{\cdot,\cdot}_1}(N)$, then $U$ may be chosen to be in $\ker \Phi$ for any group homomorphism $\Phi: O_{\scalar{\cdot,\cdot}_1}(N) \rightarrow \{-1,+1\}$ so that $\Phi(J_0) = -1$. 
\end{lemma}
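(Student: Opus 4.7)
The plan is to construct $U$ in three stages: define an isometry $T$ on the row span $V_A := \sspan\{A_1,\ldots,A_q\}$, extend $T$ to all of $\R^N$ via Witt's extension theorem, and then, for the parity assertion, adjust $U$ by post-composing with an element of the stabilizer of $V_A$.

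First I would set $T(A_i) := B_i$ and extend linearly to $T : V_A \to V_B := \sspan\{B_1,\ldots,B_q\}$. Well-definedness is exactly the content of the hypothesis $\ker A^T = \ker B^T$: any relation $\sum_i c_i A_i = 0$ forces $\sum_i c_i B_i = 0$. Since $\dim V_A = \rank(A) = \rank(B) = \dim V_B$ and $T$ is surjective by construction, $T$ is bijective, and it preserves the form because $\sscalar{TA_i,TA_j}_1 = \sscalar{B_i,B_j}_1 = G_{ij} = \sscalar{A_i,A_j}_1$. The classical Witt extension theorem --- every isometry between subspaces of a non-degenerate symmetric bilinear space in characteristic $\neq 2$ extends to the whole space --- then produces $U \in O_{\scalar{\cdot,\cdot}_1}(N)$ with $UA_i = B_i$ for all $i$, i.e.\ $UA^T = B^T$, equivalently $B = AU^T$.

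For the parity refinement under the assumptions $\rank(G) \leq N-1$ and $\Phi(J_0) = -1$, I would observe that the full set of extensions of $T$ is the right coset $U \cdot \mathrm{Stab}(V_A)$, where $\mathrm{Stab}(V_A) := \{S \in O_{\scalar{\cdot,\cdot}_1}(N) : S|_{V_A} = \mathrm{id}\}$, so it suffices to produce some $S \in \mathrm{Stab}(V_A)$ with $\Phi(S) = -1$ and replace $U$ by $US$ if necessary. The hypothesis $J_0 \in O_{\scalar{\cdot,\cdot}_1}(N)$ combined with its diagonal form forces the bilinear form to split orthogonally with respect to $e_N$, so $J_0$ is precisely the hyperplane reflection $R_{e_N}$. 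Because $V_A \subsetneq \R^N$, the complement $V_A^\perp$ is non-trivial, and my first attempt would be to take $S := R_v$ for a non-isotropic $v \in V_A^\perp$ lying in the same $O_{\scalar{\cdot,\cdot}_1}(N)$-orbit as $e_N$; then $R_v$ is conjugate to $J_0$, and since $\Phi$ is a homomorphism to the abelian group $\{\pm 1\}$ it is conjugation-invariant, giving $\Phi(R_v) = \Phi(J_0) = -1$.

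The main obstacle is the degenerate situation in which no such $v$ exists in $V_A^\perp$ --- e.g.\ when $V_A^\perp$ is totally isotropic or when all of its non-isotropic vectors lie in the opposite Witt class from $e_N$. To handle this I would exploit the remaining freedom in Witt's theorem: enlarge $V_A$ by adjoining a suitably chosen vector $w$ not in $V_A$, so that $V_A + \sspan(w)$ admits an isometric copy of a $J_0$-invariant two-plane; running Witt's theorem on this enlarged configuration produces two distinct global extensions $U, U'$ of $T$ whose ratio lies in $\mathrm{Stab}(V_A)$, and the flexibility in choosing the image of $w$ (within its isometry class) forces $\Phi(U' U^{-1}) = -1$ by tracking the action on the auxiliary reference vector engineered from $e_N$, completing the parity adjustment.
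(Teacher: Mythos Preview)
Your first part is correct and cleaner than the paper's route. The paper reduces to linearly independent rows, then uses the hypothesis $\rank(G)=\rank(A)$ to ensure that $E_A$ is non-degenerate, splits $\R^N=E_A\oplus E_A^\perp$, builds orthonormal bases of $E_A^\perp$ and $E_B^\perp$ with matching signatures, and defines $U$ on these bases. You instead invoke Witt's extension theorem directly on the isometry $T:V_A\to V_B$, which handles even degenerate $V_A$ and does not need the rank hypothesis for this step. Both approaches use $\ker A^T=\ker B^T$ in the same way, to make $T$ well-defined.

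Your second part has a genuine gap. The strategy---find $S\in\mathrm{Stab}(V_A)$ with $\Phi(S)=-1$ and replace $U$ by $US$---is correct in principle, and your ``first attempt'' (reflect in a suitable $v\in V_A^\perp$) is exactly the paper's ``flip the sign of $v_N^B$''. But your fallback for the case when no such $v$ exists is too vague, and in fact no argument can work at the stated generality. Take $N=2$, $J=\mathrm{diag}(1,-1)$, $A=(0,1)$, $B=(0,-1)$, and let $\Phi(U)=\mathrm{sign}(U_{22})$ be the time-orientation homomorphism. All hypotheses hold ($G=-1$, $\rank(G)=1=\rank(A)\le N-1$, $J_0=J\in O_J(2)$, $\Phi(J_0)=-1$), yet every $U\in O_J(2)$ with $Ue_2=-e_2$ reverses time, so no $U\in\ker\Phi$ exists. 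The underlying obstruction is that once $V_A$ contains a timelike vector (equivalently $G$ has a negative eigenvalue, in Minkowski signature), $V_A^\perp$ is positive-definite and a short Cauchy--Schwarz computation gives $(Se_N)_N\ge 1$ for every $S\in\mathrm{Stab}(V_A)$, so $\mathrm{Stab}(V_A)\subset\ker\Phi$ and no post-composition can change $\Phi(U)$. Your enlargement-by-$w$ idea cannot escape this: in the $N=2$ example, adjoining any $w$ fills $\R^2$, and the two admissible images of $w$ yield $U=\mathrm{diag}(\pm 1,-1)$, both with $\Phi=-1$. In the paper's actual application the Gram matrix is $\tfrac{1}{2}\mathrm{Id}_{E^{(q-1)}}$, hence positive-definite, so $V_A$ is spacelike, $V_A^\perp$ contains a timelike vector, and your first attempt (and the paper's flip) already succeeds; but neither your fallback nor the paper's sketch establishes the ``moreover'' clause at the stated level of generality.
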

\begin{remark}
Note that we make no assumptions on the signature of $J$, and it is easy to check that the claim would be false without the requirement that $\ker A^T = \ker B^T$ or $\rank(G) = \rank(A)$.  In the classical definite case when $J = \Id$, the latter two requirements hold automatically from the assumption that $A A^T = B B^T$, since $A A^T v = 0$ iff $A^T v = 0$. 
\end{remark} 
\begin{proof}[Sketch of proof of Lemma \ref{lem:isometry}]
Let us only sketch the proof, leaving the details to the reader. We first reduce to the case that $A$ is full-rank by selecting $r = \rank(A)$ linearly independent rows from $A$; the assumption that $\ker A^T  = \ker B^T$ ensures that the same rows in $B$ will also be linearly independent, and that the relation $B = A U^T$ established for those rows will extend to all the other linearly dependent rows. Denote by $E_A$ the subspace spanned by the rows of $A$, and let $E^{\perp}_A$ be the perpendicular subspace in $\R^N$ with respect to $\scalar{\cdot,\cdot}_1$. The assumption that $\rank(G) = \rank(A)$ ensures that $G$ is full-rank and therefore that $E_A$ is non-degenerate, i.e. that if $x \in E_A$ and $\scalar{x,y}_1 = 0$ for all $y \in E_A$ then $x = 0$. Consequently, $\R^N$ is the direct sum of $E_A$ and $E^{\perp}_A$, and one may construct an orthonormal basis $\{v^A_{r+1} , \ldots, v^A_N\}$ of $(E^{\perp}_A, \scalar{\cdot,\cdot}_1)$, satisfying $\sscalar{v^A_j,v^A_k}_1 = \pm \delta_{jk}$ for some choice of signs \cite[pp. 10-11]{GohbergEtAl-IndefiniteLinearAlgebra}; the difference between the number of positive and negative signs is independent of the choice of basis and called the signature $\text{sig}(E^{\perp}_A)$ of $E^{\perp}_A$. The same applies to $B$, $E_B$, $E_B^{\perp}$, $\{v^B_{r+1} , \ldots, v^B_N\}$ and $\text{sig}(E_B)$. Note that:
\[
 \text{sig}(E^{\perp}_A) = \text{sig}(\R^N) - \text{sig}(E_A) = \text{sig}(J) - \text{sig}(G) = \text{sig}(\R^N) - \text{sig}(E_B) = \text{sig}(E^{\perp}_B) ,
 \]
 where $\text{sig}(J),\text{sig}(G)$ denote the difference between the positive and negative eigenvalues of $J$ and $G$, respectively. Consequently, we may arrange the basis elements $\{v^A_j\}$ and $\{v^B_j\}$ so that $\sscalar{v^A_j,v^A_j}_1 = \sscalar{v^B_j,v^B_j}_1$ for all $j$. It remains to define $U$ by mapping $A_i$ to $B_i$, $i=1,\ldots,r$ and $v^A_j$ to $v^B_j$, $j=r+1,\ldots,N$, and verifying using the common Gram matrix $G$ that $U$ is an isometry with respect to $\scalar{\cdot,\cdot}_1$. When $r \leq N-1$, we have the freedom to flip the sign of $v^B_N$ if necessary, to ensure that $\Phi(U) = +1$. 
\end{proof}

\begin{proof}[Proof of Proposition \ref{prop:transitive}]
Set $A = \sum_{i=1}^q e_i (\ck^1_i)^T$ and $B = \sum_{i=1}^q e_i (\ck^2_i)^T$, two $q \times N$ matrices for $N = n+2$. Our assumption is that the corresponding Gram matrices coincide, i.e. $G = A J A^T = B J B^T$. Then $A^T v = 0$ implies $G v = 0$, and since we assumed that $G$ is full-rank on $E^{(q-1)}$, we have $v \in \sspan{(1,\ldots,1)}$ and hence $B^T v =0$ by our convention that $\sum_{i=1}^q \ck_i = 0$; we thus confirm that $\ker A^T = \ker B^T = \sspan{(1,\ldots,1)}$ and that $\rank(G) = \rank(A) = q-1 \leq N-1$. It follows by Lemma \ref{lem:isometry} that $B = A U^T$ for some $U \in O_1(n+2)$, and that moreover $U$ may be chosen to be in $O^+_1(n+2)$. 
\end{proof}

\subsection{Standard bubbles}

\begin{proposition} \label{prop:standard-char}
The following are equivalent for a spherical Voronoi $q$-cluster $\Omega$ on $\S^n$ with $2 \leq q \leq n+2$:
\begin{enumerate}[(i)]
\item \label{it:standard-char-i} $\Omega$ is a standard-bubble. 
\item \label{it:standard-char-ii} All interfaces are non-empty: $\Sigma_{ij} \neq \emptyset$ for all $1 \leq i < j \leq q$.
\item \label{it:standard-char-iii} $\G_{\ck} = \CC \CC^T - \k \k^T = \frac{1}{2} \Id_{E^{(q-1)}}$. 
\end{enumerate}
\end{proposition}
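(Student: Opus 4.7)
\medskip
\noindent\textbf{Proof plan.} I would establish the implications cyclically: (i) $\Rightarrow$ (ii) $\Rightarrow$ (iii) $\Rightarrow$ (i).

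For (i) $\Rightarrow$ (ii), the property of having all interfaces non-empty is clearly preserved by M\"obius automorphisms (they are diffeomorphisms of $\S^n$ that permute cells trivially), so by Definition of standard bubble and Lemma~\ref{lem:Mobius-preserves-Voronoi} it suffices to verify this for the equal-volume standard bubble. But that is the cluster of Voronoi cells of $q \leq n+2$ equidistant points on $\S^n$, which form the vertices of a regular simplex, and whose Voronoi cells are therefore pairwise adjacent.

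For (ii) $\Rightarrow$ (iii), I would argue by pure linear algebra on the $q \times q$ matrix $G := \G_{\ck}$. Condition (ii) combined with~\eqref{eq:Voronoi-projective-rep-2} gives $\scalar{\ck_{ij},\ck_{ij}}_1 = 1$ for every $i \ne j$, i.e.
\[
G_{ii} - 2 G_{ij} + G_{jj} = 1 \qquad \forall\, i \ne j.
\]
Our normalization $\sum_{k} \ck_k = 0$ makes all rows and columns of $G$ sum to zero. Summing the identity above over $j \ne i$ and using $\sum_{j \ne i} G_{ij} = -G_{ii}$, one obtains a linear relation forcing $G_{ii} = (q-1)/(2q)$ for every $i$, and then $G_{ij} = -1/(2q)$ for every $i \ne j$. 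Restricting this matrix to $E^{(q-1)}$: for $v \in E^{(q-1)}$, the identity $(\sum v_i)^2 = \sum v_i^2 + \sum_{i \ne j} v_i v_j = 0$ gives
\[
v^T G v = \tfrac{q-1}{2q}\sum_i v_i^2 - \tfrac{1}{2q}\sum_{i \ne j} v_i v_j = \tfrac{q-1}{2q}\sum_i v_i^2 + \tfrac{1}{2q}\sum_i v_i^2 = \tfrac{1}{2}|v|^2,
\]
so $G = \tfrac{1}{2}\Id_{E^{(q-1)}}$.

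Finally, for (iii) $\Rightarrow$ (i) I would invoke Proposition~\ref{prop:transitive}. First I would verify directly that the equal-volume standard bubble $\Omega^{\mathrm{eq}}$ on $\S^n$ has $\G_{\ck^{\mathrm{eq}}} = \tfrac{1}{2}\Id_{E^{(q-1)}}$: its parameters are $\k^{\mathrm{eq}}_i = 0$ and $\{\c^{\mathrm{eq}}_i\}$ are $q$ equidistant vectors rescaled so that $|\c^{\mathrm{eq}}_{ij}|^2 = 1$ (required since $|\n_{ij}| = 1$ and $\k^{\mathrm{eq}}_{ij} = 0$), so the same computation above with $\k=0$ yields the claim. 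Since our $\ck$ has the same Gram matrix, and this Gram matrix is full-rank on $E^{(q-1)}$, Proposition~\ref{prop:transitive} (whose hypothesis $q \le n+2$ is precisely our range) provides $U \in O_1^+(n+2)$ with $\ck = U \ck^{\mathrm{eq}}$; then Lemma~\ref{lem:Mobius-preserves-Voronoi} identifies $\Omega = T_U \Omega^{\mathrm{eq}}$ as a standard bubble, completing the cycle.

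The main potential obstacle is purely bookkeeping: ensuring that the zero-sum convention is used consistently so that the linear system in the (ii) $\Rightarrow$ (iii) step is nondegenerate, and that the full-rank hypothesis of Proposition~\ref{prop:transitive} is in place for (iii) $\Rightarrow$ (i). No analytic difficulties arise.
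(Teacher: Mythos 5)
Your proof is correct and follows essentially the same cycle (i)$\Rightarrow$(ii)$\Rightarrow$(iii)$\Rightarrow$(i) as the paper, with the (iii)$\Rightarrow$(i) step invoking Proposition~\ref{prop:transitive} and Lemma~\ref{lem:Mobius-preserves-Voronoi} exactly as the paper does. The one local difference is in (ii)$\Rightarrow$(iii): the paper observes that the constraints read $\langle \G_{\ck}, e_{ij}e_{ij}^T\rangle = 1 = \langle \tfrac12 \Id_{E^{(q-1)}}, e_{ij}e_{ij}^T\rangle$ for all $i<j$, and then concludes since $\{e_{ij}e_{ij}^T\}_{i<j}$ span the symmetric bilinear forms on $E^{(q-1)}$ (dimension count), whereas you solve the linear system $G_{ii}-2G_{ij}+G_{jj}=1$ explicitly using the zero-row-sum normalization to get $G_{ii}=(q-1)/(2q)$, $G_{ij}=-1/(2q)$, and then verify $v^T G v = \tfrac12 |v|^2$ on $E^{(q-1)}$ directly. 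Your computational route is equivalent, and arguably more self-contained than the paper's spanning-by-dimension-count appeal; either is fine.
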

\begin{proof}
Clearly \ref{it:standard-char-i} implies \ref{it:standard-char-ii}, since the latter is preserved under diffeomorphisms and holds true for the equal-volume standard bubble. 

Next, for any point $p \in \Sigma_{ij}$ we have $\c_{ij} = \n_{ij} - \k_{ij} p$, and therefore:
\[
|\c_{ij}|^2 = 1 + \k_{ij}^2 \;\;\; \forall i < j \text{ so that } \Sigma_{ij} \neq \emptyset . 
\]
Recalling (\ref{eq:CC-kk}) and noting that $\frac{1}{2} \scalar{e_{ij},e_{ij}} = 1$ for all $i< j$, this is the same as:
\[
\scalar{\G_{\ck} , e_{ij} e_{ij}^T} = 1 = \sscalar{\frac{1}{2} \Id_{E^{(q-1)}} , e_{ij} e_{ij}^T} \;\;\; \forall i < j \text{ so that } \Sigma_{ij} \neq \emptyset .
\]
As $\{e_{ij} e_{ij}^T\}_{i<j}$ span the space of all symmetric bilinear forms on $E^{(q-1)}$ (by comparing dimensions), it follows that \ref{it:standard-char-ii} implies \ref{it:standard-char-iii}. 

Lastly, we know by Proposition \ref{prop:transitive} that $O^+_1(n+2)$ acts transitively on $\mathbf{CK}_{G_0}$ for $G_0 := \frac{1}{2} \Id_{E^{(q-1)}}$ (which is obviously full-rank on $E^{(q-1)}$). In conjunction with Lemma \ref{lem:Mobius-preserves-Voronoi}, this means that any two spherical Voronoi $q$-clusters with Minkowski Gram matrix $G_0$ belong to the same conformal equivalence class. Now observe that the equal-volume standard-bubble $\Omega_0$ has Minkowski Gram matrix equal to $G_0$ by the already established direction that \ref{it:standard-char-i} implies \ref{it:standard-char-iii}. Consequently, if $\Omega$ satisfies \ref{it:standard-char-iii}, there is a M\"obius automorphism transforming it into $\Omega_0$, implying that $\Omega$ is a standard bubble and establishing \ref{it:standard-char-i}. 
\end{proof}

\begin{corollary}[Standard Bubbles of Prescribed Curvature] \label{cor:standard-curvature}
For all $2 \leq q \leq n+2$ and $\k \in E^{(q-1)}$, there exists a standard bubble on $\S^n$ with $q$-cells and spherical Voronoi curvature parameters $\{\k_i\}_{i=1,\ldots,q}$, i.e. the curvature of the spherical interface $\Sigma_{ij}$ is $\k_{ij} = \k_i - \k_j$. Moreover, such a standard bubble (with the same ordering of its cells) is unique up to orthogonal transformations of $\R^{n+1}$. 
\end{corollary}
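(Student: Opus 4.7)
The plan is to leverage Proposition \ref{prop:standard-char}: a spherical Voronoi $q$-cluster on $\S^n$ is a standard bubble if and only if its Minkowski Gram matrix is $G_0 := \frac{1}{2} \Id_{E^{(q-1)}}$. Thus I will recast the problem as the purely linear-algebraic one of producing (respectively, classifying) quasi-centers $\{\c_i\}_{i=1}^q \subset \R^{n+1}$ with $\sum_i \c_i = 0$ such that the associated homogeneous parameters $\ck_i = (\c_i, -\k_i)$ satisfy $\G_{\ck} = G_0$.

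For existence, I unfold this using $\G_{\ck} = \CC \CC^T - \k \k^T$, where $\CC$ is the $q \times (n+1)$ matrix whose rows are the $\c_i$. Since $G_0$ equals $\frac{1}{2}(\Id_q - \frac{1}{q} J_q)$ as a $q \times q$ matrix (where $J_q$ is the all-ones matrix), the condition becomes
\[
\CC \CC^T \;=\; M \;:=\; \tfrac{1}{2}\bigl(\Id_q - \tfrac{1}{q} J_q\bigr) + \k \k^T.
\]
The matrix $M$ is positive semidefinite as the sum of two PSD matrices, and $M \mathbf{1} = 0$ because the first summand annihilates $\mathbf{1}$ and $\k^T \mathbf{1} = 0$ by our convention on $E^{(q-1)}$. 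On $E^{(q-1)}$ the first summand $\frac{1}{2} \Id_{E^{(q-1)}}$ is already full rank, so $M$ has rank exactly $q-1$. Since $q - 1 \leq n+1$, the spectral theorem yields a $q \times (n+1)$ matrix $\CC$ with $\CC \CC^T = M$ (e.g.~write $M = V D V^T$ with $V \in \R^{q \times (q-1)}$, $D$ diagonal positive, and take $\CC = V D^{1/2}$ padded by zero columns). Taking $\c_i$ to be the $i$-th row of $\CC$ gives the desired quasi-centers, and $M \mathbf{1} = 0$ forces $|\CC^T \mathbf{1}|^2 = \mathbf{1}^T M \mathbf{1} = 0$, hence $\sum_i \c_i = 0$ automatically. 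The data $\{\c_i\}, \{\k_i\}$ define a spherical Voronoi cluster via the representation \eqref{eq:intro-Voronoi-rep} (the required identity $|\c_{ij}|^2 = 1 + \k_{ij}^2$ holds because $e_{ij}^T \G_{\ck} e_{ij} = 1$), and Proposition \ref{prop:standard-char} then identifies this cluster as a standard bubble.

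For uniqueness, suppose $\Omega^1, \Omega^2$ are two standard bubbles on $\S^n$ with the same ordered curvature parameters $\{\k_i\}$, having quasi-center matrices $\CC^1, \CC^2 \in \R^{q \times (n+1)}$. By the already-established implication (i) $\Rightarrow$ (iii) of Proposition \ref{prop:standard-char}, both satisfy $\CC^\ell (\CC^\ell)^T = M$ for the same $M$ as above. The classical Gram-matrix rigidity statement (a direct specialization of Lemma \ref{lem:isometry} to the standard Euclidean inner product on $\R^{n+1}$) then yields $U \in O(n+1)$ with $\CC^2 = \CC^1 U^T$, i.e.~$\c^2_i = U \c^1_i$ for every $i$. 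Since the $\k_i$ agree by hypothesis, $U$ extended trivially on the $(n+2)$-nd coordinate maps $\ck^1_i$ to $\ck^2_i$, hence by Lemma \ref{lem:Mobius-preserves-Voronoi}, $\Omega^2$ is the image of $\Omega^1$ under the isometry of $\S^n$ induced by $U \in O(n+1)$.

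The only genuine subtlety is in the uniqueness step: Proposition \ref{prop:transitive} only guarantees an orthochronous Lorentz transformation $U \in O_1^+(n+2)$ relating the two parameter vectors, and such a $U$ can in principle mix spatial and temporal coordinates (boosts) and therefore alter the curvature parameters $\k_i = -(\ck_i)_{n+2}$. The fix is to not invoke Proposition \ref{prop:transitive} at all, and instead argue directly via the Euclidean Gram matrix identity $\CC^1 (\CC^1)^T = \CC^2 (\CC^2)^T = M$, which yields precisely an $O(n+1)$ relation; this automatically fixes the $\k$-coordinates since they were never varied. Once this is noted, the rest of the argument is routine.
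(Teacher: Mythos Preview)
Your proposal is correct and follows essentially the same route as the paper: factorize $\tfrac{1}{2}\Id_{E^{(q-1)}} + \k\k^T$ as $\CC\CC^T$ (possible since $q-1 \le n+1$), read off the quasi-centers as the rows of $\CC$, and invoke Proposition~\ref{prop:standard-char}\ref{it:standard-char-iii}; uniqueness up to $O(n+1)$ then comes from Euclidean Gram-matrix rigidity. Your added remark about why one must argue via the Euclidean Gram identity rather than Proposition~\ref{prop:transitive} (which would only yield a Lorentz transformation, potentially mixing the $\k$-coordinates) is a nice clarification that the paper leaves implicit.
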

\begin{proof}
Given $\k \in E^{(q-1)}$, the quadratic form $\frac{1}{2} \Id_{E^{(q-1)}} + \k \k^T$ is positive-definite on $E^{(q-1)}$. Consequently, as long as $q -1 \leq n+1$, there exists a linear map $\CC : \R^{n+1} \rightarrow E^{(q-1)}$ so that $\CC \CC^T = \frac{1}{2} \Id_{E^{(q-1)}} + \k \k^T$, and moreover, $\CC$ is determined uniquely up to composition with linear isometries of $\R^{n+1}$, i.e. replacing $\CC$ by $\CC \circ U$ where $U \in O(n+1)$. Denote the rows of $\CC$, considered as a $q \times (n+1)$ matrix,  by $\{\c_i\}_{i=1,\ldots,q}$ and note that they sum to zero. The spherical Voronoi $q$-cluster on $\S^n$ with quasi-center and curvature parameters $\{\c_i\}$ and $\{ \k_i \}$ satisfies the third condition \ref{it:standard-char-iii} of Proposition \ref{prop:standard-char} by construction, and is therefore a standard bubble. Uniqueness up to $O(n+1)$ follows from Proposition \ref{prop:standard-char} and the uniqueness of $\CC$ as described above. 
\end{proof}

\begin{corollary}[Standard Bubbles of Prescribed Volume]  \label{cor:standard-volume}
For all $2 \leq q \leq n+2$ and $v \in \interior \Delta^{(q-1)}$, there exists a standard bubble $\Omega$ on $\S^n$ with $q$-cells and $V(\Omega) = v$. Moreover, such a standard bubble (with the same ordering of its cells) is unique up to orthogonal transformations of $\R^{n+1}$. 
\end{corollary}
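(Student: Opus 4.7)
The plan is to view the statement as asserting that the volume map
\[
V: E^{(q-1)} \to \Delta^{(q-1)}, \quad V(\k) := V(\Omega(\k))
\]
is a homeomorphism onto $\interior \Delta^{(q-1)}$, where $\Omega(\k)$ is the standard bubble with curvature parameters $\k$ (well-defined up to $O(n+1)$ by Corollary~\ref{cor:standard-curvature}, an indeterminacy that does not affect its volumes). The map $V$ is continuous, because the square-root construction $\CC\CC^T = \tfrac12 \Id_{E^{(q-1)}} + \k\k^T$ depends continuously on $\k$ and the Voronoi cells depend continuously on $(\c_i,\k_i)$; and by the symmetry of the equal-volume standard bubble, $V(0) = (1/q,\ldots,1/q)$. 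Once $V$ is a homeomorphism onto the interior of the simplex, both existence (surjectivity) and uniqueness (injectivity, modulo $O(n+1)$) follow at once.

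The first substantial step is to analyze boundary behavior at infinity. Using the projective Voronoi representation~\eqref{eq:Voronoi-projective-rep} together with $|\c_{ij}|^2 = 1 + \k_{ij}^2$, if $\k_{ij} \to +\infty$ along a sequence then $\Omega_j$ is trapped inside a spherical cap $\{p \in \S^n : \scalar{\c_{ij},p} + \k_{ij} < 0\}$ whose $\S^n$-measure tends to $0$. A bookkeeping argument (selecting a vertex $j$ maximizing $\k_j$ along a blow-up sequence) then shows that whenever $|\k| \to \infty$, some $V(\Omega_i) \to 0$, so $V(\k) \to \partial\Delta^{(q-1)}$. Combined with $V(0)$ being the barycenter, a degree-theoretic argument (radially compactify $E^{(q-1)}$ and continuously extend $V$ so that the boundary sphere is sent into $\partial \Delta^{(q-1)}$) yields that $V$ surjects onto $\interior\Delta^{(q-1)}$, establishing existence.

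For uniqueness I would show that $V$ is a local homeomorphism at every $\k$; combined with the properness from the previous paragraph, $V$ becomes a proper local homeomorphism into the simply connected $\interior\Delta^{(q-1)}$, hence a covering map, hence a bijection. To verify local invertibility, suppose $a \in E^{(q-1)}$ lies in $\ker DV(\k)$. The deformation $\k_t := \k + ta$ is realized through a smooth one-parameter family of standard bubbles (obtained by tracking the homogeneous parameters $\ck_i(t) = (\c_i(t),-\k_i(t))$ via Corollary~\ref{cor:standard-curvature}) and hence by a smooth vector field $X_a$ whose normal component on $\Sigma_{ij}$ equals the signed infinitesimal displacement of the supporting sphere when its quasi-center/curvature parameters change by $(\dot\c_{ij},\dot\k_{ij})=(\dot\c_i-\dot\c_j,\,a_i-a_j)$. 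The kernel condition $DV(\k)\cdot a = 0$ then reads $\delta_{X_a}^1 V(\Omega) = 0$. By the first-variation identity in Lemma~\ref{lem:Lagrange}, this translates into a homogeneous linear system on $a$ whose matrix is (up to a positive factor) a weighted discrete Laplacian of the form $\L_{A}$ from Definition~\ref{def:LA}, with strictly positive weights given by integrals over the interfaces $\Sigma_{ij}$. Since by Proposition~\ref{prop:standard-char} a standard bubble has \emph{all} interfaces nonempty, the adjacency graph is complete and in particular connected, so Lemma~\ref{lem:LA-positive} forces $a = 0$.

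The main obstacle is the rigorous identification of $DV(\k)$ with a positive-definite discrete Laplacian, which requires carefully choosing a canonical parameterization of the moduli space of standard bubbles modulo $O(n+1)$ (e.g.\ by pinning down the ambiguity in the matrix square root $\CC(\k)$ and the tangential freedom in $X_a$) so that the computation of $\dot V_i = \sum_{j\neq i}\int_{\Sigma_{ij}} X_a^{\n_{ij}}\, dp$ produces clean positive weights $a_{ij}$ times an interface integral. This is essentially a mechanical but delicate linear-algebraic/geometric computation whose backbone is the relation $\c_{ij}=\n_{ij}-\k_{ij}p$ on $\Sigma_{ij}$ together with the Lorentz-invariant homogeneous representation of Subsection~7.2; once carried out, both parts of the corollary fall out of the covering-map argument above.
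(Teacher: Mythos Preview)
Your overall architecture matches the paper: define the curvature-to-volume map $F_{VK}:\k\mapsto V(\Omega(\k))$, establish surjectivity onto $\interior\Delta^{(q-1)}$ by a topological/degree argument (the paper cites the analogous argument of Montesinos--Brakke for $\R^n$), show $F_{VK}$ is a local homeomorphism, and conclude injectivity from the covering-map argument on a simply connected target. So far so good.

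The gap is in your local-invertibility step. You assert that $DV(\k)\cdot a=0$ reduces to $\L_A a=0$ for a positive-definite discrete Laplacian with weights $A^{ij}=\int_{\Sigma_{ij}}(\cdots)>0$. But when you vary $\k\mapsto\k+ta$, the quasi-centers $\c_i$ also move (through the square-root constraint $\CC\CC^T=\tfrac12\Id+\k\k^T$), and the normal displacement on $\Sigma_{ij}$ is $-(\sscalar{\dot\c_{ij},p}+a_{ij})$, not $a_{ij}$ times a positive function. For $q\ge 3$, $\dot\c_{ij}$ depends linearly on \emph{all} of $a$, not just on $a_{ij}$, so the resulting linear map $a\mapsto\delta^1_{X_a}V$ has no obvious reason to be of discrete-Laplacian type, let alone positive-definite. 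You flag this as ``the main obstacle'' and call it ``mechanical,'' but it is in fact the heart of the matter and is not addressed.

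The paper sidesteps this entirely: rather than differentiating $V$ in the $\k$-direction, it uses the M\"obius fields $W_\theta$ of Section~\ref{sec:conformal}. Fixing a standard bubble $\Omega$ with quasi-center span $C^{(q-1)}$, one defines two linear maps from $C^{(q-1)}$: $\theta\mapsto\delta^1_{W_\theta}V(\Omega)$ and $\theta\mapsto\delta^1_{W_\theta}\k(\Omega)$. Each is shown to be non-singular, and then $dF_{VK}$ is their composition. This factorization through the M\"obius action is precisely what replaces your unproven Laplacian identification; it is the conceptual input you are missing.
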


\begin{remark}
Analogues of Corollaries \ref{cor:standard-curvature} and \ref{cor:standard-volume} regarding standard bubbles in Euclidean space $\R^n$ and in Gauss space $\GG^n$ were proved by Montesinos Amilibia \cite{MontesinosStandardBubbleE!} and by the authors \cite{EMilmanNeeman-GaussianMultiBubble}, respectively. For standard double-bubbles in $\S^2$ and $\S^n$ and triple-bubbles in $\S^2$, these results are due to Masters \cite{Masters-DoubleBubbleInS2}, Cotton-Freeman \cite{CottonFreeman-DoubleBubbleInSandH} and Lawlor \cite{Lawlor-TripleBubbleInR2AndS2}, respectively. 
\end{remark}

\begin{proof}[Sketch of Proof of Corollary \ref{cor:standard-volume}]
The proof of existence is completely analogous to the one in \cite[Theorem 2.1]{MontesinosStandardBubbleE!} for $\R^n$, using a topological argument attributed to K.~Brakke, so we omit it here. To show uniqueness, in view of Corollary \ref{cor:standard-curvature}, it is enough to establish that the map $F_{VK} : E^{(q-1)} \ni \k \mapsto v \in \interior \Delta^{(q-1)}$ is a well-defined bijection (with $\k$ and $v$ as in Corollaries \ref{cor:standard-curvature} and \ref{cor:standard-volume}, respectively). 
This map is well-defined by Corollary \ref{cor:standard-curvature} and it is onto by the first part of the present proof. To show that the map is injective, one first observes that it is 
differentiable with non-singular differential. 
Indeed, fix $\k \in E^{(q-1)}$ and let $\Omega$ denote a fixed standard bubble with spherical Voronoi curvature parameters $\{\k_i\}_{i=1,\ldots,q}$ and $v = V(\Omega) = F_{VK}(\k)$. Let $C^{(q-1)}$ denote the $(q-1)$-dimensional affine span of $\Omega$'s quasi-center parameters $\{\c_i\}_{i=1,\ldots,q}$. Denote by $\delta^1_{V \Theta} : C^{(q-1)} \rightarrow T_v \Delta^{(q-1)}$ the linear map $\delta^1_{V \Theta}(\theta) := \delta^1_{W_\theta} V(\Omega)$, where $W_\theta$ is the M\"obius field from Definition \ref{def:dilation-field}. Similarly, denote by $\delta^1_{K \Theta} : C^{(q-1)} \rightarrow T_{\k} E^{(q-1)}$ the linear map $\delta^1_{K \Theta}(\theta) := \delta^1_{W_\theta} \kappa(\Omega)$ given by the first variation of $\k$ along $W_\theta$. It is not too hard to show that both of these linear maps are non-singular,
and since $dF_{VK} = \delta^1_{V \Theta} \circ (\delta^1_{K \Theta})^{-1}$, it follows by the chain-rule that $F_{VK}$ is differentiable everywhere with non-singular differential. It follows by e.g. \cite{SaintRaymond-DifferentiableIFT} that $F_{VK}$ is a local homeomorphism. 
It is not hard to see that $F_{VK}$ is also proper. A proper, surjective, local homeomorphism between two manifolds is a covering map. 
Since connected coverings of a manifold $X$ are in bijection with conjugacy classes of subgroups of $\pi_1(X)$, and since $\interior \Delta^{(q-1)}$ is simply connected, it follows that $F_{VK}$ must be injective, concluding the proof. 
\end{proof}

\subsection{Conformally flat clusters}

We conclude this section with a brief discussion of when is a spherical Voronoi cluster conformally flat; this will only be required to motivate the definition of a pseudo conformally flat cluster below, which is used in the statement of Theorem \ref{thm:intro-conditional}. 

\begin{definition}[Conformally Flat]
A spherical Voronoi cluster $\Omega$ on $\S^n$ is called conformally flat if there exists a M\"obius automorphism $T$ of $\S^n$ so that the spherical Voronoi cluster $T \Omega$ is flat, i.e. all of its interfaces have zero curvature. 
\end{definition}

\begin{lemma}
An interface-regular spherical Voronoi $q$-cluster $\Omega$ on $\S^n$ with $V(\Omega) \in \interior \Delta^{(q-1)}$ is conformally flat if and only if:
\[
\exists \xi \in \R^{n+1} \;\;\; |\xi| < 1 \;\; \text{ so that } \;\; \scalar{\c_i , \xi} + \k_i = 0 \;\;\; \forall i=1,\ldots,q ,
\]
(where $\{\c_i\}_{i=1,\ldots,q}$ and $\{ \k_i \}_{i=1,\ldots,q}$ are the cluster's quasi-center and curvature parameters, respectively). 
\end{lemma}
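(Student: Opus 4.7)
The strategy is to translate the flatness condition through the projective representation of spherical Voronoi clusters and M\"obius automorphisms developed in this section. By Lemma~\ref{lem:Mobius-preserves-Voronoi}, a M\"obius automorphism $T = T_U$ with $U \in O^+_1(n+2)$ sends a spherical Voronoi cluster with homogeneous parameters $\ck_i = (\c_i, -\k_i)$ to one with homogeneous parameters $U\ck_i$. Consequently, denoting by $u$ the last row of $U$, the new curvature parameters satisfy $\k_i' = -(U\ck_i)_{n+2} = -\scalar{u,\ck_i}$ (standard Euclidean inner product on $\R^{n+2}$), and the cluster $T\Omega$ is flat precisely when $\scalar{u,\ck_i} = 0$ for every $i=1,\ldots,q$. (Under the convention $\sum_i \k_i' = 0$ and connectivity of the adjacency graph via Lemma~\ref{lem:LA-connected}, ``all interfaces flat'' forces all $\k_i'=0$; otherwise, one may still set $\k_i'=0$ without loss of generality on each connected component of the adjacency graph.)

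The next step is to characterize which $u \in \R^{n+2}$ can arise as the last row of some $U \in O^+_1(n+2)$. From the equivalent identities $U^T J U = J$ and $U J U^T = J$, combined with the orthochronous condition $U_{n+2,n+2} \geq 1$, such a $u$ is exactly a future-pointing unit timelike vector: $\scalar{u,u}_1 = -1$ and $u_{n+2} \geq 1$. Conversely, any such $u$ may be completed to a matrix in $O^+_1(n+2)$ by choosing a $\scalar{\cdot,\cdot}_1$-orthonormal basis of the spacelike $(n+1)$-dimensional complement $u^{\perp_1}$ (which is indeed spacelike since $u$ is timelike) and taking these basis vectors as the first $n+1$ rows of $U$.

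Writing $u = (\xi,\eta) \in \R^{n+1} \times \R$, these requirements become $\eta \geq 1$, $|\xi|^2 - \eta^2 = -1$, and $\scalar{\xi,\c_i} - \eta \k_i = 0$ for every $i$. Setting $\xi' := -\xi/\eta$, the last system reads $\scalar{\c_i,\xi'} + \k_i = 0$, and the Minkowski normalization becomes $|\xi'|^2 = 1 - 1/\eta^2 < 1$. For the converse direction, given $\xi' \in \R^{n+1}$ with $|\xi'| < 1$ satisfying the system, define $\eta := (1-|\xi'|^2)^{-1/2}$ and $\xi := -\eta\xi'$; then $u := (\xi,\eta)$ is a future-pointing unit timelike vector, and any completion to $U \in O^+_1(n+2)$ yields the sought flattening M\"obius automorphism $T_U$. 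The argument is essentially routine Lorentzian linear algebra; the only minor obstacle is bookkeeping the sign conventions relating $\k_i$ to the $(n+2)$-nd coordinate of $\ck_i$, and verifying that the completion step can always be performed, for which the spacelikeness of $u^{\perp_1}$ is the key ingredient.
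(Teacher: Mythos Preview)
Your proof is correct and follows essentially the same approach as the paper's: translate flatness into the vanishing of the last coordinate of $U\ck_i$, identify the constraint on the last row of $U\in O^+_1(n+2)$ as being a future-pointing unit timelike vector, and rescale to obtain $\xi$ with $|\xi|<1$. You are somewhat more explicit than the paper in justifying both the completion of $u$ to a full $U$ (via a spacelike orthonormal basis of $u^{\perp_1}$) and the reduction from ``all interface curvatures vanish'' to ``all $\k_i'=0$'' (via connectivity of the adjacency graph and the summing-to-zero convention), whereas the paper passes over these points with ``it is easy to check''.
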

\begin{proof}
In view of Lemmas \ref{lem:LA-connected} and \ref{lem:Mobius-preserves-Voronoi}, such a spherical Voronoi cluster is conformally flat iff there exists $U \in O^+_1(n+2)$ so that $\scalar{U \; \ck_i , e_{n+2}}_1 = 0$ for all $i=1,\ldots,q$. Writing the last row of $U$ as $(a , b) \in \R^{n+1} \times \R$, this amounts to $\scalar{a,\c_i} - b \k_i = 0$ for all $i=1,\ldots,q$. By (\ref{eq:ba1}) we have $b^2 = |a|^2+1$, and necessarily $b \geq 1$ whenever $U \in O^+_1(n+2)$. Defining $\xi = -a/b$, this is equivalent to having $|\xi| < 1$, thereby concluding the proof.  
\end{proof}

In view of the previous lemma, we generalize the property of being conformally flat by removing the restriction that $|\xi| < 1$. 
\begin{definition}[Pseudo Conformally Flat Cluster] \label{def:PCF}
A spherical Voronoi cluster $\Omega$ on $\S^n$ is called pseudo conformally flat if:
\[
\exists \xi \in \R^{n+1} \;\; \text{ so that } \;\; \scalar{\c_i , \xi} + \k_i = 0 \;\;\; \forall i=1,\ldots,q .
\]
\end{definition}

\begin{remark}
Note that this implies that $\scalar{\c_{ij} , \xi} + \k_{ij} = 0$ for all $i,j =1 ,\ldots, q$. Recalling the polyhedral cell representation of a spherical Voronoi cluster on $\S^n$ as $\Omega_i = P_i \cap \S^n$ for $P_i := \bigcap_{j \neq i} \set{ x \in \R^{n+1} \; ; \; \scalar{\c_{ij} ,x} + \k_{ij} < 0 }$, it follows that $\Omega$ is pseudo conformally flat iff $C := \bigcap_{i=1}^q \overline{P_i}$ is non-empty, i.e.~that all closed polyhedra have a common meeting point $\xi$. Similarly, an interface-regular spherical Voronoi cluster on $\S^n$ with $V(\Omega) \in \interior \Delta^{(q-1)}$ is conformally flat iff $C \cap \{ x \in \R^{n+1} \; ; \; |x| < 1\} \neq \emptyset$; for a perpendicularly spherical Voronoi cluster (with North pole at $N$), this is the same as witnessing the common intersection point after orthogonally projecting onto $\B^n \subset N^{\perp}$, yielding a clear geometric characterization. 
\end{remark}

\begin{definition}[Full-dimensional cluster]
A spherical Voronoi $q$-cluster $\Omega$ on $\S^n$ is called full-dimensional if:
\[
\text{affine-rank}(\{\c_i\}_{i=1,\ldots,q}) = \min(q-1,n+1) . 
\]
\end{definition}
\begin{remark}
Clearly, when $q-1 \leq n+1$, a full-dimensional cluster is pseudo conformally flat. 
\end{remark}

\section{Standard double, triple and quadruple bubbles are uniquely minimizing} \label{sec:global}

Theorem~\ref{thm:intro-stable-spherical-Voronoi} imposes enough conditions on the structure of minimizing clusters $\Omega$
that the minimization problem is essentially reduced to a combinatorial one. As asserted in Corollary~\ref{cor:intro-bubble},
Proposition~\ref{prop:standard-char} implies that in order to show that standard bubbles are 
uniquely minimizing when $V(\Omega) \in \interior \Delta^{(q-1)}_{V(M^n)}$, it suffices to
show that all interfaces of $\Omega$ are non-empty, meaning that the cell adjacency graph is the complete graph $K_q$. Recall that by Theorem~\ref{thm:intro-stable-spherical-Voronoi} all 
 cells (which by assumption are non-empty) are connected, and so the cell adjacency graph coincides with the component adjacency graph defined in Subsection \ref{subsec:adjacency-graph}. We know by Lemma \ref{lem:2-connected} that the adjacency graph must be $2$-connected. As the only $2$-connected graph on $3$ vertices is the complete graph $K_3$, the double-bubble conjecture on $\S^n$ immediately follows, and an alternative proof to the double-bubble theorem on $\R^n$ \cite{SMALL93,DoubleBubbleInR3,SMALL03,Reichardt-DoubleBubbleInRn} is obtained ($n \geq 2$). However, to handle $q \geq 4$, additional work is required. 

\smallskip

When the number of cells $q$ is small, we can enumerate all possible cell adjacency graphs and rule out
non-complete graphs with various geometric-measure-theoretic and other geometric arguments. Similar arguments were used in proofs
of the double and triple bubble cases in $\R^2$ (or after reduction of double-bubble with $\S^{n-2}$-symmetry from $\R^n$ to the plane) \cite{SMALL93,Hutchings-StructureOfDoubleBubbles,Wichiramala-TripleBubbleInR2}; the main obstacles to extending these arguments
to larger $q$ are the combinatorial explosion in the number of potential cell adjacency graphs, and
the fact that area-minimizing cones have only been classified in dimensions $n \le 3$ by Taylor \cite{Taylor-SoapBubbleRegularityInR3}. This latter obstacle appears to be the more difficult one, as in dimensions $ n \geq 4$ there exist non-simplicial area-minimizing cones \cite{Brakke-MinimalConesOnCubes}. Consequently, in this section we are only able to establish Theorem \ref{thm:intro-234} for double ($q=3$), triple ($q=4$) and quadruple ($q=5$) bubbles. 

\subsection{Blow-up cones}

The main idea is to take a potential minimizer and ``slide'' one of the cells -- while
preserving all volumes and areas -- until it hits another cell. The resulting cluster cannot be minimal because
it contains an illegal singularity; and since it has the same volumes and areas as the original, the original
could not have been minimal either. For an example of this kind of
global deformation, see Figure~\ref{fig:triple-bubble-perturbation} below. Note that the set at which two moving cells collide will typically be a single point $p$, even when the the dimension $n$ is large. Consequently, when $n \ge 4$, the regularity results of Theorem~\ref{thm:regularity} do not immediately rule out this kind of singularity. We therefore begin by observing that Taylor's classification of minimal cones in $\R^3$ can nevertheless rule out many of these scenarios. 

\smallskip

For a set $A \subset M^n$ and a point $p \in M^n$, we say that $\tilde A \subset T_p M^n$ is the blow-up limit of
$A$ at $p$ if for some neighborhood $N_p$ of $p$ on which $\exp_p: T_p M^n \to M^n$ is a diffeomorphism,
\[
    \frac{\exp_p^{-1} (A \cap N_p)}{r} \to \tilde A \;\;  \text{as} \;\;  r \to 0+
\]
in $L^1_{\text{loc}}$ (note that we do not use a subsequence of $r$'s in our definition). For a cluster $\tilde \Omega$ in $T_p M^n$, we say that $\tilde \Omega$ is the blow-up limit of $\Omega$ at $p$ if each $\tilde \Omega_i$ is the blow-up limit of $\Omega_i$ at $p$. Clearly, if a blow-up limit exists, it is unique up to null-sets. 
\smallskip

Recall that a minimizing $q$-cluster on $M^n \in \{\R^n, \S^n\}$ with $q \leq n+1$ has spherical interfaces by
Theorem~\ref{thm:intro-stable-spherical-Voronoi}, and so in particular has
bounded curvature. This clearly implies that the blow-up limit cluster always exists, and that this cluster's cells are centered cones with flat interfaces -- we will call this cluster the blow-up cone. 
The bounded curvature also means that if $p \in \overline{\Sigma_{ij}}$ then the normal vector-field $\n_{ij}$ on $\Sigma_{ij}$ is continuous up to the singularity $p$ -- we denote the limiting value by $\n_{ij}(p)$. 
We now observe that the blow-up cone of a spherical Voronoi cluster at $p$ is itself a conical Voronoi cluster whose flat interfaces are given by the normals $\{ \n_{ij}(p) ; \overline{\Sigma_{ij}} \ni p \}$.

\begin{lemma}\label{lem:blow-up-cones}
    Let $\Omega$ be a regular spherical Voronoi $q$-cluster on $M^n \in \{\R^n, \S^n\}$. Let $p \in M^n$
    and set $I_p := \{i \in \{1, \dots, q\} \; ; \; p \in \overline{\Omega_i}\}$. Then there exist $\{\n_i(p) \}_{i \in I_p} \subset T_p M^n$ so that:
\begin{enumerate}[(i)]
    \item $\n_{ij}(p) = \n_i(p) - \n_j(p)$ for all $i \neq j \in I_p$ such that $p \in \overline{\Sigma_{ij}}$, and the blow-up cone $\tilde \Omega$ of $\Omega$ at $p$ is given for all $i \in I_p$ (up to null-sets) by:
    \[
        \tilde \Omega_i = \set{x \in T_p M^n \; ; \; \argmin_{j\in I_p} \; \sscalar{\n_j(p), x} = \{i\} } = \bigcap_{j \in I_p \setminus \{i\}} \set{ x \in T_p M^n \; ; \; \sscalar{\n_{ij}(p) , x} < 0 }  
    \]
    (and $\tilde \Omega_i = \emptyset$ for all $i \notin I_p$).
    \item We have:
    \begin{equation} \label{eq:spans}
    \text{affine-span} \{ \n_i(p) \}_{i \in I_p} = \sspan \{ \n_{ij}(p) \; ; \; \overline{\Sigma_{ij}} \ni p \} =: \mathbf{N}_p .
    \end{equation}
    In particular, denoting $d = \dim \mathbf{N}_p$, we have $d \leq |I_p|-1 \leq q-1$, and $\tilde \Omega$ is of the form $\C \times \R^{n-d}$ for a conical Voronoi cluster $\C$ in $\R^d$. 
     \item
     Each $\tilde \Omega_i$ for $i \in I_p$ is non-empty, and if there is an $(n-1)$-dimensional interface $\tilde \Sigma_{ij}$ between
    $\tilde \Omega_i$ and $\tilde \Omega_j$ then $p \in \overline{\Sigma_{ij}}$
    (in particular, $\Sigma_{ij}$ is non-empty).
    \end{enumerate}
\end{lemma}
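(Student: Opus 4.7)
The idea is to work directly with the explicit Voronoi representation of $\Omega$ to compute the blow-up limit, and then deduce the structural properties via elementary convex geometry. I begin by defining $\n_i(p)$ for $i \in I_p$ from the Voronoi structure: on $\S^n$, writing $\varphi_j(x) := \langle \c_j, x\rangle + \k_j$, I set $\n_i(p)$ to be the Riemannian gradient of $\varphi_i$ at $p$, namely the orthogonal projection of $\c_i$ onto $T_p \S^n$; on $\R^n$, I use the analogous gradient of the quadratic function appearing in Lemma~\ref{lem:Voronoi-rep-on-Rn}. A direct calculation, using that $p \in \overline{\Sigma_{ij}}$ forces $\langle \c_{ij}, p\rangle + \k_{ij} = 0$, then verifies $\n_i(p) - \n_j(p) = \n_{ij}(p)$ on interface pairs present at $p$; after shifting all $\n_i(p)$ by a common vector, the affine and linear spans of $\{\n_i(p)\}_{i \in I_p}$ coincide.

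For part (i), I would expand $\varphi_j(\exp_p(rz)) = \varphi_j(p) + r \langle \n_j(p), z\rangle + O(r^2)$. For $j \notin I_p$, $\varphi_j(p) > \min_{k \in I_p} \varphi_k(p)$ strictly, so on any bounded region of $T_p M^n$ and sufficiently small $r$ the cells with $j \notin I_p$ are invisible in the rescaled picture. For $i, j \in I_p$ the zeroth-order terms cancel and the linear term $\langle \n_i(p) - \n_j(p), z\rangle$ dominates after dividing by $r$, so pointwise a.e.\ in $T_p M^n$ (away from the co-dimension-one set where two linear functionals tie) the indicator of $r^{-1}\exp_p^{-1}(\Omega_i \cap N_p)$ converges to the indicator of the argmin cell $\{y \in T_p M^n : \arg\min_{j \in I_p} \langle \n_j(p), y\rangle = i\}$. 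Bounded convergence then upgrades this to $L^1_{\text{loc}}$ convergence, establishing the asserted Voronoi description of $\tilde \Omega_i$.

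For part (iii), non-emptiness of $\tilde \Omega_i$ is where the regularity assumption enters crucially. By the Infiltration Lemma (Lemma~\ref{lem:infiltration}), which is built into the definition of a regular cluster, $\Theta(\Omega_i, p) \geq \eps_n > 0$ for every $i \in I_p$; combined with the $L^1_{\text{loc}}$ blow-up convergence from (i), this forces $|\tilde \Omega_i \cap B(0,1)|_{T_p M^n} \geq \eps_n$, so $\tilde \Omega_i$ is non-empty and full-dimensional as an open convex cone. For the interface correspondence, if the flat interface $\tilde \Sigma_{ij}$ of the blow-up cone has positive $(n-1)$-dimensional Hausdorff measure, then the same $L^1_{\text{loc}}$ convergence applied to $\Omega_i$ and $\Omega_j$, together with lower semicontinuity of perimeter, shows that $\Sigma_{ij}$ has positive $\H^{n-1}$-measure in every neighborhood of $p$, whence $p \in \overline{\Sigma_{ij}}$.

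Part (ii) then follows from a convex-geometric observation. The inclusion $\mathbf{N}_p \subseteq \text{affine-span}\{\n_i(p)\}_{i \in I_p}$ is immediate from $\n_{ij}(p) = \n_i(p) - \n_j(p)$. For the reverse inclusion, note that by (i), $\tilde \Omega$ is the argmin Voronoi cluster generated by $\{\n_i(p)\}_{i \in I_p}$; by (iii), each $\n_i(p)$ is a vertex of the convex polytope $P := \text{conv}\{\n_j(p)\}_{j \in I_p}$, and two such vertices are joined by an edge of $P$ precisely when the corresponding cells share an $(n-1)$-dimensional blow-up interface $\tilde \Sigma_{ij} \neq \emptyset$ (which by (iii) forces $p \in \overline{\Sigma_{ij}}$). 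Since the $1$-skeleton of any convex polytope is connected, for any $i, j \in I_p$ there is a chain $i = i_0, \dots, i_m = j$ with $p \in \overline{\Sigma_{i_k i_{k+1}}}$, and telescoping yields $\n_i(p) - \n_j(p) = \sum_k \n_{i_k i_{k+1}}(p) \in \mathbf{N}_p$. The product structure $\tilde \Omega = \C \times \R^{n-d}$ follows because each defining halfspace $\{\langle \n_{ij}(p), y\rangle < 0\}$ depends only on the projection of $y$ onto $\mathbf{N}_p$, and the dimension bound $d \leq |I_p|-1$ is immediate. The main technical care will be in the bookkeeping of the blow-up convergence and in verifying the vertex-edge correspondence for the Voronoi-polytope duality; the infiltration-based density argument is the crucial ingredient without which the cells of the blow-up could a priori be lower-dimensional.
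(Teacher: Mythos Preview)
Your proposal is essentially correct and parallels the paper's argument for parts (i) and for the non-emptiness half of (iii), but differs in two places worth noting.

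For part (ii), the paper does not invoke polytope duality. Instead, it observes that the set $\{(i,j): p \in \overline{\Sigma_{ij}}\}$ is exactly the cell adjacency graph of the restriction of $\Omega$ to a small geodesic ball $B(p,\epsilon)$, and then applies Lemma~\ref{lem:LA-connected} (graph connectedness from the single-bubble isoperimetric inequality on $B(p,\epsilon)$) to conclude this graph is connected; telescoping along a path then gives $\n_i(p)-\n_j(p) \in \mathbf{N}_p$. Your route---each $\tilde\Omega_i$ non-empty forces $\n_i(p)$ to be a vertex of $P=\mathrm{conv}\{\n_j(p)\}$, the argmin cells are the normal fan of $P$, full-dimensional adjacent cones correspond to edges of $P$, and the $1$-skeleton of a polytope is connected---is a valid alternative. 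It has the pleasant feature of being purely linear-algebraic once (i) and (iii) are in hand, whereas the paper's argument reuses an analytic lemma already available in the toolkit. Note that your route makes (ii) logically dependent on (iii), reversing the paper's order.

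For the interface half of (iii), your invocation of lower semicontinuity of perimeter does not quite close the argument: knowing $P(\tilde\Omega_i,B) \le \liminf P_r(\Omega_i,B)$ only tells you $\partial^*\Omega_i$ is present near $p$, not that $\Sigma_{ij}$ specifically is. The paper argues more directly: for $v \in \tilde\Sigma_{ij}$ there is $\epsilon>0$ with $\langle\n_k(p),w\rangle > \min(\langle\n_i(p),w\rangle,\langle\n_j(p),w\rangle)+\delta$ for all $w \in B(v,\epsilon)$ and $k \in I_p\setminus\{i,j\}$; feeding this back through the Voronoi representation and Taylor expansion shows $\exp_p(rB(v,\epsilon))$ meets only $\overline{\Omega_i}$ and $\overline{\Omega_j}$ for small $r$, so any interface there is $\Sigma_{ij}$, and there must be one since both cells have positive volume. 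This is a small patch to your outline, and the Voronoi structure you are already using supplies exactly what is needed.
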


\begin{proof}
    It suffices to consider $M^n = \S^n$. The claim for $\R^n$ will follow from the one on $\S^n$ by applying a stereographic projection $P : \bar \R^n \rightarrow \S^n$, as the corresponding blow-up clusters are related via the differential $d_p P : T_p \R^n \rightarrow T_{P(p)} \S^n$. The latter is a linear conformal map, and therefore preserves all of the above assertions, including the Voronoi structure of the blow-up clusters (as the scalar product on the corresponding tangent spaces is preserved up to a positive constant).  
    
    We set $\n_i(p) := \c_i - \scalar{\c_i , p} p$, the orthogonal projection of $\c_i$ onto $T_p \S^n$, so that $\scalar{\c_i,v} = \scalar{\n_i(p),v}$ for all $v \in T_p \S^n$. It follows that if $v \in \tilde \Omega_i \subset T_p \S^n$ , $i \in I_p$, then 
    there exists $\epsilon > 0$ so that $\sscalar{\c_i, v} < \sscalar{\c_j, v} - \epsilon$ for all $j \in I_p \setminus \{i\}$.
    Note that  exponential map satisfies $\exp_p(v) = p + v + O(|v|^2)$, where we identify  $T_p \S^n$ with $p^\perp \subset \R^{n+1}$ in the usual way. Therefore,
    \[
        \sscalar{\c_i, \exp_p(r v) - p} < \sscalar{\c_j, \exp_p(rv) - p} - \epsilon r + O(r^2)
    \]
    for sufficiently small $r>0$. Since $\Omega$ is a spherical Voronoi cluster,
    $p \in \bigcap_{i \in I_p} \overline{\Omega_i}$ implies that $\sscalar{\c_{ij}, p} = -\k_{ij}$ for all $i,j \in I_p$, and hence
    \[
        \sscalar{\c_i, \exp_p(r v)} + \k_i < \sscalar{\c_j, \exp_p(rv)} + \k_j - \epsilon r + O(r^2).
    \]
    It follows that for all sufficiently small $r>0$, $\exp_p(rv) \in \Omega_i$ and so
    \[
        v \in \frac{\exp_p^{-1}(\Omega_i \cap N_p)}{r}
    \]
    for sufficiently small $r>0$, and hence
    \[
        \frac{\exp_p^{-1} (\Omega_i \cap N_p)}{r} \setminus \tilde \Omega_i \to \emptyset
    \]
    in $L^1_\text{loc}$ as $r \to 0+$.
    On the other hand, both $\tilde \Omega_i$ and $\exp_p^{-1}(\Omega_i \cap N_p) / r$ form a partition (up to null-sets)
    of a neighborhood of zero in $T_p \S^n$, and it follows that
    \[
        \frac{\exp_p^{-1} (\Omega_i \cap N_p)}{r} \to \tilde \Omega_i
    \]
    in $L^1_\text{loc}$ as $r \to 0+$. It remains to note that for all $i \neq j \in I_p$ for which $p \in \overline{\Sigma_{ij}}$, since $\c_{ij} = \n_{ij} - \k_{ij} p$ is constant on $\Sigma_{ij}$, this also holds at $p$ by continuity, and therefore $\n_{i}(p) - \n_{j}(p) = \c_{ij} - \scalar{\c_{ij},p} p = \n_{ij}(p)$, concluding the proof of the first assertion. 

     \smallskip    
     For the second assertion, clearly $p \in \overline{\Sigma_{ij}}$ implies that $i,j \in I_p$ and therefore the right-hand side of (\ref{eq:spans}) is a subspace of the left-hand one. To see the converse, consider the graph $G$ with vertex set $I_p$ and an edge between $i \neq j \in I_p$ if $p \in \overline{\Sigma_{ij}}$. Note that this is precisely the cell adjacency graph of $\Omega$ inside a small enough open geodesic ball $B(p,\eps) \subset \S^n$. Consequently, by Lemma \ref{lem:LA-connected} (whose proof extends to $B(p,\eps)$, since it satisfies a single-bubble isoperimetric inequality), $G$ must be connected. It follows that every $\n_i(p) - \n_j(p)$ for $i,j \in I_p$ may be written as $\sum_{k=1}^K \n_{i_{k-1} i_{k}}(p)$ by using a path $i = i_0, \ldots,i_K = j$ in $G$, concluding the proof of (\ref{eq:spans}). The other statements in the second assertion are trivial. 
     
     \smallskip
     Finally, to see the third assertion, the non-emptiness of $\tilde \Omega_i$ follows from the regularity of $\Omega$, specifically~\eqref{eq:density}, which implies that $\tilde \Omega_i$ has positive measure.
    To prove the claim about the interfaces, let $v \in \tilde \Sigma_{ij}$.
    Since $v$ is in the closure of $\tilde \Omega_i$ and of $\tilde \Omega_j$ but not of any other cell $\tilde \Omega_k$, there is some $\eps > 0$ so that
    \[
        \sscalar{\c_i, w} < \sscalar{\c_k, w} - \eps \text{ and }
        \sscalar{\c_j, w} < \sscalar{\c_k, w} - \eps
    \]
    for all $w \in B(v, \epsilon)$ and $k \not \in \{i, j\}$. Since $\exp_p(rw) = p + r w + O(r^2)$, it follows as before that for sufficiently
    small $r > 0$, $\exp_p(r B(v,\epsilon))$ intersects $\overline{\Omega_i}$ and $\overline{\Omega_j}$ but no other cells. Therefore,
    $\Sigma_{ij}$ intersects $\exp_p(r B(v,\epsilon))$ for all sufficiently small $r>0$,
    and it follows that $p \in \overline{\Sigma_{ij}}$. Note that the converse implication, namely that $p \in \overline{\Sigma_{ij}}$ implies that $\tilde \Sigma_{ij} \neq \emptyset$, is equally true, but we do not require this here. 
\end{proof}

Throughout the rest of this section, we assume that $\Omega$ is a minimizing $q$-cluster in $M^n \in \{\R^n,\S^n\}$ with $q \leq n+1$. It follows by Theorem~\ref{thm:intro-stable-spherical-Voronoi} that $\Omega$ is a regular spherical Voronoi cluster, and so Lemma~\ref{lem:blow-up-cones} applies. We also forgo our convention in $\R^n$ that $\Omega_q$ is the unbounded cell. 
Recall the definition of the $\Y$ and $\T$ cones from Subsection \ref{subsec:regularity}. 

\begin{corollary}\label{cor:low-dimensional-meeting}
     Let $p \in \Sigma$, and suppose that
    \[
        d = \dim \sspan \{\n_{ij}(p) \; ; \; \overline{\Sigma_{ij}} \ni p\} \leq 3.
    \]
    Then $2 \leq |I_p| = d+1 \leq 4$, and the interfaces of the blow-up cone at $p$ form either a hyperplane ($d=1$), a $\Y \times \R^{n-2}$ ($d=2$) or a $\T \times \R^{n-3}$ ($d=3$).
\end{corollary}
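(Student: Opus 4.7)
The plan is to combine Lemma \ref{lem:blow-up-cones} with the classical classification of low-dimensional area-minimizing cluster-cones due to J.~Taylor. By Lemma \ref{lem:blow-up-cones}, the blow-up cone $\tilde\Omega$ at $p$ is a conical Voronoi cluster of the form $\C \times \R^{n-d}$, where $\C$ is a conical Voronoi cluster in $\R^d$ with cells indexed by $I_p$ and with normals $\{\n_{ij}(p) ; i,j \in I_p\}$ spanning all of $\R^d$. In particular, the interface $\tilde\Sigma^1$ factors as $S_0 \times \R^{n-d}$ where $S_0 \subset \R^d$ is the flat conical interface of $\C$, which meets the cells in threes at $120^\circ$ by stationarity (Lemma \ref{lem:boundary-normal-sum}).

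Next I would observe that, because $\Omega$ is an isoperimetric minimizing cluster on $M^n$, a standard rescaling-and-compactness argument in geometric measure theory (which is precisely the one underlying the regularity theory recalled in Theorem \ref{thm:regularity}) shows that the blow-up $\tilde\Omega$ is area-minimizing with respect to all compactly supported perturbations; equivalently, $\tilde\Sigma$ is $(\M,0,\infty)$-minimizing in the sense of Almgren. Since $\tilde\Sigma = S_0 \times \R^{n-d}$ is invariant under translations in the $\R^{n-d}$ factor, $S_0$ is itself an area-minimizing cluster cone in $\R^d$.

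Now we apply Taylor's classification \cite{Taylor-SoapBubbleRegularityInR3} of area-minimizing cluster cones in dimension at most three: up to rotation the only possibilities are a hyperplane, a $\Y$-cone, or a $\T$-cone (no other cones appear in dimension $\leq 3$). Matching this to $S_0 \subset \R^d$ with $d \leq 3$ and using the spanning hypothesis on the normals (which forbids $S_0$ from factoring through a lower-dimensional subspace of $\R^d$), we obtain exactly three cases: for $d=1$, $S_0$ must be a separating hyperplane and $|I_p|=2$; for $d=2$, $S_0$ cannot be a line (which would force $d=1$) and must therefore be a $\Y$, giving $|I_p|=3$; for $d=3$, $S_0$ cannot be a plane nor a $\Y\times\R$ (each would force $d\leq 2$) and must therefore be a $\T$, giving $|I_p|=4$. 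In each case $|I_p|=d+1$, and the full blow-up $\tilde\Sigma = S_0 \times \R^{n-d}$ is the claimed hyperplane, $\Y\times \R^{n-2}$, or $\T\times\R^{n-3}$.

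I do not anticipate any real obstacle: the only non-elementary ingredients are Taylor's classification, which we may invoke as a black box, and the ``blow-ups of minimizing clusters are minimizing cones,'' which is the same fact that powers the proof of Theorem \ref{thm:regularity} itself. Thus the Corollary is essentially a translation of Taylor's theorem into the Voronoi-cluster language supplied by Lemma \ref{lem:blow-up-cones}, together with the observation that the full-spanning hypothesis on the normals eliminates the ``degenerate'' lower-dimensional cones from Taylor's list.
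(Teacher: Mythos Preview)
Your proposal is correct and follows essentially the same approach as the paper: invoke Lemma~\ref{lem:blow-up-cones} to get the product structure $\C\times\R^{n-d}$, use that blow-ups of minimizing clusters are area-minimizing cones (the paper cites \cite[Theorem~21.14]{MaggiBook} for this), and then apply Taylor's classification to $\C$ in $\R^d$ with $d\le 3$. Your explicit discussion of why the spanning condition on the normals rules out the lower-dimensional cones for each value of $d$ is a helpful elaboration of what the paper compresses into ``depending on the value of $d$.''
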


\begin{proof}
            Because the span of the normals is at most 3-dimensional,
    the blow-up cone is of the form $\C \times \R^{n-3}$ for a cone $\C \subset \R^3$ by Lemma~\ref{lem:blow-up-cones}.
    By~\cite[Theorem 21.14]{MaggiBook}, the blow-up limit is an area-minimizing cone. Therefore $\C$ is also an
    area-minimizing cone,
    and so by Taylor's classification~\cite{Taylor-SoapBubbleRegularityInR3} it must be one of the asserted cones, depending on the value of $d$ (note that $d \geq 1$ because $p \in \Sigma = \overline{\Sigma^1}$). Because every one of the $|I_p|$ cells in the blow-up cone is non-empty,  we also have $|I_p| = d + 1$.
\end{proof}

\begin{corollary}\label{cor:quad-bubble-simplicial}
Assume that $q \leq 5$. If $\overline{\Omega_i} \cap \overline{\Omega_j}$ is non-empty then $\Sigma_{ij}$ is non-empty. Moreover, if $\bigcap_{i=1}^q \overline{\Omega_i}$ is non-empty then $\Omega$ is a standard bubble.
\end{corollary}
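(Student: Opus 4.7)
My approach is to analyze the blow-up cone at any point where several cells meet, combining Lemma~\ref{lem:blow-up-cones} (which identifies the blow-up of a spherical Voronoi minimizer as a conical Voronoi cluster) with Corollary~\ref{cor:low-dimensional-meeting} (Taylor's classification in low dimensions) and an elementary adjacency argument for simplicial Voronoi diagrams. I would first establish the ``moreover'' statement directly, and then deduce the first assertion from the moreover claim through a short case analysis on the dimension of the span of the normals at a meeting point.

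\textbf{Step 1: the moreover claim.} I would fix $p\in\bigcap_{i=1}^q\overline{\Omega_i}$, so $|I_p|=q$, and set $d:=\dim\sspan\{\n_{ij}(p):\overline{\Sigma_{ij}}\ni p\}$; Lemma~\ref{lem:blow-up-cones}(ii) gives $d\le q-1\le 4$. If $d\le 3$, Corollary~\ref{cor:low-dimensional-meeting} forces $q=|I_p|=d+1\le 4$ and the blow-up cone is one of the three ``simplicial'' cones (hyperplane, $\Y\times\R^{n-2}$, $\T\times\R^{n-3}$), in each of which every pair of the $d+1$ cells is manifestly adjacent along a codimension-one face. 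If $d=4$, so $q=5$, Lemma~\ref{lem:blow-up-cones}(i) identifies the blow-up cone as the Voronoi cluster of five points $\{\n_i(p)\}\subset T_pM^n$ whose affine span has dimension $d=4=q-1$, i.e.~they are affinely independent. After choosing barycentric coordinates so that $\n_i(p)=e_i$, the $i$-th cell reads $\{x:x_i=\max_k x_k\}$ and the pairwise faces $\{x:x_i=x_j=\max_k x_k\}$ are non-empty and $(q-2)$-dimensional, so once again every pair of cells is adjacent. In either case $\tilde\Sigma_{ij}\ne\emptyset$ for all $i\ne j$, whence $\Sigma_{ij}\ne\emptyset$ by Lemma~\ref{lem:blow-up-cones}(iii); Corollary~\ref{cor:intro-bubble} then concludes that $\Omega$ is a standard bubble.

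\textbf{Step 2: the first claim.} Given $p\in\overline{\Omega_i}\cap\overline{\Omega_j}$, so $i,j\in I_p$, I would repeat the dichotomy on $d=\dim\sspan\{\n_{k\ell}(p):\overline{\Sigma_{k\ell}}\ni p\}$: when $d\le 3$ the same case analysis as in Step~1 produces $\tilde\Sigma_{ij}\ne\emptyset$ and hence $\Sigma_{ij}\ne\emptyset$ via Lemma~\ref{lem:blow-up-cones}(iii); when $d=4$ we have $|I_p|\ge d+1=5$, forcing $|I_p|=q=5$, so $p\in\bigcap_{k=1}^{q}\overline{\Omega_k}$ and Step~1 applies to give $\Sigma_{ij}\ne\emptyset$.

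\textbf{Main obstacle.} The delicate case is $d=4$: Taylor's classification of area-minimizing cones breaks down in ambient dimension four (non-simplicial minimizing cones genuinely exist there), so Corollary~\ref{cor:low-dimensional-meeting} provides no information. The key observation that bypasses this is that Lemma~\ref{lem:blow-up-cones}(i), which hinges on $\Omega$ being a \emph{spherical Voronoi} cluster (guaranteed by Theorem~\ref{thm:intro-stable-spherical-Voronoi}), already pins down the combinatorial structure of the blow-up cone as a Voronoi cluster of the $\n_i(p)$'s, making a classification of minimizing cones unnecessary; the elementary adjacency argument for affinely independent Voronoi generators then does the job. This is also why the argument stops at $q=5$: for $q=6$ one could have $|I_p|=6$ with $d=4$, producing a non-simplicial Voronoi blow-up whose adjacency graph is genuinely incomplete.
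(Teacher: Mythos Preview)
Your proposal is correct and takes essentially the same approach as the paper's proof: both arguments hinge on the dichotomy on $d$, invoking Corollary~\ref{cor:low-dimensional-meeting} when $d\le 3$ and the affine-independence argument for the conical Voronoi blow-up when $d=4$, then lifting adjacency via Lemma~\ref{lem:blow-up-cones}(iii). The only cosmetic differences are that you prove the ``moreover'' clause first and then invoke it for the $d=4$ case of the first claim (the paper does the reverse), and you cite Corollary~\ref{cor:intro-bubble} rather than Proposition~\ref{prop:standard-char} for the final step --- both are equivalent here.
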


\begin{proof}
    Take $p \in \overline{\Omega_i} \cap \overline{\Omega_j}$ and define 
       $d = \dim \sspan \{\n_{ij}(p) \; ; \; \overline{\Sigma_{ij}} \ni p\}$. Recall that $d \leq |I_p|-1 \leq q-1 \leq 4$ by Lemma~\ref{lem:blow-up-cones}.
   
    If $d\leq 3$ then by Corollary~\ref{cor:low-dimensional-meeting}
    the blow-up cone at $p$ is simplicial, i.e.~has non-empty interfaces between all pairs of its non-empty (two, three or four) cells. 
    It follows by Lemma~\ref{lem:blow-up-cones} that all cells which meet at $p$ share an interface, and in particular $\Sigma_{ij} \neq \emptyset$. 
    On the other hand, if $d = 4$ then necessarily $|I_p| = q=5$ and the $\{ \n_i(p) \}_{i=1,\ldots,5}$ from Lemma~\ref{lem:blow-up-cones} are affinely independent. The blow-up cone $\tilde \Omega$ at $p$ is the conical Voronoi cluster generated by the $\{ \n_i(p) \}_{i=1,\ldots,5}$, and it has $5$ non-empty cells by Lemma~\ref{lem:blow-up-cones}. 
    By affine-independence, it follows that this cluster is a non-degenerate linear image of the standard simplicial $5$-cluster (the Voronoi cells of $5$ equidistant points), and is therefore itself simplicial, i.e.~all pairs of cells share a non-empty interface. Applying Lemma~\ref{lem:blow-up-cones} again, it follows that $\Omega$ itself also has all interfaces present between every pair of cells, and in particular $\Sigma_{ij} \neq \emptyset$. 

    For the second assertion, if $\bigcap_{i=1}^q \overline{\Omega_i}$ is non-empty
    then all interfaces are present by the first assertion, and it follows from
    Proposition~\ref{prop:standard-char} that
    $\Omega$ is a standard bubble.
\end{proof}

\subsection{The triple-bubble cases}

\begin{figure}
    \begin{center}
        \begin{tikzpicture}
            \node[vertex] (1) at (0, 0) {};
            \node[vertex] (2) at (1, -1) {};
            \node[vertex] (3) at (0, -2) {};
            \node[vertex] (4) at (-1, -1) {};
            \draw (1) -- (2) -- (3) -- (4) -- (1) -- (3);
            \draw (2) -- (4);
        \end{tikzpicture}
        \hspace{2em}
        \begin{tikzpicture}
            \node[vertex] (1) at (0, 0) {};
            \node[vertex] (2) at (1, -1) {};
            \node[vertex] (3) at (0, -2) {};
            \node[vertex] (4) at (-1, -1) {};
            \draw (1) -- (2) -- (3) -- (4) -- (1) -- (3);
        \end{tikzpicture}
        \hspace{2em}
        \begin{tikzpicture}
            \node[vertex] (1) at (0, 0) {};
            \node[vertex] (2) at (1, -1) {};
            \node[vertex] (3) at (0, -2) {};
            \node[vertex] (4) at (-1, -1) {};
            \draw (1) -- (2) -- (3) -- (4) -- (1);
        \end{tikzpicture}
     \end{center}
     \caption{
        The 2-connected graphs on 4 vertices. From left to right: the complete graph, the 2-fan, and the 4-cycle.
        \label{fig:4-graphs}
     }
\end{figure}
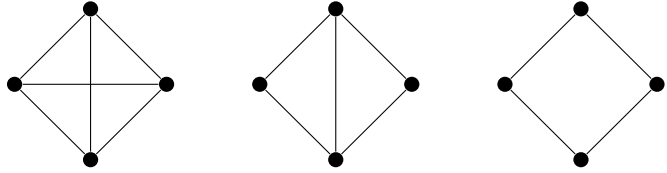

Let $\Omega$ be a minimizing triple-bubble ($q=4$) cluster in $M^n \in \{\R^n,\S^n\}$  ($n \geq q-1=3$) 
with $V(\Omega) \in \interior \Delta^{(q-1)}_{V(M^n)}$,
and let $G$ be its cell adjacency graph. If $G = K_4$ then by Proposition~\ref{prop:standard-char}
$\Omega$ is a standard bubble and we are finished. Up to isomorphism, the only other two-connected graphs on $4$ vertices are either the 4-cycle or the graph with four vertices and five edges, which we call the 2-fan (see Figure \ref{fig:4-graphs}). They are ruled by the following two lemmas.

\begin{lemma} \label{lem:edge-in-triangle}
Every edge in the adjacency graph $G$ belongs to some triangle. 
\end{lemma}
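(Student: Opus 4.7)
The plan is to locate, for every edge $\{i,j\}$ of $G$, a third cell $\Omega_k$ with $\overline{\Omega_i} \cap \overline{\Omega_j} \cap \overline{\Omega_k} \neq \emptyset$, and then promote this triple incidence to honest interfaces $\Sigma_{ik}, \Sigma_{jk}$ via Corollary~\ref{cor:quad-bubble-simplicial}, thereby exhibiting $\{i,j,k\}$ as a triangle through $\{i,j\}$. The hunt for such a $k$ is reduced to understanding how $\overline{\Sigma_{ij}}$ meets the singular strata of $\Sigma$.

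First I would split into two cases according to whether $\overline{\Sigma_{ij}} \setminus \Sigma_{ij}$ is empty. If it is non-empty, pick any $p$ in it. By Almgren's regularity (Theorem~\ref{thm:regularity}), $p \in \Sigma^2 \cup \Sigma^3 \cup \Sigma^4$, and in each case the local cone model of $\Sigma$ at $p$ (the $\Y \times \R^{n-2}$ structure on $\Sigma^2$, the $\T \times \R^{n-3}$ structure on $\Sigma^3$, and more generally the blow-up picture from Lemma~\ref{lem:blow-up-cones}) forces at least three cells to meet at $p$. Since $p \in \overline{\Sigma_{ij}} \subset \overline{\Omega_i} \cap \overline{\Omega_j}$, this means some $k \neq i,j$ satisfies $p \in \overline{\Omega_k}$. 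Because $q = 4 \leq 5$, applying Corollary~\ref{cor:quad-bubble-simplicial} to the pairs $(i,k)$ and $(j,k)$ yields $\Sigma_{ik}, \Sigma_{jk} \neq \emptyset$, producing the desired triangle.

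The remaining case is $\overline{\Sigma_{ij}} = \Sigma_{ij}$, so that $\Sigma_{ij}$ is simultaneously relatively open and closed in $\Sigma$. By the connectedness of $\Sigma$ (Lemma~\ref{lem:Sigma-connected}), this forces $\Sigma = \Sigma_{ij}$; but then every cell $\Omega_k$ with $k \neq i,j$ shares no interface with any other cell and is therefore an isolated vertex of $G$, contradicting the $2$-connectivity of $G$ from Lemma~\ref{lem:2-connected} (a $2$-connected graph on $\ge 3$ vertices has minimum degree at least $2$).

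I do not expect any serious obstacle: the connectedness of $\Sigma$, the $2$-connectivity of $G$, and the simplicial blow-up available for $q \leq 5$ via Corollaries~\ref{cor:low-dimensional-meeting} and~\ref{cor:quad-bubble-simplicial} are all in place, and the dichotomy on $\overline{\Sigma_{ij}} \setminus \Sigma_{ij}$ is clean. The only point requiring some care is the claim that at every $p \in \overline{\Sigma_{ij}} \cap (\Sigma^2 \cup \Sigma^3 \cup \Sigma^4)$ a genuine \emph{third} cell appears, which is immediate from the local cone models provided by Theorem~\ref{thm:regularity} together with Lemma~\ref{lem:blow-up-cones} (noting that for $q = 4$ one has $d \leq q-1 = 3$, so Corollary~\ref{cor:low-dimensional-meeting} identifies the blow-up with one of the standard cones and in particular excludes any exotic $\Sigma^4$ behaviour).
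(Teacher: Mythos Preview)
Your argument is correct for $q=4$ (and extends with only cosmetic changes to $q=5$, since Corollary~\ref{cor:quad-bubble-simplicial} covers that range), but it takes a genuinely different route from the paper. The paper's proof is purely combinatorial--topological and valid for all $q$: by $2$-connectivity of $G$ every edge $e$ lies on a simple cycle, and the vanishing of the first simplicial homology of $\SS$ (Proposition~\ref{prop:homology}) then expresses that cycle as the boundary of a $2$-chain in $\SS_2$, forcing $e$ to be a face of one of those triangles. Your approach instead bypasses the homological machinery entirely, locating a third cell geometrically at a point of $\overline{\Sigma_{ij}}\setminus\Sigma_{ij}$ and upgrading the incidence via Corollary~\ref{cor:quad-bubble-simplicial}; this is more hands-on but is tied to the restriction $q\le 5$ (and your write-up to $q=4$ specifically, though the step ``$|I_p|\ge 3$'' for $q=5$ follows directly from Lemma~\ref{lem:blow-up-cones} since $|I_p|=2$ would make the blow-up a hyperplane and hence $p\in\Sigma^1$). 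For the paper's applications, which live at $q\in\{4,5\}$, either proof suffices; the paper's version has the advantage of yielding the slightly stronger conclusion that $e$ belongs to a $2$-simplex of $\SS$ (i.e.\ $\Sigma_{ijk}\neq\emptyset$, not merely pairwise non-empty interfaces) and of not depending on $q$.

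One small point worth tightening: the assertion that $p\in\overline{\Sigma_{ij}}\setminus\Sigma_{ij}$ lies in $\Sigma^2\cup\Sigma^3\cup\Sigma^4$ is not quite ``by Theorem~\ref{thm:regularity}'' alone---you need the (easy) observation that $p$ cannot lie on any other $\Sigma_{ab}$, since the local two-cell structure there from Theorem~\ref{thm:Almgren}~\ref{it:Almgren-iii} would exclude $\Omega_i$ or $\Omega_j$ from a neighborhood of $p$, contradicting $p\in\overline{\Sigma_{ij}}$.
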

\begin{proof}
As $G$ is $2$-connected and has more than two vertices, every edge $e$ in $G$ lies in a simple cycle (otherwise, removal of $e$ would disconnect the graph, and as the graph has at least $3$ vertices, removing one of $e$'s vertices would disconnect it). On the other hand, $G$ is the $1$-skeleton of the two-dimensional simplicial complex $\SS$ from Subsection \ref{subsec:homology}, which has trivial first homology by Proposition~\ref{prop:homology}. It follows that $e$ must belong to some triangle, as asserted. 
\end{proof}

\noindent
This rules out the $4$-cycle. The $2$-fan is ruled out by the following lemma.

\begin{lemma} \label{lem:degree-3}
For $q \in \{ 4, 5 \}$, the adjacency graph $G$ has minimal degree at least $3$. 
\end{lemma}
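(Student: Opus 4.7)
Suppose for contradiction that some cell $\Omega_i$ has degree at most $2$ in $G$. Since $G$ is $2$-connected by Lemma~\ref{lem:2-connected} and has at least $q\geq 4$ vertices, the degree of $i$ is exactly $2$; let $j,k$ be its two neighbors. Applying Lemma~\ref{lem:edge-in-triangle} to the edge $(i,j)$, the triangle containing $(i,j)$ must be $\{i,j,k\}$ (as $k$ is $i$'s only other neighbor), so $j\sim k$ and $\Sigma_{ijk}\neq\emptyset$ by regularity. Thus cell $\Omega_i$ is a ``spherical lens,'' with $\partial\Omega_i=\overline{\Sigma_{ij}}\cup\overline{\Sigma_{ik}}$ and a single triple-junction locus $\Sigma_{ijk}$ meeting also $\Sigma_{jk}$ at $120^\circ$ angles.

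The strategy is a sliding argument. We construct a non-trivial one-parameter family $\{\Omega^t\}_{t\in[0,T]}$ of spherical Voronoi clusters by deforming only the Voronoi parameters $(c_i,k_i)$ of cell $i$, keeping all other $(c_\ell,k_\ell)_{\ell\neq i}$ fixed. An admissible infinitesimal deformation $(\delta c_i,\delta k_i)\in\R^{n+1}\times\R$ is constrained by four linear equations: the two spherical structure conditions $\langle c_{ij},\delta c_i\rangle=k_{ij}\delta k_i$ and $\langle c_{ik},\delta c_i\rangle=k_{ik}\delta k_i$ (preserving $|c_{i\ell}|^2=1+k_{i\ell}^2$ for $\ell\in\{j,k\}$), and the two volume constraints $\int_{\Sigma_{ij}}(\langle\delta c_i,p\rangle+\delta k_i)\,dp=0$ and $\int_{\Sigma_{ik}}(\langle\delta c_i,p\rangle+\delta k_i)\,dp=0$ (which preserve $V(\Omega_j)$ and $V(\Omega_k)$, hence also $V(\Omega_i)$, while all other cells' volumes are automatically unchanged). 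Since $q\leq n+1$ and $q\in\{4,5\}$ force $n\geq 3$, the null space of these four constraints has dimension at least $n-2\geq 1$; a direct check verifies linear independence of the constraint gradients at the base point in generic configurations, so by the implicit function theorem we obtain a smooth curve $\gamma(t)=(c_i^t,k_i^t)$, with $\gamma(0)=(c_i,k_i)$, giving a family $\{\Omega^t\}$ of spherical Voronoi clusters with the same volumes and adjacency graph $G$. Extend $\gamma$ maximally to $[0,T)$; since $\Omega$ is bounded inside $\S^n$ (or a bounded region of $\R^n$), either a new adjacency $\Sigma^T_{i\ell}\neq\emptyset$ first appears for some $\ell\notin\{j,k\}$, or an existing interface involving $i$ degenerates at $t=T$.

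The key step is to show that $P(\Omega^t)=P(\Omega)$ along the family, so that $\Omega^T$ is also an isoperimetric minimizer. Minimality of $\Omega$ under the volume constraint forces $P(\Omega^t)\geq P(\Omega)$, and stationarity gives $\frac{d}{dt}P(\Omega^t)|_{t=0}=0$. The perimeter is then constant along $\gamma$ either by combining stability (Theorem~\ref{thm:intro-stable-spherical-Voronoi}) with a rigidity argument for the Hessian of $P$ restricted to this special two-parameter constraint family, or---more robustly---by identifying $\gamma$ after reparametrization with a genuine isometric motion of $\Omega_i$ (a rotation of $\S^n$ fixing the span of $\{\ck_\ell\}_{\ell\neq i}$, which exists as a non-trivial subgroup in the regime $q\leq n+1$). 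Granting this, $\Omega^T$ is minimizing. At $t=T$, cell $\Omega_i^T$ develops a new tangential contact with $\Omega_\ell^T$ at a single point $p$, meaning $\n_{i\ell}(p)$ lies in $\sspan\{\n_{ij}(p),\n_{ik}(p)\}$. Hence the normals $\{\n_{ab}(p) : \overline{\Sigma_{ab}^T}\ni p\}$ span at most a $3$-dimensional subspace (since they involve at most the four cells $i,j,k,\ell$). Corollary~\ref{cor:low-dimensional-meeting} then forces the blow-up cone at $p$ to be one of the standard cones $\Y\times\R^{n-2}$ or $\T\times\R^{n-3}$; but each of these has all pairs of adjacent cells meeting transversally at $120^\circ$, which is incompatible with the tangential contact between $\Omega_i^T$ and $\Omega_\ell^T$. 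This contradiction shows that the initial assumption was false, so $i$ must have degree at least $3$.

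The main obstacle is precisely the perimeter-preservation step: the family $\gamma$ is not \emph{a priori} an isometric family, and one must use the rigid structure of spherical Voronoi clusters (together with stationarity and stability) to conclude $P$ is constant. The rest of the argument---dimension counting, the maximal extension, and ruling out the limiting singularity via the classification of low-dimensional minimizing cones---is comparatively routine given the machinery developed earlier, especially Lemma~\ref{lem:blow-up-cones} and Corollary~\ref{cor:quad-bubble-simplicial}.
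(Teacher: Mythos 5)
Your overall strategy --- slide the degree-$2$ cell $\Omega_i$ along the interface $\Sigma_{jk}$ between its two neighbors until it hits another cell, then derive a contradiction from the resulting illegal singularity --- is exactly the paper's. The essential difference lies in how you parametrize the slide, and this is where the gap you yourself flag (perimeter preservation) is genuine and fatal to your version.

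You deform the Voronoi parameters $(\c_i,\k_i)$ inside the moduli space of spherical Voronoi clusters and impose four linear constraints to keep the volumes fixed. Minimality then gives $P(\Omega^t)\geq P(\Omega)$ with a critical point at $t=0$, but nothing forces $P(\Omega^t)$ to stay constant: stability makes the Hessian \emph{nonnegative}, not degenerate, so generically $P(\Omega^t)>P(\Omega)$ for $t\neq 0$, and your ``rigidity of the Hessian'' suggestion is precisely the statement you would need to prove, not a tool you already have. Your fallback --- that the curve $\gamma$ is secretly an isometric motion of $\S^n$ fixing $\sspan\{\ck_\ell\}_{\ell\neq i}$ --- does not close the gap either: a rotation fixing those vectors changes \emph{all} of the interfaces $E_{i\ell}$ simultaneously, not only the two involving $i$, and there is no reason it preserves the individual cell volumes; the constrained null direction you constructed by dimension-counting is a priori unrelated to any isometry.

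The paper bypasses the whole issue by \emph{not} staying in the Voronoi moduli space. Since $\Omega_i$ is adjacent only to $\Omega_j$ and $\Omega_k$, its closure touches the rest of the cluster only through the generalized sphere $S_{jk}$ carrying $\Sigma_{jk}$. Pick the subgroup of isometries of $M^n$ preserving $S_{jk}$ (rotations about its center, or translations parallel to a hyperplane), and apply such an isometry to $\Omega_i$ alone, leaving the complementary region and the sphere $S_{jk}$ where it is. Because the isometry preserves $S_{jk}$ and moves the two caps of $\Omega_i$ rigidly on their respective sides, every volume and every surface measure is preserved \emph{tautologically} --- no Hessian argument, no Lagrange multipliers. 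The modified cluster is generally no longer a Voronoi cluster, but it has the same $(P,V)$ as $\Omega$, so it is still minimizing, and that is all that is needed.

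Two smaller points. First, you do not verify that a collision actually occurs before the family terminates for some other reason; the paper shows that $\overline{\Sigma_{jk}}$ must meet $\overline{\Omega_4\cup\Omega_5}$ (otherwise $\Sigma$ would be disconnected, contradicting Lemma~\ref{lem:Sigma-connected}), guaranteeing that sliding $\Omega_i$ along $S_{jk}$ eventually produces the contact. Second, your contradiction at the collision time via the blow-up cone is a valid (and somewhat more explicit) phrasing of the same fact the paper derives directly from Corollary~\ref{cor:quad-bubble-simplicial}; this part is fine. But without perimeter preservation along the slide, the argument does not get to $t=T$ with a minimizer in hand, so the contradiction never materializes.
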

\begin{proof}
Suppose without loss of generality that $\Omega_3$ participates in only two interfaces, $\Sigma_{31}$ and $\Sigma_{32}$; see Figures \ref{fig:triple-bubble-perturbation} (also \ref{fig:almost}) and \ref{fig:min-degree} for examples of this situation when $q=4$ and $q=5$, respectively. By Lemma \ref{lem:edge-in-triangle}, we must have an edge $\{1,2\}$ in $G$ between vertices $1$ and $2$ (otherwise the edge $\{1,3\}$ would not belong to any triangle, thus contradicting Lemma \ref{lem:edge-in-triangle}). In other words, $\Sigma_{12}$ is necessarily non-empty. From here on, we set $\Omega_5 = \emptyset$ if $q=4$, and continue the proof for $q=5$.

Note that since $\Sigma_{34} = \Sigma_{35} = \emptyset$, we know by Corollary~\ref{cor:quad-bubble-simplicial} that $\overline{\Omega_3}$ does not intersect $\overline{\Omega_4 \cup \Omega_5}$. We now claim that $\overline{\Sigma_{12}}$ must intersects $\overline{\Omega_4 \cup \Omega_5}$; if not, it would follow that $\overline{\Sigma_{12}} \cup \partial \Omega_3$ is disconnected from $\partial \Omega_4 \cup \partial \Omega_5$, in contradiction to Lemma \ref{lem:Sigma-connected}, which states that $\Sigma$ must be connected.

The sphere (or hyperplane) $S_{12}$ containing $\Sigma_{12}$ is acted upon transitively by
a subgroup of isometries of $M^n$ (either rotations or translations), and so we may use these isometries to move
$\Omega_3$ -- while preserving all volumes and surface areas -- until $\overline{\Omega_3}$ collides with
$\overline{\Omega_4 \cup \Omega_5}$. 
If $\Omega$ were minimizing, this modified cluster would also be minimizing. 
By Corollary~\ref{cor:quad-bubble-simplicial}, 
this means that either a new non-empty $\Sigma_{34}$ or $\Sigma_{35}$ interface is created at the time of collision. Consider a point $p \in \partial \Omega_4$ belonging to a newly created $\Sigma_{34}$, for example. Since $\Sigma_{34}$ is a smooth hypersurface in $\partial \Omega_4$, we see that an instant before the collision, $p$ belongs to $\Sigma_{4 i}$ for some $i \not \in \{3, 4\}$ (and in particular, $p \in \overline{\Omega_i}$ at the time of collision). This means that $\Sigma_{4i}$ and the rotated (or translated) $\Sigma_{3i}$ meet each other tangentially at $p$ in their relative interior. Consequently, the relative volume of $\Omega_i$ in a neighborhood of $p$ decreases to zero at the instant of collision, and so $p \notin \overline{\Omega_i}$ by the Infiltration Lemma \ref{lem:infiltration}, a contradiction. Alternatively (and equivalently), we can invoke \cite[Corollary 30.3]{MaggiBook}, which states that $\Sigma_{3i}$ and $\Sigma_{4i}$ which meet tangentially must coincide in some neighborhood of $p$, thereby strictly reducing the total perimeter of $\Omega$ at the time of collision and contradicting minimality (note that the proof of \cite[Corollary 30.3]{MaggiBook} is a consequence of the Infiltration Lemma). 
\end{proof}


\begin{figure}
\begin{center}
    \begin{tikzpicture}
        \draw ($(0.0,0.0)+({1.0*cos(83.21321070173818)},{1.0*sin(83.21321070173818)})$) arc (83.21321070173818:128.948275564627:1.0);
\draw ($(0.0,0.0)+({1.0*cos(231.05172443537288)},{1.0*sin(231.05172443537288)})$) arc (231.05172443537288:366.7867892982618:1.0);
\node at (0.0, 0.0) {2};
\node at (0.0, 0.0) {2};
\draw ($(-0.927960727138337,1.1364241342244148e-16)+({0.8333333333333334*cos(68.94827556462708)},{0.8333333333333334*sin(68.94827556462708)})$) arc (68.94827556462708:291.05172443537293:0.8333333333333334);
\node at (-0.927960727138337, 1.1364241342244148e-16) {4};
\node at (-0.927960727138337, 1.1364241342244148e-16) {4};
\draw ($(0.6187184335382291,0.618718433538229)+({0.625*cos(306.78678929826185)},{0.625*sin(306.78678929826185)})$) arc (306.78678929826185:503.21321070173815:0.625);
\node at (0.6187184335382291, 0.618718433538229) {3};
\node at (0.6187184335382291, 0.618718433538229) {3};
\draw ($(-5.567764362830023,6.81854480534649e-16)+({5.000000000000001*cos(351.0517244353729)},{5.000000000000001*sin(351.0517244353729)})$) arc (351.0517244353729:368.94827556462707:5.000000000000001);
\draw ($(1.6499158227686108,1.6499158227686104)+({1.6666666666666665*cos(203.21321070173818)},{1.6666666666666665*sin(203.21321070173818)})$) arc (203.21321070173818:246.7867892982618:1.6666666666666665);
    \end{tikzpicture}
    \hspace{1em}
    \begin{tikzpicture}
        \draw ($(0.0,0.0)+({1.0*cos(123.21869761331881)},{1.0*sin(123.21869761331881)})$) arc (123.21869761331881:128.948275564627:1.0);
\draw ($(0.0,0.0)+({1.0*cos(231.05172443537288)},{1.0*sin(231.05172443537288)})$) arc (231.05172443537288:406.79227620984244:1.0);
\node at (0.0, 0.0) {2};
\node at (0.0, 0.0) {2};
\draw ($(-0.927960727138337,1.1364241342244148e-16)+({0.8333333333333334*cos(68.94827556462708)},{0.8333333333333334*sin(68.94827556462708)})$) arc (68.94827556462708:291.05172443537293:0.8333333333333334);
\node at (-0.927960727138337, 1.1364241342244148e-16) {4};
\node at (-0.927960727138337, 1.1364241342244148e-16) {4};
\draw ($(0.07617779932770131,0.8716776599693195)+({0.625*cos(346.79227620984244)},{0.625*sin(346.79227620984244)})$) arc (346.79227620984244:543.2186976133188:0.625);
\node at (0.07617779932770131, 0.8716776599693195) {3};
\node at (0.07617779932770131, 0.8716776599693195) {3};
\draw ($(-5.567764362830023,6.81854480534649e-16)+({5.000000000000001*cos(351.0517244353729)},{5.000000000000001*sin(351.0517244353729)})$) arc (351.0517244353729:368.94827556462707:5.000000000000001);
\draw ($(0.2031407982072035,2.324473759918185)+({1.6666666666666665*cos(243.21869761331877)},{1.6666666666666665*sin(243.21869761331877)})$) arc (243.21869761331877:286.79227620984244:1.6666666666666665);
    \end{tikzpicture}
    \hspace{1em}
    \begin{tikzpicture}
        \draw ($(0.0,0.0)+({1.0*cos(231.05172443537288)},{1.0*sin(231.05172443537288)})$) arc (231.05172443537288:412.52185416115066:1.0);
\node at (0.0, 0.0) {2};
\node at (0.0, 0.0) {2};
\draw ($(-0.927960727138337,1.1364241342244148e-16)+({0.8333333333333334*cos(68.94827556462708)},{0.8333333333333334*sin(68.94827556462708)})$) arc (68.94827556462708:291.05172443537293:0.8333333333333334);
\node at (-0.927960727138337, 1.1364241342244148e-16) {4};
\node at (-0.927960727138337, 1.1364241342244148e-16) {4};
\draw ($(-0.011225331376672936,0.8749279924287963)+({0.625*cos(352.52185416115066)},{0.625*sin(352.52185416115066)})$) arc (352.52185416115066:548.948275564627:0.625);
\node at (-0.011225331376672936, 0.8749279924287963) {3};
\node at (-0.011225331376672936, 0.8749279924287963) {3};
\draw ($(-5.567764362830023,6.81854480534649e-16)+({5.000000000000001*cos(351.0517244353729)},{5.000000000000001*sin(351.0517244353729)})$) arc (351.0517244353729:368.94827556462707:5.000000000000001);
\draw ($(-0.029934217004461156,2.3331413131434564)+({1.6666666666666665*cos(248.94827556462704)},{1.6666666666666665*sin(248.94827556462704)})$) arc (248.94827556462704:292.5218541611507:1.6666666666666665);
    \end{tikzpicture}
    \vspace{2em}
    \newline
    \begin{tikzpicture}
        \draw ($(0.0,0.0)+({0.8333333333333334*cos(68.9482755646271)},{0.8333333333333334*sin(68.9482755646271)})$) arc (68.9482755646271:291.05172443537293:0.8333333333333334);
\node at (0.0, 0.0) {2};
\node at (0.0, 0.0) {2};
\draw ($(0.927960727138337,0.0)+({1.0*cos(231.05172443537288)},{1.0*sin(231.05172443537288)})$) arc (231.05172443537288:488.9482755646271:1.0);
\node at (0.927960727138337, 0.0) {1};
\node at (0.927960727138337, 0.0) {1};
\draw ($(-4.639803635691686,0.0)+({5.000000000000001*cos(3.417981091912093)},{5.000000000000001*sin(3.417981091912093)})$) arc (3.417981091912093:8.948275564627107:5.000000000000001);
\draw ($(-4.639803635691686,0.0)+({5.000000000000001*cos(351.0517244353729)},{5.000000000000001*sin(351.0517244353729)})$) arc (351.0517244353729:356.5820189080879:5.000000000000001);
\draw ($(0.20214271308629872,0.0)+({0.3333333333333333*cos(296.58201890808874)},{0.3333333333333333*sin(296.58201890808874)})$) arc (296.58201890808874:423.41798109191126:0.3333333333333333);
\draw ($(0.5479960237132975,0.0)+({0.35714285714285715*cos(123.41798109191234)},{0.35714285714285715*sin(123.41798109191234)})$) arc (123.41798109191234:236.5820189080877:0.35714285714285715);
\node at (0.3750693683997981, 0.0) {3};
\node at (0.3750693683997981, 0.0) {3};
    \end{tikzpicture}
    \hspace{1em}
    \begin{tikzpicture}
        \draw ($(0.0,0.0)+({0.8333333333333334*cos(68.9482755646271)},{0.8333333333333334*sin(68.9482755646271)})$) arc (68.9482755646271:291.05172443537293:0.8333333333333334);
\node at (0.0, 0.0) {2};
\node at (0.0, 0.0) {2};
\draw ($(0.927960727138337,0.0)+({1.0*cos(231.05172443537288)},{1.0*sin(231.05172443537288)})$) arc (231.05172443537288:488.9482755646271:1.0);
\node at (0.927960727138337, 0.0) {1};
\node at (0.927960727138337, 0.0) {1};
\draw ($(-4.639803635691686,0.0)+({5.000000000000001*cos(7.842216670084054)},{5.000000000000001*sin(7.842216670084054)})$) arc (7.842216670084054:8.948275564627107:5.000000000000001);
\draw ($(-4.639803635691686,0.0)+({5.000000000000001*cos(351.0517244353729)},{5.000000000000001*sin(351.0517244353729)})$) arc (351.0517244353729:361.0062544862599:5.000000000000001);
\draw ($(0.1877147375255559,0.37351144653733603)+({0.3333333333333333*cos(301.00625448626073)},{0.3333333333333333*sin(301.00625448626073)})$) arc (301.00625448626073:427.84221667008325:0.3333333333333333);
\draw ($(0.5325374784696445,0.40019083557571716)+({0.35714285714285715*cos(127.84221667008394)},{0.35714285714285715*sin(127.84221667008394)})$) arc (127.84221667008394:241.00625448625996:0.35714285714285715);
\node at (0.3601261079976002, 0.3868511410565266) {3};
\node at (0.3601261079976002, 0.3868511410565266) {3};
    \end{tikzpicture}
    \hspace{1em}
    \begin{tikzpicture}
        \draw ($(0.0,0.0)+({0.8333333333333334*cos(68.9482755646271)},{0.8333333333333334*sin(68.9482755646271)})$) arc (68.9482755646271:291.05172443537293:0.8333333333333334);
\node at (0.0, 0.0) {2};
\node at (0.0, 0.0) {2};
\draw ($(0.927960727138337,0.0)+({1.0*cos(231.05172443537288)},{1.0*sin(231.05172443537288)})$) arc (231.05172443537288:488.9482755646271:1.0);
\node at (0.927960727138337, 0.0) {1};
\node at (0.927960727138337, 0.0) {1};
\draw ($(-4.639803635691686,0.0)+({5.000000000000001*cos(8.948275564627053)},{5.000000000000001*sin(8.948275564627053)})$) arc (8.948275564627053:8.948275564627107:5.000000000000001);
\draw ($(-4.639803635691686,0.0)+({5.000000000000001*cos(351.0517244353729)},{5.000000000000001*sin(351.0517244353729)})$) arc (351.0517244353729:362.11231338080285:5.000000000000001);
\draw ($(0.1796053020267756,0.46662826262868873)+({0.3333333333333333*cos(302.1123133808037)},{0.3333333333333333*sin(302.1123133808037)})$) arc (302.1123133808037:428.9482755646262:0.3333333333333333);
\draw ($(0.5238487975780942,0.4999588528164523)+({0.35714285714285715*cos(128.9482755646273)},{0.35714285714285715*sin(128.9482755646273)})$) arc (128.9482755646273:242.1123133808026:0.35714285714285715);
\node at (0.3517270498024349, 0.48329355772257054) {3};
\node at (0.3517270498024349, 0.48329355772257054) {3};
    \end{tikzpicture}
\end{center}
\caption{
    Sliding cell 3 along $\Sigma_{12}$ until it hits cell 4.
    In the first row the unbounded cell is $\Omega_1$; in the second row the unbounded cell is $\Omega_4$.
    \label{fig:triple-bubble-perturbation}
}
\end{figure}

\begin{figure}
    \begin{center}
            \includegraphics[scale=0.08]{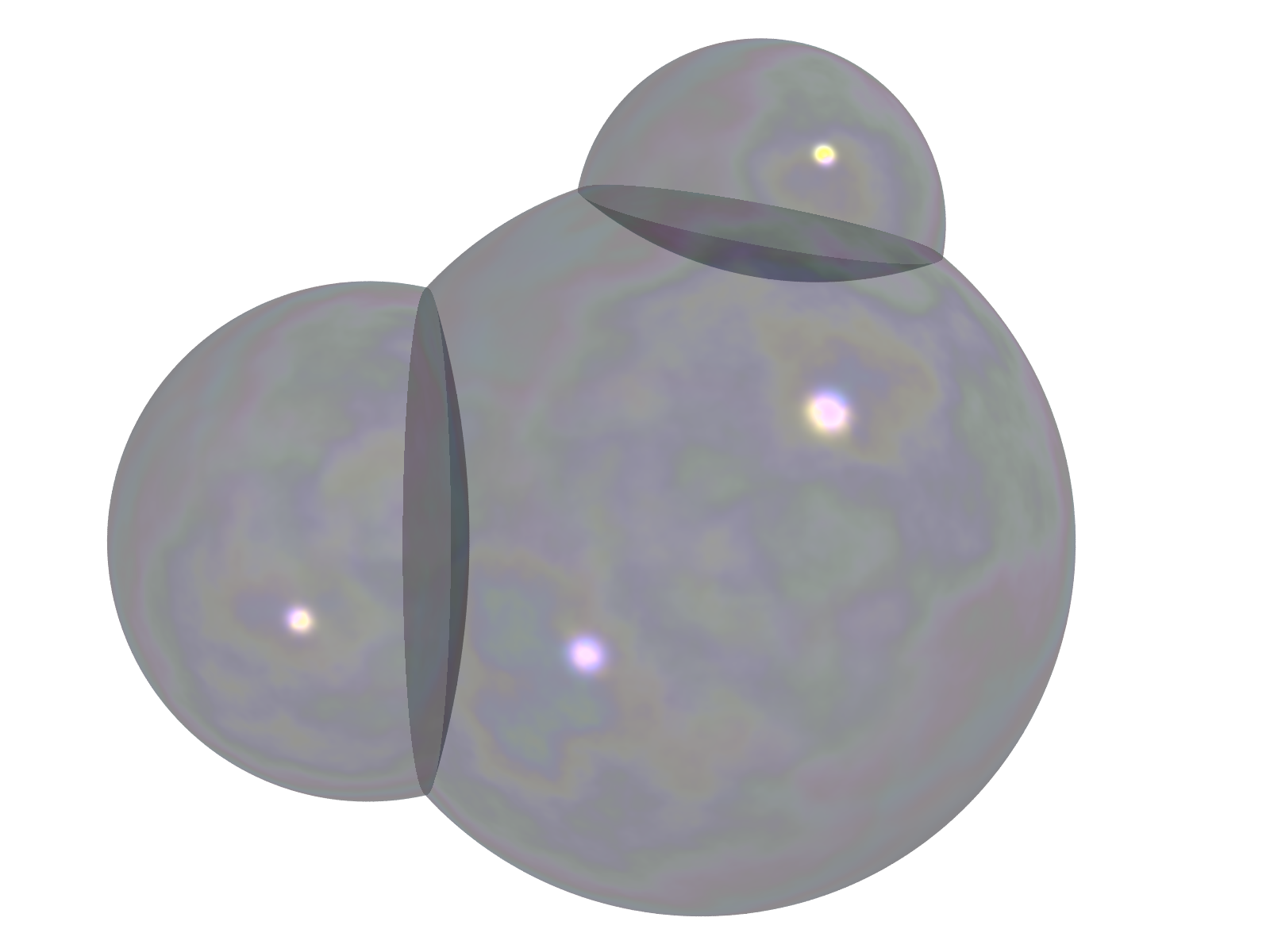}
        \includegraphics[scale=0.08]{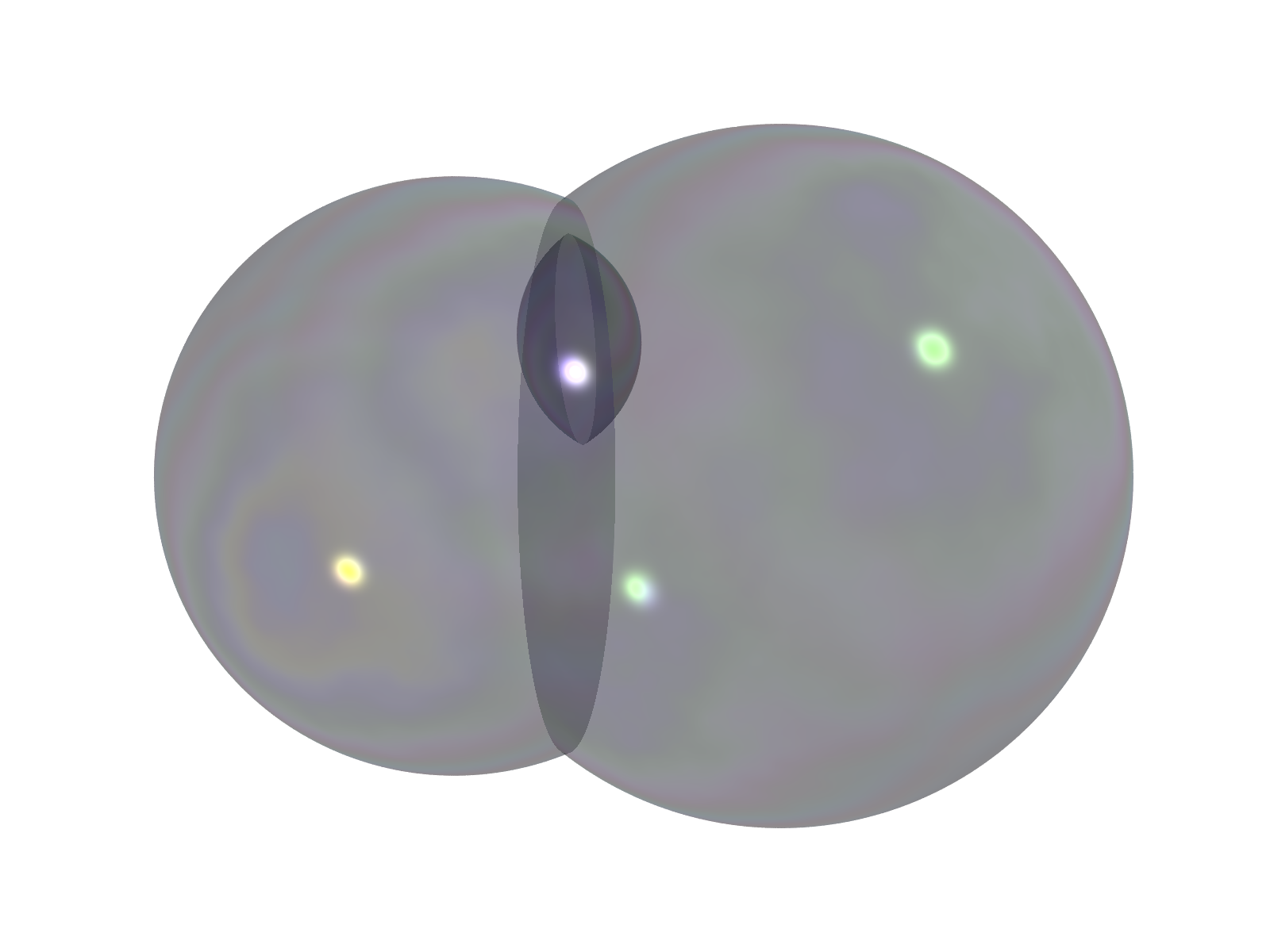}
     \end{center}
     \caption{
         \label{fig:almost}
        These non-standard triple-bubbles cannot be area minimizing, as witnessed by sliding a cell with two neighbors along the interface it sits on; the corresponding cross-sections are depicted in Figure \ref{fig:triple-bubble-perturbation}.
     }
\end{figure}

\begin{figure}
    \begin{center}
        \begin{tikzpicture}
            \draw ($(0.0,0.0)+({1.0*cos(70.89339464913091)},{1.0*sin(70.89339464913091)})$) arc (70.89339464913091:249.6970942081064:1.0);
\node at (0.0, 0.0) {2};
\node at (0.0, 0.0) {2};
\draw ($(1.14564392373896,0.0)+({1.25*cos(265.3322249251854)},{1.25*sin(265.3322249251854)})$) arc (265.3322249251854:490.8933946491309:1.25);
\node at (1.14564392373896, 0.0) {1};
\node at (1.14564392373896, 0.0) {1};
\draw ($(0.4112567931370628,-0.8083025777511853)+({0.7692307692307692*cos(189.6970942081064)},{0.7692307692307692*sin(189.6970942081064)})$) arc (189.6970942081064:325.3322249251854:0.7692307692307692);
\node at (0.4112567931370628, -0.8083025777511853) {4};
\node at (0.4112567931370628, -0.8083025777511853) {4};
\draw ($(-4.582575694955841,0.0)+({5.000000000000001*cos(6.676050025118485)},{5.000000000000001*sin(6.676050025118485)})$) arc (6.676050025118485:10.893394649130926:5.000000000000001);
\draw ($(-4.582575694955841,0.0)+({5.000000000000001*cos(354.588154703838)},{5.000000000000001*sin(354.588154703838)})$) arc (354.588154703838:359.05352792618976:5.000000000000001);
\draw ($(1.7821127702606052,-3.5026445035884697)+({-3.333333333333333*cos(294.588154703838)},{-3.333333333333333*sin(294.588154703838)})$) arc (294.588154703838:309.69709420810636:-3.333333333333333);
\draw ($(-0.7637626158259727,-2.101586702153082)+({2.0*cos(25.332224925185432)},{2.0*sin(25.332224925185432)})$) arc (25.332224925185432:54.588154703838015:2.0);
\draw ($(0.23688045158380078,0.2411738190138347)+({0.37037037037037035*cos(299.05352792618976)},{0.37037037037037035*sin(299.05352792618976)})$) arc (299.05352792618976:426.6760500251185:0.37037037037037035);
\draw ($(0.622436943306972,0.2604677245349415)+({0.4*cos(126.67605002511858)},{0.4*sin(126.67605002511858)})$) arc (126.67605002511858:239.0535279261896:0.4);
\node at (0.4296586974453864, 0.2508207717743881) {3};
\node at (0.4296586974453864, 0.2508207717743881) {3};
        \end{tikzpicture}
     \end{center}
     \caption{
         \label{fig:min-degree}
         When the adjacency graph has a vertex of degree two (here cell 3 is adjacent to cells 1 and 2 only),
         it can be transported along $\Sigma_{12}$ to create a meeting point which violates Corollary~\ref{cor:low-dimensional-meeting} or Corollary~\ref{cor:quad-bubble-simplicial}.
     }
\end{figure}
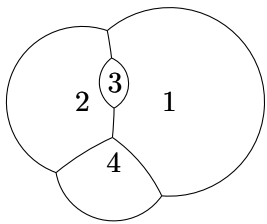

This concludes the proof of Theorem \ref{thm:intro-234} in the triple-bubble case.

\subsection{The quadruple-bubble cases}

Let $\Omega$ be a minimizing quadruple-bubble ($q=5$) cluster in $M^n \in \{\R^n,\S^n\}$ ($n \geq q-1=4$)
with $V(\Omega) \in \interior \Delta^{(q-1)}_{V(M^n)}$,
and let $G$ be its cell adjacency graph.
There are many 2-connected graphs on $5$ vertices, but we eliminate
most of them by invoking Lemma \ref{lem:degree-3}. Up to isomorphism, there are three graphs on five vertices for which every vertex has degree at least three,
depicted in Figure~\ref{fig:5-graphs}.
As usual, if $G = K_5$ then by Proposition~\ref{prop:standard-char} $\Omega$ is a standard bubble and we are finished.

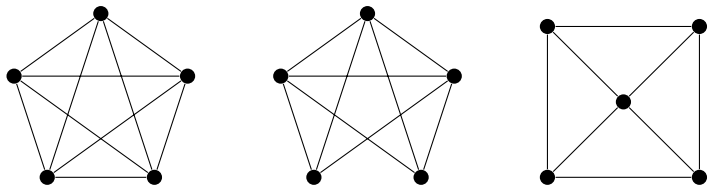
\begin{figure}
    \begin{center}
        \begin{tikzpicture}
            \node[vertex] (1) at (18:1.2) {};
            \node[vertex] (2) at (90:1.2) {};
            \node[vertex] (3) at (162:1.2) {};
            \node[vertex] (4) at (234:1.2) {};
            \node[vertex] (5) at (306:1.2) {};
            \draw (1) -- (2) -- (3) -- (4) -- (5) -- (1) -- (3) -- (5) -- (2) -- (4) -- (1);
        \end{tikzpicture}
        \hspace{2em}
        \begin{tikzpicture}
            \node[vertex] (1) at (18:1.2) {};
            \node[vertex] (2) at (90:1.2) {};
            \node[vertex] (3) at (162:1.2) {};
            \node[vertex] (4) at (234:1.2) {};
            \node[vertex] (5) at (306:1.2) {};
            \draw (1) -- (2) -- (3) -- (4);
            \draw (5) -- (1) -- (3) -- (5) -- (2) -- (4) -- (1);
        \end{tikzpicture}
        \hspace{2em}
        \begin{tikzpicture}
            \node[vertex] (1) at (-1, 1) {};
            \node[vertex] (2) at (1, 1) {};
            \node[vertex] (3) at (1, -1) {};
            \node[vertex] (4) at (-1, -1) {};
            \node[vertex] (5) at (0, 0) {};
            \draw (1) -- (2) -- (3) -- (4) -- (1);
            \draw (1) -- (5) -- (2);
            \draw (3) -- (5) -- (4);
        \end{tikzpicture}
     \end{center}
     \caption{
         \label{fig:5-graphs}
         The graphs on 5 vertices with minimum degree at least 3.
     }
\end{figure}

For the second case of Figure~\ref{fig:5-graphs}, suppose
without loss of generality that $\Sigma_{45}$ is the empty interface.
We claim that $\Sigma_{123}$ is non-empty and meets both $\Omega_4$ and $\Omega_5$;
it suffices to show this on $\S^n$ since stereographic projection preserves incidences.
Because all interfaces among cells $\{1, 2, 3, 4\}$ are present, the $4$-cluster
on $\S^n$ with data $\{\c_1, \dots, \c_4\}$ and $\{\k_1, \dots, \k_4\}$ is a spherical
Voronoi cluster having all interfaces present.
By Proposition~\ref{prop:standard-char}, this 4-cluster is a standard bubble and so
\[
    S_{1234} = \{p \in \S^n\; ; \;\sscalar{\c_1, p} + \k_1 = \cdots = \sscalar{\c_4, p} + \k_4\}
\]
is non-empty and has Hausdorff dimension $n-3$. Note that
$\overline{\Omega_5}$ is disjoint from $S_{1234}$, because otherwise $\Omega_5$ and $\Omega_4$ would
share an interface by Corollary~\ref{cor:quad-bubble-simplicial}. Therefore $S_{1234} = \Sigma_{1234}$,
which implies in particular that $\overline{\Sigma_{123}}$ is non-empty and intersects $\overline{\Omega_4}$.
By the same argument with $4$ and $5$ interchanged, it also intersects $\overline{\Omega_5}$.

\begin{figure}
    \begin{center}
       \includegraphics[scale=0.09]{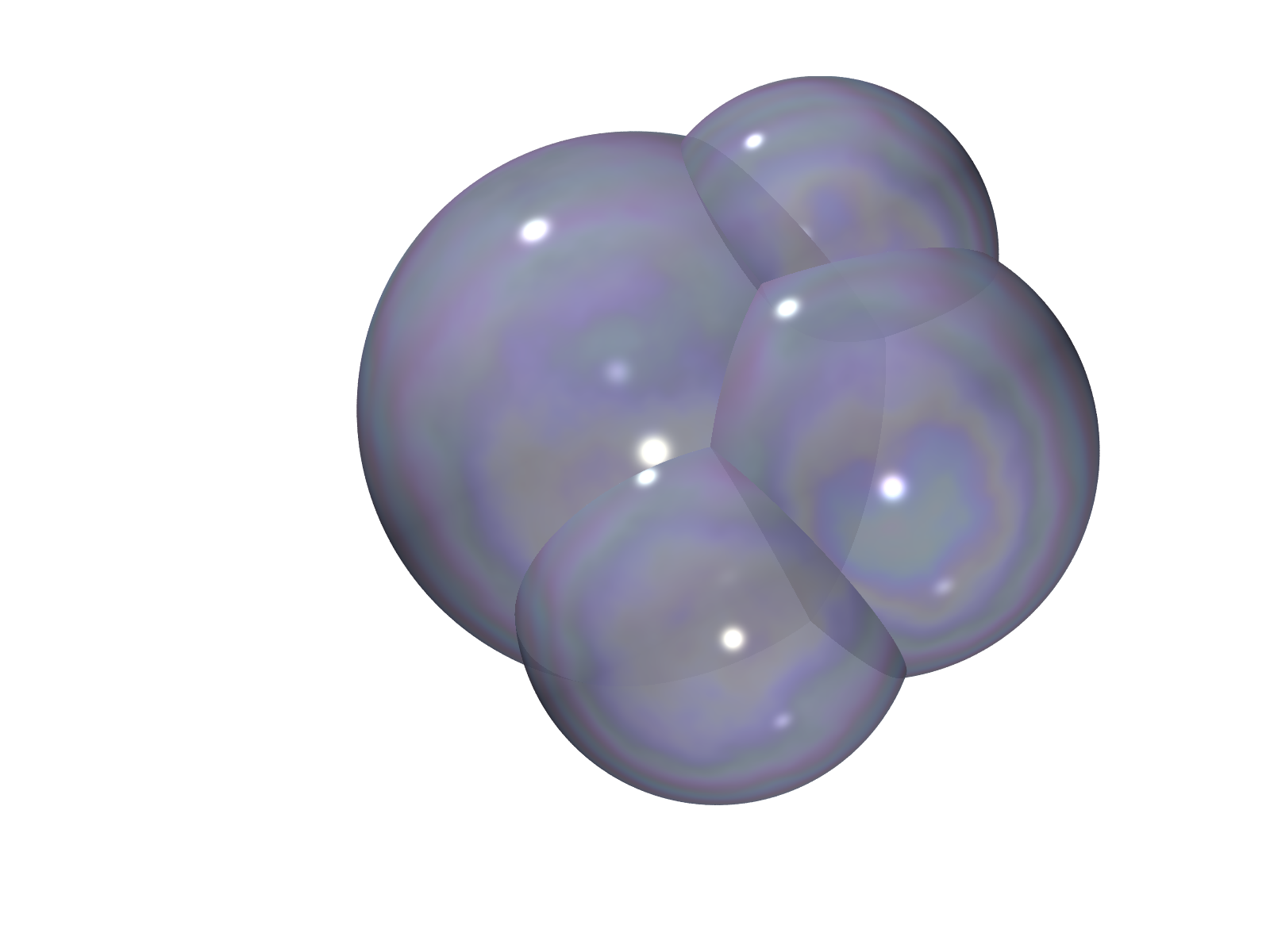}
       \includegraphics[scale=0.09]{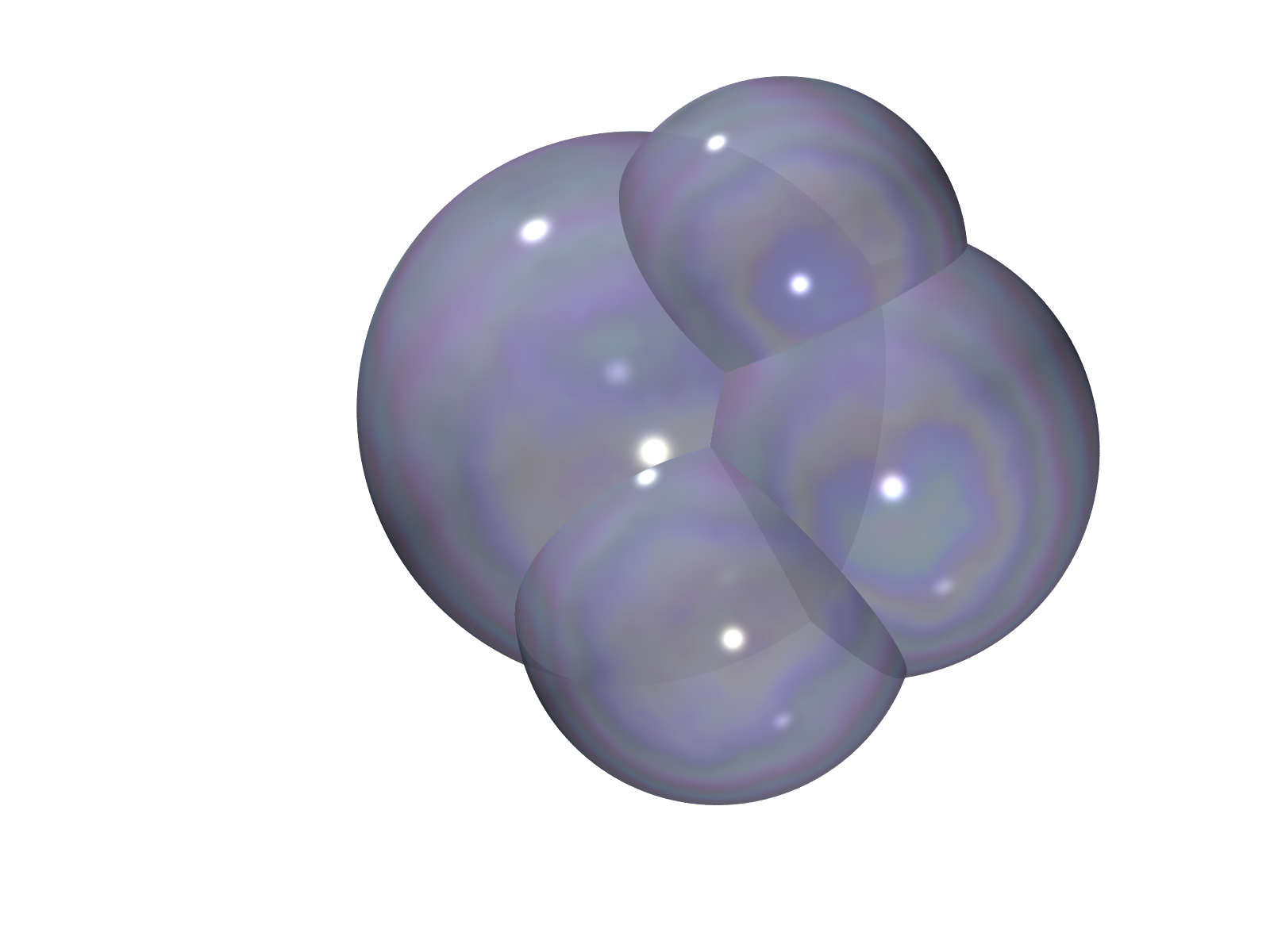}
       \includegraphics[scale=0.09]{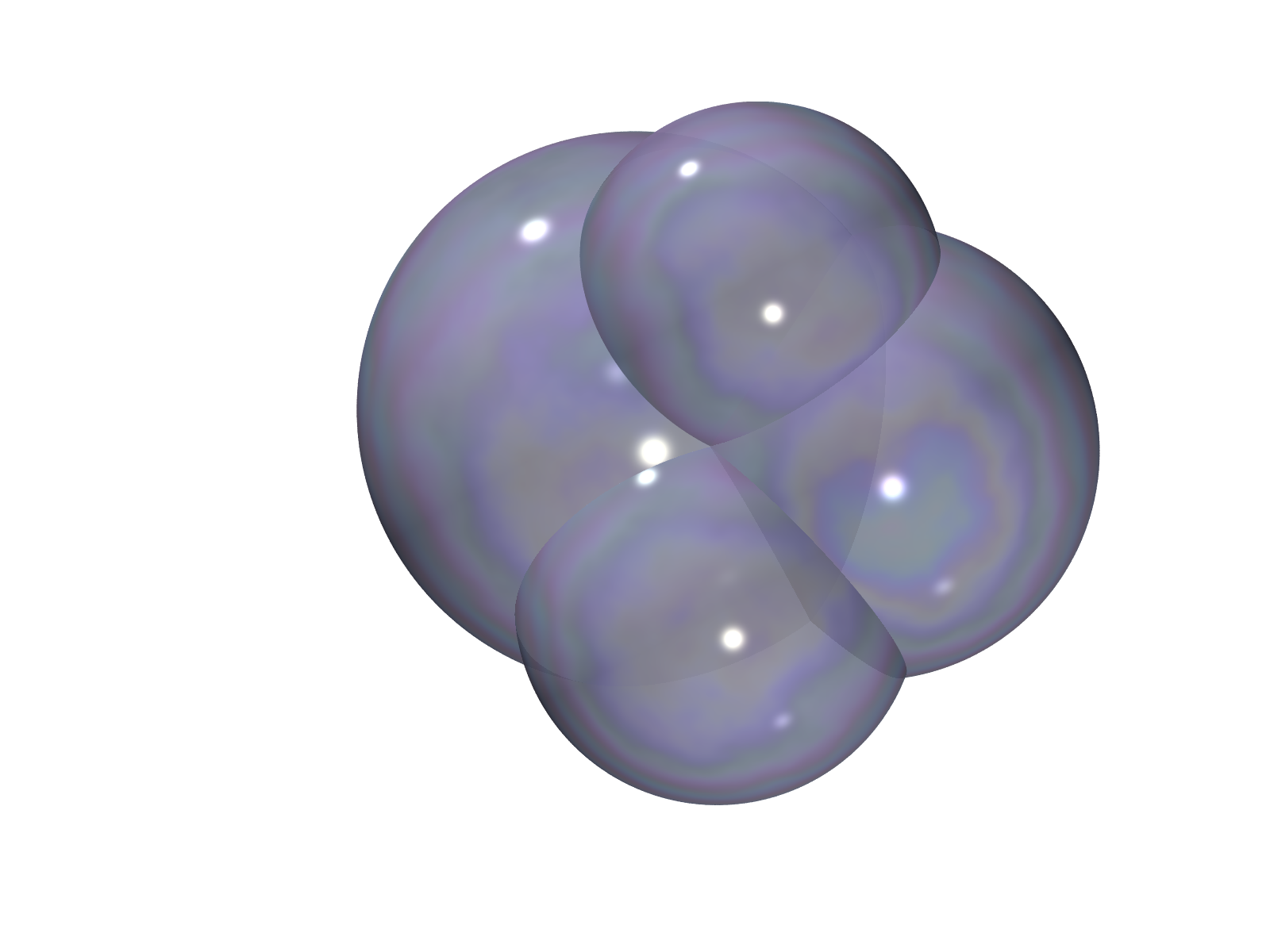}
     \end{center}
     \caption{
         \label{fig:quad-sliding}
         From left to right: sliding cell $4$ of a non-standard quadruple-bubble with adjacency graph as in the second case of Figure~\ref{fig:5-graphs}, until it hits cell $5$. 
     }
\end{figure}

We now obtain a contradiction as in the proof of Lemma \ref{lem:degree-3}. 
$S_{123}$ is a generalized $(n-2)$-sphere and so it is
acted upon transitively by
a subgroup of isometries of $M^n$, and we may use these isometries to move
$\Omega_4$ along $\Sigma_{123}$ -- while preserving all volumes and surface areas -- until it collides with $\Omega_5$; see Figure~\ref{fig:quad-sliding}. 
If $\Omega$ were minimizing, this modified cluster would also be minimizing.
By Corollary~\ref{cor:quad-bubble-simplicial}, this means that a new interface is created at the time of collision, thereby strictly reducing the total perimeter of $\Omega$, and contradicting minimality.  

It remains to consider the third case in Figure~\ref{fig:5-graphs}. We will show that it cannot happen: there
is no spherical Voronoi cluster with that component adjacency graph.

\subsection{Bubble rings}

We say that a spherical Voronoi cluster is a \emph{bubble ring} if its cell adjacency graph consists of
a vertex of degree $q-1$, with the remaining $q-1$ vertices forming a cycle as in Figure~\ref{fig:bubble-ring}. We are no longer concerned with preserving volumes or surface areas, and only care about preserving the spherical Voronoi property and incidence structure of the cluster, so we may freely apply stereographic projections (by definition) and M\"obius transformations (by Lemma \ref{lem:Mobius-preserves-Voronoi}) in our analysis. Consequently, in $\R^n$, we may apply a M\"obius inversion and assume that the cell with $q-1$ neighbors is the unbounded cell, and then a bubble ring indeed looks like a collection of bubbles arranged in a ring as in Figure~\ref{fig:bubble-ring}. We say that a bubble ring in $M^n$ is \emph{non-degenerate} if every $p \in M^n$ belongs to the closure of at most three cells.

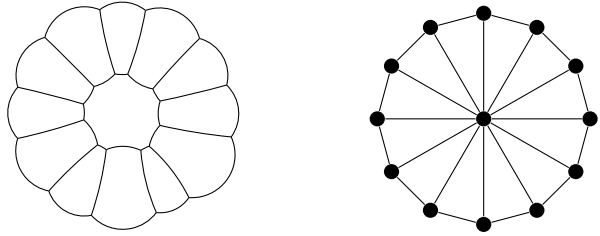
\begin{figure}
    \begin{center}
        \begin{tikzpicture}
            \draw ($(1.0109327423200576,0.0051230155744126675)+({0.530319892005336*cos(161.6385985245456)},{0.530319892005336*sin(161.6385985245456)})$) arc (161.6385985245456:197.17519508040473:0.530319892005336);
\draw ($(1.0109327423200576,0.0051230155744126675)+({0.530319892005336*cos(324.33585829684694)},{0.530319892005336*sin(324.33585829684694)})$) arc (324.33585829684694:405.57075749812225:0.530319892005336);
\draw ($(-2.137919642291556,13.016270570611747)+({-13.11371529252676*cos(101.63859852454563)},{-13.11371529252676*sin(101.63859852454563)})$) arc (101.63859852454563:105.57075749812222:-13.11371529252676);
\draw ($(0.8885423325942677,0.5108436673413717)+({0.5097072811425771*cos(200.28843100903615)},{0.5097072811425771*sin(200.28843100903615)})$) arc (200.28843100903615:221.63859852454559:0.5097072811425771);
\draw ($(0.8885423325942677,0.5108436673413717)+({0.5097072811425771*cos(345.5707574981222)},{0.5097072811425771*sin(345.5707574981222)})$) arc (345.5707574981222:434.51092497642674:0.5097072811425771);
\draw ($(7.335882076874603,-5.417856015370915)+({9.002585836496825*cos(134.51092497642668)},{9.002585836496825*sin(134.51092497642668)})$) arc (134.51092497642668:140.2884310090362:9.002585836496825);
\draw ($(0.5015997912355642,0.8666595872665438)+({0.540297794212609*cos(14.51092497642667)},{0.540297794212609*sin(14.51092497642667)})$) arc (14.51092497642667:110.33420629816351:0.540297794212609);
\draw ($(0.5015997912355642,0.8666595872665438)+({0.540297794212609*cos(218.7635797087002)},{0.540297794212609*sin(218.7635797087002)})$) arc (218.7635797087002:260.2884310090362:0.540297794212609);
\draw ($(-3.9725541763731247,2.1033416386616963)+({-4.34812945551475*cos(158.76357970870026)},{-4.34812945551475*sin(158.76357970870026)})$) arc (158.76357970870026:170.33420629816348:-4.34812945551475);
\draw ($(0.007089922074395545,1.0033449771428873)+({0.4805808972398061*cos(50.33420629816346)},{0.4805808972398061*sin(50.33420629816346)})$) arc (50.33420629816346:127.83387381187578:0.4805808972398061);
\draw ($(0.007089922074395545,1.0033449771428873)+({0.4805808972398061*cos(258.62441885330793)},{0.4805808972398061*sin(258.62441885330793)})$) arc (258.62441885330793:278.76357970870026:0.4805808972398061);
\draw ($(4.316002381557308,2.0163040726383032)+({4.647057314876854*cos(187.83387381187572)},{4.647057314876854*sin(187.83387381187572)})$) arc (187.83387381187572:198.62441885330796:4.647057314876854);
\draw ($(-0.4899202151101775,0.886505532445755)+({0.5360133479826334*cos(67.83387381187573)},{0.5360133479826334*sin(67.83387381187573)})$) arc (67.83387381187573:165.98726122215663:0.5360133479826334);
\draw ($(-0.4899202151101775,0.886505532445755)+({0.5360133479826334*cos(283.70385794030284)},{0.5360133479826334*sin(283.70385794030284)})$) arc (283.70385794030284:318.62441885330793:0.5360133479826334);
\draw ($(-17.007920713645284,-15.542685799508408)+({-23.024635631244305*cos(223.70385794030284)},{-23.024635631244305*sin(223.70385794030284)})$) arc (223.70385794030284:225.98726122215658:-23.024635631244305);
\draw ($(-0.8657107443525991,0.5127354440983432)+({0.5238188490336075*cos(105.9872612221566)},{0.5238188490336075*sin(105.9872612221566)})$) arc (105.9872612221566:197.4271439181208:0.5238188490336075);
\draw ($(-0.8657107443525991,0.5127354440983432)+({0.5238188490336075*cos(313.58423717955384)},{0.5238188490336075*sin(313.58423717955384)})$) arc (313.58423717955384:343.70385794030284:0.5238188490336075);
\draw ($(-4.25195878564159,-12.586281029544319)+({-13.260114045879305*cos(253.5842371795538)},{-13.260114045879305*sin(253.5842371795538)})$) arc (253.5842371795538:257.42714391812086:-13.260114045879305);
\draw ($(-0.994395392272275,0.014944879895287007)+({0.503912615544604*cos(137.42714391812086)},{0.503912615544604*sin(137.42714391812086)})$) arc (137.42714391812086:222.23415172343576:0.503912615544604);
\draw ($(-0.994395392272275,0.014944879895287007)+({0.503912615544604*cos(349.3578706519931)},{0.503912615544604*sin(349.3578706519931)})$) arc (349.3578706519931:373.58423717955384:0.503912615544604);
\draw ($(-2.9065401069843695,6.77411708314157)+({7.262823610756994*cos(282.23415172343573)},{7.262823610756994*sin(282.23415172343573)})$) arc (282.23415172343573:289.3578706519931:7.262823610756994);
\draw ($(-0.8518348754711385,-0.48898721009182006)+({0.5414819701765663*cos(8.540346963327405)},{0.5414819701765663*sin(8.540346963327405)})$) arc (8.540346963327405:49.35787065199311:0.5414819701765663);
\draw ($(-0.8518348754711385,-0.48898721009182006)+({0.5414819701765663*cos(162.2341517234358)},{0.5414819701765663*sin(162.2341517234358)})$) arc (162.2341517234358:258.66649146193583:0.5414819701765663);
\draw ($(2.812754178080218,-4.336725000483449)+({-5.022121850088589*cos(308.5403469633274)},{-5.022121850088589*sin(308.5403469633274)})$) arc (308.5403469633274:318.66649146193583:-5.022121850088589);
\draw ($(-0.495175960726577,-0.8634712148434005)+({0.4887818258998802*cos(53.175982525835714)},{0.4887818258998802*sin(53.175982525835714)})$) arc (53.175982525835714:68.54034696332742:0.4887818258998802);
\draw ($(-0.495175960726577,-0.8634712148434005)+({0.4887818258998802*cos(198.66649146193583)},{0.4887818258998802*sin(198.66649146193583)})$) arc (198.66649146193583:281.4510031401752:0.4887818258998802);
\draw ($(-4.538233942799991,0.046670833011640066)+({4.366950268336325*cos(341.4510031401752)},{4.366950268336325*sin(341.4510031401752)})$) arc (341.4510031401752:353.17598252583565:4.366950268336325);
\draw ($(0.014387585062941449,-0.9781802324072071)+({0.5503850483684556*cos(64.36215144274033)},{0.5503850483684556*sin(64.36215144274033)})$) arc (64.36215144274033:113.17598252583568:0.5503850483684556);
\draw ($(0.014387585062941449,-0.9781802324072071)+({0.5503850483684556*cos(221.45100314017517)},{0.5503850483684556*sin(221.45100314017517)})$) arc (221.45100314017517:323.423814554251:0.5503850483684556);
\draw ($(2.8088570290470156,-0.28698214791553933)+({-2.563754725988791*cos(4.362151442740347)},{-2.563754725988791*sin(4.362151442740347)})$) arc (4.362151442740347:23.423814554251013:-2.563754725988791);
\draw ($(0.5082749220270613,-0.856019661999397)+({0.45311141153239864*cos(108.63668427689494)},{0.45311141153239864*sin(108.63668427689494)})$) arc (108.63668427689494:124.36215144274031:0.45311141153239864);
\draw ($(0.5082749220270613,-0.856019661999397)+({0.45311141153239864*cos(263.423814554251)},{0.45311141153239864*sin(263.423814554251)})$) arc (263.423814554251:326.23831247492234:0.45311141153239864);
\draw ($(-1.0959694587033935,-2.0842216038271606)+({2.208498585762762*cos(26.238312474922356)},{2.208498585762762*sin(26.238312474922356)})$) arc (26.238312474922356:48.636684276894904:2.208498585762762);
\draw ($(0.9223724194150091,-0.5389885684160628)+({0.5700713359723726*cos(137.1751950804048)},{0.5700713359723726*sin(137.1751950804048)})$) arc (137.1751950804048:168.63668427689493:0.5700713359723726);
\draw ($(0.9223724194150091,-0.5389885684160628)+({0.5700713359723726*cos(266.23831247492234)},{0.5700713359723726*sin(266.23831247492234)})$) arc (266.23831247492234:384.335858296847:0.5700713359723726);
\draw ($(2.1924068270830794,7.264059236858878)+({-7.6052625806222816*cos(77.17519508040479)},{-7.6052625806222816*sin(77.17519508040479)})$) arc (77.17519508040479:84.33585829684698:-7.6052625806222816);
        \end{tikzpicture}
        \hspace{40pt}
        \begin{tikzpicture}
            \node[vertex] (12) at (0, 0) {};
            \foreach \i in {0, ..., 11}
            {
                \pgfmathtruncatemacro{\theta}{\i * 360 / 12};
                \node[vertex] (\i) at (\theta:1.4) {};
                \draw (12) -- (\i);
            }
            \draw (11) -- (0);
            \foreach \i in {0, ..., 10}
            {
                \pgfmathtruncatemacro{\j}{\i + 1};
                \draw (\i) -- (\j);
            }
        \end{tikzpicture}
    \end{center}
        \caption{
                 \label{fig:bubble-ring}
         Left: a bubble ring with 12 bounded cells and one unbounded one. Right: its adjacency graph.
              }
\end{figure}

\begin{figure}
    \begin{center}
        \begin{tikzpicture}
            \draw ($(0.0,0.0)+({2.857142857142857*cos(109.2759089602724)},{2.857142857142857*sin(109.2759089602724)})$) arc (109.2759089602724:136.99608805717713:2.857142857142857);
\draw ($(0.0,0.0)+({2.857142857142857*cos(223.0039119428228)},{2.857142857142857*sin(223.0039119428228)})$) arc (223.0039119428228:409.2759089602724:2.857142857142857);
\draw ($(-2.5394841192330255,0.0)+({2.0*cos(76.99608805717719)},{2.0*sin(76.99608805717719)})$) arc (76.99608805717719:124.64295784936957:2.0);
\draw ($(-2.5394841192330255,0.0)+({2.0*cos(212.43873034539752)},{2.0*sin(212.43873034539752)})$) arc (212.43873034539752:283.0039119428228:2.0);
\draw ($(0.4604279884765047,2.431142745093204)+({1.4285714285714286*cos(143.4132244463705)},{1.4285714285714286*sin(143.4132244463705)})$) arc (143.4132244463705:169.2759089602724:1.4285714285714286);
\draw ($(0.4604279884765047,2.431142745093204)+({1.4285714285714286*cos(349.27590896027243)},{1.4285714285714286*sin(349.27590896027243)})$) arc (349.27590896027243:396.5867755536295:1.4285714285714286);
\draw[dotted] ($(0.46042798847650473,3.779327465677267)+({1.25*cos(174.22256573838868)},{1.25*sin(174.22256573838868)})$) arc (174.22256573838868:203.4132244463705:1.25);
\draw[dotted] ($(0.46042798847650473,3.779327465677267)+({1.25*cos(336.5867755536295)},{1.25*sin(336.5867755536295)})$) arc (336.5867755536295:365.77743426161135:1.25);
\draw ($(-8.659997936148747,1.2406895731583087)+({5.000000000000001*cos(332.4387303453975)},{5.000000000000001*sin(332.4387303453975)})$) arc (332.4387303453975:364.6429578493695:5.000000000000001);
\draw[dotted] ($(-4.2882023526375175,0.35448273518808815)+({1.4285714285714286*cos(64.64295784936954)},{1.4285714285714286*sin(64.64295784936954)})$) arc (64.64295784936954:90.14570306104005:1.4285714285714286);
\draw[dotted] ($(-4.2882023526375175,0.35448273518808815)+({1.4285714285714286*cos(246.935985133727)},{1.4285714285714286*sin(246.935985133727)})$) arc (246.935985133727:272.4387303453975:1.4285714285714286);
\draw ($(-8.464947064110083,0.0)+({6.666666666666666*cos(343.0039119428228)},{6.666666666666666*sin(343.0039119428228)})$) arc (343.0039119428228:376.9960880571772:6.666666666666666);
\draw ($(0.9208559769530092,4.862285490186408)+({2.857142857142857*cos(229.2759089602724)},{2.857142857142857*sin(229.2759089602724)})$) arc (229.2759089602724:289.2759089602724:2.857142857142857);
\draw ($(0.4604279884765052,13.216620509765697)+({-9.999999999999991*cos(83.41322444637055)},{-9.999999999999991*sin(83.41322444637055)})$) arc (83.41322444637055:96.58677555362947:-9.999999999999991);
\node at (0.3, -0.3) {$s_{2q}$};
\node at (0.3, -0.3) {$s_{2q}$};
\node at (0.6104279884765047, 2.631142745093204) {$s_{3q}$};
\node at (0.6104279884765047, 2.631142745093204) {$s_{3q}$};
\node at (-2.7894841192330255, 0.0) {$s_{1q}$};
\node at (-2.7894841192330255, 0.0) {$s_{1q}$};
\draw (0.0, 0.0) -- (0.4604279884765047, 2.431142745093204);
\draw (0.0, 0.0) -- (-2.5394841192330255, 0.0);
\draw (0.0, 0.0) -- (-0.9431929298414623, 2.6969709681825513);
\draw (0.0, 0.0) -- (-2.0894489588626164, 1.9487094073848927);
\draw (-2.5394841192330255, 0.0) -- (-2.0894489588626164, 1.9487094073848927);
\draw ($(0.0,0.0)+({0.5714285714285715*cos(79.2759089602724)},{0.5714285714285715*sin(79.2759089602724)})$) arc (79.2759089602724:109.2759089602724:0.5714285714285715);
\draw ($(0.0,0.0)+({0.5714285714285715*cos(136.9960880571772)},{0.5714285714285715*sin(136.9960880571772)})$) arc (136.9960880571772:180.0:0.5714285714285715);
\node at (-0.05, 1.0) {$\theta_{23}$};
\node at (-0.05, 1.0) {$\theta_{23}$};
\node at (-0.8, 0.3) {$\theta_{21}$};
\node at (-0.8, 0.3) {$\theta_{21}$};
\draw ($(-2.5394841192330255,0.0)+({0.4*cos(0.0)},{0.4*sin(0.0)})$) arc (0.0:76.9960880571772:0.4);
\node at (-2.0894841192330253, 0.45) {$\theta_{12}$};
\node at (-2.0894841192330253, 0.45) {$\theta_{12}$};
        \end{tikzpicture}
     \end{center}
     \caption{
         \label{fig:angles}
         The angles $\theta_{ij}$ for $(i,j) = (1, 2)$, $(2, 1)$ and $(2,3)$.
     }
\end{figure}
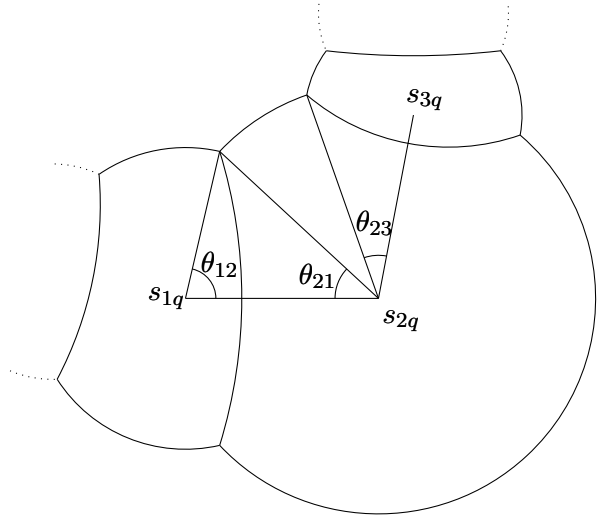

\begin{lemma} \label{lem:bubble-ring}
A non-degenerate bubble ring does not exist in $\R^n$ or $\S^n$ when $q \leq 7$. 
\end{lemma}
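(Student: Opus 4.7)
The plan is to derive a contradiction from the angular constraints imposed by the $120^\circ$ condition at triple points (Lemma \ref{lem:boundary-normal-sum}) together with the cyclic topology of the wheel adjacency graph underlying a bubble ring. Suppose for contradiction that such a non-degenerate bubble ring exists for some $q \leq 7$.

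First I would apply M\"obius invariance (Lemma \ref{lem:Mobius-preserves-Voronoi}) to normalize the cluster: by sending a point in the interior of the hub cell $\Omega_q$ to a convenient location (for instance, infinity in $\R^n$, or a pole of $\S^n$), the $q-1$ ring cells are placed cyclically around (or around the complement of) the hub. Non-degeneracy guarantees that every blow-up cone is of type $\Y \times \R^{n-2}$ by Corollary \ref{cor:low-dimensional-meeting}, so at each triple point $v \in \Sigma_{i,i+1,q}$ the three normals $\n_{iq}(v), \n_{(i+1)q}(v), \n_{i,i+1}(v)$ lie in a common $2$-plane with pairwise angles $120^\circ$ and sum to zero.

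Next I would set up a Gauss--Bonnet angular identity. For each cell $\Omega_k$ (either a ring cell or a component of the hub) the total angular turning around its boundary equals $2\pi\,\chi(\Omega_k)$, where the turning is the sum of signed arc-turnings along edges (each edge lies on a sphere $S_{ij}$ of constant curvature $\k_{ij}$) and exterior angles at corners, which are each $\pi-2\pi/3 = \pi/3$ by the $120^\circ$ condition. Each ring cell has either $3$ or $4$ edges depending on whether the triple-point set $\Sigma_{i,i+1,q}$ consists of one or two points (Corollary \ref{cor:low-dimensional-meeting} allows up to two), and each connected component of the hub contributes $q-1$ edges. Summing these identities over all cells, the arc-turning of each interface cancels with its reverse-oriented contribution from the other side, leaving a linear identity relating $q$, the number of triple points, and the Euler characteristics of the cells.

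The critical numerical threshold is the identity $(q-1)\cdot(\pi - 2\pi/3) = 2\pi$, i.e.\ $q = 7$, which marks the regime where a straight-sided $(q-1)$-gon with $120^\circ$ interior angles just barely closes up (the regular planar hexagon at $q=7$). For $q < 7$ the required net arc-turning along the hub's boundary is strictly positive $\pi(7-q)/3 > 0$, forcing the bounding arcs to bulge outward away from the hub; for $q = 7$ the hub's bounding loop would be forced to a geodesic hexagon, which on $\S^2$ has zero spherical excess and hence degenerates. Feeding these sign/area constraints into the spherical Voronoi compatibility relations $|\c_{ij}|^2 = 1 + \k_{ij}^2$ (Remark \ref{rem:intro-quasi-centers}) and using the convexity of $\Omega_q$ after projection (Proposition \ref{prop:convex} applied to the natural $\S^0$-symmetrization, compatible with the ring topology) forces the quasi-centers and curvatures of the hub arcs into a configuration that admits no solution for $q \leq 7$.

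The main obstacle is executing the Gauss--Bonnet accounting in the correct topological setting. The hub cell $\Omega_q$ of a spherical Voronoi cluster need not be connected: a convex polyhedron intersected with $\S^n$ can split into several spherical pieces, and stereographic projection inherits the same phenomenon in $\R^n$. Likewise, the interface $\Sigma_{iq}$ may consist of multiple arcs if the two sphere boundaries of $\Omega_{i\pm 1}$ cut through the hub-sphere $S_{iq}$ non-trivially. A careful case analysis -- organized by the topological type of the hub (one disk versus two disks versus higher genus after compactification) and the cardinality of each triple-point set -- is required to show that in every case the resulting linear equation in $q$ is violated for $q \leq 7$, yielding the desired contradiction.
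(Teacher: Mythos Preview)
Your instinct that the result comes from an angle count with threshold $q=7$ is correct, but the Gauss--Bonnet framework you set up has a basic problem: the cells are $n$-dimensional subsets of $\R^n$ (with $n \geq 4$ in the application), not $2$-surfaces, so there is no identity of the form ``arc-turning plus exterior angles equals $2\pi\chi$'' for them unless you first pass to a $2$-dimensional cross-section. You never say why such a section exists or why the $120^\circ$ data survives passage to it. The topological worries you raise in your last paragraph (disconnected hub, multiple arcs per interface, higher genus) are genuine obstacles for this route and remain unresolved; the appeal to Proposition~\ref{prop:convex} via an ``$\S^0$-symmetrization'' is not legitimate, since an arbitrary spherical Voronoi bubble ring need not admit any reflection symmetry. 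As written, the proposal is a plan with acknowledged holes rather than a proof.

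The paper sidesteps all of this by working not with the cells but with the \emph{centers} $\s_{iq} = -\c_{iq}/\k_{iq}$ of the spheres $S_{iq}$ (after sending the hub cell to infinity in $\R^n$). Non-degeneracy immediately gives $\Sigma_{ijq} = S_{iq} \cap S_{jq}$, so for any $p \in \Sigma_{ijq}$ the angle $\theta_{ij}$ between $p - \s_{iq}$ and $\s_{jq} - \s_{iq}$ is a well-defined constant. In the triangle $\s_{iq},\, p,\, \s_{jq}$ the angle at $p$ is $60^\circ$ (stationarity), hence $\theta_{ij} + \theta_{ji} = 120^\circ$. For consecutive ring cells $i,j,k$, the interior angle of the closed polygonal path $\s_{1q},\ldots,\s_{q-1,q}$ at $\s_{jq}$ strictly exceeds $\theta_{ji} + \theta_{jk}$. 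Summing over the cycle, the interior angles strictly exceed $(q-1)\cdot 120^\circ$, while any closed polygonal line on $q-1$ vertices has interior angle sum at most $(q-3)\cdot 180^\circ$; combining gives $q > 7$. This is a few lines of elementary Euclidean geometry in $\R^n$, with no Gauss--Bonnet, no reduction to dimension two, and no case analysis on the topology of the hub.
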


\begin{proof}
As already explained, it suffices to prove this in $\R^n$,
so suppose that $\Omega$ is a non-degenerate bubble ring in $\R^n$. We may relabel the cells so that $\Omega_q$ is the unbounded cell and $\Omega_1, \dots, \Omega_{q-1}$ are in the order in which they go around the cycle.
For $1 \le i \le q-1$, the interface $\Sigma_{iq}$ is a subset of the sphere $S_{iq}$ of radius $1/\k_{iq}$ centered at $\s_{iq} :=  -\c_{iq} / \k_{iq}$. 

Clearly $\Sigma_{ijq} \subset S_{ijq} = S_{iq} \cap S_{jq}$ for all $1 \le i \ne j \le q-1$. Moreover, by non-degeneracy, we actually have $\Sigma_{ijq} = S_{ijq}$; otherwise, $p \in \partial \Sigma_{ijq}$ would be in the closure of at least 4 cells $i,j,q,k$. In particular, for $p \in \Sigma_{ijq}$ the angle between $p - \s_{iq}$ and $\s_{jq} - \s_{iq}$ is constant; call it $\theta_{ij}$ and refer to Figure~\ref{fig:angles} for a depiction.

Consequently, for $i,j,k$ consecutive cells along the cycle, a two dimensional affine slice spanned by $\s_{iq},\s_{jq},\s_{kq}$ looks as in Figure~\ref{fig:angles}, and it follows that the angle between $\s_{iq} - \s_{jq}$ and $\s_{kq} - \s_{jq}$
is strictly greater than $\theta_{ji} + \theta_{jk}$.
Therefore the sum of the interior angles along the closed polygonal line
$\s_{1q}, \dots, \s_{q-1,q}, \s_{q1}$ is strictly greater than
\[
    (\theta_{1,q-1} + \theta_{1,2}) + (\theta_{2,1} + \theta_{2,3}) + \cdots + (\theta_{q-1,q-2} + \theta_{q-1,1}).
\]
On the other hand, $\theta_{ij} + \theta_{ji} = 120^{\circ}$, because together with a point $p \in \Sigma_{ij}$,
$\s_{iq}$ and $\s_{jq}$ form a triangle with angles $\theta_{ij}$, $\theta_{ji}$, and $60^{\circ}$ (by stationarity of any spherical Voronoi cluster, as interfaces meet in threes at $120^{\circ}$ angles). 
Hence, the sum of the interior angles along the closed polygonal line is strictly greater than $(q-1) 120^{\circ}$. But
since the sum of interior angles along a closed polygonal line with $q-1$ vertices is at most $(q-3) 180^{\circ}$ (with equality in the planar case), it follows that $q-1 > 6$.
\end{proof}
\begin{remark}
Note that the latter estimate on $q$ is sharp: to see this, consider an equilateral hexagon in the plane partitioned into 6 equal parts by rays from the origin to its vertices, and make its 6 faces into spherical arcs so that they meet at $120^\circ$ angles at the vertices. Together with the exterior cell, we obtain a bubble ring with 7 cells, which is however degenerate (as the 6 bounded cells all meet at the origin). By using a heptagon, one can already obtain a non-degenerate bubble ring with 8 cells. See Figure~\ref{fig:bubble-ring-67}. 
\end{remark}

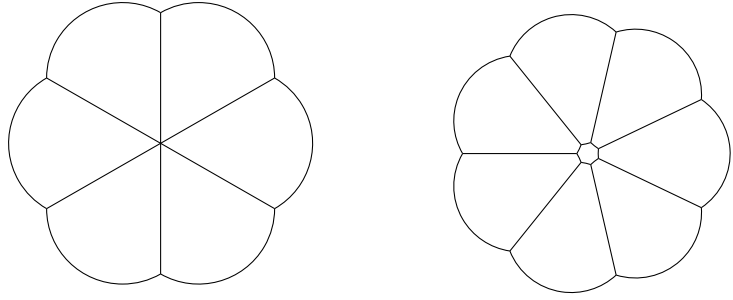
\begin{figure}
    \begin{center}
        \begin{tikzpicture}
            \draw ($(1.0,0.0)+({0.9999999999999999*cos(300.0)},{0.9999999999999999*sin(300.0)})$) arc (300.0:420.0:0.9999999999999999);
\draw (1.5, 0.8660254037844385) -- (1.1102230246251565e-16, 1.224646799147353e-16);
\draw ($(0.5000000000000001,0.8660254037844386)+({0.9999999999999999*cos(0.0)},{0.9999999999999999*sin(0.0)})$) arc (0.0:120.00000000000007:0.9999999999999999);
\draw (0.0, 1.7320508075688772) -- (0.0, 1.1102230246251565e-16);
\draw ($(-0.4999999999999998,0.8660254037844387)+({0.9999999999999999*cos(59.99999999999999)},{0.9999999999999999*sin(59.99999999999999)})$) arc (59.99999999999999:180.0:0.9999999999999999);
\draw (-1.4999999999999996, 0.8660254037844388) -- (-1.1102230246251565e-16, 0.0);
\draw ($(-1.0,1.2246467991473532e-16)+({0.9999999999999999*cos(119.99999999999999)},{0.9999999999999999*sin(119.99999999999999)})$) arc (119.99999999999999:239.99999999999994:0.9999999999999999);
\draw (-1.5, -0.8660254037844384) -- (-1.1102230246251565e-16, -5.55111512312578e-17);
\draw ($(-0.5000000000000004,-0.8660254037844385)+({0.9999999999999999*cos(180.0)},{0.9999999999999999*sin(180.0)})$) arc (180.0:300.0:0.9999999999999999);
\draw (-2.220446049250313e-16, -1.7320508075688767) -- (-2.220446049250313e-16, -1.1102230246251565e-16);
\draw ($(0.5000000000000001,-0.8660254037844386)+({0.9999999999999999*cos(120.00000000000001)},{0.9999999999999999*sin(120.00000000000001)})$) arc (120.00000000000001:120.00000000000007:0.9999999999999999);
\draw ($(0.5000000000000001,-0.8660254037844386)+({0.9999999999999999*cos(239.99999999999997)},{0.9999999999999999*sin(239.99999999999997)})$) arc (239.99999999999997:360.0:0.9999999999999999);
\draw (1.5, -0.8660254037844386) -- (0.0, -1.1102230246251565e-16);
        \end{tikzpicture}
        \hspace{40pt}
         \begin{tikzpicture}
            \draw ($(1.0,0.0)+({0.8677674782351161*cos(175.71428571428572)},{0.8677674782351161*sin(175.71428571428572)})$) arc (175.71428571428572:184.28571428571425:0.8677674782351161);
\draw ($(1.0,0.0)+({0.8677674782351161*cos(304.2857142857143)},{0.8677674782351161*sin(304.2857142857143)})$) arc (304.2857142857143:415.7142857142857:0.8677674782351161);
\draw (1.4888308262251284, 0.7169831376082642) -- (0.13465897563360507, 0.06484834485976558);
\draw ($(0.6234898018587336,0.7818314824680298)+({0.8677674782351161*cos(227.14285714285714)},{0.8677674782351161*sin(227.14285714285714)})$) arc (227.14285714285714:235.71428571428567:0.8677674782351161);
\draw ($(0.6234898018587336,0.7818314824680298)+({0.8677674782351161*cos(355.7142857142857)},{0.8677674782351161*sin(355.7142857142857)})$) arc (355.7142857142857:467.1428571428571:0.8677674782351161);
\draw (0.3677108474634314, 1.6110464864151237) -- (0.033258020438987734, 0.14571290823473004);
\draw ($(-0.22252093395631434,0.9749279121818236)+({0.8677674782351161*cos(47.14285714285714)},{0.8677674782351161*sin(47.14285714285714)})$) arc (47.14285714285714:158.57142857142853:0.8677674782351161);
\draw ($(-0.22252093395631434,0.9749279121818236)+({0.8677674782351161*cos(278.57142857142856)},{0.8677674782351161*sin(278.57142857142856)})$) arc (278.57142857142856:287.1428571428571:0.8677674782351161);
\draw (-1.030302899372565, 1.2919589715920843) -- (-0.09318690248616868, 0.1168526797072974);
\draw ($(-0.900968867902419,0.43388373911755823)+({0.8677674782351161*cos(98.57142857142857)},{0.8677674782351161*sin(98.57142857142857)})$) arc (98.57142857142857:209.99999999999997:0.8677674782351161);
\draw ($(-0.900968867902419,0.43388373911755823)+({0.8677674782351161*cos(330.0)},{0.8677674782351161*sin(330.0)})$) arc (330.0:338.5714285714286:0.8677674782351161);
\draw (-1.6524775486319896, 2.220446049250313e-16) -- (-0.1494601871728487, 0.0);
\draw ($(-0.9009688679024191,-0.433883739117558)+({0.8677674782351161*cos(21.428571428571413)},{0.8677674782351161*sin(21.428571428571413)})$) arc (21.428571428571413:29.999999999999964:0.8677674782351161);
\draw ($(-0.9009688679024191,-0.433883739117558)+({0.8677674782351161*cos(149.99999999999997)},{0.8677674782351161*sin(149.99999999999997)})$) arc (149.99999999999997:261.4285714285714:0.8677674782351161);
\draw (-1.0303028993725651, -1.291958971592084) -- (-0.09318690248616857, -0.11685267970729757);
\draw ($(-0.2225209339563146,-0.9749279121818236)+({0.8677674782351161*cos(72.85714285714285)},{0.8677674782351161*sin(72.85714285714285)})$) arc (72.85714285714285:81.42857142857139:0.8677674782351161);
\draw ($(-0.2225209339563146,-0.9749279121818236)+({0.8677674782351161*cos(201.42857142857142)},{0.8677674782351161*sin(201.42857142857142)})$) arc (201.42857142857142:312.8571428571429:0.8677674782351161);
\draw (0.3677108474634311, -1.6110464864151237) -- (0.03325802043898771, -0.14571290823472993);
\draw ($(0.6234898018587334,-0.7818314824680299)+({0.8677674782351161*cos(124.28571428571426)},{0.8677674782351161*sin(124.28571428571426)})$) arc (124.28571428571426:132.85714285714283:0.8677674782351161);
\draw ($(0.6234898018587334,-0.7818314824680299)+({0.8677674782351161*cos(252.85714285714283)},{0.8677674782351161*sin(252.85714285714283)})$) arc (252.85714285714283:364.2857142857143:0.8677674782351161);
\draw (1.4888308262251284, -0.7169831376082643) -- (0.13465897563360513, -0.06484834485976554);
        \end{tikzpicture}
     \end{center}
     \caption{
         \label{fig:bubble-ring-67}
         Left: a degenerate bubble ring with 6 bounded cells ($q=7$); Right: a non-degenerate bubble ring with 7 bounded cells ($q=8$). 
     }
\end{figure}

We now claim that it is not possible for a minimizing quadruple-bubble to have the third component adjacency
graph in Figure~\ref{fig:5-graphs}. Indeed, such a cluster
would have to be a non-degenerate bubble ring: non-degenerate because if the closure of $4$ or more cells were non-empty then the adjacency graph would contain a $K_4$ by Corollary~\ref{cor:quad-bubble-simplicial}, which the bubble ring graph does not. 
 But non-degenerate stationary bubble rings do not exist for $q=5$ by Lemma \ref{lem:bubble-ring}.

\medskip

This rules out all of the possible adjacency graphs besides $K_5$, thereby concluding the proof of Theorem \ref{thm:intro-234} in the quadruple-bubble case.

\appendix

\section{Contribution of curvature to Index-Form} \label{app:Ricci}

In this Appendix we extend the formula for $Q(X)$ given in Theorem \ref{thm:Q-Sigma4}, originally derived in the Euclidean setting in \cite[Appendix E]{EMilmanNeeman-GaussianMultiBubble}, to the Riemannian setting. 

Recall that the vector-field $X$ generates a flow $F_t$ via:
\[
\frac{d}{dt} F_t(x) = X \circ F_t(x)  ~,~ F_0 = \Id . 
\]
Let $\gamma(s,t) := F_t(\gamma(s))$ where $\gamma$ is a curve with $\gamma(0) = x$ and $\gamma'(0) = \xi \in T_x M$. 
Therefore:
\[
\frac{\partial \gamma}{\partial s} = d_x F_t \cdot \xi.
\]
Taking two time-derivatives using covariant differentiation (and denoting as usual $\frac{D}{\partial r} = \nabla_{\partial_r \gamma}$, $r \in \{t,s\}$), we obtain by the definition of the Riemann curvature tensor $\text{Riem}$ (and using $[\frac{D}{\partial s} , \frac{D}{\partial t}] \gamma = 0$):
\[
\brac{\frac{D}{dt}}^2 d_x F_t \cdot \xi = \frac{D}{dt} \frac{D}{dt} \frac{\partial \gamma}{\partial s} = 
\frac{D}{dt} \frac{D}{ds} \frac{\partial \gamma}{\partial t} = \frac{D}{ds}  \frac{D}{dt} \frac{\partial \gamma}{\partial t}  - \text{Riem}\brac{\frac{\partial \gamma}{\partial t},\frac{\partial \gamma}{\partial s}} \frac{\partial \gamma}{\partial t}  . 
\]
Note that we use the convention that the sectional curvature in the plane spanned by two orthonormal vectors $u,v$ is $\text{Riem}(u,v,u,v)$. Now:
\[
\frac{\partial \gamma}{\partial t} = X( \gamma(s,t)) ~,~ \frac{D}{dt} \frac{\partial \gamma}{\partial t}  = \nabla_X X (\gamma(s,t)) ~,~ \frac{\partial \gamma}{\partial s} = d_x F_t \cdot \xi  ,
\]
and hence:
\[
\left . \frac{D}{ds}  \frac{D}{dt} \frac{\partial \gamma}{\partial t} \right |_{t=0} = \nabla_{\xi} (\nabla_X X)  ~,~ \left . \frac{\partial \gamma}{\partial s} \right|_{t=0} = \xi . 
\]
As this is true for all $\xi \in T_x M$, we obtain:
\[
\brac{\frac{D}{dt}}^2 (d_x F_t)|_{t=0} = \nabla (\nabla_X X) - \text{Riem}(X,\cdot) X . 
\]
For the first derivative there is no curvature contribution and we get the usual:
\[
\frac{D}{dt} (d_x F_t)|_{t=0} = \nabla X . 
\]

Write $JF_t = \text{det}(dF_t)$ for the Jacobian of $F_t$, and observe that by the change-of-variables formula for smooth injective functions:
\[ \mu(F_t(U)) = \int_{U} J F_t  e^{-\pot \circ F_t} \, d\H^n,
\] for any Borel set $U$. Similarly, if $U$ is in addition of locally finite-perimeter, let $\Phi_t = F_t|_{\partial^* U}$ and write $J \Phi_t = \det((d_{\n_U^{\perp}} F_t)^T d_{\n_U^{\perp}} F_t)^{1/2}$ for the Jacobian of $\Phi_t$ on $\partial^* U$. Since $\partial^* U$ is locally $\H^{n-1}$-rectifiable, \cite[Proposition 17.1 and Theorem 11.6]{MaggiBook} imply:
\[ \mu^{n-1}(\partial^* F_t(U)) = \mu^{n-1}(F_t(\partial^* U)) = \int_{\partial^* U} J \Phi_t e^{-\pot \circ F_t} \, d\H^{n-1} . 
\] We assume in addition that $\partial^* U = \Sigma \cup \Xi$, where $\Sigma$ is a smooth $(n-1)$-dimensional manifold co-oriented by the outer unit-normal $\n$ and $\H^{n-1}(\Xi) = 0$.

\subsection{Second variation of volume}

We have:
\[
JF_t = \det(\Id + t A + \frac{t^2}{2} B + o(t^2)) = 1 + t \; \tr(A) + \frac{t^2}{2} \brac{\tr(B) + \tr(A)^2 - \tr(A^2) } + o(t^2) ,
\]
where, recall:
\[
A = \nabla X ~,~ B = \nabla (\nabla_X X) - \text{Riem}(X,\cdot) X .
\]
It follows that:
\[
\brac{\frac{d}{dt}}^2 (JF_t)|_{t=0} =  \tr(B) + \tr(A)^2 - \tr(A^2) = \div (\nabla_X X) - \Ric(X,X) + (\div X)^2 - \tr( (\nabla X)^2 ) . 
\]
Denoting by $\{e_i\}_{i=1,\ldots,n}$ Riemannian normal coordinates around a point $x_0 \in M$, and employing Einstein summation convention of summing over repeated indices, we compute:
\vspace{3pt}
\begin{align*}
\smash{\brac{\frac{d}{dt}}^2} (JF_t)|_{t=0} & = (\div X)^2 - \Ric(X,X) + \sscalar{\nabla_{e_i} \nabla_X X , e_i} - \sscalar{\nabla_{e_i} X , e_j} \sscalar{\nabla_{e_j} X , e_i} \\
& = (\div X)^2 - \Ric(X,X) + \sscalar{\nabla_{\nabla_{e_i} X} X + \nabla^2_{e_i,X} X , e_i} - \sscalar{\nabla_{e_i} X , e_j} \sscalar{\nabla_{e_j} X , e_i}  \\
& =  (\div X)^2 - \Ric(X,X) +  \sscalar{\nabla^2_{e_i,X} X , e_i} \\
& = (\div X)^2 + \sscalar{\nabla^2_{X,e_i} X , e_i} \\
& = (\div X)^2 + X^j \sscalar{\nabla^2_{e_j,e_i} X , e_i} \\
& = (\div X)^2 + X^j \nabla_{e_j} \sscalar{\nabla_{e_i} X , e_i} \\
& = \div(X \div X). 
\end{align*} 
It follows that the curvature of the manifold is not witnessed in the second variation of volume $\delta^2_X V$ (as expected), and that the formula:
\[
\delta^2_X V(U) = \int_{U} \div_\mu(X \div_\mu X)\, d\mu = \int_{\partial^* U} X^\n \div_\mu X\, d\mu^{n-1}
\]
obtained in the Euclidean setting equally applies in the Riemannian one.

\subsection{Second variation of area}

Similarly, for $x_0 \in \Sigma$, we identify $T_{x_0} \Sigma \subset T_{x_0} M$ with $\R^{n-1} \subset \R^n$, respectively, and write:
\begin{align*}
J \Phi_t & = \det ( (\Id_{n-1 \times n} + t  A + \frac{t^2}{2} B + o(t^2)) (\Id_{n-1 \times n} + t A + \frac{t^2}{2} B + o(t^2))^T)^{1/2} \\
& = 1 + t \; \tr(\tilde A) + \frac{t^2}{2} \brac{\tr(\tilde B) + \tr(\tilde A)^2 - \tr(\tilde A^2) + |A^{\perp}|^2} + o(t^2) ,
\end{align*}
where $\tilde C$ denotes the restriction of $C$ to the first $n-1$ columns, $C^{\perp}$ denotes the last column, and:
\[
A = \nabla^\tang X ~,~ B = \nabla^\tang (\nabla_X X) - \text{Riem}(X,\cdot|_{T \Sigma}) X .
\]
Consequently, if $\{\tau_i\}_{i=1,\ldots,n-1}$ are Riemannian normal coordinates on $T \Sigma$ around $x_0$, then:
\begin{align*}
\smash{\brac{\frac{d}{dt}}^2} (J\Phi_t)|_{t=0}  & =  \div_{\Sigma} (\nabla_X X) + (\div_{\Sigma} X)^2 - \sum_{i,j=1}^{n-1} \scalar{\nabla_{\tau_i} X,\tau_j} \scalar{\nabla_{\tau_j} X,\tau_i} + \sum_{i=1}^{n-1} \abs{\scalar{\nabla_{\tau_i} X , \n}}^2  \\
& - \sum_{i=1}^{n-1}\text{Riem}(X,\tau_i, X,\tau_i) .
\end{align*}
Denoting the tangential Ricci curvature tensor by:
\begin{equation} \label{eq:Ric-Sigma}
\Ric_{\Sigma}(X,Y) := \sum_{i=1}^{n-1} \text{Riem}(X,\tau_i,Y,\tau_i) = \Ric(X,Y) - \text{Riem}(X,\n,Y,\n), 
\end{equation}
it follows that the extra term with respect to the corresponding Euclidean expression derived in \cite[(E.3)]{EMilmanNeeman-GaussianMultiBubble} is $-\Ric_{\Sigma}(X,X)$. 

\subsection{Additional contribution of curvature}

The curvature also appears in two additional places in the Euclidean identity of \cite[Lemma E.1]{EMilmanNeeman-GaussianMultiBubble}:
\begin{itemize}
\item In the commutation:
\[
\sum_{i,j=1}^{n-1} X^j \scalar{\tau_i,\nabla_{\tau_i} \nabla_{\tau_j} X - \nabla_{\tau_j} \nabla_{\tau_i} X} = \sum_{i,j=1}^{n-1} X^j \text{Riem}(\tau_j,\tau_i,X,\tau_i)   = \Ric_{\Sigma}(X^\tang,X)  .
\]
\item
When invoking the Codazzi equation:
\begin{equation} \label{eq:Codazzi}
X^\n \sum_{i,j=1}^{n-1} X^j ( \nabla_{\tau_i} \II_{ji} - \nabla_{\tau_j} \II_{ii}) = X^\n \sum_{i,j=1}^{n-1} X^j  \text{Riem}(\tau_j,\tau_i,\n,\tau_i)  = \Ric_{\Sigma}(X^\tang , X^{\n} \n) . 
\end{equation}
\end{itemize}

Summing all of the curvature contribution together, we obtain:
\[
- \Ric_{\Sigma}(X,X) + \Ric_{\Sigma}(X^\tang,X) + \Ric_{\Sigma}(X^\tang, X^{\n} \n)   = - \Ric_{\Sigma}(X^\n \n , X^\n \n) = - \Ric(\n,\n) (X^\n)^2 ,
\]
where the last transition follows since $\text{Riem}(\n,\n,\n,\n) = 0$. 
It follows that the expressions for $\delta^2_X A$ and thus $Q(X)$ derived in \cite[Appendix E]{EMilmanNeeman-GaussianMultiBubble} contain this extra curvature term in the (weighted) Riemannian setting, thereby verifying the formula stated in Theorem \ref{thm:Q-Sigma4}.

\section{A Technical Computation} \label{app:tedious}

In this Appendix, we verify a calculation used in the proof of Lemma \ref{lem:Stokes-integrability}. Let $\Sigma$ be a smooth hypersurface in the smooth weighted Riemannian manifold $(M,g,\mu = \exp(-W) d\vol_g)$ co-oriented by the unit-normal $\n$, and let $\II$ denote its second fundamental form. Let $X,Y$ denote two smooth vector-fields on $(M,g)$. We use $\alpha,\beta$ to denote tensor indices on $T \Sigma$ and $\gamma$ on $T M$.  We freely raise and lower indices using the metric as needed; in particular, $(\II Y^{\tang})^{\alpha} = \II^{\alpha}_\beta Y^\beta$ denotes the tangential vector-field obtained by applying the Weingarten map to $Y^{\tang}$. Recall that $Y^{\tang}$ denotes the tangential projection of $Y$ onto $T \Sigma$, that $\div_{\Sigma,\mu}$ denotes the weighted tangential divergence on $\Sigma$, and that $H_{\Sigma,\mu}$ denotes its weighted mean-curvature. 

\begin{lemma} \label{lem:tedious}
\begin{align}
\label{eq:app-tedious}
\div_{\Sigma,\mu}(X^\n \II Y^\tang) = \; & \II^2_{\alpha \beta} X^{\alpha} Y^{\beta} + \II_{\alpha\beta} (\nabla^{\alpha} X^\gamma) \n_{\gamma} Y^{\beta} + X^\n \II_{\alpha\beta} \nabla^\alpha Y^\beta - 
\norm{\II}^2 X^\n Y^\n \\
\nonumber & + \Ric_{\Sigma,\mu}(Y^{\tang}, X^\n \n) + X^\n \nabla_{Y^\tang} H_{\Sigma,\mu} . 
\end{align}
Here $\II^2$ denotes the square of the Weingarten map as an operator on $T \Sigma$ and 
\begin{equation} \label{eq:Ric-Sigma-weighted}
\Ric_{\Sigma,\mu} = \Ric_{\Sigma} + \nabla^2 W
\end{equation}
denotes the weighted tangential Ricci $2$-tensor (recall the unweighted version $\Ric_{\Sigma}$ was defined in (\ref{eq:Ric-Sigma})). 
\end{lemma}
\begin{proof}
It will be more convenient to perform the computation in Riemannian normal coordinates $\{ \tau_i \}_{i=1,\ldots,n-1}$ for $T \Sigma$ around a fixed $x_0 \in \Sigma$. We denote $\II_{ij} = \II(\tau_i,\tau_j)$ and $Z^i = \scalar{Z,\tau_i}$ for a vector-field $Z$. We abbreviate $\nabla_{\tau_i}$ by $\nabla_i$, and reserve the indices $i,j,k$ to range over $\{1,\ldots,n-1\}$. 
Note that for a tangential vector-field $Z$ such as $X^\n \II \Y^\tang$, $\scalar{\nabla_{\tau_i} Z , \tau_i} = \nabla_{\tau_i} \scalar{Z, \tau_i}$. Consequently, employing Einstein summation convention of summing over repeated indices:
\begin{align*}
\div_{\Sigma}(X^\n \II Y^\tang) & = \nabla_i(X^{\n} \II^i_j Y^j) = \II^i_j \nabla_{i} (X^\n Y^j) + X^\n Y^j \nabla_i \II^i_j  \\
 & = \II^i_j Y^j ( \nabla_i X^\gamma \n_\gamma + X^k \II_{ik}) + \II^i_j X^{\n} ( (\nabla Y)_{i}^j - Y^\n \II_i^j) + X^\n Y^j \nabla_i \II^i_j . 
\end{align*}
The first four terms above correspond to the first four terms in (\ref{eq:app-tedious}). As for the last term $X^\n Y^j \nabla_i \II^i_j$, we observe using the Codazzi equation (\ref{eq:Codazzi}) (in its polarized form) that:
\begin{equation} \label{eq:app-tediousA}
X^\n Y^j \nabla_i \II^i_j = \Ric_{\Sigma}(Y^{\tang} , X^{\n} \n) + X^\n Y^j \nabla_j \II^{i}_i  = \Ric_{\Sigma}(Y^{\tang} , X^{\n} \n) + X^\n \nabla_{Y^\tang} H_{\Sigma}   . 
\end{equation}
This concludes the computation for the unweighted divergence. As for the weighted one, we have:
\begin{align} 
\nonumber \div_{\Sigma,\mu}(X^\n \II Y^\tang) & = \div_{\Sigma}(X^\n \II Y^\tang) - \scalar{\nabla W , X^\n \II Y^{\tang}} \\
\label{eq:app-tediousB}  & =  \div_{\Sigma}(X^\n \II Y^\tang) + X^\n \nabla^2_{Y^\tang,\n} W - X^\n \nabla_{Y^\tang} \nabla_\n W .
\end{align}
Recalling that $H_{\Sigma,\mu} = H_{\Sigma} - \nabla_\n W$, we see that the last two terms in (\ref{eq:app-tediousA}) and (\ref{eq:app-tediousB}) combine to yield $X^\n \nabla_{Y^\tang} H_{\Sigma,\mu}$. The corresponding two penultimate terms combine to yield $\Ric_{\Sigma,\mu}(Y^{\tang} , X^{\n} \n)$. This establishes (\ref{eq:app-tedious}) and concludes the proof. 
\end{proof}

\bibliographystyle{plain}
\bibliography{../../../ConvexBib}

\def\cprime{$'$} \def\textasciitilde{$\sim$}
\begin{thebibliography}{10}

\bibitem{Alexandrov-MethodOfMovingPlanes}
A.~D. Aleksandrov.
\newblock Uniqueness theorems for surfaces in the large. {V}.
\newblock {\em Amer. Math. Soc. Transl. (2)}, 21:412--416, 1962.

\bibitem{AlmgrenMemoirs}
F.~J. Almgren, Jr.
\newblock Existence and regularity almost everywhere of solutions to elliptic
  variational problems with constraints.
\newblock {\em Mem. Amer. Math. Soc.}, 4(165), 1976.

\bibitem{BarbosaDoCarmo-StabilityInRn}
J.~L. Barbosa and M.~do~Carmo.
\newblock Stability of hypersurfaces with constant mean curvature.
\newblock {\em Math. Z.}, 185(3):339--353, 1984.

\bibitem{Blair-InversionTheory}
D.~E. Blair.
\newblock {\em Inversion theory and conformal mapping}, volume~9 of {\em
  Student Mathematical Library}.
\newblock American Mathematical Society, Providence, RI, 2000.

\bibitem{Borell-GaussianIsoperimetry}
Ch. Borell.
\newblock The {B}runn--{M}inkowski inequality in {G}auss spaces.
\newblock {\em Invent. Math.}, 30:207--216, 1975.

\bibitem{Brakke-MinimalConesOnCubes}
K.~A. Brakke.
\newblock Minimal cones on hypercubes.
\newblock {\em J. Geom. Anal.}, 1(4):329--338, 1991.

\bibitem{BuragoZalgallerBook}
Yu.~D. Burago and V.~A. Zalgaller.
\newblock {\em Geometric inequalities}, volume 285 of {\em Grundlehren der
  Mathematischen Wissenschaften [Fundamental Principles of Mathematical
  Sciences]}.
\newblock Springer-Verlag, Berlin, 1988.

\bibitem{CarlenKerceEqualityInGaussianIsop}
E.~A. Carlen and C.~Kerce.
\newblock On the cases of equality in {B}obkov's inequality and {G}aussian
  rearrangement.
\newblock {\em Calc. Var. Partial Differential Equations}, 13(1):1--18, 2001.

\bibitem{CLM-StabilityInPlanarDoubleBubble}
M.~Cicalese, G.~P. Leonardi, and F.~Maggi.
\newblock Sharp stability inequalities for planar double bubbles.
\newblock {\em Interfaces Free Bound.}, 19(3):305--350, 2017.

\bibitem{CES-RegularityOfMinimalSurfacesNearCones}
M.~Colombo, N.~Edelen, and L.~Spolaor.
\newblock The singular set of minimal surfaces near polyhedral cones.
\newblock {\em J. Differential Geom.}, 120(3):411--503, 2022.

\bibitem{CorneliCorwinEtAl-DoubleBubbleInSandG}
J.~Corneli, I.~Corwin, S.~Hurder, V.~Sesum, Y.~Xu, E.~Adams, D.~Davis, M.~Lee,
  R.~Visocchi, and N.~Hoffman.
\newblock Double bubbles in {G}auss space and spheres.
\newblock {\em Houston J. Math.}, 34(1):181--204, 2008.

\bibitem{CorneliHoffmanEtAl-DoubleBubbleIn3D}
J.~Corneli, N.~Hoffman, P.~Holt, G.~Lee, N.~Leger, S.~Moseley, and
  E.~Schoenfeld.
\newblock Double bubbles in {${\bf S}^3$} and {${\bf H}^3$}.
\newblock {\em J. Geom. Anal.}, 17(2):189--212, 2007.

\bibitem{CottonFreeman-DoubleBubbleInSandH}
A.~Cotton and D.~Freeman.
\newblock The double bubble problem in spherical space and hyperbolic space.
\newblock {\em Int. J. Math. Math. Sci.}, 32(11):641--699, 2002.

\bibitem{LawlorEtAl-PlanarDoubleBubbleViaMetacalibration}
R.~Dorff, G.~Lawlor, D.~Sampson, and B.~Wilson.
\newblock Proof of the planar double bubble conjecture using metacalibration
  methods.
\newblock {\em Involve}, 2(5):611--628, 2009.

\bibitem{EhrhardGaussianIsopEqualityCases}
A.~Ehrhard.
\newblock \'{E}l\'ements extr\'emaux pour les in\'egalit\'es de
  {B}runn-{M}inkowski gaussiennes.
\newblock {\em Ann. Inst. H. Poincar\'e Probab. Statist.}, 22(2):149--168,
  1986.

\bibitem{Foisy-UGThesis}
J.~Foisy.
\newblock Soap bubble clusters in $\mathbb{R}^2$ and $\mathbb{R}^3$.
\newblock Undergraduate thesis, {W}illiams {C}ollege, Williamstown, MA, 1991.

\bibitem{SMALL93}
J.~Foisy, M.~Alfaro, J.~Brock, N.~Hodges, and J.~Zimba.
\newblock The standard double soap bubble in {${\bf R}^2$} uniquely minimizes
  perimeter.
\newblock {\em Pacific J. Math.}, 159(1):47--59, 1993.

\bibitem{GelfandEtAl-RepresentationOfLorentzGroups}
I.~M. Gelfand, R.~A. Milnos, and Z.~Ya. Shapiro.
\newblock {\em Representation of the Rotation and Lorentz groups}.
\newblock Oxford: Pergamon, Macmillan, New-York, 1963.
\newblock Translated from Russian by G. Cummins and T. Boddington. English
  Translation editor: H. K. Farahat.

\bibitem{GohbergEtAl-IndefiniteLinearAlgebra}
I.~Gohberg, P.~Lancaster, and L.~Rodman.
\newblock {\em Indefinite linear algebra and applications}.
\newblock Birkh\"{a}user Verlag, Basel, 2005.

\bibitem{GromovGeneralizationOfLevy}
M.~Gromov.
\newblock {P}aul {L}\'evy's isoperimetric inequality.
\newblock preprint, I.H.E.S., 1980.

\bibitem{Gromov}
M.~Gromov.
\newblock {\em Metric structures for {R}iemannian and non-{R}iemannian spaces},
  volume 152 of {\em Progress in Mathematics}.
\newblock Birkh\"auser Boston Inc., Boston, MA, 1999.

\bibitem{HHS95}
J.~Hass, M.~Hutchings, and R.~Schlafly.
\newblock The double bubble conjecture.
\newblock {\em Electron. Res. Announc. Amer. Math. Soc.}, 1(3):98--102, 1995.

\bibitem{HassSchlafly-EqualDoubleBubbles}
J.~Hass and R.~Schlafly.
\newblock Double bubbles minimize.
\newblock {\em Ann. of Math. (2)}, 151(2):459--515, 2000.

\bibitem{Udo-MobiusDifferentialGeometry}
U.~Hertrich-Jeromin.
\newblock {\em Introduction to {M}\"{o}bius differential geometry}, volume 300
  of {\em London Mathematical Society Lecture Note Series}.
\newblock Cambridge University Press, Cambridge, 2003.

\bibitem{Hutchings-StructureOfDoubleBubbles}
M.~Hutchings.
\newblock The structure of area-minimizing double bubbles.
\newblock {\em J. Geom. Anal.}, 7(2):285--304, 1997.

\bibitem{DoubleBubbleInR3-Announcement}
M.~Hutchings, F.~Morgan, M.~Ritore, and A.~Ros.
\newblock Proof of the double bubble conjecture.
\newblock {\em Electron. Res. Announc. Amer. Math. Soc.}, 6:45--49, 2000.

\bibitem{DoubleBubbleInR3}
M.~Hutchings, F.~Morgan, M.~Ritor\'e, and A.~Ros.
\newblock Proof of the double bubble conjecture.
\newblock {\em Ann. of Math. (2)}, 155(2):459--489, 2002.

\bibitem{KNS}
D.~Kinderlehrer, L.~Nirenberg, and J.~Spruck.
\newblock Regularity in elliptic free boundary problems.
\newblock {\em J. Analyse Math.}, 34:86--119 (1979), 1978.

\bibitem{KMS-LimitOfCapillarity}
D.~King, F.~Maggi, and S.~Stuvard.
\newblock Plateau's problem as a singular limit of capillarity problems.
\newblock {\em Comm. Pure Appl. Math.}, 75(5):895--969, 2022.

\bibitem{Lawlor-DoubleBubbleInRn}
G.~R. Lawlor.
\newblock Double bubbles for immiscible fluids in {$\Bbb{R}^n$}.
\newblock {\em J. Geom. Anal.}, 24(1):190--204, 2014.

\bibitem{Lawlor-TripleBubbleInR2AndS2}
G.~R. Lawlor.
\newblock Perimeter-minimizing triple bubbles in the plane and the 2-sphere.
\newblock {\em Anal. Geom. Metr. Spaces}, 7(1):45--61, 2019.

\bibitem{Leonardi-Infiltration}
G.~P. Leonardi.
\newblock Infiltrations in immiscible fluids systems.
\newblock {\em Proc. Roy. Soc. Edinburgh Sect. A}, 131(2):425--436, 2001.

\bibitem{MaggiBook}
F.~Maggi.
\newblock {\em Sets of finite perimeter and geometric variational problems: an
  introduction to {G}eometric {M}easure {T}heory}, volume 135 of {\em Cambridge
  Studies in Advanced Mathematics}.
\newblock Cambridge University Press, Cambridge, 2012.

\bibitem{Masters-DoubleBubbleInS2}
J.~D. Masters.
\newblock The perimeter-minimizing enclosure of two areas in {$S^2$}.
\newblock {\em Real Anal. Exchange}, 22(2):645--654, 1996/97.

\bibitem{EMilmanNeeman-GaussianMultiBubble}
E.~Milman and J.~Neeman.
\newblock The {G}aussian double-bubble and multi-bubble conjectures.
\newblock {\em Ann. of Math. (2)}, 195(1):89--206, 2022.

\bibitem{MondinoNguyen-ConformalInvariants}
A.~Mondino and H.~T. Nguyen.
\newblock Global conformal invariants of submanifolds.
\newblock {\em Ann. Inst. Fourier (Grenoble)}, 68(6):2663--2695, 2018.

\bibitem{MontesinosStandardBubbleE!}
A.~Montesinos~Amilibia.
\newblock Existence and uniqueness of standard bubble clusters of given volumes
  in {$\Bbb R^N$}.
\newblock {\em Asian J. Math.}, 5(1):25--31, 2001.

\bibitem{MorganSoapBubblesInR2}
F.~Morgan.
\newblock Soap bubbles in {${\bf R}^2$} and in surfaces.
\newblock {\em Pacific J. Math.}, 165(2):347--361, 1994.

\bibitem{MorganBook5Ed}
F.~Morgan.
\newblock {\em Geometric measure theory}.
\newblock Elsevier/Academic Press, Amsterdam, fifth edition, 2016.
\newblock A beginner's guide, Illustrated by James F. Bredt.

\bibitem{MorganWichiramala-StablePlanarDoubleBubble}
F.~Morgan and W.~Wichiramala.
\newblock The standard double bubble is the unique stable double bubble in
  {$\bold R^2$}.
\newblock {\em Proc. Amer. Math. Soc.}, 130(9):2745--2751, 2002.

\bibitem{Munkres-AlgebraicTopology}
J.~R. Munkres.
\newblock {\em Elements of algebraic topology}.
\newblock Addison-Wesley Publishing Company, Menlo Park, CA, 1984.

\bibitem{NaberValtorta-MinimizingHarmonicMaps}
A.~Naber and D.~Valtorta.
\newblock Rectifiable-{R}eifenberg and the regularity of stationary and
  minimizing harmonic maps.
\newblock {\em Ann. of Math. (2)}, 185(1):131--227, 2017.

\bibitem{PaoliniTamagnini-PlanarQuadraupleBubbleEqualAreas}
E.~Paolini and A.~Tamagnini.
\newblock Minimal clusters of four planar regions with the same area.
\newblock {\em ESAIM Control Optim. Calc. Var.}, 24(3):1303--1331, 2018.

\bibitem{PaoliniTortorelli-PlanarQuadrupleEqualAreas}
E.~Paolini and V.~M. Tortorelli.
\newblock The quadruple planar bubble enclosing equal areas is symmetric.
\newblock {\em Calc. Var. Partial Differential Equations}, 59(1):Paper No. 20,
  9, 2020.

\bibitem{Reichardt-DoubleBubbleInRn}
B.~W. Reichardt.
\newblock Proof of the double bubble conjecture in {$\bold R^n$}.
\newblock {\em J. Geom. Anal.}, 18(1):172--191, 2008.

\bibitem{SMALL03}
B.~W. Reichardt, C.~Heilmann, Y.~Y. Lai, and A.~Spielman.
\newblock Proof of the double bubble conjecture in {${\bf R}^4$} and certain
  higher dimensional cases.
\newblock {\em Pacific J. Math.}, 208(2):347--366, 2003.

\bibitem{SaintRaymond-DifferentiableIFT}
J.~Saint~Raymond.
\newblock Local inversion for differentiable functions and the {D}arboux
  property.
\newblock {\em Mathematika}, 49(1-2):141--158 (2004), 2002.

\bibitem{Simon-Codimension2Regularity}
L.~Simon.
\newblock Cylindrical tangent cones and the singular set of minimal
  submanifolds.
\newblock {\em J. Differential Geom.}, 38(3):585--652, 1993.

\bibitem{SudakovTsirelson}
V.~N. Sudakov and B.~S. Cirel{\cprime}son~[Tsirelson].
\newblock Extremal properties of half-spaces for spherically invariant
  measures.
\newblock {\em Zap. Nau\v cn. Sem. Leningrad. Otdel. Mat. Inst. Steklov.
  (LOMI)}, 41:14--24, 165, 1974.
\newblock Problems in the theory of probability distributions, II.

\bibitem{OpenProblemsInSoapBubbles96}
J.~M. Sullivan and F.~Morgan.
\newblock Open problems in soap bubble geometry.
\newblock {\em Internat. J. Math.}, 7(6):833--842, 1996.

\bibitem{Taylor-SoapBubbleRegularityInR3}
J.~E. Taylor.
\newblock The structure of singularities in soap-bubble-like and soap-film-like
  minimal surfaces.
\newblock {\em Ann. of Math. (2)}, 103(3):489--539, 1976.

\bibitem{Tinaglia-CurvatureEstimatesForCMCSurfaces}
G.~Tinaglia.
\newblock On curvature estimates for constant mean curvature surfaces.
\newblock In {\em Geometric analysis: partial differential equations and
  surfaces}, volume 570 of {\em Contemp. Math.}, pages 165--185. Amer. Math.
  Soc., Providence, RI, 2012.

\bibitem{White-AusyAnnouncementOfClusterRegularity}
B.~White.
\newblock Regularity of the singular sets in immiscible fluid interfaces and
  solutions to other {P}lateau-type problems.
\newblock In {\em Miniconference on geometry and partial differential equations
  ({C}anberra, 1985)}, volume~10 of {\em Proc. Centre Math. Anal. Austral. Nat.
  Univ.}, pages 244--249. Austral. Nat. Univ., Canberra, 1986.

\bibitem{White-SoapBubbleRegularityInRn}
B.~White.
\newblock Regularity of singular sets for plateau-type problems.
\newblock Announced, 1990's.

\bibitem{Wichiramala-TripleBubbleInR2}
W.~Wichiramala.
\newblock Proof of the planar triple bubble conjecture.
\newblock {\em J. Reine Angew. Math.}, 567:1--49, 2004.

\bibitem{Wilker-InversiveGeometry}
J.~B. Wilker.
\newblock Inversive geometry.
\newblock In {\em The geometric vein}, pages 379--442. Springer, New
  York-Berlin, 1981.

\end{thebibliography}

\end{document}